\documentclass[
11pt,  reqno]{amsart}
\usepackage[margin=1.2in,marginparwidth=1.5cm, marginparsep=0.5cm]{geometry}
\pdfoutput=1

\usepackage{booktabs} 
\usepackage{microtype}
\usepackage{amssymb}
\usepackage{mathrsfs}

\usepackage{upgreek} 

\usepackage{color}
\usepackage[implicit=true]{hyperref}

\usepackage{xsavebox}

\usepackage{cases}

\allowdisplaybreaks[2]

\sloppy

\hfuzz  = 0.5cm 


\definecolor{gr}{rgb}   {0.,   0.69,   0.23 }
\definecolor{bl}{rgb}   {0.,   0.5,   1. }
\definecolor{mg}{rgb}   {0.85,  0.,    0.85}
\definecolor{yl}{rgb}   {0.8,  0.7,   0.}
\definecolor{or}{rgb}  {0.7,0.2,0.2}

\usepackage{tikz} 
%
%
%
%
%

\usepackage{marginnote}
\usepackage{scalerel} 

\usetikzlibrary{shapes.misc}
\usetikzlibrary{shapes.symbols}
\usetikzlibrary{decorations}
\usetikzlibrary{decorations.markings}

\tikzset{
	dot/.style={circle,fill=black,draw=black,inner sep=0pt,minimum size=0.5mm},
	>=stealth,
	}

\tikzset{
	dot2/.style={circle,fill=black,draw=black,inner sep=0pt,minimum size=0.2mm},
	>=stealth,
	}

\tikzset{
	ddot/.style={circle,fill=white,draw=black,inner sep=0pt,minimum size=0.8mm},
	>=stealth,
	}


\tikzset{decision/.style={ 
        draw,
        diamond,
        aspect=1.5
    }}

\tikzset{dia2/.style
={diamond,fill=white,draw=black,inner sep=0pt,minimum size=1mm},
	>=stealth,
	}

\tikzset{dia/.style
={star,fill=black,draw=black,inner sep=0pt,minimum size=1mm},
	>=stealth,
	}

\tikzset{dia/.style
={diamond,fill=black,draw=black,inner sep=0pt,minimum size=1.3mm},
	>=stealth,
	}

\makeatletter
\def\DeclareSymbol#1#2#3{\xsavebox{#1}{\tikz[baseline=#2,scale=0.15]{#3}}}
\def\<#1>{\xusebox{#1}}
\makeatother

\newsavebox{\peA}
\newsavebox{\pneA}
\newsavebox{\plA}
\newsavebox{\pgA}
\newsavebox{\pleA}
\newsavebox{\pgeA}
\newsavebox{\pezA}

\savebox{\peA}{\tikz \draw (0,0) node[shape=circle,draw,inner sep=0pt,minimum size=8.5pt] {\scriptsize  $=$};}
\savebox{\pneA}{\tikz \draw (0,0) node[shape=circle,draw,inner sep=0pt,minimum size=8.5pt] {\footnotesize $\neq$};}
\savebox{\plA}{\tikz \draw (0,0) node[shape=circle,draw,inner sep=0pt,minimum size=8.5pt] {\scriptsize $<$};}
\savebox{\pgA}{\tikz \draw (0,0) node[shape=circle,draw,inner sep=0pt,minimum size=8.5pt] {\scriptsize $>$};}
\savebox{\pleA}{\tikz \draw (0,0) node[shape=circle,draw,inner sep=0pt,minimum size=8.5pt] {\scriptsize $\leqslant$};}
\savebox{\pgeA}{\tikz \draw (0,0) node[shape=circle,draw,inner sep=0pt,minimum size=8.5pt] {\scriptsize $\geqslant$};}
\savebox{\pezA}{\tikz \draw (0,0) node[shape=circle,draw,
fill=white, 
inner sep=0pt,minimum size=8.5pt]{} ;}

\def \peB{\mathchoice
{\scalebox{.7}{{\usebox{\peA}}}}
{\scalebox{.7}{{\usebox{\peA}}}}
{\scalebox{.7}{{\usebox{\peA}}}}
{}
}

\def \plB{\mathchoice
{\scalebox{.7}{{\usebox{\plA}}}}
{\scalebox{.7}{{\usebox{\plA}}}}
{\scalebox{.7}{{\usebox{\plA}}}}
{}
}
\def \pgB{\mathchoice
{\scalebox{.7}{{\usebox{\pgA}}}}
{\scalebox{.7}{{\usebox{\pgA}}}}
{\scalebox{.7}{{\usebox{\pgA}}}}
{}
}

\def \pgeB{\mathchoice
{\scalebox{.7}{{\usebox{\pgeA}}}}
{\scalebox{.7}{{\usebox{\pgeA}}}}
{\scalebox{.7}{{\usebox{\pgeA}}}}
{}
}
\def \pezB{\mathchoice
{\scalebox{.7}{{\usebox{\pezA}}}}
{\scalebox{.7}{{\usebox{\pezA}}}}
{\scalebox{.7}{{\usebox{\pezA}}}}
{}
}

\newcommand{\pe}{\mathbin{{\peB}}}

\newcommand{\pl}{\mathbin{{\plB}}}
\newcommand{\pg}{\mathbin{{\pgB}}}

\newcommand{\pge}{\mathbin{{\pgeB}}}
\newcommand{\pez}{\mathbin{{\pezB}}}

\usetikzlibrary{decorations.pathmorphing, patterns,shapes}

\DeclareSymbol{dot}{-2.7}{\draw (0,0) node[dot]{};}


\DeclareSymbol{1}{0}{\draw[white] (-.4,0) -- (.4,0); \draw (0,0)  -- (0,1.2) node[dot] {};}
\DeclareSymbol{2}{0}{\draw (-0.5,1.2) node[dot] {} -- (0,0) -- (0.5,1.2) node[dot] {};}
\DeclareSymbol{3}{0}{\draw (0,0) -- (0,1.2) node[dot] {}; \draw (-.7,1) node[dot] {} -- (0,0) -- (.7,1) node[dot] {};}
\DeclareSymbol{30}{-3}{\draw (0,0) -- (0,-1); \draw (0,0) -- (0,1.2) node[dot] {}; \draw (-.7,1) node[dot] {} -- (0,0) -- (.7,1) node[dot] {};}
\DeclareSymbol{31}{-3}{\draw (0,0) -- (0,-1) -- (1,0) node[dot] {}; \draw (0,0) -- (0,1.2) node[dot] {}; \draw (-.7,1) node[dot] {} -- (0,0) -- (.7,1) node[dot] {};}
\DeclareSymbol{32}{-3}{\draw (0,0) -- (0,-1) -- (1,0) node[dot] {}; \draw (0,0) -- (0,-1) -- (-1,0) node[dot] {}; \draw (0,0) -- (0,1.2) node[dot] {}; \draw (-.7,1) node[dot] {} -- (0,0) -- (.7,1) node[dot] {};}
\DeclareSymbol{20}{-3}{\draw (0,0) -- (0,-1);\draw (-.7,1) node[dot] {} -- (0,0) -- (.7,1) node[dot] {};}
\DeclareSymbol{22}{-3}{\draw (0,0.3) -- (0,-1) -- (1,0) node[dot] {}; \draw (0,0.3) -- (0,-1) -- (-1,0) node[dot] {};\draw (-.7,1) node[dot] {} -- (0,0.3) -- (.7,1) node[dot] {};}
\DeclareSymbol{31p}{-3}{\draw (0,0) -- (0,-1) -- (1,0) node[dot] {}; \draw (0,0) -- (0,1.2) node[dot] {}; \draw (-.7,1) node[dot] {} -- (0,0) -- (.7,1) node[dot] {}; 
\draw (0,-1) node{\scaleobj{0.5}{\pez}}; 
\draw (0,-1) node{\scaleobj{0.5}{\pe}}; }
\DeclareSymbol{32p}{-3}{\draw (0,0) -- (0,-1) -- (1,0) node[dot] {}; \draw (0,0) -- (0,-1) -- (-1,0) node[dot] {}; \draw (0,0) -- (0,1.2) node[dot] {}; \draw (-.7,1) node[dot] {} -- (0,0) -- (.7,1) node[dot] {}; 
\draw (0,-1) node{\scaleobj{0.5}{\pez}};
\draw (0,-1) node{\scaleobj{0.5}{\pe}};}
\DeclareSymbol{22p}{-3}{\draw (0,0.3) -- (0,-1) -- (1,0) node[dot] {}; \draw (0,0.3) -- (0,-1) -- (-1,0) node[dot] {};\draw (-.7,1) node[dot] {} -- (0,0.3) -- (.7,1) node[dot] {}; 
\draw (0,-1) node{\scaleobj{0.5}{\pez}};
\draw (0,-1) node{\scaleobj{0.5}{\pe}};}
\DeclareSymbol{21p}{-3}{\draw (0,0.3) -- (0,-1) -- (1,0) node[dot] {}; 
\draw (-.7,1) node[dot] {} -- (0,0.3) -- (.7,1) node[dot] {}; 
\draw (0,-1) node{\scaleobj{0.5}{\pez}};
\draw (0,-1) node{\scaleobj{0.5}{\pe}};}


%
%
\DeclareSymbol{11p}{-3}{\draw (0,0) node[dot2] {} -- (0,-1) -- (1,0) node[dot] {}; 
\draw (0,0) -- (0,1.2) node[dot] {};  
\draw (0,-1) node{\scaleobj{0.5}{\pez}}; 
\draw (0,-1) node{\scaleobj{0.5}{\pe}}; }
\DeclareSymbol{100}{-3}
{\draw[white] (-.4,0) -- (.4,0); 
\draw (0,0) node[dot2] {} -- (0,-1)  ; 
\draw (0,0) -- (0,1.2) node[dot] {};}  
%
%
%
%
%
%
%



\usetikzlibrary{mindmap,backgrounds,trees,arrows,decorations.pathmorphing,decorations.pathreplacing,positioning,shapes.geometric,shapes.misc,decorations.markings,decorations.fractals,calc,patterns}

\tikzset{>=stealth',
         cvertex/.style={circle,draw=black,inner sep=1pt,outer sep=3pt},
         vertex/.style={circle,fill=black,inner sep=1pt,outer sep=3pt},
         star/.style={circle,fill=yellow,inner sep=0.75pt,outer sep=0.75pt},
         tvertex/.style={inner sep=1pt,font=\scriptsize},
         gap/.style={inner sep=0.5pt,fill=white}}
\tikzstyle{mybox} = [draw=black, fill=blue!10, very thick,
    rectangle, rounded corners, inner sep=10pt, inner ysep=20pt]
\tikzstyle{boxtitle} =[fill=blue!50, text=white,rectangle,rounded corners]

\tikzstyle{decision} = [diamond, draw, fill=blue!20,
    text width=4.5em, text badly centered, node distance=2.5cm, inner sep=0pt]
\tikzstyle{block} = [rectangle, draw, fill=blue!20,
    text width=2.7cm, text centered, rounded corners, minimum height=3em]

\tikzstyle{line} = [draw, very thick, color=black!50, -latex']

\tikzstyle{cloud} = [draw, ellipse,fill=red!40, 
node distance=2.5cm,
    minimum height=1em]

\tikzstyle{cloud2} = [draw, ellipse,fill=red!30, text=white,text width=10em, node distance=2.5cm, text centered, minimum height=4em]

\tikzstyle{cloud3} = [draw, ellipse, fill=cyan!30, 
node distance=2.5cm,
    minimum height=3em, 
    text width=1.7cm]

\tikzstyle{cloud4} = [draw, ellipse,fill=orange!70, node distance=2.5cm,
   text width=2.1cm,  
           minimum height=3em]

\tikzstyle{cloud5} = [draw, ellipse,fill=red!20, node distance=2.5cm,
    text centered, 
    text width=3.0cm, 
    minimum height=3em]

\tikzstyle{cloud6} = [draw, ellipse,fill=red!20, node distance=2.5cm,
    text width=1.5cm, 
    text centered, 
    minimum height=3em]


\tikzset{
    position/.style args={#1:#2 from #3}{
        at=(#3.#1), anchor=#1+180, shift=(#1:#2)
    }
}

\newtheorem{theorem}{Theorem} [section]

\newtheorem{lemma}[theorem]{Lemma}
\newtheorem{proposition}[theorem]{Proposition}
\newtheorem{remark}[theorem]{Remark}

\newtheorem{corollary}[theorem]{Corollary}


\DeclareMathOperator*{\supp}{supp}

%
\newcommand{\1}{\hspace{0.2mm}\text{I}\hspace{0.2mm}}
\newcommand{\II}{\text{I \hspace{-2.8mm} I} }
\newcommand{\III}{\text{I \hspace{-2.9mm} I \hspace{-2.9mm} I}}
%

\newcommand{\noi}{\noindent}
\newcommand{\Z}{\mathbb{Z}}
\newcommand{\R}{\mathbb{R}}
\newcommand{\T}{\mathbb{T}}

\let\Re=\undefined\DeclareMathOperator*{\Re}{Re}
\let\Im=\undefined\DeclareMathOperator*{\Im}{Im}

\let\P= \undefined
\newcommand{\P}{\mathbf{P}}

\newcommand{\E}{\mathbb{E}}

\renewcommand{\L}{\mathcal{L}}

\newcommand{\K}{\mathcal{K}}

\newcommand{\F}{\mathcal{F}}

\newcommand{\al}{\alpha}
\newcommand{\be}{\beta}
\newcommand{\dl}{\delta}

\newcommand{\nb}{\nabla}

\newcommand{\Dl}{\Delta}
\newcommand{\eps}{\varepsilon}
\newcommand{\kk}{\kappa}
\newcommand{\g}{\gamma}

\newcommand{\ld}{\lambda}
\newcommand{\Ld}{\Lambda}
\newcommand{\s}{\sigma}
\newcommand{\Si}{\Sigma}
\newcommand{\ft}{\widehat}

\newcommand{\wt}{\widetilde}
\newcommand{\cj}{\overline}

\newcommand{\dt}{\partial_t}
\newcommand{\dd}{\partial}

\newcommand{\ud}{\underline}

\newcommand{\ta}{\theta}

\renewcommand{\l}{\ell}
\renewcommand{\o}{\omega}
\renewcommand{\O}{\Omega}

\newcommand{\les}{\lesssim}
\newcommand{\ges}{\gtrsim}

\newcommand{\jb}[1]
{\langle #1 \rangle}

%
\newcommand{\jbb}[1]
{[\hspace{-0.6mm}[ #1 ]\hspace{-0.6mm}]}


\renewcommand{\b}{\beta}
\newcommand{\ind}{\mathbf 1}

\newcommand{\N}{\mathbb{N}}

\renewcommand{\H}{\mathcal{H}}

\DeclareMathOperator{\Lip}{Lip}

\newtheorem*{ackno}{Acknowledgements}


\newcommand{\I}{\mathcal{I}}

\newcommand{\If}{\mathfrak{I}}

\newcommand{\RR}{\mathcal{R}}

\newcommand{\A}{\mathcal{A}}

\newcommand{\C}{\mathcal{C}}
\numberwithin{equation}{section}
\numberwithin{theorem}{section}


\newcommand{\Q}{\mathbb{Q}}
\newcommand{\PP}{\mathbb{P}}

\DeclareMathOperator{\Law}{Law}

\newcommand{\ZZ}{\mathfrak{Z}}

\newcommand{\Zc}{\mathcal{Z}}

\newcommand{\muu}{\vec{\mu}}

\newcommand{\rhoo}{\vec{\rho}}

\newcommand{\W}{\mathcal{W}}
\newcommand{\U}{\mathcal{U}}

\newcommand{\dr}{\theta}
\newcommand{\Dr}{\Theta}

\newcommand{\Ha}{\mathbb{H}_a}
\newcommand{\Hc}{\mathbb{H}_c}

\newcommand{\NN}{\mathcal{N}}
\newcommand{\D}{\mathcal{D}}

\newcommand{\Res}{\mathfrak{R}}
\newcommand{\Qxy}{Q_{X,Y}}

\newcommand{\QxyN}{Q_{X_N,Y_N}}

\newcommand{\dia}{\diamond}
\newcommand{\Y}{\mathbb{Y}}

\newcommand{\too}{\longrightarrow}

\newcommand{\proj}{\Pi}

\newcommand{\Ups}{\Upsilon}
\newcommand{\UUps}{{\ud \Upsilon}}

\newcommand{\Ab}{\mathbb{A}}

\newcommand{\tf}{\mathfrak{t}}

\usepackage{comment}

%


\makeatletter
\@namedef{subjclassname@2020}{%
  \textup{2020} Mathematics Subject Classification}
\makeatother

\begin{document}
\baselineskip = 14pt

\title[Focusing $\Phi^4_3$-model with a Hartree-type nonlinearity]
{Focusing $\Phi^4_3$-model with a Hartree-type nonlinearity}

\author[T.~Oh, M.~Okamoto, and L.~Tolomeo]
{Tadahiro Oh, Mamoru Okamoto, and Leonardo Tolomeo}

\address{
Tadahiro Oh, School of Mathematics\\
The University of Edinburgh\\
and The Maxwell Institute for the Mathematical Sciences\\
James Clerk Maxwell Building\\
The King's Buildings\\
Peter Guthrie Tait Road\\
Edinburgh\\ 
EH9 3FD\\
 United Kingdom,
 and
 School of Mathematics and Statistics, Beijing Institute of Technology, Beijing 100081, China}

\email{hiro.oh@ed.ac.uk}

\address{
Mamoru Okamoto\\
Department of Mathematics\\
 Graduate School of Science\\ Osaka University\\
Toyonaka\\ Osaka\\ 560-0043\\ Japan}
\email{okamoto@math.sci.osaka-u.ac.jp}

\address{
Leonardo Tolomeo\\ 
Mathematical Institute, Hausdorff Center for Mathematics, Universit\"at Bonn, Bonn, Germany}

\email{tolomeo@math.uni-bonn.de}

\subjclass[2020]{35L71, 60H15, 81T08, 60L40, 35K15}

\keywords{Hartree $\Phi^4_3$-measure;
stochastic quantization;
stochastic nonlinear wave equation; nonlinear wave equation;
Gibbs measure; 
paracontrolled calculus;
nonlinear heat equation}

\begin{abstract}
Lebowitz, Rose, and Speer (1988) initiated the study 
of focusing Gibbs measures, which was continued
by Brydges and Slade (1996), Bourgain (1997, 1999), 
and Carlen, Fr\"ohlich, and Lebowitz (2016)
among others.
In this paper, we complete the program
on the (non-)construction of the focusing Hartree Gibbs measures
in the three-dimensional setting.
More precisely, 
we  study a focusing $\Phi^4_3$-model with a Hartree-type nonlinearity, 
where the potential 
for the Hartree nonlinearity is given by the Bessel potential of order~$\be$.
We first construct 
the focusing Hartree $\Phi^4_3$-measure 
for $\beta > 2$, 
while we prove its non-normalizability for $\be < 2$.
Furthermore,  we establish the following phase transition 
at the critical value  $\be = 2$:
  normalizability in the weakly nonlinear regime
and 
 non-normalizability in the strongly nonlinear regime.
We then study 
the  canonical stochastic quantization of the focusing Hartree $\Phi^4_3$-measure, 
namely, 
 the three-dimensional stochastic damped nonlinear wave equation (SdNLW) with 
a cubic 
nonlinearity of Hartree-type, 
forced by an additive space-time white noise, 
and 
prove almost sure global well-posedness
and invariance of the focusing Hartree $\Phi^4_3$-measure
for $\be > 2$ (and  $\be = 2$ in the weakly nonlinear regime).
In view of the  non-normalizability result, 
our almost sure global well-posedness result  is sharp.
In Appendix, 
we also discuss  the (parabolic) stochastic quantization
for the focusing Hartree $\Phi^4_3$-measure. 

We also consider  the defocusing case.
By adapting our argument from the focusing case, 
we first construct the defocusing Hartree $\Phi^4_3$-measure
and the associated invariant dynamics for the defocusing Hartree SdNLW
for $\be > 1$.
By introducing further renormalizations at $\be = 1$ and $\be = \frac 12$, 
we extend the construction of 
the defocusing Hartree $\Phi^4_3$-measure
for $\be > 0$, where the resulting measure is shown to be singular with respect to the
reference Gaussian free field for $0 < \be \le  \frac 12$.

\end{abstract}

%
\maketitle
%


\tableofcontents

\section{Introduction}
\label{SEC:1}


\subsection{Focusing Hartree $\Phi^4_3$-measure
and its canonical stochastic quantization}

In this paper, we study 
the Gibbs measure 
$\rho$ 
with a Hartree-type nonlinearity
on 
the three-dimensional torus on $\T^3 = (\R/2\pi\Z)^3$, formally written as\footnote{In this introduction,
we keep our discussion at a formal level and do not worry about various renormalizations 
required 
to give a  proper meaning to various objects.}
\begin{align}
d\rho(u) = Z^{-1} \exp \bigg(\frac{\s}4 \int_{\T^3} (V*u^2)u^2 dx\bigg) d\mu(u), 
\label{H1}
\end{align}

\noi
and its associated stochastic quantization.
Here, $\mu$ is the massive Gaussian free field on~$\T^3$
(see~\eqref{gauss0} with $s =1$)
and the coupling constant $\s \in \R\setminus \{0\}$.
The associated energy functional 
for the Gibbs measure~$\rho$ in \eqref{H1} 
is given by
\begin{align}
E(u)
= \frac 12 \int_{\T^3} |\jb{\nabla} u|^2 dx 
- \frac \s4 \int_{\T^3} (V \ast u^2 ) u^2 dx.
\label{H2}
\end{align}

\noi
The main interest in this paper is
to investigate the construction 
of the Hartree Gibbs measures in the {\it focusing}
case ($\s > 0$). 
In the seminal work \cite{LRS}, 
Lebowitz, Rose, and Speer initiated
the study of  focusing Gibbs measures
in the one-dimensional setting.
In this work, they constructed
the one-dimensional focusing Gibbs measures\footnote{As pointed out by 
Carlen, Fr\"ohlich, and Lebowitz \cite{CFL}, there is in fact an error in the Gibbs measure
construction in \cite{LRS}, 
which was amended in  \cite{BO94, OST}.
In particular,   in \cite{OST}, the first and third authors
with Sosoe  completed the 
focusing Gibbs measure construction program
in the one-dimensional setting, including 
the critical case ($p = 6$) at the optimal $L^2$-threshold.
See \cite{OST} for more details 
on the (non-)construction of the focusing Gibbs measures in the one-dimensional setting.}
in the $L^2$-(sub)critical setting (i.e.~$2 < p\leq 6$)
with an $L^2$-cutoff:
\begin{align}
d\rho(u) = Z^{-1} 
\ind_{\{\int_\T u^2 dx \le K \}} \exp \bigg(\frac{1}{p} \int_{\T} |u|^p dx\bigg) d\mu(u)
\label{AX1}
\end{align}

\noi
or with a taming by the $L^2$-norm:
\begin{align}
d\rho(u) = Z^{-1} 
 \exp \bigg(\frac{1}{p} \int_{\T} |u|^p dx - A \Big(\int_\T u^2 dx\Big)^{q}\bigg) d\mu(u)
\label{AX2}
\end{align}

\noi
for some appropriate $q = q(p)$, 
where $\mu$ denotes the periodic Wiener measure on $\T$.
See Remark 2.1 in \cite{LRS}, 
Here, the parameter $A > 0$ denotes the so-called 
(generalized) chemical potential
and the expression \eqref{AX2} is referred to as the generalized grand-canonical Gibbs measure.
See also the work by 
Carlen, Fr\"ohlich, and Lebowitz \cite{CFL}
for a further discussion, where they describe the details
of the construction of the 
generalized grand-canonical Gibbs measure in \eqref{AX2}.
In the two-dimensional setting, 
Brydges and Slade \cite{BS}
continued the study on the focusing Gibbs measures
and showed that with the quartic interaction ($p = 4$), 
the focusing Gibbs measure $\rho$ in \eqref{AX1} (and hence $\rho$ in~\eqref{AX2};
see \eqref{pa00})
is not normalizable as a probability measure
(even with proper renormalization on the potential energy
$\frac{1}{4} \int_{\T^2} |u|^4 dx$ and on the $L^2$-cutoff).
See also~\cite{OS}.
We point out that with the cubic interaction ($p = 3$),
Jaffe constructed a (renormalized) $\Phi^3_2$-measure 
with a Wick-ordered $L^2$-cutoff.
See~\cite{BO95, OS}.
Following a suggestion by Lebowitz \cite{BO99}
to consider a Hartree-type nonlinearity
in order  to overcome
the difficulty of the focusing Gibbs measure construction in higher dimensions, 
Bourgain investigated the construction of the focusing Gibbs measures
with a Hartree-type nonlinearity in \eqref{H1} (with $p = 4$)
in the two- and three-dimensional setting \cite{BO97, BO99}.
In particular, by taking 
 $V$ to be 
the kernel for 
the Bessel potential 
of order $\be$:\footnote{In the following, 
we simply refer to $V$ in  \eqref{Bessel1}
as the Bessel potential of order $\be$.}
\begin{align}
 V*f = \jb{\nabla}^{-\beta} f = (1-\Dl)^{-\frac{\be}{2}} f, 
 \label{Bessel1}
\end{align}

\noi
Bourgain  constructed 
(with a proper renormalization and a Wick-ordered $L^2$-cutoff) 
the focusing Hartree Gibbs measure $\rho$ in \eqref{H1} for $\be > 2$
(in the complex-valued setting); see~\eqref{Gibbs7} below.
Furthermore, he studied the associated 
 Hartree nonlinear Schr\"odinger equation (NLS)
on~$\T^3$:
\begin{align}
i \dt u + (1- \Dl) u - \s (V*|u|^2) u = 0, 
\label{NLS1}
\end{align}

\noi
and constructed invariant Gibbs dynamics for \eqref{NLS1}
when $\be > 2$.\footnote{By combining
the construction of the focusing Hartree Gibbs measure
in the critical case ($\be = 2$) with $0 < \s \ll 1$ (Theorem \ref{THM:Gibbs0}) and 
the local well-posedness result in \cite{DNY4}, 
this result by Bourgain \cite{BO97} can be extended to the critical case $\be = 2$
(in the weakly nonlinear regime $0 < \s \ll 1$).
See also Remark \ref{REM:end1}.}
In the same paper \cite{BO97}, 
Bourgain proposed to further 
 investigate 
the (non-)normalizability issue of the focusing (Hartree) Gibbs measures
 as a continuation of 
\cite{LRS, BS, BO97}.
See also Section 5 in \cite{LRS}.
In this paper, 
we complete this program
on the (non-)construction of the focusing Hartree Gibbs measures
\eqref{H1} 
in the three-dimensional setting.
More precisely, 
in the focusing case ($\s > 0$),
\begin{itemize}
\item[(i)] We construct the focusing Hartree Gibbs measure 
for $\be \ge 2$ (with $0 < \s \ll 1$ when $\be = 2$), 

\smallskip

\item[(ii)] We prove that 
the focusing Hartree Gibbs measure is not normalizable
for $\be < 2$ or for $\be = 2$ and $\s \gg 1$.

\end{itemize}

\noi
See Theorem \ref{THM:Gibbs0}.
In particular, we establish a phase transition
in two respects:
(i)  the 
 focusing Hartree Gibbs measure
is constructible for $\be > 2$, 
while it is not for $\be < 2$
and (ii)~when $\be = 2$, 
the  focusing Hartree Gibbs measure
is constructible for $0 < \s \ll 1$, 
while it is not for $\s \gg 1$.
In this paper, 
we also construct the (canonical) stochastic quantization dynamics;
see Theorem \ref{THM:GWP0} and Remark \ref{REM:heat}.

We point out that such a Gibbs measure with a (Wick-ordered) $L^2$-cutoff 
is not suitable for stochastic quantization in the heat and wave settings
due to the lack of the $L^2$-conservation.
For this reason, 
we consider 
 the following
 generalized grand-canonical Gibbs measure
 formulation of the 
  focusing Hartree Gibbs measure 
(namely,  with a taming by the Wick-ordered $L^2$-norm):
\begin{align}
d\rho(u) = Z^{-1}
 \exp \bigg( \frac \s4 \int_{\T^3} (V*:\! u^2\!:) :\!u^2\! :\, dx - A
\bigg|\int_{\T^3} :\,u^2: \, dx\bigg|^\g\bigg) d \mu(u)
\label{H3}
\end{align}

\noi
for suitable $A, \g > 0$.

We now state our first main  result in a somewhat formal manner.
See Theorems \ref{THM:Gibbs} and~\ref{THM:Gibbs2}
in Subsection \ref{SUBSEC:Gibbs}
for the precise statements.
We also study 
 the defocusing case $(\s < 0$), 
 where
 we construct 
 the defocusing Hartree Gibbs measure $\rho$ in \eqref{H1}
  (without a cutoff or taming by the Wick-ordered $L^2$-norm)
for any $\be > 0$.

\begin{theorem}\label{THM:Gibbs0}
Given  $\be>0$, 
let $V$ be the Bessel potential of order $\b$.

\smallskip

\noi
\textup{(i)} \textup{(focusing case).}
Let $\s> 0$. Then, the following statements hold\textup{:}

\begin{itemize}
\item
Let $\be > 2$ and  $\max \big(\frac{\be+1}{\be-1},2\big) \le  \g <3$
with $\g > 2$ when $\be = 3$.
Then, 
the focusing Hartree Gibbs measure $\rho$ in~\eqref{H3}
exists as a limit of the truncated Gibbs measures, 
provided that 
 $A >0$ is sufficiently large.

\smallskip

\item
Let $1< \be < 2$.
Then, 
the focusing Hartree Gibbs measure $\rho$ in \eqref{H3}
is not normalizable \textup{(}i.e.~$Z = \infty$\textup{)}
for any $A, \g > 0$.

\smallskip

\item
\textup{(critical case)}.
Let $\be = 2$.
Then, by choosing $\g = 3$, the focusing Hartree Gibbs measure 
$\rho$ in~\eqref{H3} exists in the weakly nonlinear regime
\textup{(}$0 < \s \ll 1$\textup{)}, provided that $A = A(\s) >0$ is
 sufficiently large. 
On the other hand, 
in the strongly nonlinear regime
\textup{(}i.e.~$\s \gg1$\textup{)}, 
the focusing Hartree Gibbs measure 
$\rho$ in~\eqref{H3} is not normalizable 
for any $\g > 0$ and any $A>0$.

\end{itemize}

\noi
Furthermore, when the focusing Hartree Gibbs measure $\rho$ exists, 
it is equivalent to the base massive Gaussian free field $\mu$.

\smallskip

\noi
\textup{(ii)} \textup{(defocusing case)}.\footnote{After 
\label{FOOT1}
the completion of this paper, 
we learned that Bringmann \cite{Bring1} independently studied the construction of the
Hartree Gibbs measures in the defocusing case and obtained analogous results for $\be > 0$.
We point out some differences between \cite{Bring1} and our work in the defocusing case.
Bringmann 
proves tightness of the truncated defocusing Hartree
Gibbs measures, using the Laplace transform
as in  a recent work \cite{BG} by Barashkov and Gubinelli.
This yields  convergence of the truncated Gibbs measures up to a subsequence.
However, uniqueness of the limiting Gibbs measure is not studied  in \cite{Bring1}.
In this paper, 
we establish tightness by a more direct argument
and also prove uniqueness of the limiting Gibbs measure
(which implies convergence of the entire sequence);
see Section~\ref{SEC:def}
for  the most intricate case $0 < \beta \le \frac 12$.
In \cite{Bring1}, 
Bringmann also proves singularity of the defocusing Hartree Gibbs measure
with respect to the massive Gaussian free field $\mu$
in the  range $0 < \be <  \frac 12$.
This is done 
by first establishing singularity of the reference shifted measure
with respect to $\mu$
as in~\cite{BG2}.
In Subsection \ref{SUBSEC:def5}, we present a direct proof of singularity of the Gibbs measure
without referring to the shifted measure
for $0 < \be \le  \frac 12$, including the  endpoint $\be = \frac 12$ 
which is not covered in \cite{Bring1}. 
See Remark~\ref{REM:sing} and Appendix \ref{SEC:D}
on absolute continuity of the Gibbs measure with respect to the shifted measure.
We point out that the focusing case is not studied in~\cite{Bring1}.

As for the dynamical problem, our results are complementary.
Our main focus in this paper is to study the focusing case.
In Theorem \ref{THM:GWP0}, 
we establish a sharp  result 
on almost sure global well-posedness
of the focusing Hartree SdNLW \eqref{SNLW0}
and invariance of the  focusing Hartree Gibbs measure.
In the defocusing case, we only handle the range $\be > 1$, 
where we  need the same renormalization as in the focusing case.

In the second preprint \cite{Bring2}, 
Bringmann studies the dynamical problem in the defocusing case, 
more precisely,  the defocusing Hartree NLW
\eqref{NLW1} with $\s < 0$
and his analysis goes much further
than that presented in our paper.
In this remarkable work, Bringmann  proves its almost sure global well-posedness
and invariance of the defocusing Hartree Gibbs measures
for the entire range $\be > 0$.}
Let $\s < 0$.
Given any $\be > 1$, 
the defocusing Hartree Gibbs measure $\rho$ in~\eqref{H3}
with $A = 0$
exists as a limit of the truncated Gibbs measures.
By introducing further renormalizations at $\be = 1$
and  $\be = \frac 12$, 
the defocusing Hartree Gibbs measure can be constructed
 as a limit of the truncated Gibbs measures
 for $\be > 0$.

For $\be >\frac 12$, the defocusing Hartree Gibbs measure $\rho$ 
 is equivalent to the base massive Gaussian free field $\mu$, 
 while they are mutually singular for $0 < \be \le \frac 12$.

\end{theorem}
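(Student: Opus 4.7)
The plan is to treat the truncated partition function via the variational (Bou\'e--Dupuis) formula in the spirit of Barashkov--Gubinelli. Realize the truncated massive Gaussian free field $Y_N$ as a stochastic integral on $[0,1]$ and write $u_N = Y_N + I_N(\theta)$ for an adapted drift $\theta$. The negative log of the truncated partition function then satisfies
\begin{align*}
-\log Z_N = \inf_\theta \E\bigg[ & -\tfrac{\s}{4}\int_{\T^3}(V\ast :\!u_N^2\!:):\!u_N^2\!:\,dx \\
& + A\Big|\!\int_{\T^3}\!:\!u_N^2\!:\,dx\Big|^{\g}+\tfrac{1}{2}\!\int_0^1\!\|\theta(s)\|_{L^2}^2\,ds\bigg].
\end{align*}
Existence of $\rho$ as a limit of $\rho_N$ then reduces to uniform-in-$N$ two-sided bounds on this infimum, together with Cauchy-in-$N$ estimates on the densities $F_N = d\rho_N/d\mu$ obtained by applying the same representation to $F_N-F_M$; non-normalizability reduces to exhibiting a test drift that drives $-\log Z_N \to -\infty$.

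For the \emph{focusing} regime $\be>2$, expand $:\!u_N^2\!:\,=\,:\!Y_N^2\!:+\,2Y_NI_N(\theta)+I_N(\theta)^2$ and substitute into the potential. The purely Gaussian pieces are controllable $L^p(\mu)$-random variables by standard Wick calculus, using that $V=\jb{\nb}^{-\be}$ gains $\be$ derivatives. Mixed terms involving $I_N(\theta)$ are estimated via Hardy--Littlewood--Sobolev / Young against $\|I_N(\theta)\|_{H^1}^2\sim\int_0^1\|\theta(s)\|_{L^2}^2\,ds$; for $\be>2$ a strictly positive fraction of the quadratic drift cost survives absorption. The only non-absorbable residue is of the form $\bigl(\int\!:\!Y_N^2\!:I_N(\theta)\,dx\bigr)^2$, which is swallowed by the taming $A|\int\!:\!u_N^2\!:|^{\g}$ once $\g\ge\max(\tfrac{\be+1}{\be-1},2)$ and $A$ is sufficiently large. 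At the critical $\be=2$, the governing HLS inequality becomes scale-invariant, so one trades smallness of $\s$ against the fixed coefficient $\tfrac12$ of the drift cost; the choice $\g=3$ preserves coercivity. Non-normalizability for $\be<2$, and for $\be=2$ with $\s\gg1$, is proved by constructing a near-minimizer: choose $\theta^\star$ so that $I_N(\theta^\star)\approx -c f_M$ with $f_M$ a rescaled bump at scale $M^{-1}$ satisfying $\|f_M\|_{L^2}\sim 1$, $\|\nb f_M\|_{L^2}^2\sim M^2$ and $\int(V\ast f_M^2) f_M^2\,dx\sim M^{4-\be}$. Evaluation gives $\sim cM^2 + A M^{(4-\be)\g/2} - \s M^{4-\be}$; for $\be<2$ the focusing term dominates as $M\to\infty$ for every choice of $(A,\g)$, while at $\be=2$ all three terms share the same $M$-scaling, producing the announced phase transition in $\s$.

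In the \emph{defocusing} case the potential has a favorable sign, so uniform lower bounds on the variational functional are automatic once singular self-interactions are renormalized; one must still bound the density \emph{above} via the same drift analysis. For $\be>1$ the Wick renormalization used in the focusing case suffices. At $\be=1$ and $\be=\tfrac12$, new logarithmic divergences appear in higher-chaos expectations; introducing counter-terms correcting the effective coefficient of $:\!u_N^2\!:$ cancels them, and the variational scheme closes down to $\be>0$. Equivalence with $\mu$ for $\be>\tfrac12$ follows from positive $L^1(\mu)$-convergence of $F_N$; for $0<\be\le\tfrac12$ we identify a renormalized quadratic observable whose law under $\rho_N$ is shifted by a divergent amount relative to its law under $\mu$, yielding $\rho\perp\mu$ and in particular handling the endpoint $\be=\tfrac12$. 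The \textbf{main obstacle} is the critical regime $\be=2$, where scale invariance of the underlying HLS inequality forces sharp-constant bookkeeping to separate the weak- and strong-coupling phases, and where the concentration profile $f_M$ driving non-normalizability must be chosen compatibly with the Wick renormalization so that the divergent counter-terms do not fortuitously cancel the focusing gain.
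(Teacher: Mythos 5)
The proposal follows the paper's broad strategy (Boué–Dupuis variational formula, drift absorption, concentration profiles for non-normalizability), but the non-normalizability argument has a genuine gap that would prevent it from working.

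You choose a drift with $I_N(\theta^\star)\approx -c f_M$ and then evaluate the functional as $cM^2+AM^{(4-\be)\g/2}-\s M^{4-\be}$. There are two problems. First, the exponents are off: with $\|f_M\|_{L^2}\sim 1$ and $f_M$ concentrated on frequencies $\sim M$, one finds $\int(V\ast f_M^2)f_M^2\,dx\sim M^{3-\be}$, not $M^{4-\be}$ (the paper's Lemma~\ref{LEM:leo1}); so a purely deterministic drift $\sim f_M$ yields focusing gain $\sim M^{3-\be}$ against a drift cost $\sim M^2$, which would give the wrong threshold $\be<1$. Second, and more fundamentally, under your scaling the taming penalty $A\,|\int:\!u_N^2\!:|^{\g}$ grows at rate $M^{(4-\be)\g/2}$ (or, with corrected exponents, $M^{(3-\be)\g}$), which \emph{dominates} the focusing term whenever $\g\ge 2$ --- precisely the regime required for the construction. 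So the divergence you claim cannot occur; the drift you chose makes the Wick-ordered $L^2$-norm large, and the taming kills the blow-up. The paper circumvents this with the drift $\Theta^0 = -Z_M + \sqrt{\wt\s_M}\, f_M$, where $Z_M$ is a smooth approximation of $-Y(1)$ and $\wt\s_M\sim M$. The $-Z_M$ piece cancels the Gaussian fluctuations so that $\int:\!u_N^2\!:\,dx = O(1)$ (see \eqref{pa8}), making the taming/cutoff harmless, while the $\sqrt{\wt\s_M}\,f_M$ piece produces a focusing energy $\sim \wt\s_M^2 \cdot M^{3-\be}\sim M^{5-\be}$ against a drift cost $\sim M^3$, giving the correct threshold $\be<2$. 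Without the $-Z_M$ cancellation your construction does not escape the taming.

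A smaller inaccuracy in the construction part: you identify the non-absorbable residue as $(\int:\!Y_N^2\!:I_N(\theta)\,dx)^2$, but the term actually requiring the taming and producing the lower bound $\g\ge\frac{\be+1}{\be-1}$ is the quartic drift self-interaction $\int(V_0\ast\Dr_N^2)\Dr_N^2\,dx$, which carries the wrong sign in the focusing case; the constraint comes from interpolating $\|\Dr_N\|_{L^{12/(3+\be)}}^4\les\|\Dr_N\|_{L^2}^{1+\be}\|\Dr_N\|_{H^1}^{3-\be}$ and applying Young to absorb against $A\|\Dr_N\|_{L^2}^{2\g}$ and $\|\Dr_N\|_{H^1}^2$ (see \eqref{YY14}). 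Also, the critical case $\be=2$ in the weak-coupling regime is considerably subtler than a sharp-constant HLS trade-off: Lemma~\ref{LEM:Dr5} breaks down at $\g=3$, and the paper resolves this by decomposing the drift onto Littlewood--Paley blocks of $Y_N$ plus orthogonal remainders and running a frequency-localized Gronwall-type argument (Subsection~\ref{SUBSEC:foc3}); your description does not capture the needed mechanism.
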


We point out that the Gibbs measure is constructed as
a strong limit in the theorem above
{\it except} for the defocusing case with $0 < \be \le \frac 12$, 
where the limiting Gibbs measure is constructed only as a weak limit
of the truncated Gibbs measures.
Theorem \ref{THM:Gibbs0} provides
a complete picture\footnote{The non-normalizability in Theorem \ref{THM:Gibbs0}\,(i)
for $1 < \be < 2$ may be extended for lower values of $\be$ by introducing 
further renormalizations as in the defocusing case.
 We, however,  do not pursue this issue.} on the construction of the Hartree Gibbs measures
on $\T^3$, which is of particular interest in the focusing case
due to its critical nature at $\be = 2$.
The most important novelty in Theorem \ref{THM:Gibbs0}
is the non-normalizability of the focusing Hartree Gibbs measure
for (i) $\be < 2$  or (ii) $\be = 2$ and $\s \gg 1$, 
where we introduce a new strategy for such a non-normalizability argument.
See also \cite{TW, OS}.
The results in Theorem~\ref{THM:Gibbs0} also apply to the
Hartree Gibbs measure with a Wick-ordered  $L^2$-cutoff
studied by Bourgain~\cite{BO97},
showing essential sharpness of his result for $\be > 2$ in the focusing case.
Theorem~\ref{THM:Gibbs0}
 extends the construction of 
 the focusing 
Hartree Gibbs measure with a Wick-ordered  $L^2$-cutoff
 in \cite{BO97} (see~\eqref{Gibbs7} below)
to the critical case ($\be = 2$) in the weakly nonlinear regime ($0 < \s \ll 1$), 
while it establishes the non-normalizability 
for $\be < 2$ and for $\b= 2$ in the strongly nonlinear regime ($ \s \gg 1$), 
thus completing the picture also for the
focusing Hartree Gibbs measure with a Wick-ordered  $L^2$-cutoff.
See Remark \ref{REM:end1} below.
In the defocusing case, 
Theorem \ref{THM:Gibbs0}
also improves Bourgain's Gibbs measure construction for $\beta > \frac 32$ \cite{BO97}
to $\be > 0$.

\begin{remark}\rm
(i) The Hartree Gibbs measures of the form \eqref{H1} 
with various potentials
appear in different contexts in mathematical physics,  in particular as limits of 
the corresponding 
many-body quantum Gibbs states
\cite{LNR1, FKSS1,   LNR2,  LNR3,  LNR4,  LNR5, Sohinger, FKSS2}.
See also \cite{BO97, BO99}.

\smallskip

\noi
(ii)
In the defocusing case ($\s < 0$), 
the Gibbs measure $\rho$ in \eqref{H1} 
corresponds
to the well-studied $\Phi^4_3$-measure
when  $\be = 0$.
The construction of the $\Phi^4_3$-measure
is 
one of the early achievements in constructive Euclidean quantum field theory;
see
\cite{Glimm, GlimmJ, Feldman, Park, 
BFS, AK, BG, GH18b}.
For an overview of the  constructive program
with respect to the $\Phi^4_3$-model, 
see the introductions in \cite{AK, GH18b}.
From the scaling point of view (see \eqref{NLWc} below),  when $\be > 0$, the defocusing Hartree Gibbs measure in~\eqref{H1}
corresponds to 
$\Phi^{4-\eps}_3$-measure
for $\eps =\frac{2\be}{2+\be} > 0$, which tends to 0 as $\be \to 0$.

\smallskip

\noi
(iii)
Note that when $\be = 2$, the potential $V$ essentially corresponds to the Coulomb potential
$V(x) = |x|^{-1}$, which is of particular physical relevance; see \eqref{Be2}.

\smallskip

\noi
(iv)
A precise value
of $\s$ does not play any role unless $\be = 2$ in the focusing case
(and it plays no role in the defocusing case ($\s < 0$))
and thus we simply set $\s = \pm 1$ except for this endpoint focusing case ($\be = 2$).

\end{remark}

\medskip

Next, we discuss stochastic dynamics associated with the Gibbs measures 
constructed in Theorem \ref{THM:Gibbs0}.
This process is known as 
 stochastic quantization \cite{PW}.
While we may consider the usual parabolic stochastic quantization,\footnote{See Remark
\ref{REM:heat} and Appendix \ref{SEC:B}
for the parabolic stochastic quantization of the Hartree Gibbs measure.}
where the linear part is given by the heat operator, 
we consider  the following stochastic damped nonlinear wave equation (SdNLW)  
with a cubic nonlinearity of Hartree-type, 
posed
on $\T^3$:
\begin{align}
\dt^2 u + \dt u + (1 -  \Dl)  u  - \s (V \ast u^2)u  = \sqrt{2} \xi,
\qquad (x, t) \in \T^3\times \R_+,
\label{SNLW0}
\end{align}
where 
$\s \in \R\setminus \{0\}$,  $u$ is an unknown function, 
and $\xi$ denotes a (Gaussian) space-time white noise on $\T^3\times \R_+$
with the space-time covariance given by
\[ \E\big[ \xi(x_1, t_1) \xi(x_2, t_2) \big]
= \dl(x_1 - x_2) \dl (t_1 - t_2).\]

With  $\vec{u} = (u, \dt u)$, 
define the energy $\mathcal{E}(\vec u)$ by 
\begin{align}
\begin{split}
\mathcal{E}(\vec{u})
& = E(u) +  \frac 12 \int_{\T^3} (\dt u)^2 dx \\
& = \frac 12 \int_{\T^3} |\jb{\nabla} u|^2 dx + \frac 12 \int_{\T^3} (\dt u)^2 dx 
- \frac \s4 \int_{\T^3} (V \ast u^2 ) u^2 dx, 
\end{split}
\label{Hamil1}
\end{align}

\noi
where $E(u)$ is as in \eqref{H2}.
This is precisely the energy (= Hamiltonian) 
of  the  (deterministic) nonlinear wave equation (NLW) on $\T^3$
with a cubic Hartree-type nonlinearity:
\begin{align}
\dt^2 u + (1 -  \Dl)  u - \s (V \ast u^2)u  = 0.
\label{NLW1}
\end{align}

\noi
Then,
by letting $v = \dt u$, we can write \eqref{SNLW0} as
\begin{align}
\dt   \begin{pmatrix}
u \\ v
\end{pmatrix}
=     
\begin{pmatrix} 
\frac{\dd\mathcal{E}}{\dd v}
\rule[-3mm]{0pt}{2mm}
\\
- \frac{\dd\mathcal{E}}{\dd u}
\end{pmatrix}
+      \begin{pmatrix} 
0 
\\
- \frac{\dd\mathcal{E}}{\dd v} + \sqrt 2 \xi
\end{pmatrix}.
\label{SNLW00}
\end{align}

\noi
Thus, it is easy to see that 
the Gibbs measure $\rhoo$, formally given by 
\begin{align}
``d\rhoo(\vec u ) = Z^{-1}e^{-\mathcal E(\vec u  )}d\vec u 
=  d\rho \otimes d\mu_0(\vec u )\text{''}
\label{Gibbs0}
\end{align}

\noi 
remains invariant under the dynamics of Hartree SdNLW~\eqref{SNLW0}.
Here, $\rho$ is the Hartree Gibbs measure in~\eqref{H1}
and $\mu_0$ denotes the  white noise measure;
see \eqref{gauss0} with $s = 0$.
Namely, 
Hartree SdNLW~\eqref{SNLW0}
is  the so-called canonical stochastic quantization equation\footnote{Namely, the Langevin equation
 with the momentum $v = \dt u$.}
 for the Gibbs measure $\rhoo$,
 and thus is of importance in mathematical physics. See~\cite{RSS}.

Stochastic nonlinear wave equations (SNLW)
 have been studied extensively
in various settings; 
see \cite[Chapter 13]{DZ} and \cite{OOcomp} for the references therein.
In recent years, we have seen a rapid progress
in the well-posedness theory of 
SNLW with space-time  white noise forcing:\footnote{Some of the works mentioned below
are on SNLW without damping.}
\begin{align}
\dt^2 u + \dt u + (1 -  \Dl)  u + \NN(u)  =  \xi
\label{SNLW0a}
\end{align}

\noi
for a power-type nonlinearity
 \cite{GKO, GKO2, GKOT,   ORTz, OOR, OOcomp, Tolomeo2, OWZ}
and for trigonometric and exponential nonlinearities
\cite{ORSW,  ORW,  ORSW2}.
We also  mention the works 
 \cite{OTh2,   OPTz,  OOTz}
 on
 nonlinear wave equations with  rough random initial data
and
\cite{Deya1, Deya2} 
on SNLW with 
more singular (both in space and time) noises.
In \cite{GKO2}, 
Gubinelli, Koch, and the first author studied the hyperbolic $\Phi^3_3$-model
(i.e.~\eqref{SNLW0a} on $\T^3$ with $\NN(u) = u^2$)
by combining  the paracontrolled calculus~\cite{GIP, CC, MW1}, 
originally introduced in the parabolic setting, 
with the multilinear harmonic analytic approach, 
more traditional in studying dispersive equations.
In particular, one of the new ingredients in \cite{GKO2}
was the introduction of 
{\it  paracontrolled operators}
(namely, random operators with an embedded paracontrolled structure)
as a part of the pre-defined enhanced data set.
These paracontrolled operators introduced in \cite{GKO2}
play an important role in studying well-posedness
of Hartree SdNLW~\eqref{SNLW0}.
See Subsection~\ref{SUBSEC:1.4}.

We now state our main result on the dynamical problem.

\begin{theorem}\label{THM:GWP0}

Let $V$ be the Bessel potential of order $\be$ with

\medskip

\begin{itemize}
\item[\textup{(i)}] $\be \ge 2$
 in the focusing case \textup{(}$\s>0$\textup{)}, and

\smallskip

\item[\textup{(ii)}]   $\be > 1$  in the defocusing case \textup{(}$\s<0$\textup{)}.

\end{itemize}

\medskip

\noi
In the focusing case with $\be = 2$, 
we also assume that  $\s > 0$ is sufficiently small.
Then, the cubic  Hartree SdNLW~\eqref{SNLW0} 
on the three-dimensional torus $\T^3$ 
\textup{(}with a proper renormalization\textup{)} is almost surely globally well-posed
with respect to the random initial data distributed
by the \textup{(}renormalized\textup{)} Gibbs measure $\rhoo$ in \eqref{Gibbs0}.
Furthermore, the Gibbs measure $\rhoo$ is invariant under the resulting dynamics.

\end{theorem}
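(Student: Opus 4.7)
The plan is to combine a paracontrolled local well-posedness theory adapted to the Hartree nonlinearity with a Bourgain-style invariant measure argument. First, I would construct all enhanced stochastic objects built from the stochastic convolution $\Psi$ solving the linear damped wave equation driven by $\sqrt{2}\,\xi$. After writing $u = \Psi + v$, the cubic Hartree nonlinearity $(V\ast u^2)u$ produces a random source of the schematic form $(V\ast \Psi^2)\Psi$, which must be renormalized to $(V\ast {:}\Psi^2{:})\Psi - \gamma_N \Psi$ with a diverging counterterm $\gamma_N$. The key feature is that $V = \jb{\nabla}^{-\be}$ provides $\be$ derivatives of smoothing, so the Hartree-smoothed Wick square $V\ast{:}\Psi^2{:}$ is strictly more regular than the bare Wick square. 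The threshold $\be \ge 2$ in the focusing case (respectively $\be > 1$ in the defocusing case) is precisely what places all stochastic data and the random paracontrolled operators of Gubinelli-Koch-Oh at regularities suitable for pathwise analysis.

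Next, I would decompose $v = X + Y$, where $X$ absorbs the worst paracontrolled contributions (schematically $X \sim \jb{\nabla}^{-2}\bigl(\Psi \prec (V\ast{:}\Psi^2{:})\bigr)$) and $Y$ is a smoother remainder. The fixed-point problem for $Y$ reduces, after commutator expansions, to resonant products and random-operator bounds involving $V\ast{:}\Psi^2{:}$ and $\Psi$. Because the Hartree smoothing pushes $V\ast{:}\Psi^2{:}$ above the critical regularity, the delicate resonance $(V\ast{:}\Psi^2{:})\odot \Psi$ can be treated either directly or via paracontrolled operators, and the fixed point then closes in a Banach space of paracontrolled distributions, producing local-in-time solutions for $\rhoo$-typical initial data.

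To upgrade from local to global, I would implement a Bourgain-type invariant measure argument. Consider the frequency-truncated equation on $\{|n| \le N\}$ with the appropriately renormalized truncated nonlinearity (together, in the focusing case, with the drift induced by the Wick-ordered $L^2$ taming, so that the Langevin structure with invariant measure $\rhoo_N$ is genuinely preserved). The truncated dynamics is a finite-dimensional SDE preserving the truncated Gibbs measure $\rhoo_N$ from Theorem \ref{THM:Gibbs0}. Combining uniform-in-$N$ local well-posedness with exact invariance of $\rhoo_N$, a probabilistic iteration argument extends $\rhoo$-almost every local solution to all times; and the convergence $\rhoo_N \to \rhoo$ transfers invariance from $\rhoo_N$ to $\rhoo$ in the limit $N \to \infty$.

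The principal obstacle will be the critical endpoint $\be = 2$ in the focusing regime, where $V\ast{:}\Psi^2{:}$ sits exactly at the threshold of regularity accommodated by the paracontrolled scheme. Here the smallness hypothesis on $\s > 0$ must be invoked both to close the deterministic contraction for $Y$ and to absorb the focusing contribution in the energy-type estimates during globalization. A secondary subtlety, absent in the defocusing case, is the non-polynomial Wick-$L^2$ taming defining $\rhoo$: one must carefully incorporate the corresponding drift into the finite-dimensional Langevin approximation so that $\rhoo_N$ is exactly invariant, and then verify that this drift passes to the limit $N \to \infty$ without disturbing the paracontrolled estimates.
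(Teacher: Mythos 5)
Your overall strategy — first-order expansion $u = \Psi + v$, a paracontrolled decomposition $v = X + Y$, and a Bourgain invariant-measure globalization using the truncated Gibbs measures $\rhoo_N$ — is the right skeleton, and your remarks about the $\be = 2$ endpoint requiring small $\s$ and about incorporating the $M_\g$ drift into the truncated Langevin dynamics are on target. However, there are two genuine gaps in the local theory and one in the globalization.

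First, you misidentify the problematic resonance. You argue that because $V = \jb{\nabla}^{-\be}$ smooths the Wick square, the resonant product $(V\ast {:}\Psi^2{:})\pe\Psi$ can be closed and the $(X,Y)$ fixed point then works. But $(V\ast {:}\Psi^2{:})\pe\Psi$ is in fact the \emph{tame} one: it is a purely stochastic object that one controls by moment estimates (it has regularity $\be - \tfrac{3}{2}-$ once $\be > 1$). The genuinely ill-defined product is $X \pe \Psi$, where $X$ is the \emph{unknown} of expected regularity $\tfrac12-$ and $\Psi$ has regularity $-\tfrac12-$; the sum of regularities is negative, so this resonance cannot be defined pathwise. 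Crucially, the Hartree smoothing does not rescue this: $X \pe \Psi$ sits \emph{inside} the convolution with $V$, so $V$ smooths the already-formed product rather than one of the factors. The paper's central new device is therefore to promote $\Res = X \pe \Psi$ to a third unknown, so that the paracontrolled system becomes one in \emph{three} variables $(X, Y, \Res)$, with the $\Res$-equation written via the Duhamel representation of $X$ and the paracontrolled operators $\If_{\pl}^{(1)}$, $\If_{\pl,\pe}$. Moreover, because the $X$-equation is now nonlinear in $(X,Y,\Res)$ — unlike the linear paracontrolled ansatz in the parabolic works — the operator $\If_{\pl,\pe}$ must be re-proved with matching integrability in time on domain and range (Proposition \ref{PROP:sto4J}), not the $C^1_t$-in, $C_t$-out mapping of \cite{GKO2}. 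Your two-variable scheme would not close.

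Second, you propose to resolve the paracontrolled terms ``after commutator expansions.'' This is exactly the step that fails for the wave operator: as observed in \cite{GKO2}, the relevant commutators for the half-wave/Duhamel propagator do \emph{not} smooth, which is why the random paracontrolled operators were invented in the first place. A commutator-based argument in the spirit of \cite{GIP, CC, MW1} would break here.

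Third, the globalization is more delicate than ``uniform local well-posedness $+$ invariance of $\rhoo_N$.'' In the two-dimensional SNLW globalizations one compares $\Phi^N(t)\ft\Phi^M(\dl)(\vec u_0,\o_2)$ against $\Phi^N(t)\ft\Phi^N(\dl)(\vec u_0,\o_2)$ by product estimates on the difference of stochastic convolutions with perturbed initial data; in three dimensions this produces a term of the form $S(t)(\vec v_0 - \vec w_0) \pe X_N$ with regularities $-\tfrac12-$ and $\tfrac12-$, which is again not pathwise definable. The paper instead proves a stability estimate directly on a long interval $[0,T]$, iterating the paracontrolled structure over subintervals and using the unconditional uniqueness of the truncated flow to reconcile the two decompositions \eqref{YL7} and \eqref{YL8} (see Lemma \ref{LEM:Leo1} and Proposition \ref{PROP:GWPNC}). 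Your sketch skips this entirely.
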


See Theorem~\ref{THM:GWP}
for the precise statement.
Theorem \ref{THM:GWP0}
is a wave-analogue of Bourgain's result in \cite{BO97}
on the Hartree NLS \eqref{NLS1}
for $\be > 2$
mentioned above.
In the focusing case, we extend the result
to the endpoint case $\be = 2$ in the weakly nonlinear regime.
In view of the 
 non-normalizability of the focusing Hartree Gibbs measure  (Theorem \ref{THM:Gibbs0}),
 Theorem \ref{THM:GWP0} is sharp in the focusing case.\footnote{Recall that finiteness
 of a limiting measure is needed for Bourgain's invariant measure argument.}
In terms of the scaling, 
 Theorem \ref{THM:GWP0} 
 for $\be > 1$ in the defocusing case\footnote{As mentioned earlier, 
 this result was improved to $\be > 0$ by Bringmann \cite{Bring2}.} may be viewed as a (slight) improvement
 from  \cite{GKO2} on the quadratic nonlinearity
 (corresponding to $\be = 2$).
 
 Given the construction of the Gibbs measure in Theorem~\ref{THM:Gibbs0}, 
 the main task in proving Theorem~\ref{THM:GWP0}
is  the construction of local-in-time dynamics
almost surely with respect to the Gibbs measure.
We go over
the well-posedness aspects
in Section \ref{SEC:NLW}.
In particular, 
in Subsection~\ref{SUBSEC:1.4}, 
by using the ideas from the paracontrolled calculus, 
we rewrite (the renormalized version of) Hartree SdNLW~\eqref{SNLW0}
into a system of three unknowns, 
for which we prove local well-posedness.

\begin{remark}\rm

(i) 
Let us study   \eqref{SNLW0}
from the scaling point of view.
Recall that the  Bessel potential of order $\be$ on $\T^3$
can be written (for some $c>0$) as 
\begin{align} \label{Be2}
V(x) = c |x|^{\be-3} + K(x)
\end{align}
for $0<\be<3$ and $x \in \T^3 \setminus \{ 0 \}$,
where $K$ is a smooth function on $\T^3$.
See Lemma 2.2 in \cite{ORSW}.
In order to study the scaling property 
of Hartree SdNLW \eqref{SNLW0}, let us consider the following 
 nonlinear wave equation (NLW) on $\R^3$ (without damping):
\begin{align} 
\dt^2 u- \Dl u \pm \big( |x|^{\be-3} \ast u^2 \big) u =0.
\label{NLWb}
\end{align}

\noi
A simple calculation shows that \eqref{NLWb} is invariant under the following scaling:
\[
u(x,t) \mapsto
u^{\ld} (x,t) = \ld^{1+ \frac \be 2} u(\ld x, \ld t)
\]

\noi
for $\ld >0$.
Namely, the equation \eqref{NLWb}
with a cubic Hartree nonlinearity
scales like
the following NLW
with a power nonlinearity:
\begin{align} 
\dt^2 u- \Dl u\pm |u|^{\frac 4{2+\be}}u =0.
\label{NLWc}
\end{align}

\noi
From this scaling point of view, 
the quadratic SNLW studied in \cite{GKO2}
corresponds to Hartree SdNLW~\eqref{SNLW0}
with $\be = 2$.
See Remark \ref{REM:quad} below.

\smallskip

\noi
(ii) In a recent work \cite{DNY1}, 
Deng, Nahmod, and Yue introduced the notion of probabilistic scaling
and the associated critical regularity, 
based on the 
observation that the Picard second iterate should be (at least) as smooth as a stochastic convolution (or a random linear solution in the context of the random data well-posedness theory).
The probabilistic scaling critical 
regularity for \eqref{NLWb} 
on $\T^3$ (with  Gaussian random initial data)
are given by
$s_{\text{prob}}^{\text{Hartree}} = \max\big(-\frac{\be+2}3, - \frac 32\big)$.
See Figure 2 in \cite{Bring2}.
As observed in the recent works \cite{OTh2, GKO, GKOT, ORTz}, 
the study of 
SNLW with the space-time white noise forcing is
closely related to that of the deterministic NLW with 
the Gaussian free field as initial data (see \eqref{IV2} below)
with regularity $s= - \frac 12 - \eps$.
Comparing this regularity with 
$s_{\text{prob}}^{\text{Hartree}}$ above, 
we see that SdNLW \eqref{SNLW0} with the space-time white noise forcing
is subcritical 
for $\be > -\frac 12$
(coming from the condition $s_{\text{prob}}^{\text{Hartree}} < -\frac 12$).
%
From this probabilistic scaling point of view, 
one may hope to solve \eqref{SNLW0}
for the entire subcritical range but this is a very challenging problem.
See also Remark~\ref{REM:0} below.

Lastly, we point out that while the probabilistic
scaling criticality is relevant for constructing local-in-time dynamics, 
the critical value $\be = 2$ in the focusing case comes 
from the viewpoint of the measure construction (Theorem \ref{THM:Gibbs0}),
which is relevant for constructing global-in-time dynamics.

\end{remark}

\begin{remark} \label{REM:Vp} \rm
In view of \eqref{Be2}, 
(the kernel of)
the Bessel potential $V(x)$ is not non-negative\footnote{Note that, in view 
of \eqref{Be2}, the potential $V$ is uniformly bounded 
from below by a (possibly negative) constant.}
 on $\T^3$.
Nonetheless, 
the potential part of the energy in \eqref{Hamil1}
(for a smooth function $u$)
is 
non-negative.
Indeed, Parseval's identity yields 
\begin{align*}
\int_{\T^3} (V \ast u^2) \, u^2  dx
= \sum_{n \in \Z^3} \ft V(n) |\ft {u^2} (n)|^2
\ge 0.
\end{align*}

\noi
This justifies the use of the terminology `defocusing\,/\,focusing'.
\end{remark}

\begin{remark}\rm 
We point out that a slight modification of our proof of Theorem \ref{THM:GWP0}
yields the corresponding results
(namely, almost sure global well-posedness and invariance of the associated Gibbs measure)
for the (deterministic) cubic Hartree NLW \eqref{NLW1}
 on $\T^3$
for 
(i)  $\be > 2$ (and $\be = 2$ in the weakly nonlinear regime) in the focusing case
and (ii)  $\be > 1$ in the defocusing case.
As pointed above, this result is sharp in the focusing case.

\end{remark}

\begin{remark}\label{REM:heat}\rm 
 In Appendix \ref{SEC:B}, 
we consider the parabolic stochastic quantization of the 
focusing Hartree Gibbs measure $\rho$
constructed in Theorem \ref{THM:Gibbs0}.
Namely, we study the following stochastic nonlinear heat equation on $\T^3$
with a focusing Hartree nonlinearity ($\s > 0$):
\begin{align}
 \dt u + (1 -  \Dl)  u - \s (V*u^2) u   =  \sqrt 2\xi.
\label{heat0a}
\end{align}

\noi
When $\be >  2$,
  (and $\be = 2$ in the weakly nonlinear regime, $0 < \s \ll 1$),  
we prove almost sure global well-posedness of \eqref{heat0a} and invariance of the 
focusing Hartree Gibbs measure.
In view of the non-normalizability result in Theorem \ref{THM:Gibbs0}, 
this result is also sharp.

\end{remark}

\begin{remark}\label{REM:quad}\rm
In terms of scaling, 
the critical focusing Hartree model  ($\be = 2$) 
corresponds to the $\Phi^3_3$-model.
In~\cite{OOTolo}, 
we study the construction of the $\Phi^3_3$-measure:
\begin{align*}
d\rho(u) = Z^{-1} \exp \bigg(\frac{\s}3 \int_{\T^3} u^3  dx\bigg) d\mu(u), 
\end{align*}

\noi
and its canonical stochastic quantization. 
This $\Phi^3_3$-model also turns out to be critical.
In the measure construction part, 
 we exhibit 
a phase transition between the 
weakly  nonlinear regime ($ |\s| \ll 1 $)
and 
the strongly nonlinear regime
($|\s| \gg1$) 
for the $\Phi^3_3$-measure, just 
as in the critical $\be = 2$ case of Theorem~\ref{THM:Gibbs0}\,(i).
In the weakly nonlinear regime, 
we also 
 extend the local-in-time solutions 
 to  the hyperbolic $\Phi^3_3$-model
(i.e.~\eqref{SNLW0a} on $\T^3$ with $\NN(u) = u^2$), 
constructed in~\cite{GKO2},  globally in time.
While 
the 
focusing Hartree Gibbs measure in \eqref{H3}
is absolutely continuous with respect the base Gaussian free field
even in the critical case ($\be = 2$), 
it turns out that 
  the $\Phi^3_3$-measure
  is singular with respect to the base Gaussian free field. 
This singularity of the $\Phi^3_3$-measure
introduces additional difficulties
 in both the measure (non-)construction part
and the dynamical part in \cite{OOTolo}.
See \cite{OOTolo} for a further discussion.

\end{remark}

\begin{remark}\rm

In \cite{Tolomeo1}, the third author introduced 
a new approach to establish unique
ergodicity of Gibbs measures
for stochastic dispersive/hyperbolic equations.
In particular, ergodicity of the 
Gibbs measures was shown in \cite{Tolomeo1} for 
the cubic SdNLW on $\T$
and the cubic stochastic damped nonlinear beam equation on $\T^3$.
See also 
\cite{FT} on the asymptotic Feller property
of the invariant Gibbs dynamics for these models.
In  \cite{Tolomeo3}, 
 the third  author 
further developed the methodology
and managed to prove 
ergodicity of the hyperbolic $\Phi^4_2$-model, 
i.e.~\eqref{SNLW0a} on $\T^2$ with $\NN(u) = u^3$.

\end{remark}

\begin{remark}\label{REM:0}

\rm

In the defocusing case, the threshold value  $\be=1$ in Theorem \ref{THM:GWP0}
is by no means sharp but 
a further renormalization is required in order to treat the problem for $\be \le 1$
(as mentioned in Theorem \ref{THM:Gibbs0}).\footnote{As mentioned in Footnote \ref{FOOT1}, 
Bringmann \cite{Bring2} studied the  defocusing Hartree NLW
\eqref{NLW1} with $\s < 0$
and proved its almost sure global well-posedness
and invariance of the defocusing Hartree Gibbs measures
for the entire range $\be > 0$.
We expect that his analysis also applies to the defocusing Hartree SdNLW \eqref{SNLW0}
and yields the corresponding well-posedness result for $\be > 0$.
}
When $\be = 0$, 
Hartree SdNLW  \eqref{SNLW0} with $\s = -1$ reduces to the following 
 hyperbolic $\Phi^4_3$-model on $\T^3$:
\begin{align}
\dt^2 u + \dt u + (1 -  \Dl)  u + u^3  = \sqrt{2} \xi.
\label{NLWd}
\end{align}

\noi
In the parabolic setting, we have seen a tremendous
progress
in the study of singular stochastic partial differential equations (PDEs)
over the last ten years
and, in particular, 
the well-posedness theory of  the parabolic $\Phi^4_3$-model:
\begin{align}
 \dt u +  (1 -  \Dl)  u + u^3  =  \sqrt 2\xi, 
\label{heat0}
\end{align}

\noi
has been studied by many authors.
See
 \cite{Hairer, GIP, CC, Kupi, MW1, MWX, AK, GH18} and references therein. 
Up to date, the well-posedness issue of the  hyperbolic $\Phi^4_3$-model \eqref{NLWd}
 remains as an important open problem.\footnote{In a very recent breakthrough work \cite{BDNY}, 
 Bringmann, Deng, Nahmod, and Yue resolved this open problem
 and proved that the hyperbolic $\Phi^4_3$-model is indeed
 almost surely globally well-posed with respect to the (defocusing) $\Phi^4_3$-measure.}
In a recent preprint \cite{OWZ}, by smoothing out the noise in \eqref{NLWd}
(i.e.~replacing $\xi$ by $\jb{\nb}^{-\be}\xi$ for any $\be >0$), 
Y.~Wang, Zine, and the first author proved local well-posedness
of the cubic  SNLW on $\T^3$ with an almost space-time white noise forcing.

We also note that the well-posedness issue of NLS \eqref{NLS1} 
with the Gibbs measure for  $\be = 0$, 
corresponding to the dispersive $\Phi^4_3$-model, 
is a challenging open problem, expected to be much harder 
than the hyperbolic $\Phi^4_3$-model mentioned above.
We mention a recent 
breakthrough~\cite{DNY2}
by Deng, Nahmod, and Yue,
making an important step in this direction.

 \end{remark}

\subsection{Hartree Gibbs measures}
\label{SUBSEC:Gibbs}

In this subsection, we describe
a renormalization procedure 
(and also a taming by the Wick-ordered $L^2$-norm in the focusing case) 
required to construct
the Gibbs measure $\rhoo$ in \eqref{Gibbs0}
and make 
precise statements
on the Gibbs measure construction
(Theorems~\ref{THM:Gibbs} and \ref{THM:Gibbs2}).
For this purpose,  we first fix some notations. 
Given $ s \in \R$, 
let $\mu_s$ denote
a Gaussian measure,   formally defined by
\begin{align}
 d \mu_s 
   = Z_s^{-1} e^{-\frac 12 \| u\|_{{H}^{s}}^2} du
& =  Z_s^{-1} \prod_{n \in \Z^3} 
 e^{-\frac 12 \jb{n}^{2s} |\ft u(n)|^2}   
 d\ft u(n) , 
\label{gauss0}
\end{align}

\noi
where 
  $\jb{\,\cdot\,} = \big(1+|\,\cdot\,|^2\big)^\frac{1}{2}$
and $\ft u(n)$  denotes the Fourier transforms of $u$.
Note that 
$\mu_s$ corresponds to 
 the massive Gaussian free field $\mu_1$ when  $s  = 1$
 and to the white noise measure $\mu_0$ when $s = 0$.
On $\T^3$, it is well known that $\mu_s$ is a Gaussian probability measure supported
on $W^{s - \frac 32 - \eps, p}(\T^3)$ for any $\eps > 0$ and $1 \leq p \leq \infty$.
For simplicity, we set $\mu = \mu_1$
and 
\begin{align}
\muu = \mu_1 \otimes \mu_{0} .
\label{gauss1}
\end{align}

\noi
Note that $\mu$ and $\muu$ serve
as the reference Gaussian measures
for the Gibbs measures $\rho$ in \eqref{H1} and 
$\rhoo$ in \eqref{Gibbs0}, respectively.

We now go over the Fourier representation
of functions distributed by $\mu$ and $\muu$.
Define the index set $\Ld$ and $\Ld_0$ by 
\begin{align}
\Ld = \bigcup_{j=0}^{2} \Z^j\times \N \times \{0\}^{2-j}
\qquad \text{and}\qquad \Ld_0 = \Ld \cup\{(0, 0, 0)\}
\label{index}
\end{align}

\noi
such that $\Z^3 = \Ld \cup (-\Ld) \cup \{(0, 0, 0)\}$.
Then, 
let 
$\{ g_n \}_{n \in \Ld_0}$ and $\{ h_n \}_{n \in \Ld_0}$
 be sequences of mutually independent standard complex-valued\footnote
{This means that $g_0,h_0\sim\NN_\R(0,1)$
and  
$\Re g_n, \Im g_n, \Re h_n, \Im h_n \sim \NN_\R(0,\tfrac12)$
for $n \ne 0$.}
 Gaussian random variables on 
a probability space $(\O,\F,\PP)$ and 
set $g_{-n} := \cj{g_n}$ and $h_{-n} := \cj{h_n}$ for $n \in \Ld_0$.
Moreover, we assume that 
$\{ g_n \}_{n \in \Ld_0}$ and $\{ h_n \}_{n \in \Ld_0}$ are independent from the space-time white noise $\xi$ in \eqref{SNLW0}.
We now define random distributions $u= u^\o$ and $v = v^\o$ by 
the following  Gaussian Fourier series:\footnote{By convention, 
 we endow $\T^3$ with the normalized Lebesgue measure $dx_{\T^3}= (2\pi)^{-3} dx$.}
\begin{equation} 
u^\o = \sum_{n \in \Z^3 } \frac{ g_n(\o)}{\jb{n}} e_n
 \qquad
\text{and}
\qquad
v^\o = \sum_{n\in \Z^3}  h_n(\o) e_n, 
\label{IV2}
\end{equation}

\noi
where  $e_n=e^{i n\cdot x}$.
Denoting the law of a random variable $X$ by $\Law(X)$, 
we then have
\[\Law((u, v)) = \muu_1 = \mu \otimes \mu_0\] 

\noi
for $(u, v)$ in \eqref{IV2}.
Note that  $\Law((u, v)) = \muu$ is supported on
\begin{align*}
\H^{s}(\T^3): = H^{s}(\T^3)\times H^{s - 1}(\T^3)
\end{align*}

\noi
for $s < -\frac 12$ but not for $s \geq -\frac 12$.

\begin{remark}\label{REM:Gibbs1} \rm

 In the following, we only discuss the construction
 and non-normalizability 
 of the (renormalized) Gibbs measure~$\rho$ on $u$, formally written in \eqref{H1}.
The Gibbs measure $\rhoo$
on a vector $\vec u= (u, \dt u)$
for SdNLW \eqref{SNLW0} and NLW \eqref{NLW1}, formally defined in   
 \eqref{Gibbs0},   decouples
as the Gibbs measure $\rho$   on the first component $u$
and the white noise measure $\mu_0$ on the second component $ \dt u$.
Thus, once we prove Theorem \ref{THM:Gibbs0}
for the Gibbs measure $\rho$ on $u$, 
by setting 
\begin{align*}
d\rhoo(\vec u ) 
=  d\rho \otimes d\mu_0(\vec u ), 
\end{align*}

\noi
we see that 
the corresponding results extend to the Gibbs measure 
 $\rhoo$.
See also Remarks~\ref{REM:Gibbs1a}
and~\ref{REM:Gibbs2a}.

\end{remark}

\noi
$\bullet$ {\bf Defocusing case:}
Let us first consider the defocusing case.
A precise value of $\s < 0$ in \eqref{H1} does not play any role
and thus we simply set $\s = -1$.
In view of 
 \eqref{H2}, 
 we can write the formal expression~\eqref{H1}
 for the Gibbs measure $\rho$ as\footnote{Hereafter, 
 we simply use $Z$, $Z_N$, etc. to denote various normalization constants.}
\begin{align}
\text{``}\, d \rho(u) 
= Z^{-1} e^{-E(u)} d  u
= Z^{-1} \exp \bigg( - \frac 1 4 \int_{\T^3} (V \ast u^2) u^2 dx \bigg) d \mu(u) \, \text{"}.
\label{Gibbs0a}
\end{align}

\noi
Since $u$ in the support of $\mu$ is not a function, 
the quartic potential energy is not well defined
 and thus a proper renormalization is required to give a meaning to 
 \eqref{Gibbs0a}.
In order to explain the renormalization process, we first study the regularized model.
Given $N \in \N$, we define the (spatial) frequency projector $\pi_N$  by 
\begin{align}
\pi_N f = 
\sum_{ |n| \leq N}  \ft f (n)  e_n.
\label{pi}
\end{align}

\noi
Let   $u$  be as in \eqref{IV2}
and 
set $u_N = \pi_N u$.
Note that, for each fixed $x \in \T^3$, 
 $u_N(x)$ is
a mean-zero real-valued Gaussian random variable with variance
\begin{align}
\s_N = \E\big[u_N^2(x)\big] = \sum _{|n| \le N} \frac1{\jb{n}^2}
\sim N \too \infty, 
\label{sigma1}
\end{align}

\noi
as $N \to \infty$.
\noi
See also \eqref{sigma1a} below.
We then define the Wick power $:\! u_N^2 \!: $ by 
\begin{align}
:\! u_N^2 \!: \, = u_N^2 - \s_N.
\label{Wick1}
\end{align}

Let us consider the renormalized potential energy.
By Parseval's identity, we have
\begin{align}
\begin{split}
\int_{\T^3} (V \ast :\! u_N^2 \!:) :\! u_N^2 \!: dx
& = \sum_{n \in \Z^3}
\ft V(n) |\ft{\,:\!u_N^2 \!:\,}(n)|^2 \\
& = \sum_{n\in \Z^3} \ft V(n) 
\bigg( \sum_{
\substack{n_1,n_2 \in \Z^3 \\ |n_1|, |n_2| \le N\\
n_1 + n_2 = n}} \ft u(n_1) \ft u(n_2) -\ind_{n = 0}\cdot  \s_N\bigg) \\
& \quad 
\times  \bigg( \sum_{
 \substack{n_1',n_2' \in \Z^3 \\ |n_1'|, |n_2'| \le N\\
\\ n_1' + n_2' = n} }\cj{\ft u(n_1') \ft u(n_2')} -\ind_{n = 0} \cdot \s_N\bigg). 
\end{split}
\label{Wick2}
\end{align}

\noi
While the Wick renormalization \eqref{Wick1} removes certain singularities, 
we still need to subtract a divergent contribution from the renormalized potential energy in \eqref{Wick2}.
By setting 
\begin{align}
\al_N := \sum_{\substack{n_1,n_2 \in \Z^3 \\ 
|n_1|, |n_2| \le N\\n_1 + n_2 \ne 0}} \frac{\ft V(n_1+n_2)}{\jb{n_1}^2 \jb{n_2}^2}, 
\label{alN}
\end{align}

\noi
we define the full renormalized potential energy $R_N(u)$ by 
\begin{align}
\begin{split}
R_N (u)
&= \frac 14 \int_{\T^3} (V \ast :\! u_N^2 \!:) :\! u_N^2 \!: dx
- \frac 12 \al_N.
\end{split}
\label{K1}
\end{align}

\noi
With \eqref{Bessel1} and Lemma \ref{LEM:SUM} below,  
we see that $\al_N$ is uniformly bounded in $N \in \N$ when $\be > 2$ and thus 
the subtraction of $\frac 12\al_N$ in \eqref{K1} is not necessary in this case.
Thanks to the presence of $\al_N$ in \eqref{K1}, 
we can show that $R_N $ 
converges to some limit $R$
in $L^p(\mu)$ when $\be > 1$.
See Lemma \ref{LEM:conv} below.

Define the truncated renormalized Gibbs measure $\rho_N$ by 
\begin{align}
d \rho_N (u) = Z_N^{-1} e^{-R_N(u)} d \mu(u).
\label{GibbsN}
\end{align}

\noi
Then, we have the following uniform exponential integrability of the density, 
which allows us to construct the limiting Gibbs measure $\rho$.

\begin{theorem}[defocusing case] \label{THM:Gibbs}
Let $V$ be the Bessel potential of order $\be>0$.

\smallskip

\noi
\textup{(i)}
Let $\be > 1$.
Then, given any finite $ p \ge 1$, 
there exists $C_p > 0$ such that 
\begin{equation}
\sup_{N\in \N} \Big\| e^{-R_N(u)}\Big\|_{L^p( \mu)}
\leq C_p  < \infty.
\label{exp1}
\end{equation}

\noi
Moreover, we have
\begin{equation}\label{exp2}
\lim_{N\rightarrow\infty}e^{ -R_N(u)}=e^{-R(u)}
\qquad \text{in } L^p( \mu).
\end{equation}

\noi
As a consequence, 
the truncated renormalized Gibbs measure $\rho_N$ in \eqref{GibbsN} converges, in the sense 
of~\eqref{exp2}, 
 to the defocusing Hartree Gibbs measure $\rho$ given by
\begin{align}
d\rho(u)= Z^{-1} e^{-R(u)}d\mu(u).
\label{Gibbs2}
\end{align}

\noi
The resulting Gibbs measure $\rho$ is equivalent 
to the base massive Gaussian free field  $\mu = \mu_1$.

\smallskip

\noi
\textup{(ii)}
By introducing further renormalizations
at $\be = 1$ and $\be = \frac 12$, 
we replace  the potential energy $R_N(u)$  in \eqref{K1} by 
the new renormalized potential energies\textup{:}
\[ 
\text{$R_N^\dia(u)$ \ for $\tfrac 12 < \be \le 1$
\qquad and  
\qquad  $R_N^{\dia\dia} (u)$ \ for $0 < \be \le \tfrac 12$}.\] 

\noi
Then, 
the uniform exponential integrability \eqref{exp1}
holds 
for \textup{(a)} any finite $p \geq 1$ when $ \tfrac 12 < \be \le 1$
and \textup{(b)} 
 $p = 1$ when 
 $0 < \be \le \tfrac 12$.

\smallskip

\begin{itemize}
\item[\textup{(ii.a)}]
Let  $\frac 12 < \be \le 1$.
Then, 
 $R_N^\dia $ 
converges to some limit $R^\dia$
in $L^p(\mu)$ and we have
\begin{equation}
\lim_{N\rightarrow\infty}e^{ -R_N^\dia (u)}=e^{-R^\dia (u)}
\qquad \text{in } L^p( \mu).
\label{exp2a}
\end{equation}

\noi
As a consequence, 
the truncated renormalized Gibbs measure $\rho_N$ in \eqref{GibbsN} 
\textup{(}with $R_N$ replaced by $R_N^\dia$\textup{)} converges, in the sense 
of~\eqref{exp2}, 
 to the defocusing Hartree Gibbs measure $\rho$ 
 in \eqref{Gibbs2}
 \textup{(}with $R$ replaced by $R^\dia$\textup{)}. 
The resulting Gibbs measure $\rho$ is equivalent 
to the base massive Gaussian free field  $\mu$.

\smallskip

\item[\textup{(ii.b)}]
Let $0 < \be \le \frac 12$.
The truncated renormalized Gibbs measure $\rho_N$ in \eqref{GibbsN} 
\textup{(}with $R_N$ replaced by $R_N^{\dia\dia}$\textup{)} converges
weakly to a unique limit $\rho$.
In this case, the resulting Gibbs measure $\rho$ 
and 
 the base massive Gaussian free field  $\mu$
 are mutually singular.

\end{itemize}

\end{theorem}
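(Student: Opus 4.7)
The strategy I would follow is the variational approach of Barashkov--Gubinelli adapted to the Hartree setting. Realizing the massive GFF $\mu$ as the terminal value $Y(1)$ of a Brownian path $Y(t)$ in the Cameron--Martin space of $H^1(\T^3)$, the Bou\'e--Dupuis formula gives
\[
-\log \mathbb{E}_\mu\bigl[e^{-p R_N(u)}\bigr]
= \inf_{\Theta \in \Ha} \mathbb{E}\Big[\,p R_N\bigl(Y + I(\Theta)\bigr) + \tfrac{1}{2}\int_0^1 \|\Theta(s)\|_{L^2}^2\, ds\,\Big],
\]
where $I(\Theta)$ is an $H^1$-valued drift adapted to the filtration of $Y$. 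The uniform bound \eqref{exp1} reduces to a uniform (in $N$) lower bound on the right-hand side. To extract it, I would expand $:(Y_N + \Xi_N)^2: = :Y_N^2: + 2 Y_N \Xi_N + \Xi_N^2$ with $\Xi = I(\Theta)$ (no renormalization is required on the smooth drift part) and unfold $R_N(Y+\Xi)$ into a purely stochastic piece plus mixed and pure-drift pieces.

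The decisive observation is that the pure-drift quartic term is non-negative: by Parseval,
\[
\frac{1}{4}\int_{\T^3} (V * \Xi_N^2)\,\Xi_N^2\, dx
= \frac{1}{4}\sum_{n \in \Z^3} \widehat V(n)\,|\widehat{\Xi_N^2}(n)|^2 \geq 0,
\]
and moreover controls $\Xi_N^2$ in a Fourier--Sobolev space of regularity $\be/2$. Combined with the Cameron--Martin cost $\tfrac{1}{2}\|\Theta\|_{L^2_{t,x}}^2$ (which dominates $\|\Xi\|_{H^1}^2$), this supplies coercivity. The remaining mixed terms $\int (V*:\!Y_N^2\!:)\,Y_N \Xi_N$ and $\int (V*:\!Y_N^2\!:)\,\Xi_N^2$ are handled by duality, using that the stochastic data $:\!Y_N^2\!:\in \C^{-1-\eps}$ and $V * :\!Y_N^2\!:\,\in\C^{\be-1-\eps}$ in $L^p(\PP)$, which for $\be > 1$ leaves a surplus of regularity to pair against $\Xi_N^2 \in H^1 \subset \C^{-1/2-\eps}$. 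The fully stochastic piece $\tfrac{1}{4}\int (V*:\!Y_N^2\!:):\!Y_N^2\!: - \tfrac{1}{2}\al_N$ is estimated in $L^p(\PP)$ via Wick calculus and Wiener chaos hypercontractivity, where the convergence of $\al_N$ (resp.\ its divergent rate) is governed by the sum estimate applied to \eqref{alN} with $\widehat V(n)\sim \jb{n}^{-\be}$. This yields \eqref{exp1} and, by running the same analysis on $R_N - R_M$, convergence in measure, from which \eqref{exp2} follows via Vitali/de~la~Vall\'ee-Poussin. Equivalence with $\mu$ then comes from showing both $e^{-R} \in L^p(\mu)$ and $e^{R} \in L^p(\mu)$ (the second by symmetry of the variational argument applied to $+R_N$, which is tractable since the quartic Hartree term provides control of the same sign in both cases).

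For part (ii.a), $\tfrac12 < \be \le 1$, the constant $\al_N$ diverges and a subleading Wick contraction appearing inside $\int(V*:\!u_N^2\!:):\!u_N^2\!:$ also fails to converge; I would define $R_N^\dia$ by subtracting this additional (deterministic or one-chaos) counterterm and repeat the variational analysis, noting that the new counterterm is linear in the data and is absorbed by the coercive quartic plus drift-cost structure. For part (ii.b), $0 < \be \le \tfrac12$, a further renormalization $R_N^{\dia\dia}$ is needed, and the variational bound only closes at $p=1$; weak convergence of $\rho_N$ is obtained from tightness in a negative H\"older space (using the stochastic bounds) together with uniqueness of the limit via characterization on cylinder sets. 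Singularity with respect to $\mu$ is proved by exhibiting a tail functional $F_N$ (a normalized version of the new counterterm, or a quartic-type variation of $\pi_N u$) such that $F_N \to +\infty$ in $\mu$-probability while $F_N$ remains bounded in $\rho$-probability (or vice versa), directly separating the two measures.

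The main obstacle I expect is the regime $0 < \be \le \tfrac12$: the quartic coercivity in the Hartree variable controls only a $\be/2$-regularity norm of $\Xi_N^2$, which is too weak (for small $\be$) to absorb the worst mixed terms, so the counterterms in $R_N^{\dia\dia}$ must be engineered very precisely to cancel the offending low-frequency contributions while leaving a net coercive bound at $p=1$. A second subtlety is proving singularity without an explicit density: this requires identifying an invariant of the Gibbs dynamics --- essentially, a renormalized observable detectable at infinity --- whose law under $\rho$ genuinely differs from its law under $\mu$, in contrast to the $\be > \tfrac12$ regime where the two measures remain equivalent.
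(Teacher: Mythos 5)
Your overall strategy is the one used in the paper: the Bou\'e--Dupuis variational formulation for the logarithm of the partition function, the splitting of $R_N(Y+I(\dr))$ into purely stochastic, mixed, and pure-drift pieces, coercivity from the non-negative Hartree quartic in the drift together with the $\frac12\int_0^1\|\dr\|_{L^2}^2\,dt$ cost, Wick-calculus/hypercontractivity for the stochastic piece, and convergence of the densities via convergence in measure plus uniform $L^p$ integrability. The additional renormalizations at $\be=1$ and $\be=\tfrac12$, weak convergence via tightness in the regime $0<\be\le\tfrac12$, and singularity via a tail functional are all in line with the paper's Sections \ref{SEC:Gibbs} and \ref{SEC:def}.

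There is, however, one concrete step in your proposal that does not work. To conclude equivalence of $\rho$ with $\mu$ you claim it is necessary to also show $e^{+R}\in L^p(\mu)$, ``by symmetry of the variational argument applied to $+R_N$, which is tractable since the quartic Hartree term provides control of the same sign in both cases.'' This is wrong on two counts. First, flipping the sign of $R_N$ flips the sign of the pure-drift potential energy $\frac14\int_{\T^3}(V*\Dr_N^2)\Dr_N^2\,dx$ in the variational functional, which is exactly the focusing regime; there the quartic term works \emph{against} you rather than for you. Indeed, Theorem \ref{THM:Gibbs2}\,(ii) shows that (with $L^2$-taming) $\E_\mu[e^{\s R_N}]$ blows up for $1<\be<2$ and $\s>0$, so the ``symmetric'' estimate you invoke is false precisely in part of the range of part~(i). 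Second, the claim is not needed: for $d\rho=Z^{-1}e^{-R}\,d\mu$, mutual absolute continuity follows once $R$ is $\mu$-a.s.\ finite (which you already have, since $R\in L^p(\mu)$), because then $e^{-R}>0$ $\mu$-a.s.\ and the Radon--Nikodym derivative in the reverse direction is $Ze^{R}$, finite $\rho$-a.s. No integrability of $e^{R}$ under $\mu$ is required.

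A secondary remark: for uniqueness of the weak limit in the regime $0<\be\le\tfrac12$, ``characterization on cylinder sets'' is too weak a description; since $\rho$ is singular with respect to $\mu$ and the densities $e^{-R_N^{\dia\dia}}$ no longer converge in measure, one must compare subsequential limits directly (e.g.\ by comparing the variational expressions for $\int e^{F}\,d\rho_{N_k^1}$ and $\int e^{F}\,d\rho_{N_k^2}$ using almost-optimal drifts for one inside the other). The paper's argument (Proposition \ref{PROP:uniq}) proceeds this way, and your sketch should be sharpened to something of that flavor.
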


See \eqref{K1r} and \eqref{K5}
for the definitions of $R_N^\dia$ and $R_N^{\dia\dia}$.
Theorem \ref{THM:Gibbs} is an improvement of the defocusing Hartree Gibbs measure construction
by Bourgain \cite{BO97}, 
where he essentially proved an analogue of Theorem \ref{THM:Gibbs}
for $\be > \frac 32$.
See \cite{BO97} for a precise statement.

The main task  in proving Theorem \ref{THM:Gibbs} 
is to show  the uniform exponential bound \eqref{exp1}.
 We establish the bound \eqref{exp1} by applying the variational approach introduced by Barashkov and Gubinelli \cite{BG} in the construction of
the $\Phi^4_3$-measure. See also \cite{GOTW, ORSW2}.
We point out that further renormalizations
are required
in order to go below the thresholds $\be = 1$ and $\be = \frac 12$
and that the renormalization introduced for  $0 < \be \le \frac 12$
(see \eqref{K5})
only appears at the level of the Gibbs measure but not
in the associated equation.
See Remarks \ref{REM:ren} and \ref{REM:Bb} and Subsection \ref{SUBSEC:def3} below.
When $\be = 0$, the Gibbs measure 
corresponds to the $\Phi^4_3$-measure
whose construction requires a further renormalization to remove a logarithmic divergence;
see
\cite{Glimm, GlimmJ, Feldman, Park, 
BFS, AK, BG, GH18b}.
If we consider a $\Phi^4_3$-measure but with a smoother base Gaussian measure $\mu_s$, $s > 1$, 
such a logarithmic divergence does not appear
and thus the second renormalization is not needed.
Thus, it is interesting to see that the defocusing Hartree Gibbs measure $\rho$ requires
renormalizations at $\be = 1$ and $\frac 12$.

Once the uniform bound \eqref{exp1} is established, 
the $L^p$-convergence \eqref{exp2} of the densities follows from 
(softer) convergence in measure of the densities. 
See Remark 3.8 in \cite{Tz08}.  
For $0 < \be \le \frac 12$, 
such convergence in measure of the densities no longer holds, 
which is essentially the source of the singularity
of the Gibbs measure in this range.
See Remark~\ref{REM:Bb}. 
For this range of $\be$, we use the more refined
Bou\'e-Dupuis variational formula (Lemma \ref{LEM:var3})
to prove uniqueness of the limiting Gibbs measures
and its singularity with respect to the base Gaussian free field.
Our proof of the singularity is strongly inspired
by a recent work 
\cite{BG2} by Barashkov and Gubinelli, 
where they proved the ``folklore'' singularity
of the $\Phi^4_3$-measure with respect to the base Gaussian free field.
While the proof of the singularity in \cite{BG2} goes through the shifted measure, 
we present a direct argument without referring to shifted measures.
 See Remark~\ref{REM:sing}.

We present the proof of Theorem~\ref{THM:Gibbs}\,(i) 
for $\be > 1$ in Section \ref{SEC:Gibbs}, 
while 
the proof of Theorem~\ref{THM:Gibbs}\,(ii) 
for $0 < \be \leq 1$ is discussed in detail in 
Section \ref{SEC:def}.

\begin{remark}\label{REM:Gibbs1a}\rm
Let $\be > 1$.
Define
 the renormalized energy:
\begin{align} 
E^\flat( u)
= \frac 12 \int_{\T^3} |\jb{\nabla} u|^2 dx + R(u).
\label{Energy0}
\end{align}

\noi
In view of the definition of $\mu = \mu_1$, \eqref{Gibbs2},  and \eqref{Energy0}, we 
can also write the defocusing Hartree Gibbs measure $\rho$ formally as 
\begin{align*} 
d \rho = Z^{-1} e^{-E^\flat ( u)} d  u.
\end{align*}

\noi
Similarly, by defining the renormalized energy 
for SdNLW \eqref{SNLW0} and NLW \eqref{NLW1} by 
\begin{align} 
\mathcal{E}^\flat (\vec u)
= \frac 12 \int_{\T^3} |\jb{\nabla} u|^2 dx + \frac 12 \int_{\T^3} (\dt u)^2 dx + R(u), 
\label{Energy1}
\end{align}

\noi
we can write 
 the defocusing Hartree 
Gibbs measure $\rhoo = \rho\otimes \mu_0$ on a vector $\vec u = (u, \dt u)$ 
as
\begin{align} 
d \rhoo = Z^{-1} e^{-\mathcal{E}^\flat ( \vec u)} d  \vec u.
\label{Gibbs2x}
\end{align}

\noi
In the following subsections, we discuss
well-posedness of the SdNLW dynamics,
emanating from the renormalized energy $\mathcal{E}^\flat(\vec u)$ in \eqref{Energy1}. 

\end{remark}

\begin{remark} \label{REM:ren}\rm
We briefly discuss the renormalization required for $\be \leq 1$.
See Subsection \ref{SUBSEC:def3} for 
a further renormalization required for $\be \le \frac 12$.
Define $\kk_N(n)$ 
by 
\begin{align} 
\kappa_N (n) =
\sum_{\substack{n_1 \in \Z^3 \\ n_1 \neq -n \\ |n_1| \le N}} \ft V(n+n_1) \jb{n_1}^{-2}.
\label{kappa1}
\end{align}

\noi 
Note that the limit
$\kappa (n) = \lim_{N \to \infty} \kappa_N(n) $ exists if and only if $\be > 1$.
This term exactly cancels the divergence part
of $R_N(u)$ which emerges at $\be = 1$.
See Remark~\ref{REM:Bb}.
With a slight abuse of notation, 
define $K_N$ and $K_N^\frac{1}{2}$ by 
\begin{align}
 K_N(x) = \sum_{n \in \Z^3} \kk_N(n) e_n(x)
 \qquad \text{and}\qquad
 K_N^\frac{1}{2}(x) = \sum_{n \in \Z^3} \kk_N^\frac{1}{2}(n) e_n(x).
\label{kappa2}
\end{align}

\noi
Then, 
for  $\frac 12 < \be \le 1$, 
we can introduce 
a further renormalization
to $R_N(u)$ in \eqref{K1} by setting
\begin{align} 
\begin{split}
 R_N^\dia (u)
=
R_N(u)
- \int_{\T^3} :\! (K_N^\frac{1}{2} *   u_N)^2\!:\, dx.
\end{split}
\label{K1r}
\end{align}

\noi
The truncated renormalized Gibbs measure $\rho_N$ is then given by 
\begin{align}
d \rho_N (u) = Z_N^{-1} e^{- R_N^\dia(u)} d \mu(u), 
\label{Gibbsx}
\end{align}

\noi
for which we prove the following 
 uniform exponential integrability:
\begin{equation}
\sup_{N\in \N} \Big\| e^{- R_N^\dia(u)}\Big\|_{L^p(\mu)}
\leq C_p  < \infty
\label{exp1a}
\end{equation}

\noi
for any finite $p \geq 1$
and the convergence claimed in Theorem \ref{THM:Gibbs} (ii.a).
This allows us to construct 
 the  Gibbs measure $\rho$ given by 
\begin{align}
d\rho(u)= Z^{-1} e^{- R^\dia(u)}d\mu(u)
\label{Gibbs2a}
\end{align}

\noi
as a limit of 
the truncated renormalized Gibbs measures $\rho_N$ in \eqref{Gibbsx}, 
provided that $\be > \frac 12$.

For  $0 < \be \le \frac 12$, 
we introduce another renormalization, based on a change of variables \eqref{YZ13}
as in~\cite{BG},  
and prove the uniform exponential integrability
for a new renormalized potential energy $R_N^{\dia\dia}(u)$:
\begin{equation}
\sup_{N\in \N} \E_\mu\Big[ e^{- R_N^{\dia\dia}(u)}\Big]
 < \infty.
\label{exp1c}
\end{equation}

\noi
We can prove the uniform exponential integrability
only for $p = 1$ due to 
the renormalization introduced at $\be = \frac 12$
(which is aimed to cancel a second order interaction).
Unfortunately, the convergence of $R_N^{\dia\dia}(u)$
or the density no longer holds in this case.
We establish uniqueness of the limiting Gibbs measure in 
a direct manner.  See Subsection \ref{SUBSEC:def4}.

\end{remark}

\begin{remark}\label{REM:sing}\rm
As mentioned above, 
 our proof of the singularity 
of the Gibbs measure does not make use of the shifted measure.
In Appendix \ref{SEC:D}, we show that the Gibbs
measure $\rho$ is absolutely continuous
with respect to the shifted measure, 
more precisely, to the law of $Y(1) - \ZZ(1) + \W(1)$, 
where $Y(1)$ is as in \eqref{P2} with $\Law(Y(1) ) = \mu$, 
 $\ZZ = \ZZ(Y)$ is the limit of $\ZZ^N$ defined in \eqref{YZ12}, 
 and the auxiliary process $\W = \W(Y)$ is defined in \eqref{AC0}.

\end{remark}

\noi
$\bullet$ {\bf Focusing case:}
Let us first  go over  the Gibbs measure construction in the two-dimensional setting.
In the defocusing case, 
the standard Wick renormalization and
Nelson's argument \cite{Nelson2} allow us to construct
the  (defocusing) $\Phi^4_2$-measure on $\T^2$: 
\begin{align*}
d\rho (u) = Z^{-1}e^{-\frac 14 \int_{\T^2}  :u^4: \, dx  }d \mu(u) .
\end{align*}

\noi
See \cite{Simon, GlimmJ87, DT, OTh}.
On the other hand, 
in the focusing case, 
Brydges and Slade \cite{BS} proved
 non-normalizability of 
$\Phi^4_2$-measure, even with a (Wick-ordered) $L^2$-cutoff.
In \cite{BO95}, 
 Bourgain reported Jaffe's construction of a  $\Phi^3_2$-measure endowed with a Wick-ordered
 $ L^2$-cutoff:
\begin{align*}
d\rho = Z^{-1}
 \ind_{\{\int_{\T^2} :\,u^2: \, dx\, \leq K\}} 
 e^{ \int_{\T^2}  :u^3: \, dx  }d \mu(u) .
\end{align*}

\noi 
Unfortunately, this measure is not suitable
for studying the associated heat and wave  dynamics
due to the lack of the $L^2$-conservation in the deterministic setting.\footnote{This measure
does not make sense in the complex-valued setting
and hence is not suitable also for the Schr\"odinger dynamics.}
In \cite{BO95}, 
Bourgain instead proposed to consider the  Gibbs measure
of the form:
\begin{align}
d\rhoo(\vec u ) = Z^{-1}
 e^{ \int_{\T^2}  :u^3: \, dx  - A
\big(\int_{\T^2} :\,u^2: \, dx\big)^2} d \muu(\vec u ) 
\label{Gibbs6}
\end{align}

\noi
(for sufficiently large $A>0$)
in studying NLW dynamics on $\T^2$.
See \cite{OTh2} for the construction of the associated NLW dynamics.

Let us now discuss the focusing Hartree Gibbs measure 
in the three-dimensional setting.
In~\cite{BO97}, Bourgain studied
the construction of the Gibbs measure 
for the Hartree NLS~\eqref{NLS1} on~$\T^3$. 
In the focusing case, he constructed the Gibbs measure
with a Wick-ordered $L^2$-cutoff
(for complex-valued $u$):
\begin{align}
d\rho(u) = Z^{-1} \ind_{\{\int_{\T^3} :\,|u|^2: \, dx\leq K\}} \, e^{\frac 14 \int_{\T^3} (V*:|u|^2:)\,  :|u|^2 :\, dx} d\mu(u)
\label{Gibbs7}
\end{align}

\noi
for $\be > 2$.
As in the two-dimensional case, 
such a measure is not suitable  for studying the NLW or heat dynamics
due to the non-conservation of the $L^2$-norm. 
Following Bourgain's proposition~\eqref{Gibbs6}
in the two-dimensional case
\cite{BO95}, 
we consider the following  Hartree Gibbs measure
on $\T^3$ in the focusing case ($\s > 0$):
\begin{align}
d\rho(u) = Z^{-1}
 e^{\frac \s4 \int_{\T^3} (V*:u^2:) :u^2 :\, dx - A
\big|\int_{\T^3} :\,u^2: \, dx\big|^\g} d \mu(u)
\label{Gibbs8}
\end{align}

\noi
for some suitable $A, \g > 0$.
Thus, 
we replace $R_N$ in \eqref{K1} by 
\begin{align}
\begin{split}
\RR_N (u)
&= \frac \s 4 \int_{\T^3} (V \ast :\! u_N^2 \!:) :\! u_N^2 \!: dx
 - A \, \bigg| \int_{\T^3} :\! u_N^2 \!: dx\bigg|^\g - \frac \s2 \al_N
\end{split}
\label{K2}
\end{align}

\noi
and 
define the truncated renormalized Gibbs measure $\rho_N$ by 
\begin{align}
d \rho_N (u) = Z_N^{-1} e^{\RR_N(u)} d \mu(u).
\label{GibbsN1}
\end{align}

\noi
Then, we have the following result in the focusing case.

\begin{theorem}[focusing case] \label{THM:Gibbs2}
Let $\s> 0$ and $V$ be the Bessel potential of order $\be>1$.
Then,  for any $A > 0$ and $\g > 0$, 
 $\RR_N $ defined in \eqref{K2}
converges to some limit $\RR$
in $L^p(\mu)$.

\smallskip

\noi
\textup{(i)}
Given   $\be>2$, let 
 $\max \big(\frac{\be+1}{\be-1},2\big) \le \g <3$, 
 with $\g > 2$ when $\be = 3$.
Then, 
given any finite $ p \ge 1$, 
there exists   $A  =  A(p) > 0$ such that 
\begin{equation}
\sup_{N\in \N} \Big\| e^{\RR_N(u)}\Big\|_{L^p(\mu)}
\leq C_p  < \infty
\label{exp3}
\end{equation}

\noi
for some  $C_p > 0$.
In particular,  we have
\begin{equation}
\lim_{N\rightarrow\infty}e^{ \RR_N(u)}=e^{\RR(u)}
\qquad \text{in } L^p(\mu).
\label{exp4}
\end{equation}

\noi
As a consequence, 
the truncated renormalized Gibbs measure $\rho_N$ in \eqref{GibbsN1} converges, in the sense of \eqref{exp4}, 
 to the focusing Hartree Gibbs measure $\rho$ given by
\begin{align}
d\rho(u)= Z^{-1} e^{\RR(u)}d\mu(u).
\label{Gibbs9}
\end{align}

\noi
Furthermore, 
the resulting Gibbs measure $\rho$ is equivalent 
to the base massive Gaussian free field~$\mu$.

\smallskip

\noi
\textup{(ii) (non-normalizability).}
Let   $1 < \be <  2$.
Then, for any $A > 0$ and $\g> 0$, we have
\begin{align}
\sup_{N \in \N}  \E_{\mu} \Big[e^{\RR_N (u)} \Big] = \infty.
\label{Gibbs10}
\end{align}

\noi
In particular, the focusing Hartree Gibbs measure $\rho$ in \eqref{Gibbs9}
can not be defined as a probability measure for $1< \be < 2$.

\smallskip

\noi
\textup{(iii) (critical case).}
Let $\be = 2$.
Then, there exist $\s_1 \geq \s_0 > 0$ such that 

\smallskip

\begin{itemize}
\item[\textup{(iii.a)}] \textup{(strongly nonlinear regime)}. For $\s > \s_1$, the 
 focusing Hartree Gibbs measure $\rho$ in \eqref{Gibbs9}
 is not normalizable in the sense of \eqref{Gibbs10}
 for any $A> 0$ and $\g > 0$.

\smallskip

\item[\textup{(iii.b)}] 
\textup{(weakly nonlinear regime)}. 
For $0 < \s < \s_0$, 
then by choosing $\g = 3$ and $A = A(\s) >0$ sufficiently large, 
we can construct the 
 focusing Hartree Gibbs measure $\rho$ in \eqref{Gibbs9}
 as in Part \textup{(i)}. In particular, \eqref{exp3} and \eqref{exp4}
 hold with a restricted range $1 \le p <  p(\s)$ in this case.
\end{itemize}

\end{theorem}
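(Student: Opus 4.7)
The plan is to analyse Parts (i)--(iii) of Theorem~\ref{THM:Gibbs2} through the Bou\'e--Dupuis variational formula, following the strategy of Barashkov--Gubinelli \cite{BG} adapted to the focusing setting. Let $Y_N(t)$ be the truncated cylindrical Brownian motion built so that $\Law(Y_N(1)) = (\pi_N)_*\mu$. Then
\begin{align*}
-\log \E_\mu\big[e^{\RR_N(u)}\big]
= \inf_{\Theta} \E\bigg[-\RR_N\big(Y_N(1) + \Theta(1)\big)
 + \tfrac12 \int_0^1 \|\dot\Theta(t)\|_{H^1}^2\, dt\bigg],
\end{align*}
the infimum running over adapted drifts. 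Writing $\Theta_N = \pi_N\Theta(1)$, the multilinear expansion of $\RR_N(Y_N(1) + \Theta(1))$ separates the pure-drift pieces $\tfrac{\s}{4}\int(V*\Theta_N^2)\Theta_N^2\,dx$ and $-A|\int \Theta_N^2\,dx|^\g$ from mixed terms involving the stochastic objects $Y_N$, $:\!Y_N^2\!:$, $V*:\!Y_N^2\!:$, and so on.

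For the construction in Part~(i), the $L^p(\mu)$-convergence $\RR_N \to \RR$ is obtained constituent by constituent (Wiener chaos bounds on the renormalized Hartree potential, identical in nature to those used in the defocusing case, together with the standard convergence of $\int :\!u_N^2\!:\,dx$). The uniform bound \eqref{exp3} is where focusing enters: the positive quartic drift term $\tfrac{\s}{4}\int(V*\Theta_N^2)\Theta_N^2\,dx$ is \emph{not} controlled by the drift cost $\tfrac12\|\Theta_N\|_{H^1}^2$ alone, so it must be absorbed into the taming term $A|\int\Theta_N^2\,dx|^\g$. I would use the Hardy--Littlewood--Sobolev inequality together with Gagliardo--Nirenberg to obtain, for $1 < \be < 3$,
\[ \int_{\T^3}(V*\Theta_N^2)\Theta_N^2\, dx
\les \|\Theta_N\|_{L^{12/(3+\be)}}^4
\les \|\Theta_N\|_{L^2}^{\be+1}\|\Theta_N\|_{H^1}^{3-\be}, \]
followed by Young's inequality: the resulting exponent on $\|\Theta_N\|_{L^2}$ is $2(\be+1)/(\be-1)$, which is precisely why the lower bound $\g \ge \frac{\be+1}{\be-1}$ appears in the statement (the case $\be \ge 3$ is easier and produces $\g \ge 2$). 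The mixed and purely stochastic terms are controlled by standard Wiener chaos estimates, with the upper bound $\g < 3$ ensuring that cross-terms of the form $\int(V*Y_N\Theta_N)Y_N\Theta_N\,dx$ do not overwhelm the gain provided by the taming term. For $A$ sufficiently large this yields a uniform lower bound on the variational functional, and the $L^p$-convergence of the densities follows from \eqref{exp3}, \eqref{exp4}, and a soft convergence-in-measure argument (see Remark~3.8 of \cite{Tz08}); equivalence with $\mu$ is then immediate.

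For the non-normalizability in Parts~(ii) and~(iii.a), I would reverse the Bou\'e--Dupuis inequality: producing a deterministic drift $\Theta^*$ that makes the right-hand side tend to $-\infty$ uniformly in $N$ forces $\E_\mu[e^{\RR_N}] = \infty$. Take $\Theta^* = \lambda \phi_\dl$ with $\phi_\dl(x) = \dl^{-3/2}\phi(x/\dl)$ for a fixed bump $\phi$ centred at the origin. Since $V(x) \sim |x|^{\be-3}$ near $0$, the focusing term scales as $\lambda^4\dl^{-(3-\be)}$, the taming term as $\lambda^{2\g}$ (the $\dl$-dependence cancels by the $L^2$-normalization of $\phi_\dl$), and the drift cost as $\lambda^2\dl^{-2}$. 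For $1 < \be < 2$, the balance $\dl = \lambda^{-2/(3-\be)}$ between the focusing contribution and the drift cost forces the focusing term to dominate for every fixed $\g > 0$ and $A > 0$ as $\lambda \to \infty$, proving~\eqref{Gibbs10}. At the critical $\be = 2$, the two scalings match only for $\g = 3$, and the prefactors $\tfrac{\s}{4}$ and $A$ compete directly: for $\s \gg 1$ the focusing term wins (giving (iii.a)), while for $\s \ll 1$ and $A$ large the argument of Part~(i) with $\g = 3$ succeeds (giving (iii.b)).

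The main obstacle will be the critical case $\be = 2$: both the normalizability and the non-normalizability arguments are genuinely quantitative, and tracking the precise dependence on $\s$ and $A$ throughout the Bou\'e--Dupuis estimates---in particular identifying the correct sharp form of the Hardy--Littlewood--Sobolev interpolation---is where the bulk of the technical work lies. The subcritical and strongly-focusing supercritical regimes, by contrast, are dictated by scaling and reduce, once the variational framework is set up, to showing that a single term dominates.
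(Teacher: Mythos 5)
Your plan for Part~(i) is essentially the paper's: the Bou\'e--Dupuis/Barashkov--Gubinelli variational framework, with the HLS--Sobolev interpolation
$\int(V*\Theta_N^2)\Theta_N^2\,dx \lesssim \|\Theta_N\|_{L^2}^{1+\beta}\|\Theta_N\|_{H^1}^{3-\beta}$
followed by Young's inequality against the taming term, which is exactly the paper's \eqref{YY14}, and this is what forces $\gamma \ge \frac{\beta+1}{\beta-1}$. The upper bound $\gamma < 3$ enters through the analogue of Lemma~\ref{LEM:Dr5}, where the cross term $|\int Y_N\Theta_N\,dx|^\gamma$ must be absorbed with a loss: the Young-type step produces an exponent $2\gamma < 6$ on $\|\Theta_N\|_{L^2}$, which is only strictly below $4$ when $\gamma < 3$. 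So the $L^p$-convergence and equivalence with $\mu$ proceed as you say.

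For the non-normalizability in Parts~(ii) and~(iii.a), however, there is a genuine gap. You propose a purely \emph{deterministic} bump drift $\Theta^* = \lambda\phi_\delta$ with $\|\phi_\delta\|_{L^2}=1$, and your scaling analysis correctly identifies the focusing term $\sim\lambda^4\delta^{\beta-3}$ and the drift cost $\sim\lambda^2\delta^{-2}$. But the taming term then contributes
$A\big|\int :\!u_N^2\!:\,dx\big|^\gamma \approx A\,\lambda^{2\gamma}$
(the $\lambda^2\|\phi_\delta\|_{L^2}^2 = \lambda^2$ piece dominates $\int :\!u_N^2\!:\,dx$), and this term sits on the \emph{positive} side of the variational functional. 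At the balance $\delta \sim \lambda^{-2/(3-\beta)}$ the focusing term grows as $\lambda^6$, so for any $\gamma \geq 3$ (and, for $\beta$ close to $2$, even for $\gamma$ somewhat below $3$ once the precise constants are tracked) the taming term overtakes the focusing term and the variational functional does not go to $-\infty$. The theorem, however, asserts non-normalizability for \emph{every} $\gamma > 0$ and every $A > 0$, so the deterministic bump alone cannot prove it. The paper instead chooses the drift $\Theta^0 = -Z_M + \sqrt{\wt\sigma_M}f_M$, where $Z_M$ is a smooth random approximation of $Y$ and $f_M$ is an $L^2$-normalized bump living at frequency $\sim M$. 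The crucial point (Lemma~\ref{LEM:leo2}, and the verification of~\eqref{pa8}) is the cancellation $-2\int Y_N Z_M\,dx + \int Z_M^2\,dx + \wt\sigma_M\int f_M^2\,dx \approx -\wt\sigma_M + \wt\sigma_M = 0$, which keeps $\big|\int :\!u_N^2\!:\,dx\big|$ bounded with high probability even though the drift has diverging $L^2$-size $\sim M^{1/2}$ and the focusing term grows like $\wt\sigma_M^2 M^{3-\beta} \sim M^{5-\beta}$. Since the Wick-ordered mass stays bounded, one can pass through the sharp cutoff $\ind_{\{|\int :u_N^2:\,dx|\le K\}}$ and the taming term simply contributes a bounded constant, independently of $\gamma$. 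There is also a secondary technical point you skip: when $\E_\mu[e^{\RR_N}] = \infty$ the hypotheses of the Bou\'e--Dupuis formula fail, and the paper handles this by truncating to $\min(\sigma R_N, L)$ and taking $L\to\infty$.

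Finally, for (iii.b) you assert that ``the argument of Part~(i) with $\gamma = 3$ succeeds.'' It does not, as stated: at $\gamma = 3$ the analogue of Lemma~\ref{LEM:Dr5} breaks down because the exponent $2\gamma = 6$ on $\|\Theta_N\|_{L^2}$ can no longer be absorbed via Young's inequality against the drift cost. The paper's Subsection~\ref{SUBSEC:foc3} introduces a new idea: a dichotomy between $\|\Theta_N\|_{L^2}^2 \gg |\int Y_N\Theta_N\,dx|$ (where the taming term directly wins for $\sigma$ small) and $\|\Theta_N\|_{L^2}^2 \lesssim |\int Y_N\Theta_N\,dx|$, in which case a Littlewood--Paley decomposition of $\Theta_N$ along $\{\Pi_j Y_N\}$ (with the orthogonality and the Cauchy--Schwarz gain on the coefficients $\lambda_j$) extracts the missing factor. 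Your proposal correctly flags that $\beta = 2$ is where ``the bulk of the technical work lies,'' but it does not identify this mechanism.
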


We present the proof of  Theorem
\ref{THM:Gibbs2}
in Section \ref{SEC:Gibbs}.
As in the defocusing case, 
we prove Theorem \ref{THM:Gibbs2}, 
using  the variational approach  by Barashkov and Gubinelli in \cite{BG}.
In the focusing case, 
the potential energy 
for the drift $\Dr$ appears with the $-$ sign
and we need the lower bound $\g\ge \frac{\be+1}{\be-1}$ to control this part.
See \eqref{YY14} below.
Furthermore,   in the non-endpoint case $\be > 2$, the upper bound $\g<3$ essentially ensures that $|\int_{\T^3} \Dr^2 dx|^\g$ is the leading part of the second term on the right-hand side of \eqref{K2}.
See Lemma \ref{LEM:Dr5} below.
In the critical case $\be = 2$
under the weakly nonlinear assumption ($0 < \s < \s_0$), 
the Gibbs measure construction requires  a more refined argument.
See Subsection~\ref{SUBSEC:foc3}.

Theorem \ref{THM:Gibbs2}
shows that our Gibbs measure construction
in the focusing case 
is sharp.
Our argument also shows that Bourgain's 
construction \cite{BO97} of the focusing Hartree Gibbs measure~\eqref{Gibbs7}
for $\be > 2$
is also sharp modulo the endpoint case $\be = 2$, 
where an analogous  dichotomy\,/\,phase transition  follows as a corollary to Theorem \ref{THM:Gibbs2}\,(iii).
See Remark \ref{REM:end1}.

Let us consider the following truncated Gibbs measure
with a Wick-ordered $L^2$-cutoff:
\begin{align}
d \wt \rho_N (u) = \wt Z_N^{-1} \ind_{\{ |\int_{\T^3} \, : u_N^2 :\, dx | \le K\}}
 e^{\s R_N(u)} d \mu(u), 
\label{C1}
\end{align}

\noi
where $\s > 0$ and $R_N(u)$ is as in \eqref{K1}.
Then, by 
noting that
\begin{align}
 \ind_{\{|x| \le K\}} \le \exp\big( -  A |x|^\gamma\big) \exp\big(A K^\g\big)
\label{pa00}
\end{align}

\noi
for any $x \in \R$, $K>0$, $\g>0$, and $A>0$, 
the uniform integrability \eqref{exp3} in Theorem \ref{THM:Gibbs2}
implies
\begin{equation*}
\sup_{N\in \N} \Big\|  \ind_{\{ |\int_{\T^3} \, : u_N^2 :\, dx | \le K\}}
 e^{\s R_N(u)}\Big\|_{L^p(\mu)}
\leq \wt C_p  < \infty
\end{equation*}

\noi
for $\be > 2$ or $\be = 2$ with sufficiently small $\s > 0$.
A modification of the proof of Theorem \ref{THM:Gibbs2}
yields convergence of the truncated Gibbs measure $\wt \rho_N$ in \eqref{C1}
to the limiting Gibbs measure
\begin{align}
d \wt \rho (u) =  Z_N^{-1} \ind_{\{ |\int_{\T^3} \, : u^2 :\, dx | \le K\}}
 e^{\s R(u)} d \mu(u), 
\label{C2}
\end{align}

\noi
in the sense of convergence of the truncated density 
$  \ind_{\{ |\int_{\T^3} \, : u_N^2 :\, dx | \le K\}}
 e^{\s R_N(u)}$, 
analogous to~\eqref{exp4}.

Our proof of the non-normalizability in Theorem \ref{THM:Gibbs2}
is in fact based on
showing non-normalizability of 
the focusing Hartree Gibbs measure $\wt \rho$ 
with a Wick-ordered $L^2$-cutoff in~\eqref{C2}.
See Proposition \ref{PROP:Gibbs5}.\footnote{While the proof of Proposition \ref{PROP:Gibbs5}
works only for sufficiently large $K \gg 1$, it is possible to modify the argument
so that the conclusion of Proposition \ref{PROP:Gibbs5} holds for any $K > 0$.
See Remark \ref{REM:OS}.}
Our main strategy
for proving non-normalizability
of $\wt \rho$ in~\eqref{C2}
is inspired by a recent work by 
Weber and the third author \cite{TW}
on the non-construction of the Gibbs measure 
for the focusing cubic NLS on the real line,
giving an alternative proof of Rider's result \cite{Rider},  
and 
is also based on the variational formulation due to Barashkov and Gubinelli \cite{BG}.
For this approach, 
we need to construct a drift $\Dr$ which achieves the desired divergence.
See Remark \ref{REM:choice} below.
The lower threshold $\be = 1$ in Theorem~\ref{THM:Gibbs2}\,(ii) 
naturally appears due to the necessity of a further renormalization
for $\be \le 1$
(required even in the defocusing case).
See Remark \ref{REM:ren}.
We expect that once we endow with a proper renormalization, 
the non-normalizability result may be extended for 
lower values of $\be \le 1$.
We  point out that a similar argument yields
the exact analogue of Theorem \ref{THM:Gibbs2}
for the focusing Hartree Gibbs measure in \eqref{Gibbs7}
with an Wick-ordered $L^2$-cutoff (but without an absolute value
on the Wick-ordered $L^2$-norm), where 
we introduce a general  coupling constant $\s >0$
as in \eqref{C1} and \eqref{C2}. 
See Remark \ref{REM:end1}.
Lastly, we also mention  related works
\cite{LRS, BS, Rider, BB14, OST,OOTolo, OS}
on the non-normalizability (and other issues)
for focusing Gibbs measures.

\begin{remark}\label{REM:Gibbs2}\rm
(i) 
While we stated 
Theorem \ref{THM:Gibbs2} for the Bessel potential, 
the Gibbs measure construction holds
for any Hartree potential 
 $V$, satisfying 
\begin{align} \label{Vb}
| \ft V(n)| \les \jb{n}^{-\b}
\end{align}
for $n \in \Z^3$
and 
the non-normalizability holds
for any Hartree potential 
 $V$, satisfying 
$ \ft V(n) \ges \jb{n}^{-\b}$
for $n \in \Z^3$.

\smallskip

\noi
(ii) In the two-dimensional case, 
the focusing Hartree Gibbs measure $\rho$ in \eqref{Gibbs8}
(also  $\rho$ in~\eqref{Gibbs7} with a Wick-ordered $L^2$ cutoff)
can be easily constructed for $\be > 0$ 
(and suitable $\g > 2$) via the variational argument.
When $\be = 0$, it is not normalizable in view of the result~\cite{BS} by Brydges and Slade.
See also \cite{OS}.

\smallskip

\noi
(iii)  In \cite{OQ}, Quastel and the first author studied
the construction of the focusing Gibbs measure
on the one-dimensional torus $\T$, 
with a specified $L^2$-norm $\ind_{\{\int_\T u^2 dx = K\}}$
(and a specified momentum).
It is of interest to investigate the construction 
(in particular, non-normalizability) of 
the focusing Hartree Gibbs measure on $\T^3$
with a specified Wick-ordered $L^2$-norm:
\begin{align*}
d \ft \rho (u) =  Z_N^{-1} \ind_{\{ |\int_{\T^3} \, : u^2 :\, dx | = K\}}
 e^{\s R(u)} d \mu(u).
\end{align*}

\end{remark}

\begin{remark}\rm
When $\be < 2$ (or $\be =2$ with $\s \gg 1$), Theorem \ref{THM:Gibbs2}
states that the focusing Hartree Gibbs measure is not normalizable.
A natural question may be then 
to wonder if it is possible to find diverging constants $C_N \to \infty$
such that $e^{\RR_N(u) - C_N}$
remains uniformly integrable with respect to the massive Gaussian free field $\mu$.
In view of the convergence of $\RR_N$
 in $L^p(\mu)$ stated in Theorem \ref{THM:Gibbs2}, 
we see that $e^{\RR_N(u)}$ converges in measure.
This in turn implies
that  $e^{\RR_N(u) - C_N}$ converges in measure to $0$,
showing that there is no hope to find a good candidate for the limiting focusing Hartree Gibbs measure
as a probability measure which is absolutely continuous
with respect to the base Gaussian free field
in this case.
Furthermore, by slightly modifying the proof of 
Theorem 1.8\,(ii) in \cite{OOTolo} (see also Proposition 4.4 in \cite{OOTolo}), 
we can also show\footnote{Strictly speaking, in order to prove this non-convergence claim, 
we need to modify our frequency projector
(projecting onto  a ball $\{|n|\le N\}$)
 to that onto a cube $[-N, N]^3$ as in \cite{OOTolo}.} that,
 as probability measures on $\C^{-\frac 12-}(\T^3)$,  the truncated focusing Hartree Gibbs measures $\rho_N$ in \eqref{GibbsN1}
do not converges to any weak limit, not even up to any subsequence.
\end{remark}

\begin{remark}\label{REM:Gibbs2a}
\rm

Let $\be \ge 2$.
Define
 the renormalized energy:
\begin{align} 
E^\sharp( u)
= \frac 12 \int_{\T^3} |\jb{\nabla} u|^2 dx - \RR(u), 
\label{Energy0a}
\end{align}

\noi
where $\RR(u)$ is the limit of $\RR_N(u)$.
Then, as in Remark \ref{REM:Gibbs1a}, 
we can also write $\rho$ in \eqref{Gibbs8} formally as 
\begin{align*} 
d \rho = Z^{-1} e^{-E^\sharp ( u)} d  u.
\end{align*}

\noi
Similarly, by defining the renormalized energy 
for SdNLW \eqref{SNLW0} and NLW \eqref{NLW1} by 
\begin{align} 
\mathcal{E}^\sharp (\vec u)
= \frac 12 \int_{\T^3} |\jb{\nabla} u|^2 dx + \frac 12 \int_{\T^3} (\dt u)^2 dx - \RR(u), 
\label{Energy1a}
\end{align}

\noi
we can write the 
focusing Hartree Gibbs measure $\rhoo = \rho\otimes \mu_0$ on a vector $\vec u = (u, \dt u)$ 
as
\begin{align} 
d \rhoo = Z^{-1} e^{-\mathcal{E}^\sharp ( \vec u)} d  \vec u.
\label{Gibbs11}
\end{align}

\noi
In the focusing case, 
the second term in \eqref{K2}
introduces an extra term
for the resulting equations.
See \eqref{SNLWA2} and 
\eqref{SNLH}.

\end{remark}

\section{Invariant dynamics for Hartree  SdNLW}
\label{SEC:NLW}

In this section, we consider the canonical stochastic quantization
for the Hartree Gibbs measure constructed in Theorems \ref{THM:Gibbs}
and \ref{THM:Gibbs2}
and describe our strategy for constructing global-in-time invariant Gibbs dynamics.

\subsection{Main results}
\label{SUBSEC:1.3}

Let $\rhoo$ be the focusing Hartree Gibbs measure ($\s> 0$) constructed
in Theorem~\ref{THM:Gibbs2}.
As pointed out in Remark \ref{REM:Gibbs2a}, 
the energy for $\rhoo$ is given by $\mathcal{E}^\sharp(u)$ in~\eqref{Energy1a}.
By considering the Langevin equation,
i.e.~\eqref{SNLW00} with $\mathcal{E}$ replaced by $\mathcal{E}^\sharp$, 
we obtain the following 
focusing Hartree  SdNLW:
\begin{align}
\dt^2 u + \dt u + (1 -  \Dl)  u - \s(V \ast \,:\! u^2 \!:\,)u + M_\g (\,:\! u^2 \!:\,) u = \sqrt{2} \xi,
\label{SNLWA2}
\end{align}

\noi
where $M_\g$ is defined by 
\begin{align}
M_\g (w) := 2A\g \bigg| \int_{\T^3} w dx \bigg|^{\g-2} \int_{\T^3} w dx
\label{focusnon}
\end{align}

\noi
and 
 $:\! u^2 \!:$ denotes the Wick renormalization of $u^2$.\footnote{In order to give a proper meaning to 
 $:\! u^2 \!:$, we need to assume a structure on $u$ (see \eqref{decomp3}).
 We postpone this discussion to the next subsection.} 
The last term $M_\g (:\! u^2 \!:) u$ on the left-hand side of \eqref{SNLWA2}
appears due to the taming via a power of the Wick-ordered $L^2$-norm
in~\eqref{Gibbs8} and~\eqref{K2}.
Given $N \in \N$, we also consider the truncated focusing Hartree SdNLW: 
\begin{align}
\begin{split}
\dt^2 & u_N + \dt u_N  + (1 -  \Dl)  u_N  \\
& 
-\s  \pi_N \big( (V \ast  :\! (\pi_N u_N)^2 \!:\, ) \pi_N u_N \big)
+  M_\g (\,:\! (\pi_N u_N)^2 \!:\,) \pi_N u_N 
= \sqrt{2} \xi, 
\end{split}
\label{SNLWA2r}
\end{align}

\noi
where 
$:\! (\pi_N u_N)^2 \!:  \, = 
(\pi_N u_N)^2 -\s_N.$
Our main goal here is to  construct invariant Gibbs dynamics 
for the focusing Hartree SdNLW \eqref{SNLWA2} as a limit of the truncated dynamics  \eqref{SNLWA2r}.

In the defocusing case ($\s < 0$), 
the energy for the Gibbs measure (for $\be > 1$) is given by $\mathcal{E}^\flat(u)$
in \eqref{Energy1}, giving rise to the following 
defocusing Hartree SdNLW:
\begin{align}
\dt^2 u + \dt u + (1 -  \Dl)  u  -\s (V \ast :\! u^2 \!:)u
= \sqrt{2} \xi
\label{SNLWA}
\end{align}

\noi
and its truncated version:
\begin{align}
\dt^2 u_N + \dt u_N + (1 -  \Dl)  u_N  
-\s  \pi_N \big( (V \ast  :\! (\pi_N u_N)^2 \!:\,) \pi_N u_N \big)
= \sqrt{2} \xi
\label{SNLWAr}
\end{align}
for $N \in \N$.

\begin{theorem}\label{THM:GWP}
Let $V$ be the Bessel potential of order $\be$ with

\medskip

\hspace{8mm}
\textup{(i)}\  $\be \ge 2$
 in the focusing case  \qquad and \qquad
\textup{(ii)} \  $\be > 1$  in the defocusing case.

\medskip

\noi
In the focusing case with $\be = 2$, 
we also assume that  $\s > 0$ is sufficiently small.

\smallskip

\noi
\textup{(i)
(focusing case)}.
Let  $A > 0$ be sufficiently large
and $\g > 0$ satisfy 
 $\max \big(\frac{\be+1}{\be-1},2\big) \le \g <3$
 with $\g > 2$ when $\be = 3$.
Then, the focusing Hartree SdNLW \eqref{SNLWA2} is almost surely globally well-posed
with respect to the random initial data distributed
by the Gibbs measure $\rhoo = \rho\otimes \mu_0$ in \eqref{Gibbs11}.
Furthermore, $\rhoo$ is invariant under the resulting dynamics.

More precisely,
there exists a non-trivial stochastic process $(u, \dt u) \in C(\R_+; \H^{-\frac 12 -\eps}(\T^3))$ for any $\eps>0$ such that,
given any $T>0$, the solution $(u_N, \dt u_N)$ to the truncated Hartree SdNLW~\eqref{SNLWA2r}
with the random initial data distributed by the truncated Gibbs measure 
$\rhoo_N = \rho_N \otimes \mu_0$, where $\rho_N$ is as in~\eqref{GibbsN1}, 
converges to $(u, \dt u)$ in $C([0,T]; \H^{-\frac 12-\eps}(\T^3))$.
Furthermore, 
we have $\Law\big((u(t), \dt u(t))\big) = \rhoo$ for any $t \in \R_+$.

\smallskip

\noi
\textup{(ii) (defocusing case)}
Let $\be > 1$.
Then, the corresponding results from Part \textup{(i)}
hold for the defocusing Hartree SdNLW \eqref{SNLWA}, 
its truncated version  \eqref{SNLWAr}, 
and the Gibbs measure $\rhoo$ in~\eqref{Gibbs2x}.

\end{theorem}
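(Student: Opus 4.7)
The plan is to apply Bourgain's invariant-measure strategy. In both the focusing case (Theorem \ref{THM:Gibbs2}) with $\be \ge 2$ and the defocusing case (Theorem \ref{THM:Gibbs}) with $\be > 1$, the Gibbs measure $\rho$ is equivalent to the massive Gaussian free field $\mu$, so $\rhoo \ll \muu$, and it suffices to construct almost sure local-in-time dynamics from random initial data of law $\muu = \mu \otimes \mu_0$. I would first write $u = \Psi + v$, where $\Psi$ is the linear stochastic damped wave evolution $(\dt^2 + \dt + 1 - \Dl)\Psi = \sqrt{2}\,\xi$ with initial data of law $\muu$, so that $\Psi \in C(\R_+; W^{-1/2 - \eps, \infty}(\T^3))$ almost surely. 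Expanding $:\!u^2\!: \, = \, :\!\Psi^2\!: + 2\Psi v + v^2$ and substituting into \eqref{SNLWA2}, the remainder $v$ satisfies an equation forced by the Wick-renormalized stochastic objects $:\!\Psi^2\!:$, $V \ast :\!\Psi^2\!:$, and $(V \ast :\!\Psi^2\!:)\Psi$, plus cross terms involving $v$. The Hartree convolution supplies $\be$ derivatives of smoothing, so the worst stochastic object has regularity roughly $\min(\be - 1, 0) - \tfrac12 - \eps$.

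In the sub-critical regime $\be > 2$, these stochastic data are regular enough that a direct contraction argument for $v$ in a space such as $C([0,T]; H^{1/2+}(\T^3))$ should yield local well-posedness. The focusing taming term $M_\g(:\!u^2\!:)u$ in \eqref{focusnon} is a scalar multiple of $u$ depending only on $\int_{\T^3} :\!u^2\!: dx$, so once $:\!u^2\!:$ is controlled in a negative-regularity space, $M_\g$ is treated perturbatively. At the endpoint $\be = 2$ (and just above it, for uniform-in-$N$ estimates) the model is critical in the scaling of Remark~\ref{REM:quad} and matches the hyperbolic $\Phi^3_3$-setting of~\cite{GKO2}. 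Here I would adopt a paracontrolled ansatz $v = -\,\mathcal{I}(\Psi \succ w) + Y$, where $\mathcal{I}$ denotes the Duhamel operator for the damped wave equation, $\succ$ is the low-high paraproduct, $w$ is a paracontrolled unknown of positive regularity, and $Y$ is smoother. After substituting this decomposition, the resonant contribution from $(V \ast (\Psi v))\Psi$ reduces to random operators of the form $f \mapsto (V \ast (\Psi f))\Psi$, which must be defined probabilistically (with appropriate subtractions) and added to the enhanced data set, exactly in the spirit of the paracontrolled operators introduced in \cite{GKO2}. The equation then becomes a fixed-point problem for the triple $(v, w, Y)$, to be solved by contraction in a product space with bounds uniform in the truncation $N$.

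For the global step, the truncated equation \eqref{SNLWA2r} preserves $\rhoo_N$: by the Hamiltonian structure \eqref{SNLW00} with $\mathcal{E}$ replaced by $\mathcal{E}^\sharp$ together with the balance between the damping and the white noise, the low-frequency modes follow a Langevin equation whose invariant distribution is the finite-dimensional projection of $\rhoo_N$, while the high-frequency modes form a free Ornstein--Uhlenbeck process preserving the Gaussian part of $\muu$. Combining this invariance, the uniform-in-$N$ local bounds, and the $L^p(\mu)$-convergence of the densities from Theorems \ref{THM:Gibbs} and \ref{THM:Gibbs2}, a Borel--Cantelli argument in the style of Bourgain extends the uniform local solutions to every time interval $[0, T]$ on a set of full $\rhoo$-measure and gives convergence of $(u_N, \dt u_N)$ to a limit $(u, \dt u)$ in $C([0, T]; \H^{-1/2 - \eps}(\T^3))$. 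Invariance $\Law((u(t), \dt u(t))) = \rhoo$ then follows by sending $N \to \infty$ in the identity $\Law((u_N(t), \dt u_N(t))) = \rhoo_N$.

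The main obstacle is the paracontrolled analysis at $\be = 2$: one must renormalize and realise $f \mapsto (V \ast (\Psi f))\Psi$ as a bounded random paracontrolled operator, with estimates uniform in $N$. This requires a delicate bilinear Gaussian analysis playing the $\be = 2$ derivatives of smoothing from $V$ against the $-\tfrac12 - \eps$ regularity of $\Psi$, and is where the smallness restriction on $\s$ in the focusing $\be = 2$ case manifests on the dynamical side (beyond its Gibbs-measure role). In the defocusing case, the threshold $\be > 1$ arises here as the lowest regularity at which the Hartree smoothing still closes the fixed-point argument without invoking the further renormalizations of Theorem \ref{THM:Gibbs}\,(ii).
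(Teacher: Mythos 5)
There are genuine gaps. First, the claim that a direct contraction in $C([0,T]; H^{1/2+}(\T^3))$ works for $\be>2$ without paracontrolled analysis is false. The worst forcing term for $v$ is the paraproduct $(V \ast :\!\Psi^2\!:)\pl\Psi$, whose regularity $-\tfrac12-\eps$ is inherited from the outer $\Psi$ and is therefore independent of $\be$. Consequently $v$ cannot be better than $C_T H^{1/2-\eps}_x$ for any $\be$, and the resonant product $v \pe \Psi$ is then ill-defined since $(\tfrac12-) + (-\tfrac12-) < 0$. Moreover, because $v\pe\Psi$ sits \emph{inside} the convolution $V\ast(\cdots)$ in the nonlinearity, the Hartree smoothing cannot rescue it. So the paracontrolled structure is needed throughout the range $\be>1$ (defocusing) and $\be\ge2$ (focusing), not only at $\be=2$. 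Relatedly, the smallness of $\s$ at $\be=2$ plays no role in local well-posedness (Theorem~\ref{THM:2} allows arbitrary $\s$ there); it enters only in the construction of the Gibbs measure, so the claim that it "manifests on the dynamical side" is wrong.

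Second, your paracontrolled ansatz and the identification of the relevant enhanced data object miss the paper's central new idea. You propose defining random operators $f\mapsto (V\ast(\Psi f))\Psi$, but that resonant product is actually well-defined deterministically for $\be>1$; it is not the difficulty. The issue is $X\pe\Psi$ with $X = \I(\cdots\pl\Psi)$, i.e.\ the resonant product of a Duhamel-integrated paraproduct with $\Psi$. The object you need to make sense of includes the Duhamel integral $\I$, which is where the dispersive cancellation lives — it is the paracontrolled operator $\If_{\pl,\pe}(w)=\I(w\pl\Psi)\pe\Psi$. The paper's resolution is genuinely different from a linear paracontrolled ansatz: it treats the resonant product $\Res = X\pe\Psi$ as an independent third unknown and rewrites the problem as a system for $(X,Y,\Res)$ (equations \eqref{SNLW5}, \eqref{SNLW6}), with the $X$-equation \emph{nonlinear} in the unknowns and $\Res$ defined via $\If_{\pl}^{(1)}$ and $\If_{\pl,\pe}$. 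A key technical point you would still have to supply is Proposition~\ref{PROP:sto4J}, a new mapping property for $\If_{\pl,\pe}$ that, unlike the bound in~\cite{GKO2}, does not require time differentiability of the input — this is forced precisely by the three-unknown structure, since $\Res$ itself is an input to $\If_{\pl,\pe}$. Finally, for the globalization step, the Borel--Cantelli scheme you sketch is the right skeleton, but the stability estimate comparing solutions with different truncations does not follow from the two-dimensional argument of~\cite{ORTz}: in 3D the difference involves ill-defined products of $S(t)(\vec v_0 - \vec w_0)$ (regularity $-\tfrac12-$) with the paracontrolled component $X_N$ (regularity $\tfrac12-$), so one must carry the paracontrolled structure over the whole interval $[0,T]$ rather than concatenate local solutions, which the paper does via a weighted Gronwall argument (Proposition~\ref{PROP:GWPNC}).
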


In view of Theorem \ref{THM:Gibbs2}, 
Theorem \ref{THM:GWP}\,(i) on the focusing case is sharp.
On the other hand, the threshold $\be = 1$ in 
Theorem \ref{THM:GWP}\,(ii) is by no means sharp.
As we saw in Theorem \ref{THM:Gibbs}
on the Gibbs measure construction, 
we need to introduce another renormalization to go below $\be = 1$.
Since our main goal in this paper is to obtain a sharp result
in the focusing case, we limit ourselves only to the range $\be > 1$
in the defocusing case, where the same renormalization as
in the focusing case suffices.

The main task in proving Theorem \ref{THM:GWP}
is the construction of local-in-time solutions.
Our  strategy for constructing local-in-time dynamics
is to adapt the  paracontrolled approach
in the hyperbolic\,/\,dispersive setting 
as in \cite{GKO2}, where the quadratic SNLW on $\T^3$ was studied.
By viewing 
the cubic Hartree nonlinearity $(V \ast :\! u^2 \!:)u$ 
as iterated  bilinear interactions,\footnote{In \cite{BO97}, 
Bourgain used this view point in studying the cubic Hartree NLS \eqref{NLS1}.} 
the exact paracontrolled operators
used in \cite{GKO2} appear in the study of 
the cubic Hartree SdNLW~\eqref{SNLWA2} and \eqref{SNLWA}.
We, however,  point out that, in order to treat the ill-defined product $:\! u^2 \!:$
in $(V \ast :\! u^2 \!:)u$ (see also $M_\g (\,:\! u^2 \!:\,)u$ in \eqref{SNLWA2}), 
the paracontrolled analysis in \cite{GKO2} is not sufficient.
In order to overcome this difficulty, 
we view the ill-defined (resonant) product (see \eqref{Res1} below)
as a new unknown
and  rewrite the equation into a system for {\it three} unknowns.
(Note that in \cite{GKO2}, the resulting system
was for two unknowns.)
In the next subsection, we describe the basic setup
of our paracontrolled approach.

Once we establish local well-posedness, 
we  adapt Bourgain's invariant measure argument \cite{BO94, BO96}
to the stochastic PDE setting (as in \cite{GKOT, ORTz})
to prove   the desired almost sure global well-posedness
 and invariance of the Gibbs measure.
Due to the use of the paracontrolled structure
in the local-in-time analysis, 
however, we need to proceed with care, 
in particular in proving 
convergence of the truncated dynamics to the full dynamics
on any large time interval $[0, T]$, 
where we  make use of the paracontrolled structure
on a large time scale (i.e.~not locally in time).
See  Section~\ref{SEC:GWP}
for  details.

\begin{remark}\rm
Here, we used the sharp frequency cutoff $\pi_N$.
It is, however, possible to use 
regularization via a mollification
and show that 
the limiting process is independent of mollification kernels.
See \cite{GKO2} for a further discussion.
We also point out that there are certain approximations
which lead to a wrong (and even divergent) limit.
See \cite{OOTz} for such an example
in the context of the deterministic NLW with random initial data.

\end{remark}

\subsection{Paracontrolled approach: defocusing case}
\label{SUBSEC:1.4}

In this subsection, we 
go over a paracontrolled approach  in the simpler defocusing case
($\s < 0$).
Since a precise value of $\s < 0$ does not play any role, 
we set $\s = -1$.
Proceeding 
 in the spirit of 
\cite{CC, MW1, GKO2}, 
we 
transform the defocusing Hartree SdNLW \eqref{SNLWA} to a system of PDEs.
Unlike the previous works \cite{CC, MW1, GKO2}, 
the resulting  system (see \eqref{SNLW5} below) consists of  {\it three} equations.
We then state our local well-posedness result
 of the resulting system.
The focusing case is treated in the next subsection.

The main difficulty in studying Hartree SdNLW \eqref{SNLWA}
comes from the roughness of the space-time white noise.
This is already manifested at the level of the linear equation.
Let $\Psi$  be the solution to the following linear stochastic damped wave equation:
\begin{align*}
\begin{cases}
\dt^2 \Psi + \dt\Psi +(1-\Dl)\Psi  = \sqrt{2}\xi\\
(\Psi,\dt\Psi)|_{t=0}=(\phi_0,\phi_1),
\end{cases}
\end{align*}

\noi
where  $(\phi_0, \phi_1) = (\phi_0^\o, \phi_1^\o)$ 
is a pair of 
 the Gaussian random distributions with 
$\Law( (\phi_0^\o, \phi_1^\o)) = \muu = \mu_1 \otimes \mu_0$.
Define the linear damped wave propagator $\D(t)$ by 
\begin{equation} 
\D(t) = e^{-\frac t2 }\frac{\sin\Big(t\sqrt{\tfrac34-\Dl}\Big)}{\sqrt{\tfrac34-\Dl}}, 
\label{D2a}
\end{equation}

\noi
viewed as a Fourier multiplier operator.
By setting
\begin{align}
\jbb{n} = \sqrt{\frac 34+|n|^2},
\label{jbbn}
\end{align}

\noi
we have
\begin{align}
\D(t) f  =  \sum_{n\in \Z^3}  \ft \D_n(t) 
 \ft f (n) e_n
 := 
\sum_{n\in \Z^3} 
e^{-\frac t2}\frac{\sin (t \jbb{n})}{\jbb{n}}
 \ft f (n) e_n.
\label{W3}
\end{align}

\noi
Then, the stochastic convolution $\Psi$ can be expressed as 
\begin{equation}
\Psi (t) =  \dt\D(t)\phi_0 +  \D(t) (\phi_0+\phi_1) + \sqrt 2 \int_0^t\D(t-t')dW(t'), 
\label{W2}
\end{equation}

\noi
where  $W$ denotes a cylindrical Wiener process on $L^2(\T^3)$:
\begin{align}
W(t)
 = \sum_{n \in \Z^3} B_n (t) e_n
\label{W1}
\end{align}

\noi
and  
$\{ B_n \}_{n \in \Z^3}$ 
is defined by 
$B_n(t) = \jb{\xi, \ind_{[0, t]} \cdot e_n}_{ x, t}$.
Here, $\jb{\cdot, \cdot}_{x, t}$ denotes 
the duality pairing on $\T^3\times \R$.
As a result, 
we see that $\{ B_n \}_{n \in \Ld_0}$ is a family of mutually independent complex-valued\footnote
{In particular, $B_0$ is  a standard real-valued Brownian motion.} 
Brownian motions conditioned so that $B_{-n} = \cj{B_n}$, $n \in \Z^3$. 
Note that we have, for any $n \in \Z^2$,  
 \[\text{Var}(B_n(t)) = \E\Big[
 \jb{\xi, \ind_{[0, t]} \cdot e_n}_{x, t}\cj{\jb{\xi, \ind_{[0, t]} \cdot e_n}_{x, t}}
 \Big] = \|\ind_{[0, t]} \cdot e_n\|_{L^2_{x, t}}^2 = t.\]

\noi
It is easy to see that $\Psi$ almost surely lies in 
$C (\R_+;W^{-\frac{1}{2}-\eps, \infty}(\T^3))$
for any $\eps > 0$.
See Lemma~\ref{LEM:stoconv} below.

Given $N \in \N$, we define the truncated stochastic convolution  $\Psi_N$ by
\begin{align}
\Psi_N &= \pi_N \Psi,
\label{so4a}
\end{align}

\noi
where $\pi_N$ is the (spatial) frequency projection defined  in \eqref{pi}.
Then, for each fixed $(x,t) \in \T^3 \times \R_+$, 
 a direct computation with \eqref{W3} and \eqref{W2} shows that 
 the random variable $\Psi_N(x, t)$ is a mean-zero real-valued Gaussian random variable with variance
\begin{align}
\s_N &  = \E \big[ \Psi_N (x,t)^2\big]
 =  \sum_{|n| \le N} \frac{1}{ \jb{n}^2}  \sim  N
\to \infty, 
\label{sigma1a}
\end{align}

\noi
as $N \to \infty$
(which agrees with $\s_N$ defined in \eqref{sigma1}).
We then define the truncated Wick power $:\! \Psi^2_N \!:$ by 
\begin{align}
:\! \Psi^2_N \!: & = (\Psi_N)^2 - \s_N.
\label{so4b}
\end{align}

\noi
A standard computation shows that 
$:\! \Psi^2 \!: \, = \lim_{N \to \infty} :\! \Psi^2_N \!:$ 
belongs to $C([0,T];W^{-1 -\eps,\infty}(\T^3))$ almost surely for $\eps>0$.
See Lemma~\ref{LEM:stoconv} below.

\medskip

In the following, we keep our discussion at a formal level\footnote{In the following, 
we directly work on \eqref{SNLWA}.
A rigorous treatment, however, needs to start with the truncated equation 
\eqref{SNLWAr} and take a limit $N \to \infty$.}
 and only  discuss spatial regularities (= differentiability) of various objects
without worrying about precise spatial Sobolev spaces that they belong to.
We also use the following ``rules'':\footnote{In the remaining part of the paper, 
we will justify these rules.}

\begin{itemize}

\item A product of functions of regularities $s_1$ and $s_2$
is defined if $s_1 + s_2 > 0$.
When $s_1 > 0$ and $s_1 \geq s_2$, the resulting product has regularity $s_2$.

\item A product of stochastic objects (not depending on the unknown)
is always well defined, possibly with a renormalization.
The product of stochastic objects of regularities $s_1$ and $s_2$
has regularity $\min( s_1, s_2, s_1 + s_2)$.

\end{itemize}

\smallskip

We now write $u$ in the first order expansion as in \cite{McKean, BO96, DPD2}:
\begin{align}
u = \Psi+ v.
\label{decomp3}
\end{align}

\noi
Then, it follows from \eqref{SNLWA} and \eqref{decomp3} that $v$ satisfies
\begin{align}
\begin{split}
(\dt^2 + \dt +1 -  \Dl)  v 
&  =  - \Big( V \ast :\!(v+\Psi)^2\!:  \Big) (v+\Psi) \\
& = - \Big( V \ast (v^2 + 2 v \Psi + :\! \Psi^2 \!:\, ) \Big) v
- \Big( V \ast (v^2 + 2 v \Psi + :\! \Psi^2 \!:\, ) \Big) \Psi.
\end{split}
\label{SNLW2}
\end{align}

\noi
The second  term on the right-hand side
has regularity\footnote{Hereafter,   we use $a-$ 
(and $a+$) to denote $a- \eps$ (and $a+ \eps$, respectively)
for arbitrarily small $\eps > 0$.
If this notation appears in an estimate, 
then  an implicit constant 
is allowed to depend on $\eps> 0$ (and it usually diverges as $\eps \to 0$).}
$-\frac 12 -$, inheriting the worse regularity of $\Psi$.
In view of one degree of smoothing under the damped wave operator, 
we  expect\footnote{Here, we do not expect to have any multilinear smoothing. See Remark \ref{REM:no} below.} $v$ to have regularity at most $\frac 12- = (-\frac 12-) + 1$.
Then, the product 
$v \Psi$ is {\it not} well defined since $(\frac 12-) + (-\frac 12-) < 0$.

\begin{remark} \rm
Note that the second term 
on the right-hand side of \eqref{SNLW2} 
(ignoring $v\Psi$)
has regularity $-\frac 12-$ even if $\be \gg 1$.
Namely, the smoothing property of the Bessel potential $V$
does {\it not} improve the regularity of this term.
Furthermore, we point out, when $\be > 1$, 
the purely stochastic term 
 $(V \ast :\! \Psi^2 \!:) \Psi$ and 
 the terms   $\big( V * v^2  \big) \Psi$, involving the unknown $v$,  have the same regularity $-\frac 12-$.
This makes it  difficult to apply a higher order expansion as in \cite{GKO2, OPTz}, since the worst part depends not only on $\Psi$ but also on the unknown $v$.

\end{remark}

We now proceed with the paracontrolled calculus.
The main ingredients for the paracontrolled approach in the parabolic setting, 
introduced by Gubinelli, Imkeller, and Perkowski~\cite{GIP}, 
are (i) a paracontrolled ansatz 
and (ii) commutator estimates.
As pointed out in \cite{GKO2}, 
however, 
there seems to be no smoothing for a certain relevant commutator
for the wave equation.
In order to overcome this difficulty, 
 Gubinelli, Koch, and the first author~\cite{GKO2}
 introduced the so-called paracontrolled operators
 (see \eqref{X3} and \eqref{X6} below) in studying SNLW
 with a quadratic nonlinearity.
While our nonlinearity is cubic, 
the presence of the Bessel potential makes it more convenient
to view it as iterated bilinear interactions
(as in the Schr\"odinger case by Bourgain \cite{BO97}).
As a result,  the   (essentially) same paracontrolled operators from \cite{GKO2}
will play an important role in our analysis.

In the following, 
the  paraproduct decomposition:
\begin{align}
fg 
= f\pl g + f \pe g + f \pg g
\label{para}
\end{align}

\noi
 plays an important role.
See Section \ref{SEC:2} for a precise definition.
The first term 
$f\pl g$ (and the third term $f\pg g$) is called the paraproduct of $g$ by $f$
(the paraproduct of $f$ by $g$, respectively)
and it is always well defined as a distribution
of regularity $\min(s_2, s_1+ s_2)$.
On the other hand, 
the resonant product $f \pe g$ is well defined in general 
only if $s_1 + s_2 > 0$.
We also use the notation $f\pge g := f\pg g + f\pe g$.

With this notation, we introduce our paracontrolled ansatz:\footnote{We say that a distribution $f$ is paracontrolled (by a given distribution $g$)
if there exists $f'$ such that
$ f= f' \pl g + h$, 
where $h$ is a ``smoother'' remainder.
See Definition 3.6 in \cite{GIP}
for a precise definition.
Formally speaking, 
via the decomposition \eqref{decomp3a}
with \eqref{SNLW3} and the regularity assumption
$0 < s_1 < s_2$, 
we are postulating 
$(\dt^2  +\dt + 1 - \Dl)v$ is paracontrolled by $\Psi$.
}
\begin{align}
v = X+ Y,
\label{decomp3a}
\end{align}

\noi
where $X$ and $Y$ satisfy
\begin{align}
\begin{split}
 (\dt^2 + \dt  +1 - \Dl) X  & =  - \Big( V \ast \big( (X+Y)^2 + 2 (X+Y) \Psi + :\! \Psi^2 \!: \big) \Big) \pl \Psi,
\label{SNLW3}
\end{split}\\
\begin{split}
 (\dt^2 + \dt +1  - \Dl) Y 
&  = - \Big( V \ast \big( (X+Y)^2 + 2 (X+Y) \Psi + :\! \Psi^2 \!: \big) \Big) (X+Y) \\
& \hphantom{XXX}
- \Big( V \ast \big( (X+Y)^2 + 2 (X+Y) \Psi + :\! \Psi^2 \!: \big) \Big) \pge \Psi.
\end{split}
\label{SNLW4}
\end{align}

\noi
In view of the paraproduct decomposition \eqref{para}, 
the right-hand side of the $X$-equation \eqref{SNLW3}
consists of the worst nonlinear terms in \eqref{SNLW2}.
We  postulate that  both $X$ and $Y$ have positive regularities $s_1$ and $s_2$, respectively,
with $0 < s_1 < s_2$.
If we ignore for now  the potentially ill-defined resonant products of the unknowns
with $\Psi$, 
then we expect that $X$ has regularity 
 $\frac 12- = (-\frac 12-)+1$ (at best).
In the second equation, 
the worst term is given by 
 the purely stochastic resonant product 
 \begin{align}
 (V \ast :\! \Psi^2 \!:\,) \pe \Psi
\label{Psi2}
\end{align}

\noi
  which has regularity $\be - \frac 32 - $.
 See Lemma \ref{LEM:IV} below.
Thus, we expect that $Y$ has regularity $\frac 12+$
when $\be > 1$ is close to $1$.
See Remark \ref{REM:no} below for a further discussion.

\begin{remark}\label{REM:no}\rm

We point out that there is no 
multilinear dispersive smoothing for \eqref{Psi2}
(and hence for $Y$).
This is due to the fact that the third term $Z_{13}$  on the right-hand side of  \eqref{Z13} 
in the proof of Lemma \ref{LEM:IV}
is  the linear solution (namely, the stochastic convolution)
with an explicit smoothing of order $\be-1$, coming from 
\[\sum_{\substack{n_2 \in \Z^3 \\ |n+n_2| \sim |n_2|\ne 0 }}
\ft V(n+n_2)  \jb{n_2}^{-2} \les \frac{1}{\jb{n}^{\be-1}}\]

\noi
where the inequality follows from 
Lemma \ref{LEM:SUM}.
Putting together with one degree of smoothing
coming from the Duhamel integral operator, 
we expect the regularity 
of $Y$ to be $(-\frac 12 -) + (\be - 1) + 1 = \be - \frac 12-$, 
which is $\frac 12+$ when $\be > 1$ is close to $1$.
\end{remark}

The main new feature  of our formulation \eqref{SNLW3} - \eqref{SNLW4},
when compared with the previous works \cite{CC, MW1, GKO2}, 
is that the first equation \eqref{SNLW3} (for $X$)
 is {\it nonlinear} in the unknowns $X$ and $Y$, 
while the paracontrolled parts 
in \cite{CC, MW1, GKO2}
were {\it linear} in the unknowns. 
This makes our analysis different from  that in 
\cite{CC, MW1, GKO2}.
In these previous works, 
 the main difficulty 
was to make sense of the resonant product 
$\pe$ (for example $X\pe \Psi$ in \cite{GKO2})
in the  second equation \eqref{SNLW4} (for $Y$), 
which was overcome using the Duhamel formulation of the $X$-equation
(and then via the commutator estimates in the parabolic setting
and via the paracontrolled operators in the wave case \cite{GKO2}).

In our case, 
the resonant product with $\Psi$ in the
second term on the right-hand side of the second equation \eqref{SNLW4}
is not so much of an issue thanks
to the smoothing property of $V$.
On the other hand, 
we expect from \eqref{SNLW3} that 
$X$ has regularity $\frac 12-$
and thus   $X \pe \Psi$ is not well defined since 
 the sum of the regularities is negative.
Note that this resonant product 
$X \pe \Psi$  appears in both equations.
Furthermore, the smoothing of $V$ does not help the situation
since the (ill-defined) resonant product $X \pe \Psi$ appears inside
the convolution with $V$.
Our main new idea is to define the resonant product
\begin{align}
\text{``}\,\Res = X \pe \Psi\,\text{''}
\label{Res1}
\end{align}

\noi 
as a new unknown
and reduce to a system of three unknowns $(X, Y, \Res)$.
More precisely, we substitute the Duhamel formulation of
the $X$-equation \eqref{SNLW3}
into \eqref{Res1} and define $\Res$ by 
\begin{align}
\Res &=
- \I \Big( \big(V \ast (\Qxy  + 2\Res + :\! \Psi^2 \!:\,)\big) \pl \Psi\Big)\pe \Psi
\label{Res2}
\end{align}

\noi
where 
 $\I = (\dt^2 +\dt +1 - \Dl)^{-1}$ is the Duhamel integral operator
  given by 
\[\I F (t) = \int_0^t \D(t - t') F(t') dt'\]

\noi
 and 
$Q_{X, Y}$ denotes a good part of $:\!u^2\!: $, defined by 
\begin{align}
\Qxy =
(X+Y)^2 + 2 X \pl \Psi + 2 X \pg \Psi + 2 Y \Psi.
\label{Pxy}
\end{align}

\noi
Note that 
 all the terms in \eqref{Pxy} make sense for $0 < s_1 < \frac 12 < s_2$
 and that $\Qxy$ has (expected) regularity 
 $ - \frac 12-$.
Recalling
\eqref{decomp3}
and \eqref{decomp3a},  we have
\begin{align}
:\!u^2\!: \, \, = \Qxy + 2\Res \, + \! :\! \Psi^2 \!:.
\label{Pxy2}
\end{align}

Due to the paraproduct structure (with the high frequency part given by $\Psi$)
in the Duhamel integral operator $\I$ in \eqref{Res2}, 
we see that the resonant product in \eqref{Res2} is not well defined at this point.
In order to give a precise meaning to the right-hand side of \eqref{Res2}, 
we now recall the paracontrolled operators introduced in \cite{GKO2}.\footnote
{Strictly speaking, the paracontrolled operators introduced in \cite{GKO2}
are for the undamped wave equation. 
Since the local-in-time mapping property remains unchanged, 
we ignore this minor point.}
We point out that
in the parabolic setting, 
it is at this step where
one would introduce commutators  and exploit their smoothing properties.
For our dispersive problem, however, such an argument does not seem to work.
See \cite[Remark~1.17]{GKO2}.

Given a function $w $ of positive regularity on $\T^3 \times \R_+$, 
 define
\begin{align}
\begin{split}
 \If_{\pl}(w) (t)
:  \! &  =  \I (w\pl \Psi)(t) \\
& =       \sum_{n \in \Z^3}
e_n  \sum_{\substack{n =  n_1 +  n_2\\  |n_1| \ll |n_2|}}
\int_0^t e^{-\frac{t-t'}2} \frac{\sin ((t-t')\jbb{n})}{\jbb{n}}
\ft w(n_1, t')\,  \ft{ \Psi}(n_2, t') dt',
\end{split}
\label{X2}
\end{align}

\noi
where $\jbb{n}$ is as in  \eqref{jbbn}.
Here, $|n_1| \ll |n_2|$ signifies the paraproduct $\pl$
in the definition of $\If_{\pl}$.\footnote{For simplicity of the presentation, 
we use the less precise definitions of paracontrolled operators
in the remaining part of this introduction.
See
\eqref{XX2a},
\eqref{A0a}, and \eqref{A0b}
for the precise definitions of 
the paracontrolled operators $ \If_{\pl}^{(1)}$ 
and $\If_{\pl, \pe}$.}
As mentioned above, 
the regularity of 
$ \If_{\pl}(w)$ is (at best) $\frac 12-$
and thus the resonant product 
$ \If_{\pl}(w) \pe \Psi$ does not make sense
in terms of deterministic analysis.

Proceeding as in  \cite{GKO2}, 
we divide the paracontrolled operator $\If_{\pl}$
into two parts.
Fix small $\theta  > 0$.
Denoting by $n_1$ and $n_2$ the spatial frequencies
of $w$ and $\Psi$  in \eqref{X2}, 
we  define $ \If_{\pl}^{(1)}$ and $ \If_{\pl}^{(2)}$ 
as the restrictions of $\If_{\pl}$ onto $\{ |n_1| \ges |n_2|^{\theta}\}$
and  $\{ |n_1| \ll |n_2|^{\theta}\}$.
More concretely, we set 
\begin{align}
 \If_{\pl}^{(1)}(w)(t) 
:= \sum_{n \in \Z^3}
e_n  \sum_{\substack{n =  n_1 +  n_2\\    |n_2|^\theta \les |n_1| \ll |n_2|}}
\int_0^t e^{-\frac{t-t'}2} \frac{\sin ((t-t') \jbb{n})}{\jbb{n}}
\ft w(n_1, t')\,  \ft{\Psi}(n_2, t') dt'
\label{X3}
\end{align}

\noi
and $ \If_{\pl}^{(2)}(w)   := \If_{\pl}(w) - \If_{\pl}^{(1)}(w)$.
As for the first paracontrolled operator $ \If_{\pl}^{(1)}$, 
 the lower bound  $|n_1| \ges |n_2|^\theta$ 
 and the positive regularity of $w$
 allow us to prove 
a smoothing property
such that 
 the resonant product 
$ \If_{\pl}^{(1)}(w) \pe \Psi$ is well defined.
See Lemma~\ref{LEM:sto3}
below.

As noted in \cite{GKO2},
the second paracontrolled operator $ \If_{\pl}^{(2)}$
does not seem to possess a (deterministic) smoothing property.
One of the main novelty in \cite{GKO2}
was then to directly 
 study the  operator $\If_{\pl, \pe}$ defined by
\begin{align}
\begin{split}
\If_{\pl, \pe}(w) (t)
: \!  & =
  \If_{\pl}^{(2)}(w)\pe \Psi(t) \\
&  = \sum_{n \in \Z^3}e_n 
    \int_0^{t} 
\sum_{n_1 \in \Z^3}
\ft w(n_1, t') \A_{n, n_1} (t, t') dt', 
\end{split}
\label{X6}
\end{align}

\noi
where $\A_{n, n_1} (t, t')$ is given by 
\begin{align}
\A_{n, n_1} (t, t')
= \ind_{[0 , t]}(t') \sum_{\substack{n - n_1 =  n_2 + n_3\\ |n_1| \ll |n_2|^\theta \\ |n_1 + n_2|\sim |n_3|}}
 e^{-\frac{t-t'}{2}} \frac{\sin ( (t - t') \jbb{n_1+n_2})}{\jbb{n_1+n_2}}
   \ft{\Psi}(n_2, t')\,   \ft{\Psi}(n_3, t) .
\label{X7}
\end{align}

\noi
Here, 
the condition  $ |n_1 + n_2|\sim |n_3|$
is used to denote the  Fourier  multiplier corresponding 
to the resonant product $\pe$ in \eqref{X6}.
See \eqref{A0b} for a more precise  definition.

In \cite{GKO2}, 
by combining stochastic analysis and multilinear dispersion, 
Gubinelli, Koch, and the first author proved
the following almost sure boundedness
property
of the paracontrolled operator $\If_{\pl, \pe}$ defined in \eqref{X6}.
 Given Banach spaces $B_1$ and $B_2$, 
 we use $\L(B_1; B_2)$ to denote the space
 of bounded linear operators from $B_1$ to $B_2$.

\begin{lemma}\label{LEM:sto4}
Let  $ s_3 < 0$ and $T > 0$.
Then, there exist small $\theta = \theta (s_3) > 0$
and $\eps > 0$
such that
the paracontrolled operator $\If_{\pl, \pe}$ 
defined in \eqref{X6}
belongs to  the class:
\begin{align*}
\L_1(T) := \L(C([0, T]; L^2(\T^3) ) \cap C^1([0,T];H^{-1 -\eps}(\T^3))
\, ;\, 
 C([0, T]; H^{s_3}(\T^3) )),
\end{align*}
almost surely.

\end{lemma}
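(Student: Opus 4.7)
The plan is to treat $\If_{\pl,\pe}$ as a random bilinear-in-$\Psi$ operator, i.e.\ a second Wiener chaos object evaluated against the deterministic input $w$, and to prove the claimed almost sure boundedness by combining hypercontractivity with the multilinear dispersive harmonic analysis built into the kernel $\A_{n,n_1}$. Since, as observed in \cite{GKO2}, neither $\If_{\pl}^{(2)}(w)$ has a deterministic smoothing property nor the resonant product $\cdot\,\pe\,\Psi$ makes sense pathwise when applied to an object of regularity $\frac12-$, we renounce any attempt to bound $\A_{n,n_1}$ pointwise in the input $w$, and instead estimate $\A_{n,n_1}$ directly in $L^p(\Omega)$ after suitable time manipulations.

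First, I would transfer one time derivative from $w$ onto the kernel via integration by parts in $t'$. Setting
\[
\Bf_{n,n_1}(t,t') := \int_{t'}^{t} \A_{n,n_1}(t,t'')\, dt'',
\]
one gets
\[
\int_0^t \ft w(n_1,t')\, \A_{n,n_1}(t,t')\, dt' = \ft w(n_1, 0)\, \Bf_{n,n_1}(t,0) + \int_0^t \dt\ft w(n_1,t')\, \Bf_{n,n_1}(t,t')\, dt'.
\]
This step is exactly what the $C^1_t H^{-1-\eps}$ hypothesis is tailored for: the antiderivative $\Bf$ acquires a fractional gain in the large frequency $|n_2|\sim|n_3|$ coming from the damped oscillatory phase $e^{-(t-t'')/2}\sin((t-t'')\jbb{n_1+n_2})$ in $\A$, which compensates for the absent resonance smoothing. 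After this reduction it suffices to bound two random operators, $w\mapsto \int_0^t \dt w(t')\Bf(t,t')\,dt'$ and the boundary term, uniformly on $[0,T]$.

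Second, I would estimate moments of $\A$ and $\Bf$ via Wiener chaos. Both kernels belong to the second homogeneous chaos generated by $\{B_n\}$, so by hypercontractivity everything reduces to variance computations. Using the covariance of $\Psi$ at two times (which, modulo time oscillations, is $\les \jb{n_2}^{-2}\delta_{n_2, n_3'}$ in Fourier), one finds, schematically,
\[
\E\bigl[\,|\Bf_{n,n_1}(t,t')|^2\,\bigr] \les \sum_{\substack{n-n_1 = n_2+n_3 \\ |n_1|\ll |n_2|^\theta \\ |n_1+n_2|\sim |n_3|}}\frac{1}{\jbb{n_1+n_2}^{2+2\delta}\,\jb{n_2}^2\,\jb{n_3}^2},
\]
for some $\delta>0$ extracted from the $t''$-integration in $\Bf$. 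Because the resonance constraint and the paraproduct frequency relation force $|n_3|\sim|n_1+n_2|\ges|n_2|\ges|n_1|^{1/\theta}$, Lemma~\ref{LEM:SUM}-type sums yield $\E[\,|\Bf_{n,n_1}(t,t')|^2\,]\les \jb{n}^{2s_3}\jb{n_1}^{-2}$ for any prescribed $s_3<0$ provided $\theta=\theta(s_3)$ is taken small enough, together with an analogous bound with gain $|t_1-t_2|^\alpha$ for increments. Combining these moment bounds with Minkowski, Cauchy--Schwarz in $n_1$ against $\ft w(n_1,\cdot)$ and $\dt\ft w(n_1,\cdot)$, and Kolmogorov's continuity criterion in $t$ and in a mollification parameter gives $\E\bigl[\|\If_{\pl,\pe}\|_{\L_1(T)}^p\bigr]<\infty$ for every finite $p$, whence the almost sure conclusion. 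The main obstacle I foresee is extracting the fractional gain $\jbb{n_1+n_2}^{-2\delta}$ from the phase $\sin((t-t'')\jbb{n_1+n_2})$ upon $t''$-integration: one must split into the regimes $|t-t''|\jbb{n_1+n_2}\les 1$ and $\gg 1$, and in the non-stationary regime one has to integrate by parts in $t''$ carefully so as to lose nothing from the damping factor $e^{-(t-t'')/2}$, mirroring the delicate bookkeeping of \cite{GKO2}.
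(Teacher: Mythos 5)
The paper does not prove Lemma~\ref{LEM:sto4} itself: it cites \cite{GKO2} for it and then proves the variant Proposition~\ref{PROP:sto4J} in Section~\ref{SEC:po}, replacing the $C^1$-hypothesis on $w$ by $L^q_T L^2_x$ and replacing the integration by parts by temporal Fourier analysis of the counterterm kernel (the key point being $\ft\K_n\in L^q_\tau$; see~\eqref{A2c}--\eqref{A2e}). Your integration-by-parts strategy is therefore the route of \cite{GKO2}, not the one executed here, and the paper explicitly flags the point where $C^1$-regularity is needed: only to control one specific piece of the \emph{deterministic} counterterm.

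That is where your outline has a real gap. The kernel $\A_{n,n_1}$ is not a centered second-chaos object: its expectation is the diagonal contribution $n_2=-n_3$ (equivalently $n=n_1$), namely $\A^{(2)}_{n,n_1}$ in~\eqref{X9}, whose defining $n_2$-sum $\sum_{|n|\ll|n_2|^\theta}\jbb{n+n_2}^{-1}\s_{n_2}(t,t')$ is not absolutely convergent. Your variance estimate for $\Bf$ is blind to $\A^{(2)}$ (hypercontractivity says nothing about a nonrandom term), and the decay $\jb{n_1}^{-2}$ you write down is not there in any case --- the constraint $|n_1|\ll|n_2|^\theta$ restricts the range of $n_1$ but produces no power decay in $n_1$. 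More importantly, the claimed across-the-board ``fractional gain from the phase'' after $t''$-integration is wrong. Once you multiply the Duhamel kernel's $\sin((t-t'')\jbb{n+n_2})$ by the covariance $\s_{n_2}(t,t'')\propto\cos((t-t'')\jbb{n_2})$, the product-to-sum identity of~\eqref{A4a} splits $\A^{(2)}$ into $\A^{(3)}_n$ (phase $\jbb{n+n_2}+\jbb{n_2}$, large: here your $t''$-integration does gain $|n_2|^{-1}$) and $\A^{(4)}_n$ (phase $\jbb{n+n_2}-\jbb{n_2}=O(\jb n/\jb{n_2})$, small: here time integration gains nothing). The piece $\A^{(4)}_n$ has to be closed by the $n_2\leftrightarrow-n_2$ symmetrization and mean value theorem as in~\eqref{A5a}, a cancellation your outline never produces. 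With the four-way decomposition $\A=\A^{(1)}+\A^{(3)}+\A^{(4)}+\A^{(5)}$ in place --- chaos estimate for $\A^{(1)}$, $t''$-gain for $\A^{(3)}$, symmetrization for $\A^{(4)}$, trivial decay for $\A^{(5)}$ --- your high-level plan would go through; as written, it skips the structural step that makes the lemma true.
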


The kernel 
$\A_{n, n_1} (t, t')$ in \eqref{X7}
can be divided into two parts: a stochastic part
and a deterministic counter term.
See \eqref{X9} below.
In order to control a part 
of the deterministic counter term, 
the time differentiability of the input function $w$ was exploited in \cite{GKO2}.
Unfortunately, 
 Lemma \ref{LEM:sto4} is not suitable
for our purpose
due to the lack of differentiability in the range of $\L_1(T)$. 
One of the terms in 
 \eqref{Res2}, giving rise to $\Res$, is given by 
$\If_{\pl, \pe}(V*\Res)$.
Hence, we need to prove an almost sure mapping property
 with the same time differentiability for the domain and the range.
In Section \ref{SEC:po}, we prove the following proposition.

\begin{proposition}\label{PROP:sto4J}
Let  $ s_3 < 0$ and $T>0$.
Then, there exist small $\theta = \theta (s_3) > 0$
such that, for any finite $ q >1$, 
the paracontrolled operator $\If_{\pl, \pe}$
defined in \eqref{X6}
belongs to
\begin{align}
\L_2(q, T) := \L(L^q([0, T]; L^2(\T^3) )
\, ;\, 
 L^\infty([0, T]; H^{s_3}(\T^3))), 
\label{L1ast}
\end{align}

\noi
almost surely.
Furthermore the following tail estimate holds
for some  $C,c >0$:
\begin{align}
\PP \Big( \| \If_{\pl, \pe} \|_{\L_2 (q, T)} > \ld \Big)
\le
\begin{cases}
C \exp \big( -  \frac{\ld}{T^{c}} \big),  & \text{when }0<T\le1, \\
C T \exp ( -  \ld),  & \text{when }  T > 1
\end{cases}
\label{pote2}
\end{align}
for any $\ld\gg 1$.

If we define  the paracontrolled operator $\If_{\pl, \pe}^N$, $N \in \N$, 
by replacing 
$\Psi$ in \eqref{X6} and \eqref{X7}
with the truncated stochastic convolution $\Psi_N$ in \eqref{so4a}, 
then
the truncated paracontrolled operators $\If_{\pl, \pe}^N$ converge almost surely to $\If_{\pl, \pe}$ 
in $\L_2 (q, T)$.
Furthermore, the tail estimate~\eqref{pote2}
holds for the truncated paracontrolled operators $\If_{\pl, \pe}^N$ with the constants independent of $N\in \N$. 
\end{proposition}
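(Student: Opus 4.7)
The plan is to decompose the kernel $\A_{n,n_1}(t,t')$ in \eqref{X7} into its Wick-ordered stochastic part and a deterministic renormalization counter term. Writing
\[
\ft\Psi(n_2,t')\,\ft\Psi(n_3,t) \, = \, {:}\,\ft\Psi(n_2,t')\,\ft\Psi(n_3,t)\,{:} \, + \,\ind_{n_3 = -n_2}\,\E\big[\ft\Psi(n_2,t')\,\ft\Psi(-n_2,t)\big],
\]
one obtains $\A_{n,n_1} = \A_{n,n_1}^{\text{sto}} + \A_{n,n_1}^{\text{det}}$. The stochastic component lies entirely in the second homogeneous Wiener chaos generated by $\Psi$, while the deterministic component is supported on the diagonal $n = n_1$ (coming from the constraint $n - n_1 = n_2 + n_3 = 0$ after the pairing $n_3 = -n_2$).

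For the stochastic part, the approach is to compute the second moment of $I_n(t,t') = \sum_{n_1} \ft w(n_1,t')\,\A_{n,n_1}^{\text{sto}}(t,t')$ using the explicit Gaussian covariance of $\Psi$ read off from \eqref{W2}. The frequency restrictions $|n_1| \ll |n_2|^\theta$ and $|n_1+n_2| \sim |n_3|$, together with the decay $\jb{n_2}^{-2}$ coming from $\E[|\ft\Psi(n_2,t')|^2]$, yield an absolutely convergent kernel estimate once $s_3 < 0$ and $\theta = \theta(s_3) > 0$ is chosen small. H\"older in $t'$ converts this to an $\L_2(q,T)$ second-moment bound on the action on $w$. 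Gaussian hypercontractivity on the second Wiener chaos then promotes the second moment to all $L^p(\O)$-moments, and a Kolmogorov-type continuity argument in the output time variable $t$, exploiting a small $\eps$-loss in spatial regularity to pass from $\jb{n}^{s_3+\eps}$-weighted moment bounds to $C([0,T];H^{s_3})$-bounds, produces almost sure membership in $\L_2(q,T)$ of the stochastic piece.

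The main obstacle is the deterministic counter term, which, after restriction to $n = n_1$, becomes a Fourier multiplier operator in $n$ with kernel
\[
K_n(t,t') = \ind_{[0,t]}(t')\, e^{-\frac{t-t'}{2}} \sum_{\substack{n_2 \in \Z^3\\ |n|\ll |n_2|^\theta}} \frac{\sin((t-t')\jbb{n+n_2})}{\jbb{n+n_2}}\, C(n_2;t,t'),
\]
where $C(n_2;t,t') = \E[\ft\Psi(n_2,t')\,\ft\Psi(-n_2,t)]$ is a smooth bounded function of $(t,t')$ satisfying $|C(n_2;t,t')| \les \jb{n_2}^{-2}$, computable explicitly from \eqref{W2}. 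In \cite{GKO2} the analogous term was controlled by integrating by parts in $t'$ using the $C^1_t$-regularity of $w$; this route is unavailable here. Instead, the kernel is estimated directly: the constraint $|n| \ll |n_2|^\theta$ restricts the sum to $|n_2| \ges |n|^{1/\theta}$, so that $\jbb{n+n_2}^{-1}\jb{n_2}^{-2}$ is summable in $n_2$ and produces $\sup_{0 \le t' \le t \le T} |K_n(t,t')| \les \jb{n}^{-\eta}$ for some $\eta = \eta(\theta) > 0$. Choosing $\theta$ small enough so that $\eta > -s_3$ gives boundedness of the deterministic operator from $L^q([0,T];L^2)$ into $L^\infty([0,T];H^{s_3})$ by H\"older in $t'$ with exponent conjugate to $q$.

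For the tail estimate \eqref{pote2}, since $\If_{\pl,\pe}$ lies in the inhomogeneous Wiener chaos of order at most two, Gaussian hypercontractivity together with Nelson's estimate gives sub-exponential tails of the form $\PP\big(\| \If_{\pl,\pe}\|_{\L_2(q,T)} > \ld\big) \les \exp(-c\ld)$, while a careful tracking of the $T$-dependence of the $L^p(\O)$-moments produces the small-time factor $T^{-c}$ in the exponent in the regime $0 < T \le 1$. Convergence of the truncations $\If^N_{\pl,\pe}$ to $\If_{\pl,\pe}$ follows from the bilinear structure in $\Psi$: the differences are bilinear expressions in $\Psi - \Psi_N$ tested against $\Psi_N$ or $\Psi$, to which the same moment bounds apply, and the truncated covariances converge to those of $\Psi$ uniformly on compact frequency sets. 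This yields convergence in $L^p(\O;\L_2(q,T))$ and hence almost surely along a subsequence; full almost sure convergence is then upgraded by Borel--Cantelli from the uniform-in-$N$ tail bound, which is immediate from the argument above since only the uniform variance bound $\E[|\ft\Psi_N(n_2,t)|^2] \les \jb{n_2}^{-2}$ is used.
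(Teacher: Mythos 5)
Your decomposition of $\A_{n,n_1}$ into a Wick-ordered stochastic piece and a deterministic counter term on the diagonal $n=n_1$ matches the paper's splitting in \eqref{X9}, and your treatment of the stochastic piece---second-moment estimate via the Gaussian covariance with the frequency localizations $|n_1|\ll|n_2|^\theta$, $|n_1+n_2|\sim|n_3|$, hypercontractivity, and a Kolmogorov/GRR continuity argument in the output time---is essentially the argument in \eqref{Aop0}--\eqref{Aop4b}.

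The deterministic counter term, however, is where your proof breaks down, and it is the heart of the proposition. You claim that under $|n|\ll|n_2|^\theta$ the kernel
$\frac{\sin((t-t')\jbb{n+n_2})}{\jbb{n+n_2}}\s_{n_2}(t,t')$
is absolutely summable in $n_2$ with a bound $\les\jb{n}^{-\eta}$. It is not. Since $\jbb{n+n_2}\sim\jb{n_2}$ on the region $|n|\ll|n_2|^\theta$, the crude majorization gives $\sum_{n_2}\jbb{n+n_2}^{-1}\jb{n_2}^{-2}\sim\sum_{n_2}\jb{n_2}^{-3}$, which diverges logarithmically in $\Z^3$; the restriction to $|n_2|\ges|n|^{1/\theta}$ does not cure this, since $\sum_{|n_2|>R}\jb{n_2}^{-3}$ still diverges for every $R$. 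This divergence is exactly what forced \cite{GKO2} to integrate by parts in $t'$ (using $C^1_t$-regularity of $w$), and the point of Proposition \ref{PROP:sto4J} is to replace that manoeuvre by dispersive estimates that do not touch the time regularity of $w$.

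What the paper actually does is further decompose $\A^{(2)}_{n,n}$ via trigonometric product-to-sum identities into three pieces $\A^{(3)}_n,\A^{(4)}_n,\A^{(5)}_n$ (see \eqref{A4a}) carrying phases $\jbb{n+n_2}+\jbb{n_2}$, $\jbb{n+n_2}-\jbb{n_2}$, and a faster-decaying remainder, and exploits oscillation and cancellation rather than crude size. For the high-oscillation piece $\A^{(3)}$, the kernel $\K_n(t-t')$ is treated as a temporal convolution operator; a level-set counting in $n_2$ relative to the phase $\jbb{n+n_2}+\jbb{n_2}$ shows $|\ft\K_n(\tau)|\les\log\jb\tau/\jb\tau$ uniformly in $n$ (\eqref{A2c}--\eqref{A2d}), hence $\ft\K_n\in L^q_\tau$ for $q>1$, and Hausdorff--Young converts this into the claimed $L^q_{t'}\to L^\infty_t$ bound. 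For the low-oscillation piece $\A^{(4)}$, the sum over $n_2$ with phase $\jbb{n+n_2}-\jbb{n_2}\approx\jb{n,n_2}/\jbb{n_2}$ is still not absolutely convergent; the paper symmetrizes $n_2\leftrightarrow -n_2$, so that the leading linear-in-$n$ phases cancel and only the correction $\Theta^\pm(n,n_2)=O(\jb{n}^2/\jb{n_2})$ from the mean value theorem (\eqref{Y14}--\eqref{Y14a}) survives, making the sum absolutely convergent with the bound $\jb{n}^{2-1/\theta}$ in \eqref{A5a}. Without this two-part oscillatory mechanism, the deterministic operator simply cannot be controlled, so your proof as written does not go through.
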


We are now ready to present the resulting system for the three unknowns $(X, Y, \Res)$.
Putting together \eqref{SNLW3}, \eqref{SNLW4}, \eqref{Res2}, \eqref{Pxy2}, 
\eqref{X3}, and  \eqref{X6}, we arrive at the following system:
\begin{align}
\begin{split}
 (\dt^2 + \dt  +1 - \Dl) X  & =  - \Big( V \ast 
( \Qxy + 2\Res \, + \! :\! \Psi^2 \!: ) \Big) \pl \Psi,\\
 (\dt^2 + \dt +1  - \Dl) Y 
&  = - \Big( V \ast ( \Qxy + 2\Res \, + \! :\! \Psi^2 \!:) \Big) (X+Y) \\
& \hphantom{X}
- \Big( V \ast ( \Qxy + 2\Res \, + \! :\! \Psi^2 \!: ) \Big) \pge \Psi, \\
\Res 
&= - \If_{\pl}^{(1)}
 \big(V \ast (\Qxy  + 2\Res + :\! \Psi^2 \!:\,)\big)\pe \Psi\\
& \hphantom{X}
 - \If_{\pl, \pe}
 \big(V \ast (\Qxy  + 2\Res + :\! \Psi^2 \!:\,)\big), \\
(X, \dt X, Y, \dt Y, \Res)|_{t = 0} & = (X_0, X_1, Y_0, Y_1, 0).
\end{split}
\label{SNLW5}
\end{align}

\noi
Here, we included general initial data for $X$ and $Y$.
By  viewing the following random distributions and operator:
$ \Psi$,  $:\! \Psi^2 \!:$, $ (V \ast :\! \Psi^2 \!:) \pe \Psi$,
and $ \If_{\pl, \pe}$
as predefined deterministic data
with certain regularity properties, 
we prove the following local well-posedness 
 of the system~\eqref{SNLW5}.
Given $s \in \R$ and $T>0$, define $X^s(T)$ by 
\begin{align}
 X^{s}(T)
 & = C([0,T ];H^{s}(\T^3))\cap C^1([0,T]; H^{s-1}(\T^3)).
\label{M0}
\end{align}

\begin{theorem} \label{THM:1}
Let $V$ be the Bessel potential of order $\b> 1$.
Let $\frac 14<s_1 < \frac 12 < s_2<1$ and $-\frac 12 <s_3<0$ satisfy
\begin{align}
\be > -3s_1 + s_2 + 2.
\label{Z0}
\end{align}
Then,
there exist $\ta = \ta(s_3) > 0$ 
and $\eps = \eps(s_1, s_2, s_3, \be ) > 0$
such that 
if 
\begin{itemize}
\item   $\Psi $ is a distribution-valued function belonging to 
$C([0, 1]; W^{-\frac 12 - \eps, \infty}(\T^3))\cap C^1([0, 1]; W^{-\frac 32 - \eps, \infty}(\T^3))$, 

\smallskip
\item
$:\! \Psi^2 \!:
$ is a distribution-valued function belonging to $ C([0, 1]; W^{-1 - \eps, \infty}(\T^3))$,

\smallskip
\item
$(V \ast :\! \Psi^2 \!:) \pe \Psi
$ is a distribution-valued function belonging to $ C([0, 1]; W^{\be -\frac 32 - \eps, \infty}(\T^3))$, 

\smallskip

\item 
 the operator
 $\If_{\pl, \pe}$ 
belongs to  the class
 $\L_2\big(\frac 32, 1\big)$ in \eqref{L1ast}, 

\smallskip

\end{itemize}

\noi
then the system \eqref{SNLW5} is locally well-posed in 
$\H^{s_1}(\T^3) \times \H^{s_2}(\T^3) \times \{0\}$.
More precisely, 
given any $(X_0, X_1, Y_0, Y_1) \in \H^{s_1}(\T^3)\times \H^{s_2}(\T^3)$, 
there exists $T > 0$ 
such that there exists  a unique solution $(X, Y, \Res) $ to the 
defocusing Hartree SdNLW system \eqref{SNLW5} on $[0, T]$
in the class\textup{:}
\begin{align}
 Z^{s_1, s_2, s_3}(T)
 =  X^{s_1}(T) \times  X^{s_2}(T) \times 
 L^3([0, T ];H^{s_3}(\T^3)). 
\label{Z1}
\end{align}

\noi
Furthermore, the solution $(X, Y, \Res)$
depends  continuously 
on the enhanced data set\textup{:}
\begin{align}
\Xi = \big(X_0, X_1, Y_0, Y_1, 
 \Psi, \,  :\! \Psi^2 \!:,  \,  (V \ast :\! \Psi^2 \!:) \pe \Psi,
\,  \If_{\pl, \pe}\big)
\label{data1}
\end{align}

\noi
in the class\textup{:}
\begin{align*}
\mathcal{X}^{s_1, s_2, \eps}_T
& = \H^{s_1}(\T^3) \times 
\H^{s_2}(\T^3)\\
& \hphantom{X}
\times 
\big( C([0,T]; W^{-\frac 12 - \eps, \infty}(\T^3))\cap C^1([0, T]; W^{-\frac 32 - \eps, \infty}(\T^3))\big)\\
& \hphantom{X}
\times 
C([0,T]; W^{-1 - \eps, \infty}(\T^3))
\times C([0,T];W^{\be -\frac 32 -\eps,\infty}(\T^3))\\
& \hphantom{X}
\times
 \L_2\big(\tfrac 32, T\big).
\end{align*}

\end{theorem}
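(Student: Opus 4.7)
The plan is to prove the theorem by a standard contraction-mapping argument, treating the enhanced data set $\Xi$ in \eqref{data1} as a given deterministic input. After passing to the Duhamel formulation of the $X$- and $Y$-equations via the integral operator $\I=(\dt^2+\dt+1-\Dl)^{-1}$ and keeping the $\Res$-equation in its algebraic form, I would define a solution map
\[
\Phi:(X,Y,\Res)\longmapsto(\wt X,\wt Y,\wt\Res)
\]
on the space $Z^{s_1,s_2,s_3}(T)$ from \eqref{Z1}, and show that $\Phi$ is a strict contraction on a small ball for sufficiently small $T>0$ depending on $\|\Xi\|_{\mathcal{X}^{s_1,s_2,\eps}_T}$. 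Continuous dependence on $\Xi$ then follows from the same multilinear estimates applied to differences.

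The building blocks are the deterministic linear bound $\|\I F\|_{X^{s}(T)}\les T^{\alpha}\|F\|_{L^p_T H^{s-1}}$ for the damped wave propagator \eqref{D2a}, Bony's paraproduct decomposition \eqref{para}, and fractional Leibniz and Sobolev product inequalities in $\R^3$. For the $X$-equation, every source is of the form $(V*w)\pl\Psi$ with $w$ of regularity $-\tfrac12-$, which lies in $C_T H^{s_1-1-}$ and hence gives $\wt X\in X^{s_1}$ after Duhamel. For the $Y$-equation, the purely stochastic term $(V*:\!\Psi^2\!:)\pe\Psi$ is absorbed directly by its hypothesized $H^{\be-\frac32-}$ regularity, the $\pge\Psi$ portion benefits from the Bessel smoothing together with the positivity of $s_1$, and the genuinely cubic source $\bigl(V*(X+Y)^2\bigr)(X+Y)$ is controlled by a fractional product inequality of the schematic form
\[
\|(V*fg)h\|_{H^{s_2-1}}\les\|f\|_{H^{s_1}}\|g\|_{H^{s_1}}\|h\|_{H^{s_1}},
\]
which holds precisely when the Sobolev pairing condition $(2s_1-\tfrac32+\be)+s_1>s_2+\tfrac12$ is satisfied, producing the hypothesis \eqref{Z0}.

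For the $\Res$-equation, the first summand $\If_{\pl}^{(1)}(V*w)\pe\Psi$ is handled by a deterministic smoothing argument: the frequency restriction $|n_1|\ges|n_2|^\theta$ in \eqref{X3} yields a polynomial gain in the high frequency that is sufficient to open up the otherwise ill-defined resonant product with $\Psi$, so the resulting output lies in $L^\infty_T H^{s_3}$. The second summand $\If_{\pl,\pe}(V*w)$ is bounded using the operator norm in $\L_2(\tfrac32,T)$ supplied by Proposition \ref{PROP:sto4J}, after placing $V*w$ in $L^{3/2}_T L^2$. Each constituent of $w=\Qxy+2\Res+\,:\!\Psi^2\!:$ admits such a bound: the pieces of $\Qxy$ use $s_1>\tfrac14$ and $s_2>\tfrac12$ combined with the $\be$-gain, the Wick square enters through its $W^{-1-,\infty}$ regularity, and the $V*\Res$ contribution is estimated by $T^{\delta}\|\Res\|_{L^3_T H^{s_3}}$ thanks to the smoothing by $V$ and the admissible range of $s_3$. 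Powers of $T$ produced by H\"older in time and the Duhamel gain yield the smallness needed for contraction.

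The step I expect to be the main obstacle is the cubic $Y$-equation estimate above, which is simultaneously trilinear in the unknowns, exploits the full Bessel smoothing of $V$, and lands in a negative Sobolev space; it is precisely this three-way balance that dictates the sharp condition \eqref{Z0}. All remaining estimates are essentially routine paraproduct and Kato--Ponce bounds, and the contraction together with the continuity with respect to $\Xi$ are then immediate from the multilinearity of $\Phi$ by subtracting two copies and absorbing one factor into a small $T^{\alpha}$ prefactor.
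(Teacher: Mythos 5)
Your proposal is correct and follows essentially the same route as the paper: Duhamel formulation plus a contraction on $Z^{s_1,s_2,s_3}(T)$, with the energy estimate $\|\I F\|_{X^s(T)}\lesssim\|F\|_{L^1_T H^{s-1}}$ and H\"older in time providing the small-$T$ gain, the trilinear Sobolev/paraproduct estimate on $(V*(X+Y)^2)(X+Y)$ landing in $H^{s_2-1}$ generating precisely the hypothesis \eqref{Z0}, Lemma \ref{LEM:sto3} handling $\If_{\pl}^{(1)}$ followed by the resonant product with $\Psi$, and the $\L_2(\frac32,T)$-bound of Proposition \ref{PROP:sto4J} absorbing the $\If_{\pl,\pe}$ term. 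The paper factors your single trilinear bound into \eqref{M1-1}, \eqref{M1-3}, and \eqref{M1-2}, but that is only a presentational difference.
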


Note that, given $\be > 1$, the condition \eqref{Z0} is satisfied
by taking both $s_1$ and $ s_2$ sufficiently close to $\frac 12$.
Given the a priori regularities of the enhanced data, 
Theorem \ref{THM:1} follows
from the standard energy estimate for the damped wave equation
(see \eqref{EE1} below).
Namely, we do not need to rely on the Strichartz estimates
thanks to the strong smoothing of the Bessel potential~$V$.
See Section \ref{SEC:LWP} for the proof.

\begin{remark} \label{REM:LWP}\rm
(i) The choice of the temporal integrability 
$L^3_T$ for $\Res$
and $ \L_2\big(\tfrac 32, T\big)$
comes from  the focusing case presented in the next subsection.

\smallskip

\noi
(ii) For the sake of the well-posedness
of the system \eqref{SNLW5}, 
we considered 
general initial data $(X_0, X_1, Y_0, Y_1) \in \H^{s_1}(\T^3)\times \H^{s_2}(\T^3)$
in Theorem \ref{THM:1}.
However, in order to go back from the system~\eqref{SNLW5}
to the defocusing Hartree SdNLW \eqref{SNLWA}
with the identification \eqref{Res1}
(in the limiting sense), 
we need to set $(X_0, X_1) = (0, 0)$
since  the resonant product
of  
$ \dt\D(t)X_0 +  \D(t) (X_0+X_1)$ and $\Psi$ is not well defined
in general.
The same comment applies to Theorem \ref{THM:2}
in the focusing case.


\smallskip

\noi
(iii) In proving  the local well-posedness result 
of  the system \eqref{SNLW5} stated in Theorem \ref{THM:1}, 
we do not need to use the $C^1_TW^{-\frac32-\eps, \infty}_x$-norm
for the stochastic convolution $\Psi$.
However, we will need
the $C^1_TW^{-\frac32-\eps, \infty}_x$-norm for $\Psi$
in the globalization argument presented in Section~\ref{SEC:GWP}
and thus have included it in the hypothesis
of Theorem \ref{THM:1}
and the definition 
of the space $\mathcal{X}^{s_1, s_2, \eps}_T$.
See also \eqref{data3} and Remark \ref{REM:conv0}.
Furthermore, with this definition of the space $\mathcal{X}^{s_1, s_2, \eps}_T$, 
the map 
from an enhanced data set in \eqref{data1} (with $(X_0, X_1, Y_0, Y_1) = (0, 0, u_0, u_1)$)
to $(u, \dt u)$, where $ u = \Psi  + X + Y$ as in~\eqref{decomp3} and \eqref{decomp3a}
is  a continuous map
from $\mathcal{X}^{s_1, s_2, \eps}_T$
to $ C([0, T]; \H^{-\frac{1}{2}-\eps}(\T^3))$.

Consider 
the following defocusing Hartree SdNLW $(\s < 0$) with the truncated noise for $N \in \N$:
\begin{align}
\begin{cases}
\dt^2 u_N + \dt u_N + (1 -  \Dl)  u_N  
-\s   \big( (V \ast  :\! u_N^2 \!:\,) \, u_N \big)
= \sqrt{2}\pi_N \xi\\
(u_N, \dt u_N)|_{t= 0} = (u_0, u_1) + \pi_N (\phi_0^\o, \phi_1^\o)
\end{cases}
\label{SNLWn}
\end{align}

\noi
where $(u_0, u_1) \in \H^{s_2}(\T^3)$, 
$\Law( (\phi_0^\o, \phi_1^\o)) = \muu = \mu_1 \otimes \mu_0$, 
and 
$:\!  u_N^2 \!:  \, =  u_N^2 -\s_N$.
Then, 
together with the almost sure convergence
of the truncated enhanced data set:
\[ 
\Xi_N = \big(0, 0, u_0, u_1, 
 \Psi_N, \,  :\! \Psi^2_N \!:,  \,  (V \ast :\! \Psi^2_N \!:) \pe \Psi_N,
\,  \If_{\pl, \pe}^N\big)\]
 
 \noi
 in $\mathcal{X}^{s_1, s_2, \eps}_1$
 (see Lemmas \ref{LEM:stoconv} and  \ref{LEM:IV} and Proposition \ref{PROP:sto4J}), 
 the discussion above shows that 
 the solution $(u_N, \dt u_N)$ to \eqref{SNLWn}
 converges almost surely
 to some limiting process $(u, \dt u)$ in 
 $ C([0, T_\o]; \H^{-\frac{1}{2}-\eps}(\T^3))$, 
 where $T_\o$ is an almost surely positive stopping time, 
thus  yielding local well-posedness of 
the defocusing Hartree SdNLW \eqref{SNLWA}
in the usual sense in the study of singular stochastic PDEs.

The same comment applies to Theorem \ref{THM:2}
in the focusing case.

\end{remark}

\subsection{Focusing case}
\label{SUBSEC:1.5}

In the following, we briefly describe the required modification
to prove local well-posedness of 
the focusing Hartree SdNLW \eqref{SNLWA2} 
for $\be \geq  2$.
Since a precise value of $\s > 0$ does not play any role, 
we set $\s = 1$.
In the focusing case, 
we have  an extra term $M_\g(:\!u^2\!:) u$ in the equation.
From \eqref{focusnon}, 
\eqref{decomp3}, \eqref{decomp3a}, and \eqref{Pxy2}, we have
\begin{align}
M_\g(:\!u^2\!:) u
= M_\g(\Qxy + 2\Res \, + \! :\! \Psi^2 \!:) \Psi
+ M_\g(\Qxy + 2\Res \, + \! :\! \Psi^2 \!:)(X+Y).
\label{gam1}
\end{align}

\noi
Then, 
by including the first term on the right-hand side of \eqref{gam1}
in the $X$-equation and the second term in the $Y$-equation, we 
end up with the system:
\begin{align}
\begin{split}
 (\dt^2 + \dt  +1 - \Dl) X  
& =   \Big( V \ast 
( \Qxy + 2\Res \, + \! :\! \Psi^2 \!:) \Big) \pl \Psi\\
& \hphantom{X}
- M_\g(\Qxy + 2\Res \, + \! :\! \Psi^2 \!:) \Psi,\\
 (\dt^2 + \dt +1  - \Dl) Y
&  =  \Big( V \ast ( \Qxy + 2\Res \, + \! :\! \Psi^2 \!:) \Big) (X+Y) \\
& \hphantom{X}
+ \Big( V \ast ( \Qxy + 2\Res \, + \! :\! \Psi^2 \!: ) \Big) \pge \Psi\\
& \hphantom{X}
-M_\g(\Qxy + 2\Res \, + \! :\! \Psi^2 \!:)(X+Y),\\
\Res 
&=  \If_{\pl}^{(1)}
 \big(V \ast (\Qxy  + 2\Res + :\! \Psi^2 \!:\,)\big)\pe \Psi\\
& \hphantom{X}
+   \If_{\pl, \pe}
 \big(V \ast (\Qxy  + 2\Res + :\! \Psi^2 \!:\,)\big)\\
& \hphantom{X}
- \I\big( M_\g(\Qxy + 2\Res \, + \! :\! \Psi^2 \!:) \Psi\big) \pe \Psi, \\
(X, \dt X, Y, \dt Y, \Res)|_{t = 0} & = (X_0, X_1, Y_0, Y_1, 0).
\end{split}
\label{SNLW6}
\end{align}

\noi
Here, $\g$ is as in Theorem \ref{THM:Gibbs2}
and in particular, we have $ \g = 3$ when $\be = 2$.
The last term in the $\Res$-equation puts a restriction
on the temporal integrability for $\Res$.
By the energy estimate, 
we can place 
$M_\g(\Qxy + 2\Res \, + \! :\! \Psi^2 \!:) $ in $L^1([0, T])$
(ignoring the spatial regularity).
In order to perform a contraction argument, we need to save some time integrability
and thus need to place $|\int \Res \, dx |^{\g - 1}$ in $L^{1+}([0, T])$, 
namely, $\int \Res \,dx $ in $L^{2+ }([0, T])$ when $\g = 3$.
This explains the choice $L^3_T$ for $\Res$ in \eqref{Z1}.

In order to handle the last term in the $\Res$-equation, we also need to introduce the following stochastic term:
\begin{align}
\Ab(x, t, t') = \sum_{n \in \Z^3} e_n(x) \sum_{\substack{n = n_1 + n_2\\|n_1|\sim| n_2|}}
 e^{-\frac{t-t'}{2}} \frac{\sin ( (t - t') \jbb{n_1})}{\jbb{n_1}}
 \ft \Psi(n_1, t') \ft \Psi (n_2, t)
\label{sto1}
\end{align}

\noi
for $t \geq t' \geq 0$, 
where $|n_1|\sim| n_2|$ signifies the resonant product.
Then, we interpret 
the last term in the $\Res$-equation
as
\begin{align}
\Big( \I\big( M_\g(\Qxy + 2\Res \, + \! :\! \Psi^2 \!:) \Psi\big) \pe \Psi\Big)(t)
= \int_0^t 
M_\g(\Qxy + 2\Res \, + \! :\! \Psi^2 \!:) (t') \Ab(t, t') dt'.
\label{sto1a}
\end{align}

\noi
We point out that the Fourier transform $\ft \Ab(n, t, t')$
corresponds to $\A_{n, 0}(t, t')$ defined in \eqref{X7}
and thus the analysis for $\Ab$ is closely related to 
that for the paracontrolled
operator $\If_{\pl, \pe}$ in~\eqref{X6}.
See
 Lemma \ref{LEM:sto1} below.

As a result, we obtain the following local well-posedness
of the focusing Hartree SdNLW system~\eqref{SNLW6}.

\begin{theorem} \label{THM:2}
Let $V$ be the Bessel potential of order $\b\geq 2$, 
$A\in \R$, and $2 < \g \le 3$.
Let $\frac 14<s_1 < \frac 12 < s_2<1$ and $-\frac 12 <s_3<0$, 
satisfying \eqref{Z0}.
Then,
there exist $\ta = \ta(s_3) > 0$ 
and $\eps = \eps(s_1, s_2, s_3, \be ) > 0$
such that 
if 
\begin{itemize}
\item   $\Psi $ is a distribution-valued function belonging to 
$C([0, 1]; W^{-\frac 12 - \eps, \infty}(\T^3))\cap C^1([0, 1]; W^{-\frac 32 - \eps, \infty}(\T^3))$, 

\smallskip
\item
$:\! \Psi^2 \!:
$ is a distribution-valued function belonging to $ C([0, 1]; W^{-1 - \eps, \infty}(\T^3))$,

\smallskip
\item
$\Ab(t, t')$ 
is a distribution-valued function belonging to $ L^\infty_{t'} L^3_{t}(\Dl_2(1); H^{ - \eps}(\T^3))$, 
where $\Dl_2(T) \subset [0, T]^2 $ is given by 
\begin{align}
\Dl_2(T) = \{(t, t') \in \R_+^2: 0 \leq t' \leq t \leq T\}, 
\label{sto2}
\end{align}

\smallskip

\item 
 the operator
 $\If_{\pl, \pe}$ 
belongs to  the class
 $ \L_2\big(\tfrac 32, 1\big)$ in \eqref{L1ast}, 

\smallskip

\end{itemize}

\noi
then the system  \eqref{SNLW6}  is locally well-posed in 
$\H^{s_1}(\T^3) \times \H^{s_2}(\T^3) \times \{0\}$.
More precisely, 
given any $(X_0, X_1, Y_0, Y_1) \in \H^{s_1}(\T^3)\times \H^{s_2}(\T^3)$, 
there exists $T > 0$ 
such that there exists  a unique solution $(X, Y, \Res) $ to the 
focusing Hartree SdNLW system \eqref{SNLW6} on $[0, T]$
in the class 
$ Z^{s_1, s_2, s_3}(T)$
defined in \eqref{Z1}.
Furthermore, the solution $(X, Y, \Res)$
depends  continuously 
on the enhanced data set\textup{:}
\begin{align}
\Xi = \big(X_0, X_1, Y_0, Y_1, 
 \Psi, \,  :\! \Psi^2 \!: ,  \, \Ab, 
\,  \If_{\pl, \pe}\big)
\label{data2}
\end{align}

\noi
in the class\textup{:}
\begin{align*}
\mathcal{Y}^{s_1, s_2, \eps}_T
& = \H^{s_1}(\T^3) \times 
\H^{s_2}(\T^3)\\
& \hphantom{X}
\times 
\big(C([0,T]; W^{-\frac 12 - \eps, \infty}(\T^3))\cap C^1([0,T]; W^{-\frac 32 - \eps, \infty}(\T^3))\big)\\
& \hphantom{X}
\times 
C([0,T]; W^{-1 - \eps, \infty}(\T^3))
\times  L^\infty_{t'}L^3_t(\Dl_2(T); H^{ - \eps}(\T^3))
\times
 \L_2\big(\tfrac 32, T\big).
\end{align*}

\end{theorem}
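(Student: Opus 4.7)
The plan is a contraction mapping argument in the Banach space $Z^{s_1, s_2, s_3}(T)$ defined in \eqref{Z1}, treating the enhanced data $\Xi$ in \eqref{data2} as deterministic input with the prescribed regularities. I would define a map $\Phi$ sending $(X, Y, \mathfrak{R})$ to $(\widetilde X, \widetilde Y, \widetilde{\mathfrak{R}})$, where the first two components are obtained by applying the Duhamel operator $\I$ to the right-hand sides of the first two equations of \eqref{SNLW6}, and the third by substituting into the right-hand side of the $\mathfrak{R}$-equation. The damped wave operator $\dt^2 + \dt + 1 - \Delta$ gives one degree of spatial smoothing via the standard energy estimate \eqref{EE1} (no Strichartz needed thanks to $\beta \ge 2$), so all estimates reduce to paraproduct/resonant estimates on $Q_{X,Y}$, $2\mathfrak{R}$, $:\!\Psi^2\!:$ and their convolutions with $V$.

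First I would verify that $Q_{X,Y}$ in \eqref{Pxy} is well defined in $C_T H^{-1/2-}$ using $s_1 > 1/4$ and $s_2 > 1/2$: the only marginal terms are $X \pg \Psi$ and $Y\Psi$, and both close since $s_1 + (-1/2-\eps) > 0$ and $s_2 + (-1/2-\eps) > 0$. Convolution with $V$ then gives $V \ast (Q_{X,Y} + 2\mathfrak{R} + :\!\Psi^2\!:) \in C_T H^{\beta - 1/2-}$, and standard paraproduct estimates bound $(V\ast(\cdots))\pl \Psi$ and $(V\ast(\cdots))\pge \Psi$ together with $(V\ast(\cdots))(X+Y)$, delivering the $X^{s_1}(T)$ and $X^{s_2}(T)$ norms on $\widetilde X$ and $\widetilde Y$. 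The scaling condition \eqref{Z0}, $\beta > -3s_1 + s_2 + 2$, is precisely what is needed to bound the worst contribution, $(V \ast \mathfrak{R})\pge \Psi$, after one degree of Duhamel smoothing and the use of $\mathfrak{R} \in L^3_T H^{s_3}$ (the cubic structure in $X$ entering the $\mathfrak{R}$-equation explains the $-3s_1$ term). The focusing contributions involve only the scalar $\int(\cdots)dx \in L^\infty_T$ multiplied by $\Psi$ or $X+Y$, which are easily absorbed.

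For the $\mathfrak{R}$-equation, the first term is handled by the (deterministic) smoothing of $\If_{\pl}^{(1)}$ of Lemma \ref{LEM:sto3}, applied to $V \ast (Q_{X,Y} + 2\mathfrak{R} + :\!\Psi^2\!:)$; the second term uses the almost sure boundedness $\If_{\pl, \pe} \in \L_2(3/2, T)$ from Proposition \ref{PROP:sto4J}, for which we must place the input in $L^{3/2}_T L^2_x$, and this is where $\mathfrak{R} \in L^3_T H^{s_3}$ together with $\beta \ge 2$ is used. The third, focusing term is reinterpreted via \eqref{sto1a} as $\int_0^t M_\gamma(\cdots)(t')\,\mathbb{A}(t,t')\,dt'$ and estimated by H\"older using $\mathbb{A} \in L^\infty_{t'} L^3_t H^{-\eps}$: writing $M_\gamma(w) = 2A\gamma |\int w\,dx|^{\gamma-2}\int w\,dx$, the scalar factor lies in $L^{3/2}_{t'}$ exactly when $\gamma \le 3$ (using $\int \mathfrak{R}\,dx \in L^3_T$), producing a gain of $T^{\delta}$ and yielding the required $L^\infty_t H^{s_3}$-bound on the $\mathfrak{R}$ output, hence on $\widetilde{\mathfrak{R}}$ in $L^3_T H^{s_3}$.

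The main obstacle is the delicate balancing of time integrabilities in the $\mathfrak{R}$-equation: the focusing term $M_\gamma$ forces a $(\gamma-1)$-fold occurrence of $\int \mathfrak{R}\,dx$, so the choice of $L^3_T$ for $\mathfrak{R}$ in \eqref{Z1}, the $L^{3/2}_T$ domain of $\If_{\pl, \pe}$ in \eqref{L1ast}, and the $L^\infty_{t'} L^3_t$ class of $\mathbb{A}$ in $\mathcal{Y}^{s_1,s_2,\eps}_T$ are all forced to match at the endpoint $\gamma = 3$ (which occurs in the critical case $\beta = 2$). With these choices, all estimates above carry over, mutatis mutandis, to differences of two input triples $(X_i, Y_i, \mathfrak{R}_i)$ and to differences of two enhanced data sets $\Xi_i$, and the short-time factors $T^{\delta}$ arising from H\"older in time make $\Phi$ a contraction on a suitable ball in $Z^{s_1,s_2,s_3}(T)$ for $T = T(\|\Xi\|_{\mathcal{Y}^{s_1,s_2,\eps}_T})$ sufficiently small, yielding both existence of a unique fixed point and continuous dependence on $\Xi$.
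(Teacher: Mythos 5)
Your overall strategy---a contraction mapping in $Z^{s_1,s_2,s_3}(T)$ using the Duhamel formulation, the energy estimate \eqref{EE1}, and paraproduct/resonant estimates, with the enhanced data treated as a fixed input---is exactly the paper's approach (see the discussion around \eqref{M1a}--\eqref{M9}), and your treatment of the $\Res$-equation and the $M_\g$-terms (choice of $L^3_T$ for $\Res$, $\L_2(\tfrac32, T)$ for $\If_{\pl,\pe}$, $L^\infty_{t'}L^3_t$ for $\Ab$, the gain $T^{(4-\g)/3}$ under $\g \le 3$) is correct and matches the paper.

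However, there is a genuine gap in your treatment of $\Qxy$ and in your identification of where the scaling condition \eqref{Z0} is used. You claim that $\Qxy$ lands in $C_T H^{-\frac 12-}$ using only $s_1 > \tfrac14$ and $s_2 > \tfrac12$, asserting that ``the only marginal terms are $X \pg \Psi$ and $Y\Psi$.'' This is not correct: the quadratic self-interaction $(X+Y)^2$ is the worst term. With $X \in H^{s_1}$ for $\tfrac14 < s_1 < \tfrac12$, the estimate \eqref{M1-1} gives $X^2 \in H^{-\frac 32 + 2 s_1}$, which for $s_1$ near $\tfrac14$ is only $H^{-1+}$, strictly worse than $H^{-\frac 12-}$. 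Consequently, the claim $V*\Qxy \in H^{\be - \frac 12-}$ is too optimistic; one only gets $V*(X+Y)^2 \in H^{\be - \frac 32 + 2 s_1}$. To place $V*\Qxy$ in $H^{-s_1+s_2+\frac 12}$, which is what the bilinear estimate \eqref{M1-2} requires so that the product $(V*\Qxy)(X+Y)$ lands in $H^{s_2-1}$, one needs $\be - \tfrac 32 + 2 s_1 \ge -s_1 + s_2 + \tfrac 12$, i.e.\ precisely \eqref{Z0}. This is \emph{the} source of the constraint $\be > -3s_1 + s_2 + 2$---it comes from the quadratic interaction $(X+Y)^2$, not from $(V*\Res)\pge\Psi$ as you assert, and the ``cubic structure in $X$ entering the $\Res$-equation'' explanation is simply wrong: the term $(V*\Res)\pge\Psi$ with $\Res \in L^3_T H^{s_3}$, $s_3 > -\tfrac12$, and $\be \ge 2$ lands in $H^{\be + s_3 - \frac 12-}$, comfortably above $H^{s_2-1}$, and imposes no binding constraint. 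The misattribution matters because, having assumed too much regularity for $\Qxy$, you never confront the step at which \eqref{Z0} is actually invoked, so the parameter bookkeeping that makes the contraction close is left unverified. The fix is simply the paper's estimate \eqref{M1-3}: bound $V*\Qxy$ in $H^{-s_1 + s_2 + \frac 12}$ directly, tracking the Sobolev index on $(X+Y)^2$, and this is exactly where $\be > -3s_1 + s_2 + 2$ enters.
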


For $\be > \frac 32$, 
we can make sense of 
the resonant product in 
$(V \ast :\! \Psi^2 \!:) \pe \Psi$ 
 in a deterministic manner
(given the pathwise regularities of $\Psi$ and $:\!\Psi^2\!:$\,)
and thus there is no need to include this term in the enhanced data set.

\begin{remark}\rm
By including 
$ (V \ast :\! \Psi^2 \!:) \pe \Psi$
in the enhanced data set, we may extend
Theorem~\ref{THM:2}
for $\be > 1$ under the condition \eqref{Z0}.
Note, however, that 
it is not very meaningful to consider 
the focusing SdNLW \eqref{SNLWA2} and thus the system 
\eqref{SNLW6} for $\be < 2$ in view of Theorem \ref{THM:Gibbs2},
since the nonlinearity, especially the terms involving $M_\g$, 
is derived from the potential energy in the Gibbs measure. 
\end{remark}

\section{Notations and basic lemmas}
\label{SEC:2}

In describing regularities of functions and distributions, 
we  use $\eps > 0$ to denote a small constant.
We often  suppress the dependence on such $\eps > 0$ in an estimate.

\subsection{Sobolev 
 and Besov spaces}
\label{SUBSEC:21}

Let $s \in \R$ and $1 \leq p \leq \infty$.
We define the $L^2$-based Sobolev space $H^s(\T^d)$
by the norm:
\begin{align*}
\| f \|_{H^s} = \| \jb{n}^s \ft f (n) \|_{\l^2_n}.
\end{align*}

\noi
We also define the $L^p$-based Sobolev space $W^{s, p}(\T^d)$
by the norm:
\begin{align*}
\| f \|_{W^{s, p}} = \big\| \F^{-1} [\jb{n}^s \ft f(n)] \big\|_{L^p}.
\end{align*}

\noi
When $p = 2$, we have $H^s(\T^d) = W^{s, 2}(\T^d)$.

Let $\phi:\R \to [0, 1]$ be a smooth  bump function supported on $[-\frac{8}{5}, \frac{8}{5}]$ 
and $\phi\equiv 1$ on $\big[-\frac 54, \frac 54\big]$.
For $\xi \in \R^d$, we set $\phi_0(\xi) = \phi(|\xi|)$
and 
\[\phi_{j}(\xi) = \phi\big(\tfrac{|\xi|}{2^j}\big)-\phi\big(\tfrac{|\xi|}{2^{j-1}}\big)\]

\noi
for $j \in \N$.
Then, for $j \in \Z_{\geq 0} := \N \cup\{0\}$, 
we define  the Littlewood-Paley projector  $\P_j$ 
as the Fourier multiplier operator with a symbol $\varphi_j$
given by 
\begin{align}
 \varphi_j(\xi) = \frac{\phi_j(\xi)}{\sum_{k \in \Z_{\geq 0}} \phi_k(\xi)}.
\label{phi1}
 \end{align}

\noi
Note that, for each $\xi \in \R^d$,  the sum in the denominator is over finitely many $k$'s.
Thanks to the normalization \eqref{phi1}, 
we have 
\[ f = \sum_{j = 0}^\infty \P_j f. \]

Let us now recall
 the definition and basic properties of  paraproducts
 introduced by Bony~\cite{Bony}.
 See~\cite{BCD, GIP} for further details.
Given two functions $f$ and $g$ on $\T^3$
of regularities $s_1$ and $s_2$, 
we write the product $fg$ as
\begin{align}
fg & \hspace*{0.3mm}
= f\pl g + f \pe g + f \pg g\notag \\
& := \sum_{j < k-2} \P_{j} f \, \P_k g
+ \sum_{|j - k|  \leq 2} \P_{j} f\,  \P_k g
+ \sum_{k < j-2} \P_{j} f\,  \P_k g.
\label{para1}
\end{align}

Next, we  recall the basic properties of the Besov spaces $B^s_{p, q}(\T^d)$
defined by the norm:
\begin{equation*}
\| u \|_{B^s_{p,q}} = \Big\| 2^{s j} \| \P_{j} u \|_{L^p_x} \Big\|_{\l^q_j(\Z_{\geq 0})}.
\end{equation*}

\noi
We denote the H\"older-Besov space by  $\C^s (\T^d)= B^s_{\infty,\infty}(\T^d)$.
Note that (i)~the parameter $s$ measures differentiability and $p$ measures integrability, 
(ii)~$H^s (\T^d) = B^s_{2,2}(\T^d)$,
and (iii)~for $s > 0$ and not an integer, $\C^s(\T^d)$ coincides with the classical H\"older spaces $C^s(\T^d)$;
see \cite{Graf}.

We recall the basic estimates in Besov spaces.
See \cite{BCD, GOTW} for example.

\begin{lemma}
The following estimates hold.

\noi
\textup{(i) (interpolation)} 
Let $s, s_1, s_2 \in \R$ and $p, p_1, p_2 \in (1,\infty)$
such that $s = \ta s_1 + (1-\ta) s_2$ and $\frac 1p = \frac \ta{p_1} + \frac{1-\ta}{p_2}$
for some $0< \ta < 1$.
Then, we have\footnote{We use the convention that the symbol $\lesssim$ indicates that inessential constants are suppressed in the inequality.}
\begin{equation}
\| u \|_{W^{s,  p}} \les \| u \|_{W^{s_1, p_1}}^\ta \| u \|_{W^{s_2, p_2}}^{1-\ta}.
\label{interp}
\end{equation}

\noi
\textup{(ii) (immediate  embeddings)}
Let $s_1, s_2 \in \R$ and $p_1, p_2, q_1, q_2 \in [1,\infty]$.
Then, we have
\begin{align} 
\begin{split}
\| u \|_{B^{s_1}_{p_1,q_1}} 
&\les \| u \|_{B^{s_2}_{p_2, q_2}} 
\qquad \text{for $s_1 \leq s_2$, $p_1 \leq p_2$,  and $q_1 \geq q_2$},  \\
\| u \|_{B^{s_1}_{p_1,q_1}} 
&\les \| u \|_{B^{s_2}_{p_1, \infty}}
\qquad \text{for $s_1 < s_2$},\\
\| u \|_{B^0_{p_1, \infty}}
 &  \les  \| u \|_{L^{p_1}}
 \les \| u \|_{B^0_{p_1, 1}}.
\end{split}
\label{embed}
\end{align}

\smallskip

\noi
\textup{(iii) (algebra property)}
Let $s>0$. Then, we have
\begin{equation}
\| uv \|_{\C^s} \les \| u \|_{\C^s} \| v \|_{\C^s}.
\label{alge}
\end{equation}

\smallskip

\smallskip

\noi
\textup{(iv) (Besov embedding)}
Let $1\leq p_2 \leq p_1 \leq \infty$, $q \in [1,\infty]$,  and  $s_2 = s_1 + d \big(\frac{1}{p_2} - \frac{1}{p_1}\big)$. Then, we have
\begin{equation*}
 \| u \|_{B^{s_1}_{p_1,q}} \les \| u \|_{B^{s_2}_{p_2,q}}.
\end{equation*}

\smallskip

\noi
\textup{(v) (duality)}
Let $s \in \mathbb{R}$
and  $p, p', q, q' \in [1,\infty]$ such that $\frac1p + \frac1{p'} = \frac1q + \frac1{q'} = 1$. Then, we have
\begin{equation}
\bigg| \int_{\T^d}  uv \, dx \bigg|
\le \| u \|_{B^{s}_{p,q}} \| v \|_{B^{-s}_{p',q'}},
\label{dual}
\end{equation}

\noi
where $\int_{\T^d} u v \, dx$ denotes  the duality pairing between $B^{s}_{p,q}(\T^d)$ and $B^{-s}_{p',q'}(\T^d)$.

\smallskip
	
\noi		
\textup{(vi) (fractional Leibniz rule)} 
Let $p, p_1, p_2, p_3, p_4 \in [1,\infty]$ such that 
$\frac1{p_1} + \frac1{p_2} 
= \frac1{p_3} + \frac1{p_4} = \frac 1p$. 
Then, for every $s>0$, we have
\begin{equation}
\| uv \|_{B^{s}_{p,q}} \les  \| u \|_{B^{s}_{p_1,q}}\| v \|_{L^{p_2}} + \| u \|_{L^{p_3}} \| v \|_{B^s_{p_4,q}} .
\label{prod}
\end{equation}

\end{lemma}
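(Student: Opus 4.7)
\medskip

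\noindent\textbf{Proof proposal.} All six estimates are by now classical for Besov spaces on $\T^d$, and my plan is to reduce each statement to properties of the Littlewood-Paley blocks $\P_j$ and then invoke the usual inequalities block-by-block. The underlying input is Bernstein's inequality for $\P_j$: for $1\le p_1\le p_2\le\infty$ and $k\ge 0$,
\begin{equation*}
\|\P_j u\|_{L^{p_2}} \lesssim 2^{jd(\frac{1}{p_1}-\frac{1}{p_2})}\|\P_j u\|_{L^{p_1}},
\qquad \|\nabla^k \P_j u\|_{L^{p_1}}\sim 2^{jk}\|\P_j u\|_{L^{p_1}}
\end{equation*}
for $j\ge 1$, together with the almost-orthogonality relation $\P_j \P_k \equiv 0$ when $|j-k|\ge 3$. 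I would quote these from \cite{BCD} rather than reprove them.

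For (i), the interpolation estimate \eqref{interp} follows by writing $u=\sum_j \P_j u$, applying H\"older in $x$ with the exponents $(p_1,p_2)\to p$, and then H\"older in the $\ell^2_j$ index against the split $\jb{n}^{s}=\jb{n}^{\theta s_1}\jb{n}^{(1-\theta)s_2}$; alternatively one cites complex interpolation of $W^{s,p}$ directly. For the embeddings in (ii), the first line is immediate from the monotonicity of $\ell^q$ sequences (using $q_1\ge q_2$) combined with Bernstein to exchange $p_1$ for $p_2$ (which produces the regularity loss $d(\frac1{p_2}-\frac1{p_1})$, absorbed into $s_1\le s_2$). The second line is just $\|\cdot\|_{\ell^{q_1}_j}$ controlled by a geometric series in $2^{-(s_2-s_1)j}\|\cdot\|_{\ell^\infty_j}$, and the third line is the standard comparison between $L^p$ and $B^0_{p,\infty}$, $B^0_{p,1}$ via summation of Littlewood-Paley pieces. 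The Besov embedding (iv) is the exact analogue obtained by applying Bernstein to each block and summing (or taking sup) in $j$ after pulling out the factor $2^{jd(\frac{1}{p_2}-\frac{1}{p_1})}=2^{j(s_2-s_1)}$.

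For (iii), the algebra property with $s>0$, I would use Bony's decomposition \eqref{para1}: the two paraproducts $u\pl v$ and $u\pg v$ are estimated by putting the low-frequency factor in $L^\infty$ (via $B^s_{\infty,\infty}\hookrightarrow L^\infty$ for $s>0$) and the high-frequency factor in $\C^s$, while the resonant piece $u\pe v$ is handled by writing $\P_j(u\pe v)$ as a sum over $|k-k'|\le 2$ with $k,k'\gtrsim j$ and using $s>0$ to sum the geometric series in $2^{-s\max(k,k')}$. For (v), duality between $B^s_{p,q}$ and $B^{-s}_{p',q'}$ follows from the block-diagonalization $\int uv\,dx=\sum_{|j-k|\le 2}\int \P_j u\,\P_k v\,dx$, H\"older in $x$ with exponents $(p,p')$, and H\"older in $j$ with exponents $(q,q')$ against the pairing $2^{sj}\cdot 2^{-sj}$.

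Finally, for the fractional Leibniz rule (vi), I again decompose $uv=u\pl v+u\pe v+u\pg v$: the paraproducts are bounded using H\"older with the exponent pair $(p_3,p_4)$ on $u\pl v$ (placing $u$ in $L^{p_3}$ and $v$ in $B^s_{p_4,q}$) and symmetrically for $u\pg v$, while the resonant term is controlled by either of the two splittings $(p_1,p_2)$ or $(p_3,p_4)$ after putting one factor in $L^{p_i}$ (using again $B^s_{p,q}\hookrightarrow L^p$ for $s>0$). The main technical point throughout is bookkeeping the almost-orthogonality of the Littlewood-Paley supports in the resonant terms (iii) and (vi), so that one can sum over $j$ after using $s>0$; this is the only step where the positivity of $s$ is actually used. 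None of the steps are conceptually difficult: the whole lemma is a bundle of standard consequences of \cite{BCD}, and my proof would essentially be a pointer to the corresponding statements there, with the short arguments above inserted only where the statement does not appear verbatim.
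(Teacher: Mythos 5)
Your approach matches what the paper does: these six inequalities are presented as standard Besov-space facts with a citation to \cite{BCD, GOTW}, and the only proof hint the paper gives is the one-line remark for the interpolation (i) (Littlewood--Paley square function plus H\"older). Your block-by-block sketches of (i) and (iii)--(vi) are consistent with this and essentially correct.

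The one genuine problem is your treatment of the first line of \eqref{embed} in part (ii). You invoke Bernstein ``to exchange $p_1$ for $p_2$'', attributing a ``regularity loss $d(\tfrac1{p_2}-\tfrac1{p_1})$, absorbed into $s_1\le s_2$''. But with $p_1 \le p_2$ that quantity is $\le 0$, and Bernstein runs the other way: it bounds the larger-$L^p$ norm of a frequency-localized piece by the smaller-$L^p$ norm at a derivative cost, so it cannot take you from $\|\P_j u\|_{L^{p_2}}$ down to $\|\P_j u\|_{L^{p_1}}$. The ingredient you actually need is that $\T^d$ has finite measure, so $\|\P_j u\|_{L^{p_1}(\T^d)} \les \|\P_j u\|_{L^{p_2}(\T^d)}$ for $p_1 \le p_2$ by H\"older, with no factor of $2^j$ at all; the hypothesis $s_1 \le s_2$ then supplies $2^{js_1} \le 2^{js_2}$ (since $j \ge 0$), and $q_1 \ge q_2$ gives the $\ell^q$-monotonicity. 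This first line of \eqref{embed} is torus-specific and false on $\R^d$. You have in effect transplanted the argument for (iv), where the $p$-ordering is reversed ($p_2 \le p_1$) and $s_2 - s_1 = d(\tfrac1{p_2}-\tfrac1{p_1}) \ge 0$ is exactly the derivative price Bernstein charges; your sketch of (iv) itself is the correct one for that statement.
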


The interpolation \eqref{interp} follows
from the Littlewood-Paley characterization of Sobolev norms via the square function
and H\"older's inequality.

\begin{lemma}[paraproduct and resonant product estimates]
\label{LEM:para}
Let $s_1, s_2 \in \R$ and $1 \leq p, p_1, p_2, q \leq \infty$ such that 
$\frac{1}{p} = \frac 1{p_1} + \frac 1{p_2}$.
Then, we have 
\begin{align}
\| f\pl g \|_{B^{s_2}_{p, q}} \les 
\|f \|_{L^{p_1}} 
\|  g \|_{B^{s_2}_{p_2, q}}.  
\label{para2a}
\end{align}

\noi
When $s_1 < 0$, we have
\begin{align}
\| f\pl g \|_{B^{s_1 + s_2}_{p, q}} \les 
\|f \|_{B^{s_1 }_{p_1, q}} 
\|  g \|_{B^{s_2}_{p_2, q}}.  
\label{para2}
\end{align}

\noi
When $s_1 + s_2 > 0$, we have
\begin{align}
\| f\pe g \|_{B^{s_1 + s_2}_{p, q}} \les 
\|f \|_{B^{s_1 }_{p_1, q}} 
\|  g \|_{B^{s_2}_{p_2, q}}  .
\label{para3}
\end{align}

\end{lemma}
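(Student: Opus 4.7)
The plan is to reduce each of the three estimates to the standard Littlewood--Paley almost-orthogonality, combined with H\"older's inequality in $x$ and a geometric summation in the frequency parameter. Throughout I rely on two frequency-support facts: if $j<k-2$, then $\P_j f\cdot \P_k g$ is Fourier-supported in the annulus $\{|\xi|\sim 2^k\}$, and hence contributes to $\P_m(\,\cdot\,)$ only when $|m-k|\le C$; if instead $|j-k|\le 2$, then $\P_j f\cdot \P_k g$ is Fourier-supported in the ball $\{|\xi|\les 2^k\}$, so it contributes only when $k\ge m-C$. I also write $\wt\P_k=\sum_{|j-k|\le 2}\P_j$ for the fattened projector.

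For \eqref{para2a}, the decomposition $f\pl g=\sum_k S_{k-2}f\cdot \P_k g$ with $S_{k-2}=\sum_{j<k-2}\P_j$ together with the first support fact localizes $\P_m(f\pl g)$ to those terms with $|k-m|\le C$. H\"older in $x$ and the uniform $L^{p_1}$-boundedness of $S_{k-2}$ (standard multiplier theory) then give $\|\P_m(f\pl g)\|_{L^p}\les \|f\|_{L^{p_1}}\sum_{|k-m|\le C}\|\P_k g\|_{L^{p_2}}$; pulling out the factor $2^{s_2 m}\sim 2^{s_2 k}$ and taking the $\ell^q_m$-norm closes the estimate.

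The bound \eqref{para2} is obtained with the same scheme, the only change being that $\|S_{k-2}f\|_{L^{p_1}}$ is now controlled using the negative regularity $s_1<0$. Specifically, H\"older's inequality in $j$ gives
\begin{equation*}
\|S_{k-2}f\|_{L^{p_1}}\le \sum_{j<k-2}2^{-s_1 j}\bigl(2^{s_1 j}\|\P_j f\|_{L^{p_1}}\bigr)\les 2^{-s_1 k}\|f\|_{B^{s_1}_{p_1,q}},
\end{equation*}
where the geometric series $\sum_{j<k-2}2^{-s_1 j q'}\les 2^{-s_1 kq'}$ converges precisely because $-s_1>0$ (the endpoints $q\in\{1,\infty\}$ being even simpler). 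Inserting this bound, multiplying by $2^{(s_1+s_2)m}\sim 2^{(s_1+s_2)k}$, and taking $\ell^q_m$ yields \eqref{para2}.

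For the resonant estimate \eqref{para3}, writing $f\pe g=\sum_k(\wt\P_k f)(\P_k g)$ and using the second support fact, I obtain
\begin{equation*}
2^{(s_1+s_2)m}\|\P_m(f\pe g)\|_{L^p}\les \sum_{l\le C}2^{(s_1+s_2)l}\,a_{m-l}\,b_{m-l},
\end{equation*}
where $a_k=2^{s_1 k}\|\wt\P_k f\|_{L^{p_1}}$ and $b_k=2^{s_2 k}\|\P_k g\|_{L^{p_2}}$. Since $s_1+s_2>0$, the kernel $\{2^{(s_1+s_2)l}\ind_{l\le C}\}$ lies in $\ell^1$, so Young's inequality for discrete convolutions bounds the $\ell^q_m$-norm by $\|a_k b_k\|_{\ell^q_k}\le \|a\|_{\ell^q}\|b\|_{\ell^q}$ via the elementary embedding $\ell^q\hookrightarrow\ell^\infty$. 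The resulting right-hand side is exactly $\|f\|_{B^{s_1}_{p_1,q}}\|g\|_{B^{s_2}_{p_2,q}}$. The only substantive issue throughout is verifying the two sign conditions $s_1<0$ in \eqref{para2} and $s_1+s_2>0$ in \eqref{para3}; both arise transparently as the convergence thresholds of the relevant geometric series, so I do not anticipate any further obstacle.
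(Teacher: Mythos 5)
Your argument is correct and is exactly the standard Littlewood--Paley proof: for the paraproduct you use that $S_{k-2}f\cdot\P_k g$ is frequency-localized to $|\xi|\sim 2^k$, bound $\|S_{k-2}f\|_{L^{p_1}}$ either trivially or by summing a geometric series using $s_1<0$, and take $\ell^q_m$; for the resonant product you use the frequency support in $\{|\xi|\les 2^k\}$ together with the $\ell^1$-summability of $2^{(s_1+s_2)l}$ over $l\le C$ (via $s_1+s_2>0$) and Young's inequality. The paper itself supplies no proof of this lemma, only a pointer to the references \cite{BCD, MW2}, and your proof is precisely the argument found there (adapted to the torus, where the Littlewood--Paley and $S_k$ kernels still have uniformly bounded $L^1$ norms, so the uniform $L^{p_1}$-boundedness of $S_{k-2}$ you invoke in \eqref{para2a} holds across the full range $1\le p_1\le\infty$).
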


The product estimates \eqref{para2a},  \eqref{para2},  and \eqref{para3}
follow easily from the definition \eqref{para1} of the paraproduct 
and the resonant product.
See \cite{BCD, MW2} for details of the proofs in the non-periodic case
(which can be easily extended to the current periodic setting).

We also recall the following product estimate from \cite{GKO, BOZ}.

\begin{lemma}\label{LEM:gko}
Let $s> 0$.

\smallskip

\noi
\textup{(i)}
Let  $1<p_j,q_j,r\le\infty$, $j=1,2$ such that $\frac{1}{r}=\frac{1}{p_j}+\frac{1}{q_j}$.
Then, we have 
$$
\|\jb{\nb}^s(fg)\|_{L^r(\T^3)}\lesssim\| \jb{\nb}^s f\|_{L^{p_1}(\T^3)} \|g\|_{L^{q_1}(\T^3)}+ \|f\|_{L^{p_2}(\T^3)} 
\|  \jb{\nb}^s g\|_{L^{q_2}(\T^3)}.
$$

\smallskip

\noi
\textup{(ii)}
Let   $1<p \le \infty$ and $1 < q,r<\infty$ such that $s \geq   3\big(\frac{1}{p}+\frac{1}{q}-\frac{1}{r}\big)$
and $q, r' \ge p'$.
%
Then, we have
$$
\|\jb{\nb}^{-s}(fg)\|_{L^r(\T^3)}
\lesssim\| \jb{\nb}^{-s} f\|_{L^{p}(\T^3)} \| \jb{\nb}^{s} g\|_{L^{q}(\T^3)} .
$$
\end{lemma}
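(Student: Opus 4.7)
My plan is to establish part (i) via a paraproduct decomposition combined with the Littlewood--Paley characterization $W^{s,r}(\T^3) \simeq B^s_{r,2}(\T^3)$ valid for $1<r<\infty$, and then to derive part (ii) from part (i) by duality and Sobolev embedding.

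For part (i), I would decompose $fg = f \pl g + f \pe g + f \pg g$ via \eqref{para1} and estimate each piece in $B^s_{r,2}(\T^3)$ using Lemma \ref{LEM:para}. The low-high paraproduct $f\pl g$ is bounded by \eqref{para2a} by $\|f\|_{L^{p_2}}\|g\|_{B^s_{q_2,2}}$, producing the second summand on the right-hand side. Symmetrically, $f\pg g$ is bounded by $\|f\|_{B^s_{p_1,2}}\|g\|_{L^{q_1}}$, producing the first summand. For the resonant piece, since $s \geq 0$, the constraint $|j-k|\le 2$ in the definition of $\pe$ combined with Bernstein's inequality and Hölder's inequality in the dyadic summation absorbs $f \pe g$ into either summand; equivalently, this is a standard Kato--Ponce argument, and in the case $r = 2$ the statement reduces to \eqref{prod} with $q = 2$, the general $r \ne 2$ case following by a routine modification.

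For part (ii), I would proceed by duality. Writing
\begin{align*}
\|\jb{\nb}^{-s}(fg)\|_{L^r} = \sup_{\|h\|_{L^{r'}} \le 1} \bigg|\int_{\T^3} fg \cdot \jb{\nb}^{-s} h\, dx\bigg|
\end{align*}
and setting $\phi := \jb{\nb}^{-s} h$, Hölder's inequality yields
\begin{align*}
\bigg|\int_{\T^3} f(g\phi)\, dx\bigg| \le \|\jb{\nb}^{-s} f\|_{L^p}\, \|\jb{\nb}^s(g\phi)\|_{L^{p'}}.
\end{align*}
I would then apply part (i) to $\|\jb{\nb}^s(g\phi)\|_{L^{p'}}$ with the splits $\tfrac{1}{p'} = \tfrac{1}{q}+\tfrac{1}{b} = \tfrac{1}{c}+\tfrac{1}{r'}$, so that $\tfrac{1}{b} = 1-\tfrac{1}{p}-\tfrac{1}{q}$ and $\tfrac{1}{c} = \tfrac{1}{r}-\tfrac{1}{p}$. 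This produces one term with $\|\phi\|_{L^b}$ and another with $\|g\|_{L^c}\|\jb{\nb}^s \phi\|_{L^{r'}}$; since $\jb{\nb}^s\phi = h$, the latter factor equals $\|h\|_{L^{r'}}$. To dispose of $\|\phi\|_{L^b}$ and $\|g\|_{L^c}$ I invoke the Sobolev embeddings $W^{s,r'}(\T^3) \hookrightarrow L^b(\T^3)$ and $W^{s,q}(\T^3) \hookrightarrow L^c(\T^3)$; a direct arithmetic check shows that both admissibility conditions collapse precisely to $\tfrac{s}{3} \ge \tfrac{1}{p}+\tfrac{1}{q}-\tfrac{1}{r}$, which is the hypothesis.

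The main delicate point is the bookkeeping in choosing the Hölder exponents $b,c$ and verifying that the Sobolev embedding thresholds match the sharp hypothesis on $s$. In degenerate sub-regimes (e.g.~$\tfrac{1}{p}+\tfrac{1}{q} \ge 1$ or $r > p$, so that one of $b,c$ leaves $(1,\infty)$), one instead routes $\phi$ or $g$ through $L^\infty$ via Sobolev embedding when $s > 3/r'$ or $s > 3/q$, or picks a slightly different splitting of $\tfrac{1}{p'}$; each such adjustment preserves the equivalence of the final admissibility with the sharp hypothesis on $s$.
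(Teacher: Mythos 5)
The paper does not prove Lemma \ref{LEM:gko}; it is recalled from \cite{GKO}, with the accompanying remark that \cite{GKO} establishes only the equality case $s = 3(\frac1p+\frac1q-\frac1r)$ and that the inequality version follows from the nestedness of $L^p(\T^3)$ spaces. So there is no in-paper proof to compare against, and your argument should be judged on its own. For part (ii), your duality argument is correct and in fact pleasantly direct: the exponents $\frac1b = 1-\frac1p-\frac1q$, $\frac1c = \frac1r-\frac1p$ are exactly right, both Sobolev embedding thresholds collapse to $s \ge 3(\frac1p+\frac1q-\frac1r)$ as you compute, and on the torus the case $\frac1p+\frac1q-\frac1r < 0$ is handled automatically by nestedness of Lebesgue spaces, so no reduction to the equality case is needed. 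One small point worth making explicit: the inequality $|\int f(g\phi)\,dx| \le \|\jb{\nb}^{-s}f\|_{L^p}\|\jb{\nb}^s(g\phi)\|_{L^{p'}}$ is not H\"older alone; it first uses the Parseval identity $\int uv\,dx = \int (\jb{\nb}^{-s}u)(\jb{\nb}^{s}v)\,dx$, valid since $\jb{-n}=\jb{n}$, and then H\"older. Your remarks about the degenerate exponent sub-regimes (where $b$ or $c$ exits $(1,\infty)$) are appropriate.

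For part (i) there is a genuine technical slip. On $\T^3$ with $1<r<\infty$, $W^{s,r}$ is the Triebel--Lizorkin space $F^s_{r,2}$, not the Besov space $B^s_{r,2}$; the two coincide only when $r=2$, and otherwise one has $B^s_{r,\min(r,2)}\hookrightarrow F^s_{r,2}\hookrightarrow B^s_{r,\max(r,2)}$. Since Lemma \ref{LEM:para} is stated in Besov norms with the same inner index on both sides, chaining it with the claimed identification $W^{s,r}\simeq B^s_{r,2}$ forces the embeddings $B^s_{r,2}\hookrightarrow W^{s,r}$ (needs $r\ge2$) on the left and $W^{s,q_2}\hookrightarrow B^s_{q_2,2}$ (needs $q_2\le2$) on the right, so the argument as written only covers a strict sub-range of exponents; the resonant-piece estimate also needs $s>0$ in Lemma \ref{LEM:para}\,(iii) and so does not cover $s=0$ cleanly. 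The conclusion is of course the classical fractional Leibniz (Kato--Ponce) rule, and the standard proofs (working directly with the Littlewood--Paley square function, i.e.\ the Triebel--Lizorkin form of the paraproduct estimates, or the Coifman--Meyer multiplier theorem) close this gap; since you ultimately fall back on ``a standard Kato--Ponce argument'' anyway, this is a fixable bookkeeping issue rather than a dead end, but a self-contained write-up would need to replace the $B^s_{r,2}$ identification by the correct Triebel--Lizorkin framework.
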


%
%

\subsection{On discrete convolutions}

Next, we recall the following basic lemma on a discrete convolution.

\begin{lemma}\label{LEM:SUM}
\textup{(i)}
Let $d \geq 1$ and $\al, \be \in \R$ satisfy
\[ \al+ \be > d  \qquad \text{and}\qquad  \al < d .\]
\noi
Then, we have
\[
 \sum_{n = n_1 + n_2} \frac{1}{\jb{n_1}^\al \jb{n_2}^\be}
\les \jb{n}^{- \al + \ld}\]

\noi
for any $n \in \Z^d$, 
where $\ld = 
\max( d- \be, 0)$ when $\be \ne d$ and $\ld = \eps$ when $\be = d$ for any $\eps > 0$.

\smallskip

\noi
\textup{(ii)}
Let $d \geq 1$ and $\al, \be \in \R$ satisfy $\al+ \be > d$.
\noi
Then, we have
\[
 \sum_{\substack{n = n_1 + n_2\\|n_1|\sim|n_2|}} \frac{1}{\jb{n_1}^\al \jb{n_2}^\be}
\les \jb{n}^{d - \al - \be}\]

\noi
for any $n \in \Z^d$.

\end{lemma}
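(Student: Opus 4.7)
The plan is to decompose the convolution sum according to the relative sizes of $|n_1|$, $|n_2|$, and $|n|$, exploiting the elementary triangle-inequality observation that $n = n_1+n_2$ forces $\max(|n_1|, |n_2|) \geq \frac{1}{2}|n|$. For part~(i), I would split the sum into three regions:
(A) $|n_1| \le \frac{1}{4}|n|$, which forces $|n_2| \sim |n|$;
(B) $|n_2| \le \frac{1}{4}|n|$, which forces $|n_1| \sim |n|$; and
(C) $|n_1|, |n_2| \gtrsim |n|$.

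In region~(A), I would pull out the factor $\jb{n_2}^{-\beta} \lesssim \jb{n}^{-\beta}$ and use the standard estimate $\sum_{|n_1| \lesssim |n|} \jb{n_1}^{-\alpha} \lesssim \jb{n}^{d-\alpha}$, valid by the hypothesis $\alpha < d$; this yields a contribution of size $\jb{n}^{d-\alpha-\beta}$. Region~(B) is formally symmetric, but yields a different outcome: after pulling out $\jb{n_1}^{-\alpha} \lesssim \jb{n}^{-\alpha}$, the remaining sum $\sum_{|n_2| \lesssim |n|} \jb{n_2}^{-\beta}$ is bounded by $\jb{n}^{\max(d-\beta,0)}$ when $\beta \ne d$ and by a logarithm absorbed into $\jb{n}^{\varepsilon}$ when $\beta = d$, producing exactly the stated term $\jb{n}^{-\alpha+\lambda}$. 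For region~(C), I would dyadically decompose as $|n_1| \sim |n_2| \sim M$ for $M \gtrsim |n|$ (the equivalence $|n_1|\sim |n_2|$ follows from the triangle inequality on this region): the contribution at scale $M$ is $\lesssim M^{d}\cdot M^{-\alpha-\beta}$, and summing in $M \gtrsim |n|$ yields a convergent geometric series of size $\jb{n}^{d-\alpha-\beta}$ thanks to $\alpha+\beta > d$. The final step is the elementary polynomial comparison $d-\alpha-\beta \leq -\alpha + \max(d-\beta,0)$, confirming that region~(B) furnishes the dominant contribution.

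For part~(ii), the constraint $|n_1|\sim|n_2|$ combined with $n_1+n_2=n$ automatically forces $|n_1|\sim|n_2|\gtrsim|n|$, so regions~(A) and~(B) of the previous analysis are vacuous and only region~(C) needs to be estimated. The same dyadic argument then delivers the sharper bound $\jb{n}^{d-\alpha-\beta}$ at once, and requires only $\alpha+\beta > d$ (no separate upper bound on $\alpha$ is needed).

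The main bookkeeping point, rather than any genuine obstacle, is the borderline case $\beta = d$ of region~(B), where the inner sum produces a logarithm $\log\jb{n}$ that must be absorbed into the $\varepsilon$-loss in the statement; in all other cases, the argument reduces to a small number of elementary polynomial comparisons of the type described above, and it remains only to verify that the three-region decomposition exhausts all pairs $(n_1, n_2)$ with $n_1 + n_2 = n$ and to assemble the bounds.
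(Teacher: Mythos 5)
Your argument is correct, and it follows the standard decomposition (by comparing the relative sizes of $|n_1|$, $|n_2|$, and $|n|$, with a dyadic sum on the balanced region) that this type of discrete convolution estimate always uses. The paper does not actually give a proof of Lemma~\ref{LEM:SUM}; it only remarks that the bound "follows from elementary computations" and points to \cite[Lemma 4.2]{GTV} and \cite[Lemmas~4.1 and 4.2]{MWX}, so your write-up supplies precisely the omitted details. All the individual steps hold up: the three regions are disjoint and exhaustive (two of them cannot hold simultaneously by the triangle inequality), the hypothesis $\alpha < d$ is indeed what is needed to bound the region-(A) sum $\sum_{|n_1|\lesssim |n|}\jb{n_1}^{-\alpha}$ by $\jb{n}^{d-\alpha}$ (and one can check that without it, region~(A) would dominate and the claimed bound would fail), the $|n_1|\sim|n_2|$ equivalence in region~(C) does follow from $|n_1|,|n_2|\gtrsim|n|$ via $n_1 = n - n_2$, and the final comparison $d-\alpha-\beta \le -\alpha + \max(d-\beta,0)$ confirms that region~(B) dominates. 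The observation that for part~(ii) the constraint $|n_1|\sim|n_2|$ confines the sum to region~(C) and hence eliminates the need for any upper bound on $\alpha$ is exactly the right way to see why (ii) requires only $\alpha+\beta>d$.
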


Namely, in the resonant case (ii), we do not have the restriction $\al, \be < d$.
Lemma \ref{LEM:SUM} follows
from elementary  computations.
See, for example,  
 \cite[Lemma 4.2]{GTV} and \cite[Lemmas~4.1 and 4.2]{MWX}.

We also need the following lemma,
where we establish
 a uniform bound with respect to  the coefficients for
 a non-integer variable $n_0$
 defined in \eqref{SUM2}.

\begin{lemma} \label{LEM:SUM2}
Let $ \be \le \frac 12$.
Then, given $\eps > 0$, we have 
\begin{align}
 \sum_{n_1,n_2\in \Z^3}\frac 1 {\jb{n_1}^2 \jb{n_2}^2\jb{n_0}^{2\beta} \jb{n_1-n_2}^{2-2\beta +\eps}} 
\le C_\eps < \infty, 
\label{SUM1}
\end{align}

\noi
uniformly in $t \gg s > 0$, where $n_0$ is defined by 
\begin{align}
n_0 = \frac{t n_1 + s n_2}{t + s}.
\label{SUM2}
\end{align}

\end{lemma}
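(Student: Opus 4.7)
The plan is to perform a direct case analysis in the discrete sum \eqref{SUM1} after the change of variables $m = n_1 - n_2$, based on the relative sizes of $|n_1|$, $|n_2|$, and $|m|$. Writing $\ld := s/(s+t) \in (0, \tfrac 12)$ (since $t \gg s > 0$), the point
\begin{align*}
n_0 = (1-\ld) n_1 + \ld n_2 = n_1 - \ld m
\end{align*}
is a convex combination of $n_1$ and $n_2$ closer to $n_1$, with $|n_1 - n_0| = \ld |m|$ and $|n_2 - n_0| = (1-\ld)|m|$.

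In the easy regime, where $|n_1| \ge 4|m|$ (the case $|n_2| \ge 4|m|$ being symmetric), the triangle inequality gives $|n_2| \ges |n_1|$, and from $|n_1 - n_0| = \ld|m| \le |n_1|/8$ one also has $|n_0| \ges |n_1|$, so $\jb{n_1} \sim \jb{n_2} \sim \jb{n_0}$. The contribution to \eqref{SUM1} is then controlled by
\begin{align*}
\sum_{m \in \Z^3} \jb{m}^{-2+2\be-\eps} \sum_{|n_1| \ges |m|} \jb{n_1}^{-4-2\be} \les \sum_{m \in \Z^3} \jb{m}^{-3-\eps} < \infty.
\end{align*}
In the hard regime $|n_1|, |n_2| \les |m|$, the factor $\jb{n_0}^{-2\be}$ is essential, since simply dropping it leaves $\sum_{n_1} \jb{n_1}^{-2} \jb{n_1-m}^{-2} \les \jb{m}^{-1}$ by Lemma \ref{LEM:SUM}, producing the non-summable tail $\sum_m \jb{m}^{-3+2\be-\eps}$. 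Instead I would establish the uniform inner bound
\begin{align*}
T_\ld(m) := \sum_{n_1 \in \Z^3} \frac{1}{\jb{n_1}^2 \jb{n_1 - m}^2 \jb{n_1 - \ld m}^{2\be}} \les_\dl \jb{m}^{-1-2\be+\dl}
\end{align*}
for any $\dl > 0$, uniformly in $\ld \in [0, \tfrac 12]$, from which the hard-regime contribution becomes $\sum_m \jb{m}^{-3-\eps+\dl} < \infty$ upon choosing $\dl < \eps$.

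To prove the bound on $T_\ld(m)$ I plan to compare with the corresponding three-dimensional integral and decompose $\R^3$ into regions according to which of the three singular points $\{0, m, \ld m\}$ is closest to the integration variable. On each piece one of the three factors is essentially frozen at its value at the center, and the remaining two-factor sum is handled by Lemma \ref{LEM:SUM} (extended to non-integer shifts, which is immediate from the integral comparison) or by direct integration on balls/annuli; a careful accounting shows each piece contributes at most $|m|^{-1-2\be}$, uniformly in $\ld$. The main obstacle is ensuring uniformity in the intermediate regime $1 \les \ld|m| \ll |m|$, where the three singular points are mutually well-separated and each contributes at the same critical order; at the borderline exponent $\be = \tfrac 12$ a logarithmic loss appears in the annulus near the origin, and this loss is absorbed into the factor $\jb{m}^{\dl}$ and ultimately into the arbitrary $\eps > 0$ in \eqref{SUM1}.
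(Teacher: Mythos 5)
Your proposal is correct, and it takes a genuinely different route from the paper. The paper performs a dyadic decomposition in $(|n_1|,|n_2|) \sim (N,M)$ and treats five cases directly, several of which use the ratio $t/s$ itself as an additional dyadic threshold (e.g.\ $N \ll M \ll \tfrac{t}{s}N$, $M \sim \tfrac{t}{s}N$, $M \gg \tfrac{t}{s}N$); in each case it simply reads off the size of the denominator $\ld = \jb{n_1}^2\jb{n_2}^2\jb{n_0}^{2\be}\jb{n_1-n_2}^{2-2\be+\eps}$ and sums. You instead change variables to $m = n_1 - n_2$, observe that $n_0 = n_1 - \ld m$ with $\ld = s/(s+t) \in (0,\tfrac12)$, and split only on whether $|m|$ dominates both $|n_1|,|n_2|$ or not, reducing the hard regime to a single \emph{uniform-in-$\ld$} bound $T_\ld(m) \lesssim_\dl \jb{m}^{-1-2\be+\dl}$ on the inner sum over $n_1$. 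This is a cleaner, more modular organization: it abstracts the dependence on $t/s$ entirely into the parameter $\ld$, the condition $\be \le \tfrac12$ enters transparently through the factor $\ld^{1-2\be} \le 1$ in the piece of the inner sum near the third singular point $\ld m$, and the log-loss at the endpoint $\be = \tfrac12$ is isolated cleanly. The trade-off is that your route requires actually proving the inner estimate $T_\ld(m)$ with the claimed uniformity, which you have only sketched; the three-region decomposition around the singular points $\{0, m, \ld m\}$ does work as you describe (each region contributing at most $\ld^{1-2\be}\jb{m}^{-1-2\be}$ or $\jb{m}^{-1-2\be}$, with a logarithm absorbed into $\jb{m}^{\dl}$ when $\ld|m| \lesssim 1$ and $\be = \tfrac12$), but these computations would need to be carried out for a complete proof, whereas the paper's case-by-case bookkeeping, while less elegant, is entirely elementary.
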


\begin{proof}
Given dyadic numbers
 $N,M \ge 1$, 
 we separately estimate the contributions 
 from 
  $\jb{n_1} \sim N$ and $\jb{n_2} \sim M$. 
Note that  we have $n_0 \sim n_1 + \frac st n_2$
under $t\gg s > 0$.

\smallskip

\noi
$\bullet$ {\bf Case 1:}
$N \gg M$.
\quad 
In this case, we have
\[ \ld: = {\jb{n_1}^2 \jb{n_2}^2\jb{n_0}^{2\beta} \jb{n_1-n_2}^{2-2\beta +\eps}} \sim N^{4+\eps} M^2.\]

\noi
Thus, we have
\[ \text{LHS of }\eqref{SUM1}
\les 
 \sum_{\substack{N, M \geq 1, \text{ dyadic}\\N \gg M}} N^{-1-\eps}M \les 1.\]

\smallskip

\noi
$\bullet$ {\bf Case 2:}
 $N \sim M$.
 \quad 
 In this case, we have 
$\ld 
\sim N^{4+2\beta} \jb{n_1-n_2}^{2-2\beta +\eps}. $
Thus, we have 
\begin{align*}
\text{LHS of }\eqref{SUM1}
& \les \sum_{\substack{N\ge 1\\ \text{dyadic}}} N^{-4-2\beta} 
\sum_{n_1,n_2 \sim N} \frac 1 {\jb{n_1-n_2}^{2-2\beta +\eps}}\\
& \les \sum_{\substack{N\ge 1\\ \text{dyadic}}} N^{-4-2\beta} N^3 N^{3-(2-2\beta +\eps)} \\
& = \sum_{\substack{N\ge 1\\ \text{dyadic}}}N^{-\eps} \les 1.
\end{align*}

\smallskip

\noi
$\bullet$ {\bf Case 3:} 
 $\frac{t}{s}N \gg M \gg N$.
 \quad 
 In this case, we have 
$\ld 
\sim N^{2+2\beta} M^{4-2\beta+\eps}. $
Thus, for $\be \le \frac 12$, we have
$$ \text{LHS of }\eqref{SUM1}
 \les \sum_{M \gg N} N^{1-2\beta}M^{-1+2\beta -\eps} \les 1. $$

\smallskip

\noi
$\bullet$ {\bf Case 4:}
 $\frac t s N \sim M \gg N$.
 \quad 
 In this case, we have 
$\ld 
 \sim N^{2} M^{4-2\beta+\eps} \jb{n_0}^{2\beta}.$
Recalling $\jb{n_0} \les N$,  we have 
\begin{align*}
\text{LHS of }\eqref{SUM1}
& \les \sum_{\substack{N, M\ge 1, \text{ dyadic}\\ \frac ts N \sim  M \gg N}}
 N^{-2} M^{-4+2\beta-\eps} \sum_{\jb{n_2} \sim M} \sum_{\jb{n_1} \sim N} \frac 1 {\jb{n_0}^{2\beta}}\\
& \les \sum_{\substack{N, M\ge 1, \text{ dyadic}\\ \frac ts N \sim  M \gg N}}
N^{1-2\beta} M^{-1+2\beta -\eps} \les 1, 
\end{align*}

\noi
provided that $\be \le \frac 12$.

\smallskip

\noi
$\bullet$ {\bf Case 5:}
 $M \gg \frac ts N$.
 \quad 
 In this case, we have 
$\ld 
 \sim \big(\frac st\big)^{2\be} N^{2} M^{4+\eps}. $
Thus, we have 
$$ \text{LHS of }\eqref{SUM1}
 \les \bigg( \frac ts\bigg)^{2\be}
 \sum_{\substack{N, M\ge 1, \text{ dyadic}\\ M \gg \frac ts N}}
 NM^{-1-\eps} \les 1, $$

\noi
provided that $\be \le \frac 12$.
This proves Lemma \ref{LEM:SUM2}.
\end{proof}

\subsection{Tools from stochastic analysis}

We conclude this section by recalling useful lemmas
from stochastic analysis.
See \cite{Bog, Shige} for basic definitions.
Let $(H, B, \mu)$ be an abstract Wiener space.
Namely, $\mu$ is a Gaussian measure on a separable Banach space $B$
with $H \subset B$ as its Cameron-Martin space.
Given  a complete orthonormal system $\{e_j \}_{ j \in \N} \subset B^*$ of $H^* = H$, 
we  define a polynomial chaos of order
$k$ to be an element of the form $\prod_{j = 1}^\infty H_{k_j}(\jb{x, e_j})$, 
where $x \in B$, $k_j \ne 0$ for only finitely many $j$'s, $k= \sum_{j = 1}^\infty k_j$, 
$H_{k_j}$ is the Hermite polynomial of degree $k_j$, 
and $\jb{\cdot, \cdot} = \vphantom{|}_B \jb{\cdot, \cdot}_{B^*}$ denotes the $B$--$B^*$ duality pairing.
We then 
denote the closure  of 
polynomial chaoses of order $k$ 
under $L^2(B, \mu)$ by $\mathcal{H}_k$.
The elements in $\H_k$ 
are called homogeneous Wiener chaoses of order $k$.
We also set
\begin{align}
 \H_{\leq k} = \bigoplus_{j = 0}^k \H_j
\notag
\end{align}

\noi
 for $k \in \N$.

Let $L = \Dl -x \cdot \nabla$ be 
 the Ornstein-Uhlenbeck operator.\footnote{For simplicity, 
 we write the definition of the Ornstein-Uhlenbeck operator $L$
 when $B = \R^d$.}
Then, 
it is known that 
any element in $\mathcal H_k$ 
is an eigenfunction of $L$ with eigenvalue $-k$.
Then, as a consequence
of the  hypercontractivity of the Ornstein-Uhlenbeck
semigroup $U(t) = e^{tL}$ due to Nelson \cite{Nelson2}, 
we have the following Wiener chaos estimate
\cite[Theorem~I.22]{Simon}.
See also \cite[Proposition~2.4]{TTz}.

\begin{lemma}\label{LEM:hyp}
Let $k \in \N$.
Then, we have
\begin{equation*}
\|X \|_{L^p(\O)} \leq (p-1)^\frac{k}{2} \|X\|_{L^2(\O)}
 \end{equation*}
 
 \noi
 for any $p \geq 2$
 and any $X \in \H_{\leq k}$.

\end{lemma}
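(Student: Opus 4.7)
The plan is to derive the estimate as an immediate consequence of Nelson's hypercontractivity for the Ornstein--Uhlenbeck semigroup $U(t) = e^{tL}$, namely the statement that $U(t)\colon L^2(B,\mu) \to L^p(B,\mu)$ is a contraction whenever $e^{2t} \ge p-1$ (see \cite{Nelson2}, or \cite[Thm.~I.17]{Simon}). I would take this hypercontractivity bound as a black box; given it, passing to the claimed Wiener chaos estimate reduces to a short chaos-decomposition argument.

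First, given $X \in \H_{\le k}$, I would write the orthogonal decomposition $X = \sum_{j=0}^{k} X_j$ with $X_j \in \H_j$, which is available because the homogeneous chaoses $\H_j$ are mutually $L^2$-orthogonal eigenspaces of $L$ with eigenvalues $-j$. In particular $U(t) X_j = e^{-jt} X_j$ for each $j$. I would then choose $t > 0$ with $e^{2t} = p-1$ and introduce the auxiliary element
\[
Y := \sum_{j=0}^{k} e^{jt} X_j \in L^2(B,\mu),
\]
tailored so that $U(t) Y = \sum_{j=0}^{k} X_j = X$. Applying Nelson's contraction statement to $Y$ with this choice of $t$ yields
\[
\|X\|_{L^p(\O)} = \|U(t) Y\|_{L^p(\O)} \le \|Y\|_{L^2(\O)}.
\]
Using the $L^2$-orthogonality of the $\H_j$'s once more, I would estimate
\[
\|Y\|_{L^2(\O)}^2 = \sum_{j=0}^{k} e^{2jt} \|X_j\|_{L^2(\O)}^2 \le e^{2kt} \sum_{j=0}^{k} \|X_j\|_{L^2(\O)}^2 = (p-1)^{k} \|X\|_{L^2(\O)}^2,
\]
and conclude $\|X\|_{L^p(\O)} \le (p-1)^{k/2} \|X\|_{L^2(\O)}$, as claimed.

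The only genuine difficulty is hidden inside Nelson's hypercontractivity itself, which classically reduces to the two-point inequality on $\{-1,1\}$ followed by tensorization and a central limit theorem passage to the Gaussian measure; this is the step I would not attempt to reprove. Every other step above is routine bookkeeping with the chaos decomposition, and in particular no further input beyond the eigenstructure of $L$ and orthogonality in $L^2$ is required.
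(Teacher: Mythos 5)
Your proof is correct and is precisely the standard derivation the paper points to: the paper does not write out an argument but states the lemma as "a consequence of the hypercontractivity of the Ornstein--Uhlenbeck semigroup due to Nelson" and cites Simon and Thomann--Tzvetkov, both of which prove it by exactly your chaos-decomposition-plus-hypercontractivity computation. Nothing to add.
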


The following lemma will be used in studying regularities of stochastic objects.
We say that a stochastic process $X:\R_+ \to \mathcal{D}'(\T^d)$
is spatially homogeneous  if  $\{X(\cdot, t)\}_{t\in \R_+}$
and $\{X(x_0 +\cdot\,, t)\}_{t\in \R_+}$ have the same law for any $x_0 \in \T^d$.
Given $h \in \R$, we define the difference operator $\dl_h$ by setting
\begin{align}
\dl_h X(t) = X(t+h) - X(t).
\label{diff1}
\end{align}

\begin{lemma}\label{LEM:reg}
Let $\{ X_N \}_{N \in \N}$ and $X$ be spatially homogeneous stochastic processes
$:\R_+ \to \mathcal{D}'(\T^d)$.
Suppose that there exists $k \in \N$ such that 
  $X_N(t)$ and $X(t)$ belong to $\H_{\leq k}$ for each $t \in \R_+$.

\smallskip
\noi\textup{(i)}
Let $t \in \R_+$.
If there exists $s_0 \in \R$ such that 
\begin{align*}
\E\big[ |\ft X(n, t)|^2\big]\les \jb{n}^{ - d - 2s_0}
\end{align*}

\noi
for any $n \in \Z^d$, then  
we have
$X(t) \in W^{s, \infty}(\T^d)$, $s < s_0$, 
almost surely.
Furthermore, if there exists $\g > 0$ such that 
\begin{align*}
\E\big[ |\ft X_N(n, t) - \ft X(n, t)|^2\big]\les N^{-\g} \jb{n}^{ - d - 2s_0}
\end{align*}

\noi
for any $n \in \Z^d$ and $N \geq 1$, 
then 
$X_N(t)$ converges to $X(t)$ in $W^{s, \infty}(\T^d)$, $s < s_0$, 
almost surely.
The following bound also holds:
\begin{align}
\E \big[ \| X_N(t) - X(t) \|_{W^{s,\infty}}^p \big]
\les p^{\frac{kp}{2}} N^{-\g p}. 
\label{reg22}
\end{align}

\smallskip
\noi\textup{(ii)}
Let $T > 0$ and suppose that \textup{(i)} holds on $[0, T]$.
If there exists $\s \in (0, 1)$ such that 
\begin{align}
 \E\big[ |\dl_h \ft X(n, t)|^2\big]
 \les \jb{n}^{ - d - 2s_0+ \s}
|h|^\s, 
\label{reg2}
\end{align}

\noi
for any  $n \in \Z^d$, $t \in [0, T]$, and $h \in [-1, 1]$,\footnote{We impose $h \geq - t$ such that $t + h \geq 0$.}
then we have 
$X \in C^\al([0, T]; W^{s, \infty}(\T^d))$, 
$\al<\s$ and $s < s_0 - \frac \s2$,  almost surely.
Furthermore, 
if there exists $\g > 0$ such that 
\begin{align*}
 \E\big[ |\dl_h \ft X_N(n, t) - \dl_h \ft X(n, t)|^2\big]
 \les N^{-\g}\jb{n}^{ - d - 2s_0+ \s}
|h|^\s, 
\end{align*}

\noi
for any  $n \in \Z^d$, $t \in [0, T]$,  $h \in [-1, 1]$, and $N \geq 1$, 
then 
$X_N$ converges to $X$ in $C^\al([0, T]; W^{s, \infty}(\T^d))$, $\al<\s$ and $s < s_0 - \frac{\s}{2}$,
almost surely.

\end{lemma}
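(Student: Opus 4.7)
The plan is to combine the hypercontractivity of Lemma \ref{LEM:hyp} with a Besov embedding argument, and, for (ii), with the Kolmogorov continuity theorem.

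For Part (i), I would fix $s < s_0$ and insert an auxiliary exponent $s_1$ with $s < s_1 < s_0$. For $p$ finite and large enough that $s_1 - d/p > s$, the Besov embedding $B^{s_1}_{p,p}(\T^d) \hookrightarrow \C^{s_1 - d/p}(\T^d) \hookrightarrow W^{s,\infty}(\T^d)$ reduces the problem to bounding $\E\|X(t)\|_{B^{s_1}_{p,p}}^p$. Writing this out via Littlewood--Paley and Fubini, it suffices to estimate $\E|\P_j X(t,x)|^p$. Since $\P_j X(t,x) \in \H_{\le k}$, Lemma \ref{LEM:hyp} gives
\begin{align*}
\E|\P_j X(t,x)|^p \les p^{kp/2}\bigl(\E|\P_j X(t,x)|^2\bigr)^{p/2}.
\end{align*}
Spatial homogeneity together with Parseval turn the second moment into $\sum_n |\varphi_j(n)|^2 \E|\ft X(n,t)|^2 \les 2^{-2js_0}$ by hypothesis; summation in $j$ converges because $s_1 < s_0$ and yields $\E\|X(t)\|_{W^{s,\infty}}^p \les p^{kp/2}$. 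The same argument applied to $X_N - X$, with the extra factor $N^{-\g}$ on the Fourier side, produces the quantitative bound \eqref{reg22}. Almost sure convergence along the sequence $\{X_N\}$ then follows by Chebyshev plus Borel--Cantelli with $p$ chosen large.

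For Part (ii) I would apply the Kolmogorov continuity criterion to $t \mapsto X(t)$ viewed as a $W^{s,\infty}(\T^d)$-valued process. The same Besov-embedding/hypercontractivity chain, now applied to the increment $\dl_h X(t)$ and using \eqref{reg2} (which trades an extra factor $\jb{n}^{\s}$ of roughness in space for a factor $|h|^{\s}$ in time), yields, for $s < s_0 - \s/2$,
\begin{align*}
\E\|\dl_h X(t)\|_{W^{s,\infty}}^p \les p^{kp/2}\, |h|^{\s p/2},
\end{align*}
uniformly in $t$ and $h$ with $t, t+h \in [0,T]$. Kolmogorov's criterion with $p$ chosen large then produces the claimed H\"older continuity in time, and the parallel bound for $X_N - X$ gives the stated almost sure convergence in $C^\al([0,T]; W^{s,\infty})$.

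The only obstacle is bookkeeping: balancing the Besov index $s_1$, the integrability $p$, and the embedding loss $d/p$ so that $s_1 < s_0$, $s_1 - d/p > s$, and (in (ii)) the Kolmogorov exponent $\s/2 - 1/p$ exhausts the intended H\"older range as $p \to \infty$. No new analytic idea is needed beyond the finite-chaos Gaussian hypercontractivity and the embeddings recalled in Subsection \ref{SUBSEC:21}.
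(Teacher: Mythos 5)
Your proposal is correct and follows exactly the route the paper cites, namely Proposition~3.6 of~[MWX]: Besov embedding plus Fubini to reduce to $\E|\P_j X(t,x)|^p$, hypercontractivity to collapse $p$-th moments to second moments, spatial homogeneity to diagonalize the covariance on the Fourier side, and Kolmogorov/Borel--Cantelli to upgrade to pathwise statements.

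One small bookkeeping point you should flag rather than pass over: the numerical exponents in the statement do not quite match what the calculation produces. From the hypothesis $\E|\ft X_N(n,t)-\ft X(n,t)|^2\les N^{-\g}\jb{n}^{-d-2s_0}$, hypercontractivity gives $\E|\P_j(X_N-X)(t,x)|^p\les p^{kp/2}N^{-\g p/2}2^{-js_0p}$, so the bound one actually obtains in \eqref{reg22} is $\les p^{kp/2}N^{-\g p/2}$, not $N^{-\g p}$. Similarly, from \eqref{reg2} the same chain yields $\E\|\dl_h X(t)\|_{W^{s,\infty}}^p\les p^{kp/2}|h|^{\s p/2}$, and Kolmogorov then gives H\"older exponent $\al<\s/2-1/p\to\s/2$ as $p\to\infty$, not $\al<\s$. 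Both appear to be slight imprecisions in the lemma's statement (the qualitative conclusions used later are unaffected), but your writeup asserts that the claimed bounds fall out, which is not literally true; it is better to state what the argument actually delivers.
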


Lemma \ref{LEM:reg} follows
from a straightforward application of the Wiener chaos estimate
(Lemma~\ref{LEM:hyp}).
For the proof, see Proposition 3.6 in \cite{MWX}
and  Appendix in \cite{OOTz}.
As compared to  Proposition 3.6 in \cite{MWX}, 
we made small adjustments.
In studying the time regularity, we 
made the following modifications:
$\jb{n}^{ - d - 2s_0+ 2\s}\mapsto\jb{n}^{ - d - 2s_0+ \s}$
and $s < s_0 - \s \mapsto s < s_0 - \frac \s2$  
so that it is suitable
for studying  the wave equation.
Moreover, while the result in \cite{MWX} is stated in terms of the
H\"older-Besov space $\mathcal{C}^s(\T^d) = B^s_{\infty, \infty}(\T^d)$, 
Lemma \ref{LEM:reg} handles the $L^\infty$-based Sobolev space $W^{s, \infty}(\T^3)$.
Note that 
the required modification of the proof is straightforward
since $W^{s, \infty}(\T^d)$ and $B^s_{\infty, \infty}(\T^d)$
differ only logarithmically.

Next, we recall the following corollary to 
the Garsia-Rodemich-Rumsey inequality
(\cite[Theorem A.1]{FV}).
See Lemma 2.2 in \cite{GKOT} for the proof.
See also  Corollary A.5 in~\cite{FV}
for the $\al = 2$ case.
This lemma is used to obtain the $L^\infty_t$-regularity 
of stochastic objects.

\begin{lemma}\label{LEM:GRR}

Let $(E, d)$ be a metric space.
Given $u \in C([0, T]; E)$, 
suppose that there exist
$c_0 > 0$, $\ta \in(0, 1)$, and $\al > 0$
such that 
\begin{align}
\int_{t_1}^{t_2}
\int_{t_1}^{t_2} \exp \bigg\{c_0 \bigg( \frac{d(u(t), u(s))}{|t-s|^\ta}\bigg)^\al\bigg\} dt ds
= : F_{t_1, t_2} < \infty
\label{G1}
\end{align}

\noi
for any $0 \leq t_1 \leq t_2 \leq T$ with $t_2 - t_1 \leq 1$.
Then, we have
\begin{align*}
 \exp \bigg\{\frac{c_0}{C} \bigg( 
 \sup_{t_1 \leq s < t \leq t_2} \frac{d(u(t), u(s))}{\zeta(t-s)}\bigg)^\al\bigg\} 
\leq  \max( F_{t_1, t_2}, e)
\end{align*}

\noi
for any $0 \leq t_1 \leq t_2 \leq T$
with $t_2 - t_1 \leq 1$,  
where $\zeta(t)$ is defined by 
\begin{align*}
 \zeta(t) = \int_0^t \tau^{\ta - 1} \bigg\{\log\Big( 1+ \frac 4{\tau^2}\Big)  \bigg\}^\frac{1}{\al} d\tau. 
\end{align*}

\end{lemma}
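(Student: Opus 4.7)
\medskip

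\noindent
\emph{Proof proposal.}
The plan is to derive the stated bound as a direct consequence of the Garsia--Rodemich--Rumsey (GRR) inequality applied with a Young-function $\Psi$ of exponential-type and a power increment function $p$. More precisely, I would set
\begin{align*}
\Psi(x) = \exp(c_0 x^\alpha) - 1, \qquad p(\tau) = \tau^\theta,
\end{align*}
so that the hypothesis \eqref{G1} rewrites as
$\iint_{[t_1,t_2]^2} \Psi\bigl(d(u(t),u(s))/p(|t-s|)\bigr)\,dt\,ds \le F_{t_1,t_2}$.
The classical GRR inequality (e.g.\ \cite[Theorem A.1]{FV}) then yields the pointwise bound
\begin{align*}
d(u(t),u(s)) \le 8 \int_0^{|t-s|} \Psi^{-1}\!\left(\frac{4 F_{t_1,t_2}}{\tau^2}\right) dp(\tau)
= \frac{8\theta}{c_0^{1/\alpha}} \int_0^{|t-s|} \tau^{\theta-1} \left[\log\!\left(1 + \frac{4 F_{t_1,t_2}}{\tau^2}\right)\right]^{1/\alpha} d\tau
\end{align*}
for every $t_1 \le s \le t \le t_2$, simply by inverting $\Psi$ and computing $dp$.

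Next I would separate the dependence on $F=F_{t_1,t_2}$ from the dependence on $\tau$ inside the logarithm. Using the elementary inequality $1 + xy \le (1+x)(1+y)$ for $x,y \ge 0$, one has
\begin{align*}
\log\!\left(1 + \frac{4 F}{\tau^2}\right) \le \log(1+F) + \log\!\left(1 + \frac{4}{\tau^2}\right).
\end{align*}
Combined with the sub-additivity/quasi-subadditivity of $x \mapsto x^{1/\alpha}$ (a factor of $2^{(1/\alpha - 1)_+}$ appears when $\alpha < 1$), this splits the integrand into two pieces. The piece containing $\log(1+4/\tau^2)$ integrates against $\tau^{\theta-1}\,d\tau$ to exactly $\zeta(|t-s|)$, while the piece containing $\log(1+F)$ integrates to $\theta^{-1}|t-s|^\theta \cdot [\log(1+F)]^{1/\alpha}$. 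Since $\zeta(r) \ge c r^\theta$ (as the logarithmic factor is bounded below by a positive constant for $\tau \le 1$), the second piece is itself controlled by a constant multiple of $\zeta(|t-s|)\,[\log(1+F)]^{1/\alpha}$.

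Combining, I would obtain
\begin{align*}
\frac{d(u(t),u(s))}{\zeta(|t-s|)} \le C\, c_0^{-1/\alpha} \Bigl(1 + [\log(1+F_{t_1,t_2})]^{1/\alpha}\Bigr),
\end{align*}
and taking the supremum over $t_1 \le s < t \le t_2$ and then raising to the $\alpha$-th power gives
$\tfrac{c_0}{C'}\bigl(\sup d/\zeta\bigr)^\alpha \le \log \max(F_{t_1,t_2},e)$, which is exactly the claimed exponential inequality after exponentiating. The only subtle point is the bookkeeping that reduces $\log(1+F)$ to $\log\max(F,e)$ up to the absolute constant $C$, which just uses $\log(1+F) \le \log(2\max(F,e))$. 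The main (minor) obstacle I anticipate is choosing the constants in the $(a+b)^{1/\alpha}$ split uniformly in $\alpha \in (0,\infty)$; if desired this can be sidestepped by noting that the lemma is only ever applied with a fixed $\alpha$, so all constants are allowed to depend on $\alpha$. Everything else is a direct calculation once GRR is invoked.
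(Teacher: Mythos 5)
Your proposal is correct and follows essentially the same approach as the paper, which defers the proof to Lemma 2.2 of \cite{GKOT}: a direct application of the classical Garsia--Rodemich--Rumsey inequality with $\Psi(x) = e^{c_0 x^\alpha} - 1$ and $p(\tau) = \tau^\theta$, followed by the inversion, the logarithm-splitting via $\log(1+xy)\le\log(1+x)+\log(1+y)$, and the lower bound $\zeta(r)\gtrsim r^\theta$ valid for $r\le 1$. The constant $C$ in the lemma is allowed to depend on $\alpha$, so the $\alpha$-uniformity and non-convexity concerns you flag at the end are immaterial.
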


\medskip

Lastly, we recall the following Wick's theorem.
See Proposition I.2 in \cite{Simon}.

\begin{lemma}\label{LEM:Wick}	
Let $g_1, \dots, g_{2n}$ be \textup{(}not necessarily distinct\textup{)}
 real-valued jointly Gaussian random variables.
Then, we have
\[ \E\big[ g_1 \cdots g_{2n}\big]
= \sum  \prod_{k = 1}^n \E\big[g_{i_k} g_{j_k} \big], 
\]

\noi
where the sum is over all partitions of $\{1, \dots, 2 n\}$
into disjoint pairs $(i_k, j_k)$.
\end{lemma}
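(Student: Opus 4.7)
The plan is to prove Wick's theorem by induction on $n$, with the key tool being the \emph{Gaussian integration by parts} (Stein) identity: for a centered jointly Gaussian vector $(g_1,\ldots,g_{2n})$ and any polynomial $F$ in $2n-1$ real variables,
\[
\E\big[g_1\, F(g_2,\ldots,g_{2n})\big] \;=\; \sum_{j=2}^{2n} \E[g_1 g_j]\, \E\big[(\partial_j F)(g_2,\ldots,g_{2n})\big].
\]
Since the formula in the lemma is a polynomial identity in the covariance entries $c_{ij}=\E[g_i g_j]$, I would first reduce to the case when the covariance matrix $\Sigma$ is strictly positive definite by replacing $(g_1,\ldots,g_{2n})$ with $(g_1+\eps h_1,\ldots,g_{2n}+\eps h_{2n})$, where $(h_i)$ are i.i.d.~standard Gaussians independent of the $g_i$'s, and then sending $\eps\downarrow 0$; this handles the case of possibly coinciding variables. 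With $\Sigma$ nondegenerate, the joint density $p(x)=(2\pi)^{-n}(\det\Sigma)^{-1/2}\exp(-\tfrac12 x^\top \Sigma^{-1}x)$ satisfies $\partial_{x_1} p = -(\Sigma^{-1} x)_1 p$, and integration by parts together with the identity $\sum_{j} (\Sigma^{-1})_{1j}\, c_{jk} = \delta_{1k}$ yields the Stein identity above.

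The base case $n=1$ reads $\E[g_1 g_2]=\E[g_1 g_2]$, matching the unique pairing of $\{1,2\}$. For the inductive step, apply the Stein identity with $F(g_2,\ldots,g_{2n})=g_2 g_3\cdots g_{2n}$, so that $\partial_j F = \prod_{k\in\{2,\ldots,2n\}\setminus\{j\}} g_k$ is a product of $2(n-1)$ jointly Gaussian variables. The inductive hypothesis gives
\[
\E\big[\partial_j F\big] \;=\; \sum_{\sigma\,\in\,\mathcal{P}_2(\{2,\ldots,2n\}\setminus\{j\})}\, \prod_{(a,b)\in\sigma} \E[g_a g_b],
\]
where $\mathcal{P}_2(S)$ denotes the set of pairings of a finite set $S$ of even cardinality. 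Plugging this back in,
\[
\E[g_1 g_2 \cdots g_{2n}]
\;=\; \sum_{j=2}^{2n} \E[g_1 g_j]\, \sum_{\sigma\,\in\,\mathcal{P}_2(\{2,\ldots,2n\}\setminus\{j\})}\, \prod_{(a,b)\in\sigma} \E[g_a g_b].
\]
A pairing $\pi\in\mathcal{P}_2(\{1,\ldots,2n\})$ is uniquely encoded by the partner $j\in\{2,\ldots,2n\}$ of $1$ together with a pairing $\sigma$ of $\{2,\ldots,2n\}\setminus\{j\}$; this bijection shows that the double sum above is exactly $\sum_{\pi} \prod_{(a,b)\in\pi} \E[g_a g_b]$, completing the induction.

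The only real obstacle is justifying the Stein identity when $\Sigma$ is singular, which occurs precisely because the lemma allows some $g_i$'s to coincide; this is dispatched by the $\eps$-regularization indicated above and noting that both sides of the claimed identity are polynomials in the $c_{ij}$'s, hence continuous in $\eps$. Everything else is bookkeeping of the combinatorics of pairings, which matches naturally because each step of the recursion removes the variable $g_1$ together with its partner $g_j$ and its covariance factor $c_{1j}$.
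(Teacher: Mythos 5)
Your proof is correct. The paper itself gives no proof of Lemma \ref{LEM:Wick}; it simply cites Proposition I.2 of Simon's book \cite{Simon}, so there is no argument in the paper to compare against. Your route---Gaussian integration by parts (the Stein identity), induction on $n$, regularization by adding $\eps h_i$ with independent standard Gaussians $h_i$ to handle a singular covariance, and the bijection between pairings of $\{1,\dots,2n\}$ and the data (partner of $1$, pairing of the rest)---is a complete and standard way to prove Wick's theorem. One small point of precision: you correctly insert the hypothesis that the $g_i$ are \emph{centered}, which the paper's statement omits but which is necessary (for $g_1=\cdots=g_4\sim\NN(\mu,\sigma^2)$, $\E[g^4]=\mu^4+6\mu^2\sigma^2+3\sigma^4$ while $3(\E[g^2])^2=3\mu^4+6\mu^2\sigma^2+3\sigma^4$); in the paper all applications are to mean-zero Gaussian Fourier coefficients, so this is the intended reading. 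The Stein identity derivation---from $\partial_{x_k}p=-(\Sigma^{-1}x)_k p$ one deduces $x_1 p=-\sum_k c_{1k}\partial_{x_k}p$ via $\sum_j(\Sigma^{-1})_{1j}c_{jk}=\delta_{1k}$, then integrates by parts---is sound, and the continuity-in-$\eps$ argument (both sides being polynomials in the covariances $c_{ij}$) correctly removes the nondegeneracy hypothesis at the end.
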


Given $n \in \Z^3$ and  $0 \leq t_2\leq t_1$, 
define $\s_{n}(t_1, t_2 )$ by 
\begin{align}
\begin{split}
\s_{n}(t_1, t_2 )  
:\! &=
\E  \big[  \ft{\Psi}(n, t_1)  \,  \ft{\Psi}(-n, t_2) \big] \\
&= \frac{e^{-\frac{t_1-t_2}2}}{\jb{n}^2} \bigg( \cos ((t_1-t_2) \jbb{n}) + \frac{\sin ((t_1-t_2) \jbb{n})}{2\jbb{n}} \bigg), 
\end{split}
\label{sigma2}
\end{align}

\noi
where $\Psi$ is as in \eqref{W2}.
Then, by Wick's theorem (Lemma~\ref{LEM:Wick}) and~\eqref{sigma2}, we have
\begin{align} \label{Wickz}
\begin{split}
&\E \Big[
\Big( \ft \Psi (n_1,t_1) \ft \Psi (n_2,t_1') - \ind_{n_1+n_2=0} \cdot \s_{n_1}(t_1,t_1') \Big) \\
& \hphantom{XXX}
\times
\Big( \cj{\ft \Psi (n_1',t_2) \ft \Psi (n_2',t_2') - \ind_{n_1'+n_2'=0} \cdot \s_{n_1'}(t_2,t_2')} \Big)
\Big] \\
&=
\ind_{\substack{n_1=n_1' \\ n_2=n_2'}} \cdot \s_{n_1}(t_1,t_2) \s_{n_2}(t_1',t_2')
+
\ind_{\substack{n_1=n_2' \\ n_2=n_1'}} \cdot \s_{n_1}(t_1,t_2') \s_{n_2}(t_1',t_2)
\end{split}
\end{align}
for $n_1,n _2, n_1', n_2' \in \Z^3$ and $0 \le t_2' \le t_2 \le t_1' \le t_1$.

\section{On the stochastic terms}
\label{SEC:sto1}

In this section, 
we establish the regularity properties
of the stochastic objects 
$\Psi$, $:\! \Psi^2 \!:$, and $(V \ast :\! \Psi^2 \!:) \pe \Psi$.
We study the paracontrolled operators (and $\Ab$) in Section \ref{SEC:po}.
First, we go over  the regularity properties
of the stochastic convolution $\Psi$ and the Wick power $:\! \Psi^2 \!:$.

\begin{lemma}\label{LEM:stoconv}
Given $k = 1, 2$, let 
$:\! \Psi_N^k \!:$
denote the truncated Wick power defined
in
 \eqref{so4a} for $k = 1$ and 
 \eqref{so4b} for $k = 2$, respectively.
Then, 
given any  $T,\eps>0$ and finite $p \geq 1$, 
 $\{ \, :\! \Psi _N^k \!: \, \}_{N\in \N}$ is a Cauchy sequence in $L^p(\O;C([0,T];W^{-\frac k2-\eps,\infty}(\T^3)))$,
 converging to some limit $:\!\Psi^k\!:$ in $L^p(\O;C([0,T];W^{-\frac k2-\eps,\infty}(\T^3)))$.
Moreover,  $:\! \Psi_N^k \!:$  converges almost surely to the same  limit in $C([0,T];W^{-\frac k2 -\eps,\infty}(\T^3))$.
Given any finite $q\geq 1$, we   have 
the following tail estimate:
\begin{align}
\PP\Big( \|:\! \Psi^k \!:\|_{L^q_T W^{-\frac k2-\eps, \infty}_x} > \ld\Big) 
\leq C\exp\bigg(-c \frac{\ld^{\frac{2}{k}}}{T^{ \frac{2}{kq}}}\bigg)
\label{P0}
\end{align}

\noi
for any $T > 0$ and $\ld > 0$.
When $q = \infty$, we  also have 
the following tail estimate:
\begin{align}
\PP\Big( \|:\! \Psi^k \!:\|_{L^\infty ([j, j+1]; W^{-\frac k2-\eps, \infty}_x)}> \ld\Big) 
\leq C\exp\big(-c \ld^{\frac{2}{k}}\big)
\label{P0z}
\end{align}

\noi
for any $j \in \Z_{\ge 0}$ and $\ld > 0$.
Moreover, the tail estimates \eqref{P0}
and \eqref{P0z}
also  hold
for $:\! \Psi _N^k \!: $, uniformly in $N \in \N$.

When $k = 1$, the convergence results for $\Psi_N$ 
 also hold 
in $C^1([0,T];W^{-\frac 32-\eps,\infty}(\T^3)))$.
Moreover, the tail estimates \eqref{P0} and \eqref{P0z}
hold for $\dt \Psi$ with 
$L^q([0, T];  W^{-\frac 32-\eps, \infty}(\T^3))$ in~\eqref{P0}
and $L^\infty ([j, j+1]; W^{-\frac 32-\eps, \infty}(\T^3))$ in \eqref{P0z}.

\end{lemma}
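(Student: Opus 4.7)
The plan is to reduce everything to explicit covariance computations in Fourier, combined with the Wiener chaos estimate (Lemma~\ref{LEM:hyp}) and the Kolmogorov-type regularity criterion in Lemma~\ref{LEM:reg}. First, from the formula \eqref{W2}--\eqref{W3}, $\widehat{\Psi}_N(n,t)$ is, for each fixed $(n,t)$, a Gaussian living in the first Wiener chaos $\H_1$; the initial data contributions use $g_n,h_n$ and the noise contribution is the Wiener integral $\sqrt{2}\int_0^t \widehat{\D}_n(t-t')\, dB_n(t')$. A direct computation yields the covariance formula \eqref{sigma2}; in particular,
\begin{align*}
\E\big[|\widehat{\Psi}(n,t)|^2\big] = \sigma_n(t,t) \lesssim \jb{n}^{-2}.
\end{align*}
Lemma~\ref{LEM:reg}(i) with $d=3$ and $s_0 = -\frac12$ then puts $\Psi(t)\in W^{-\frac12-\eps,\infty}(\T^3)$ a.s. For the time regularity, a similar explicit computation of $\delta_h\widehat{\Psi}(n,t)$ (separating the ``drift'' piece from the integrated noise and using the Lipschitz behaviour in $t$ of $\widehat{\D}_n(t)$) gives the bound
\begin{align*}
\E\big[|\delta_h \widehat{\Psi}(n,t)|^2\big] \lesssim \jb{n}^{-2+\sigma}\,|h|^\sigma
\end{align*}
for any small $\sigma\in(0,1)$, so Lemma~\ref{LEM:reg}(ii) upgrades this to $C^\alpha_t W^{-\frac12-\eps,\infty}_x$. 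For $\partial_t\Psi$, the same analysis (with $\widehat{\D}_n$ replaced by $\partial_t \widehat{\D}_n$, which loses one factor of $\jb{n}$) gives regularity $-\frac32-\eps$ in space, hence $\Psi\in C^1_t W^{-\frac32-\eps,\infty}_x$.

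For the Wick square $:\!\Psi_N^2\!:$, note that $\widehat{:\!\Psi_N^2\!:}(n,t)$ belongs to $\H_2$. Applying Wick's theorem (equivalently, the special case \eqref{Wickz}) with $t_1=t_1'=t_2=t_2'=t$ gives
\begin{align*}
\E\big[|\widehat{:\!\Psi_N^2\!:}(n,t)|^2\big] = 2\!\!\sum_{\substack{n=n_1+n_2\\ |n_1|,|n_2|\le N}}\! \sigma_{n_1}(t,t)\sigma_{n_2}(t,t) \lesssim \!\!\sum_{n=n_1+n_2}\!\frac{1}{\jb{n_1}^2\jb{n_2}^2} \lesssim \jb{n}^{-1},
\end{align*}
by Lemma~\ref{LEM:SUM}. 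Thus $s_0 = -1$, giving regularity $-1-\eps$ via Lemma~\ref{LEM:reg}(i). The time increment $\delta_h \widehat{:\!\Psi^2\!:}(n,t)$ is handled by expanding the square and using the time-Lipschitz bound on $\sigma_n(t_1,t_2)$ from \eqref{sigma2} inside \eqref{Wickz}, producing the factor $|h|^\sigma \jb{n}^{-1+\sigma}$ required by Lemma~\ref{LEM:reg}(ii). Convergence of $\{:\!\Psi_N^k\!:\}$ as a Cauchy sequence is obtained by the identical computation restricted to frequencies $\min(|n_j|)>N$, which yields a quantitative decay $N^{-\gamma}$ (for some $\gamma>0$) in all the above variance bounds, and then \eqref{reg22} supplies the $L^p(\Omega)$ bound on the difference; a.s. convergence follows by Borel-Cantelli along a subsequence and monotonicity.

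The tail estimates are extracted from the Wiener chaos estimate. Since $:\!\Psi^k\!:\in\H_k$, Lemma~\ref{LEM:hyp} (applied after a Sobolev-type argument as in Lemma~\ref{LEM:reg}) gives
\begin{align*}
\big\| :\!\Psi^k\!: \big\|_{L^p(\Omega;\,L^q_T W^{-\frac{k}{2}-\eps,\infty}_x)} \lesssim p^{\frac{k}{2}}\, T^{\frac{1}{q}}
\end{align*}
for all finite $p\ge 2$. Chebyshev then gives $\PP(\|:\!\Psi^k\!:\|_{L^q_T W^{-\frac{k}{2}-\eps,\infty}_x}>\lambda)\lesssim (\lambda^{-1}p^{k/2}T^{1/q})^p$; optimizing in $p\sim (\lambda/T^{1/q})^{2/k}$ produces the exponent in \eqref{P0}. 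For the $L^\infty_t$ tail \eqref{P0z} on the unit-length interval $[j,j+1]$, one feeds the moment bound on $\delta_h(:\!\Psi^k\!:)$ into the Garsia-Rodemich-Rumsey inequality (Lemma~\ref{LEM:GRR}) with $\alpha=2/k$, $\Theta$ a small Hölder exponent, and metric space $E=W^{-\frac{k}{2}-\eps,\infty}(\T^3)$; choosing the constant $c_0$ small enough so that $\E[F_{j,j+1}]<\infty$ converts the $L^q$-in-time bound into an $L^\infty$-in-time bound with the same Gaussian/subgaussian tail exponent $2/k$. The $N$-uniformity in \eqref{P0} and \eqref{P0z} is immediate since every estimate above holds with constants independent of $N$ once the covariance bounds are uniform in $N$, which they are by monotone truncation.

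The main obstacle is keeping track of the exponent $2/k$ throughout, especially for $:\!\Psi^2\!:$ where subgaussianity is lost and only a subexponential tail $e^{-c\lambda}$ survives. In particular, the GRR step requires that the moment bound on the time increment be subexponential of exponent exactly $2/k$ after optimizing in $p$, and that the Hölder parameter $\Theta$ be compatible with the spatial regularity loss $s<s_0-\sigma/2$ in Lemma~\ref{LEM:reg}(ii); this forces a careful (but standard) juggling of the small parameters $\eps,\sigma,\Theta$. Everything else is routine stochastic analysis.
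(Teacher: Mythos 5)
Your proposal follows essentially the same route as the paper's proof: explicit covariance bounds $\E[|\ft{:\!\Psi^k\!:}(n,t)|^2]\lesssim\jb{n}^{-3+k}$ together with the Wiener chaos estimate (Lemma~\ref{LEM:hyp}) and Lemma~\ref{LEM:reg} handle the regularity, convergence, and tail estimate~\eqref{P0} via Chebyshev and optimization in $p$, while the Garsia--Rodemich--Rumsey inequality (Lemma~\ref{LEM:GRR}) upgrades to the $L^\infty_t$-tail~\eqref{P0z}. The only cosmetic divergences are that the paper obtains the pointwise $W^{-\frac k2-\eps,\infty}_x$ moment bound via spatial homogeneity (\eqref{R2}--\eqref{R3}) rather than a Besov-embedding argument, and it makes explicit the integration-by-parts justification of the formula~\eqref{Psiz} for $\dt\Psi_N$ which you assert; neither affects the correctness of your argument.
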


\begin{proof}
From \eqref{sigma2} and \eqref{Wickz}, 
we have
\begin{align}
\E \big[ |\ft{:\! \Psi^k \!:}(n,t)|^2 \big]
&\les \jb{n}^{-3 + k} 
\label{R1}
\end{align}

\noi
for $n \in \Z^3$ and $0 \le t \le T$.
Then, the first part of the claim follows
from Lemma \ref{LEM:reg}.
Indeed, the difference estimate \eqref{reg2}
for $\dl_h \ft{:\! \Psi^k \!:}(n,t)$
follows from~\eqref{R1} and the mean value theorem
as in the proof of Lemma 3.1 in \cite{GKO2}.
Note that 
our stochastic convolution $\Psi$ in  \eqref{W2} is 
for the damped wave equation 
and thus is slightly different  
from that for the undamped wave equation studied in~\cite{GKO2}.
Furthermore, $\Psi$ in~\eqref{W2} has non-zero random initial data
distributed by $\muu$ in~\eqref{gauss1}.
This difference, however, is marginal and the argument
in the proof of 
Lemma 3.1 in \cite{GKO2} can be easily modified  to establish the convergence results.
See also \cite{GKO, GKOT}.

Next, we prove the tail estimate \eqref{P0z}.
Since $ :\! \Psi^{k}\!: $ is spatially homogeneous (i.e.~its
distribution is invariant under spatial translations), 
we have 
\begin{align}
\E\bigg[ \ft{:\! \Psi^{k}\!:}(n_1, t_1) \cj{\ft{:\! \Psi^{k}\!:}(n_2, t_2)} \bigg] = 0
\label{R2}
\end{align}

\noi
unless $n_1 =  n_2$.
Indeed, by letting 
$F_{t_1, t_2} (x, y)=  \E\big[ \! :\! \Psi^{k}\!:(x, t_1) \, \cj{:\! \Psi^{k}\!:(y, t_2)}\big]$, 
it follows from the spatial homogeneity that 
\begin{align*}
\text{LHS of }\eqref{R2} 
& = 
\int_{\T^3}\int_{\T^3}
F_{t_1, t_2} (x, y)
e_{n_1}(x)e_{-n_2}(y) dy dx\\
& = \int_{\T^3}
\bigg( \int_{\T^3}
F_{t_1, t_2} (0, y-x)
e_{-n_2}(y- x)dy  \bigg)
e_{n_1- n_2}(x)
dx
\end{align*}

\noi
which equals $0$ unless $n_1 = n_2$ since
the inner integral on the right-hand side is a constant independent of $x$.  This proves \eqref{R2}.
Now, from \eqref{R1} 
and 
\eqref{R2}, we have
\begin{align}
\E\big[| \jb{\nb}^{-\frac{k}{2} -\eps} :\! \Psi^{k} (x, t) \!:  |^2 \big] 
= 
\sum_{n \in \Z^3} \jb{n}^{-k-2\eps}
\E \big[ |\ft{:\! \Psi^k \!:}(n,t)|^2 \big]
\les \sum_{n \in \Z^3} \jb{n}^{-3-2\eps} \leq C_\eps
\label{R3}
\end{align}

\noi
for any $\eps > 0$, uniformly in 
$x \in \T^3$ and $t \geq 0$.
Then, Minkowski's integral inequality and the Wiener chaos estimate (Lemma \ref{LEM:hyp}), 
we obtain
\begin{align}
 \Big\| \| :\! \Psi^{k}  \!:\|_{L^q_T W^{-\frac k2 -\eps, \infty}_x}\Big\|_{L^p(\O)}
\les p ^\frac{k}{2} T^{ \frac{1}{q}}
\label{P0a}
\end{align}

\noi
for any sufficiently large $p \gg1 $ (depending on $q \geq 1$).
The exponential tail estimate \eqref{P0}
follows from \eqref{P0a} and Chebyshev's inequality
(see also Lemma 4.5 in \cite{TzBO}).

Fix $j \in \Z_{\ge 0}$ and $\ld > 0$. Then, we have
\begin{align}
\begin{split}
\PP\Big( \|:\! \Psi^k \!:\|_{L^\infty ([j, j+1]; W^{-\frac k2-\eps, \infty}_x)}
& > \ld\Big) 
\leq
 \PP\Big( \|:\! \Psi^k(j) \!:\|_{ W^{-\frac k2 -\eps, \infty}_x}> \tfrac{\ld}{2}\Big) \\
+ 
& \PP\Big( \sup_{t \in [j, j+1]}\|:\! \Psi^k(t) \!: - :\! \Psi^k(j) \!:\|_{ W^{-\frac k2 -\eps, \infty}_x}> \tfrac \ld2\Big). 
\end{split}
\label{P0b}
\end{align}

\noi
The first term on the right-hand side of \eqref{P0b}
is for a fixed time $t = j$ and thus 
can be  controlled by the right-hand side of \eqref{P0z}
as above, using \eqref{R3}.
As for the second term on the right-hand side
of \eqref{P0b}, 
a straightforward adaptation of the argument in  the proof of \cite[Proposition~2.1]{GKO}
to the current three-dimensional setting
yields
\begin{align*}
\Big\| |h|^{-\rho} \|\dl_h (:\! \Psi^{k} (t) \!:) \|_{W^{-\frac k2-\eps, \infty}_x} \Big\|_{L^p(\O)}
\les p^\frac{k}{2} 
\end{align*}

\noi
for any sufficiently large $p \gg1 $, $t \in [j, j+1]$, 
and $|h| \leq 1$, where $\dl_h$ is as in \eqref{diff1} and $0 < \rho < \eps$.
Then, by applying Lemma 4.5 in \cite{TzBO}, 
we obtain the following exponential bound:
\begin{align}
\E \Bigg[\exp \bigg\{ \bigg( \frac{
\|:\! \Psi^{k} (\tau_2)\!: - :\! \Psi^{k} (\tau_1) \!: \|_{W^{-\frac k2-\eps, \infty}_x}}{|\tau_2-\tau_1|^\rho}\bigg)^\frac{2}{k}\bigg\}\Bigg] 
\leq C < \infty, 
\label{P0d}
\end{align}

\noi
uniformly in  $j \leq \tau_1 < \tau_2 \leq  j+1$ (and $j \in \Z_{\ge 0}$).
By integrating~\eqref{P0d} in $\tau_1$ and $\tau_2$, 
this verifies
the hypothesis~\eqref{G1} of Lemma \ref{LEM:GRR}
(under an expectation).
Finally, applying 
Lemma~\ref{LEM:GRR}
and then  Chebyshev's inequality, we conclude that 
\begin{align*}
\PP\Big( \sup_{t \in [j, j+1]}\|:\! \Psi^k(t) \!: - :\! \Psi^k(j) \!:\|_{ W^{-\frac k2-\eps, \infty}_x}> \tfrac \ld2\Big)
\leq C\exp\big(-c \ld^{\frac{2}{k}}\big).
\end{align*}

\noi
This proves \eqref{P0z}.

Lastly, when $k = 1$, 
we note that, unlike the heat or Schr\"odinger case,  the truncated stochastic convolution $\Psi_N$
is differentiable in time and its time derivative is given by 
\begin{align}
\dt \Psi_N(t) = 
  \pi_N \dt^2\D(t)\phi_0 +  \pi_N \dt \D(t) (\phi_0+\phi_1) + \sqrt 2 \pi_N \int_0^t\dt \D(t-t')dW(t').
\label{Psiz}
\end{align}

\noi
The formula \eqref{Psiz} can be easily verified
by writing the Fourier coefficient of  the stochastic convolution 
with the zero initial data  as a Paley-Wiener-Zygmund integral
and taking a time derivative.
With $  \ft \D_n(t) $ as in \eqref{W3}, integration by parts gives 
\begin{align*}
  \int_0^t \ft \D_n(t-t')dB_n(t')
=  -   \int_0^t B_n(t') \dd_{t'} (\ft \D_n(t-t'))dt'
=    \int_0^t B_n(t')  \ft \D_n'(t-t')dt', 
\end{align*}

\noi
since $B_n(0) =  \ft \D_n(t-t')|_{t' = t} = 0$.
Then, by taking a time derivative
and integrating by parts again, we obtain
\begin{align*}
\dt \bigg(  \int_0^t \ft \D_n(t-t')dB_n(t')\bigg)
& =  B_n(t)  \ft \D_n'(0) 
+  \int_0^t B_n(t')  \ft \D_n''(t-t'))dt'\\
& = 
 \int_0^t \dt \ft \D_n(t - t') d B_n(t'). 
\end{align*}

\noi
This proves \eqref{Psiz}.
Once we have \eqref{Psiz} for $\dt \Psi_N$, 
we can simply repeat the computation above
and obtain the claimed convergence and tail estimates.
\end{proof}

Next, we study the regularity of the resonant product $(V \ast :\! \Psi^2 \!:) \pe \Psi$
in \eqref{Psi2}.
Note that when $\be > \frac 32$, 
we can make sense of this resonant product in the deterministic manner
and thus the following lemma is not needed in the focusing case.

\begin{lemma}\label{LEM:IV}
Let $V$ be the Bessel potential of order $\b>1$
and set 
\[Z_N = ( V \ast :\! \Psi^2_N \!:) \pe \Psi_N\]

\noi
for $N \in \N$.
Then, 
given any  $T,\eps>0$ and finite $p \geq 1$, 
 $\{ Z_N \}_{N\in \N}$ is a Cauchy sequence in $L^p(\O;C([0,T];W^{\beta - \frac 32 -\eps,\infty}(\T^3)))$,
 converging to some limit 
 \[  Z = ( V \ast :\! \Psi^2 \!:) \pe \Psi\]

\noi
 in $L^p(\O;C([0,T];W^{\beta - \frac 32 -\eps,\infty}(\T^3)))$.
Moreover,  $Z_N$  converges almost surely to the same  limit in $C([0,T];W^{\beta - \frac 32 -\eps,\infty}(\T^3))$.
Given any finite $q\geq 1$, we   have 
the following tail estimate:
\begin{align}
\PP\Big( \| Z \|_{L^q_T W^{\beta - \frac 32 -\eps, \infty}_x} > \ld\Big) 
\leq C\exp\bigg(-c \frac{\ld^{\frac{2}{3}}}{T^{\frac{2}{3q}}}\bigg)
\label{YS0}
\end{align}

\noi
for any $T > 0$ and $\ld > 0$.
When $q = \infty$, we  also have 
the following tail estimate:
\begin{align}
\PP\Big( \|Z\|_{L^\infty ([j, j+1]; W^{\beta - \frac 32 -\eps, \infty}_x)}> \ld\Big) 
\leq C\exp\big(-c \ld^{\frac{2}{3}}\big)
\label{YS0a}
\end{align}

\noi
for any $j \in \Z_{\ge 0}$ and $\ld > 0$.
Moreover, the tail estimates \eqref{YS0}
and \eqref{YS0a}
also  hold
for $Z_N $, uniformly in $N \in \N$.

\end{lemma}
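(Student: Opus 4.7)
\medskip

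\noindent
\textbf{Proof plan for Lemma \ref{LEM:IV}.}
The strategy follows the template used in Lemma \ref{LEM:stoconv}: verify the second-moment hypotheses of Lemma \ref{LEM:reg} for the Fourier coefficients of $Z_N$, then upgrade to $L^p(\Omega)$ and almost sure convergence via the Wiener chaos estimate (Lemma \ref{LEM:hyp}), and finally obtain the exponential tail bounds \eqref{YS0} and \eqref{YS0a} through Chebyshev's inequality combined with the Garsia--Rodemich--Rumsey inequality (Lemma~\ref{LEM:GRR}). Since $Z_N$ is built from three copies of $\Psi_N$ (after the Wick renormalization inside $:\!\Psi_N^2\!:$\,), it lies in $\mathcal{H}_{\le 3}$ for each fixed $(x,t)$, so Lemma \ref{LEM:hyp} will apply with $k=3$, yielding the exponent $\tfrac{2}{3}$ in the tail bounds as required.

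First, I would write out the Fourier coefficient
\[
\widehat{Z_N}(n,t)
= \sum_{\substack{n=n_1+n_2\\ |n_1|\sim|n_2|\\ |n_1|,|n_2|\le N}} \widehat V(n_1)\,\widehat{:\!\Psi_N^2\!:}(n_1,t)\,\widehat{\Psi_N}(n_2,t),
\]
expand $\widehat{:\!\Psi_N^2\!:}(n_1,t)$ as a double sum over frequencies $k_1+k_2=n_1$ minus the diagonal Wick correction, and then compute $\E[|\widehat{Z_N}(n,t)|^2]$ via Wick's theorem (Lemma \ref{LEM:Wick}) applied to the resulting six-fold Gaussian product. The Wick correction removes exactly those pairings where $k_1=-k_2$ (and the conjugate counterpart), so only two families of pairings survive: \emph{(a)} the ``third-chaos'' pairings in which $\Psi(n_2)$ is paired with its conjugate copy and the two internal $\Psi$-factors pair with their conjugate partners (in two symmetric ways), and \emph{(b)} the ``first-chaos'' pairings in which the outer $\Psi(n_2)$ is paired with one of the internal $\Psi$-factors inside the conjugate $:\!\Psi_N^2\!:$ (this is the $Z_{13}$-piece mentioned in Remark~\ref{REM:no}). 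Using the covariance formula \eqref{sigma2} together with Lemma~\ref{LEM:SUM}, family~(a) gives the bound
\[
\sum_{\substack{n=n_1+n_2\\ |n_1|\sim|n_2|}} \frac{|\widehat V(n_1)|^2}{\jb{n_1}^2\jb{n_2}^2}\cdot\Big(\!\!\!\sum_{k_1+k_2=n_1}\!\!\!\frac{1}{\jb{k_1}^2\jb{k_2}^2}\Big)\lesssim \jb{n}^{-3+2\beta-},
\]
while for family~(b) the double sum collapses to a single resonant-type sum, which by Lemma~\ref{LEM:SUM}\,(ii) produces the explicit smoothing $\jb{n}^{-2(\beta-1)}$ noted in Remark~\ref{REM:no}; either way the total is bounded by $\jb{n}^{-3+(2\beta-1)+\eps}=\jb{n}^{-3-2s_0}$ with $s_0=\beta-\tfrac{3}{2}-\tfrac{\eps}{2}$. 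A parallel calculation for $\widehat{Z_N}(n,t)-\widehat{Z_M}(n,t)$ introduces a sharp frequency cut that supplies a factor $N^{-\gamma}$ for some $\gamma>0$, and the time-difference estimate \eqref{reg2} is obtained by applying the mean value theorem to the explicit sine/cosine in \eqref{sigma2}, exactly as for $:\!\Psi^k\!:$ in Lemma~\ref{LEM:stoconv}.

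With these second-moment bounds in hand, Lemma \ref{LEM:reg} immediately delivers convergence of $\{Z_N\}$ to a limit $Z$ in $C([0,T];W^{\beta-\tfrac32-\eps,\infty}(\T^3))$, both almost surely and in $L^p(\Omega)$. To derive the tail estimates \eqref{YS0} and \eqref{YS0a}, I would first use Minkowski's integral inequality together with Lemma \ref{LEM:hyp} (with $k=3$) to obtain
\[
\bigl\|\,\|Z\|_{L^q_T W^{\beta-\tfrac32-\eps,\infty}_x}\bigr\|_{L^p(\Omega)}\lesssim p^{3/2}T^{1/q}
\]
for $p\gg 1$, which yields \eqref{YS0} via Chebyshev exactly as in the derivation of \eqref{P0} from \eqref{P0a}. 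For the $q=\infty$ statement \eqref{YS0a} on unit time intervals $[j,j+1]$, I would split as in \eqref{P0b}: the fixed-time piece is handled as above, and the time-oscillation piece is controlled by producing an exponential moment estimate of the form $\E\exp\{(|Z(\tau_2)-Z(\tau_1)|/|\tau_2-\tau_1|^\rho)^{2/3}\}\le C$ (which again follows from the Wiener chaos estimate with $k=3$ plus the time-difference bound) and feeding it into Lemma~\ref{LEM:GRR}.

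The main technical obstacle will be the Wick bookkeeping in the second step: one must carefully identify and discard the pairings canceled by the Wick renormalization inside $:\!\Psi_N^2\!:$, show that the surviving ``tree'' contribution from family (b) (the $Z_{13}$-piece) does \emph{not} enjoy a better variance bound than $\jb{n}^{-2(\beta-1)}\cdot\jb{n}^{-1}$, and verify that both families satisfy the threshold $\beta>1$ needed to make the resonant product converge. The uniformity of the tail bounds in $N$ is automatic once the variance computation is performed directly on the truncated object $Z_N$, since the frequency cutoffs only restrict the summation domains and hence preserve all inequalities.
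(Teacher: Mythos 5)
Your overall strategy matches the paper exactly: compute $\E\big[|\ft Z(n,t)|^2\big]$ via Wick's theorem, bound it by a power of $\jb{n}$ using Lemma~\ref{LEM:SUM}, invoke Lemma~\ref{LEM:reg} for convergence, and use the Wiener chaos estimate (with $k=3$) plus the Garsia--Rodemich--Rumsey inequality for the two tail bounds. Your ``family~(a)'' is the paper's $Z_{11}$ (fully off-diagonal third chaos) and ``family~(b)'' is the paper's $Z_{13}$ (first-chaos counter term), so the organizing principle is right, and you correctly identify that $\beta>1$ is the operative threshold coming from the counter term. You do omit the smaller pieces ($Z_2$, $Z_{12}$, $Z_{14}$ in the paper's decomposition), but these are lower order and easily absorbed.

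However, the explicit variance estimates you display contain errors that would sink the proof if read literally. For family~(a) you claim $\lesssim\jb{n}^{-3+2\beta-}$; for $\beta>\tfrac 32$ this does not even decay and the subsequent application of Lemma~\ref{LEM:reg} would fail. The correct bound is $\jb{n}^{-2\beta}$: after summing the inner Wick pairs, the sum reduces (in your variables) to
\begin{align*}
\sum_{\substack{n=n_1+n_2\\ |n_1|\sim|n_2|}} |\ft V(n_1)|^2\,\jb{n_2}^{-2}\sum_{k_1+k_2=n_1}\jb{k_1}^{-2}\jb{k_2}^{-2}
\lesssim \sum_{\substack{n=n_1+n_2\\ |n_1|\sim|n_2|}} \jb{n_1}^{-2\beta-1}\jb{n_2}^{-2}
\lesssim\jb{n}^{-2\beta}
\end{align*}
by Lemma~\ref{LEM:SUM}\,(ii), noting $|n_1|\sim|n_2|\gtrsim|n|$; there should be no extra $\jb{n_1}^{-2}$ in your displayed sum, since only the outer factor $\Psi(n_2)$ contributes a Gaussian variance at that frequency. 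Likewise, for family~(b) the full weight of the outer $\Psi(n)$ yields $\jb{n}^{-2}\cdot\jb{n}^{-2(\beta-1)}=\jb{n}^{-2\beta}$, whereas you write $\jb{n}^{-1}\cdot\jb{n}^{-2(\beta-1)}$. You recover the correct $s_0=\beta-\tfrac32-\tfrac\eps2$ in your summary sentence, so the conclusion is right, but the algebra in between is inconsistent with it. Also note that $\beta>1$ is needed only for family~(b); family~(a) already closes for any $\beta>0$.
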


\begin{proof}

Note that  $( V \ast :\! \Psi^2 \!:) \pe \Psi \in \H_{\le 3}$.
Thus, in view of Lemma \ref{LEM:reg}, 
it suffices to show 
\begin{align}
\E \big[ |\ft Z(n,t)|^2 \big]
&\les \jb{n}^{-2\beta}, \label{YS1} 
\end{align}

\noi
for $n \in \Z^3$ and  $0 \le t \le T$.
As mentioned above, 
 the difference estimate \eqref{reg2}
for $\dl_h \ft Z(n,t)$
follows from~\eqref{YS1} and the mean value theorem
as in the proof of Lemma 3.1 in \cite{GKO2}.
Also, an adaptation of the argument in 
 the proof of Lemma 3.1 in \cite{GKO2}
 yields the claimed convergence results.
As for the exponential tail estimates \eqref{YS0} and \eqref{YS0a}, 
from the spatial homogeneity of $Z$
and \eqref{YS1}, we first obtain
\begin{align*}
\E\big[| \jb{\nb}^{\be -\frac{3}{2} -\eps}Z (x, t)  |^2 \big] 
\les \sum_{n \in \Z^3} \jb{n}^{-3-2\eps} \leq C_\eps
\end{align*}

\noi
for any $\eps > 0$, uniformly in 
$x \in \T^3$ and $t \geq 0$.
Then, we can proceed as in the proof of Lemma~\ref{LEM:stoconv}
to conclude 
the exponential tail estimates \eqref{YS0} and \eqref{YS0a}.

In the following, we focus on proving the bound \eqref{YS1}.
Using \eqref{so4b}, we write $\ft Z (n,t)$ as follows:
\begin{align}
\begin{aligned}
\ft Z (n,t)
&= \sum_{\substack{n_1,n_2,n_3 \in \Z^3 \\ n=n_1+n_2+n_3 \\ |n_1+n_2| \sim |n_3|}}
\ft V(n_1+n_2) \Big( \ft{\Psi} (n_1,t) \ft{\Psi}(n_2,t) - \ind_{n_1+n_2=0} \cdot \jb{n_1}^{-2}\Big) \ft{\Psi} (n_3,t) \\
&= \sum_{\substack{n_1,n_2,n_3 \in \Z^3 \\ n=n_1+n_2+n_3 \\    |n_3|\sim |n_1+n_2|\ne0}}
\ft V(n_1+n_2) \ft{\Psi} (n_1,t) \ft{\Psi}(n_2,t) \ft{\Psi} (n_3,t) \\
&\quad + \sum_{\substack{n_1 \in \Z^3}} \ind_{|n| \sim 1}
\ft V(0) \Big( |\ft{\Psi} (n_1,t)|^2 - \jb{n_1}^{-2} \Big) \ft{\Psi} (n,t)\\
&=: \ft Z_1 (n,t) + \ft Z_2(n,t), 
\end{aligned}
\label{YS1b}
\end{align}

\noi
where we used $|n_1+n_2| \sim |n_3|$ and $|n|\sim 1$
to signify 
 the resonant product $\pe$ in 
the definition of  $Z = ( V \ast :\! \Psi^2 \!:) \pe \Psi$.
From \eqref{Wickz} with \eqref{sigma2}, we have
\begin{align}
\E \big[ | \ft Z_2 (n,t)|^2 \big]
\les \ind_{|n| \sim 1} \sum_{n_1 \in \Z^3} \frac{1}{\jb{n_1}^4}
\les \ind_{|n| \sim 1},
\label{YS2}
\end{align}
verifying \eqref{YS1} for $Z_2$.
We now decompose $\ft Z_1 (n,t)$ as
\begin{align}
\ft Z_1 (n,t)
&= \sum_{\substack{n_1,n_2,n_3 \in \Z^3 \\ n=n_1+n_2+n_3 \\  
 |n_3|\sim |n_1+n_2|\ne0
\\ |n_2+n_3| |n_3+n_1| \ne 0}}
\ft V(n_1+n_2) \ft{\Psi} (n_1,t) \ft{\Psi}(n_2,t) \ft{\Psi} (n_3,t) \notag\\
&\quad + 2 \ft{\Psi} (n,t) \sum_{\substack{n_2 \in \Z^3 \\  |n+n_2| \sim |n_2|\ne 0 }}
\ft V(n+n_2)  \Big( |\ft{\Psi}(n_2,t)|^2 - \jb{n_2}^{-2} \Big) \notag\\
&\quad + 2 \ft{\Psi} (n,t) \sum_{\substack{n_2 \in \Z^3 \\ |n+n_2| \sim |n_2|\ne 0 }}
\ft V(n+n_2)  \jb{n_2}^{-2}\notag\\
&\quad - \ind_{n \neq 0}
\ft V(2n) |\ft{\Psi}(n,t)|^2 \ft{\Psi} (n,t) \notag\\
&
=: \ft Z_{11}(n,t) + \ft Z_{12} (n,t) + \ft Z_{13} (n,t) + \ft Z_{14}(n,t).
\label{Z13}
\end{align}

\noi
Here, $Z_{12}$ denotes the renormalized contribution from $n_3 = n_1, n_2$,
while $Z_{13}$ is the counter term.

From \eqref{Bessel1} and \eqref{W2}, we have 
\begin{align}
\E \big[ |\ft Z_{14} (n,t)|^2 \big]
\les \jb{n}^{-2\beta-6},
\label{YS3}
\end{align}

\noi
verifying \eqref{YS1}.
Under the condition $|n+n_2| \sim |n_2|$, 
we have $|n_2|\ges |n|$.
Then, 
it follows from~\eqref{Bessel1}, \eqref{sigma2}, 
and Lemma \ref{LEM:SUM} that 
\begin{align}
\E \big[ |\ft Z_{13} (n,t)|^2 \big]
&= 4 \jb{n}^{-2} 
\bigg( \sum_{\substack{n_2 \in \Z^3 \\  |n+n_2| \sim |n_2| }} \jb{n+n_2}^{-\be}  \jb{n_2}^{-2} \bigg)^2
\les \jb{n}^{-2\beta}
\label{YS4}
\end{align}

\noi
provided that $\be > 1$.
Similarly, 
we have 
\begin{align}
\E \big[ |\ft Z_{12} (n,t)|^2 \big]
&\les \jb{n}^{-2} \sum_{\substack{n_2 \in \Z^3 \\ |n+n_2| \sim |n_2|}} \jb{n+n_2}^{-2\beta} \jb{n_2}^{-4}
\les \jb{n}^{-2\beta-3}
\label{YS5}
\end{align}

\noi
for $\be > -\frac 12$.
Finally, we consider the estimate for $\ft Z_{11}(n,t)$.
The condition $|n_1+n_2| \sim |n_3|$ implies $|n_1+n_2| \sim |n_3| \ges |n|$.
From \eqref{Bessel1}, Wick's theorem (Lemma \ref{LEM:Wick}), and Lemma \ref{LEM:SUM},
we have
\begin{align}
\E \big[ |\ft Z_{11} (n,t)|^2 \big]
&= \sum_{\substack{n_1,n_2,n_3 \in \Z^3 \\ n=n_1+n_2+n_3 \\  
 |n_3|\sim |n_1+n_2|\ne0
  \\ |n_2+n_3| |n_3+n_1| \neq 0}}
\sum_{\substack{n_1',n_2',n_3' \in \Z^3 \\ n=n_1'+n_2'+n_3' \\  
 |n_3'|\sim |n_1'+n_2'|\ne0
  \\ |n_2'+n_3'| |n_3'+n_1'| \neq 0}}
\ft V(n_1+n_2) \ft V(n_1'+n_2') \notag \\
&  \hphantom{XXX}
\times \E \Big[ \ft{\Psi} (n_1,t) \ft{\Psi}(n_2,t) \ft{\Psi} (n_3,t)
\cj{\ft{\Psi} (n_1',t) \ft{\Psi}(n_2',t) \ft{\Psi} (n_3',t)} \Big] \notag \\
&\les \sum_{\substack{n_1,n_2,n_3 \in \Z^3 \\ n=n_1+n_2+n_3 \\ |n_1+n_2| \sim |n_3| \ges |n|}}
\jb{n_1+n_2}^{-2\beta} \jb{n_1}^{-2} \jb{n_2}^{-2} \jb{n_3}^{-2} \notag  \\
&\quad + \sum_{\substack{n_1,n_2,n_3 \in \Z^3 \\ n=n_1+n_2+n_3 \\ |n_1+n_2| \sim |n_3| \ges |n| \\ |n_2+n_3| \sim |n_1| \ges |n|}}
\jb{n_1+n_2}^{-\beta} \jb{n_2+n_3}^{-\beta} \jb{n_1}^{-2} \jb{n_2}^{-2} \jb{n_3}^{-2} \notag\\
&\les \jb{n}^{-\beta} \sum_{\substack{n_1,n_3 \in \Z^3 \\ |n-n_3| \sim |n_3|}}
\jb{n-n_3}^{-\beta} \jb{n_1}^{-2} \jb{n-n_1-n_3}^{-2} \jb{n_3}^{-2} \notag  \\
&\les \jb{n}^{-\beta} \sum_{\substack{n_3 \in \Z^3 \\ |n-n_3| \sim |n_3|}} \jb{n-n_3}^{-\beta-1} \jb{n_3}^{-2} 
\notag \\
&\les \jb{n}^{-2\beta}
\label{YS6}
\end{align}

\noi
for $\beta>0$.
Putting \eqref{YS1b} - \eqref{YS6} together, we obtain the desired bound 
\eqref{YS1}.
\end{proof}

\begin{remark} \label{REM:Z} \rm
The assumption $\be>1$ 
was used to estimate $Z_{13}$ in \eqref{YS4}, 
while the other terms can be controlled under $\be > 0$.
Note that when $\be \le 1$, 
\eqref{YS4} yields
\[
\E \big[ |\ft Z_{13} (n,t)|^2 \big]
\ges \jb{n}^{-2} \bigg( \sum_{\substack{n_2 \in \Z^3}} \jb{n_2}^{-\b-2} \bigg)^2
= \infty.
\]

\noi
From this, we   conclude that $Z \notin C([0,T]; \D'(\T^3))$ almost surely when $\be \le 1$.
See, for example, Subsection 4.4 in \cite{OOcomp}.
For $\be \le 1$, we introduce a renormalization to remove
this problematic term $Z_{13}$. See \eqref{YZ6a} below.

\end{remark}

\section{Construction of the Gibbs measures}
\label{SEC:Gibbs}

In this section, we present the construction
and non-normalizability 
of the Gibbs measures.
We first discuss the defocusing case (Theorem \ref{THM:Gibbs})
for $\be > 1$.
Then, 
we present the full proof 
of Theorem~\ref{THM:Gibbs2}
in the focusing case.
The remaining part of the defocusing case ($0 < \be \le 1$)
is presented in Section~\ref{SEC:def}.
Our proofs rely on the variational formulation
of the partition function due to Barashkov-Gubinelli \cite{BG}.
See Lemma \ref{LEM:var2} 
and  the  Bou\'e-Dupuis variational formula (Lemma~\ref{LEM:var3})
below.

We first consider the defocusing case.
In the following, we study
 the truncated Gibbs measure $\rho_N$ 
defined in \eqref{GibbsN}: 
\[
d\rho_N = Z_N^{-1} e^{-R_N(u)} d \mu,
\]

\noi
where  
$R_N$ is as in \eqref{K1}
and $Z_N$ denotes the partition function:
\begin{align}
Z_N = \int e^{-R_N(u)} d \mu.
\label{P1}
\end{align}

\noi
In what follows, we prove various statements in terms of $\mu$
but they can be trivially upgraded to the corresponding statement for $\muu = \mu_1\otimes \mu_0$.

First, we state the convergence property of $R_N$.

\begin{lemma} \label{LEM:conv}
Let $V$ satisfy \eqref{Vb} with $\b>1$.
Then, given any finite $p\geq 1$, $R_N$ defined in~\eqref{K1} converges to some $R$ in $L^p(\mu)$ as $N \to \infty$.
Moreover, for $\g>0$, 
 $\RR_N$ defined in~\eqref{K2} converges to some $\RR$ 
in $L^p(\mu)$ as $N \to \infty$.

\end{lemma}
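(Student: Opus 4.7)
\medskip
\noindent\textbf{Proof plan for Lemma~\ref{LEM:conv}.}
The plan is to decompose $R_N$ into homogeneous Wiener chaoses, establish $L^2(\mu)$-Cauchyness for each piece, and upgrade to $L^p(\mu)$ via the hypercontractivity estimate (Lemma~\ref{LEM:hyp}). Using Parseval and \eqref{Wick2}, write
\begin{align*}
4R_N(u) + 2\al_N
= \sum_{n\in\Z^3} \ft V(n) \big| \ft{:\!u_N^2\!:}(n)\big|^2.
\end{align*}
Expanding $\ft{:\!u_N^2\!:}(n) = \sum_{n=n_1+n_2,\,|n_i|\le N} \ft u(n_1)\ft u(n_2) - \ind_{n=0}\s_N$, applying Wick's theorem (Lemma~\ref{LEM:Wick}) to $\ft u(n) = g_n/\jb{n}$, and grouping by the number of contractions, we split
$R_N = R_N^{(4)} + R_N^{(2)} + R_N^{(0)},$
corresponding to the projections onto $\H_4$, $\H_2$, and $\H_0$, respectively. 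The counter term $-\tfrac12\al_N$ is engineered so that $R_N^{(0)}$ itself converges to a finite constant; indeed, the purely contracted piece reduces (after elementary bookkeeping) to $\tfrac12\al_N$ plus a term which is uniformly bounded in $N$ thanks to Lemma~\ref{LEM:SUM}.

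Next, I would establish the $L^2(\mu)$-Cauchy property for the remaining two chaos components. For $R_N^{(4)}\in\H_4$, orthogonality of chaos gives
\[
\E_\mu\big[|R_N^{(4)} - R_M^{(4)}|^2\big]
\les \sum_{n\in \Z^3}|\ft V(n)|^2
\sum_{\substack{n=n_1+n_2\\ |n_i|> M\wedge N\text{ or similar}}}
\frac{1}{\jb{n_1}^2\jb{n_2}^2\jb{n_1'}^2\jb{n_2'}^2},
\]
and iterated applications of Lemma~\ref{LEM:SUM} show the full (un-truncated) sum is finite (using only $\be>0$), so the tail goes to $0$ by dominated convergence. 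For $R_N^{(2)}\in\H_2$, a single contraction produces a coefficient of the form
\[
\kk_N(n_1) := \sum_{|n_1+n_2|\le N,\,|n_2|\le N}\frac{\ft V(n_1+n_2)}{\jb{n_2}^2},
\]
already introduced in \eqref{kappa1}, and the $L^2(\mu)$-variance reduces to $\sum_{n_1}\jb{n_1}^{-4}|\kk_N(n_1)|^2$ (up to constants). By Lemma~\ref{LEM:SUM}(i), $|\kk_N(n_1)|\les \jb{n_1}^{-(\be-1)}$ \emph{uniformly} in $N$ precisely because $\be>1$, so the sum converges and a dominated-convergence argument on the truncated version gives Cauchyness. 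This is the step where the hypothesis $\be>1$ is essential: for $\be\le 1$ the kernel $\kk_N$ diverges as $N\to\infty$, which is the point that later forces the additional renormalizations $R_N^\dia$ and $R_N^{\dia\dia}$ treated in Section~\ref{SEC:def}.

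Having $R_N\to R$ in $L^2(\mu)$ and $R_N\in\H_{\le 4}$ for every $N$, the hypercontractivity estimate (Lemma~\ref{LEM:hyp}) yields
$\|R_N-R_M\|_{L^p(\mu)}\le (p-1)^2\|R_N-R_M\|_{L^2(\mu)}$,
so $L^2$-Cauchyness promotes to $L^p$-Cauchyness for every finite $p\ge 1$, completing the convergence statement for $R_N$.

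Finally, for $\RR_N$ it suffices to handle the extra taming term $M_N := \int_{\T^3}\!:\!u_N^2\!:\,dx = \sum_{|n|\le N}(|g_n|^2-1)/\jb{n}^2$. This is an element of $\H_2$ with variance bounded by $\sum_n \jb{n}^{-4}<\infty$, hence $L^2$-Cauchy; by Lemma~\ref{LEM:hyp} it is in fact $L^q(\mu)$-Cauchy for every finite $q$. Since $x\mapsto|x|^\g$ is locally Lipschitz with polynomial growth, the elementary bound $\big||x|^\g-|y|^\g\big|\les |x-y|\big(|x|^{\g-1}+|y|^{\g-1}+1\big)$ (for $\g\ge 1$; replace by $||x|^\g - |y|^\g|\le |x-y|^\g$ for $0<\g<1$) together with H\"older's inequality yields convergence of $|M_N|^\g$ in every $L^p(\mu)$. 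Combining this with the already-established $L^p$-convergence of the quartic part gives $\RR_N\to\RR$ in $L^p(\mu)$. The main obstacle throughout is the chaos of order $2$: its coefficient $\kk_N$ captures the borderline divergence at $\be=1$, and all the analytic content of the lemma is encoded in the uniform bound on this kernel.
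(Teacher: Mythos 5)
Your proposal follows essentially the same route as the paper: both decompose the renormalized potential energy via Wick's theorem (the paper organizes by frequency-pairing constraints $Q_{N,1},\dots,Q_{N,4}$, you by chaos order, which is the same bookkeeping), both identify the second-order chaos carrying the kernel $\kappa_N$ from \eqref{kappa1} as the borderline term that forces $\be>1$, both use the orthogonality/Wick computation plus Lemma~\ref{LEM:SUM} for the $L^2(\mu)$ bound, and both invoke the hypercontractive estimate (Lemma~\ref{LEM:hyp}) to upgrade from $L^2$ to $L^p$; your treatment of the taming factor $|\!\int :\!u_N^2\!: dx|^\g$ via a local Lipschitz/H\"older bound is a harmless cosmetic variant of the paper's direct Wiener chaos bound. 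One small imprecision: your parenthetical claim that the fourth-chaos piece converges for all $\be>0$ is not what the estimate gives --- iterating Lemma~\ref{LEM:SUM} reduces the relevant four-fold sum to $\sum_m \jb{m}^{-2\be-2}$, which requires $\be>\frac12$ (this is exactly the paper's bound \eqref{B5} for $Q_{N,1}$) --- but since the lemma assumes $\be>1$ this does not affect the argument.
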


We point out  that
the proof of Lemma \ref{LEM:conv} does not rely on
the positivity of $\ft V$.
See Subsection~\ref{SUBSEC:R} for the details.
Note that 
Lemma \ref{LEM:conv}  
implies convergence in measure of $\big\{e^{-R_N(u)}\big\}_{N \in \N}$ (in the defocusing case).
Then, 
the desired convergence \eqref{exp2} of the density follows from a standard argument, 
once we prove the uniform exponential integrability \eqref{exp1}. 
See Remark 3.8 in~\cite{Tz08}.  See also the proof of Proposition 1.2 in \cite{OTh}.
The same comment applies to the focusing case.

In establishing the uniform exponential integrability bounds \eqref{exp2}
and \eqref{exp3}, we employ the variational approach 
as in \cite{BG}.
In Subsection \ref{SUBSEC:var}, 
we briefly go over the setup for the variational
formulation of the partition function
from \cite{BG, GOTW}.
In Subsection~\ref{SUBSEC:def}, 
we then present the uniform exponential integrability~\eqref{exp1}
for $\be > 1$ in the defocusing case.
We then move onto the focusing case.
We go over the construction of the focusing Gibbs measure 
for $\be > 2$
in Subsection \ref{SUBSEC:foc1} 
and the non-normalizability in Subsection \ref{SUBSEC:foc2}.
In Subsection \ref{SUBSEC:foc3}, 
we prove the uniform exponential integrability \eqref{exp3}
in  the weakly nonlinear regime
at the critical value $\be = 2$.

Recall our convention that $\s = -1$ in the defocusing case, 
since a precise value of $\s< 0$ does not play an important role.

\subsection{Proof of Lemma \ref{LEM:conv}} \label{SUBSEC:R}

We only consider the case $p = 2$.
The convergence for general $p \geq1$
follows from the $p = 2$ case and  the Wiener chaos estimate (Lemma \ref{LEM:hyp}).
Furthermore,  in the following, we only show 
\begin{align}
\sup_{N \in \N} \| R_N(u) \|_{L^2(\mu)}< \infty
\qquad \text{and}
\qquad
\sup_{N \in \N} \| \RR_N(u) \|_{L^2(\mu)}< \infty
\label{B2}
\end{align}

\noi
since a slight modification of the argument presented below 
implies the convergence of  $\{ R_N \}_{N\in \N}$ 
and $\{ \RR_N \}_{N\in \N}$ in $L^2(\mu)$.

Define $Q_N(u)$ by
\begin{align}
Q_N(u)
:= \int_{\T^3} (V \ast :\! u_N^2 \!:) :\! u_N^2 \!:  dx
- \ft V(0) \bigg( \int_{\T^3} :\! u_N^2 \!: dx\bigg)^2
- 2 \al_N
\label{KN}
\end{align}

\noi
where  $\al_N$ is as in \eqref{alN}.
Then, we can write  $R_N(u)$ and $\RR_N(u)$ in \eqref{K1} and \eqref{K2} as 
\begin{align}
R_N (u)
& = \frac 14 Q_N(u) + \frac{\ft V(0)}4 \bigg( \int_{\T^3} :\! u_N^2 \!: dx\bigg)^2, 
\label{K3}\\
\RR_N (u)
& = \frac \s4 Q_N(u) + \frac{\s \ft V(0)}4 \bigg( \int_{\T^3} :\! u_N^2 \!: dx\bigg)^2
 - A \, \bigg| \int_{\T^3} :\! u_N^2 \!: dx\bigg|^\g.
\label{K4}
\end{align}

\noi
By the Wiener chaos estimate (Lemma \ref{LEM:hyp}), 
we have
\[
\Bigg\| \bigg| \int_{\T^3} :\! u_N^2 \!: dx \bigg|^p \Bigg\|_{L^2(\mu)}
\les C_p  \bigg\| \int_{\T^3} :\! u_N^2 \!: dx \bigg\|_{L^2(\mu)}^p
\les C_p \bigg( \sum_{n \in \Z^3} \jb{n}^{-4} \bigg)^\frac{p}{2}
<\infty
\]
for any finite $p>0$.
Hence, the desired bounds \eqref{B2} follow once we prove
\begin{align}
\sup_{N \in \N} \| Q_N(u) \|_{L^2(\mu)}
< \infty.
\label{B3}
\end{align}

From Parseval's identity (see \eqref{Wick2}) with  \eqref{KN}, \eqref{Wick1}, and \eqref{alN}, we have
\begin{align}
Q_N(u)
&=
\sum_{\substack{n_1,n_2,n_3,n_4 \in \Z^3 \\ n_1+n_2+n_3+n_4=0 \\ |n_1+n_2| |n_1+n_3| |n_1+n_4| \neq 0}}
\ft V(n_1+n_2)
\ft u_N (n_1) \ft u_N (n_2) \ft u_N (n_3) \ft u(n_4) \notag\\
&\quad +
2\sum_{\substack{n_1,n_2 \in \Z^3 \\ n_1+n_2 \neq 0 \\ |n_1|, |n_2| \le N}}
\ft V(n_1+n_2)
\Big( |\ft u_N (n_1)|^2 - \jb{n_1}^{-2} \Big) \Big( |\ft u_N (n_2)|^2 - \jb{n_2}^{-2} \Big) \notag\\
&\quad
+4\sum_{\substack{n_1,n_2 \in \Z^3 \\ n_1+n_2 \neq 0 \\ |n_1|, |n_2| \le N}}
\ft V(n_1+n_2)
\Big( |\ft u_N (n_1)|^2 - \jb{n_1}^{-2} \Big) \jb{n_2}^{-2} \notag\\
&\quad
-
\sum_{\substack{n_1 \in \Z^3 \\ n_1 \neq 0}}
\ft V(2n_1) |\ft u_N (n_1)|^4 \notag\\
&=: Q_{N,1}(u) + Q_{N,2}(u) + Q_{N,3}(u) +Q_{N,4}(u).
\label{B4}
\end{align}
Here,
the condition $|n_1+n_2| |n_1+n_3| |n_1+n_4| \neq 0$ 
together with $n_1 + n_2 + n_3 + n_4 = 0$ in
the definition of  $Q_{N,1}$ implies that
$n_i + n_j \neq 0$ for all $i \neq j$.

From  \eqref{Vb} and Wick's theorem (Lemma \ref{LEM:Wick}),
$Q_{N,1}(u)$ is estimated as follows:
\begin{align}
\| Q_{N,1} (u) \|_{L^2(\mu)}^2
&\les \sum_{\substack{n_1,n_2,n_3,n_4 \in \Z^3 \\ n_1+n_2+n_3+n_4=0}}
\jb{n_1+n_2}^{-2\b} \jb{n_1}^{-2} \jb{n_2}^{-2} \jb{n_3}^{-2} \jb{n_4}^{-2} 
\notag \\
&\quad +\sum_{\substack{n_1,n_2,n_3,n_4 \in \Z^3 \\ n_1+n_2+n_3+n_4=0}}
\jb{n_1+n_2}^{-\b} \jb{n_1+n_3}^{-\b} \jb{n_1}^{-2} \jb{n_2}^{-2} \jb{n_3}^{-2} \jb{n_4}^{-2} 
\notag \\
\intertext{By Cauchy's inequality, symmetry, and Lemma \ref{LEM:SUM},}
&\les
\sum_{\substack{n_1,n_2,n_3 \in \Z^3}}
\jb{n_1+n_2}^{-2\b} \jb{n_1}^{-2} \jb{n_2}^{-2} \jb{n_3}^{-2} \jb{n_1+n_2+n_3}^{-2}  
\notag \\
&\les
\sum_{\substack{n_1,n_2\in \Z^3}}
\jb{n_1+n_2}^{-2\b-1} \jb{n_1}^{-2} \jb{n_2}^{-2} 
\notag \\
&\les
\sum_{n_1 \in \Z^3}
\jb{n_1}^{-2-\min(2\b,2-\eps)}
< \infty
\label{B5}
\end{align}

\noi
for any small $\eps > 0$, provided that $\be > \frac 12$.
From \eqref{B4} and \eqref{Wickz},
we have
\begin{align}
\begin{split}
\| Q_{N,2} (u) \|_{L^2(\mu)}^2
& \les
\sum_{\substack{n_1,n_2 \in \Z^3}}
\jb{n_1+n_2}^{-2\b}
\jb{n_1}^{-4} \jb{n_2}^{-4}
+ \bigg(\sum_{n_1 \in \Z^3} \jb{2n_1}^{-\be} 
\jb{n_1}^{-4}\bigg)^{2}\\
& \les
\bigg(
\sum_{\substack{n_1 \in \Z^3}}
\jb{n_1}^{-4} \bigg)^2
<\infty
\end{split}
\label{Bb2}
\end{align}

\noi
for $\be \ge 0$.
As for  $Q_{N,3}(u)$, we first note that 
\begin{align*}
\begin{split}
Q_{N,3}(u)
&= 4\sum_{\substack{n_1 \in \Z^3 \\ |n_1| \le N}}
\Big( |\ft u_N (n_1)|^2 - \jb{n_1}^{-2} \Big) \kappa_N (n_1),
\end{split}
\end{align*}

\noi
where $\kappa_N$ is defined in \eqref{kappa1}.
Hence, from \eqref{Wickz}
and the uniform boundedness of $\kk_N$ for  $\b>1$, 
 we obtain
\begin{align}
\begin{split}
\| Q_{N,3} (u) \|_{L^2(\mu)}^2
\les
\sum_{n_1 \in \Z^3}
\kappa (n_1)^2 \jb{n_1}^{-4}
\les \sum_{n_1 \in \Z^3}
\jb{n_1}^{-4}
<\infty.
\end{split}
\label{B6}
\end{align}

\noi
Lastly, we have
\begin{align}
&\| Q_{N,4} (u) \|_{L^2(\mu)}^2
\les  \bigg(\sum_{n_1 \in \Z^3} \jb{n_1}^{- \be -4}\bigg)^2
<\infty.
\label{B7}
\end{align}

\noi
Therefore, putting 
\eqref{B4} - 
\eqref{B7}
together, 
we obtain \eqref{B3}.
This proves Lemma \ref{LEM:conv}.

\begin{remark} \label{REM:Bb} \rm
For a potential $V$ satisfying 
 $\ft V(n) \ges \jb{n}^{-\b}$, $n \in \Z^3$, 
 for some $\be \le 1$,  we have
\begin{align*} 
\lim_{N \to \infty } \| Q_N(u) \|_{L^2(\mu)}
=\infty.
\end{align*}

\noi
The argument above shows that 
while $Q_{N,1}$, $Q_{N,2}$, and $Q_{N,4}$
are uniformly bounded in $L^2(\mu)$ for $\b> \frac 12$, 
 $Q_{N, 3}$ becomes divergent for $\be \le 1$
 due to the unboundedness of $\kk_N$.
For $\frac 12 < \be \le 1$, 
we can  introduce  the second renormalization
as in \eqref{K1r}.
This precisely 
 removes the divergent term $Q_{N, 3}$, 
allowing us to prove an analogue of Lemma \ref{LEM:conv}
for $R^\dia_N(u)$ defined in 
\eqref{K1r}.
For this renormalized potential energy 
 $ R^\dia_N(u)$, 
 the uniform exponential integrability holds true for $\be > \frac 12$.
See Section \ref{SEC:def}.

For $0 < \be \le \frac 12$, the first term $Q_{N, 1}$ in \eqref{B4}
also becomes divergent.
This term, however, constitutes 
the main contribution for the potential energy and thus can not be removed
by a renormalization, 
causing the singularity 
of the resulting Gibbs measure to the base Gaussian measure in this case.
See Subsection \ref{SUBSEC:def5}.


\end{remark}

\subsection{Variational formulation}
\label{SUBSEC:var}

In order to  prove \eqref{exp1}, 
we follow the argument in \cite{BG,GOTW} 
and derive a variational formula for the partition function
$Z_N$ in \eqref{P1}. 
Let us first introduce some notations.
See also Section 4 in  \cite{GOTW}. 
Let $W(t)$ be the cylindrical 
 Wiener process  in~\eqref{W1}.
 We  define a centered Gaussian process $Y(t)$
by 
\begin{align}
Y(t)
=  \jb{\nabla}^{-1}W(t).
\label{P2}
\end{align}

\noi
Then, 
we have $\Law(Y(1)) = \mu$. 
By setting  $Y_N = \pi_NY $, 
we have   $\Law(Y_N(1)) = (\pi_N)_\#\mu_1$. 
In particular, 
we have  $\E [Y_N^2(1)] = \s_N$,
where $\s_N$ is as in~\eqref{sigma1}.

Next, let $\Ha$ denote the space of drifts, which are the progressively measurable processes that belong to
$L^2([0,1]; L^2(\T^3))$, $\PP$-almost surely. 
Given a drift $\dr \in \Ha$, 
we  define the measure $\Q_\dr$ 
whose Radon-Nikodym derivative with 
respect to $\PP$ is given by the following stochastic exponential:
\begin{align*}
\frac{d\Q_\dr}{d\PP} = e^{\int_0^1 \jb{\dr(t),  dW(t)} - \frac{1}{2} \int_0^1 \| \dr(t) \|_{L^2_x}^2dt},
\end{align*}

\noi
where $\jb{\cdot, \cdot}$ stands for the usual  inner product on $L^2(\T^3)$.
Then, by letting  $\Hc$ denote the subspace of $\Ha$ consisting of drifts such that $\Q_\dr(\O) = 1$,
it follows from Girsanov's theorem (\cite[Theorems 1.4 and 1.7 in Chapter VIII]{RV})
 that $W$ is a semi-martingale under $\Q_\dr$
 with the following  decomposition:
\begin{align}
 W(t) = W_\dr(t) + \int_0^t \dr(t')dt',
\label{P3}
\end{align}

\noi
 where $W_\dr$ is now an $L^2(\T^2)$-cylindrical Wiener process under
 the new measure $\Q_\dr$.
Substituting~\eqref{P3} in \eqref{P2} leads to the decomposition:
\begin{align*}
 Y = Y_\dr +I(\dr), 
\end{align*}

\noi
where 
\begin{align}
Y_\dr(t) = \jb{\nabla}^{-1} W_\dr(t)\qquad \textup{and}\qquad I(\dr)(t) = \int_0^t \jb{\nabla}^{-1} \dr(t') dt'.
\label{P3a}
\end{align}
In the following, 
we use $\E_{\Q_\dr}$ for an expectation with respect to 
$\Q_\dr$.

Proceeding  as  in \cite[Lemma 1]{BG} and \cite[Proposition 4.4]{GOTW}, we then have the following variational formula for the partition function $Z_N$ in \eqref{P1}.

\begin{lemma} \label{LEM:var2}
For any $N \in \N$, we have
\begin{equation} \notag
- \log Z_N = \inf_{\dr \in \Hc} \E_{\Q_\dr} 
\bigg[ R_N (Y_\dr(1) + I(\dr)(1)) + \frac{1}{2} \int_0^1 \| \dr(t) \|_{L^2_x}^2 dt \bigg].
\end{equation}
\end{lemma}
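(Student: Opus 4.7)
The plan is to obtain the identity as an instance of the Bou\'e-Dupuis variational principle applied to the functional $F(W) := R_N(\jb{\nabla}^{-1} W(1))$ on Wiener space, and then to recast the resulting $\E_\PP$-formula in the $\E_{\Q_\dr}$-form via Girsanov's theorem. The overall structure follows \cite[Lemma~1]{BG} and \cite[Proposition~4.4]{GOTW}.

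The first step is to verify that $F$ falls within the scope of Bou\'e-Dupuis. By Remark \ref{REM:Vp} the quartic leading term in $R_N$ is pointwise non-negative, so $R_N(Y(1)) \geq -\tfrac12 \al_N$ almost surely and hence $e^{-R_N(Y(1))} \leq e^{\al_N/2}$; combined with Lemma \ref{LEM:conv}, this gives $F \in L^p(\PP)$ for all finite $p \geq 1$ as well as a uniform pointwise lower bound. The Bou\'e-Dupuis formula, extended to functionals bounded from below on abstract Wiener space as in \cite{BG, GOTW}, then asserts
\begin{align*}
-\log Z_N = \inf_{\dr \in \Ha} \E_\PP\bigg[ R_N\big(Y(1) + I(\dr)(1)\big) + \frac{1}{2}\int_0^1 \|\dr(t)\|_{L^2_x}^2\, dt\bigg],
\end{align*}
where we used the identification $\jb{\nabla}^{-1}\big(W + \int_0^\cdot \dr(t')dt'\big)|_{t=1} = Y(1) + I(\dr)(1)$ that follows from \eqref{P2} and \eqref{P3a}.

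The second step is the change of measure. For any $\dr \in \Hc$, so that $\Q_\dr$ is a probability measure, Girsanov's theorem (cf.\ \eqref{P3}) yields $Y = Y_\dr + I(\dr)$ pathwise under $\Q_\dr$, together with the identity in law of $(W, \dr)$ under $\Q_\dr$ and $\big(W + \int_0^\cdot \dr(t')dt', \dr\big)$ under $\PP$. This converts the $\PP$-expectation in the display above into $\E_{\Q_\dr}\big[R_N(Y(1)) + \tfrac12 \int_0^1 \|\dr(t)\|_{L^2_x}^2 dt\big]$, which is the desired expression modulo the restriction of the infimum from $\Ha$ to $\Hc$. The latter is harmless: truncating a general $\dr \in \Ha$ by stopping times $\tau_M = \inf\{t : \int_0^t \|\dr\|_{L^2_x}^2 ds \geq M\}$ yields bounded drifts in $\Hc$ (Novikov's condition being trivially satisfied), and the lower bound $R_N \geq -\tfrac12 \al_N$ combined with monotone convergence shows that the infimum is unchanged.

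The main technical obstacle is justifying that all of these exchanges are legitimate simultaneously: Bou\'e-Dupuis is classically proved for bounded $F$, while Girsanov requires the drift to satisfy a Novikov-type condition. Both are handled by the same truncation-and-passage-to-the-limit scheme, with the uniform lower bound $R_N \geq -\tfrac12 \al_N$ from Remark \ref{REM:Vp} (and the $L^p(\mu)$ convergence from Lemma \ref{LEM:conv}) providing the control needed for dominated\,/\,monotone convergence. Apart from this book-keeping, no genuinely new input beyond the existing literature is required, and the details follow those of \cite{BG, GOTW} verbatim.
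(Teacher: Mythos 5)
The change-of-measure step contains a gap: the claimed ``identity in law of $(W,\dr)$ under $\Q_\dr$ and $(W+\int_0^\cdot\dr(t')dt', \dr)$ under $\PP$'' is false for path-dependent drifts. What Girsanov, in the form recorded in \eqref{P3}, gives is that $W_\dr = W - \int_0^\cdot\dr\,dt'$ is a cylindrical Wiener process under $\Q_\dr$, i.e.\ $\Law(W_\dr;\Q_\dr) = \Law(W;\PP)$; but this does \emph{not} yield $\Law(W;\Q_\dr) = \Law(W+\int_0^\cdot\dr(W,\cdot)\,dt';\PP)$ with the same functional $\dr$. Writing $W = W_\dr + \int_0^\cdot\dr\,dt'$ and re-expressing $\dr(W,t)$ as a functional $\psi(W_\dr,t)$ of $W_\dr$ (possible since $W$ and $W_\dr$ generate the same filtration), one gets $\Law(W;\Q_\dr) = \Law(W+\int_0^\cdot\psi(W,\cdot)\,dt';\PP)$, and $\psi$ differs from $\dr$ unless $\dr$ is deterministic. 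As a concrete test, take $\dr(W,t) = \eps W(t)$ on a one-dimensional Wiener space, with $\eps$ small enough that $\dr\in\Hc$: under $\Q_\dr$ the process $W$ solves $dW = dW_\dr + \eps W\,dt$ and has $\E_{\Q_\dr}[W(1)^2] = (e^{2\eps}-1)/(2\eps) = 1 + \eps + \tfrac23\eps^2 + O(\eps^3)$, whereas $\E_\PP\big[(W(1)+\eps\int_0^1 W\,ds)^2\big] = 1 + \eps + \tfrac13\eps^2$. These disagree at order $\eps^2$, so the term-by-term conversion $\E_\PP[R_N(Y+I(\dr))+\tfrac12\int\|\dr\|^2] = \E_{\Q_\dr}[R_N(Y)+\tfrac12\int\|\dr\|^2]$ does not hold, and your chain $\inf_{\Ha}\E_\PP = \inf_{\Hc}\E_\PP = \inf_{\Hc}\E_{\Q_\dr}$ breaks at the last equality.

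The lemma is of course true, and there are two standard ways to close the gap. Staying within the Bou\'e--Dupuis framework, one observes that for $\dr\in\Hc$ the Girsanov shift $\sigma_\dr:w\mapsto w+\int_0^\cdot\dr(w,s)\,ds$ is a bijection on path space (causality of $\dr$ allows one to solve for $w$ given $\sigma_\dr(w)$), and conjugating drifts by $\sigma_\dr^{-1}$ gives a bijection of (suitable subsets of) $\Hc$ onto itself under which the two collections of functional values agree \emph{as sets}; the infima then coincide even though individual values do not match. The arguably cleaner route, and the one closer to \cite{BG, GOTW}, is to bypass Bou\'e--Dupuis and use the Gibbs variational formula $-\log\E_\PP[e^{-F}] = \inf_{\nu\ll\PP}\big(\E_\nu [F] + H(\nu\,|\,\PP)\big)$ directly, combined with the identity $H(\Q_\dr\,|\,\PP) = \tfrac12\E_{\Q_\dr}\int_0^1\|\dr(t)\|_{L^2_x}^2\,dt$ for $\dr\in\Hc$ (It\^o isometry under $\Q_\dr$) and the F\"ollmer\,/\,martingale-representation theorem, which shows that any finite-entropy $\nu\ll\PP$ can be represented as $\Q_\dr$ with $\dr\in\Hc$ after approximation. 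Either argument supplies the missing input; as written, your change-of-measure step does not go through.
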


We state a useful lemma on the  pathwise regularity estimates  of 
$Y_\dr(1)$ and $I(\dr)(1)$.
See Lemmas 4.6 and 4.7 in \cite{GOTW}.

\begin{lemma}  \label{LEM:Dr}

\textup{(i)} 
Let $V$ be the Bessel potential of order $\be >1$.
Then, given any finite $p \ge 1$, 
 we have 
\begin{align}
\begin{split}
\sup_{\dr \in \Hc} \E_{\Q_\dr} 
\Big[ & \|Y_\dr(1)\|_{\C^{-\frac 12 - \eps}}^p
+ \|:\!Y_\dr^2(1)\!:\|_{\C^{-1 - \eps}}^p
 + 
\big\|(V \ast :\! Y_\dr^2(1) \!: ) Y_\dr(1)\big\|_{\C^{-\frac 12-\eps}}^p
\Big]
 <\infty
 \end{split}
 \label{P4}
\end{align}

\noi
for any $\eps > 0$.

\smallskip

\noi
\textup{(ii)} For any $\dr \in \Hc$, we have
\begin{align*}
\| I(\dr)(1) \|_{H^{1}}^2 \leq \int_0^1 \| \dr(t) \|_{L^2}^2dt.
\end{align*}
\end{lemma}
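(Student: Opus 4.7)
\medskip

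\noindent
\textbf{Proof plan.}
The central observation is that under the tilted measure $\Q_\dr$, Girsanov's theorem makes $W_\dr$ an $L^2(\T^3)$-cylindrical Wiener process, so $Y_\dr(1) = \jb{\nabla}^{-1} W_\dr(1)$ has exactly law $\mu$ under $\Q_\dr$, independently of the choice of drift $\dr \in \Hc$. Consequently, for any measurable $F$,
\[
\sup_{\dr \in \Hc} \E_{\Q_\dr}\big[ F(Y_\dr(1)) \big] = \E_\mu\big[ F(u) \big],
\]
and so the three bounds in \eqref{P4} reduce to moment bounds for the Gaussian free field, the Wick square $:\!u^2\!:$, and the cubic object $(V \ast :\!u^2\!:)u$, each evaluated under $u \sim \mu$. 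The first two of these are classical: with $u = \sum_n \jb{n}^{-1} g_n e_n$ one computes $\E_\mu[|\widehat u(n)|^2] = \jb{n}^{-2}$ and $\E_\mu[|\widehat{:\!u^2\!:}(n)|^2]\les \jb{n}^{-1}$ via Wick's theorem (Lemma~\ref{LEM:Wick}); combining these with the Wiener chaos estimate (Lemma~\ref{LEM:hyp}) and the spatial part of Lemma~\ref{LEM:reg} yields the required $\C^{-\frac12-\eps}$ and $\C^{-1-\eps}$ bounds, uniformly in $p$. These are nothing but the fixed-time slices of Lemma~\ref{LEM:stoconv}, applied to the Gaussian field $u$ whose covariance matches $\sigma_n(1,1) = \jb{n}^{-2}$ (cf.~\eqref{sigma2}).

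For the cubic term, I will split the product via the paraproduct decomposition:
\[
(V\ast :\!u^2\!:)u = (V\ast :\!u^2\!:)\pl u + (V\ast :\!u^2\!:)\pg u + (V\ast :\!u^2\!:)\pe u.
\]
Since $V$ is the Bessel potential of order $\be>1$, the convolution upgrades the regularity of the Wick square: $V\ast :\!u^2\!:\, \in \C^{\be - 1-\eps}$ has \emph{positive} regularity. By Lemma~\ref{LEM:para} the low-high and high-low paraproducts thus belong to $\C^{-\frac12-\eps}$ in a purely deterministic (pathwise) way, with moment bounds inherited from those on $\|u\|_{\C^{-\frac12-\eps}}$ and $\|:\!u^2\!:\|_{\C^{-1-\eps}}$ via Cauchy--Schwarz. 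The resonant piece $(V\ast :\!u^2\!:)\pe u$ is the delicate term: the sum of regularities is $\be - \frac{3}{2}-\eps$, which is negative when $1 < \be \le \frac{3}{2}$, so the product is not defined by Besov multiplication alone and must be handled stochastically. Here I invoke the fixed-time analog of Lemma~\ref{LEM:IV}: since $u \overset{\Law}{=} \Psi(1)$, the Fourier-side second-moment computation \eqref{YS1}--\eqref{YS6} runs identically and gives $\E_\mu[|\widehat{(V\ast :\!u^2\!:)\pe u}(n)|^2]\les \jb{n}^{-2\be}$, whence Wiener chaos (Lemma~\ref{LEM:hyp}) and Lemma~\ref{LEM:reg} place this piece in $\C^{\be-\frac{3}{2}-\eps}\subset \C^{-\frac12-\eps}$ with moments of every order.

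Part~(ii) is elementary. Writing $\jb{\nabla} I(\dr)(1) = \int_0^1 \dr(t')\,dt'$ and applying Minkowski's integral inequality followed by Cauchy--Schwarz,
\[
\|I(\dr)(1)\|_{H^1}^2 = \Big\|\int_0^1 \dr(t')\,dt'\Big\|_{L^2}^2 \le \Big(\int_0^1 \|\dr(t')\|_{L^2}\,dt'\Big)^2 \le \int_0^1 \|\dr(t')\|_{L^2}^2\,dt'.
\]
The principal obstacle throughout is the resonant contribution in part~(i): it is precisely this term that fixes the threshold $\be > 1$ (as flagged in Remark~\ref{REM:Z}, the counter-term $Z_{13}$ fails to be summable at $\be = 1$), and its treatment forces us to leave the realm of deterministic paraproduct estimates and rely on the stochastic cancellation encoded in the Fourier-side calculation of Lemma~\ref{LEM:IV}.
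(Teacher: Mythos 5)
Your proof is correct and takes the same route as the paper: the key observation (Girsanov makes $\Law(Y_\dr(1)) = \mu$ under $\Q_\dr$, independent of $\dr$), the reduction of the first two terms to classical Gaussian moment bounds, the paraproduct split of the cubic term with the resonant piece handled by the Fourier-side second-moment estimate of Lemma~\ref{LEM:IV}, and the elementary Minkowski–Cauchy–Schwarz argument for part~(ii) all match what the paper does (the paper only sketches this, citing \cite{GOTW}). One small imprecision worth noting: the high-low paraproduct $(V\ast:\!u^2\!:)\pg u$ actually lands in $\C^{\be-\frac32-\eps'}$, not $\C^{-\frac12-\eps}$ outright; it is only after choosing $\eps$ small relative to $\be-1$ that $\be-\frac32-\eps' > -\frac12-\eps$, which is precisely where the hypothesis $\be>1$ enters for the deterministic pieces as well as for the resonant one.
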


As for (i), 
the main point is to note that,  
for any $\dr \in \Hc$,
 $W_\dr$ is a cylindrical Wiener process in $L^2(\T^2)$ under $\Q_\dr$.
 Thus,  the law of $Y_\dr(1) = \jb{\nabla}^{-1} W_\dr(1)$ under $\Q_\dr$ is always given by  $\mu$, so in particular,  it is independent of $\dr\in\Hc$. 
 This fact is also used in \eqref{YZ10} below.
As for the last term in \eqref{P4}, 
the same argument as in the proof of Lemma \ref{LEM:IV} yields that 
$(V \ast :\! Y_\dr^2(1) \!:) \pe Y_\dr(1)$ is in $ \C^{\b-\frac 32-\eps}(\T^3)$ almost surely
for $\be > 1$.
By  the paraproduct decomposition \eqref{para1}
and  Lemma~\ref{LEM:para},  we then conclude that 
 $(V \ast :\! Y_\dr^2(1)\!:) Y_\dr(1)\in \C^{-\frac 12-\eps}(\T^3)$ almost surely
for  $\be>1$.

\begin{remark}\rm
In the discussion above, we used the formulation, following the work \cite{GOTW}
rather than the original work by Barashkov and Gubinelli \cite{BG}.
In \cite{BG}, a Gaussian process was localized in a frequency annulus, 
depending on the value of $t$ (which is not restricted to $[0, 1]$ in \cite{BG}), 
in order to treat a cubic term which would be divergent without such a frequency cutoff.
In our current problem, however, there is no such 
issue thanks to the smoothing coming from the Hartree potential $V$,
allowing us to work with a simpler formulation as in \cite{GOTW}.

\end{remark}

\subsection{Exponential integrability in the defocusing case for $\be > 1$}
\label{SUBSEC:def}

In this section, we consider the defocusing case.
We use the variational formulation 
of the partition function $Z_N$ (Lemma \ref{LEM:var2})
and prove  the uniform exponential integrability 
\eqref{exp1} for $\be > 1$ in 
Theorem~\ref{THM:Gibbs}\,(i).
Since the argument is identical for any finite $p \ge 1$,
we only present details for the case $p=1$.

Fixing an arbitrary drift $\dr \in \Hc$, 
our main goal is to establish a uniform (in $N$) lower bound on 
\begin{equation}
\W_N(\dr) = \E_{\Q_\dr} 
\bigg[ R_N(Y_\dr(1) + I(\dr)(1)) + \frac{1}{2} \int_0^1 \| \dr(t) \|_{L^2_x}^2 dt \bigg].	
\label{v_N0}
\end{equation}
Since the drift $\dr \in \Hc$ is fixed, 
we suppress the dependence on the drift $\dr$ henceforth 
and denote $Y = Y_\dr(1) $ and $\Dr = I(\dr)(1)$
with the understanding that an expectation is taken under the measure $\Q_\dr$.
We also set $Y_N = \pi_N Y$ and $\Dr_N = \pi_N \Dr$.
By setting
\begin{align}
V_0 = V -\ft V(0) = V-1, 
\label{V0}
\end{align}

\noi
it follows 
from \eqref{K1}, \eqref{K3},  and \eqref{KN}
that 
\begin{align}
\begin{split}
R_N (Y + \Dr)  & = 
\frac 14 Q_N(Y)
+ \int_{\T^3} ( V_0 \ast :\! Y_N^2 \!: ) Y_N \Dr_N dx
+\frac 12 \int_{\T^3} (V_0 \ast :\! Y_N^2 \!: ) \Dr_N^2 dx
\\
&\hphantom{X}
+ \int_{\T^3} ( V_0 \ast ( Y_N \Dr_N)) Y_N \Dr_N dx
+ \int_{\T^3} ( V_0 \ast \Dr_N^2) Y_N \Dr_N dx
\\
&\hphantom{X}
+\frac 14 \int_{\T^3} (V_0 \ast \Dr_N^2) \Dr_N^2 dx
+ \frac 14 \bigg\{ \int_{\T^3} \Big( :\! Y_N^2 \!: + 2 Y_N \Dr_N + \Dr_N^2 \Big) dx \bigg\}^2.
\end{split}
\label{Y0}
\end{align}

\noi
From \eqref{B4} and Wick's theorem (Lemma \ref{LEM:Wick} and  \eqref{Wickz}), we have
\begin{align}
\begin{split}
\E_{\Q_\dr} [Q_{N,1}(Y)] &=  \E_{\Q_\dr} [Q_{N,3}(Y)] =0, \\
\E_{\Q_\dr}  [Q_{N,2}(Y)
+ Q_{N,4}(Y)]
&= 2\sum_{\substack{n_1 \in \Z^3 \\ |n_1| \le N}} \ft V (2n_1) \jb{n_1}^{-4}
-2 \sum_{\substack{n_1 \in \Z^3 \\ |n_1| \le N}} \ft V (2n_1) \jb{n_1}^{-4} = 0.
\end{split}
\label{Y0a}
\end{align}

\noi
As a consequence, we  have
\begin{align}
\E_{\Q_\dr} [Q_N(Y)]
=0.
\label{Y1}
\end{align}

\noi
Hence, from  \eqref{v_N0}, \eqref{Y0}, and \eqref{Y1}, we obtain
\begin{align}
\begin{split}
\W_N(\dr)
&=\E_{\Q_\dr} 
\bigg[
\int_{\T^3} ( V_0 \ast :\! Y_N^2 \!:) Y_N \Dr_N dx
+ \frac 12 \int_{\T^3} (V_0 \ast :\! Y_N^2 \!: ) \Dr_N^2 dx
\\
&\hphantom{XXXX}
+ \int_{\T^3} (V_0 \ast (Y_N \Dr_N )) Y_N \Dr_N dx
+ \int_{\T^3} (V_0 \ast \Dr_N^2) Y_N \Dr_N dx
\\
&\hphantom{XXXX}
+ \frac 14 \int_{\T^3} (V_0 \ast \Dr_N^2 ) \Dr_N^2 dx
+ \frac 14 \bigg\{ \int_{\T^3} \Big( :\! Y_N^2 \!: + 2 Y_N \Dr_N + \Dr_N^2 \Big) dx \bigg\}^2 \\
&\hphantom{XXXX}
+ \frac{1}{2} \int_0^1 \| \dr(t) \|_{L^2_x}^2 dt
\bigg].
\end{split}
\label{v_N0a}
\end{align}

\noi
The main strategy is to bound $\W_N(\dr)$ from below pathwise,
uniformly in $N \in \N$ and independently of the drift $\dr$, by utilizing the positive terms:
\begin{equation}
\U_N(\dr) = \E_{\Q_\dr}\bigg[ \frac 14 \int_{\T^3} (V_0 \ast \Dr_N^2) \Dr_N^2 dx 
+ \frac 1{16} \bigg( \int_{\T^3} \Dr_N^2 dx \bigg)^2 + \frac{1}{2} \int_0^1 \| \dr(t) \|_{L^2_x}^2 dt\bigg].
\label{v_N1}
\end{equation}

\noi
As pointed out in  Remark \ref{REM:Vp}, the first term on the right-hand side of \eqref{v_N1} 
is non-negative
and is in fact equal to $\frac 14 \| \Dr_N^2 \|_{\dot H^{-\frac \be 2}}^2$.
As for the second term, see Lemma \ref{LEM:Dr3} below.

In the following, we first state two lemmas, controlling the other terms appearing \eqref{v_N0a}.
We present the proofs of these lemmas at the end of this subsection.

\begin{lemma} \label{LEM:Dr2}
Give $\be > 1$, let the potential $V$ satisfy \eqref{Vb}.
Then, there exist   small $\eps>0$ and   a constant  $c  >0$ 
 such that
\begin{align}
\begin{split}
\bigg| \int_{\T^3}  (V_0 \ast \Dr_N^2) Y_N \Dr_N  dx\bigg|
&\le c \Big(1 + 
\| Y_N \|_{\C^{-\frac 12-\eps}}^{c}\Big)\\
& \quad + \frac 1{100} \Big(
\| \Dr_N^2\|_{\dot H^{-\frac{\be}{2}}}^2 + 
 \| \Dr_N \|_{L^2}^4 +  \| \Dr_N \|_{H^1}^2\Big), 
\end{split}
\label{YY1}\\
\bigg| \int_{\T^3} (V_0 \ast :\! Y_N^2 \!:) \Dr_N^2 dx \bigg|
&\le c \| :\! Y_N^2 \!: \|_{\C^{-1-\eps}}^{2}  
+ \frac 1{100} 
\| \Dr_N \|_{L^2}^4, 
\label{YY2} \\
\begin{split}
\bigg| \int_{\T^3}  (V_0 \ast (Y_N \Dr_N) ) Y_N \Dr_N dx \bigg|
&\le c \Big(1 + 
\| Y_N \|_{\C^{-\frac 12-\eps}}^{c}\Big)\\
& \quad + \frac 1{100} \Big(
 \| \Dr_N \|_{L^2}^4 +  \| \Dr_N \|_{H^1}^2\Big),
\end{split}
\label{YY3}\\
\begin{split}
\bigg| \int_{\T^3} (V_0 \ast :\! Y_N^2 \!:) Y_N \Dr_Ndx  \bigg|
&\le c \| (V_0 \ast :\! Y_N^2 \!:) Y_N \|_{\C^{-\frac 12-\eps}}^2 + \frac 1{100} \| \Dr_N \|_{H^1}^2, 
\end{split}
\label{YY4}
\end{align}

\noi
uniformly in $N \in \N$.


\end{lemma}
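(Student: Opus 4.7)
The plan is to bound each of the four integrals pathwise by Besov duality, exploiting (i) the $\beta$ degrees of Bessel smoothing provided by $V_0$, (ii) the paraproduct boundedness of Lemma~\ref{LEM:para}, and (iii) Young's inequality with a small weight to absorb the resulting $\|\Dr_N\|$-factors into the positive terms $\|\Dr_N\|_{L^2}^4$, $\|\Dr_N\|_{H^1}^2$, and $\|\Dr_N^2\|_{\dot H^{-\be/2}}^2$ on the right-hand side. Throughout, I would use the embeddings $H^s(\T^3)\hookrightarrow B^{s-3/2}_{\infty,\infty}(\T^3)$ and $H^1(\T^3)\hookrightarrow \C^{-1/2}(\T^3)$, the fact that $\ft V_0(n)=\ft V(n)\ind_{n\ne 0}$ is non-negative and satisfies $|\ft V_0(n)|\les \jb{n}^{-\b}$ by~\eqref{Vb}, and the standing assumption $\be>1$.

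Estimates \eqref{YY2} and \eqref{YY4} are the most direct. For \eqref{YY2}, transposing the convolution and pairing via Besov duality \eqref{dual} gives
\[
\bigg|\int_{\T^3} (V_0 \ast :\!Y_N^2\!:)\Dr_N^2 \, dx\bigg|
\le \|:\!Y_N^2\!:\|_{\C^{-1-\eps}} \,\|V_0 \ast \Dr_N^2\|_{B^{1+\eps}_{1,1}}.
\]
Choosing $\eps<\b-1$, the Bessel smoothing gives $\|V_0 \ast \Dr_N^2\|_{B^{1+\eps}_{1,1}}\les \|\Dr_N^2\|_{L^1} = \|\Dr_N\|_{L^2}^2$, and Young's inequality closes the bound. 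For~\eqref{YY4}, I would pair $(V_0 \ast :\!Y_N^2\!:)Y_N \in \C^{-1/2-\eps}$ with $\Dr_N \in B^{1/2+\eps}_{1,1}\supset H^{1/2+2\eps}$, interpolate $\|\Dr_N\|_{H^{1/2+2\eps}} \le \|\Dr_N\|_{L^2}^{1/2-2\eps}\|\Dr_N\|_{H^1}^{1/2+2\eps} \le \|\Dr_N\|_{H^1}$, and close by Young.

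The heart of the proof lies in \eqref{YY1} and \eqref{YY3}, where the positivity of $\ft V_0$ is essential. By Parseval and Cauchy-Schwarz,
\[
\bigg|\int_{\T^3}(V_0\ast h_1)h_2\, dx\bigg|
\le \bigg(\sum_{n\ne 0}\ft V(n)|\ft h_1(n)|^2\bigg)^{\!1/2}\bigg(\sum_{n\ne 0}\ft V(n)|\ft h_2(n)|^2\bigg)^{\!1/2}
\le \|h_1\|_{H^{-\b/2}}\|h_2\|_{H^{-\b/2}}.
\]
For \eqref{YY3}, taking $h_1=h_2=Y_N\Dr_N$ reduces the task to estimating $\|Y_N\Dr_N\|_{H^{-\b/2}}^2$. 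A paraproduct decomposition $Y_N\Dr_N = Y_N\pl\Dr_N + Y_N\pe\Dr_N + Y_N\pg\Dr_N$ combined with Lemma~\ref{LEM:para} and $\be>1$ yields
\[
\|Y_N\Dr_N\|_{H^{-\b/2}}\les \|Y_N\|_{\C^{-1/2-\eps}}\|\Dr_N\|_{L^2}.
\]
The low-high piece $Y_N\pl\Dr_N$ uses \eqref{para2} with $s_1=-\frac 12-\eps$ and $s_2=\frac{1-\b}2+\eps \le 0$, absorbing the negative regularity into $\|\Dr_N\|_{L^2}$; the high-low and resonant pieces are treated with the Besov paraproduct bound \eqref{para2a}, \eqref{para3} at $(p_1,p_2)=(2,\infty)$, followed by a Besov embedding into $H^{-\b/2}$ whose validity requires exactly $\b>1$. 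Squaring and applying Young produces $c\|Y_N\|_{\C^{-1/2-\eps}}^4 + \frac{1}{200}\|\Dr_N\|_{L^2}^4$. The mean-zero correction $|\int Y_N\Dr_N|^2 \le \|Y_N\|_{\C^{-1/2-\eps}}^2\|\Dr_N\|_{H^1}^2$ is absorbed identically. Estimate~\eqref{YY1} is then analogous: I would pair $\Dr_N^2 \in \dot H^{-\b/2}$ with $V_0\ast(Y_N\Dr_N)\in \dot H^{\b/2}$, directly absorb $\|\Dr_N^2\|_{\dot H^{-\b/2}}^2$, and control the remaining $\|Y_N\Dr_N\|_{H^{-\b/2}}$ factor exactly as in~\eqref{YY3}.

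The principal technical obstacle I anticipate is the bookkeeping in the paraproduct bound for $\|Y_N\Dr_N\|_{H^{-\b/2}}$: the low-high piece is handled by the textbook estimate $\|f\pl g\|_{H^{s_1+s_2}}\les \|f\|_{\C^{s_1}}\|g\|_{H^{s_2}}$ (valid for $s_1<0$), but this fails when the roles are reversed, so one must route the high-low and resonant pieces through a Besov paraproduct bound at $(p_1,p_2)=(2,\infty)$ and then embed back into $H^{-\b/2}$, which is precisely where $\b>1$ becomes strict and why both $\|\Dr_N\|_{L^2}^4$ and $\|\Dr_N\|_{H^1}^2$ appear on the right-hand side. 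Once these four pathwise bounds are in place, they will be combined with \eqref{v_N0a} and Lemma~\ref{LEM:Dr}\,(i) in the subsequent step to yield the uniform-in-$N$ lower bound on $\W_N(\dr)$ needed for \eqref{exp1}.
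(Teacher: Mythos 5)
Your treatment of \eqref{YY2} and \eqref{YY4} matches the paper's argument (duality, paraproduct bounds, Young), modulo the cosmetic choice of which factor receives the Bessel smoothing.

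The problem is the product estimate you invoke for \eqref{YY1} and \eqref{YY3}:
\[
\|Y_N\Dr_N\|_{H^{-\be/2}} \les \|Y_N\|_{\C^{-\frac12-\eps}}\|\Dr_N\|_{L^2}.
\]
This is false, and the gap is precisely in the resonant piece. With $Y_N\in\C^{-\frac12-\eps}$ and $\Dr_N\in L^2 = H^0$, the sum of regularities is $-\frac12-\eps<0$, so \eqref{para3} does not apply; switching to $(p_1,p_2)=(2,\infty)$ does not help, since embedding $L^2(\T^3)$ into $B^{s_2}_{\infty,\cdot}$ costs $\frac32$ derivatives and makes the deficit worse. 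Concretely, in the resonant sum $\sum_{j\sim k}\P_j Y_N\,\P_k\Dr_N$ the only available bound is $\|\P_j Y_N\,\P_k\Dr_N\|_{L^2}\les 2^{(\frac12+\eps)j}\|Y_N\|_{\C^{-\frac12-\eps}}\|\P_k\Dr_N\|_{L^2}$, and one cannot sum in $j$ without compensating decay in $\|\P_k\Dr_N\|_{L^2}$, i.e., without $\Dr_N\in H^{\frac12+}$. The paper instead proves $\|Y_N\Dr_N\|_{H^{-\be/2}}\les \|Y_N\|_{\C^{-\frac12-\eps}}\|\Dr_N\|_{H^{\frac12+2\eps}}$ (the resonant piece now closes since $-\frac12-\eps+\frac12+2\eps=\eps>0$), applies Young with the unequal exponents $\frac{1+\eps}{\eps}$ and $1+\eps$ to separate a large power of $\|Y_N\|_{\C^{-\frac12-\eps}}$ from $\|\Dr_N\|_{H^{\frac12+2\eps}}^{2(1+\eps)}$, and then interpolates $\|\Dr_N\|_{H^{\frac12+2\eps}}$ between $L^2$ and $H^1$ with one more Young. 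This is the actual origin of both $\|\Dr_N\|_{L^2}^4$ and $\|\Dr_N\|_{H^1}^2$ on the right-hand side of \eqref{YY1} and \eqref{YY3} --- a feature you anticipate in your closing paragraph, but which your intermediate estimate (yielding only $\|\Dr_N\|_{L^2}^4$) cannot reproduce. The ``mean-zero correction $|\int Y_N\Dr_N|^2$'' you append to supply the missing $H^1$ term is a phantom: since $\ft V_0(0)=0$, the $n=0$ mode never enters $\int(V_0\ast h_1)h_2\,dx$, so there is nothing to correct.

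A secondary point: you assert the positivity of $\ft V_0$ is ``essential'' for \eqref{YY1} and \eqref{YY3}, but it is not used here. Cauchy--Schwarz on $\sum_n\ft V_0(n)\ft h_1(n)\cj{\ft h_2(n)}$ with absolute values on $\ft V_0$ uses only the decay $|\ft V_0(n)|\les\jb{n}^{-\be}$ from \eqref{Vb}, which is the sole hypothesis of the lemma. Positivity of $\ft V$ enters elsewhere, namely to ensure $\int(V_0\ast\Dr_N^2)\Dr_N^2\,dx\ge0$ as one of the coercive terms in $\U_N(\dr)$ (see Remark \ref{REM:Vp}); it plays no role in Lemma \ref{LEM:Dr2} itself.
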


\begin{lemma} \label{LEM:Dr3}
Given any small $\eps > 0$, 
there exists $c = c(\eps)>0$ such that
\begin{align}
\begin{split}
\bigg\{ \int_{\T^3}&  \Big( :\! Y_N^2 \!: + 2 Y_N \Dr_N + \Dr_N^2 \Big) dx \bigg\}^2 \\
&\ge \frac 14 \| \Dr_N \|_{L^2}^4 - \frac 1{100} \| \Dr_N \|_{H^1}^2 - c \bigg\{ \| Y_N \|_{\C^{-\frac 12-\eps}}^c 
 + \bigg( \int_{\T^3} :\! Y_N^2 \!:  dx \bigg)^2 \bigg\}, 
\end{split}
\label{YY5}
\end{align}

\noi
uniformly in $N \in \N$.

\end{lemma}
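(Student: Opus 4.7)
My plan is to introduce the shorthands $A := \int_{\T^3}\!:\! Y_N^2 \!:\, dx$, $B := 2\int_{\T^3} Y_N \Dr_N\, dx$, and $C := \|\Dr_N\|_{L^2}^2 \geq 0$, so that the left-hand side of \eqref{YY5} equals $(A+B+C)^2$. Exploiting the non-negativity of $C$, the elementary quadratic inequality $(x+y)^2 \geq \tfrac{1}{2}x^2 - y^2$ applied with $x = C$ and $y = A+B$, combined with $(A+B)^2 \leq 2A^2 + 2B^2$, yields
\[
(A+B+C)^2 \geq \tfrac{1}{2}\|\Dr_N\|_{L^2}^4 - 2A^2 - 2B^2.
\]
The term $2A^2 = 2\bigl(\int\!:\! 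Y_N^2\!:\, dx\bigr)^2$ is already of the form allowed by the right-hand side of \eqref{YY5}, so the main task is to absorb $2B^2$ into $\tfrac{1}{4}\|\Dr_N\|_{L^2}^4 + \tfrac{1}{100}\|\Dr_N\|_{H^1}^2$ plus a term depending only on $Y_N$.

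To bound $B^2$, I would combine the $H^{-s}$--$H^s$ duality pairing with the Besov embedding $\C^{-\frac{1}{2}-\eps}(\T^3) \hookrightarrow H^{-\frac{1}{2}-\eps'}(\T^3)$, valid on the compact torus for any $\eps' > \eps$, obtaining
\[
|B| \les \|Y_N\|_{\C^{-\frac{1}{2}-\eps}}\, \|\Dr_N\|_{H^{\frac{1}{2}+\eps'}}.
\]
The Sobolev interpolation $\|\Dr_N\|_{H^{\frac{1}{2}+\eps'}} \leq \|\Dr_N\|_{L^2}^{\frac{1}{2}-\eps'}\|\Dr_N\|_{H^1}^{\frac{1}{2}+\eps'}$ (cf.~\eqref{interp}) then gives
\[
B^2 \les \|Y_N\|_{\C^{-\frac{1}{2}-\eps}}^2\, \|\Dr_N\|_{L^2}^{1-2\eps'}\, \|\Dr_N\|_{H^1}^{1+2\eps'}.
\]

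Since the $H^1$-exponent $1+2\eps'$ is strictly less than $2$, a first application of Young's inequality with conjugate exponents $\bigl(\tfrac{2}{1-2\eps'},\, \tfrac{2}{1+2\eps'}\bigr)$ peels off an arbitrarily small multiple of $\|\Dr_N\|_{H^1}^2$, leaving a term of the form $\|Y_N\|_{\C^{-\frac{1}{2}-\eps}}^{4/(1-2\eps')}\,\|\Dr_N\|_{L^2}^2$. A second Young's inequality with exponent pair $(2,2)$ then splits this into an arbitrarily small multiple of $\|\Dr_N\|_{L^2}^4$ plus $c_1\, \|Y_N\|_{\C^{-\frac{1}{2}-\eps}}^{8/(1-2\eps')}$. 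Choosing both Young parameters sufficiently small so that
\[
2B^2 \leq \tfrac{1}{4}\|\Dr_N\|_{L^2}^4 + \tfrac{1}{100}\|\Dr_N\|_{H^1}^2 + c\,\|Y_N\|_{\C^{-\frac{1}{2}-\eps}}^c
\]
for some finite exponent $c = 8/(1-2\eps')$, and absorbing $2A^2$ into the $\bigl(\int\!:\! Y_N^2\!:\, dx\bigr)^2$ contribution on the right-hand side of \eqref{YY5}, completes the argument. No significant obstacle is expected; the key observation is that the non-negativity of $C = \|\Dr_N\|_{L^2}^2$ allows the elementary quadratic inequality above to be used with the $\tfrac{1}{2}$-coefficient on $C^2 = \|\Dr_N\|_{L^2}^4$, leaving just enough room after subtracting $2B^2$ to retain the desired $\tfrac{1}{4}\|\Dr_N\|_{L^2}^4$.
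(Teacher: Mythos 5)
Your proposal is correct and follows essentially the same route as the paper: a quadratic inequality to isolate $\frac12\|\Dr_N\|_{L^2}^4$ at the cost of $(\int : Y_N^2 :\, dx)^2$ and $(\int Y_N\Dr_N\,dx)^2$, followed by duality, interpolation and two applications of Young's inequality to absorb the cross term. The only cosmetic differences are that the paper packages the elementary bound as $(a+b+c)^2\ge \tfrac12 c^2 - C_0(a^2+b^2)$ rather than your $(x+y)^2\ge \tfrac12 x^2 - y^2$ with $(A+B)^2\le 2A^2+2B^2$, and it routes the duality pairing through $B^{\frac12+\eps}_{1,1}$ rather than through the $H^{-s}$--$H^s$ pairing with an $\eps'>\eps$ loss; both give the same final exponent $c = 8/(1-2\eps')$ (the paper writes $8/(1-4\eps)$, corresponding to $\eps'=2\eps$).
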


We now prove 
  the uniform exponential integrability 
\eqref{exp1} in 
Theorem \ref{THM:Gibbs}.
In view of Lemma~\ref{LEM:var2}, it suffices
to establish a finite lower bound on 
$\W_N(\dr) $ uniformly in $N \in \N$ and $\dr \in \Hc$.
From \eqref{v_N0a}, \eqref{v_N1}, Lemmas  \ref{LEM:Dr2} and \ref{LEM:Dr3}
with Lemma \ref{LEM:Dr},  we obtain
\begin{align*}
\inf_{N \in \mathbb{N}} \inf_{\dr \in \Hc} \W_N(\dr) 
\geq 
\inf_{N \in \mathbb{N}} \inf_{\dr \in \Hc}
\Big\{ -C_0 + \frac{1}{10}\U_N(\dr)\Big\}
 \geq - C_0 >-\infty.
\end{align*}

\noi
This proves the uniformly exponential integrability \eqref{exp1} 
for $\be > 1$ and hence Theorem~\ref{THM:Gibbs}\,(i).

\medskip

We conclude this subsection by 
 presenting the proofs of Lemmas \ref{LEM:Dr2} and \ref{LEM:Dr3}.

\begin{proof}[Proof of Lemma \ref{LEM:Dr2}]

From  \eqref{Vb}, Young's inequality, 
and the product estimate (Lemma \ref{LEM:para}), 
we have
\begin{align}
\begin{split}
\bigg| \int_{\T^3}  (V_0 \ast \Dr_N^2) Y_N \Dr_N  dx\bigg|
&\le \frac{1}{100} \| \Dr_N^2\|_{\dot H^{-\frac{\be}{2}}}^2 + 
c\| Y_N\Dr_N\|_{H^{-\frac{\be}{2}}}^2\\
&\le \frac{1}{100} \| \Dr_N^2\|_{\dot H^{-\frac{\be}{2}}}^2 + 
c\| Y_N\|_{\C^{-\frac12 - \eps}}^{\frac{2(1+\eps)}\eps}
+ \frac 1{100}\|\Dr_N\|_{H^{\frac{1}{2}+2\eps}}^{2(1+\eps)}
\end{split}
\label{YZ1}
\end{align}

\noi
for $\be > 1$.
Then, the estimate \eqref{YY1} follows
from the interpolation \eqref{interp}
and Young's inequality.

Next, we consider the second estimate \eqref{YY2}.
When $\be > 1$, 
it follows from  \eqref{dual} and    \eqref{embed}
that 
\begin{align}
\begin{split}
\bigg| \int_{\T^3} (V_0 \ast :\! Y_N^2 \!:) \Dr_N^2 dx \bigg|
&\le \|  :\! Y_N^2 \!: \|_{\C^{-1-\eps}} \| \Dr_N^2 \|_{B^{-\eps}_{1,1}} \\
&\le c \|  :\! Y_N^2 \!: \|_{\C^{-1-\eps}}\| \Dr_N^2 \|_{L^1}.
\end{split}
\label{YZ1a}
\end{align}

\noi
Then, the estimate \eqref{YY2} follows from  Cauchy's inequality.

As for \eqref{YY3}, 
we have, from \eqref{Vb}, 
\begin{align*}
\bigg| \int_{\T^3}  (V_0 \ast (Y_N \Dr_N) ) Y_N \Dr_N dx \bigg| 
\les \| Y_N \Dr_N \|_{\dot H^{-\frac{\be}{2}}}^2 .
\end{align*}

\noi
Then, the rest follows as in \eqref{YZ1}, provided that $\be > 1$.

Lastly, 
from \eqref{dual}, \eqref{embed}, and Young's inequality that
\begin{align}
\begin{split}
\bigg| \int_{\T^3} (V_0 \ast :\! Y_N^2 \!:) Y_N \Dr_Ndx  \bigg|
&\le \| (V_0 \ast :\! Y_N^2 \!:) Y_N \|_{\C^{ -\frac 12-\eps}} \| \Dr_N \|_{B^{\frac 12 +\eps}_{1,1}} \\
&\le c \| (V_0 \ast :\! Y_N^2 \!:) Y_N \|_{\C^{ -\frac 12-\eps}}^2 + \frac 1{100} \| \Dr_N \|_{H^1}^2.
\end{split}
\label{YZ2}
\end{align}

\noi
Here, the condition 
 $\be > 1$ is needed to guarantee the finiteness
 of the first term on the right-hand side of \eqref{YZ2}.
 See Lemma \ref{LEM:Dr}.
This completes the proof of Lemma \ref{LEM:Dr2}.
\end{proof}

Next, we present the proof of Lemma \ref{LEM:Dr3}.

\begin{proof}[Proof of Lemma \ref{LEM:Dr3}]
From Cauchy's inequality,  there exists a constant $C>0$ such that
\begin{align*}
(a+b+c)^2
\ge \frac 12 c^2 -C (a^2+b^2)
\end{align*}

\noi
for any real numbers $a,b,c$.
Thus,  we have 
\begin{align}
\begin{split}
&\bigg\{ \int_{\T^3} \Big( :\! Y_N^2 \!: + 2 Y_N \Dr_N + \Dr_N^2 \Big) dx \bigg\}^2 \\
&\ge \frac 12 \bigg( \int_{\T^3} \Dr_N^2dx \bigg)^2 - C_0 \bigg\{ \bigg( \int_{\T^3} :\! Y_N^2 \!: dx \bigg)^2
+ \bigg( \int_{\T^3} Y_N \Dr_N dx \bigg)^2 \bigg\}
\end{split}
\label{YZ3}
\end{align}

\noi
for some $C_0 > 0$.
From  \eqref{dual}, \eqref{embed}, \eqref{interp}, and Young's inequality, we have
\begin{align}
\begin{split}
\bigg| \int_{\T^3} Y_N \Dr_N dx \bigg|^2
&\le \| Y_N \|_{\C^{-\frac 12-\eps}}^2 \| \Dr_N \|_{H^{{\frac 12+2\eps}}}^2
\les \| Y_N \|_{\C^{-\frac 12-\eps}}^2 \| \Dr_N \|_{L^2}^{1-4\eps} \| \Dr_N \|_{H^1}^{1+4\eps} \\
&\le c \| Y_N \|_{\C^{-\frac 12-\eps}}^{\frac 8{1-4\eps}} + \frac 1{4C_0} \| \Dr_N \|_{L^2}^4 + \frac1{100 C_0} \| \Dr_N \|_{H^1}^2.
\end{split}
\label{YZ4}
\end{align}

\noi
Hence, \eqref{YY5} follows from \eqref{YZ3} and \eqref{YZ4}.
\end{proof}


\subsection{Exponential integrability for the focusing case: the non-endpoint case $\be > 2$}
\label{SUBSEC:foc1}

In this subsection, we present the construction of the 
focusing Hartree Gibbs measure $\rho$ in~\eqref{Gibbs9} 
in the non-endpoint case 
 $\be > 2$ (Theorem~\ref{THM:Gibbs2}\,(i)).
In view of Lemma \ref{LEM:conv}
and the comments  following the lemma, 
it suffices to prove the uniform exponential integrability~\eqref{exp3}.

In the focusing case, the potential 
energy $\frac 14 \int_{\T^3} (V_0 \ast \Dr_N^2) \Dr_N^2 dx$ 
has a wrong sign.
Thus, 
we need to reprove
\eqref{YY1} 
in Lemma~\ref{LEM:Dr2}
without using the potential energy.

\begin{lemma} \label{LEM:Dr4}
Let $V$ satisfy \eqref{Vb} with $\be\ge 2$.
Then, there exist  small $\eps>0$ and   a constant $c  >0$ 
 such that
\begin{align}
\bigg| \int_{\T^3}  (V_0 \ast \Dr_N^2) Y_N \Dr_N  dx\bigg|
&\le c
\| Y_N \|_{\C^{-\frac 12-\eps}}^{c}
+ \frac 1{100} \Big(\| \Dr_N \|_{L^2}^4 + \| \Dr_N \|_{H^1}^2\Big), 
\label{YY6}
\end{align}

\noi
uniformly in $N \in \N$.

\end{lemma}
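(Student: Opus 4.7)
The approach is to mirror the defocusing proof of \eqref{YY1} from Lemma \ref{LEM:Dr2}, but to redistribute regularity so that we never invoke the positive potential-energy term $\tfrac{1}{100}\|\Dr_N^2\|_{\dot H^{-\beta/2}}^2$, which is unavailable in the focusing case. The $\beta \ge 2$ derivatives of smoothing coming from the Bessel potential $V$ are exactly what is needed to carry out this redistribution.

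First I would apply the Cauchy--Schwarz (Fourier-side) duality,
\[
\bigg|\int_{\T^3} (V_0 \ast \Dr_N^2) Y_N \Dr_N \, dx\bigg|
\le \|V_0 \ast \Dr_N^2\|_{H^{s}} \|Y_N \Dr_N\|_{H^{-s}},
\]
with $s=\tfrac12+\eps$ for small $\eps>0$. The hypothesis $|\widehat V(n)| \lesssim \jb{n}^{-\b}$ with $\b\ge 2$, combined with the Sobolev embedding $L^{1+\delta}(\T^3) \hookrightarrow H^{-\frac 32+\eps}(\T^3)$ (for $\delta=\delta(\eps)>0$ small) and $L^2$--$L^6$ interpolation, yields
\[
\|V_0 \ast \Dr_N^2\|_{H^{\frac12+\eps}} \les \|\Dr_N^2\|_{H^{-\frac 32+\eps}} \les \|\Dr_N\|_{L^{2+\delta}}^2 \les \|\Dr_N\|_{L^2}^{2-c\eps}\|\Dr_N\|_{H^1}^{c\eps}
\]
for some absolute constant $c>0$. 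For the second factor, the paraproduct decomposition \eqref{para} together with the product estimates in Lemma \ref{LEM:para} handle the pieces $Y_N \pl \Dr_N$ and $Y_N \pe \Dr_N$ directly; the less regular piece $Y_N \pg \Dr_N$ (whose regularity is inherited from the low-regularity factor $Y_N$) is controlled via an $L^3$--$L^6$ H\"older pairing at the level of a Littlewood--Paley block, exploiting $\|\Dr_N\|_{L^3} \lesssim \|\Dr_N\|_{L^2}^{1/2}\|\Dr_N\|_{H^1}^{1/2}$. Combined with the interpolation $\|\Dr_N\|_{H^{\frac12+2\eps}} \lesssim \|\Dr_N\|_{L^2}^{\frac12-2\eps}\|\Dr_N\|_{H^1}^{\frac12+2\eps}$, these bounds collectively give (after possibly shrinking $\eps$)
\[
\|Y_N \Dr_N\|_{H^{-\frac12-\eps}} \lesssim \|Y_N\|_{\mathcal{C}^{-\frac12-\eps}}\|\Dr_N\|_{L^2}^{\frac12-c\eps}\|\Dr_N\|_{H^1}^{\frac12+c\eps},
\]
so that multiplying the two factors produces
\[
\bigg|\int_{\T^3} (V_0 \ast \Dr_N^2) Y_N \Dr_N \, dx\bigg|
\lesssim \|Y_N\|_{\mathcal{C}^{-\frac12-\eps}}\|\Dr_N\|_{L^2}^{\frac52 - c\eps}\|\Dr_N\|_{H^1}^{\frac12+c\eps}.
\]

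The conclusion then follows from two successive applications of Young's inequality. The first, with dual exponents $\bigl(4/(1+2c\eps),\, 4/(3-2c\eps)\bigr)$, raises $\|\Dr_N\|_{H^1}$ to the power $2$ (absorbed as $\tfrac1{100}\|\Dr_N\|_{H^1}^2$) and leaves a term of the form $\|Y_N\|^{4/(3-2c\eps)}\|\Dr_N\|_{L^2}^{(10-4c\eps)/(3-2c\eps)}$. The crucial observation is that the exponent of $\|\Dr_N\|_{L^2}$ tends to $\tfrac{10}{3}<4$ as $\eps \downarrow 0$, so that a second Young's inequality separates the remaining product into a constant multiple of $\|Y_N\|_{\mathcal{C}^{-\frac12-\eps}}^{c'}$ and $\tfrac{1}{100}\|\Dr_N\|_{L^2}^{4}$, yielding \eqref{YY6}.

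The main obstacle is the sharpness of the threshold $\beta=2$: the $2$ derivatives of smoothing afforded by $V$ are exactly what is needed to control $\|V_0\ast\Dr_N^2\|_{H^{1/2+\eps}}$ by a near-perfect $\|\Dr_N\|_{L^2}^2$ (leaving only an $\eps$-amount of $H^1$-weight), which in turn leaves just enough headroom so that the final $L^2$-exponent stays strictly below $4$; for $\b<2$ this chain of estimates breaks down, consistent with the non-normalizability established in Theorem~\ref{THM:Gibbs2}\,(ii). A secondary technical point is the control of the $\pg$-piece of $Y_N\Dr_N$, where the high-frequency factor $Y_N$ has negative regularity, forcing the $L^3$--$L^6$ exchange of Lebesgue integrability that is the source of the small $\eps$-loss eventually absorbed in the Young's inequalities above.
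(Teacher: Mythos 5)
Your argument is correct, and the final bookkeeping of exponents (in particular that the $L^2$-exponent lands strictly below $4$ as $\eps\downarrow 0$, leaving room for a second Young's inequality) is the same as in the paper. The route is genuinely, if mildly, different. The paper first invokes the Besov duality \eqref{dual} to peel off $Y_N$ against the \emph{whole} product, $\text{LHS} \le \|Y_N\|_{\C^{-\frac12-\eps}}\|(V_0*\Dr_N^2)\Dr_N\|_{B^{\frac12+\eps}_{1,1}}$, and then applies the fractional Leibniz rule \eqref{prod} once to split the $B^{\frac12+\eps}_{1,1}$-norm into $\|\Dr_N^2\|_{H^{\frac12-\be+2\eps}}\|\Dr_N\|_{H^{\frac12+2\eps}}$; the $\be\ge 2$ smoothing then makes the first factor controllable by $\|\Dr_N\|_{H^\eps}^2$ via Sobolev. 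You instead pair $V_0*\Dr_N^2$ against $Y_N\Dr_N$ in a symmetric $H^s\times H^{-s}$ duality, which requires you to estimate $\|Y_N\Dr_N\|_{H^{-\frac12-\eps}}$ by a hands-on paraproduct decomposition, and the $Y_N\pg\Dr_N$ piece (high-frequency $Y_N$) is what forces your $L^3$--$L^6$ side computation. The paper's grouping buys economy: the Leibniz rule handles the product once and no explicit paraproduct case-analysis is required; yours buys nothing extra here, though it is more symmetric and makes the role of $\be\ge2$ (it is spent on $V_0*\Dr_N^2$, not on $Y_N$) perhaps more transparent. Both lead to the same intermediate bound $\|Y_N\|_{\C^{-\frac12-\eps}}\|\Dr_N\|_{L^2}^{\frac52-c\eps}\|\Dr_N\|_{H^1}^{\frac12+c\eps}$ and the same two-stage Young's inequality.

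One small remark: your handling of $Y_N \pg \Dr_N$ via an $L^3$--$L^6$ exchange is more elaborate than necessary. Since the output regularity of $Y_N\pg\Dr_N$ is inherited from $Y_N$, a single application of \eqref{para2a} (i.e.\ $\|\Dr_N \pl Y_N\|_{H^{-\frac12-\eps}}\les \|\Dr_N\|_{L^2}\|Y_N\|_{\C^{-\frac12-\eps/2}}$, after a minor adjustment of the summability index $q$) suffices and avoids an extra interpolation. This does not affect correctness.
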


\begin{proof}
From \eqref{dual},  \eqref{prod}, 
and Sobolev's inequality with $\be \ge 2$, we have 
\begin{align*}
\text{LHS of } \eqref{YY6}
&\le 
\| Y_N \|_{\C^{-\frac 12-\eps}}
\| (V_0*\Dr_N^2)\Dr_N\|_{B^{\frac{1}{2}+\eps}_{1, 1}}\\
&\le 
\| Y_N \|_{\C^{-\frac 12-\eps}}
\| \Dr_N^2\|_{H^{\frac 12 - \be + 2\eps}}\|\Dr_N\|_{H^{\frac{1}{2}+2\eps}}\\
&\le 
\| Y_N \|_{\C^{-\frac 12-\eps}}
\| \Dr_N\|_{H^{\eps}}^2\|\Dr_N\|_{H^{\frac{1}{2}+2\eps}}.
\end{align*}

\noi
Then, \eqref{YY6} follows from \eqref{interp} and Young's inequality.
%
\end{proof}

\begin{lemma} \label{LEM:Dr5}
Let $0<\g<3$ and $A> 0$.
There exist small $\eps>0$ and a constant  $c>0$ such that
\begin{align}
\begin{split}
A \bigg| \int_{\T^3} & \Big( :\! Y_N^2 \!: + 2 Y_N \Dr_N + \Dr_N^2 \Big) dx \bigg|^\g \\
&\ge \frac A4 \| \Dr_N \|_{L^2}^{2\g} - \frac 1{100} \| \Dr_N \|_{H^1}^2 - c \bigg\{ \| Y_N \|_{\C^{-\frac 12-\eps}}^{\frac{4\g}{3+4\eps-\g(1+4\eps)}} + \bigg| \int_{\T^3} :\! Y_N^2 \!:  dx \bigg|^\g \bigg\}, 
\end{split}
\label{YY8}
\end{align}

\noi
uniformly in $N \in \N$.

\end{lemma}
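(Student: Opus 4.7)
The plan is to decompose the argument of the absolute value on the left-hand side of \eqref{YY8} into three pieces, $I_N := \int_{\T^3} :\!Y_N^2\!:\, dx$, $II_N := 2\int_{\T^3} Y_N \Dr_N\, dx$, and $III_N := \|\Dr_N\|_{L^2}^2 \ge 0$, and to extract a positive multiple of $III_N^{\g} = \|\Dr_N\|_{L^2}^{2\g}$ via a convexity-type inequality, absorbing $|I_N|^{\g}$ directly into the right-hand side and handling the cross term $|II_N|^{\g}$ by the interpolation bound \eqref{YZ4} combined with a carefully tuned three-term Young's inequality.

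For the extraction step, I would apply the inequality $(a+b)^{\g} \le (1+\eta)^{(\g-1)_+} a^{\g} + C_{\eta,\g}\, b^{\g}$, valid for $a,b\ge 0$ and any $\eta>0$, with $a = |I_N + II_N + III_N|$ and $b = |I_N + II_N|$, so that $a-b$ controls $III_N$. This yields
\[
|I_N + II_N + III_N|^{\g} \ge (1+\eta)^{-(\g-1)_+} III_N^{\g} - C'_{\eta,\g}\bigl(|I_N|^{\g} + |II_N|^{\g}\bigr).
\]
Since $\g < 3$, taking $\eta > 0$ small (and using that $(1+\eta)^{-(\g-1)_+} = 1$ when $\g \le 1$), the coefficient of $III_N^{\g}$ can be made larger than $\tfrac12$, which after multiplication by $A$ leaves ample room to yield the $\tfrac{A}{4}\|\Dr_N\|_{L^2}^{2\g}$ on the right-hand side of \eqref{YY8} once the cross-term correction is absorbed. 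The term $|I_N|^{\g}$ appears verbatim on the right-hand side of \eqref{YY8}.

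For the cross term, I would invoke \eqref{YZ4} from the proof of Lemma \ref{LEM:Dr3}, which gives
\[
|II_N|^{\g} \lesssim \|Y_N\|_{\C^{-\frac12-\eps}}^{\g}\, \|\Dr_N\|_{L^2}^{\g(1-4\eps)/2}\,\|\Dr_N\|_{H^1}^{\g(1+4\eps)/2},
\]
and then apply the three-term Young's inequality $xyz \le \delta\, y^p + \delta\, z^q + C_{\delta}\, x^r$ with the exponents
\[
p = \tfrac{4}{1-4\eps}, \qquad q = \tfrac{4}{\g(1+4\eps)}, \qquad r = \tfrac{4}{3+4\eps - \g(1+4\eps)},
\]
so that $y^p = \|\Dr_N\|_{L^2}^{2\g}$, $z^q = \|\Dr_N\|_{H^1}^{2}$, and the exponent on $\|Y_N\|_{\C^{-\frac12-\eps}}$ becomes $\g r = \tfrac{4\g}{3+4\eps - \g(1+4\eps)}$, matching \eqref{YY8} exactly. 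A routine check shows $\tfrac{1}{p}+\tfrac{1}{q}+\tfrac{1}{r}=1$ precisely under the condition $\g(1+4\eps) < 3 + 4\eps$. Finally I would choose $\delta$ small enough (depending on $A$) that the $\|\Dr_N\|_{L^2}^{2\g}$ contribution extracted from $|II_N|^{\g}$ is dominated by the $\tfrac{A}{4}$ margin, and the $\|\Dr_N\|_{H^1}^{2}$ contribution stays below $\tfrac{1}{100}$.

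The main obstacle, and the reason the hypothesis $\g < 3$ enters sharply, is the verification that the Young exponent condition $\tfrac{1}{p} + \tfrac{1}{q} < 1$ reduces to $\g(1+4\eps) < 3 + 4\eps$; this is solvable for small $\eps > 0$ if and only if $\g < 3$, and the permissible range of $\eps$ shrinks as $\g \nearrow 3$, which mirrors the blow-up of the exponent $\tfrac{4\g}{3+4\eps - \g(1+4\eps)}$ in the statement. The rest of the balancing is bookkeeping: tracking how the constants from the convexity step, the factor $A$, and the Young's inequality combine, so that the coefficients of $\|\Dr_N\|_{L^2}^{2\g}$ on the right are at most $\tfrac{3A}{4}$ (leaving $\tfrac{A}{4}$) and those of $\|\Dr_N\|_{H^1}^{2}$ are at most $\tfrac{1}{100}$.
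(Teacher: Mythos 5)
Your proposal is correct and follows essentially the same route as the paper: the same three-piece decomposition of the integrand, extraction of a positive multiple of $\|\Dr_N\|_{L^2}^{2\g}$, the interpolation bound $\big|\int Y_N\Dr_N\,dx\big|^\g \lesssim \|Y_N\|_{\C^{-\frac12-\eps}}^\g\|\Dr_N\|_{L^2}^{\g(1-4\eps)/2}\|\Dr_N\|_{H^1}^{\g(1+4\eps)/2}$, and the three-term Young's inequality with exactly the exponents $p = \frac{4}{1-4\eps}$, $q = \frac{4}{\g(1+4\eps)}$, $r = \frac{4}{3+4\eps-\g(1+4\eps)}$. The only cosmetic difference is that the paper isolates the cube term via the elementary case-split inequality $|a+b+c|^\g \ge \tfrac12|c|^\g - C(|a|^\g+|b|^\g)$ (the paper's \eqref{YY9}), whereas you use an $\eta$-tuned convexity estimate; both produce the same conclusion.
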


\begin{proof}
Note that there exists a constant $C>0$ such that
\begin{align}\label{YY9}
|a+b+c|^\g
\ge \frac 12 |c|^\g -C (|a|^\g+|b|^\g)
\end{align}

\noi
for any $a, b, c\in \R$.
Indeed, if $|c|^\g <2 C(|a|^\g+|b|^\g)$, \eqref{YY9} is trivial.
When $|c|^\g \ge 2 C(|a|^\g+|b|^\g)$, 
by $|c| \ge (2C)^{\frac 1\g} \max (|a|, |b|)$ and the triangle inequality, we have
\[
|a+b+c|
\ge |c| -|a| - |b|
\ge \big( 1-2 (2C)^{-\frac 1\g} \big) |c|
\ge 2^{-\frac 1\g} |c|,
\]

\noi
provided that a constant $C>0$ is sufficiently large.
Hence, we obtain \eqref{YY9}.

By \eqref{YY9}, there exists a constant $C_0>0$ such that
\begin{align}
\begin{split}
A\bigg| \int_{\T^3} & \Big( :\! Y_N^2 \!: + 2 Y_N \Dr_N + \Dr_N^2 \Big) dx \bigg|^\g \\
&\ge \frac A2 \bigg( \int_{\T^3} \Dr_N^2dx \bigg)^\g - C_0 A \bigg\{ \bigg| \int_{\T^3} :\! Y_N^2 \!: dx \bigg|^\g
+ \bigg| \int_{\T^3} Y_N \Dr_N dx \bigg|^\g \bigg\}.
\end{split}
\label{YY10}
\end{align}

\noi
From \eqref{dual}, \eqref{embed}, \eqref{interp}, and Young's inequality,
we have
\begin{align}
\begin{split}
\bigg| \int_{\T^3} Y_N \Dr_N dx \bigg|^\g
&\les \| Y_N \|_{\C^{-\frac 12-\eps}}^\g \| \Dr_N \|_{B^{\frac 12+\eps}_{1,1}}^\g
\les \| Y_N \|_{\C^{-\frac 12-\eps}}^\g \| \Dr_N \|_{L^2}^{\frac{\g(1-4\eps)}{2}} \| \Dr_N \|_{H^1}^{\frac{\g(1+4\eps)}{2}} \\
&\le c \| Y_N \|_{\C^{-\frac 12-\eps}}^{\frac{4\g}{3+4\eps-\g(1+4\eps)}} + \frac 1{4 C_0} \| \Dr_N \|_{L^2}^{2\g} + \frac 1{100C_0A} \| \Dr_N \|_{H^1}^2, 
\end{split}
\label{YY11}
\end{align}

\noi
provided that $0<\g<\frac{3+4\eps}{1+4\eps}$,
namely $0<\g<3$ and $0< \eps \ll 1$.
Hence, \eqref{YY8} follows from~\eqref{YY10} and \eqref{YY11}.
\end{proof}

We now present the proof of the uniform exponential integrability \eqref{exp3}
for $\be > 2$, 
using the variational formulation. 
As in the previous section, we only consider the case $p = 1$.
Set \begin{align}
\W_N(\dr)
= \E_{\Q_\dr} 
\bigg[ - \RR_N(Y_\dr(1) + I(\dr)(1)) + \frac{1}{2} \int_0^1 \| \dr(t) \|_{L^2_x}^2 dt \bigg],
\label{YY12}
\end{align}

\noi
where $\RR_N(u)$ is as in \eqref{K4}.
In view of Lemma \ref{LEM:Dr5}, we also set 
\begin{equation}
\U_N(\dr) =
\E_{\Q_\dr} \bigg[\frac A 4\bigg| \int_{\T^3} \Dr_N^2 dx \bigg|^\g + \frac{1}{2} \int_0^1 \| \dr(t) \|_{L^2_x}^2 dt\bigg].
\label{YY13}
\end{equation}

\noi
In the focusing case, the potential energy
 $\int_{\T^3} (V_0 \ast \Dr_N^2) \Dr_N^2 dx$
 appears with a wrong sign and thus we need to control this term by $\U_N$ in \eqref{YY13}.
When $1< \be<  3$, 
it follows from Sobolev's inequality,~\eqref{interp}, 
and Young's inequality that 
\begin{align}
\begin{split}
\bigg| \int_{\T^3} (V_0 \ast \Dr_N^2) \Dr_N^2 dx \bigg|
& \les
\| \Dr_N^2 \|_{H^{-\frac \b2}}^2 
\les
 \| \Dr_N \|_{L^\frac{12}{3+\b}}^4
 \les \| \Dr_N \|_{L^2}^{1+\b} \| \Dr_N \|_{H^1}^{3-\b} \\
&\le  c_0 + \frac{A}{100} \| \Dr_N \|_{L^2}^{2\g } + \frac 1{100} \| \Dr_N \|_{H^1}^2,  
\end{split}
\label{YY14}
\end{align}

\noi
provided that  $\g\ge \frac{\be + 1}{\be-1}$ and $A>0$ is sufficiently large.
When $\be = 3$, 
\eqref{YY14} holds with a strict inequality
 $\g >  \frac{\be + 1}{\be-1} = 2$.
When $\be > 3$, 
applying Hausdorff-Young's inequality twice, we have
\begin{align}
\begin{split}
\bigg| \int_{\T^3} (V_0 \ast \Dr_N^2) \Dr_N^2 dx \bigg|
& \le \| V_0* \Dr_N^2 \|_{L^\infty}\| \Dr_N \|_{L^2}^2
\le \|   \jb{n}^{-\be} \ft {\Dr_N^2 }\|_{\l^1_n}\| \Dr_N \|_{L^2}^2\\
& \les \|   \ft {\Dr_N^2 }\|_{\l^\infty_n}\| \Dr_N \|_{L^2}^2
\les \| \Dr_N \|_{L^2}^4.
\end{split}
\label{YY15}
\end{align}

From \eqref{YY12} and \eqref{YY13} with 
Lemmas \ref{LEM:Dr}, 
\ref{LEM:Dr2}, 
\ref{LEM:Dr4}, and \ref{LEM:Dr5},  
 \eqref{YY14}, 
  and  \eqref{YY15}, 
 and $\max (\frac{\b+1}{\b-1},2+\eps) \le \g<3$
 with $\g > 2$ when $\be = 3$,
we obtain
\begin{align*}
\inf_{N \in \mathbb{N}} \inf_{\dr \in \Hc} \W_N(\dr) 
\geq 
\inf_{N \in \mathbb{N}} \inf_{\dr \in \Hc}
\Big\{ -C_0 + \frac{1}{10}\U_N(\dr)\Big\}
 \geq - C_0 >-\infty.
\end{align*}

\noi
Therefore, 
 from  
an analogue of Lemma \ref{LEM:var2} for $\RR_N(u)$, 
 we conclude 
the uniform exponential integrability \eqref{exp3}, 
provided that $\frac{\be + 1}{\be - 1} <  3$, namely, 
$\be > 2$.

\subsection{Non-normalizability of the focusing Gibbs measure}
\label{SUBSEC:foc2}

In this subsection, we prove the non-normalizability
of the focusing Hartree Gibbs measure
for $\be < 2$ with any $\s > 0$ (Theorem~\ref{THM:Gibbs2}\,(ii))
and for $\be = 2$ with $\s \gg 1$
(Theorem \ref{THM:Gibbs2}\,(iii.a)).
When $\be < 2$, 
the non-normalizability follows 
 from 
 the next proposition.

\begin{proposition} \label{PROP:Gibbs5}
Given  $1 < \beta < 2$,  let $V$ be the Bessel potential of order $\b$.
Then,  for any $\s > 0$, there exists $K > 0$ such that\footnote{It is indeed possible to prove
Proposition \ref{PROP:Gibbs5}
for any $K > 0$.
See Remark \ref{REM:OS}.}  
\[ \lim_{L \to \infty} \liminf_{N \to \infty} \E\bigg[\exp\Big(\min{(\s R_N(u),L)} \Big) 
\cdot \ind_{\{ |\int_{\T^3} \, : \, u_N^2 :\, dx | \le K\}} \bigg] =  \infty,\]

\noi
where $R_N(u)$ is as  in \eqref{K1}.
\end{proposition}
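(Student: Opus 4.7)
The plan is to follow the variational approach of Barashkov--Gubinelli \cite{BG} and Tolomeo--Weber \cite{TW}, which reduces the non-normalizability to the explicit construction of a drift $\Dr$ along which the Gibbs density explodes. First, I would derive a Bou\'e--Dupuis-type lower bound: using the Cameron--Martin shift for $\mu$ and Jensen's inequality, for any deterministic drift $\theta(t) \equiv \jb{\nabla} \Theta$ with $\Theta \in H^1(\T^3)$,
\begin{align*}
\log \E_\mu\big[e^{\min(\s R_N(u), L)} \ind_A(u)\big]
\ge \E_{\Q_\theta}\!\big[\min(\s R_N(Y + \Theta), L)\ind_A(Y+\Theta)\big] - \tfrac 12 \|\Theta\|_{H^1}^2,
\end{align*}
where under $\Q_\theta$ the random variable $Y$ is distributed according to $\mu$. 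The plan is then to exhibit, for each $L \gg 1$, a profile $\Theta = \Theta_L$ for which the right-hand side tends to $+\infty$ after $N \to \infty$ and then $L \to \infty$.

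Expanding $R_N(Y + \Theta)$ via the algebraic identity \eqref{Y0} and taking expectations using Wick's theorem (as in \eqref{Y0a}--\eqref{Y1}) together with Lemma~\ref{LEM:SUM}, applicable because $\b > 1$, one checks that
\begin{align*}
\E_{\Q_\theta}[R_N(Y + \Theta)] = \tfrac14 \int_{\T^3}(V \ast \Theta^2)\Theta^2 \, dx + O(1 + \|\Theta\|_{L^2}^2),
\end{align*}
with the $\Q_\theta$-variance of the remainder bounded uniformly in $N$. The Wick-ordered cutoff $|\int_{\T^3}:\!u_N^2\!:\,dx| \le K$ reduces, upon writing $u = Y + \Theta$, to the quasi-deterministic constraint $\|\Theta\|_{L^2}^2 \les K$, since $\int:\!Y_N^2\!:\,dx$ and $\int Y_N \Theta \, dx$ have bounded variance uniformly in $N$. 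The variational lower bound thus becomes essentially $L - \tfrac12 \|\Theta_L\|_{H^1}^2 + O(1)$, and it suffices to produce $\Theta_L$ with $\tfrac{\s}{4}\int_{\T^3}(V \ast \Theta_L^2)\Theta_L^2 \, dx \ge L + O(1)$, $\|\Theta_L\|_{H^1}^2 = o(L)$, and $\|\Theta_L\|_{L^2}^2 \les K$.

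The construction of such a profile in the range $1 < \b < 2$ is where the main difficulty lies. For a single bump $\Theta_\varepsilon = M \Phi_\varepsilon$ at spatial scale $\varepsilon$ with $\|\Phi_\varepsilon\|_{L^2} = 1$ and $M^2 \le K$, Lemma~\ref{LEM:SUM} gives $\int (V \ast \Theta_\varepsilon^2)\Theta_\varepsilon^2 \, dx \asymp M^4 \varepsilon^{\b-3}$ against $\|\Theta_\varepsilon\|_{H^1}^2 \asymp M^2 \varepsilon^{-2}$, whose ratio $\s M^2 \varepsilon^{\b-1}$ diverges as $\varepsilon \to 0$ only for $\b < 1$, so a single bump cannot beat the mass constraint for $\b > 1$. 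A sharper choice of $\Theta_L$ is the heart of the argument, using the long-range tail $V(x) \sim c|x|^{\b-3}$ of the Bessel kernel from \eqref{Be2} together with the budget allowed by the hypothesis $K \gg 1$. Once such a $\Theta_L$ is produced, the remaining verifications --- bounded variance of the Gaussian cross terms and a uniform lower bound on the $\Q_\theta$-probability of $A$ --- follow from Lemma~\ref{LEM:SUM} and Lemma~\ref{LEM:hyp}. The extension to arbitrary $K > 0$ indicated in the footnote to the proposition and Remark~\ref{REM:OS} requires an additional refinement of the variational construction.
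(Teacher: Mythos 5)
There is a genuine gap here, and in fact the deterministic framework you set up cannot be completed. You are right that a single bump at scale $\varepsilon$ fails for $\beta > 1$, but the obstruction is structural: \emph{no} deterministic $\Theta$ satisfying your constraint $\|\Theta\|_{L^2}^2 \lesssim K$ can make the variational functional diverge once $\beta > 1$. Indeed, exactly as in the complementary bound \eqref{YY14}, Sobolev embedding and interpolation give
\[
\int_{\T^3}(V*\Theta^2)\,\Theta^2 \,dx \lesssim \|\Theta^2\|_{H^{-\beta/2}}^2 \lesssim \|\Theta\|_{L^2}^{1+\beta}\|\Theta\|_{H^1}^{3-\beta},
\]
so under $\|\Theta\|_{L^2}^2 \lesssim K$ the quantity $\frac{\sigma}{4}\int(V*\Theta^2)\Theta^2\,dx - \frac{1}{2}\|\Theta\|_{H^1}^2$ is bounded above uniformly in $\Theta$, since $3-\beta < 2$. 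Your three requirements --- $\frac{\sigma}{4}\int(V*\Theta_L^2)\Theta_L^2\,dx \geq L$, $\|\Theta_L\|_{H^1}^2 = o(L)$, and $\|\Theta_L\|_{L^2}^2 \lesssim K$ --- are mutually inconsistent for $\beta > 1$; the ``sharper choice of $\Theta_L$'' you defer to does not exist.

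The missing idea is that the drift must be \emph{random}: this is precisely what the Bou\'e--Dupuis formula (Lemma~\ref{LEM:var3}) buys over a plain Cameron--Martin shift, since the infimum runs over progressively measurable processes rather than constants. The paper takes $\Theta^0 = -Z_M + \sqrt{\tilde\sigma_M}\,f_M$, where $Z_M = \pi_M Y(\tfrac12)$ and $\tilde\sigma_M = \E[Z_M^2] \sim M$. The random piece $-Z_M$ is a smooth approximation of $-Y$; shifting by it cancels the bulk of the low frequencies of $Y$, with the crucial consequence that $\int_{\T^3}:\!u_N^2\!:\,dx = \int :\!Y_N^2\!:\,dx + 2\int Y_N\Theta_N^0\,dx + \int(\Theta_N^0)^2\,dx$ has $O(1)$ second moment \emph{even though} $\|\Theta^0\|_{L^2}^2 \sim M \gg K$: the $O(M)$ terms cancel because $\tilde\sigma_M = \E[Z_M^2]$, because $Y_N-Z_M$ has independent Brownian increments, and because $Z_M$ and $f_M$ have essentially disjoint Fourier supports (Lemma~\ref{LEM:leo2} and the estimate \eqref{pa8}). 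This frees a mass budget of order $M$ (not $K$) for the deterministic bump $\sqrt{\tilde\sigma_M}\,f_M$, which is localized at scale $1/M$ and gives $\sigma\int(V*\Theta^2)\Theta^2\,dx \sim \sigma \tilde\sigma_M^2\, Q(f_M) \sim \sigma M^{5-\beta}$ (Lemma~\ref{LEM:leo1}) against an $H^1$ cost $\lesssim M^3$, whence divergence as $M\to\infty$ exactly when $\beta < 2$. The indicator $\ind_{t>1/2}$ in the drift \eqref{paax} preserves progressive measurability --- a subtlety absent from the deterministic setting you propose.
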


We first present the proof of 
 Theorem \ref{THM:Gibbs2}\,(ii)
by assuming  
 Proposition \ref{PROP:Gibbs5}.

\begin{proof}[Proof of Theorem \ref{THM:Gibbs2}\,(ii)]
It follows from \eqref{K1} and \eqref{K2} that
\[
\s  R_N(u) = \RR_N(u) + A \bigg| \int_{\T^3} :\! u_N^2 \!: dx \bigg|^\gamma. 
\]

\noi
In view of \eqref{pa00}, 
 we have,  for any $L>0$,
\begin{align*}
\E \Big[e^{\RR_N (u)} \Big]
&= \E\bigg[\exp\bigg( \s R_N(u) - A \bigg| \int_{\T^3} :\! u_N^2 \!: dx \bigg|^\gamma \bigg)\bigg] \\
&\ge \E\bigg[\exp\bigg( \min{(\s R_N(u),L)}
- A \bigg| \int_{\T^3} :\! u_N^2 \!: dx \bigg|^\gamma \bigg)  \bigg]\\ 
&\ge\exp\big(-A K^\g\big) 
\E\bigg[\exp\Big(\min{(\s  R_N(u),L)} \Big) \cdot \ind_{\{ |\int_{\T^3}\, : u_N^2 :\, dx | \le K\}} \bigg].
\end{align*}

\noi
Then, \eqref{Gibbs10} follows from 
Proposition \ref{PROP:Gibbs5}.
\end{proof}

\begin{remark}\label{REM:end1}\rm
(i) Proposition \ref{PROP:Gibbs5} holds true at the critical value  $\be = 2$, 
provided that $\s \gg 1$.  See Remark \ref{REM:end2}.
Then, the argument above proves 
Theorem \ref{THM:Gibbs2}\,(iii.a).

\smallskip

\noi
(ii)  
Proposition \ref{PROP:Gibbs5} and Part (i) of this remark 
establish the non-normalizability of the focusing Hartree Gibbs measure
$\rho$ in \eqref{Gibbs7}
with a Wick-ordered $L^2$-cutoff, considered by Bourgain~\cite{BO97}, 
(a) for $\be < 2$ with any $\s > 0$
and (b) for $\be = 2$ with $\s \gg1$:
\begin{align}
\sup_{N \in \N}  \E_{\mu} \bigg[e^{\frac \s 4 \int_{\T^3} (V \ast \, : \, u_N^2 :) \, : u_N^2 : \, dx
 - \frac \s2 \al_N} 
\ind_{\{ |\int_{\T^3} \, : \, u_N^2 :\, dx | \le K\}}
\bigg] = \infty, 
\label{pa00a}
\end{align}

\noi
provided that $K \gg 1$.\footnote{In a recent preprint \cite{OS}, 
the first and third authors with K.~Seong developed further the strategy
introduced in this subsection on non-normalizability
of focusing Gibbs measures.
In particular, by adapting the approach in~\cite{OS}, 
we can remove the assumption $K \gg 1$
and thus prove the non-normalizability \eqref{pa00a}
for any $K > 0$.
See Remark \ref{REM:OS} below.}

In view of \eqref{pa00}, 
if we replace the Wick-ordered $L^2$-cutoff $\ind_{\{\int_{\T^3} :\,|u|^2: \, dx\leq K\}}$
in \eqref{Gibbs7} by $\ind_{\{|\int_{\T^3} :\,|u|^2: \, dx|\leq K\}}$,
namely, with an absolute value,  
then 
the construction of the focusing  Hartree Gibbs measure:
\begin{align*}
d\rho(u) = Z^{-1} \ind_{\{|\int_{\T^3} :\,|u|^2: \, dx|\leq K\}} \, e^{\frac \s4 \int_{\T^3} (V*:|u|^2:)\,  :|u|^2 :\, dx} d\mu(u)
\end{align*}

\noi
in the weakly nonlinear regime ($0 < \s \ll 1$) at the critical value $\be = 2$
follows from the corresponding construction 
for the focusing Gibbs measure in \eqref{Gibbs9}
presented in 
Subsection~\ref{SUBSEC:foc3}.
As for the focusing  Hartree Gibbs measure
with the Wick-ordered $L^2$-cutoff $\ind_{\{\int_{\T^3} :\,|u|^2: \, dx\leq K\}}$
in \eqref{Gibbs7}, 
we start with  the truncated Gibbs measure $\rho_N$ in \eqref{GibbsN1}
with a slightly different potential energy $\RR_N(u)$ (compare this with \eqref{K2}):
\begin{align*}
\RR_N (u)
&:= \frac \s 4 \int_{\T^3} (V \ast :\! u_N^2 \!:) :\! u_N^2 \!: dx
 - A \, \Bigg(\bigg| \int_{\T^3} :\! u_N^2 \!: dx\bigg|^{\g-1} \int_{\T^3} :\! u_N^2 \!: dx\Bigg) - \frac \s2 \al_N
\end{align*}

\noi
and repeat the analysis presented in Subsections~\ref{SUBSEC:foc1}
and~\ref{SUBSEC:foc3}.
Then, an inequality
\begin{align*}
 \ind_{\{x \le K\}} \le \exp\big( -  A |x|^{\gamma-1}x \big) \exp\big(A K^\g\big)
\end{align*}

\noi
for any $x\in \R$, $K>0$, $\g>0$, and $A>0$
yields the normalizability of the 
focusing  Hartree Gibbs measure
in \eqref{Gibbs7} 
as in Theorem \ref{THM:Gibbs2}\,(i) and (iii.b).
In particular, this extends Bourgain's construction  to the critical case $\be = 2$
in the weakly nonlinear regime.
 
\end{remark}

\medskip

The rest of the section is  devoted  to the proof of Proposition \ref{PROP:Gibbs5}. We first note that 
\begin{align}
\begin{split}
 \E\Big[ & \exp\Big(\min{(\s R_N(u),L)} \Big) 
 \cdot\ind_{\{ |\int_{\T^3} \, : u_N^2 :\, dx | \le K\}} \Big]\\
 &
\ge  \E\Big[\exp\Big(\min{(\s R_N(u),L)}
\cdot \ind_{\{ |\int_{\T^3} \, : u_N^2 :\, dx | \le K\}}\Big)   \Big] 
- \PP\bigg(\bigg|\int_{\T^3} :\! u_N^2 \!: dx \bigg| > K\bigg) \\
&\ge \E\Big[\exp\Big(\min{(\s R_N(u),L)} 
\cdot \ind_{\{ |\int_{\T^3} \, : u_N^2 :\, dx | \le K\}}\Big)   \Big] 
- 1,
\end{split}
\label{pax}
\end{align}

\noi
and thus it suffices to prove 
\begin{align}
\lim_{L \to \infty} \liminf_{N \to \infty}  \E\Big[\exp\Big(\min{(\s  R_N(u),L)} 
\cdot \ind_{\{ |\int_{\T^3} \, : u_N^2 :\, dx | \le K\}}\Big)   \Big] =  \infty.
\label{pa0}
\end{align}

As in the previous subsections, 
 we will use a variational formulation.
In this part, however, 
we take a drift $\dr$ depending on $Y$
and thus
we need to use a variational formula,
where an  expectation is taken
with respect to  the underlying probability $\PP$, rather than the modified one $\Q_\theta$
(as in Lemma \ref{LEM:var2}). 
For this purpose, 
we first recall the  Bou\'e-Dupuis variational formula \cite{BD, Ust};
in particular, see Theorem 7 in \cite{Ust}.

\begin{lemma}\label{LEM:var3}
Let $Y(t)
=  \jb{\nabla}^{-1}W(t)$ be as in \eqref{P2}.
Fix $N \in \N$.
Suppose that  $F:C^\infty(\T^3) \to \R$
is measurable such that $\E\big[|F(\pi_NY(1))|^p\big] < \infty$
and $\E\big[|e^{-F(\pi_NY(1))}|^q \big] < \infty$ for some $1 < p, q < \infty$ with $\frac 1p + \frac 1q = 1$.
Then, we have
\begin{align*}
- \log \E\Big[e^{-F(\pi_N Y(1))}\Big]
= \inf_{\dr \in \mathbb H_a}
\E\bigg[ F(\pi_N Y(1) + \pi_N I(\dr)(1)) + \frac{1}{2} \int_0^1 \| \dr(t) \|_{L^2_x}^2 dt \bigg], 
\end{align*}

\noi
where  $I(\dr)$ is  as in  \eqref{P3a}
and the expectation $\E = \E_\PP$
is an 
expectation with respect to the underlying probability measure $\PP$.

\end{lemma}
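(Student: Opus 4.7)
The plan is to establish the identity as matching inequalities $\le$ and $\ge$, following the approach of \cite{Ust}. The upper bound will come from Girsanov's theorem combined with Jensen's inequality; the matching lower bound will be obtained by producing an optimal drift via the Brownian martingale representation theorem. Since $F$ depends on $Y(1)$ only through the finite-dimensional projection $\pi_N Y(1)$, integrability issues that plague the full path-space Bou\'e--Dupuis formula simplify considerably, and the two-sided hypothesis $\E[|F(\pi_N Y(1))|^p] + \E[|e^{-F(\pi_N Y(1))}|^q] < \infty$ for conjugate exponents $p, q$ will suffice to make every relevant stochastic integral a true martingale rather than a mere local martingale.

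For the $\le$ direction, I would fix $\dr \in \Hc$. By Girsanov's theorem recalled in Subsection \ref{SUBSEC:var}, the process $W_\dr = W - \int_0^\cdot \dr(s)\,ds$ is a cylindrical Wiener process on $L^2(\T^3)$ under $\Q_\dr$, so $Y_\dr(1) = \jb{\nabla}^{-1}W_\dr(1)$ has law $\mu$ under $\Q_\dr$ while $Y(1) = Y_\dr(1) + I(\dr)(1)$. Using the identity $dW = dW_\dr + \dr\,dt$ under $\Q_\dr$ to re-express the density as $d\PP/d\Q_\dr = \exp\bigl(-\int_0^1 \jb{\dr, dW_\dr} - \tfrac12 \int_0^1 \|\dr\|_{L^2_x}^2\,dt\bigr)$, rewriting $\E_\PP[e^{-F(\pi_N Y(1))}]$ as a $\Q_\dr$-expectation, and applying Jensen's inequality to the convex exponential together with the martingale property of $\int \jb{\dr, dW_\dr}$ under $\Q_\dr$, one obtains
\begin{equation*}
-\log \E_\PP[e^{-F(\pi_N Y(1))}] \le \E_{\Q_\dr}\bigg[F(\pi_N Y_\dr(1) + \pi_N I(\dr)(1)) + \tfrac12 \int_0^1 \|\dr(t)\|_{L^2_x}^2\,dt\bigg].
\end{equation*}
The joint law of $(Y_\dr(1), \dr)$ under $\Q_\dr$ coincides with the joint law of $(Y(1), \widetilde\dr)$ under $\PP$, where $\widetilde\dr$ is the same progressively measurable functional reinterpreted in terms of $W_\dr$; since $\dr \mapsto \widetilde\dr$ is a bijection of $\Ha$, taking the infimum absorbs the relabeling. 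Extension from $\Hc$ to $\Ha$ is via truncating $\|\dr\|_{L^2_{t,x}}$ at level $M$ and passing to the limit.

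For the $\ge$ direction, I would construct the optimal drift explicitly. Setting $X := e^{-F(\pi_N Y(1))}$ and $M(t) := \E_\PP[X \mid \F_t]$ for the augmented filtration $\F_t$ of $W$, the Brownian martingale representation theorem yields a progressively measurable $\eta$ with $dM = \jb{\eta, dW}$, and since $M > 0$ almost surely, It\^o's formula applied to $\log M$ produces $\dr^* := \eta/M$ satisfying $M(t) = M(0) \exp\bigl(\int_0^t \jb{\dr^*, dW} - \tfrac12 \int_0^t \|\dr^*\|_{L^2_x}^2\,ds\bigr)$. Evaluating at $t = 1$, the measure $\Q_{\dr^*}$ satisfies $d\Q_{\dr^*} = (M(1)/M(0))\,d\PP = (X/\E_\PP[X])\,d\PP$, so it coincides with the Gibbs entropy minimizer for the variational principle $-\log \E_\PP[e^{-F}] = \inf_{\Q \ll \PP} \{\E_\Q[F(\pi_N Y(1))] + H(\Q \| \PP)\}$. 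A direct computation gives $H(\Q_{\dr^*} \| \PP) = \tfrac12 \E_{\Q_{\dr^*}}[\int_0^1 \|\dr^*\|_{L^2_x}^2\,dt]$, and combined with the decomposition $Y(1) = Y_{\dr^*}(1) + I(\dr^*)(1)$ under $\Q_{\dr^*}$ and the relabeling $(Y_{\dr^*}, \dr^*) \leftrightarrow (Y, \widetilde{\dr^*})$ from the upper bound, this yields
\begin{equation*}
-\log \E_\PP[e^{-F(\pi_N Y(1))}] = \E_\PP\bigg[F(\pi_N Y(1) + \pi_N I(\widetilde{\dr^*})(1)) + \tfrac12 \int_0^1 \|\widetilde{\dr^*}(t)\|_{L^2_x}^2\,dt\bigg],
\end{equation*}
which attains the infimum on the right-hand side of the lemma.

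The principal obstacle is to verify that $\dr^* \in \Ha$, i.e.~that $\E_\PP[\int_0^1 \|\dr^*(t)\|_{L^2_x}^2\,dt] < \infty$ (equivalently $H(\Q_{\dr^*} \| \PP) < \infty$); this is precisely where the two-sided integrability hypothesis enters, via H\"older's inequality with conjugate exponents $p, q$ applied to bounds for $M^{-2}\,d\langle M\rangle = \|\dr^*\|_{L^2_x}^2\,dt$ arising from the It\^o expansion of $\log M$. A secondary but routine issue is justifying the relabeling $(Y_\dr, \dr) \leftrightarrow (Y, \widetilde\dr)$, which rests on the fact that $W$ and $W_\dr$ generate the same filtration up to null sets so that any progressively measurable functional admits a representation in either noise.
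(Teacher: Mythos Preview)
The paper does not supply its own proof of this lemma: immediately before the statement it writes ``we first recall the Bou\'e--Dupuis variational formula \cite{BD, Ust}; in particular, see Theorem 7 in \cite{Ust},'' and no proof block follows. So there is nothing in the paper to compare your argument against.

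That said, your sketch is precisely the standard route taken in those references: the upper bound via Girsanov plus Jensen, the lower bound by producing the optimal drift $\dr^* = \eta/M$ through the martingale representation of $M(t) = \E[e^{-F}\mid\F_t]$, and the identification $H(\Q_{\dr^*}\|\PP) = \tfrac12\,\E_{\Q_{\dr^*}}\!\int_0^1 \|\dr^*\|_{L^2_x}^2\,dt$. The two-sided integrability hypothesis is exactly what \"Ust\"unel uses to ensure $H(\Q_{\dr^*}\|\PP) < \infty$. The relabeling step $(Y_\dr,\dr)\leftrightarrow(Y,\widetilde\dr)$ you flag as ``routine'' is indeed the one point that needs a sentence of care (it rests on the strong solution property of the SDE $dW = dW_\dr + \dr\,dt$, so that the filtrations of $W$ and $W_\dr$ coincide), but this is handled in the cited sources. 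Your proposal is correct and aligned with the literature the paper defers to.
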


In our current context, Lemma \ref{LEM:var3}, 
together with 
Lemma~\ref{LEM:conv}, 
yields
\begin{align}
\begin{split}
-& \log {\E\Big[\exp\Big(\min{(\s  R_N(u),L)} \cdot \ind_{\{ |\int_{\T^3} \, : u_N^2 :\, dx | \le K\}}\Big)   \Big]} \\
&= \inf_{\dr \in \mathbb H_a} \E\bigg[ -\min\big(\s  R_N(Y(1) + I(\dr)(1)),L\big)\\
&\hphantom{XXXXX} \times  \ind_{\{ |\int_{\T^3} : (\pi_N Y(1))^2: 
+ 2 (\pi _N Y(1)) (\pi_N I(\dr)(1)) + (\pi_N I(\dr)(1))^2 dx | \le K\}} \\
&\hphantom{XXXXX}
+ \frac 12 \int_0^1 \| \dr(t)\|_{L^2_x} ^2 dt \bigg],
\label{DPf}
\end{split}
\end{align}

\noi
where $Y(1)$ is as in \eqref{P2} and 
$\mathbb H_a$ is as in Subsection \ref{SUBSEC:var}.
For simplicity, we denote  $\pi_N Y(1)$ by $Y_N$ in the following.

Fix a parameter $M \gg 1$.
Let $f: \R^3 \to \R$ be a real-valued Schwartz function
such that 
the Fourier transform $\ft f$ is an even smooth function
supported  on $\big\{\frac 12 <  |\xi| \le 1 \big\}$, satisfying 
  $\int_{\R^3} |\ft f (\xi)|^2 d\xi = 1$.
Define a function $f_M$  on $\T^3$ by 
\begin{align}
f_M(x) &:= M^{-\frac 32} \sum_{|n| >  M/2 } \ft f\Big( \frac n M\Big) e_n, \label{fMdef} 
\end{align}

\noi
where $\ft f$ denotes the Fourier transform on $\R^3$ defined by 
\[ \ft f(\xi) = \frac{1}{(2\pi)^\frac{3}{2}} \int_{\R^3} f(x) e^{-i\xi\cdot x} dx.\]

\noi
Then, a direct computation yields  the following lemma.

\begin{lemma}\label{LEM:leo1}
Let $0<\b<3$.
Then, given any  $\alpha > 0$, we have
\begin{align}
\int_{\T^3} f_M^2 dx &= 1 + O(M^{-\alpha}), \label{fM0} \\
\int_{\T^3} (\jb{\nabla}^{-1} f_M)^2 dx &\les M^{-2}, \label{fm2} \\
\int_{\T^3} (V \ast f_M^2) f_M^2 dx &\sim M^{3-\beta}. \label{fM1}
\end{align}
\end{lemma}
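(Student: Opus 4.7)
\textbf{Proof sketch for Lemma \ref{LEM:leo1}.}  The plan is to reduce each assertion to a Riemann-sum calculation via Plancherel's identity. Writing $F = \ft f$, which is a real-valued Schwartz function on $\R^3$ (real because $f$ is real and $\ft f$ is even) supported in $\{\tfrac12 < |\xi| \le 1\}$ with $\|F\|_{L^2(\R^3)}^2 = 1$, the Fourier coefficients of $f_M$ are $\widehat{f_M}(n) = M^{-\frac 32} F(n/M)$, automatically supported in $M/2 < |n| \le M$.

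For \eqref{fM0}, I would invoke Plancherel's identity on $\T^3$ to write
\begin{align*}
\int_{\T^3} f_M^2 \, dx = \sum_{n \in \Z^3} M^{-3} F(n/M)^2,
\end{align*}
which is a Riemann sum at spacing $1/M$ for $\int_{\R^3} F^2 \, d\xi = 1$. Since $F^2$ is smooth and compactly supported, Poisson summation applied to $y\mapsto M^{-3}F(y/M)^2$ (combined with the rapid decay of the Fourier transform of $F^2$) shows this Riemann sum equals $1 + O(M^{-\alpha})$ for any $\alpha > 0$, giving \eqref{fM0}. For \eqref{fm2}, Plancherel gives $\int_{\T^3} (\jb{\nabla}^{-1} f_M)^2 dx = M^{-3} \sum_n \jb{n}^{-2} F(n/M)^2$, and using $\jb{n}^{-2} \les M^{-2}$ on the support $M/2 < |n| \le M$ together with \eqref{fM0} yields the claim immediately.

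The main estimate \eqref{fM1} will require more care. I would begin with Plancherel and the convolution theorem to write
\begin{align*}
\int_{\T^3} (V \ast f_M^2) f_M^2 \, dx = \sum_{n \in \Z^3} \jb{n}^{-\be} \big|\widehat{f_M^2}(n)\big|^2,
\end{align*}
where
\begin{align*}
\widehat{f_M^2}(n) = \sum_{n_1 \in \Z^3} \widehat{f_M}(n_1)\widehat{f_M}(n - n_1) = M^{-3} \sum_{n_1 \in \Z^3} F(n_1/M) F((n-n_1)/M).
\end{align*}
The inner sum is again a Riemann sum at spacing $1/M$, now for the continuous convolution $(F \ast F)(n/M)$, where $F \ast F$ is smooth, compactly supported in $\{|\eta| \le 2\}$, and satisfies $(F \ast F)(0) = \int_{\R^3} F^2 \, d\xi = 1$. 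Poisson summation once more yields $\widehat{f_M^2}(n) = (F \ast F)(n/M) + O(M^{-\alpha})$ uniformly in $n$; in particular $|\widehat{f_M^2}(n)|^2 \les 1$ and $\widehat{f_M^2}(n) = O(M^{-\alpha})$ for $|n| \gg M$. Hence the upper bound
\begin{align*}
\sum_{n \in \Z^3} \jb{n}^{-\be} |\widehat{f_M^2}(n)|^2 \les \sum_{|n| \les M} \jb{n}^{-\be} \les M^{3-\be},
\end{align*}
which uses $\be < 3$. For the matching lower bound, continuity of $F \ast F$ at the origin, together with $(F \ast F)(0) = 1$, yields $\delta > 0$ with $|(F \ast F)(n/M)|^2 \ge \tfrac12$ for all $|n| \le \delta M$, and hence
\begin{align*}
\sum_{n \in \Z^3} \jb{n}^{-\be} |\widehat{f_M^2}(n)|^2 \ges \sum_{|n| \le \delta M} \jb{n}^{-\be} \ges M^{3-\be}.
\end{align*}

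The one technical point to verify is the effectiveness of the Riemann-sum / Poisson-summation approximations, but since $F$ (and hence $F \ast F$) is Schwartz with compact support, the remainders decay faster than any polynomial in $M$ and are absorbed effortlessly in all three estimates; in particular they are negligible compared to the main $M^{3-\be}$ contribution in \eqref{fM1} for any $\be < 3$.
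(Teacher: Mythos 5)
Your argument is correct, and the underlying mechanism — Poisson summation plus the rapid decay of Fourier transforms of Schwartz functions — is the same as in the paper's proof. The bookkeeping, however, is organized in the "dual" way. The paper works on the physical-space side: it writes $f_M = \sum_{k \in \Z^3} T_k f$ with $T_k f(x) = (2\pi)^{3/2} M^{3/2} f(M(x + 2\pi k))$ (Poisson summation in $x$), isolates the $k = 0$ term, and estimates the tails $T_k f$, $k \neq 0$, pointwise from the Schwartz decay of $f$; for \eqref{fM1} it then passes to frequency via Hausdorff--Young, bounding $\sup_n (1 + |n|^4)\,|\ft{f_M^2}(n) - \ft{(T_0 f)^2}(n)|$ by $\|(1+\Delta^2)(f_M^2 - (T_0 f)^2)\|_{L^1}$, and computes $\sum_n \ft V(n)\,|\ft{(T_0 f)^2}(n)|^2$ exploiting $\ft{(T_0 f)^2}(\xi) = (2\pi)^3 \ft{f^2}(\xi/M)$, the compact support of $\ft{f^2}$, and $\ft{f^2}(0) = 1$. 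Your version works entirely on the Fourier side: you treat $\ft{f_M^2}(n) = M^{-3}\sum_{n_1} F(n_1/M)F((n-n_1)/M)$ as a Riemann sum at spacing $1/M$ for $(F*F)(n/M)$ and apply Poisson summation (to $n_1 \mapsto F(n_1/M)F((n-n_1)/M)$) to bound the Riemann-sum error; since the profiles $G_n(\xi) := F(\xi)F(n/M - \xi)$ are smooth with compact support and derivatives bounded uniformly in $n$, the error is $O(M^{-\alpha})$ uniformly in $n$, which is the only point worth making explicit in your sketch. Both routes derive the $M^{3-\be}$ scaling from the same facts: $\ft{f_M^2}$ is supported in $\{|n| \le 2M\}$, is $O(1)$ there, and is $\gtrsim 1$ on $\{|n| \le \delta M\}$ because $(F*F)(0) = \int F^2 = 1$. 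Your version avoids introducing the translates $T_k f$ and the auxiliary Hausdorff--Young step, at the cost of having to note the uniform-in-$n$ rapid decay of $\ft{G_n}$; either is a clean way to get there.
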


\begin{proof}
Define a function $F_M$ on $\R^3$ by 
$$ F_M(x) := M^\frac 32 f(M x).$$

\noi
Then, 
by the Poisson summation formula,\footnote{Recall our convention
of using the normalized Lebesgue measure 
$dx_{\T^3}= (2\pi)^{-3} dx$
on $\T^3\cong[-\pi, \pi)^3$.
For simplicity of notation, 
we use $dx$ to denote the standard Lebesgue measure $\R^3$
and the normalized Lebesgue measure on $\T^3$ in the following.}
we have 
\begin{equation}
f_M(x)
= (2\pi)^\frac{3}{2}\sum_{k \in \Z^3}F_M(x + 2\pi k)
= \sum_{k \in \Z^3} T_k f(x), \label{fma}
\end{equation}
where
\begin{align}
T_kf (x) := (2\pi)^\frac{3}{2}M^\frac32 f(M(x + 2\pi k)).
\label{fmaa}
\end{align}

\noi
Since $f$ is a Schwartz function, if $|x| \le \pi$ and $k \in \Z^3 \setminus\{0\}$, we have 
\[ |f(M(x + 2\pi k))| \les  (M|k|)^{-\alpha - 3}\]

\noi
for any $\alpha > 0$,  
from which we obtain, for $k \in \Z^3 \setminus\{0\}$, 
\begin{align}
\int_{\T^3} (T_k f(x))^2 dx \les |k|^{-2\alpha -6} M^{-2\alpha-3}.
\label{fma1}
\end{align}

\noi
For $k = 0$, we have
\begin{align}
\int_{\T^3} (T_0 f(x))^2 dx = \int_{|x| \le \pi M} f^2(x) dx 
= 1 - \int_{|x| > \pi M} f^2(x) dx = 1 - O(M^{-\alpha}).
\label{fma2}
\end{align}

\noi
Hence, 
it follows from \eqref{fma}, \eqref{fma1}, and \eqref{fma2} that
\begin{align*}
 \bigg| & \int_{\T^3} f_M^2(x) dx - 1\bigg| \\
&= \bigg|\sum_{k,j \in \Z^3} \int_{\T^3} T_kf(x)T_jf(x) dx - 1\bigg| \\
&= \bigg|\int_{\T^3} (T_0f(x))^2 dx - 1 + 2 \sum_{k \neq 0} \int_{\T^3} T_0f(x)T_kf(x) dx 
+ \sum_{k,j \neq 0} \int_{\T^3} T_kf(x)T_jf(x) dx\bigg| \\
&\les M^{-\alpha} \bigg(1 + M^{-\frac 32} \sum_{k\neq0} |k|^{-\alpha-3} 
+ M^{-\alpha-3}\sum_{k,j \neq 0}|k|^{-\alpha-3}|j|^{-\alpha-3} \bigg) \\
&\les M^{-\alpha},
\end{align*}

\noi
for any $\al > 0$.  This proves \eqref{fM0}.

By Plancherel's identity, \eqref{fMdef},  and \eqref{fM0},
we have 
\begin{align*}
\int_{\T^3} (\jb{\nabla}^{-1} f_M (x))^2 dx
&= \sum_{|n| > M/2} M^{-3} \Big|\ft f \Big(\frac n M\Big)\Big|^2 \frac 1 {\jb{n}^2}\\
&\les M^{-5} \sum_{n \in \Z^3}  \Big|\ft f \Big(\frac n M\Big)\Big|^2 \\
&= M^{-2} \|f_M\|_{L^2}^2 \\
&\les M^{-2}.
\end{align*}

\noi
This proves \eqref{fm2}.

It remains to prove \eqref{fM1}.
By Hausdorff-Young's inequality,  \eqref{fma}, \eqref{fma1}, and \eqref{fma2}, we have 
\begin{align}
\begin{split}
&\sup_{n \in \Z^3} \Big( (1 + |n|^4) \big|\ft{f_M^2}(n) - \ft{(T_0 f)^2}(n)\big| \Big) \\
&\les \Big\| (1+\Dl^2) \big( f_M^2 - (T_0 f)^2 \big) \Big\|_{L^1} \\
&= \bigg\| (1+ \Delta^2) \Big( 2 T_0f \sum_{k\neq 0} T_k f
 + \sum_{k,j\neq 0}  T_k f T_j f \Big) \bigg\|_{L^1}\\
&\lesssim M^{-\wt \alpha+\frac 52}
\les M^{-\al}
\end{split}
\label{fma4}
\end{align}

\noi
for any $\wt \al > 0$ such that  $\wt \al>\al+\frac 52$.
Hence,  Plancherel's identity,  \eqref{fmaa}, \eqref{fma4}, 
and Hausdorff-Young's inequality with \eqref{fM0} and \eqref{fma2} 
yields that
\begin{align}
\begin{split}
\bigg| & \int_{\T^3} (V \ast f_M^2) f_M^2 dx 
- \sum_{n \in \Z^3} \widehat V(n) 
\big|\widehat{(T_0f)^2}(n)\big|^2\bigg|\\
&=\bigg| \sum_{n \in \Z^3} \widehat V(n)  \Big(\big|\widehat{f_M^2}(n)\big|^2 
- \big|\widehat{(T_0f)^2}(n)\big|^2\Big)\bigg|\\
&\les \sum_{n \in \Z^3} \jb{n}^{-\b-4} \Big( (1 + |n|^4) \big|\widehat{f_M^2}(n) - \widehat{(T_0 f)^2}(n)\big| \Big) \Big( \big|\widehat{f_M^2}(n) \big| + \big| \widehat{(T_0 f)^2}(n)\big| \Big) \\
&\lesssim  \sum_{n \in \Z^3} M^{-\alpha} \jb n^{-\beta - 4}\\
&\lesssim M^{-\alpha}.
\end{split}
\label{fma5}
\end{align}

\noi
By the  assumption,  
 $\widehat{f^2}
= \ft f * \ft f $ is an even Schwartz  function with 
$\supp \ft{f^2} \subset  \{ |\xi| \le 2 \}$ and $\widehat{f^2}(0) = 1$.
Moreover, from \eqref{fmaa}, we have $\widehat{(T_0f)^2}(\xi) = (2\pi)^3 \ft {f^2}\big(\frac{\xi}{M}\big)$.
Thus, 
 we have 
\[
 \frac12 \cdot \ind_{\{|\,\cdot\,| \le c_1 M \}}(\xi)
 \le \big|\widehat{(T_0f)^2}(\xi)\big|
\le c_2 \cdot \ind_{\{ |\,\cdot\,| \le 2M \}} (\xi) \]

\noi
for some $c_1, c_2 > 0$.
Thus, we obtain
\begin{align}
\begin{split}
&\sum_{n \in \Z^3} \widehat V(n) 
\big|\widehat{(T_0f)^2}(n)\big|^2
\les \sum_{|n| \le 2M} \jb{n}^{-\beta} 
\les M^{3-\beta}, \\
& \sum_{n \in \Z^3} \widehat V(n) 
\big|\widehat{(T_0f)^2}(n)\big|^2
\gtrsim \sum_{|n| \le c_1 M} \jb{n}^{-\beta}
\sim M^{3-\beta}.
\end{split}
\label{fma6}
\end{align}

\noi
Therefore, 
from \eqref{fma5} and \eqref{fma6}, we obtain \eqref{fM1}.
\end{proof}

Let $Y$ be as in \eqref{P2}.
We define $Z_M$ and $\wt \s_M$ by 
\begin{align}
Z_M := \sum_{|n| \le M} \widehat{Y\big(\tfrac 12)} (n) e_n
\qquad \text{and}\qquad 
\wt \s_M := \E \big[ Z_M^2(x) \big]. 
\label{fmb1}
\end{align}

\noi
Note that $\wt \s_M$ is independent of $x \in \T^3$
thanks to 
 the spatial translation invariance of $Z_M$.

\begin{lemma} \label{LEM:leo2}
Let $ M\gg 1$ and let $ 1 \le p < \infty$.
Then, we have
\begin{align}
&\wt \s_M \sim M,\label{NRZ0}\\
&\E\bigg[\int_{\T^3} |Z_M|^p dx \bigg] \le C(p) M^\frac p2, \label{NRZ1}\\
&\E\bigg[\Big(\int_{\T^3} Z_M^2 dx - \wt \s_M \Big)^2\bigg]
+\E\bigg[ \Big( \int_{\T^3} Y_N Z_M dx - \int_{\T^3} Z_M^2 dx \Big)^2  \bigg] \les 1,    \label{NRZ3}\\
&\E\bigg[\Big( \int_{\T^3} Y_N  f_M dx \Big)^2\bigg] 
+ \E\bigg[\Big( \int_{\T^3} Z_M  f_M dx \Big)^2\bigg] \les M^{-2}   \label{NRZ5}
\end{align}

\noi
for any $N \ge M$.

\end{lemma}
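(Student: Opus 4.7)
The plan is to prove all four estimates by direct Fourier-side computation, using the explicit representation $Y(t) = \sum_n \jb{n}^{-1} B_n(t) e_n$ together with the independence of $\{B_n\}_{n \in \Ld_0}$ modulo the constraint $B_{-n} = \overline{B_n}$, and the variance formula $\E[|B_n(t)|^2] = t$. This gives $Z_M = \sum_{|n| \le M} \jb{n}^{-1} B_n(\tfrac 12) e_n$ and $Y_N = \sum_{|n| \le N} \jb{n}^{-1} B_n(1) e_n$.

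The first two estimates are immediate. For \eqref{NRZ0}, Parseval's identity yields $\wt \s_M = \E[Z_M^2(x)] = \frac 12 \sum_{|n| \le M} \jb{n}^{-2} \sim M$. For \eqref{NRZ1}, I observe that $Z_M(x)$ belongs to the first Wiener chaos for each $x \in \T^3$, so Lemma \ref{LEM:hyp} gives $\|Z_M(x)\|_{L^p(\O)} \lesssim p^{1/2} \wt \s_M^{1/2}$ uniformly in $x$, and Fubini's theorem (combined with the spatial homogeneity of the law of $Z_M(x)$) then produces $\E\big[\int_{\T^3}|Z_M|^p dx\big] \lesssim_p \wt \s_M^{p/2} \lesssim M^{p/2}$.

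For \eqref{NRZ3}, Parseval rewrites both random variables as sums over $\{|n| \le M\}$ of quadratic polynomials in $\{B_n\}$. Concretely,
\[ \int_{\T^3} Z_M^2 dx - \wt \s_M = \sum_{|n| \le M} \jb{n}^{-2}\bigl(|B_n(\tfrac 12)|^2 - \tfrac 12\bigr), \]
and each centered summand has bounded variance and is independent across $\{n, -n\}$-pairs in $\Ld_0$; summing the (diagonal) second moments gives $\lesssim \sum_n \jb{n}^{-4} < \infty$. Similarly, for $N \ge M$,
\[ \int_{\T^3} Y_N Z_M dx - \int_{\T^3} Z_M^2 dx = \sum_{|n| \le M} \jb{n}^{-2} \overline{B_n(\tfrac 12)} \bigl(B_n(1) - B_n(\tfrac 12)\bigr), \]
whose second moment factorizes, via the independence of Brownian increments from the past and independence across $\{n, -n\}$-pairs, into $\sum_n \jb{n}^{-4} \cdot \E[|B_n(\tfrac 12)|^2]\cdot \E[|B_n(1)-B_n(\tfrac 12)|^2] \lesssim 1$.

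Finally, for \eqref{NRZ5}, Plancherel together with the Fourier support of $f_M$ on $\{M/2 < |n| \le M\}$ in \eqref{fMdef} gives, for $N \ge M$,
\[ \E\Bigl[\Bigl|\int_{\T^3} Y_N f_M dx\Bigr|^2\Bigr] = \sum_{M/2 < |n| \le M} \jb{n}^{-2} |\widehat{f_M}(n)|^2 \lesssim M^{-2} \|f_M\|_{L^2}^2 \lesssim M^{-2}, \]
where \eqref{fM0} is used in the last step; the identical bound for $\int_{\T^3} Z_M f_M dx$ follows by replacing $B_n(1)$ with $B_n(\tfrac 12)$, which only changes the prefactor by $\tfrac 12$. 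No genuine obstacle appears in any step; the entire lemma is routine second-moment bookkeeping, with the only mild subtlety being the pairing $B_{-n} = \overline{B_n}$, which couples $n$ and $-n$ but leaves distinct $\{n,-n\}$-pairs independent.
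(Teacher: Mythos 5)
Your proof is correct and follows essentially the same Fourier-side second-moment computation as the paper. The only cosmetic differences are: for \eqref{NRZ1} you invoke Fubini plus spatial homogeneity where the paper cites Minkowski's integral inequality (both lead to the same reduction to a single-point estimate followed by Lemma \ref{LEM:hyp}), and for \eqref{NRZ5} you bound the weight by $M^{-2}$ directly using the Fourier support of $f_M$ where the paper routes the same bound through \eqref{fm2}; these are equivalent.
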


\begin{proof}
From  \eqref{fmb1} and \eqref{P2}, we have
\begin{align*}
\wt \s_M
= \sum_{n \in \Z^3} \E \Big[ |\ft Z_M(n)|^2 \Big]
\sim \sum_{|n| \le M} \frac{1}{\jb{n}^2} \sim M,
\end{align*}

\noi
yielding  \eqref{NRZ0}.
The second estimate \eqref{NRZ1} follows
from Minkowski's integral inequality, 
the Wiener chaos estimate (Lemma \ref{LEM:hyp}), 
and \eqref{NRZ0}.

By the independence of $\big\{|\ft Z_M(n)|^2 - \E \big[ | \ft Z_M(n)|^2\big]\big\}_{n \in \Ld_0}$, 
where $\Ld_0$ is as in \eqref{index}, 
we have
\begin{align*}
\E\bigg[ \Big( \int_{\T^3} Z_M^2 dx - \wt \s_M \Big)^2\bigg]
&=\E \Bigg[ \bigg(\sum_{n \in \Z^3} \Big( |\ft Z_M(n)|^2 - \E \big[ | \ft Z_M(n)|^2 \big] \Big) \bigg)^2 \Bigg]\\
&\sim  \sum_{n \in \Z^3} \frac 1 {\jb{n}^4} \les 1.
\end{align*}

\noi
Using the independence of 
$B_n(1) - B_n(\frac 12) $ and $B_n(\frac 12)$, 
we obtain
\begin{align*}
\E\bigg[ \Big( \int_{\T^3} Y_N Z_M dx  - \int_{\T^3} Z_M^2 dx \Big)^2  \bigg]
&= \E \Bigg[ \bigg(\sum_{n \in \Z} \Big( \ft Y_N(n)\overline{\ft Z_M(n)} 
- |\ft Z_M(n)|^2 \Big) \bigg)^2 \Bigg] \\
&= \E \Bigg[ \bigg(\sum_{|n| \le M} \frac{(B_n(1) - B_n(\frac 12)) 
\cj{B_n(\frac 12)}}{\jb{n}^2} \bigg)^2 \Bigg] \\
&\les \sum_{n \in \Z^3} \frac{\E \big[ |B_n(1) - B_n(\frac 12)|^2 \big] \E \big[ |B_n(\frac 12)|^2 \big]}{\jb{n}^4}\\
&\les \sum_{n \in \Z^3} \frac 1 {\jb{n}^4} \les 1.
\end{align*}

\noi
This proves  \eqref{NRZ3}.

Lastly, from \eqref{fm2}, we have
\begin{align*}
\E\bigg[ \Big( \int_{\T^3} Y_N  f_M dx  \Big)^2\bigg]
&= \E \bigg[ \Big( \sum_{|n| \le N} \ft Y_N(n) \ft f_M(n) \Big)^2 \bigg]
= \sum_{|n| \le N} \frac 1{\jb{n}^2} |\ft f_M(n)|^2 \\
&\le \int_{\T^3} \big(\jb{\nabla}^{-1} f_M (x)\big)^2 dx
\les M^{-2}.
\end{align*}

\noi
A similar computation shows the same bound
holds for the second term in \eqref{NRZ5}.
\end{proof}

We are now ready to  prove Proposition \ref{PROP:Gibbs5}.

\begin{proof}[Proof of Proposition \ref{PROP:Gibbs5}]
For $M \gg 1$,
we set $f_M$, $Z_M$, and $\wt \s_M$ as in \eqref{fMdef} and \eqref{fmb1}.
We choose  a drift $\dr= \dr^0$ for \eqref{DPf}, defined by  
\begin{align}
\dr^0 (t) = 2 \cdot \ind_{t > \frac 12} \jb{\nabla} \big( -Z_M + \sqrt{\wt \s_M} f_M \big)
\label{paax}
\end{align}

\noi
such  that 
\begin{align}
\Dr^0 := I(\dr^0)(1) 
= \int_0^1 \jb{\nb}^{-1} \dr^0(t) dt = - Z_M + \sqrt{\wt \s_M} f_M.
\label{paa0}
\end{align}

\noi
Furthermore, define $Q(u$) by 
\begin{align}
Q(u) := \frac 14 \int_{\T^3} (V_0 \ast u^2) u^2 dx
 = \frac 14 \|u^2\|_{\dot H^{-\frac{\be}{2}}} ,
\label{paa1}
\end{align}

\noi
where $V_0 = V - \ft V(0)$ as in \eqref{V0}.

\begin{remark} \label{REM:choice}\rm
Our choice of the drift in \eqref{paax} (or rather \eqref{paa0}) is based on the following.
In view of~\eqref{DPf}, 
we would like to choose (the space-tine integral of) a drift 
as ``$-Y(1)~+ $ a deterministic perturbation'', 
where the deterministic perturbation drives $R_N \to \infty$.
In view of the regularity condition on drifts, however, we can not use $-Y(1)$
as it is. This gives rise to $-Z_M$ in~\eqref{paa0}, which is
nothing but a smooth approximation\footnote{It is possible to introduce
a more refined approximation of  $-Y(1)$.  See Remark \ref{REM:OS} below.} of $-Y(1)$.
The cutoff function $\ind_{t > \frac 12}$  in \eqref{paax} is inserted to guarantee
the progressive measurability of the drift $\dr^0$.
As for the choice of the deterministic perturbation, 
noting that the main part of the renormalized potential energy $R_N$ in \eqref{K1}
is given by $Q$ in \eqref{paa1}, 
we chose a function $f_M$ such that 
$Q( \sqrt{\wt \s_M} f_M)$ provides the desired divergence.
See \eqref{paa2} below.
Lastly, our choice of $\wt \s_M$ in~\eqref{fmb1}
allows us to view $Z_M^2 - \wt \s_M$ as a Wick renormalization, 
which plays a crucial role in the proof of \eqref{pa8}, presented at the end of this subsection.
%
\end{remark}

Let us first make some preliminary computations.
From 
\eqref{paa0}, \eqref{paa1}, and 
Young's inequality, we have 
\begin{align}
\begin{split}
& Q(\Dr^0) - \wt \s_M^2 Q(f_M) \\
&= - \int_{\T^3} \big(V_0\ast ( \sqrt{\wt \s_M} f_M)^2\big)\sqrt{\wt \s_M} f_M Z_M dx
+ \frac12 \int_{\T^3} \big( V_0\ast (\sqrt{\wt \s_M} f_M)^2 \big) Z_M^2 dx \\
&\quad + \int_{\T^3} \big( V_0\ast (\sqrt{\wt \s_M} f_M Z_M) \big) \sqrt{\wt \s_M} f_M Z_M dx
- \int_{\T^3} (V_0\ast Z_M^2) \sqrt{\wt \s_M} f_M Z_M dx \\
&\quad
+ Q(Z_M) \\
&\ge -\delta \wt \s_M^2 Q(f_M)
- C_\delta  \int_{\T^3} \big( V_0\ast (\sqrt{\wt \s_M} f_M)^2 \big) Z_M^2 dx +(1-\delta) Q(Z_M)  \\
&\ge -\delta \wt \s_M^2 Q(f_M) - C_\delta  \int_{\T^3} \big( V_0\ast (\sqrt{\wt \s_M} f_M)^2 \big) Z_M^2 dx
\end{split}
\label{pa1}
\end{align}
for any $0<\dl<1$.
From Lemmas \ref{LEM:leo1} and \ref{LEM:leo2}, 
we have
\begin{align}
\begin{split}
\E \bigg[ \int_{\T^3} \big( V_0\ast (\sqrt{\wt \s_M} f_M)^2 \big) Z_M^2 dx \bigg]
&=
\int_{\T^3} \big( V_0\ast (\sqrt{\wt \s_M} f_M)^2 \big) \wt \s_M dx \\
&\les \wt \s_M^2 \| f_M \|_{L^2}^2
\les M^2.
\end{split}
\label{pa2}
\end{align}

\noi
Then,
for any  measurable set $E$ with $\PP(E) > \frac 12$
and  any  $L \gg \s \cdot \wt \s_M^2 Q(f_M)$,
it follows from 
 \eqref{pa1}, \eqref{pa2}, \eqref{NRZ0}, and \eqref{fM1}
 that 
\begin{align}
\E \Big[\min\big(\tfrac \s 2 Q(\Dr^0), L\big) \cdot \ind_E\Big]
\ge  \s (1-\delta) \wt \s_M^2 Q(f_M) \PP(E) - C'_\delta \s  M^2
\gtrsim  \s  M^{5-\beta}, 
\label{paa2}
\end{align}

\noi
provided that $0< \be<3$.

%
Recall that  both $\ft Z_M$ and $\ft f_M$ are supported on $\{|n|\leq M\}$.
Then, 
 by Lemma \ref{LEM:Dr}, \eqref{paax},  \eqref{paa0}, 
and Lemmas \ref{LEM:leo1} and \ref{LEM:leo2},  we have 
\begin{align}
&\E \big[ \| \Dr^0\|_{H^1}^2 \big]
\le \E \bigg[ \int_0^1 \|\dr^0(t)\|_{L^2}^2  dt \bigg]
\les M^2 \E \big[ \| \Dr^0\|_{L^2}^2 \big]
\les M^3. \label{pa4}
\end{align}

We now impose $\be>1$. 
Then, it follows from \eqref{Y0} and Lemmas \ref{LEM:Dr2} and \ref{LEM:Dr3}
that 
\begin{align}
\begin{split}
\s R_N(Y + \Dr^0)
&\ge \frac \s 2 Q(\Dr^0) \\
& \quad - c(\s) \Big( 1 + \|Y_N\|_{\C^{-\frac 12 -\eps}} + \|:\! Y_N^2 \!:\|_{\C^{-1 -\eps}} 
+ \| (V_0 \ast :\! Y_N^2 \!:) Y_N \|_{\C^{-\frac 12-\eps}}
\Big)^c \\
&\quad
+ \frac \s{32} \|\Dr^0\|_{L^2}^4 -c_0 \|\Dr^0\|_{H^1}^2
- \frac \s 4 |Q_N(Y)|, 
\end{split}
\label{paa}
\end{align}

\noi
where $c_0$ is independent of $\s > 0$.
Suppose that\footnote{From \eqref{paa0} and $N > M$, we have $\pi_N  \Dr^0= \Dr^0$.}
\begin{align}
\PP\bigg( \Big|\int_{\T^3} (:{Y_N^2}: + 2 Y_N \Dr^0 + (\Dr^0)^2) dx \Big| \le K \bigg) > \frac 12, 
\label{pa5}
\end{align}

\noi
uniformly in $M \gg 1$ and $N\ge M$, 
and $L \gg \s \cdot \wt \s_M^2 Q(f_M) \sim \s  M^{5 - \be}$.
Then,
putting together,  
\eqref{DPf},  \eqref{paa2},  \eqref{pa4}, \eqref{paa}
with 
 Lemma \ref{LEM:conv} (in particular \eqref{B3}),  
there exist constants $C_1, C_2, C_3 > 0 $ such that 
\begin{align}
& -\log{\E\Big[\exp\Big(\min{( \s R_N(u),L)} \cdot \ind_{\{ |\int_{\T^3} \, : u_N^2 :\, dx | \le K\}}\Big)   \Big]} \notag \\
&\le \E\bigg[ -\min\big(\s R_N(Y + \Dr^0),L\big)\cdot 
\ind_{\{ |\int_{\T^3} ( :{Y_N^2}: + 2 Y_N \Dr^0 + (\Dr^0)^2) dx | \le K\}} + \frac 12 \int_0^1 \| \dr^0(t)\|_{L^2_x} ^2 dt \bigg] \notag \\
&\le  \E\bigg[ -\min\big(\tfrac \s  2 Q(\Dr^0),L\big)\cdot \ind_{\{ |\int_{\T^3} ( :{Y_N^2}: + 2 Y_N \Dr^0 + (\Dr^0)^2) dx | \le K\}} \notag \\
&\hphantom{XXX}
 + c(\s) \Big( 1 + \|Y_N\|_{\C^{-\frac 12 -\eps}} + \|:\! Y_N^2 \!:\|_{\C^{-1 -\eps}} 
+ \| (V_0 \ast :\! Y_N^2 \!:) Y_N \|_{\C^{-\frac 12-\eps}}
\Big)^c \notag \\
&\hphantom{XXX}
+ c_0\|\Dr^0\|_{H^1}^2
+ c(\s) |Q_N(Y)|
+ \frac 12 \int_0^1 \| \dr^0(t)\|_{L^2_x} ^2 dt \bigg] \notag \\
&\le -\s C_1 M^{5-\beta} + C_2 M^{3} + C_3
\label{pa6}
\end{align}

\noi
for any  $N \ge M \gg1 $.
Therefore, 
we conclude from \eqref{pax} and \eqref{pa6}
that 
\begin{align}
\begin{split}
 \lim_{L \to \infty} & \liminf_{N \to \infty} \E\Big[\exp\Big(\min{(\s R_N(u),L)} \Big) 
 \ind_{\{ \left|\int_{\T^3} \, : u_N^2 :\, dx \right| \le K\}} \Big]\\
\ge&~  \exp\Big( \s C_1 M^{5-\beta} - C_2 M^{3} -C_3(\s)\Big) \too  \infty
\end{split}
\label{pa7}
\end{align}

\noi
as $M \to \infty$, 
provided that $\be < 2$.
This proves \eqref{pa0} by assuming \eqref{pa5}.

Now, it remains to prove \eqref{pa5} for some $K \gg1$, namely, 
\begin{equation}
\PP\bigg( \Big|\int_{\T^3} \Big( :{Y_N^2}: + 2 Y_N \Dr^0 + (\Dr^0)^2 \Big) dx \Big| > K\bigg) \le \frac 12, \label{pa8}
\end{equation}

\noi
uniformly in $M \gg 1$ and $N\ge M$.
From \eqref{paa0} and 
Lemmas \ref{LEM:Dr} and \ref{LEM:leo2} with \eqref{fM0}, 
we have 
\begin{align}
\begin{split}
& \E \bigg[ \Big|\int_{\T^3} \Big(  :{Y_N^2}: + 2 Y_N \Dr^0 + (\Dr^0)^2 \Big) dx \Big|^2 \bigg]  \\
&= \E \bigg[ \Big|\int_{\T^3} :{Y_N^2}: dx - 2 \int_{\T^3} Y_N Z_M dx+ 2 \sqrt{\wt \s_M} \int_{\T^3} Y_N f_M dx \\
&\qquad + \int_{\T^3} Z_M^2 dx - 2 \sqrt{\wt \s_M} \int_{\T^3} Z_Mf_M dx  + \wt \s_M \int_{\T^3} f_M^2 dx \Big|^2 \bigg] \\
&\les \E \bigg[ \Big(\int_{\T^3} :{Y_N^2}: dx \Big)^2 \bigg]
+ \E \bigg[ \Big( -\int_{\T^3} Y_N Z_M dx + \int_{\T^3} Z_M^2 dx \Big)^2 \bigg] \\
&\quad + \E \bigg[ \Big( - \int_{\T^3} Z_M^2 dx + \wt \s_M \Big)^2 \bigg] 
+ \wt \s_M^2 \bigg( - 1 + \int_{\T^3} f_M^2 dx \bigg)^2 \\
&\quad + \wt \s_M \bigg( \E \bigg[ \Big( \int_{\T^3} Y_N f_M dx \Big)^2 \bigg]
 + \E \bigg[ \Big( \int_{\T^3} Z_Mf_M dx \Big)^2 \bigg] \bigg) \\
&\les 1, 
\end{split}
\notag
\end{align}

\noi
provided that $\al > 1$.
Then, by choosing $K \gg1$, 
the bound \eqref{pa8} follows from 
 Chebyshev's inequality.
This concludes the proof of Proposition \ref{PROP:Gibbs5}.
\end{proof}

\begin{remark}\label{REM:end2}\rm
When $\be = 2$, 
 \eqref{pa7} still holds true  as long as $\s \gg 1$, 
thus  yielding \eqref{pa0} 
 in the strongly nonlinear regime
 at the critical value $\be = 2$.

\end{remark}

\begin{remark}\label{REM:OS} \rm
In  the proof of Proposition \ref{PROP:Gibbs5}, 
we needed the assumption $K \gg1 $
in guaranteeing~\eqref{pa8}.
In a recent preprint \cite{OS}, 
the first and third authors with K.~Seong refined
the approach presented in this subsection
and proved \eqref{pa8} for {\it any} $K>0$
(in the two-dimensional setting).
Hence, by using this refined approach, 
we can show that 
Proposition~\ref{PROP:Gibbs5} (and \eqref{pa00a}) remains true 
for any $K > 0$.
See Subsection 3.2 in \cite{OS}
for the details.

\end{remark}

\subsection{Focusing Gibbs measure at the critical value $\be = 2$}
\label{SUBSEC:foc3}

We consider the focusing Hartree Gibbs measure
at the critical value $\be = 2$.
In the previous section, we prove the non-normalizability 
for $\be = 2$ in the strongly nonlinear regime ($\s \gg 1$);
see
Remarks  \ref{REM:end1} and~\ref{REM:end2}.
In this subsection,  we
show that the focusing Gibbs measure
is indeed normalizable 
for $\be = 2$ 
in the weakly nonlinear regime (i.e.~$0 < \s \ll 1$).

Let $\be = 2$.
In view of  \eqref{YY14}, 
we set  $\g = 3$
in \eqref{K2}.
More precisely, we consider the following renormalized potential energy:
\begin{align}
\RR_N (u)
&= \frac \s 4 \int_{\T^3} (V \ast :\! u_N^2 \!:) :\! u_N^2 \!: dx
 - A \, \bigg| \int_{\T^3} :\! u_N^2 \!: dx\bigg|^3 - \frac 12 \al_N, 
\label{DN1}
\end{align}

\noi
where $\s>0$ is a small constant.
Then, it suffices to prove 
\begin{align}
\inf_{N \in \N} \inf_{\dr \in \Hc}
\E_{\Q_\dr} 
\bigg[ -\RR_N(Y_\dr(1) + I(\dr)(1)) + \frac{1}{2} \int_0^1 \| \dr(t) \|_{L^2_x}^2 dt \bigg]
>-\infty.
\label{DN2}
\end{align}

\noi
In the following, we use
 the same notations as in Subsections~\ref{SUBSEC:def}
 and \ref{SUBSEC:foc1}.
The main difficulty comes from 
the failure of 
Lemma \ref{LEM:Dr5} when $\g = 3$.
See the case \eqref{DN9a} below.

From 
 \eqref{DN1},  Lemmas  \ref{LEM:Dr2}, \ref{LEM:Dr3}, and  \ref{LEM:Dr4}, 
and 
\eqref{YY14}
with Lemma \ref{LEM:Dr}, 
we reduce \eqref{DN2} to showing
\begin{align}
\sup_{N \in \N} \sup_{\dr \in \Hc}
\E
\bigg[ c_0\s  \| \Dr_N \|_{L^2}^6
- A \, \bigg| \int_{\T^3} \Big( :\! Y_N^2 \!: + \, 2 Y_N \Dr_N + \Dr_N^2 \Big) dx\bigg|^3
- \frac{1}{4} \| \Dr_N \|_{H^1}^2 \bigg] < \infty.
\label{DN3}
\end{align}

\noi
Suppose that  we have
\begin{align}
\| \Dr_N \|_{L^2}^2 \gg \bigg| \int_{\T^3} Y_N \Dr_N dx \bigg|.
\notag
\end{align}

\noi
Then, 
from 
 \eqref{YY9},
there exists a constant $c>0$ such that
\begin{align}
\begin{split}
&\bigg| \int_{\T^3} \Big( :\! Y_N^2 \!: + 2 Y_N \Dr_N + \Dr_N^2 \Big) dx\bigg|^3
\ge
\frac 14
\bigg( \int_{\T^3} \Dr_N^2 dx\bigg)^3
-c \bigg| \int_{\T^3} :\! Y_N^2 \!: dx \bigg|^3.
\end{split}
\label{DN10}
\end{align}

\noi
Hence, 
by choosing  $\s > 0$  sufficiently small,\footnote{This case works even for $\s = 1$
simply by taking $A \gg 1$.}
\eqref{DN3} follows from 
\eqref{DN10} and 
Lemma \ref{LEM:Dr}.

Next, we consider the case:
\begin{align}
\| \Dr_N \|_{L^2}^2 \les \bigg| \int_{\T^3} Y_N \Dr_N dx \bigg|.
\label{DN9a}
\end{align}

\noi
Define the sharp frequency projections $\{\proj_j\}_{j \in \N}$
by setting 
 $\proj_1 = \pi_2$
 and 
$\proj_j = \pi_{2^j} - \pi_{2^{j-1}}$
for $j \geq 2$.
Then,
write  $\Dr_N$ as 
\begin{align*}
\Dr_N  = \sum_{j=1}^\infty (\ld_j \proj_j Y_N + w_j),
\end{align*}
where
\begin{align*}
\ld_j &:=
\begin{cases}
\frac{\jb{\Dr_N, \proj_j Y_N}}{\|\proj_j Y_N\|_{L^2}^2},  & \text{if } \| \proj_j Y_N \|_{L^2} \neq 0, \\
0, & \text{otherwise},
\end{cases}
\qquad
\text{and}
\qquad  
w_j :=
\proj_j \Dr_N - \ld_j \proj_j Y_N.
\end{align*}

\noi
Note that $w_j$ is orthogonal to $\proj_j Y_N$ and $Y_N$ in $L^2(\T^3)$.
Thus, we have 
\begin{align}
\| \Dr_N \|_{L^2}^2
&= \sum_{j=1}^\infty \Big( \ld_j^2 \| \proj_j Y_N \|_{L^2}^2 + \| w_j \|_{L^2}^2 \Big), \label{DN7} \\
\int_{\T^3} Y_N \Dr_N dx
&= \sum_{j=1}^\infty \ld_j \| \proj_j Y_N \|_{L^2}^2.
\label{DN8}
\end{align}

\noi
Hence, 
from \eqref{DN9a}, \eqref{DN7}, and \eqref{DN8}, we have
\begin{align}
\sum_{j=1}^\infty \ld_j^2 \| \proj_j Y_N \|_{L^2}^2
\les \bigg| \sum_{j=1}^\infty \ld_j \| \proj_j Y_N \|_{L^2}^2 \bigg|.
\label{DN11}
\end{align}

Fix  $j_0 \in \N$ (to be chosen later).
By  Cauchy-Schwarz inequality and \eqref{DN7}, we have
\begin{align}
\begin{split}
\bigg| \sum_{j=j_0+1}^\infty \ld_j \| \proj_j Y_N \|_{L^2}^2 \bigg|
&\le \bigg( \sum_{j=1}^\infty \ld_j^2 2^{2j} \| \proj_j Y_N \|_{L^2}^2 \bigg)^{\frac 12}
\bigg( \sum_{j=j_0+1}^\infty 2^{-2j} \| \proj_j Y_N \|_{L^2}^2 \bigg)^{\frac 12} \\
&\le \bigg( \sum_{j=1}^\infty 2^{2j} \| \proj_j \Dr_N \|_{L^2}^2 \bigg)^{\frac 12}
\bigg( \sum_{j=j_0+1}^\infty 2^{-2j} \| \proj_j Y_N \|_{L^2}^2 \bigg)^{\frac 12} \\
&\sim\| \Dr_N \|_{H^1}
\bigg( \sum_{j=j_0+1}^\infty 2^{-2j} \| \proj_j Y_N \|_{L^2}^2 \bigg)^{\frac 12}.
\end{split}
\label{DN12}
\end{align}

\noi
Moreover, 
from
 Cauchy-Schwarz inequality
and \eqref{DN11} followed by 
Cauchy's inequality, we have 
\begin{align}
\begin{split}
\bigg| \sum_{j=1}^{j_0} \ld_j \| \proj_j Y_N \|_{L^2}^2 \bigg|
&\le \bigg( \sum_{j=1}^\infty \ld_j^2 \| \proj_j Y_N \|_{L^2}^2 \bigg)^{\frac 12}
\bigg( \sum_{j=1}^{j_0} \| \proj_j Y_N \|_{L^2}^2 \bigg)^{\frac 12} \\
&\le C\bigg| \sum_{j=1}^\infty \ld_j \| \proj_j Y_N \|_{L^2}^2 \bigg|^{\frac 12}
\bigg( \sum_{j=1}^{j_0} \| \proj_j Y_N \|_{L^2}^2 \bigg)^{\frac 12} \\
&\le
\frac 12 \bigg| \sum_{j=1}^\infty \ld_j \| \proj_j Y_N \|_{L^2}^2 \bigg|
+ C' \sum_{j=1}^{j_0} \| \proj_j Y_N \|_{L^2}^2.
\end{split}
\label{DN14}
\end{align}

\noi
Hence,  from \eqref{DN12} and \eqref{DN14}, 
we conclude that 
\begin{align}
\begin{split}
\bigg| \sum_{j=1}^\infty \ld_j \| \proj_j Y_N \|_{L^2}^2 \bigg|
\les
\| \Dr_N \|_{H^1}
\bigg( \sum_{j=j_0+1}^\infty 2^{-2j} \| \proj_j Y_N \|_{L^2}^2 \bigg)^{\frac 12}
+
\sum_{j=1}^{j_0} \| \proj_j Y_N \|_{L^2}^2.
\end{split}
\label{DN15}
\end{align}

Now, write  as follows:
\begin{align}
\sum_{j=j_0+1}^\infty 2^{-2j} \| \proj_j Y_N \|_{L^2}^2
&= \sum_{j=j_0+1}^\infty 2^{-2j} \int_{\T^3} :\! (\proj_j Y_N)^2 \!: dx
+ \sum_{j=j_0+1}^\infty 2^{-2j} \E \big[ (\proj_j Y_N)^2 \big].
\label{DN15x}
\end{align}

\noi
For the first term,
it follows from \eqref{P2} and \eqref{Wickz} that
\begin{align*}
&\E \bigg[ \bigg( \sum_{j=j_0+1}^\infty 2^{-2j} \int_{\T^3} :\! (\proj_j Y_N)^2 \!: dx \bigg)^2 \bigg]
\sim \sum_{j=j_0+1}^\infty 2^{-5j}
\sim 2^{-5j_0}.
\end{align*}

\noi
Set
an almost surely finite constant $B_1(\o)$ by 
\begin{align}
B_1(\o) = 
\bigg(\sum_{k = 1}^\infty 2^{4 k}
\Big( \sum_{j=k+1}^\infty 2^{-2j} \int_{\T^3} :\! (\proj_j Y_N)^2 \!: dx\Big)^2 \bigg)^\frac{1}{2}.
\label{DN15z}
\end{align}

\noi
By the Wiener chaos estimate (Lemma \ref{LEM:hyp}), 
we see that $\E \big[B_1^p\big] \leq C_p < \infty$
for any finite $p \geq 1$.
From \eqref{DN15x} and \eqref{DN15z}, we obtain
\begin{align}
\sum_{j=j_0+1}^\infty 2^{-2j} \| \proj_j Y_N \|_{L^2}^2
\les  2^{-2 j_0} B_1(\o)+ 2^{-j_0}.
\label{DN15a}
\end{align}

\noi
Similarly, we have
\begin{align}
\begin{split}
\sum_{j=1}^{j_0} \| \proj_j Y_N \|_{L^2}^2
&= \sum_{j=1}^{j_0} \int_{\T^3} :\! (\proj_j Y_N)^2 \!: dx
+ \sum_{j=1}^{j_0} \E \big[ (\proj_j Y_N)^2 \big] \\
&\les B_2(\o) + 2^{j_0}
\end{split}
\label{DN15b}
\end{align}

\noi
for some $B_2(\o)\ge0$, 
satisfying  $\E \big[B_2^p\big] \leq C_p < \infty$
for any finite $p \geq 1$.

Hence,  from \eqref{DN15} with \eqref{DN15a} and \eqref{DN15b} that
\[
\bigg| \sum_{j=1}^\infty \ld_j \| \proj_j Y_N \|_{L^2}^2 \bigg|
\les
\Big( 2^{-\frac 12 j_0} +  2^{- j_0} B_1^\frac{1}{2}(\o)\Big) \| \Dr_N \|_{H^1} +   B_2(\o)
+ 2^{j_0} .
\]

\noi
By choosing $2^{j_0} \sim 1 + \| \Dr_N \|_{H^1}^{\frac 23}$, 
it follows from \eqref{DN9a} and \eqref{DN8} 
and Cauchy's inequality that 
\begin{align}
\| \Dr_N \|_{L^2}^6
\les
\| \Dr_N \|_{H^1}^2 + 
B_1^3(\o)
+ B_2^3(\o) + 1.
\label{DN16}
\end{align}

\noi
Therefore, 
by  taking $\s > 0$ sufficiently small, 
the desired bound \eqref{DN3}
in this case follows
from~\eqref{DN16}.

\section{Further analysis in the defocusing case: $0 < \be \leq 1$}
\label{SEC:def}

\subsection{Construction of the defocusing Gibbs measure: $\frac 12 < \be \le  1$}
\label{SUBSEC:def2}

In this subsection, we present the proof of Theorem \ref{THM:Gibbs}\,(ii.a)
for $\frac 12 < \be \le 1$.
As pointed out 
in Remark \ref{REM:ren}, 
we introduce another  renormalization 
and consider a new renormalized potential energy $R^\dia_N(u)$
in~\eqref{K1r}.
Then, as in the case $\be > 1$, it suffices
to prove the uniform exponential integrability~\eqref{exp1a}
for this new potential energy $R^\dia_N(u)$.

We first extend the estimates \eqref{YY1} and \eqref{YY2}
in Lemma \ref{LEM:Dr2} to the range $0 < \be \le 1$.

\begin{lemma}\label{LEM:Dr6}
Let $V$ be the Bessel potential of order 
 $0 < \be \le 1$.
Then, there exist   small $\eps>0$ and   a constant  $c  >0$ 
 such that
\begin{align}
\begin{split}
\bigg| \int_{\T^3}  (V_0 \ast \Dr_N^2) Y_N \Dr_N  dx\bigg|
&\le c \Big(1 + 
\| Y_N \|_{\C^{-\frac 12-\eps}}^{c}\Big)\\
& \quad + \frac 1{100} \Big(
\| \Dr_N^2\|_{\dot H^{-\frac{\be}{2}}}^2 + 
 \| \Dr_N \|_{L^2}^4 +  \| \Dr_N \|_{H^1}^2\Big), 
\end{split}
\label{YY1x}\\
\bigg| \int_{\T^3} (V_0 \ast :\! Y_N^2 \!:) \Dr_N^2 dx \bigg|
&\le c \| :\! Y_N^2 \!: \|_{\C^{-1-\eps}}^{c}  + \frac 1{100} \Big(
\| \Dr_N \|_{L^2}^4 + \| \Dr_N \|_{H^1}^2\Big), 
\label{YY2x} 
\end{align}

\noi
uniformly in $N \in \N$.

\end{lemma}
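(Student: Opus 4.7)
The strategy is to adapt the proof of Lemma \ref{LEM:Dr2}, where the assumption $\be > 1$ entered in two places: through the product bound $\|Y_N\Dr_N\|_{H^{-\be/2}}$ used in~\eqref{YZ1}, and through the Besov embedding $\|V_0 \ast \Dr_N^2\|_{B^{1+\eps}_{1,1}} \les \|\Dr_N^2\|_{L^1}$ used in~\eqref{YZ1a}. Both steps must be reworked for $\be \le 1$ via a more careful paraproduct analysis combined with the fractional Leibniz rule~\eqref{prod}, effectively transferring derivatives onto the smoother quantity $V_0 \ast \Dr_N^2$.

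For~\eqref{YY1x}, exploit the self-adjointness of convolution with $V_0$ to write
\begin{align*}
\int_{\T^3} (V_0 \ast \Dr_N^2)\, Y_N \Dr_N\, dx = \int_{\T^3} \Dr_N^2 \,\bigl(V_0 \ast (Y_N \Dr_N)\bigr) dx,
\end{align*}
and apply $\dot H^{\pm\be/2}$-duality combined with the smoothing $\|V_0 \ast h\|_{\dot H^{\be/2}} \les \|h\|_{H^{-\be/2}}$ to reduce matters to estimating $\|Y_N \Dr_N\|_{H^{-\be/2}}$. Decompose $Y_N\Dr_N = Y_N \pl \Dr_N + Y_N \pe \Dr_N + Y_N \pg \Dr_N$ via~\eqref{para1}. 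The low-high and resonant pieces are controlled by~\eqref{para2} and~\eqref{para3} with $s_1 = -\tfrac 12 - \eps$, $s_2 = \tfrac 12 + 2\eps$, landing in $H^\eps \hookrightarrow H^{-\be/2}$ for any $\be > 0$. For the delicate high-high term, trade part of the $\dot H^{\be/2}$-regularity of $V_0 \ast \Dr_N^2$ for a weaker $H^{-\be/2-\eta}$-norm of $Y_N \pg \Dr_N$ with a small $\eta > 0$ chosen so that the latter is pathwise finite. Interpolating $\|\Dr_N\|_{H^{1/2+2\eps}}$ between $L^2$ and $H^1$ and Young's inequality with exponents producing $\|\Dr_N\|_{L^2}^4$ and $\|\Dr_N\|_{H^1}^2$ closes the bound.

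For~\eqref{YY2x}, apply duality
\begin{align*}
\bigg|\int_{\T^3} (V_0 \ast :\!Y_N^2\!:)\Dr_N^2\, dx\bigg|
\le \|:\!Y_N^2\!:\|_{\C^{-1-\eps}} \|V_0 \ast \Dr_N^2\|_{B^{1+\eps}_{1,1}}
\les \|:\!Y_N^2\!:\|_{\C^{-1-\eps}} \|\Dr_N^2\|_{B^{1+\eps-\be}_{1,1}}.
\end{align*}
Since $1+\eps-\be \ge 0$ for $\be \le 1$, the $L^1$-embedding used in~\eqref{YZ1a} is unavailable. Instead, invoke~\eqref{prod} with $p = 1$ and all integrability exponents equal to~$2$, followed by the Besov embedding $B^{1+\eps-\be}_{2,1} \hra H^{1+2\eps-\be}$, yielding $\|\Dr_N^2\|_{B^{1+\eps-\be}_{1,1}} \les \|\Dr_N\|_{H^{1+2\eps-\be}} \|\Dr_N\|_{L^2}$. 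Interpolating $\|\Dr_N\|_{H^{1+2\eps-\be}}$ between $L^2$ and $H^1$ with weights $(\be-2\eps,\, 1-\be+2\eps)$ and applying Young's inequality with exponents tuned so that $\|\Dr_N\|_{L^2}$ appears to the fourth and $\|\Dr_N\|_{H^1}$ to the second power yields~\eqref{YY2x}.

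The main obstacle is the high-high interaction in~\eqref{YY1x}: the product $Y_N \pg \Dr_N$ does not belong pathwise to $H^{-\be/2}$ when $\be \le 1$, which forces the extra smoothing of $V_0 \ast \Dr_N^2$ to be spent through the trade-off parameter $\eta$. Compatibility of the constraints $\eta > \tfrac12 - \tfrac\be 2$ (for finiteness of $\|Y_N \pg \Dr_N\|_{H^{-\be/2-\eta}}$) and $\eta \le \tfrac\be 2$ (to retain usable smoothing on $V_0 \ast \Dr_N^2$) pins down the effective range $\be > \tfrac 12$, which is exactly what is needed for Theorem~\ref{THM:Gibbs}\,(ii.a). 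All remaining Young exponents stay finite upon choosing $\eps$ small compared to $\be - \tfrac 12$.
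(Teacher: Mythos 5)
Your treatment of \eqref{YY2x} is essentially the paper's own argument: duality $\C^{-1-\eps}$--$B^{1-\be+\eps}_{1,1}$, the fractional Leibniz rule, interpolation between $L^2$ and $H^1$, and Young's inequality. That part is fine. The problem is \eqref{YY1x}, where you take a genuinely different route and --- as you yourself note --- your constraints on the trade-off parameter $\eta$ are only compatible when $\be > \tfrac 12$. This is a real gap, not a harmless restriction: the lemma is stated for $0 < \be \le 1$, and the sub-range $0 < \be \le \tfrac 12$ is actually used. The proof of Lemma \ref{LEM:Dr8} (which is stated for $0 < \be \le \tfrac 12$) explicitly invokes Lemma \ref{LEM:Dr6} for the case when all occurrences of $\Dr_N$ are replaced by $\Ups_N$, so your statement that the range $\be > \tfrac 12$ ``is exactly what is needed'' is incorrect.

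The obstruction you hit --- the high-high interaction $Y_N \pg \Dr_N$ does not fit into $H^{-\be/2}$ once $\be \le 1$ --- is exactly what the paper's proof is designed to sidestep, and it does so by not introducing $V_0 * (Y_N \Dr_N)$ at all. Instead of moving $V_0$ onto the rough term $Y_N\Dr_N$, the paper pairs $Y_N \in \C^{-\frac 12 - \eps}$ directly against the cubic expression $(V_0 * \Dr_N^2)\Dr_N$ in $W^{\frac 12 + \eps, 1}$ and then controls the latter by exploiting the \emph{pointwise positivity} of the shifted kernel $V_+ := V_0 + K_0 > 0$. Setting $Q(\Dr_N) = \int (V_+ * \Dr_N^2)\Dr_N^2\,dx$, one proves the $L^1$ and $\dot W^{1,1}$ bounds \eqref{YZ4d} and \eqref{YZ4g} purely in terms of fractional powers of $Q$ and of $\|\Dr_N\|_{L^2}$, $\|\Dr_N\|_{H^1}$; the Cauchy--Schwarz trick \eqref{YZ4f}, which produces $\||\Dr_N|(V_+*|\Dr_N|)\|_{L^2}^2 \les \ft{V_+}(0)\,Q(\Dr_N)$, uses the positivity of $V_+$ in an essential way and is valid for all $\be \in (0,3)$. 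Interpolating between $L^1$ and $\dot W^{1,1}$ and applying Young's inequality then closes \eqref{YY1x} for the entire range $0 < \be \le 1$. You should replace your paraproduct argument for \eqref{YY1x} with this positivity-based scheme; the paraproduct route has no obvious way to reach the small-$\be$ regime.
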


\begin{proof}

The second estimate \eqref{YY2x}
follows from  a small modification 
of \eqref{YZ1a}.
From  \eqref{dual},  \eqref{prod}, \eqref{interp}, and Young's inequality, 
we have 
\begin{align}
\begin{split}
\bigg| \int_{\T^3} (V_0 \ast :\! Y_N^2 \!:) \Dr_N^2 dx \bigg|
&\le \|  :\! Y_N^2 \!: \|_{\C^{-1-\eps}} \| \Dr_N^2 \|_{B^{1 -\beta+\eps}_{1,1}} \\
&\les \| :\! Y_N^2 \!: \|_{\C^{-1-\eps}} \| \Dr_N \|_{L^2} \| \Dr_N \|_{H^{1- \be  + 2\eps}} \\
&\les \| :\! Y_N^2 \!: \|_{\C^{-1-\eps}} \| \Dr_N \|_{L^2}^{1 + \be - 2\eps } \| \Dr_N \|_{H^1}^{1 - \be +  2\eps} \\
&\le c \| :\! Y_N^2 \!: \|_{\C^{-1-\eps}}^{\frac 4{1+\be - 2\eps}} + \frac 1{100} \| \Dr_N \|_{L^2}^4 + \frac 1{100} \| \Dr_N \|_{H^1}^2,
\end{split}
\label{YZ4a}
\end{align}

\noi
verifying \eqref{YY2x} 
when $0 < \be \le 1$.

As for the first estimate \eqref{YY1x}, 
it suffices to 
 control $\| (V_0 \ast \Dr_N ^2) \Dr_N \|_{W^{\frac 12 +\eps,1}}$, using the terms 
 appearing in \eqref{v_N1}.
From \eqref{Be2} and \eqref{V0}, there exists a constant $K_0>0$ such that 
$V_+ := V_0 + K_0 > 0$.
Then, we have 
\begin{align}
\| (V_0 \ast \Dr_N ^2) \Dr_N \|_{W^{\frac 12 +\eps,1}}
\leq \| (V_+ \ast \Dr_N ^2) \Dr_N \|_{W^{\frac 12 +\eps,1}}
+ K_0 \Big(c+  \| \Dr_N \|_{H^1}^2 + \|\Dr_N\|_{L^2}^4\Big)^{1-\eps_0}
\label{YZ4b}
\end{align}

\noi
for some $0 < \eps_0 < 1$.
Letting 
\[ Q (\Dr_N) :=  \int_{\T^3} (V_+ \ast \Dr_N^2) \Dr_N^2 dx, \]

\noi
 we  have
\begin{align}
Q (\Dr_N) \le
\bigg| \int_{\T^3} (V_0 \ast \Dr_N^2) \Dr_N^2 dx\bigg|
+ K_0 \|\Dr_N\|_{L^2}^4.
\label{YZ4c}
\end{align}

\noi
We also note that 
\begin{align}
\begin{split}
\| V_+ *\Dr_N^2\|_{L^2}
& \les \| \Dr_N^2\|_{\dot H^{-\be}}
+ K_0 \| \Dr_N\|_{L^2}^2
 \les \| \Dr_N^2\|_{\dot H^{-\frac \be2}}
+ K_0 \| \Dr_N\|_{L^2}^2\\
& \les Q^{\frac 12}(\Dr_N) 
+ K_0 \| \Dr_N\|_{L^2}^2.
\end{split}
\label{YZ4c1}
\end{align}

Given $\ld > 0$, from \eqref{YZ4c1}, we have
\begin{align*}
\| (V_+ \ast \Dr_N^2) \Dr_N \|_{L^1} 
&= \int_{\T^3} |V_+ \ast \Dr_N^2| |\Dr_N| dx \\
&\les \int_{\T^3} |V_+ \ast \Dr_N^2| (\ld + \ld^{-1}\Dr_N^2)dx\\
&\les \ld \big(Q^\frac12 (\Dr_N) + \|\Dr_N\|_{L^2}^2\big) + \ld^{-1} Q(\Dr_N) .
\end{align*}

\noi
By choosing $\ld \sim Q^\frac14(\Dr_N) $, 
we obtain 
\begin{align}
\| (V_+ \ast \Dr_N^2) \Dr_N \|_{L^1} \les Q^\frac 34(\Dr_N)
+ 
\|\Dr_N\|_{L^2}^3.
\label{YZ4d}
\end{align}

\noi
Moreover, we have 
\begin{align}
\begin{split}
\| (V_+ \ast \Dr_N^2) \Dr_N \|_{\dot W^{1,1}} 
&\leq   \int_{\T^3} |V_+ \ast \Dr_N^2| |\nb \Dr_N| dx
+ \int_{\T^3} |V_+\ast(\Dr_N \nabla \Dr_N)| |\Dr_N|dx\\
&\le \int_{\T^3} |V_+ \ast \Dr_N^2| |\nabla \Dr_N| dx
+ \int_{\T^3} |\Dr_N| |\nabla \Dr_N| (V_+ \ast |\Dr_N|)dx \\
& \les \big(Q^\frac12(\Dr_N)   + \| \Dr_N \|_{L^2}^2\big) \| \Dr_N \|_{H^1} + \big\| |\Dr_N| (V_+ \ast |\Dr_N|)\big\|_{L^2}\| \Dr_N \|_{H^1}, 
\end{split}
\label{YZ4e}
\end{align}

\noi
where we used  \eqref{YZ4c1} in the last step.
By Cauchy's inequality, we have 
\begin{align}
\begin{split}
\big\| |\Dr_N| (V_+ \ast |\Dr_N|)\big\|_{L^2}^2 
&= \int_{\T^3} (V_+ \ast |\Dr_N|)^2(x) \Dr_N^2 (x) d x \\
&=  \iiint V_+(x-y) V_+(x-z) |\Dr_N(y)||\Dr_N(z)| dydz \, \Dr_N^2 (x) d x\\
&\les   \iiint V_+(x-y) V_+(x-z) \big(\Dr_N^2(y) + \Dr_N^2(z)\big) dydz \, \Dr_N^2 (x) d x \\
&\sim  \ft{ V_+} (0)  \cdot Q(\Dr_N)\\
&\leq  \big(\ft{ V_0} (0) +K_0\big) \cdot Q(\Dr_N).
\end{split}
\label{YZ4f}
\end{align}

\noi
From \eqref{YZ4e} and \eqref{YZ4f}, we obtain
\begin{align}
 \| (V_+ \ast \Dr_N^2) \Dr_N \|_{\dot W^{1,1}} \les \big(Q^\frac12(\Dr_N) +  \| \Dr_N \|_{L^2}^2\big) \| \Dr_N \|_{H^1}.
 \label{YZ4g}
\end{align}

\noi
Hence, by interpolating \eqref{YZ4d} and \eqref{YZ4g}, we have
\begin{align}
\begin{split}
 \| (V_+ \ast \Dr_N^2) \Dr_N \|_{\dot W^{\frac 12 +\eps ,1}} 
& \les  \big(Q^{\frac {5-2\eps}{8}}(\Dr_N)  +  \| \Dr_N \|_{L^2}^{\frac {5-2\eps}{2}}\big)
\| \Dr_N \|_{H^1}^{\frac 12 +\eps}\\
& \les \Big(1 + Q(\Dr_N) + \| \Dr_N \|_{H^1}^2
+ \|\Dr_N\|_{L^2}^4\Big)^{1-\eps_0}
\end{split}
\label{YZ4h}
\end{align}

\noi
for some $0 < \eps_0 < 1$.
Finally, the desired estimate \eqref{YY1x}
follows from 
\eqref{YZ4b}, 
\eqref{YZ4c}, 
\eqref{YZ4d}, 
\eqref{YZ4h}, and Young's inequality.
\end{proof}

In order to handle \eqref{YY3} and \eqref{YY4} for $\be \le 1$, 
we need to introduce a further renormalization. 
Namely, we need to use $R_N^\dia$ in  \eqref{K1r} instead of $R_N$ in \eqref{K1}.
The additional term in \eqref{K1r} is divided into 
the following three terms:
\begin{align}
 - \int_{\T^3}  :\!(K_N^\frac{1}{2}*Y_N)^2\!: dx , 
\quad - 2\int_{\T^3}(K_N* Y_N) \Dr_Ndx , 
\quad \text{and}\quad 
- \int_{\T^3} (K_N*\Dr_N )\Dr_N
dx, 
\label{YZ5}
\end{align}

\noi
where $K_N$ and $K_N^\frac 12$ are defined 
 in \eqref{kappa2}
in terms of the multiplier $\kk_N(n)$.
One can easily check that 
the first term in \eqref{YZ5}
is $0$ under an expectation.
By writing the second term in~\eqref{YZ5} as 
\begin{align*}
- 2\int_{\T^3}(K_N* Y_N) \Dr_Ndx 
= -2 \sum_{n \in \Z^3} \big(\kk_N(n)  \ft Y_N(n)\big) \cj{\ft \Dr_N (n)},  
\end{align*}

\noi
we see that this term  in particular cancels the divergent 
contribution from the left-hand side of~\eqref{YY4}, 
coming from 
$(V_0 \, \ast \! :\! Y_N^2 \!:)\pe  Y_N $
(which corresponds to 
$Z_{13}$
defined in \eqref{Z13}).
In view of Remark \ref{REM:Z}
with \eqref{YS2}, \eqref{YS3}, 
\eqref{YS5}, and \eqref{YS6},
it follows from Lemma \ref{LEM:reg} 
and the paraproduct decomposition \eqref{para1} that 
the renormalized cubic term:
\begin{align}
[(V_0 \, \ast \! :\! Y_N^2 \!:)  Y_N]^\dia : = (V_0 \, \ast\! :\! Y_N^2 \!:)  Y_N  - 2 K_N* Y_N
\label{YZ6a}
\end{align}

\noi
belongs to  $\C^{\be - \frac 32 - \eps}(\T^3)$
with a uniform bound  in $N \in \N$, provided that $0 < \be \le 1$.
See also Appendix \ref{SEC:C}.
Then, by modifying \eqref{YZ2}, we have 
\begin{align}
\begin{split}
\bigg| \int_{\T^3} [(V_0 \, \ast \! :\! Y_N^2 \!:) Y_N ]^\dia \, \Dr_Ndx  \bigg|
&\le \big\| [(V_0 \, \ast \! :\! Y_N^2 \!:) Y_N ]^\dia \big\|_{\C^{\be -\frac 32-\eps}} 
\| \Dr_N \|_{B^{\frac 32 - \be +\eps}_{1,1}} \\
&\le c \big\| [(V_0 \, \ast \! :\! Y_N^2 \!:) Y_N ]^\dia
\big\|_{\C^{ \be -\frac 32-\eps}}^2 + \frac 1{100} \| \Dr_N \|_{H^1}^2, 
\end{split}
\label{YZ7}
\end{align}

\noi
provided that $\be > \frac 12$.

The third term in \eqref{YZ5}
removes the divergence 
for $\be \le 1$
in 
\begin{align*}
\int_{\T^3}  (V_0 \, \ast \, & (Y_N \Dr_N) ) Y_N \Dr_N dx \\
& = \sum_{\substack{n_1 + n_2 + n_3 + n_4 = 0\\n_1 + n_2 \ne 0}}
\ft V(n_1 + n_2) \ft Y_N(n_1) \ft \Dr_N(n_2)  \ft Y_N(n_3) \ft \Dr_N(n_4),  
\end{align*}

\noi
coming from the case $n_1 + n_3 = 0$.
We set 
\begin{align}
\int_{\T^3}  [(V_0 \ast   (Y_N \Dr_N )) Y_N \Dr_N]^\dia dx 
: = \int_{\T^3}  (V_0 \ast   (Y_N \Dr_N )) Y_N \Dr_N dx 
- \int_{\T^3} (K_N*\Dr_N )\Dr_N
dx.
\label{YZ8a}
\end{align}

\noi
Define a function $\Y_N$ on $\T^3\times \T^3$ by its Fourier coefficient:
\begin{align}
\ft \Y_N( n_2, n_4)
:= \sum_{\substack{n_1 \in \Z^3\\n_1 \ne -n_2}}
 \jb{n_1 + n_2}^{-\be} 
 \Big(\ft Y_N(n_1) \ft Y_N(-n_1 -n_2 -n_4) - \ind_{n_2 + n_4 = 0} \cdot \jb{n_1}^{-2}\Big).
\label{YZ9}
\end{align}

\noi
Then, with $\wt \Dr_N(x) = \Dr_N(-x)$, 
it follows from Parseval's identity, \eqref{interp}, and Young's inequality that 
\begin{align*}
|\eqref{YZ8a}|
& = \bigg|\int_{\T^3\times \T^3} \Y_N(x, y) \wt \Dr_N (x) \wt \Dr_N (y) dx dy\bigg|\\
& = \bigg|\int_{\T^3\times \T^3} 
\big(\jb{\nb_x}^{-1+\eps}\jb{\nb_y}^{-1+\eps}
\Y_N(x, y)\big) \\
& \hphantom{XXX}
\times \big(\jb{\nb_x}^{1-\eps}
 \wt \Dr_N (x) \big)\big(\jb{\nb_y}^{1-\eps}\wt \Dr_N (y) \big)dx dy\bigg|\\
& \leq  C \|\Y_N\|_{H^{-1+\eps}(\T^3\times \T^3)}^\frac{2}{\eps}
+ \frac{1}{100}\Big(\| \Dr_N\|_{H^1(\T^3)}^2 + \| \Dr_N\|_{L^2(\T^3)}^4 \Big).
\end{align*}

\noi
Note that $\Y_N \in \H_2$.
Then, in view of  the Wiener chaos estimate (Lemma \ref{LEM:hyp}), 
it suffices to bound the second moment of $\|\Y_N\|_{H^{-1+\eps}(\T^3\times \T^3)}$.
By symmetry, we assume $|n_2| \ges |n_4|$.
Then, 
from \eqref{YZ9}, Young's inequality,  and Lemma \ref{LEM:SUM}, we have 
\begin{align}
\begin{split}
\E_{\Q_\dr} \big[\|& \Y_N\|_{H^{-1+\eps}(\T^3\times \T^3)}^2\big]\\
& \les \sum_{\substack{n_1, n_2, n_4 \in \Z^3\\|n_2| \ges |n_4|}}
\frac{1}{\jb{n_2}^{2-2\eps}\jb{n_4}^{2-2\eps}}\frac{1}{\jb{n_1+n_2}^{2\be}}
\frac{1}{\jb{n_1}^2\jb{n_1+n_2+n_4}^2}\\
& \les 
\sum_{\substack{n_1, n_2, n_4 \in \Z^3\\|n_2| \ges |n_4|}}
\frac{1}{\jb{n_2}^{2-2\eps}\jb{n_4}^{2-2\eps}}
\frac{1}{\jb{n_1}^2}
\bigg(\frac{1}{\jb{n_1+n_2+n_4}^{2+2\be }}
+ \frac{1}{\jb{n_1+n_2}^{2+2\be}}\bigg)\\
& \les 1, 
\end{split}
\label{YZ10}
\end{align}

\noi
uniformly in $N \in \N$, provided that $\be > \frac 12$.

Putting everything together, we conclude that, 
with an additional renormalization \eqref{K1r},  
an analogue of Lemma \ref{LEM:Dr2} holds
for $\be > \frac 12$
and thus, in view of Lemma \ref{LEM:var2}, 
we conclude the uniform exponential integrability 
\eqref{exp1a} for $R_N^\dia(u)$.
Finally, together with 
Remark \ref{REM:Bb}, 
this proves \eqref{exp2a}, 
allowing us to construct the limiting Gibbs measure $\rhoo$ in 
\eqref{Gibbs2a}
for $\be > \frac 12$.

\subsection{Tightness for $0 < \be \le \frac 12$}
\label{SUBSEC:def3}

In the remaining part of this section, 
we consider the case $0 < \be \le \frac 12$.
In this subsection, 
we   extend the uniform exponential integrability 
and prove tightness of the truncated Gibbs measures
 $\{\rho_N\}_{N\in \N}$
for $0 < \be \le \frac 12$.
In this case, 
the estimate~\eqref{YZ7}
fails since $\big[(V_0 \, \ast \! :\! Y_N^2 \!:) Y_N \big]^\dia$ 
defined in~\eqref{YZ6a} is too irregular.
This forces us to introduce a 
 further renormalization (see \eqref{K5}), in an analogous manner to the case of 
 the $\Phi^4_3$-measure 
 studied in \cite{BG}.
The resulting measure 
 will not be absolutely continuous with respect to the base Gaussian free field $\mu$;
 see Subsection \ref{SUBSEC:def5}.
  We point out that this extra renormalization
 appears only at the level of the measure. 
  In the following, we use the following short-hand notations: $Y_N(t) = \pi_N Y(t)$ and $\Dr_N(t) = \pi_N \Dr(t)$.
 Recall also that 
 $Y_N = \pi_N Y(1)$ and $\Dr_N = \pi_N \Dr(1)$.

The Ito product formula yields
\begin{align}
\E\bigg[\int_{\T^3} [(V_0 \, \ast \!:\!Y_N^2\!:) Y_N]^\dia \Dr_N dx \bigg] 
 = \E\bigg[ \int_0^1 \int_{\T^3}  
 [(V_0 \, \ast \! :\!Y_N(t)^2\!:) Y_N(t)]^\dia\dot \Dr_N(t) dt \bigg], 
 \label{YZ11}
\end{align}
 
 \noi
 where we have 
 $\dot \Theta_N (t) = \jb{\nabla}^{-1} \pi_N \theta(t)$ by the definition \eqref{P3a}.
Define $\ZZ^N$ with $\ZZ^N(0) = 0$ by its time derivative:
\begin{align}
\dot \ZZ^N (t) = (1-\Delta)^{-1} [(V_0 \, \ast \!:\!Y_N(t)^2\!:) Y_N(t)]^\dia
\label{YZ12}
\end{align}

\noi
and set 
$\ZZ_N = \pi_N \ZZ^N$.
Then, we
perform a 
  change of variables: 
\begin{align}
\dot \Ups^N(t) : = \dot \Dr(t)  +  \dot \ZZ_N(t)
\label{YZ13}
\end{align}

\noi
with $\Ups^N(0) = 0$
and set $\Ups_N = \pi_N \Ups^N$.
Then, 
from \eqref{YZ11}, \eqref{YZ12}, and \eqref{YZ13}, 
we have 
\begin{align}
\begin{split}
\E\bigg[\int_{\T^3} & [(V_0 \, \ast \!:\!Y_N^2\!: )Y_N]^\dia  \Dr_N dx 
+ \frac12 \int_0^1 \|\dr(t)\|_{L^2_x}^2 dt \bigg] \\
& = \frac 12 \E\bigg[  \int_0^1 \|\dot \Ups^N (t)\|_{H^1_x}^2 dt
\bigg] - C_N,
\end{split}
\label{YZ13a}
\end{align}

\noi
where the divergent constant $C_N$ is given by 
\begin{align}
C_N : = \frac 12  \E\bigg[\int_0^1   \| \dot \ZZ_N(t) \|_{H^1_x}^2 dt \bigg]
\too \infty, 
\label{YZ14}
\end{align}

\noi
 as $N \to \infty$ for $\beta \le \frac 12$.
The divergence in \eqref{YZ14}
can be easily seen from the spatial regularity 
$\be + \frac 12 - \eps$ of $\dot \ZZ_N(t)$
(with a uniform bound in  $N \in \N$) for  $0 < \be \le \frac 12$.

This motivates us to introduce a further renormalization:
\begin{align}
R_N^{\dia \dia}(u)  = R_N^{\dia }(u)  + C_N, 
\label{K5}
\end{align}

\noi
where $R_N^{\dia}(u) $ and $C_N$ are  as in 
\eqref{K1r} and \eqref{YZ14}, respectively.
With a slight abuse of notation, we define the truncated Gibbs measure $\rho_N$ in this case by 
\begin{align}
d\rho_N(u) = Z_N^{-1} e^{-R_N^{\dia\dia}(u)} d \mu(u),
\label{K5a}
\end{align}

\noi
where 
the partition function $Z_N$ is given by 
\begin{align}
Z_N = \int e^{-R_N^{\dia\dia}(u)} d \mu.
\label{K6}
\end{align}

\noi
Then, 
by the  Bou\'e-Dupuis variational formula
 (Lemma \ref{LEM:var3}), we have
\begin{equation}
- \log Z_N = \inf_{\dr \in \Ha} \E
\bigg[ R_N^{\dia\dia} (Y(1) + I(\dr)(1)) + \frac{1}{2} \int_0^1 \| \dr(t) \|_{L^2_x}^2 dt \bigg]
\label{K7}
\end{equation}

\noi
for any $N \in \N$.
By setting
\begin{equation}
\W_N^{\dia\dia}(\dr) = \E
\bigg[ R_N^{\dia\dia}(Y(1) + I(\dr)(1)) + \frac{1}{2} \int_0^1 \| \dr(t) \|_{L^2_x}^2 dt \bigg], 
\label{K8}
\end{equation}

\noi
it follows from 
\eqref{v_N0} and \eqref{v_N0a}
with 
\eqref{K1r}, 
\eqref{YZ6a},   \eqref{YZ8a}, 
\eqref{YZ13a}, and 
\eqref{K5}
that 
\begin{align}
\begin{split}
\W_N^{\dia\dia}(\dr)
&=\E
\bigg[ \frac 12 \int_{\T^3} (V_0 \ast :\! Y_N^2 \!: ) \Dr_N^2 dx
+ \int_{\T^3} [(V_0 \ast (Y_N \Dr_N )) Y_N \Dr_N]^\dia dx
\\
&\hphantom{XXXXX}
+ \int_{\T^3} (V_0 \ast \Dr_N^2) Y_N \Dr_N dx
+ \frac 14 \int_{\T^3} (V_0 \ast \Dr_N^2 ) \Dr_N^2 dx
\\
&\hphantom{XXXXX}
+ \frac 14 \bigg\{ \int_{\T^3} \Big( :\! Y_N^2 \!: + 2 Y_N \Dr_N + \Dr_N^2 \Big) dx \bigg\}^2 \\
&\hphantom{XXXXX}
+ \frac{1}{2} \int_0^1 \| \dot \Ups^N(t) \|_{H^1_x}^2 dt
 \bigg].
\end{split}
\label{K9}
\end{align}

\noi
We also set 
\begin{align}
\Ups_N = \Ups_N(1)
= \pi_N  \Ups^N(1) \qquad \text{and} \qquad \ZZ_N = \ZZ_N(1) = \pi_N \ZZ^N(1).
\label{K9a}
\end{align}

\noi
In view of the change of variables \eqref{YZ13}, 
we view $\dot \Ups^N$ as a drift
and 
study each term in \eqref{K9}
by writing $\Dr_N$ as 
\begin{align}
\Dr_N = \Ups_N -    \ZZ_N.
\label{K9b}
\end{align}

\noi
The positive terms for the current problem are given by 
\begin{equation}
\U_N^{\dia\dia}(\dr) = \E\bigg[ \frac 18 \int_{\T^3} (V_0 \ast \Ups_N^2) \Ups_N^2 dx 
+ \frac 1{32} \bigg( \int_{\T^3} \Ups_N^2 dx \bigg)^2 + \frac{1}{2} \int_0^1 \| \dot \Ups^N(t) \|_{H^1_x}^2 dt\bigg].
\label{K10}
\end{equation}

\noi
As for the first term on the right-hand side of \eqref{K10}, see Lemma \ref{LEM:Dr7} below.

In the following, we state several lemmas, 
controlling the terms appearing in \eqref{K9}.

\begin{lemma}\label{LEM:Dr7}
Let $V$ be the Bessel potential  of order 
 $0 < \be \le \frac 12$
 and $V_0$ be as in \eqref{V0}.
Then, there exist   small $\eps>0$ and   a constant  $c  >0$ 
 such that
\begin{align}
\begin{split}
 \int_{\T^3}  & (V_0 \ast \Dr_N^2 ) \Dr_N^2 dx
 \geq   \frac {1}{2} \int_{\T^3} (V_0 \ast \Ups_N^2 ) \Ups_N^2 dx
 - \frac{1}{1000}\| \Ups_N\|_{L^2}^4  -c  \|\ZZ_N\|_{\C^{\be + \frac 12 - \eps}}^4
\end{split}
\label{KZ1}
\end{align}

\noi
and 
\begin{align}
\begin{split}
\bigg\{ \int_{\T^3}  \Big( :\! Y_N^2 \!:  & + 2 Y_N \Dr_N + \Dr_N^2 \Big) dx \bigg\}^2
 \ge \frac 18 \| \Ups_N \|_{L^2}^4 - \frac 1{100} \| \Ups_N \|_{H^1}^2 \\
& \quad - c \bigg\{1+  \| Y_N \|_{\C^{-\frac 12-\eps}}^{c} + \bigg( \int_{\T^3} :\! Y_N^2 \!:  dx \bigg)^2
+ \|\ZZ_N\|_{\C^{\be + \frac 12 - \eps}}^c \bigg\}
\end{split}
\label{KZ2}
\end{align}

\noi
for 
$\Dr_N = \Ups_N - \ZZ_N$ as in \eqref{K9b}, 
uniformly in $N \in \N$.

\end{lemma}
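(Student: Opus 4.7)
\textbf{Proof plan for Lemma \ref{LEM:Dr7}.}
The plan rests on two observations. First, since $\int_{\T^3}(V_0*f)f\,dx = \sum_{n\neq 0}\jb{n}^{-\be}|\ft f(n)|^2 = \|f\|_{\dot H^{-\be/2}}^2$, the bilinear form $(f,g)\mapsto \int(V_0*f)g\,dx$ is positive semidefinite on $L^2(\T^3)$, so the elementary Hilbert space bound $\|a+b\|^2 \geq \tfrac12\|a\|^2-\|b\|^2$ is available in $\dot H^{-\be/2}$. Second, the auxiliary process $\ZZ_N$ in~\eqref{YZ12} has \emph{positive} spatial regularity $\be+\tfrac12-\eps$ for every $\be > 0$: the renormalized cubic term $[(V_0*:\!Y_N^2\!:)Y_N]^\dia$ lies in $\C^{\be-3/2-\eps}$ uniformly in $N$ (cf.~Lemma~\ref{LEM:IV} and Remark~\ref{REM:Z}), and the factor $(1-\Delta)^{-1}$ in~\eqref{YZ12} gains two derivatives. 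In particular $\|\ZZ_N\|_{L^\infty}$, $\|\ZZ_N\|_{L^4}$, and $\|\ZZ_N\|_{L^2}$ are all controlled by $\|\ZZ_N\|_{\C^{\be+1/2-\eps}}$ with powers I may freely spare.

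For \eqref{KZ1}, substitute $\Dr_N = \Ups_N - \ZZ_N$ to obtain $\Dr_N^2 = \Ups_N^2 + r$ with $r:= -2\Ups_N\ZZ_N+\ZZ_N^2$. The Hilbert space bound yields
\begin{align*}
\int_{\T^3}(V_0*\Dr_N^2)\Dr_N^2\,dx \geq \tfrac12\int_{\T^3}(V_0*\Ups_N^2)\Ups_N^2\,dx - \|r\|_{\dot H^{-\be/2}}^2,
\end{align*}
and since $\be\leq\tfrac12$, the embedding $L^2\hookrightarrow \dot H^{-\be/2}$ gives $\|r\|_{\dot H^{-\be/2}}^2 \lesssim \|\Ups_N\ZZ_N\|_{L^2}^2 + \|\ZZ_N^2\|_{L^2}^2 \lesssim \|\ZZ_N\|_{L^\infty}^2\|\Ups_N\|_{L^2}^2 + \|\ZZ_N\|_{L^4}^4$. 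A Young split on the mixed term together with the second observation absorbs $\tfrac{1}{1000}\|\Ups_N\|_{L^2}^4$ and leaves $c\|\ZZ_N\|_{\C^{\be+1/2-\eps}}^4$, producing~\eqref{KZ1}.

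For \eqref{KZ2}, expanding all occurrences of $\Dr_N$ in $\Ups_N$ and $\ZZ_N$ yields $\int_{\T^3}(:\!Y_N^2\!:+2Y_N\Dr_N+\Dr_N^2)\,dx = a+R$ with $a:= \|\Ups_N\|_{L^2}^2$ and
\begin{align*}
R := \int_{\T^3}\!\!:\!Y_N^2\!:\,dx + 2\!\!\int_{\T^3}\!\!Y_N\Ups_N\,dx - 2\!\!\int_{\T^3}\!\!Y_N\ZZ_N\,dx - 2\!\!\int_{\T^3}\!\!\Ups_N\ZZ_N\,dx + \int_{\T^3}\!\!\ZZ_N^2\,dx.
\end{align*}
Using $(a+R)^2 \geq \tfrac12 a^2 - R^2$, it remains to bound each contribution to $R^2$. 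The piece $(\int Y_N\Ups_N\,dx)^2$ is estimated exactly as in~\eqref{YZ4}: duality against $\|Y_N\|_{\C^{-1/2-\eps}}$, interpolation of $\Ups_N$ between $L^2$ and $H^1$, and Young produce $c\|Y_N\|_{\C^{-1/2-\eps}}^c$ plus small fractions of $\|\Ups_N\|_{L^2}^4$ and $\|\Ups_N\|_{H^1}^2$. The piece $(\int Y_N\ZZ_N\,dx)^2$ is purely stochastic: duality together with $\C^{\be+1/2-\eps}\hookrightarrow B^{1/2+\eps}_{1,1}$ (valid for $\be>3\eps$) gives $c(\|Y_N\|_{\C^{-1/2-\eps}}^c + \|\ZZ_N\|_{\C^{\be+1/2-\eps}}^c)$. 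Cauchy--Schwarz and Young bound $(\int\Ups_N\ZZ_N\,dx)^2 \leq \|\Ups_N\|_{L^2}^2\|\ZZ_N\|_{L^2}^2$ by $\tfrac{1}{\text{large}}\|\Ups_N\|_{L^2}^4 + c\|\ZZ_N\|_{\C^{\be+1/2-\eps}}^4$, and $(\int\ZZ_N^2\,dx)^2 \lesssim \|\ZZ_N\|_{\C^{\be+1/2-\eps}}^4$ is direct. All absorbed $\|\Ups_N\|_{L^2}^4$-shares sum to well under $\tfrac12-\tfrac18 = \tfrac38$, and all absorbed $\|\Ups_N\|_{H^1}^2$-shares to under $\tfrac{1}{100}$.

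There is no single hard step -- the argument is careful bookkeeping -- and the conceptual input has been paid for upstream in the change of variables \eqref{YZ13}: its sole purpose is to replace the dangerous linear-in-$\Dr_N$ term by the manifestly positive $\tfrac12\int_0^1\|\dot\Ups^N(t)\|_{H^1_x}^2\,dt$ at the cost of introducing $\ZZ_N$, and the estimates above show that this cost is modest precisely because $\ZZ_N$ inherits a comfortable positive regularity from the two derivatives saved in~\eqref{YZ12}.
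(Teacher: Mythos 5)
Your proposal is correct and follows essentially the same approach as the paper: both rest on the positive pathwise regularity of $\ZZ_N$ (so its $L^\infty$, $L^4$, $L^2$ norms are controlled by $\|\ZZ_N\|_{\C^{\be+\frac12-\eps}}$), the embedding $L^2\hookrightarrow \dot H^{-\be/2}$, an elementary quadratic lower bound, and for \eqref{KZ2} the duality/interpolation estimates recycled from Lemma~\ref{LEM:Dr3}. The only difference is organizational --- the paper first invokes \eqref{YZ3} and then splits $\int\Dr_N^2\,dx$ and $\int Y_N\Dr_N\,dx$ separately into their $\Ups_N$- and $\ZZ_N$-pieces, whereas you expand fully and isolate $\|\Ups_N\|_{L^2}^2$ against the remainder $R$ in a single application of $(a+R)^2\geq\tfrac12a^2-R^2$; the resulting terms and bounds coincide.
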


\begin{proof}
The first estimate \eqref{KZ1}
can be easily seen from 
\[ \| (\Ups_N + \ZZ_N) \ZZ_N \|_{H^{-\frac{\be}{2}}}
\les \| \Ups_N \|_{L^2}^2 + \|\ZZ_N\|_{\C^{\be + \frac 12 - \eps}}^2.
\]

The second estimate \eqref{KZ2} follows
from a slight modification of the proof of 
Lemma \ref{LEM:Dr3}.
Indeed, it follows from \eqref{YZ3}
along with the following two estimates:
\begin{align*}
\frac 12 \bigg(\int_{\T^3} \Dr_N^2 dx \bigg)^2
& = \frac 12 \bigg(\int_{\T^3} \Ups_N^2 dx 
- 2 \int_{\T^3} \Ups_N \ZZ_N  dx 
+ \int_{\T^3}  \ZZ_N^2  dx\bigg)^2 \\
& \geq \frac{1}{5} 
\|\Ups_N\|_{L^2}^4
-  C \|\ZZ_N\|_{L^2}^4
\end{align*}

\noi
and
\begin{align*}
\begin{split}
\bigg| \int_{\T^3}  & Y_N \Dr_N dx \bigg|^2
\le \| Y_N \|_{\C^{-\frac 12-\eps}}^2 
\Big(\| \Ups_N \|_{H^{{\frac 12+2\eps}}}^2+ \| \ZZ_N \|_{\C^{{\frac 12+2\eps}}}^2\Big)\\
&\le C \| Y_N \|_{\C^{-\frac 12-\eps}}^{c}
 + \frac 1{100C_0} \| \Ups_N \|_{L^2}^4 + \frac1{100 C_0} \| \Ups_N \|_{H^1}^2
 + \| \ZZ_N \|_{\C^{{\be + \frac 12-\eps}}}^c.
\end{split}
\end{align*}

\noi
This proves Lemma \ref{LEM:Dr7}.
\end{proof}

\medskip

\begin{lemma}\label{LEM:Dr8}
Let $V$ be the Bessel potential of order 
 $0 < \be \le \frac 12$
 and $V_0$ be as in \eqref{V0}
Then, there exist   small $\eps>0$ and   a constant  $c  >0$ 
 such that
\begin{align}
\begin{split}
\bigg| \int_{\T^3}  (V_0 \ast \Dr_N^2) Y_N \Dr_N  dx\bigg|
&\le c \Big(1 + 
\| Y_N \|_{\C^{-\frac 12-\eps}}^{c}
+ \|\ZZ_N\|_{\C^{\be + \frac 12 -\eps}}^c\Big)\\
& \quad + \frac 1{1000} \Big(
\| \Ups_N^2\|_{\dot H^{-\frac{\be}{2}}}^2 + 
 \| \Ups_N \|_{L^2}^4 +  \| \Ups_N \|_{H^1}^2\Big), 
\end{split}
\label{YY1y}\\
\begin{split}
\bigg| \int_{\T^3} (V_0 \ast :\! Y_N^2 \!:) \Dr_N^2 dx \bigg|
&\le c \| :\! Y_N^2 \!: \|_{\C^{-1-\eps}}^{c} 
+ \| (V_0*:\!Y_N^2\!:) \ZZ_N^2\|_{\C^{\be - 1- \eps}}\\
& \quad
+ \| (V_0*:\!Y_N^2\!:) \ZZ_N\|_{\C^{\be - 1- \eps}}^c
 + \frac 1{1000} \Big(
\| \Ups_N \|_{L^2}^4 + \| \Ups_N \|_{H^1}^2\Big)
\end{split}
\label{YY2y} 
\end{align}

\noi
for 
$\Dr_N = \Ups_N -    \ZZ_N $ as in \eqref{K9b},
uniformly in $N \in \N$.
Furthermore, 
the stochastic terms $ (V_0*:\!Y_N^2\!:) \ZZ_N^2$
and $ (V_0*:\!Y_N^2\!:) \ZZ_N$
have uniformly bounded \textup{(}in $N$\textup{)} moments \textup{(}under the $\C^{\be - 1- \eps}$-norm\textup{)}.

\end{lemma}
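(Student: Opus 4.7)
My plan is to decompose $\Dr_N = \Ups_N - \ZZ_N$ pointwise and expand the quartic and cubic expressions into a sum of terms, each of which is either (a) a term involving only $\Ups_N$ (the ``new drift''), to be treated by the arguments from Lemma \ref{LEM:Dr6}; (b) a mixed term with at least one factor of $\ZZ_N$, where the positive regularity $\ZZ_N\in \C^{\be+\frac12-\eps}$ allows us to close the estimate with standard paraproduct/duality bounds; or (c) a genuinely ill-defined product which must be regarded as a new stochastic data object and placed on the right-hand side. The powers appearing in the estimates will then be obtained by the same interpolation/Young argument as in the previous lemmas, dumping small constants on $\|\Ups_N\|_{L^2}^4$, $\|\Ups_N\|_{H^1}^2$, and (for \eqref{YY1y}) $\|\Ups_N^2\|_{\dot H^{-\be/2}}^2$.

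For \eqref{YY1y}, I expand
\[
\int_{\T^3}(V_0\ast\Dr_N^2)Y_N\Dr_N\,dx
= \sum_{j,k\in\{0,1\}}(-1)^{j+k}\!\int_{\T^3}\!(V_0\ast(\Ups_N^{2-j}\ZZ_N^j))Y_N\,\Ups_N^{1-k}\ZZ_N^k\,dx.
\]
The pure $\Ups_N$ term is handled exactly as in \eqref{YY1x} of Lemma \ref{LEM:Dr6}, using $V_+=V_0+K_0\ge 0$ to produce the positive form $\int(V_0\ast\Ups_N^2)\Ups_N^2\,dx$ together with $\|\Ups_N\|_{L^2}^4+\|\Ups_N\|_{H^1}^2$ and a $\|Y_N\|_{\C^{-\frac12-\eps}}^c$ term. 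The remaining seven mixed terms each contain at least one factor of $\ZZ_N\in\C^{\be+\frac12-\eps}$; by Young's convolution inequality, the product estimate \eqref{prod}, Sobolev embedding, interpolation, and Young's inequality they can be bounded by $c(1+\|Y_N\|_{\C^{-\frac12-\eps}}^c+\|\ZZ_N\|_{\C^{\be+\frac12-\eps}}^c)$ plus small multiples of the positive quantities, using the fact that $\ZZ_N$ has positive regularity for every $\be>0$.

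For \eqref{YY2y}, expanding $\Dr_N^2=\Ups_N^2-2\Ups_N\ZZ_N+\ZZ_N^2$ we encounter three terms. The pure $\Ups_N$ piece is controlled exactly as in \eqref{YY2x}. The cross term splits, after paraproduct decomposition \eqref{para1}, into paraproducts $(V_0\ast:Y_N^2:)\pl\ZZ_N$ and $(V_0\ast:Y_N^2:)\pg\ZZ_N$, each well-defined deterministically by Lemma \ref{LEM:para} with regularity $-1-\eps$ (so one pairs against $\Ups_N$ via duality \eqref{dual} and loses nothing beyond $\|\Ups_N\|_{H^1}^{1+}$), and the resonant piece $(V_0\ast:Y_N^2:)\pe\ZZ_N$, which is genuinely outside the deterministic range when $\be\le\frac12$ and is absorbed into $\|(V_0\ast:Y_N^2:)\ZZ_N\|_{\C^{\be-1-\eps}}^c$ after pairing with $\Ups_N$ through \eqref{dual}, \eqref{embed}, and interpolation. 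The pure $\ZZ_N^2$ term pairs directly against $(V_0\ast:Y_N^2:)$ and produces $\|(V_0\ast:Y_N^2:)\ZZ_N^2\|_{\C^{\be-1-\eps}}$ by duality.

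The main obstacle, and the step I expect to require the most care, is the almost sure and moment boundedness of the stochastic objects $(V_0\ast:Y_N^2:)\ZZ_N$ and $(V_0\ast:Y_N^2:)\ZZ_N^2$ in $\C^{\be-1-\eps}$ uniformly in $N$, since here $\ZZ_N$ is not independent of $Y$ but is built from the paracontrolled cubic in \eqref{YZ6a}--\eqref{YZ12}. The plan for this step is to expand $\dot\ZZ^N(t)=(1-\Delta)^{-1}[(V_0\ast:Y_N(t)^2:)Y_N(t)]^\dia$ in Wiener chaos, carry out the Wick contractions against $:Y_N^2:$ using Lemma \ref{LEM:Wick} and \eqref{Wickz}, and reduce the second moment of the Fourier coefficient at frequency $n$ to sums of the type handled by Lemma \ref{LEM:SUM}; the contractions that would otherwise diverge are exactly those removed by the $\dia$-renormalization in \eqref{YZ6a}, while the remaining sums converge and give $\mathbb{E}|\widehat{F}(n,t)|^2\lesssim\jb n^{-3-2(\be-1)+}$ for $F\in\{(V_0\ast:Y_N^2:)\ZZ_N,(V_0\ast:Y_N^2:)\ZZ_N^2\}$, uniformly in $N$. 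Time continuity and passage to $N\to\infty$ then follow from the Wiener chaos estimate (Lemma \ref{LEM:hyp}) and Lemma \ref{LEM:reg}, exactly as in the proof of Lemma \ref{LEM:IV}.
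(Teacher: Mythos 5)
Your proposal follows essentially the same route as the paper: expand $\Dr_N=\Ups_N-\ZZ_N$, treat the pure-$\Ups_N$ terms via Lemma \ref{LEM:Dr6}, exploit the almost sure regularity $\ZZ_N\in\C^{\be+\frac12-\eps}$ (uniform in $N$, from \eqref{CZ2} and Lemma \ref{LEM:reg}) for the mixed terms via duality, product estimates, interpolation, and Young's inequality, and defer the moment bounds on $(V_0\ast:\!Y_N^2\!:)\ZZ_N$ and $(V_0\ast:\!Y_N^2\!:)\ZZ_N^2$ to a Wiener-chaos/Wick calculation in the spirit of Appendix \ref{SEC:C}. One small stylistic divergence: for the cross term in \eqref{YY2y} you perform a paraproduct decomposition on $(V_0\ast:\!Y_N^2\!:)\ZZ_N$ and isolate only the resonant piece as a stochastic object, whereas the paper simply treats the full product $(V_0\ast:\!Y_N^2\!:)\ZZ_N$ as a stochastic object in $\C^{\be-1-\eps}$ and pairs it against $\Ups_N$ directly — both work, and the paper's version is marginally cleaner. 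One imprecision worth flagging: you attribute the finiteness of the moments of $(V_0\ast:\!Y_N^2\!:)\ZZ_N$ to cancellations produced by the $\dia$-renormalization in \eqref{YZ6a}; in fact that renormalization is what makes $\ZZ_N$ itself well-defined with regularity $\be+\frac12-\eps$, and no \emph{further} renormalization of the product with $:\!Y_N^2\!:$ is needed — the second-moment computation (via the Jensen/diagonal-pairing device as in Lemma \ref{LEM:CZ}, using only the regularity \eqref{CZ2} of $\ZZ_N$ as a black box) already gives $\E|\widehat F(n)|^2\lesssim\jb n^{-3-2(\be-1)}$ for all $\be>0$.
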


\begin{proof}
In the following, we focus on proving the estimates \eqref{YY1y} and \eqref{YY2y}.
See Appendix \ref{SEC:C}
for analysis on 
 the stochastic terms
$ (V_0*:\!Y_N^2\!:) \ZZ_N^2$
and $ (V_0*:\!Y_N^2\!:) \ZZ_N$.

We prove \eqref{YY1y} and \eqref{YY2y}
by replacing each $\Dr_N$ with $\Ups_N$ or $\ZZ_N$ 
and  carrying out case-by-case analysis.
When all the occurrences of $\Dr_N$ are replaced by $\Ups_N$, 
 the estimates \eqref{YY1y} and \eqref{YY2y} follow from Lemma \ref{LEM:Dr6}.
From \eqref{CZ2} and Lemma \ref{LEM:reg}, 
we have $\ZZ_N \in   \C^{\be + \frac 12 - \eps}(\T^3)$
almost surely with a uniform bound   in $N \in \N$.

From \eqref{dual}, \eqref{prod},  and Lemma \ref{LEM:para}
(with $\be > 2\eps$), 
we have
\begin{align*}
\bigg| \int_{\T^3}  (V_0 \ast \Ups_N^2) Y_N \ZZ_N  dx\bigg|
&\le \| V_0* \Ups_N^2\|_{B^{\frac 12 +\eps}_{1, 1}}
\| Y_N \ZZ_N\|_{\C^{-\frac 12 - \eps}}\\
&\les \| \Ups_N\|_{H^{\frac 12 -\be + 2\eps}}^2
\| Y_N \|_{\C^{-\frac 12 - \eps}}
\|\ZZ_N\|_{\C^{\be + \frac 12 -\eps}}.
\end{align*}

\noi
Then, \eqref{YY1y} in this case follows from \eqref{interp} and Young's inequality.
Similarly, we have 
\begin{align*}
\bigg| \int_{\T^3}  ( & V_0 \ast (\Ups_N \ZZ_N)) Y_N (\Ups_N - \ZZ_N)  dx\bigg|\\
&\les \| \Ups_N \|_{H^{\frac{1}{2}-\be + 2\eps}}
\|\ZZ_N\|_{\C^{\frac{1}{2} - \be + 3\eps}}
\| Y_N (\Ups_N - \ZZ_N)\|_{H^{-\frac{1}{2} - 2\eps}}\\
&\les \| \Ups_N \|_{H^{\frac{1}{2}-\be + 2\eps}}
\|\ZZ_N\|_{\C^{\frac{1}{2} - \be + 3\eps}}
\| Y_N\|_{\C^{-\frac 12 - \eps}}
 \|\Ups_N - \ZZ_N\|_{H^{\frac{1}{2} +3\eps}}.
\end{align*}

\noi
Then, \eqref{interp}  and Young's inequality yields \eqref{YY1y} in this case.
The remaining case (with $V_0 * \ZZ_N^2$) follows in an analogous manner
since $V_0 * \ZZ_N^2 \in \C^{\frac 12 + 2\be -\eps}(\T^3)$.

The second estimate \eqref{YY2y}
for $(V_0*:\!Y_N^2\!:) \ZZ_N\Ups_N$
follows 
from \eqref{dual}, \eqref{interp}, and Young's inequality.
\end{proof}

Lastly,  we estimate 
the contribution from the renormalized term defined in \eqref{YZ8a}.
Given small $\eps > 0$, 
define an integral operator $T_N$
by 
\begin{align}
 T_Nf(x) = \int_{\T^3}k_N(x, y) f(y) dy
\label{KZ4}
\end{align}

\noi
with  the kernel $k_N$  given by 
\begin{align}
k_N(x,y) =  \jb{\nabla_x}^{-1+\eps}\jb{\nabla_y}^{-1+\eps} \Y_N(x,y), 
\label{KZ5}
\end{align}

\noi
where $\Y_N$ is defined in \eqref{YZ9}.
Then, the following estimate holds.

\begin{lemma}\label{LEM:Dr9}
Let $V$ be the Bessel potential of order 
 $0 < \be \le \frac 12$.
Then, there exist   small $\eps>0$ and   a constant  $c  >0$ 
 such that
\begin{align}
\begin{split}
\bigg| \int_{\T^3}  [(V_0 \ast (Y_N \Dr_N) ) Y_N \Dr_N]^\dia dx \bigg|
&\le c \Big(1 + 
\| T_N \|_{\L(L^2; L^2)}^c
+ \| [(V_0 \ast (Y_N \ZZ_N) ) Y_N \ZZ_N]^\dia
\|_{\C^{\be - 1-\eps}}\\
& \quad + 
 \| [(V_0 \ast (Y_N \ZZ_N) ) Y_N ]^\dia
\|_{\C^{\be - 1-\eps}}^c
\Big)\\
& \quad + \frac 1{100} \Big(
 \| \Ups_N \|_{L^2}^4 +  \| \Ups_N \|_{H^1}^2\Big),
\end{split}
\label{YY3y}
\end{align}

\noi
for 
$\Dr_N = \Ups_N -    \ZZ_N $ as in \eqref{K9b}, 
uniformly in $N \in \N$.
Here, $[(V_0 \ast (Y_N \ZZ_N) ) Y_N ]^\dia$ is defined by 
\begin{align}
[(V_0 \ast (Y_N \ZZ_N) ) Y_N ]^\dia
: = (V_0 \ast (Y_N \ZZ_N) ) Y_N - K_N*\ZZ_N,
\label{YY3yy}
\end{align}

\noi
where $K_N$ is as in \eqref{kappa2}.
Furthermore, 
the expectation of the first term, containing 
the stochastic terms $T_N$,  $ [(V_0 \ast (Y_N \ZZ_N) ) Y_N \ZZ_N]^\dia$, 
and $ [(V_0 \ast (Y_N \ZZ_N) ) Y_N ]^\dia$, 
is uniformly bounded in $N \in \N$.

\end{lemma}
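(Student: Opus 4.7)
The plan is to substitute $\Dr_N = \Ups_N - \ZZ_N$ in both occurrences of $\Dr_N$ inside the renormalized quartic expression~\eqref{YZ8a} and to expand the counterterm $\int (K_N\ast\Dr_N)\Dr_N\,dx$ accordingly. After grouping, this produces three structurally distinct contributions modulo symmetry: (i)~the pure $\Ups_N\Ups_N$ piece $\int [(V_0 \ast (Y_N \Ups_N))Y_N\Ups_N]^\dia dx$, where the diamond signifies subtraction of $\int (K_N\ast\Ups_N)\Ups_N\,dx$; (ii)~twice the cross piece $-2\int [(V_0\ast (Y_N\ZZ_N))Y_N]^\dia\,\Ups_N\,dx$ with the renormalization~\eqref{YY3yy}; and (iii)~the purely stochastic piece $\int [(V_0\ast (Y_N\ZZ_N)) Y_N\ZZ_N]^\dia dx$. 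The counterterm in~\eqref{YZ8a} is designed so that these three renormalized objects match the three stochastic quantities appearing on the right-hand side of~\eqref{YY3y}.

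For (i), Parseval's identity together with the definition~\eqref{YZ9} of $\Y_N$ identifies the contribution with the bilinear form
\begin{align*}
\int_{\T^3\times\T^3}\Y_N(x,y)\,\Ups_N(x)\Ups_N(y)\,dx\,dy = \int_{\T^3}(\jb{\nb}^{1-\eps}\Ups_N)(x)\,T_N(\jb{\nb}^{1-\eps}\Ups_N)(x)\,dx,
\end{align*}
where $T_N$ is the operator with kernel $k_N$ in~\eqref{KZ4}--\eqref{KZ5}. Cauchy--Schwarz, together with the interpolation $\|\Ups_N\|_{H^{1-\eps}}\les \|\Ups_N\|_{L^2}^{\eps}\|\Ups_N\|_{H^1}^{1-\eps}$ and two applications of Young's inequality, yields the bound by $c\|T_N\|_{\L(L^2;L^2)}^{2/\eps}+\tfrac{1}{300}(\|\Ups_N\|_{L^2}^4+\|\Ups_N\|_{H^1}^2)$. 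For (ii), duality between $\C^{\be-1-\eps}$ and $B^{1-\be+\eps}_{1,1}$, the embedding $H^{1-\be+2\eps}(\T^3)\hookrightarrow B^{1-\be+\eps}_{1,1}(\T^3)$ (which holds by the estimate $\|\P_j f\|_{L^1}\le\|\P_j f\|_{L^2}$ on $\T^3$ and summation), and interpolation give
\begin{align*}
\Big|\int [(V_0\ast (Y_N\ZZ_N))Y_N]^\dia \Ups_N dx\Big|\les \big\|[(V_0\ast (Y_N\ZZ_N))Y_N]^\dia\big\|_{\C^{\be-1-\eps}}\,\|\Ups_N\|_{L^2}^{\be-2\eps}\|\Ups_N\|_{H^1}^{1-\be+2\eps},
\end{align*}
from which two further Young inequalities produce the required bound. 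For (iii), pairing with the constant function $1$, whose $B^{1-\be+\eps}_{1,1}$-norm is finite, gives $|\int [\cdots]^\dia dx|\les \|[(V_0\ast (Y_N\ZZ_N))Y_N\ZZ_N]^\dia\|_{\C^{\be-1-\eps}}$ directly.

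The main obstacle is the stochastic part: the assertion that the expectation of $\|T_N\|_{\L(L^2;L^2)}^c+\|[(V_0\ast (Y_N\ZZ_N))Y_N]^\dia\|_{\C^{\be-1-\eps}}^c+\|[(V_0\ast (Y_N\ZZ_N)) Y_N\ZZ_N]^\dia\|_{\C^{\be-1-\eps}}$ is bounded uniformly in $N\in\N$. For $\be>\frac 12$ the operator norm $\|T_N\|_{\L(L^2;L^2)}$ is controlled by the Hilbert--Schmidt norm $\|\Y_N\|_{H^{-1+\eps}(\T^3\times\T^3)}$ as in~\eqref{YZ10}, but for $0<\be\le\frac 12$ that computation diverges, and only the weaker operator norm is available. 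My plan is to establish the operator norm estimate by a dyadic Littlewood--Paley decomposition of $\Y_N$ in the two spatial variables, applying the Wiener chaos estimate (Lemma~\ref{LEM:hyp}) to each dyadic block of the random kernel $k_N$ in a Schur-type norm and summing using the regularizing factors $\jb{\nb}^{-1+\eps}$ in~\eqref{KZ5}; in spirit this parallels the paracontrolled-operator bounds of Lemma~\ref{LEM:sto4} and Proposition~\ref{PROP:sto4J}. The regularity estimates for the two renormalized objects $[(V_0\ast (Y_N\ZZ_N))Y_N]^\dia$ and $[(V_0\ast (Y_N\ZZ_N)) Y_N\ZZ_N]^\dia$ are produced in parallel fashion by combining Lemma~\ref{LEM:reg} with the same resonant-cancellation bookkeeping (analogous to~\eqref{YZ6a} and~\eqref{YS4}) needed to tame the $n_1+n_3=0$ contraction in the Wiener chaos expansion. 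The detailed verification of these stochastic bounds is deferred to Appendix~\ref{SEC:C}, as already indicated for the companion stochastic terms in Lemma~\ref{LEM:Dr8}.
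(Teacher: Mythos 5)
Your case-by-case decomposition after substituting $\Dr_N = \Ups_N - \ZZ_N$ exactly matches the paper's strategy, as does the identification of the pure $\Ups_N$-part with the bilinear form $\int T_N(\jb{\nb}^{1-\eps}\wt\Ups_N)\cdot\jb{\nb}^{1-\eps}\wt\Ups_N\,dx$ and the use of Cauchy--Schwarz, duality, interpolation, and Young for the cross and pure-$\ZZ_N$ pieces. Deferring the $\C^{\be-1-\eps}$-moment bounds for the two renormalized cubic/quartic objects to Appendix~\ref{SEC:C} is also what the paper does.

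The gap is in the boundedness of $T_N$ on $L^2(\T^3)$, which is the real substance of this lemma (for $\be > \frac12$ one can simply bound $\|\Y_N\|_{H^{-1+\eps}(\T^3\times\T^3)}$ as in~\eqref{YZ10}; it is precisely the failure of that Hilbert--Schmidt estimate at $\be \le \frac12$ that the lemma must circumvent). Your plan — a dyadic Littlewood--Paley decomposition of the two-variable kernel, Wiener chaos per block, and a ``Schur-type'' sum — is not worked out and does not clearly close. A naive block-wise $L^2$ estimate of $k_N$ is again an $L^2$-norm of (pieces of) $\Y_N$, so on the near-diagonal blocks $j\sim k$ one runs straight back into the same divergence that kills~\eqref{YZ10}; the Wiener chaos estimate gives polynomial moment gain but does not help the summability. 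What the paper actually proves is a \emph{pointwise} second-moment bound on the kernel, $\sup_{x,y}|x-y|^{2\alpha}\,\E[k_N^2(x,y)]\les 1$ for some $\alpha\in(3-\be,3)$, uniformly in $N$; this is obtained by writing $\jb{\nb_x}^{-1+\eps}$, $\jb{\nb_y}^{-1+\eps}$ as weighted integrals of heat semigroups via the gamma-function identity, computing $\E\big[((p_t\otimes p_s)\ast\Y_N)^2(x,y)\big]$ using Wick's theorem, the Poisson summation formula for the heat kernel, and the discrete convolution Lemma~\ref{LEM:SUM2}, and carefully splitting into regimes depending on the relative sizes of $s$, $t$, and $1$. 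One then applies Schur's test with the integrable weight $|x-y|^{-\alpha}$ (using $\alpha<3$) to obtain $\E\big[\|T_N\|_{\L(L^2;L^2)}^{2/\eps}\big]<\infty$ uniformly in $N$. Without this pointwise kernel estimate and the weighted Schur argument your proof is incomplete at the step that distinguishes $0<\be\le\frac12$ from the easier regime.
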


\begin{proof}

As in the proof of Lemma \ref{LEM:Dr8}, 
we prove \eqref{YY3y}
by performing case-by-case analysis.
The contribution from the case when both $\Dr_N$'s are  replaced by $\ZZ_N$
is clearly bounded by 
$\| [(V_0 \ast (Y_N \ZZ_N) ) Y_N \ZZ_N]^\dia
\|_{\C^{\be - 1-\eps}}$.
From \eqref{YZ8a} and \eqref{YY3yy} with \eqref{K9b}, 
we have
\begin{align}
\begin{split}
\bigg|\int_{\T^3} [(V_0 \ast (Y_N \ZZ_N) ) Y_N \Ups_N]^\dia dx \bigg|
 = \bigg|\int_{\T^3}[(V_0 \ast (Y_N \ZZ_N) ) Y_N ]^\dia \Ups_N dx \bigg|\\
 \leq 
  \| [(V_0 \ast (Y_N \ZZ_N) ) Y_N ]^\dia\|_{\C^{\be - 1-\eps}}
\|\Ups_N\|_{H^1}.
\end{split}
\label{KZ5a}
\end{align}
 
 \noi
 Then, Cauchy's inequality yields \eqref{YY3y} in this case.
By  the symmetry, 
the contribution from 
  $[(V_0 \ast (Y_N \Ups_N) ) Y_N \ZZ_N]^\dia$
is also bounded by \eqref{KZ5a}.

It remains to consider the case $\Dr_N = \Ups_N$ 
for both entries.
Suppose that, for $\be > 0$,  $T_N$ defined in \eqref{KZ4} is bounded on $L^2(\T^3)$.
Then, with $\wt \Ups_N(x) = \Ups_N(-x)$, 
it follows from Parseval's identity, 
the (assumed) boundedness of $T_N$, \eqref{interp}, and Young's inequality that 
\begin{align*}
\bigg| \int_{\T^3}  [(V_0 \ast (Y_N \Ups_N) ) Y_N \Ups_N]^\dia dx \bigg|
& = \bigg|\int_{\T^3\times \T^3} \Y_N(x, y) \wt \Ups_N (x) \wt \Ups_N (y) dx dy\bigg|\\
& = \bigg|\int_{\T^3} T_N\big(\jb{\nb}^{1-\eps} \wt \Ups_N\big)(x) 
\cdot \jb{\nb}^{1-\eps}\wt \Ups_N (x) dx \bigg|\\
& \leq \|T_N\|_{\L(L^2; L^2)} 
\| \Ups_N\|_{H^{1-\eps}}^2\\
& 
\leq  C\|T_N\|_{\L(L^2; L^2)}^\frac{2}{\eps} 
+ \frac{1}{100}\Big(\| \Ups_N\|_{H^1(\T^3)}^2 + \| \Ups_N\|_{L^2(\T^3)}^4 \Big).
\end{align*}

\noi
This proves \eqref{YY3y} in this case.

We now need to show that 
the expectation of the first term on the right-hand side of~\eqref{YY3yy}, containing 
the stochastic terms $T_N$,  $ [(V_0 \ast (Y_N \ZZ_N) ) Y_N \ZZ_N]^\dia$, 
and $ [(V_0 \ast (Y_N \ZZ_N) ) Y_N ]^\dia$, 
is uniformly bounded in $N \in \N$.
As for
the stochastic terms  $ [(V_0 \ast (Y_N \ZZ_N) ) Y_N \ZZ_N]^\dia$
and $ [(V_0 \ast (Y_N \ZZ_N) ) Y_N ]^\dia$, 
see~Appendix \ref{SEC:C}.
In the remaining part of this proof, 
we focus on proving 
 the boundedness of $T_N$ on $L^2(\T^3)$
(under a high moment). 
In the following, all the estimates are uniform in $N \in \N$.

Suppose that there exists some $0< \alpha < 3$ such that 
\begin{equation}
 |x-y|^{2\alpha} \E\big[k_N^2(x, y)\big] \les 1
\label{KZ6}
\end{equation}

\noi
for any $x, y \in \T^3 \cong[-\pi, \pi)^3$, 
uniformly in $N\in \N$.
Then, by the Wiener chaos estimate (Lemma~\ref{LEM:hyp}), 
we have 
\[\E \Big[\big\| |x-y|^{\alpha}k_N(x,y)\big\|_{L^q_{x,y}}^\frac{2}{\eps} \Big]< \infty\]

\noi
for any finite $q\ge$1 and $\eps > 0$. 
Thus, for 
 $1 < p < q< \frac{3}{\al}$ and  $ \frac 1 r  = \frac 1 p - \frac 1 q$, 
 we have 
\begin{align*}
\E \Big[\| k_N \|_{L^{p'}_xL^p_y}^\frac{2}{\eps} \Big]
 = \E \Big[\| k_N \|_{L^{p'}_yL^p_x}^\frac{2}{\eps}\Big]
& = \E \Big[ \big\||x-y|^{-\alpha}|x-y|^{\alpha}k_N(x,y)\big\|_{L^{p'}_yL^p_x}^\frac{2}{\eps}\Big]\\
&\le \big\||x|^{-\alpha}\big\|_{L^q}^\frac{2}{\eps} 
\E \Big[\| |x-y|^{\alpha}k_N(x,y)\|_{L^{p'}_y L^r_x}^\frac{2}{\eps}\Big]\\
&\les \E \Big[\big\| |x-y|^{\alpha}k_N(x,y)\big\|_{L^{\max(p', r)}_{x,y}}^\frac{2}{\eps}\Big]\\
&<  \infty.
\end{align*}

\noi
Therefore, by Schur's test, we conclude that 
\[ \E \Big[\| T_N \|_{\L(L^2 ; L^2)}^\frac{2}{\eps} \Big]
\le
C_\eps
 \E\Big[ \|k_N\|_{L^{p'}_x L^p_y}^\frac{2}{\eps}  + \|k_N\|_{L^{p'}_y L^p_x}^\frac{2}{\eps} \Big] < \infty.\]

In the following, we prove the bound \eqref{KZ6}.
From the definition of the gamma function and a change of variables, we have 
\begin{align}
\jb{\nabla_x}^{-1+\eps} \sim \int_0^{\infty} t^{-\frac {1+\eps}2} e^{-t(1-\Delta_x)} d t.
\label{KZ7}
\end{align}

\noi
Then, from \eqref{KZ5} and \eqref{KZ7}, we have
$$ k_N(x,y) =  c\int_0^{\infty}\int_0^{\infty} t^{-\frac {1+\eps}2} s^{-\frac {1+\eps}2} 
e^{-t} e^{-s} \big((p_t\otimes p_s) \ast \Y_N\big)(x,y) dt ds, $$

\noi
where $p_t$ is the kernel of the heat semigroup  $e^{t\Dl }$. 
Therefore, in order to show \eqref{KZ6}, 
it suffices to show that 
\begin{equation} 
 \E \Big[ \big( (p_t\otimes p_s) \ast \Y_N\big)^2(x,y)\Big]
 \les |x-y|^{-2\alpha} ( s^{-1+2\eps}\vee 1)
(  t^{-1+2\eps}\vee 1)
\label{KZ8}
\end{equation}

\noi
for any $x, y \in \T^3 \cong[-\pi, \pi)^3$
and $t, s > 0$,  
uniformly in $N\in \N$, where $a\vee b = \max(a, b)$.
Without loss of generality, we assume  $t \ge s > 0$.

\smallskip

\noi
$\bullet$
{\bf Case 1:}
 $0 < s \le t \le 1$.
\quad 
From \eqref{YZ9}
and \eqref{Wickz}, we have 
\begin{align}
\E\big[ & \ft \Y(n,m)\ft \Y(n',m')\big] \notag \\
& = \E\bigg[\sum_{\substack{n_1 \in \Z^3\\n_1 \ne -n}}
 \jb{n + n_1}^{-\be} 
 \Big(\ft Y_N(n_1) \ft Y_N(-n_1 -n -m) - \ind_{n +m  = 0} \cdot \jb{n_1}^{-2}\Big) \notag \\
& \quad \times  \sum_{\substack{n_1' \in \Z^3\\n_1' \ne -n'}}
 \jb{n' + n_1'}^{-\be} 
 \Big(\ft Y_N(n_1') \ft Y_N(-n_1' -n' -m') - \ind_{n' +m'  = 0} \cdot \jb{n_1'}^{-2}\Big)\bigg] \notag \\
& = 
\ind_{n+m + n' + m' = 0} 
\sum_{\substack{
 n_1 \in \Z^3\\ |n_1|, \,  |n_1 + n + m| \le N}} 
\frac{\ft V(n + n_1) \ft V(n' - n_1)}{\jb{n_1}^2 \jb{n+m+n_1}^2}  \notag \\
& \quad + \ind_{n+m + n' + m' = 0}
\sum_{\substack{
 n_1 \in \Z^3\\ |n_1|, \,  |n_1 + n + m| \le N}} 
\frac{\ft V(n + n_1) \ft V(m' - n_1)}{\jb{n_1}^2 \jb{n+m+n_1}^2}  
 + \text{l.o.t.}\notag \\
&  =:  \1 + \II  
 + \text{l.o.t.}.
 \label{KZ9}
\end{align}

\noi
Here, ``l.o.t.'' denotes the lower order terms coming from 
$n_1 = -n$ or  $n_1' = - n'$.
Hence, by 
ignoring the lower order terms in \eqref{KZ9}
(which  can be estimated easily), we have
\begin{align}
\E \Big[ & \big((p_t\otimes p_s) \ast \Y_N\big)^2(x,y) \Big] \notag \\
& =\sum_{n,m,n',m' \in \Z^3} 
e^{-t (|n|^2 + |n'|^2)} e^{-s (|m|^2+ |m'|^2)} 
\E\big[\ft \Y(n,m)\ft \Y(n',m')\big] e_{n+n'}(x)e_{m+m'}( y) \notag \\
& = \sum_{n,m,n',m' \in \Z^3}e^{-t (|n|^2 + |n'|^2)} e^{-s (|m|^2+ |m'|^2)} 
\big( \1 + \II\big) \, 
e_{n+n'} (x-y) \notag \\
& \les \sum_{|n_1|,|n_2| \leq N} \frac 1 {\jb{n_1}^2 \jb{n_2}^2} 
\bigg|\sum_{k\in \Z^3} \ft V (k) \exp(-t|k  -  n_1|^2 - s|k - n_2|^2) e_{k} (x-y) \bigg|^2 \notag \\
&= \sum_{|n_1|,|n_2|\le N} \frac {\exp\big(-2\frac{ts}{t+s} |n_1-n_2|^2\big)} {\jb{n_1}^2 \jb{n_2}^2}\notag \\
& \hphantom{XXXXX}\times  
\bigg|\sum_{k\in \Z^3} \ft  V (k) 
\exp\Big( - (t+s) \Big| k - \frac{tn_1+sn_2}{t+s} \Big|^2\Big) e_k (x-y) \bigg|^2. 
\label{KZ10}
\end{align}

Fix $\dl > 0$ small.  
We first consider the case
 $s \gtrsim t^{\frac1 \dl}$.
Recall that 
 $e^{-t|k - \xi_0|^2}$ is the Fourier transform of the periodization of 
  $e^{-i x \cdot \xi_0} p_t^{\R^3}(x)$, 
  where $p_t^{\R^3}$ is the heat semigroup on $\R^3$.
Then, from the Poisson summation formula, we have
\begin{align}
\begin{split}
\bigg|\sum_{k\in \Z^3} \ft V (k) 
& \exp\Big( - (t+s) \Big| k - \frac{tn_1+sn_2}{t+s} \Big|^2\Big) e_k (x-y) \bigg| \\
& \les \sum_{k \in \Z^3} \big(|V^{\R^3}|*p_{t+s}^{\R^3}\big) (x-y + 2\pi k)\\
& \les |x- y|^{\be - 3}
\end{split}
\label{KZ11}
\end{align}
 
 \noi
 for any $x, y \in \T^3 \cong[-\pi, \pi)^3$, 
 where $V^{\R^3}$ is the Bessel potential of order $\be$ on $\R^3$.
 In the last step, we used the well-known asymptotics
 of the Bessel potential on $\R^3$:
 $V^{\R^3}(x) \sim |x|^{\be - 3}$ as $x\to 0$
 and 
 $V^{\R^3}(x) \sim |x|^{\frac{\be - 4}{2}}e^{-|x|}$ as $|x|\to \infty$;
 see (4,2) and (4,3) in \cite{AS}.
 
 We also have 
\begin{align}
 \exp\Big(-2\frac{ts}{t+s} |n_1-n_2|^2\Big) \les s^{-1-\eps}  \jb{n_1-n_2}^{-2-2\eps}.
 \label{KZ12}
\end{align}
	
\noi
Hence, from \eqref{KZ10},  \eqref{KZ11},  and \eqref{KZ12}
with  $s \gtrsim t^{\frac1 \dl}$, 
we obtain \eqref{KZ8}, 
provided that 
 $\alpha > 3 -\beta$ and $\frac{3\eps}{\dl} < 1 - 2\eps$.
The last condition is guaranteed by choosing sufficiently small $\eps > 0$.

Next, we  consider the case
 $s \ll t^{\frac1 \dl}$.
Recall that  given $\g > 0$, we have 
$e^{-x} \le C_\g \jb{x}^{-\g}$ for any $ x>  0$.
Then,  from  Lemma \ref{LEM:SUM}, we have
\begin{align}
\bigg|\sum_{k\in \Z^3} \ft V (k) 
\exp\Big( - (t+s) | k - n_0 |^2\Big) e_k (x-y) \bigg| \les t^{-\frac 32-\eps} \jb{n_0}^{-\beta}, 
\label{KZ13}
\end{align}

\noi
where
 $n_0 = \frac{tn_1+sn_2}{t+s}$. 
We also have 
\begin{align}
\exp\Big(-2\frac{ts}{t+s} |n_1-n_2|^2\Big) \les s^{-1+\beta-\frac 12 \eps}  \jb{n_1-n_2}^{-2+2\beta-\eps}.
\label{KZ14}
\end{align}

\noi
Therefore, from \eqref{KZ13}, \eqref{KZ14}, 
and 
Lemma \ref{LEM:SUM2} with $t^{-1} \ll s^{-\dl}$, we obtain
\begin{align*}
\text{RHS of }\eqref{KZ10}
& \les \sum_{n_1,n_2} \frac{t^{- 3-2\eps} s^{-1+\beta-\frac 12 \eps}} {\jb{n_1}^2 \jb{n_2}^2\jb{n_0}^{2\beta} \jb{n_1-n_2}^{2-2\beta +\eps}} \\
& \les t^{-3- 2\eps} s^{-1+\beta-\frac \eps 2} \les s^{-1+2\eps} t^{-1+2\eps}, 
\end{align*}

\noi
provided that 
 $\frac 52\eps + (2 + 4\eps) \delta \le \beta \le \frac 12 $.
This proves \eqref{KZ8} in this case.

\medskip

\noi
$\bullet$
{\bf Case 2:}
 $t\ge s \ge 1$.
\quad 
In this case, the bound \eqref{KZ8} follows from 
\eqref{KZ10}, \eqref{KZ11}, and \eqref{KZ12}
with $s^{-1-\eps} \le 1$.

\medskip

\noi
$\bullet$
{\bf Case 3:}
 $t\ge 1 \ge s > 0$.
\quad 
In this case, 
 from \eqref{KZ13}, \eqref{KZ14}, 
and 
Lemma \ref{LEM:SUM2}, we have 
\begin{align*}
\text{RHS of }\eqref{KZ10}
& \les \sum_{n_1,n_2} \frac{ s^{-1+\beta-\frac 12 \eps}} {\jb{n_1}^2 \jb{n_2}^2\jb{n_0}^{2\beta} \jb{n_1-n_2}^{2-2\beta +\eps}} \\
& \les  s^{-1+\beta-\frac \eps 2} \les s^{-1+2\eps}, 
\end{align*}

\noi
provided that 
 $\frac 52\eps \le \beta \le \frac 12 $.
This completes the proof of Lemma \ref{LEM:Dr9}.
\end{proof}

Putting everything together, 
we conclude from 
\eqref{K8},  \eqref{K10}, 
and 
Lemmas \ref{LEM:Dr7}, \ref{LEM:Dr8}, 
and  \ref{LEM:Dr9}
with Lemmas \ref{LEM:Dr} and  \ref{LEM:CZ}
that 
\begin{align}
\inf_{N \in \mathbb{N}} \inf_{\dr \in \Ha} \W_N^{\dia\dia}(\dr) 
\geq 
\inf_{N \in \mathbb{N}} \inf_{\dr \in \Ha}
\Big\{ -C_0 + \frac{1}{10}\U^{\dia\dia}_N(\dr)\Big\}
 \geq - C_0 >-\infty.
\label{KZ14a}
\end{align}

\noi
Then, the uniform exponential integrability
\eqref{exp1c}
for 
$R_N^{\dia\dia}(u)$ defined in 
\eqref{K5}
follows from 
the  Bou\'e-Dupuis variational formula~\eqref{K7}.

\begin{remark}\rm
As mentioned in Section \ref{SEC:1}, 
 the uniform exponential integrability
 \eqref{exp1c} holds
 only for the first moment but not for 
 higher moments.
 This is because, in 
the renormalization~\eqref{K5}, 
 the  constant $C_N$ was introduced
to cancel a divergent interaction 
in computing the first moment,
(which is not suitable for higher moments). 

\end{remark}

Finally, we prove tightness of the truncated Gibbs measures $\{\rho_N\}_{N\in \N}$.
Fix small $\eps > 0$ and 
let 
$ B_R \subset H^{-\frac 12 - \eps}(\T^3)$ 
be the closed ball of radius $R> 0$ centered at the origin.
Then, by Rellich's compactness lemma, 
we see that  $B_R$ is compact in $H^{-\frac 12 - 2\eps}(\T^3)$.
In the following, we establish a uniform bound on $\rho_N(B_R^c)$, $N \in \N$, 
by assuming that 
a unique limit $Z = \lim_{N\to \infty} Z_N \in (0, \infty)$ exists.\footnote{More precisely, 
we need a uniform (in $N$) lower bound on $Z_N$. See \eqref{T1}.}
We will prove this latter fact in the next subsection. See \eqref{KZ15}.

Given $M \gg1 $, 
let $F$ be a bounded smooth non-negative function such that $F(u) = 0 $
if $\|u\|_{H^{-\frac 12 - \eps} }> R$
and 
$F(u) = M $
if $\|u\|_{H^{-\frac 12 - \eps}} \leq \frac R2$.
Then, we have 
\begin{align}
\rho_N(B_R^c) \leq Z_N^{-1} \int e^{-F(u)-R_N^{\dia\dia}(u)} d\mu
\les \int e^{-F(u)-R_N^{\dia\dia}(u)} d\mu
=: \ft Z_N, 
\label{T1}
\end{align}

\noi
uniformly in  $N \gg 1$.
Under  the change of variables \eqref{YZ13}, 
define $\wt R^{\dia\dia}_{N}(Y+ \Ups^{N}- \ZZ_{N})$  by 
\begin{align}
\begin{split}
\wt R^{\dia\dia}_{N}(Y+ \Ups^{N}- \ZZ_{N})
& =  \frac 12 \int_{\T^3} (V_0 \ast :\! Y_N^2 \!: ) \Dr_N^2 dx
+ \int_{\T^3} [(V_0 \ast (Y_N \Dr_N )) Y_N \Dr_N]^\dia dx\\
&\hphantom{X}
+ \int_{\T^3} (V_0 \ast \Dr_N^2) Y_N \Dr_N dx
+ \frac 14 \int_{\T^3} (V_0 \ast \Dr_N^2 ) \Dr_N^2 dx
\\
&\hphantom{X}
+ \frac 14 \bigg\{ \int_{\T^3} \Big( :\! Y_N^2 \!: + 2 Y_N \Dr_N + \Dr_N^2 \Big) dx \bigg\}^2, 
\end{split}
\label{KZ16}
\end{align}

\noi
where 
$Y_N = \pi_N Y$
and $\Dr_N = \pi_N \Dr = \pi_N (\Ups^N - \ZZ_N)$.
Then, by 
the Bou\'e-Dupuis formula (Lemma~\ref{LEM:var3}),\footnote{Here, we apply 
the Bou\'e-Dupuis formula (Lemma~\ref{LEM:var3})
for $F$ on rough functions but this can be justified by a limiting argument.
A similar comment applies in the following.}
we have
\begin{align}
\begin{split}
-\log \ft Z_{N}
= \inf_{\dot \Ups^{N}\in  \mathbb H_a^1}
\E \bigg[& F(Y+ \Ups^{N}- \ZZ_{N}) \\
& +  \wt R^{\dia\dia}_{N}(Y+ \Ups^{N}- \ZZ_{N}) 
+ \frac12 \int_0^1 \| \dot \Ups^{N}(t) \|_{H^1_x}^2dt \bigg],
\end{split}
\label{T2}
\end{align}

\noi
where 
$\Ha^1$ denotes 
  the space of drifts, which are the progressively measurable processes 
 belonging to
$L^2([0,1]; H^1(\T^3))$, $\PP$-almost surely, namely,
\begin{align}
\Ha^1 := \jb{\nb}^{-1} \Ha.
\label{TT1}
\end{align}

Recall that 
$Y- \ZZ_N \in \H_{\leq 3}$.
Then, by the Wiener chaos estimate (Lemma \ref{LEM:hyp})
and 
Chebyshev's inequality, we have, for some $c > 0$, 
\begin{align}
\begin{split}
\PP \Big(   \|Y+\Ups^{N}- \ZZ_{N}
\|_{H^{-\frac 12 - \eps}} >  \tfrac R2
\Big) 
&  \leq  \PP \Big( \|Y- \ZZ_N
\|_{H^{-\frac 12 - \eps}} >  \tfrac R4
\Big) 
+ \PP \Big( \|\Ups^{N}\|_{H^1} >  \tfrac R4
\Big) \\
& \leq e^{- c R^\frac{2}{3}}
+ \frac {16}{R^2} \E\Big[\|\Ups^N\|_{H^1_x}^2\Big]
\end{split}
\label{T3}
\end{align}

\noi
uniformly in $N \in \N$ and $R\gg1$.
Thus, by choosing $M = \frac{1}{64} R^2 \gg1$, 
it follows from the definition of $F$, \eqref{T3},  and Lemma \ref{LEM:Dr}
that 
\begin{align}
\begin{split}
\E \Big[ F(Y+ \Ups^{N}- \ZZ_{N})
& \cdot \ind_{ \big\{\|Y+ \Ups^{N}- \ZZ_{N}
\|_{H^{-\frac 12 - \eps}} \leq  \tfrac R2\big\}}\Big]
 \geq \frac{M}{2}
-  \frac {16M }{R^2} \E\Big[\|\Ups^N\|_{H^1_x}^2\Big]\\
&  \geq \frac{M}{2}
-  \frac {1}{4}\E\bigg[ \int_0^1 \| \dot \Ups^{N}(t) \|_{H^1_x}^2dt \bigg].
\end{split}
\label{T4}
\end{align}

\noi
Then, from \eqref{T2}, \eqref{T4}, 
and repeating the computation leading to \eqref{KZ14a}, we obtain
\begin{align}
\begin{split}
-\log \ft Z_{N}
& \geq \frac M{2}
+ 
 \inf_{\dot \Ups^{N}\in  \mathbb H_a^1}
\E \bigg[
   \wt R^{\dia\dia}_{N}(Y+ \Ups^{N}- \ZZ_{N}) 
+ \frac14 \int_0^1 \| \dot \Ups^{N}(t) \|_{H^1_x}^2dt \bigg]\\
& \geq \frac M4, 
\end{split}
\label{T5}
\end{align}

\noi
uniformly $N \in \N$ and $M = \frac{1}{64}  R^2 \gg 1$.
Therefore, given any small $\dl > 0$, by choosing $R = R(\dl ) \gg1 $
and setting $M = \frac 1{64} R^2\gg1$, 
we obtain, from 
\eqref{T1} and~\eqref{T5}, 
\begin{align*}
\sup_{N \in \N} \rho_N(B_R^c)< \dl.
\end{align*}

\noi
This proves 
tightness of the truncated Gibbs measures $\{\rho_N\}_{N\in \N}$.

\subsection{Uniqueness of the limiting Gibbs measure for $0 < \be \le \frac 12$}
\label{SUBSEC:def4}

When $\be > \frac 12$, 
the uniform exponential integrability combined with 
Lemma \ref{LEM:conv} and Remark \ref{REM:Bb}
allowed us to conclude the convergence  of the truncated Gibbs measures.
This argument, however, fails 
for $0< \be \le \frac 12$
due to the non-convergence of 
$\{R_N^{\dia\dia}\}_{N\in \N}$, which can be seen from the proof of 
Lemma \ref{LEM:conv} (see the term $Q_{N, 1}$).
Nonetheless, 
the tightness of the truncated Gibbs measures $\{\rho_N\}_{N\in \N}$, proven in the previous subsection,
together with Prokhorov's theorem implies
existence of a weakly convergent subsequence.
In this subsection, we prove uniqueness of 
the limiting Gibbs measure, which  allows us to conclude
the weak convergence of the whole sequence 
$\{\rho_N\}_{N\in \N}$.

\begin{proposition}\label{PROP:uniq}
Let $0 < \be \le \frac 12$.
Let $\{ \rho_{N^1_k}\}_{k = 1}^\infty$
and $\{ \rho_{N^2_k}\}_{k = 1}^\infty$ be 
two weakly convergent  subsequences
of the truncated Gibbs measures
$\{\rho_N\}_{N\in \N}$ defined in \eqref{K5a}, 
converging weakly to $\rho^{(1)}$ and $\rho^{(2)}$ as $k \to \infty$,  respectively.
 Then, we have 
$\rho^{(1)} = \rho^{(2)}$.

\end{proposition}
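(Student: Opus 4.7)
The strategy is to show that $\lim_{N\to\infty}\int e^{-F}\,d\rho_N$ exists for every bounded Lipschitz $F$ on $H^{-\frac 12 - 2\eps}(\T^3)$; since such functions form a determining class, Portmanteau's theorem then forces $\rho^{(1)} = \rho^{(2)}$. Fix such an $F$ and write $\int e^{-F}\,d\rho_N = \ft Z_N/Z_N$, with $\ft Z_N$ as in \eqref{T1} and $Z_N$ as in \eqref{K6}. It suffices to show that $\lim_N(-\log Z_N)$ and $\lim_N(-\log \ft Z_N)$ both exist and are finite. The uniform lower bound $-\log Z_N \ge -C$ and its analogue with $F$ added are exactly \eqref{KZ14a}; the matching uniform upper bound on $-\log Z_N$, i.e., $Z_N \ges c > 0$ as claimed after \eqref{T1}, comes from inserting the trivial test drift $\dot\Ups^N \equiv 0$ into the Bou\'e--Dupuis formula \eqref{K7}, in which case $\Dr_N = -\ZZ_N$ and Lemmas \ref{LEM:Dr7}--\ref{LEM:Dr9} together with Appendix \ref{SEC:C} bound $\E[\wt R_N^{\dia\dia}(Y - \ZZ_N)]$ uniformly in $N$.

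To upgrade these bounds to convergence, I would compare $-\log Z_N = \inf_{\dot\Ups^N \in \Ha^1} \W_N^{\dia\dia}(\dot\Ups^N)$ from \eqref{K7}--\eqref{K9} against the natural $N = \infty$ functional $\W_\infty^{\dia\dia}(\dot\Ups) := \E\big[\wt R^{\dia\dia}(Y + \Ups - \ZZ) + \frac 12 \int_0^1 \|\dot\Ups(t)\|_{H^1_x}^2\,dt\big]$, in which $\wt R^{\dia\dia}$ is obtained from \eqref{KZ16} by replacing each stochastic ingredient ($Y_N$, $:\!Y_N^2\!:$, $\ZZ_N$, the integral operator $T_N$ of \eqref{KZ4}, and the renormalized products $(V_0\ast :\!Y_N^2\!:)\ZZ_N^j$ and $[(V_0\ast(Y_N\ZZ_N))Y_N\ZZ_N^j]^\dia$ for $j = 0,1$) by its $N = \infty$ limit in the norm supplied by Lemmas \ref{LEM:stoconv}, \ref{LEM:Dr9} or Appendix \ref{SEC:C}. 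The $\limsup \le \inf_{\dot\Ups \in \Ha^1}\W_\infty^{\dia\dia}$ direction is obtained by testing \eqref{K7} against $\dot\Ups^N := \pi_N\dot\Ups$ for an arbitrary fixed $\dot\Ups \in \Ha^1$ with $\W_\infty^{\dia\dia}(\dot\Ups) < \infty$ and passing to the limit via the product estimates of Lemmas \ref{LEM:Dr7}--\ref{LEM:Dr9} and dominated convergence. For the matching $\liminf$, I take $\frac 1N$-near minimizers $\dot\Ups^N_\star$; the coercivity in \eqref{KZ14a} yields uniform $L^2_\omega L^2_t H^1_x$-bounds on $\dot\Ups^N_\star$, $L^4_\omega L^2_x$-bounds on $\Ups^N_\star(1)$, and $L^2_\omega \dot H^{-\frac\be 2}$-bounds on $(\Ups^N_\star(1))^2$, from which I extract a weak cluster point $\dot\Ups_\infty \in \Ha^1$ along a subsequence. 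The three positive terms forming $\U_N^{\dia\dia}$ in \eqref{K10} are convex, hence weakly lower semicontinuous; every sign-indefinite summand of $\wt R_N^{\dia\dia}$ pairs a strongly converging stochastic datum with a polynomial of degree $\le 2$ in $\Ups^N_\star$, and a Rellich upgrade of $\Ups^N_\star \rightharpoonup \Ups_\infty$ in $H^1_x$ to strong convergence in $L^4_x$ then allows me to pass to the limit. Running the same scheme with $F$ included (which is continuous on $H^{-\frac 12 - 2\eps}_x$, where $\Ups^N_\star \to \Ups_\infty$ strongly) delivers convergence of $-\log \ft Z_N$, whence of $\int e^{-F}\,d\rho_N$.

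The main obstacle is the weak-to-weak passage for the operator-type term $\jb{T_N \jb{\nabla}^{1-\eps}\Ups^N_\star,\, \jb{\nabla}^{1-\eps}\Ups^N_\star}_{L^2_x}$ controlling $\int[(V_0 \ast (Y_N \Ups^N_\star))Y_N \Ups^N_\star]^\dia\,dx$: the kernel bound \eqref{KZ6} forces $\alpha \ge \frac 52$ when $\be \le \frac 12$, so $k_N$ is not uniformly in $L^2(\T^3 \times \T^3)$ and $T_N$ is not Hilbert--Schmidt, whence neither the limit $T$ nor the convergence $T_N \to T$ in operator norm on $L^2$ is immediate. What rescues the argument is that $T$ carries a genuine $(1-\eps)$-fold smoothing in each argument through the $\jb{\nabla}_x^{-1+\eps}\jb{\nabla}_y^{-1+\eps}$ factors in \eqref{KZ5}; combined with the Rellich upgrade this makes $\Ups \mapsto \jb{T \jb{\nabla}^{1-\eps}\Ups,\, \jb{\nabla}^{1-\eps}\Ups}_{L^2_x}$ sequentially continuous on $H^1_x$-weakly convergent sequences, provided one also has operator-norm convergence $T_N \to T$ on a suitable negative-regularity Sobolev space, which in turn follows by running the Schur-type estimate from the proof of Lemma \ref{LEM:Dr9} on the difference $k_N - k_M$. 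Once this ingredient is in place, the matched one-sided bounds yield convergence of both partition functions and hence uniqueness of $\rho$.
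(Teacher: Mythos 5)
Your proposal is a genuinely different route from the paper's: you aim for a $\Gamma$-convergence argument, constructing a limiting variational functional $\W^{\dia\dia}_\infty$ and proving $-\log Z_N \to \inf \W^{\dia\dia}_\infty$ via matching $\limsup/\liminf$ inequalities, whereas the paper never passes a weak limit in the drift at all. Instead, for two subsequences with $N^1_k \ge N^2_k$, the paper takes an almost-optimizer $\UUps^{N^2_k}$ for the smaller-$N$ variational problem, projects it by $\pi_{N^2_k}$, and inserts it as a competitor in the larger-$N$ problem — exploiting that $\pi_{N^1_k}\pi_{N^2_k}\UUps^{N^2_k} = \pi_{N^2_k}\UUps^{N^2_k}$, so the drift is unchanged under the larger cutoff. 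The difference $\W^{\dia\dia}_{N^1_k}(\pi_{N^2_k}\UUps^{N^2_k}) - \W^{\dia\dia}_{N^2_k}(\UUps^{N^2_k})$ then consists solely of differences of stochastic ingredients, which decay like $(N^2_k)^{-a}$ by the convergence statements of Lemmas~\ref{LEM:stoconv}, \ref{LEM:IV}, and~\ref{LEM:CZ}. The only thing the paper needs from the near-optimizer is a uniform moment bound (as in~\eqref{KZ20a}), not any compactness or weak limit, and this sidesteps the difficulties your route runs into.

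That said, your $\liminf$ step has a genuine gap which I do not see how to fill with the ingredients you propose. First, your claim that ``every sign-indefinite summand of $\wt R^{\dia\dia}_N$ pairs a strongly converging stochastic datum with a polynomial of degree $\le 2$ in $\Ups^N_\star$'' is false: the third term in \eqref{K9}, $\int (V_0 \ast \Dr_N^2) Y_N \Dr_N\, dx$, is cubic in $\Dr_N = \Ups_N - \ZZ_N$ and hence produces a sign-indefinite term of degree $3$ in $\Ups_N$ (and the nonnegative quartic also produces sign-indefinite degree-$3$ cross-terms once expanded). Second, and more fundamentally, the ``Rellich upgrade'' from weak convergence to strong $L^4_x$ convergence does not work here: what you can extract from the coercivity is a weak cluster point in $L^2(\O\times[0,1];H^1(\T^3))$, and this \emph{Bochner-space} weak convergence is not the same as $\omega$-almost-sure weak convergence in $H^1(\T^3)$. (Consider $f_N(\omega,x)=g_N(\omega)e_0(x)$ with $g_N$ i.i.d.\ standard Gaussians: $f_N \rightharpoonup 0$ in $L^2_\o L^2_x$, yet $f_N$ diverges a.s.) So Rellich--Kondrachov is inapplicable for a.e.\ $\omega$, and you cannot pass to the limit in the sign-indefinite quadratic and cubic terms. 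Third, even the nonnegative quartic $\frac 14\int (V_0\ast\Dr^2)\Dr^2\,dx$ is not convex in $\Dr$ (its Hessian $\Dr\mapsto 8\|T(\Dr\otimes h)\|^2 + 4\langle T(\Dr\otimes\Dr), T(h\otimes h)\rangle$ has a sign-indefinite second piece), so weak lower semicontinuity along weak $L^2_\omega H^1_x$-convergence of random drifts is not automatic and you would need a separate argument. These are precisely the obstructions that led the paper to its more direct comparison strategy; absent a mechanism to obtain $\omega$-wise strong convergence of the near-optimizers (e.g.\ a Skorokhod/Jakubowski embedding argument, or a monotonicity-in-$N$ decomposition as in the paper), the $\liminf$ inequality does not close.
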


\begin{proof}

By taking a further subsequence, we may assume that 
 $N^1_k \ge N^2_k$, $k \in \N$.
We first show that 
\begin{align}
\lim_{k \to \infty} Z_{N^1_k} = \lim_{k \to \infty} Z_{N^2_k}, 
\label{KZ15}
\end{align}

\noi
where $Z_N$ is as in \eqref{K6}.
Let $Y = Y(1)$  be as in \eqref{P2}.
Recall the change of variables~\eqref{YZ13} from the previous section
and
let  $\wt R^{\dia\dia}_{N}(Y+ \Ups^{N}- \ZZ_{N})$ 
be as in \eqref{KZ16}.
Then, by 
the Bou\'e-Dupuis formula (Lemma~\ref{LEM:var3}), we have
\begin{align}
-\log Z_{N^j_k}
= \inf_{\dot \Ups^{N^j_k}\in  \mathbb H_a^1}
\E \bigg[ \wt R^{\dia\dia}_{N^j_k}(Y+ \Ups^{N^j_k}- \ZZ_{N^j_k}) + \frac12 \int_0^1 \| \dot \Ups^{N^j_k}(t) \|_{H^1_x}^2dt \bigg]
\label{KZ17}
\end{align}

\noi
for $j= 1, 2$ and $k \in \N$.
Recall that  $Y$ and $\ZZ_N$ do not depend
on the drift $\dot \Ups^N$
in the Bou\'e-Dupuis formula~\eqref{KZ17}.

Given $\dl > 0$, 
let  $\UUps^{N^2_k}$ be an almost optimizer for \eqref{KZ17}:\footnote{For 
an almost optimizer $\UUps^{N^2_k}$ of the
minimization problem \eqref{KZ17}, we may assume that 
$\UUps^{N^2_k} = \pi_{N^2_k}\UUps^{N^2_k}$.
We, however, do not use this fact 
in view of a more general minimization problem
\eqref{KZ24} below.}
\begin{align}
-\log Z_{N^2_k}
& \ge 
\E \bigg[ \wt R^{\dia\dia}_{N^2_k}(Y+ \UUps^{N^2_k}- \ZZ_{N^2_k}) 
+ \frac12 \int_0^1 \| \dot  \UUps^{N^2_k}(t) \|_{H^1_x}^2dt \bigg] - \dl.
\label{KZ17a}
\end{align}

\noi
Then, 
 by choosing $\Ups^{N^1_k} = \UUps_{N^2_k}: =\pi_{N^2_k}\UUps^{N^2_k}$, 
 we have 
\begin{align}
- & \log  Z_{N^1_k}
+\log Z_{N^2_k} \notag \\
& \leq \inf_{\dot \Ups^{N^1_k}\in  \mathbb H_a^1} 
\E \bigg[ \wt R^{\dia\dia}_{N^1_k}(Y+ \Ups^{N^1_k}- \ZZ_{N^1_k}) + \frac12 \int_0^1 \| \dot \Ups^{N^1_k}(t) \|_{H^1_x}^2dt \bigg]\notag \\
& \quad \quad 
- \E \bigg[ \wt R^{\dia\dia}_{N^2_k}(Y+ \UUps^{N^2_k}- \ZZ_{N^2_k}) 
+ \frac12 \int_0^1 \| \dot  \UUps^{N^2_k}(t) \|_{H^1_x}^2dt \bigg] + \dl\notag \\
& \le 
\E \bigg[ \wt R^{\dia\dia}_{N^1_k}(Y+ \UUps_{N^2_k}- \ZZ_{N^1_k}) + \frac12 \int_0^1 \| \dot \UUps_{N^2_k}(t) \|_{H^1_x}^2dt \bigg]\notag \\
& \quad\quad 
- \E \bigg[ \wt R^{\dia\dia}_{N^2_k}(Y+ \UUps^{N^2_k}- \ZZ_{N^2_k}) 
+ \frac12 \int_0^1 \| \dot  \UUps^{N^2_k}(t) \|_{H^1_x}^2dt \bigg] + \dl\notag \\
& \leq   \E\Big[
\wt R^{\dia\dia}(Y_{N^1_k}+ \UUps_{N^2_k}- \ZZ_{N^1_k})
- \wt R^{\dia\dia}(Y_{N^2_k}+ \UUps_{N^2_k}- \ZZ_{N^2_k}) \Big]+ \dl, 
\label{KZ18}
\end{align}

\noi
where 
$ \wt R^{\dia\dia}$ is defined by 
\begin{align}
\begin{split}
\wt R^{\dia\dia}(Y+ \Ups- \ZZ)
& =  \frac 12 \int_{\T^3} (V_0 \ast :\! Y^2 \!: ) \Dr^2 dx
+ \int_{\T^3} [(V_0 \ast (Y \Dr )) Y \Dr]^\dia dx\\
&\hphantom{X}
+ \int_{\T^3} (V_0 \ast \Dr^2) Y \Dr dx
+ \frac 14 \int_{\T^3} (V_0 \ast \Dr^2 ) \Dr^2 dx
\\
&\hphantom{X}
+ \frac 14 \bigg\{ \int_{\T^3} \Big( :\! Y^2 \!: + 2 Y \Dr + \Dr^2 \Big) dx \bigg\}^2
\end{split}
\label{KZ18a}
\end{align}

\noi
for $\Dr = \Ups - \ZZ$.
At the last inequality in \eqref{KZ18}, we used the fact that 
$\pi_{N^1_k} \UUps_{N^2_k}  = \UUps_{N^2_k} $
under the assumption $N^1_k \ge N^2_k$.

In the following, we discuss how to estimate the difference
\begin{align}
 \E\Big[\wt R^{\dia\dia}(Y_{N^1_k}+ \UUps_{N^2_k}- \ZZ_{N^1_k})
- \wt R^{\dia\dia}(Y_{N^2_k}+ \UUps_{N^2_k}- \ZZ_{N^2_k})\Big].
\label{KZ18b}
\end{align}

\noi
The main point is that differences appear only for $Y$-terms and $\ZZ$-terms
(creating a negative power of $N^2_k$).
The contribution from 
the first term on the right-hand side
in \eqref{KZ18a}  is given by 
\begin{align}
\begin{split}
\E& \bigg[ \frac 12 \int_{\T^3} \big(V_0 \ast (
 :\! Y_{N^1_k}^2 \!: - :\! Y_{N^2_k}^2 \!: ) \big) \UUps_{N^2_k}^2 dx\bigg]\\
& - \E\bigg[ \frac 12 \int_{\T^3} \big(V_0 \ast (
 :\! Y_{N^1_k}^2 \!: - :\! Y_{N^2_k}^2 \!: ) \big) (2\UUps_{N^2_k} - \ZZ_{N^1_k})
 \ZZ_{N^1_k} dx\bigg]\\
& -  \E\bigg[\frac 12 \int_{\T^3} (V_0 \ast :\! Y_{N^2_k}^2 \!: ) 
(2\UUps_{N^2_k} - \ZZ_{N^1_k} - \ZZ_{N^2_k})
(\ZZ_{N^1_k} - \ZZ_{N^2_k}) dx\bigg].
\end{split}
\label{KZ19}
\end{align}

\noi
By slightly modifying the analysis in Subsections \ref{SUBSEC:def2}
and \ref{SUBSEC:def3}, we can bound  each term in~\eqref{KZ19} 
by 
\begin{align}
 (N^2_k)^{-a}\Big( 
C(Y_{N^1_k}, Y_{N^2_k},  \ZZ_{N^1_k}, \ZZ_{N^2_k})
+ 
\U^{\dia\dia}_{N^2_k}\Big)
\les  (N^2_k)^{-a}\Big( 
1 
+ 
\U^{\dia\dia}_{N^2_k}\Big), 
\label{KZ20}
\end{align}

\noi
for some small $a > 0$, where $\U^{\dia\dia}_{N^2_k}$ 
is given by  \eqref{K10} with $\Ups_N = \UUps_{N^2_k}$
and $\Ups^N = \UUps^{N^2_k}$
and $C(Y_{N^1_k}, Y_{N^2_k},  \ZZ_{N^1_k}, \ZZ_{N^2_k})$ denotes
certain high moments of various stochastic terms involving
$Y_{N^j_k}$ and $\ZZ_{N^j_k}$, $j = 1, 2$.
For example, 
proceeding as in \eqref{YZ4a}
together with H\"older's inequality in $\o$, 
we can estimate 
the first term in \eqref{KZ19} by
\begin{align*}
&\les \E\Big[\| :\! Y_{N^1_k}^2 \!: - :\! Y_{N^2_k}^2 \!:  \|_{\C^{-1-\eps}} 
\| \UUps_{N^2_k} \|_{L^2}^{1 + \be - 2\eps } \|\UUps_{N^2_k} \|_{H^1}^{1 - \be +  2\eps}\Big] \\
&\le \| :\! Y_{N^1_k}^2 \!: - :\! Y_{N^2_k}^2 \!:  \|_{L^\frac 4{1+\be - 2\eps}_\o \C^{-1-\eps}_x} 
\| \UUps_{N^2_k} \|_{L^4_\o L^2_x}^{1 + \be - 2\eps } \|\UUps_{N^2_k} \|_{L^2_\o H^1_x}^{1 - \be +  2\eps}
 \\
&\les  (N^2_k)^{-a}
\| \UUps_{N^2_k} \|_{L^4_\o L^2_x}^{1 + \be - 2\eps } \|\UUps_{N^2_k} \|_{L^2_\o H^1_x}^{1 - \be +  2\eps}
\\
&\les  (N^2_k)^{-a}\Big( 1+ \U^{\dia\dia}_{N^2_k}\Big), 
\end{align*}

\noi
where the third inequality follows from
a modification of the proof of Lemma \ref{LEM:stoconv}
and 
 \eqref{reg22} in 
Lemma \ref{LEM:reg}.
By modifying the proofs of 
Lemmas \ref{LEM:Dr8}  and \ref{LEM:CZ},\footnote{\label{FT:xx}In order to obtain a decay  
$ (N^2_k)^{-a}$ from a variant of Lemma \ref{LEM:CZ}, we also need
to control the term $\ZZ_{N^1_k} - \ZZ_{N^2_k}$.
In view of the definitions \eqref{YZ12} and \eqref{K9a},  a modification of Lemma \ref{LEM:IV}
yields a decay  $ (N^2_k)^{-a}$ in estimating  
$\ZZ_{N^1_k} - \ZZ_{N^2_k}
= (\pi_{N^1_k} - \pi_{N^2_k})
\ZZ_{N^1_k}  + \pi_{N^2_k}(\ZZ_{N^1_k} - \ZZ_{N^2_k})$.}
we can also estimate 
the other two terms in~\eqref{KZ19} by~\eqref{KZ20}.

Similarly, 
the contribution to the difference \eqref{KZ18b}
from the third, fourth, and fifth terms in~\eqref{KZ18a}
can also be estimated by~\eqref{KZ20}.
As for the contribution from the second term in~\eqref{KZ18a}, 
we need to check that the difference $T_{N^1_k} - T_{N^2_k}$
of the operator $T_N$ defined in~\eqref{KZ4}
gives a decay $ (N^2_k)^{-a}$.
It follows from \eqref{KZ9} that 
in studying the difference $T_{N^1_k} - T_{N^2_k}$, 
we have an extra condition $\max(|n_1|, |n_2|) > N^2_k$ in 
\eqref{KZ10}, which allows us to gain a small negative power of  $N^2_k$.
Thus, we can also bound 
the contribution from the second term in \eqref{KZ18a}
by~\eqref{KZ20}.
Hence, we conclude that \eqref{KZ18b} is bounded by 
\eqref{KZ20}.

 It follows from (a slight modification of)
 Lemmas \ref{LEM:Dr7}, \ref{LEM:Dr8}, and \ref{LEM:Dr9}
 together with Lemmas~\ref{LEM:stoconv}, \ref{LEM:IV}, and 
\ref{LEM:CZ} that 
$ C(Y_{N^1_k}, Y_{N^2_k},  \ZZ_{N^1_k}, \ZZ_{N^2_k})$
in \eqref{KZ20} 
 is uniformly bounded in $N^1_k$ and $N^2_k$, $k \in \N$.
 Furthermore, 
 from 
 the discussion in Subsection \ref{SUBSEC:def3} (see \eqref{KZ14a}), 
 \eqref{KZ17a}, 
 and 
 \eqref{KZ17}, we have
\begin{align}
\begin{split}
\sup_{k \in \N} \, \U^{\dia\dia}_{N^2_k}
& \leq 10 C_0 + 10 
\sup_{k \in \N}\E \bigg[ \wt R^{\dia\dia}_{N^2_k}(Y+ \UUps^{N^2_k}- \ZZ_{N^2_k}) 
+ \frac12 \int_0^1 \| \dot  \UUps^{N^2_k}(t) \|_{H^1_x}^2dt \bigg] \\
& \leq 10(C_0 + \dl) 
+ 10 \sup_{k \in \N}
\E \bigg[ \wt R^{\dia\dia}_{N^2_k}(Y+ 0- \ZZ_{N^2_k})  \bigg]\\
& \les 1.
\end{split}
\label{KZ20a}
\end{align}

\noi
Therefore, we conclude that 
\begin{align}
 \E\bigg[\wt R^{\dia\dia}(Y_{N^1_k}+ \UUps_{N^2_k}- \ZZ_{N^1_k})
- \wt R^{\dia\dia}(Y_{N^2_k}+ \UUps_{N^2_k}- \ZZ_{N^2_k})\bigg]
\les  (N^2_k)^{-a}
\too 0
\label{KZ21}
\end{align}

\noi
as $k \to \infty$.
 Since the choice of $\dl > 0$ was arbitrary, 
we obtain, from \eqref{KZ18} and \eqref{KZ21}, 
\begin{align}
 \lim_{k \to \infty} Z_{N^1_k} \geq  \lim_{k \to \infty} Z_{N^2_k}. 
 \label{KZ22}
\end{align}

\noi
We proved \eqref{KZ22} under the assumption
 $N^1_k \ge N^2_k$, $k \in \N$.
By extracting a further subsequence, still denoted by 
 $N^1_k$ and $N^2_k$, 
we can assume that 
 $N^1_k \le  N^2_k$, $k \in \N$, which leads
 to 
\begin{align}
 \lim_{k \to \infty} Z_{N^1_k} \leq  \lim_{k \to \infty} Z_{N^2_k}, 
 \label{KZ22a}
\end{align}

\noi
since the limit,  
$ \lim_{k \to \infty} Z_{N^j_k}$, $j = 1, 2$, remains
the same under the extraction of subsequences.
Hence, from \eqref{KZ22} and \eqref{KZ22a}, 
we  conclude
\eqref{KZ15}.

Next, we show $\rho^{(1)} = \rho^{(2)}$.
This claim follows from a small variation of the argument presented above.
For this purpose, it suffices  to prove that for every bounded Lipschitz continuous function $F: \C^{-100}(\T^3) \to \R$,
we have  
\begin{align*}
 \lim_{k\to \infty} \int \exp(F(u)) d \rho_{N^1_k} 
 \ge  \lim_{k\to \infty} \int \exp(F(u)) d \rho_{N^2_k}.
\end{align*}

\noi
In view of \eqref{KZ15}, it suffices to show
\begin{align}
\begin{split}
\limsup_{k\to \infty} \bigg[  -\log\bigg(\int  & \exp(  F(u)
 -R_{N^1_k}^{\dia\dia}(u))
d\mu \bigg) \\
& + \log\bigg(\int \exp(F(u) -R_{N^2_k}^{\dia\dia}(u)) d \mu\bigg)\bigg] \le 0.
 \label{KZ23}
\end{split}
\end{align}

\noi
As before, we assume  $N^1_k \ge N^2_k$, $k \in \N$, without loss of generality.
By the Bou\'e-Dupuis formula (Lemma \ref{LEM:var3}), we have 
\begin{align}
\begin{split}
- & \log  \bigg(\int \exp(F(u)
-R_{N^j_k}^{\dia\dia}(u))
 d \mu  \bigg)\\
&  = \inf_{\dot \Ups^{N^j_k}\in  \mathbb H_a^1}\E \bigg[ 
- F(Y+ \Ups^{N^j_k}- \ZZ_{N^j_k})\\
& \hphantom{XXXXXXl} + \wt R^{\dia\dia}_{N^j_k}(Y+ \Ups^{N^j_k}- \ZZ_{N^j_k}) + \frac12 \int_0^1 \| \dot \Ups^{N^j_k}(t) \|_{H^1_x}^2dt \bigg].
\end{split}
\label{KZ24}
\end{align}

Given $\dl > 0$, 
let  $\UUps^{N^2_k}$ be an almost optimizer for \eqref{KZ24}
with $j =2$:
\begin{align*}
-\log  & \bigg(\int \exp(F(u)
-R_{N^2_k}^{\dia\dia}(u))
 d \mu  \bigg)\\
& \ge 
\E \bigg[ 
- F(Y+ \UUps^{N^2_k}- \ZZ_{N^2_k})
+ 
\wt R^{\dia\dia}_{N^2_k}(Y+ \UUps^{N^2_k}- \ZZ_{N^2_k}) 
+ \frac12 \int_0^1 \| \dot  \UUps^{N^2_k}(t) \|_{H^1_x}^2dt \bigg] - \dl.
\end{align*}

\noi
Then, by choosing $\Ups^{N^1_k} = \UUps_{N^2_k} = \pi_{N^2_k}\UUps^{N^2_k}$
and proceeding as in \eqref{KZ18}, 
 we have 
\begin{align}
  - & \log\bigg(\int   \exp(  F(u)
 -R_{N^1_k}^{\dia\dia}(u))
d\mu \bigg) 
 + \log\bigg(\int \exp(F(u) -R_{N^2_k}^{\dia\dia}(u)) d \mu\bigg) \notag \\
& \le 
\E \bigg[ 
- F(Y+ \UUps_{N^2_k}- \ZZ_{N^1_k})
+ \wt R^{\dia\dia}_{N^1_k}(Y+ \UUps_{N^2_k}- \ZZ_{N^1_k}) + \frac12 \int_0^1 \| \dot \UUps_{N^2_k}(t) \|_{H^1_x}^2dt \bigg]\notag \\
& \quad
- \E \bigg[ 
- F(Y+ \UUps^{N^2_k}- \ZZ_{N^2_k})
+ \wt R^{\dia\dia}_{N^2_k}(Y+ \UUps^{N^2_k}- \ZZ_{N^2_k}) 
+ \frac12 \int_0^1 \| \dot  \UUps^{N^2_k}(t) \|_{H^1_x}^2dt \bigg] + \dl\notag \\
& \le  
\Lip (F)\cdot 
 \E\Big[ \|  \pi_{N^2_k}^\perp\UUps^{N^2_k}
 +  \ZZ_{N^1_k} -  \ZZ_{N^2_k}\|_{\C^{-100}} \Big] \notag \\
 & \quad +  \E\Big[
\wt R^{\dia\dia}(Y_{N^1_k}+ \UUps_{N^2_k}- \ZZ_{N^1_k})
- \wt R^{\dia\dia}(Y_{N^2_k}+ \UUps_{N^2_k}- \ZZ_{N^2_k}) \Big]+ \dl, 
\label{KZ26}
\end{align}

\noi
where $\pi_N^\perp = \text{Id} - \pi_N$.
We can proceed as before to show that  the second term
on the right-hand side of \eqref{KZ26}
satisfies \eqref{KZ21}. 
Here, we need to use the boundedness of $F$ in showing an analogue
of \eqref{KZ20a} in the current context.

By writing
\begin{align*}
 \E\Big[   \|  \pi_{N^2_k}^\perp\UUps^{N^2_k}
 +  \ZZ_{N^1_k} -  \ZZ_{N^2_k}\|_{\C^{-100}} \Big] 
 \leq
 \E\Big[ \|  \pi_{N^2_k}^\perp\UUps^{N^2_k}\|_{\C^{-100}} \Big] 
 + 
 \E\Big[ \| 
  \ZZ_{N^1_k}  -   \ZZ_{N^2_k}\|_{\C^{-100}} \Big], 
\end{align*}

\noi
we see
from Footnote \ref{FT:xx}
 that the second term on the right-hand side tends
to 0 as $k \to \infty$.  
As for the first term, 
from Lemma \ref{LEM:Dr} and (an analogue of) \eqref{KZ20a}, we obtain
\begin{align*}
 \E\Big[ \|  \pi_{N^2_k}^\perp\UUps^{N^2_k}\|_{\C^{-100}} \Big] 
 \les
(N^2_k)^{-a} \|  \UUps^{N^2_k}\|_{L^2_\o H^1} 
 \les
(N^2_k)^{-a} \Big(\sup_{k \in \N} \, \U_{N^2_k}^{\dia\dia}\Big)^\frac{1}{2}
\too 0 , 
\end{align*}

\noi
as $k \to \infty$.
Since the choice of $\dl > 0$ was arbitrary, 
we conclude \eqref{KZ23}
and hence $\rho^{(1)} = \rho^{(2)}$ by symmetry.
This completes the proof of Proposition \ref{PROP:uniq}.
\end{proof}

\subsection{Singularity of the defocusing Gibbs measure for $0 < \be \le \frac 12$}
\label{SUBSEC:def5}

In this subsection,
we prove that 
the Gibbs measure $\rho$ for $0<\be \le \frac 12$ is singular with respect to the reference Gaussian free field $\mu$.
While our proof is inspired by the discussion in Section 4 of \cite{BG2}, 
we directly prove singularity without referring to  the shifted measure.
In  Appendix \ref{SEC:D}, we show that the Gibbs measure is indeed absolutely continuous with respect 
to the shifted measure, namely, the law of~$Y(1) - \ZZ(1) + \W(1)$, 
where the auxiliary process $\W = \W(Y)$ is defined in~\eqref{AC0}.

Given $N \in \N$, define $A_N$ and $B_N$ by 
\begin{align}
A_N := \sum_{|n| \le N} \jb{n}^{-2\be-2} 
\sim
\begin{cases}
\log N, & \text{if } \be=\frac 12, \\
 N^{1-2\be}, & \text{if } \be<\frac 12, 
\end{cases}
\label{Ks0a}
\end{align}

\noi
and
\begin{align}
B_N := (\log N)^{-\frac 14} A_N^{-\frac 12}
\sim
\begin{cases}
(\log N)^{- \frac 34}, & \text{if } \be=\frac 12, \\
(\log N)^{-\frac 14} N^{\be-\frac 12}, & \text{if } \be<\frac 12.
\end{cases}
\label{Ks0}
\end{align}

\begin{proposition}\label{PROP:sing}
Let  $0 < \be \le \frac 12$, $\eps>0$, 
and  $R_N^{\dia}$ be as  in \eqref{K1r}.
Then, 
there exists a strictly increasing sequence $\{ N_k \}_{k =1}^\infty \subset \N$ such that
the set
\begin{align*}
S := \big\{ u \in H^{-\frac 12-\eps}(\T^3) : \lim_{k \to \infty} B_{N_k} R_{N_k}^{\dia}(u) =0 \big\}
\end{align*}
has $\mu$-full measure\textup{:} $\mu (S) = 1$.
Furthermore, 
we have
\begin{align}
\rho (S) =0.
\label{Ks2}
\end{align}

\noi
In particular, the Gibbs measure $\rho$ and 
the Gaussian free field $\mu$ are mutually singular for $0 < \be \le \frac 12$.

\end{proposition}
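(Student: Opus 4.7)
The plan is to establish $\mu(S) = 1$ and $\rho(S) = 0$ in turn, with the latter being the main novelty of the ``direct'' approach.

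For the first assertion, I would begin by proving $\|R_N^\dia\|_{L^2(\mu)}^2 \les A_N$. Inspecting the decomposition~\eqref{K3}--\eqref{B4}, the Wick-square renormalization $-\int_{\T^3}:\!(K_N^{\frac12}\ast u_N)^2\!:dx$ in~\eqref{K1r} cancels $\tfrac14 Q_{N,3}$ exactly; the contributions $Q_{N,2}$, $Q_{N,4}$, and $\big(\int_{\T^3}:\!u_N^2\!:dx\big)^2$ are uniformly bounded in $L^2(\mu)$; and the dominant piece $Q_{N,1}$, via Wick's theorem and Lemma~\ref{LEM:SUM}, satisfies $\|Q_{N,1}\|_{L^2(\mu)}^2 \les \sum_{n}\jb{n}^{-2-2\be} \sim A_N$ for $\be \le \tfrac12$. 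Hence $\|B_N R_N^\dia\|_{L^2(\mu)}^2 \les (\log N)^{-1/2} \to 0$. A Chebyshev-plus-Borel--Cantelli argument applied to a sufficiently sparse subsequence $\{N_k\}$ (for instance $N_k = 2^{k^8}$) then yields $B_{N_k} R_{N_k}^\dia \to 0$ $\mu$-almost surely, so $\mu(S) = 1$.

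For $\rho(S) = 0$, the core identity is $R_N^\dia = R_N^{\dia\dia} - C_N$ from~\eqref{K5}, combined with the divergence $C_N \sim A_N$ of the counter-term in~\eqref{YZ14}. On the event $\{|B_N R_N^\dia| < \epsilon\}$ one has the \emph{pointwise} lower bound $R_N^{\dia\dia} \ge C_N - \epsilon/B_N$, so the density $d\rho_N/d\mu = Z_N^{-1}e^{-R_N^{\dia\dia}}$ satisfies
\[
\rho_N\bigl(|B_N R_N^\dia| < \epsilon\bigr) \le Z_N^{-1} e^{-C_N + \epsilon/B_N}.
\]
Together with the uniform lower bound $Z_N \ge c > 0$---obtained by evaluating the Bou\'e--Dupuis functional~\eqref{K8} at the drift that makes $\dot\Ups^N \equiv 0$ in the change of variables~\eqref{YZ13}, thereby absorbing the divergent $C_N$-piece---and the asymptotics $\epsilon/B_N = \epsilon(\log N)^{1/4} A_N^{1/2} = o(A_N) = o(C_N)$, valid for both $\be < \tfrac12$ and $\be = \tfrac12$, this forces $\rho_N(|B_N R_N^\dia| < \epsilon) \to 0$ as $N \to \infty$.

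To transfer this bound to the limit measure $\rho$ along the subsequence $\{N_k\}$, I would use that $u \mapsto B_{N_k} R_{N_k}^\dia(u)$ is continuous on $H^{-\frac12-\eps}(\T^3)$, being a polynomial in the finitely many Fourier modes $\{\ft u(n) : |n| \le N_k\}$. For a continuous bump $\chi_\epsilon \in C_c(\R;[0,1])$ equal to $1$ on $[-\epsilon/2,\epsilon/2]$ and supported in $(-\epsilon, \epsilon)$, the composition $u \mapsto \chi_\epsilon(B_{N_k} R_{N_k}^\dia(u))$ is bounded continuous, so by Proposition~\ref{PROP:uniq} and weak convergence $\rho_M \rightharpoonup \rho$,
\[
\E_\rho\bigl[\chi_\epsilon(B_{N_k} R_{N_k}^\dia)\bigr]
= \lim_{M\to\infty}\E_{\rho_M}\bigl[\chi_\epsilon(B_{N_k} R_{N_k}^\dia)\bigr]
\le \limsup_{M\to\infty} \rho_M\bigl(|B_{N_k} R_{N_k}^\dia| < \epsilon\bigr).
\]
Applying the density estimate above to $\rho_M$ for $M \ge N_k$, while controlling the cross contribution $R_M^{\dia\dia} - R_{N_k}^{\dia\dia}$ on the high-mode Gaussian conditional (a uniform-in-$M$ bound in the spirit of the tightness argument in Subsection~\ref{SUBSEC:def3}), yields that the right-hand side tends to $0$ as $k \to \infty$, hence $\rho(|B_{N_k} R_{N_k}^\dia| \le \epsilon/2) \to 0$ for every $\epsilon > 0$, which forces $\rho(S) = 0$. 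The main obstacle I anticipate is precisely this last uniform-in-$M$ step: for $M$ much larger than $N_k$ the density $e^{-R_M^{\dia\dia}}/Z_M$ is no longer controlled by the low-mode inequality alone, and one must argue---via conditioning on $\pi_{N_k} u$ and integrating out the high modes---that the divergent factor $e^{-C_M}$ continues to dominate.
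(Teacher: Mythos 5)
The $\mu(S)=1$ part of your argument is correct and is exactly the paper's: the cancellation of $Q_{N,3}$ by the extra renormalization in~\eqref{K1r}, the computation $\|R_N^\dia\|_{L^2(\mu)}^2\lesssim A_N$, and a Borel--Cantelli extraction along a sparse subsequence. Your derivation of the matched-index bound $\rho_N(|B_N R_N^\dia|<\epsilon)\le Z_N^{-1}e^{-C_N+\epsilon/B_N}$ and the uniform lower bound $Z_N\ge c>0$ (via the admissible drift $\dot\Ups^N\equiv 0$) are also correct.

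The genuine gap is precisely where you say it is, but I think you underestimate it. To use weak convergence $\rho_M\rightharpoonup\rho$ you must bound $\rho_M(|B_{N_k}R_{N_k}^\dia|<\epsilon)$ for \emph{all} $M\ge N_k$, not just $M=N_k$, and this requires controlling the quantity $\int e^{-(R_M^{\dia\dia}-R_{N_k}^{\dia\dia})}\,d\mu$ uniformly in $N_k\le M$. That increment has no sign (the renormalized quartic energy $\int(V*:\!u_M^2\!:\,):\!u_M^2\!:\,dx$ is not monotone in $M$), and any attempt to split it by H\"older or Cauchy--Schwarz produces a factor of the form $\E_\mu[e^{+pR_{N_k}^{\dia\dia}}]$, which is a focusing-type exponential moment that need not be finite. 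A conditional ``integrate out the high modes'' step does not resolve this, because the cross terms between low and high modes in $R_M^{\dia\dia}$ are exactly the terms that carry the additional divergence beyond $C_{N_k}$. The paper sidesteps the issue entirely: instead of estimating $\rho(|B_{N_k}R_{N_k}^\dia|<\epsilon)$ directly, it introduces the coercive functional $G_k(u)=B_{N_k}R_{N_k}^\dia(u)-\|u\|_{\mathcal C^{-1/2-\eps}}^{10}$ and proves $\int e^{G_k}\,d\rho\to 0$ by running Fatou plus the Bou\'e--Dupuis formula on $\int e^{G_k}\,d\rho_M$ \emph{with both truncation levels coupled inside the variational problem}. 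There, the $-\|u\|^{10}$ term controls the drift $\Ups^M$ against the low-frequency potential, the Ito product formula converts the problematic low-frequency linear term into $-\E\int_0^1\|\dot\ZZ_{N_k}(t)\|_{H^1}^2\,dt\sim -A_{N_k}$, and the orthogonality statement of Lemma~\ref{LEM:Ks3} shows that the cross term $\E\int_0^1\int(1-\Dl)\dot\ZZ_{N_k}\cdot(\dot\ZZ_{N_k}-\dot\ZZ_M)\,dx\,dt$ is $O(1)$ uniformly in $M\ge N_k$, precisely neutralizing the high-frequency dependence that your direct approach leaves uncontrolled. Without an analogue of Lemma~\ref{LEM:Ks3} (or some substitute for it), your final uniform-in-$M$ step remains unproven.
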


\begin{proof}
By repeating the computation as in Subsection \ref{SUBSEC:R}, we have
\begin{align}
\| R_N^{\dia}(u) \|_{L^2(\mu)}^2
\les \sum_{|n| \le N} \jb{n}^{-2\be-2} = A_N.
\label{Ks3}
\end{align}

\noi
Then, from 
 \eqref{Ks0} and \eqref{Ks3},
we have
\[
\lim_{N \to \infty} B_N
\| R_N^{\dia}(u) \|_{L^2(\mu)}
\les \lim_{N \to \infty}
(\log N)^{-\frac 14}
= 0.
\]

\noi
Hence, there exists a subsequence such that 
\[
\lim_{k \to \infty} B_{N_k} R_{N_k}^{\dia}(u)
=0
\]
almost surely with respect to $\mu$.

Define $G_k(u)$ by 
\[ G_k(u) = B_{N_k} R_{N_k}^{\dia}(u) - \|u\|_{\C^{-\frac 12 - \eps}}^{10}\]

\noi
for some small $\eps > 0$.
In the following, 
 we show that 
$e^{G_k(u)}$ tends to $0$ in $L^1(\rho)$.
This will imply that there exists a subsequence
of $G_k(u)$ 
 tending to $- \infty$, 
almost surely with respect to the Gibbs measure $\rho$.
Recalling the almost sure boundedness of $\|u\|_{\C^{-\frac 12 - \eps}}^{10}$
under the Gibbs measure $\rho$, 
this shows that  
$B_{N_k} R_{N_k}^{\dia}$ tends $\rho$-almost surely to $- \infty$
along this subsequence,
 which in turn yields~\eqref{Ks2}.

Let $\phi$ be a smooth bump function as in Subsection \ref{SUBSEC:21}.
By Fatou's lemma, the weak convergence of $\rho_M$ to $\rho$, 
and the boundedness of $\phi$, we have
\begin{align}
\begin{split}
\int  e^{G_k(u)}d\rho(u)
& \leq \liminf_{K \to \infty}
\int \phi \bigg(\frac{G_k(u) }{K}\bigg)e^{G_k(u)}d\rho(u)\\
& = \liminf_{K \to \infty}
\lim_{M \to \infty}
\int \phi \bigg(\frac{G_k(u)}{K}\bigg)e^{G_k(u)}d\rho_M(u)\\
& \leq 
\lim_{M \to \infty}
\int e^{G_k(u)}d\rho_M(u)
=: Z^{-1} \lim_{M\to \infty} C_{M, k}, 
\end{split}
\label{Ks4}
\end{align}

\noi
provided that  $\lim_{M\to \infty} C_{M, k} $ exists.
Here,  $Z = \lim_{M \to \infty} Z_M$ denotes the partition function for~$\rho$,
which is well defined thanks to \eqref{KZ15}.
In the following, we show that the right-hand side of~\eqref{Ks4}
tends to $0$ as $k \to \infty$.

As in the previous subsection, we proceed with 
 the change of variables \eqref{YZ13}: 
$ \dot \Ups^M(t) : = \dot \Dr(t)  +  \dot \ZZ_M(t)$.
Then, 
by the Bou\'e-Dupuis formula (Lemma \ref{LEM:var3}), we have 
\begin{align}
\begin{aligned}
- \log C_{M, k} 
& 
= \inf_{\dot \Ups^{M}\in  \Ha^1}
\E \bigg[ 
- B_{N_k} R_{N_k}^{\dia}(Y+ \Ups^{M}- \ZZ_{M}) 
+   \|Y+ \Ups^{M}- \ZZ_{M}\|_{\C^{-\frac 12 - \eps}}^{10}
\\
& \hphantom{XXXXX}+ 
\wt R^{\dia\dia}_{M}(Y+ \Ups^{M}- \ZZ_{M}) + \frac12 \int_0^1 \| \dot \Ups^{M}(t) \|_{H^1_x}^2dt \bigg]\\
& 
=:  \inf_{\dot \Ups^{M}\in  \Ha^1}
\ft \W_{M, k}^{\dia\dia}(\Ups^M), 
\end{aligned}
\label{Ks5a}
\end{align}

\noi
where 
$\wt R^{\dia\dia}_{N}$ is defined in \eqref{KZ16}.
Let $Q_N^\dia := Q_N - Q_{N, 3}$, where $Q_N$
and $Q_{N, 3}$ are as in \eqref{KN} and \eqref{B4}.
Then, from 
\eqref{K1r}, \eqref{Y0}, 
\eqref{YZ6a},   and \eqref{YZ8a},
we have
\begin{align}
\begin{split}
R_{N_k}^{\dia} & (Y + \Ups^M - \ZZ_M)\\
&=
\frac 14 Q_{N_k}^{\dia}(Y)
+ \int_{\T^3} [(V_0 \ast :\! Y_{N_k}^2 \!: ) Y_{N_k} ]^\dia \Dr_{N_k} dx \\
&\hphantom{X}
+ \frac 12 \int_{\T^3} (V_0 \ast :\! Y_{N_k}^2 \!: ) \Dr_{N_k}^2 dx
+ \int_{\T^3} [(V_0 \ast (Y_{N_k} \Dr_{N_k} )) Y_{N_k} \Dr_{N_k}]^\dia dx
\\
&\hphantom{X}
+ \int_{\T^3} (V_0 \ast \Dr_{N_k}^2) Y_{N_k} \Dr_{N_k} dx
+ \frac 14 \int_{\T^3} (V_0 \ast \Dr_{N_k}^2 ) \Dr_{N_k}^2 dx\\
&\hphantom{X}
+ \frac 14 \bigg\{ \int_{\T^3} \Big( :\! Y_{N_k}^2 \!: + 2 Y_{N_k} \Dr_{N_k} + \Dr_{N_k}^2 \Big) dx \bigg\}^2
\end{split}
\label{Ks6}
\end{align}

\noi
for $N_k \leq M$, 
where $\Dr_{N_k}$ is given by 
\begin{align}
\Dr_{N_k} := \pi_{N_k} \Dr = \pi_{N_k} \Ups^M - \pi_{N_k} \ZZ_M.
\label{Ks61}
\end{align}

We can handle the contribution from the last two terms
on the right-hand side of \eqref{Ks5a}
  as in Subsection \ref{SUBSEC:def3}
(see \eqref{KZ14a}) and obtain
\begin{align}
\E \bigg[ 
\wt R^{\dia\dia}_{M}(Y+ \Ups^{M}- \ZZ_{M}) + \frac12 \int_0^1 \| \dot \Ups^{M}(t) \|_{H^1_x}^2dt \bigg]
\geq 
 -C_0 + \frac{1}{10}\U^{\dia\dia}_M, 
\label{Ks62}
\end{align}

\noi
where
 $\U^{\dia\dia}_M$
is given by  \eqref{K10} with $\Ups_N = \pi_M \Ups^M$
and 
 $\Ups^N = \Ups^M$.
The main contribution to~\eqref{Ks5a} comes from the first term.
More precisely, 
 under an expectation, 
the second term on the right-hand side of \eqref{Ks6} gives
a (negative) divergent contribution; see \eqref{Ks8} below.
From~\eqref{Y0a}, the first term 
on the right-hand side of \eqref{Ks6}
gives $0$
under an expectation, 
while we can bound the other terms 
(excluding the first and second terms) as in Subsection~\ref{SUBSEC:def3}
and obtain
\begin{align}
\begin{split}
\E& \bigg[   \bigg|R_{N_k}^{\dia}  (Y + \Ups^M - \ZZ_M)
- \frac 14 Q_{N_k}^{\dia}(Y)
-  \int_{\T^3} [(V_0 \ast :\! Y_{N_k}^2 \!: ) Y_{N_k} ]^\dia \Dr_{N_k} dx \bigg|\bigg]\\
& \les 
C(Y_{N_k},   \pi_{N_k} \ZZ_{M})
+ 
\U^{\dia\dia}_{N_k}
 \les 
1+ 
\U^{\dia\dia}_{N_k}
\end{split}
\label{Ks63}
\end{align}

\noi
where 
$C(Y_{N_k}, \pi_{N_k}  \ZZ_{M})$ denotes
certain high moments of various stochastic terms involving
$Y_{N_k}$ and $  \ZZ_{N_k}$
and 
$\U^{\dia\dia}_{N_k} = \U^{\dia\dia}_{N_k}( \pi_{N_k} \Ups^M)$ 
is given by  \eqref{K10} with $\Ups_N = 
\Ups^N =  \pi_{N_k} \Ups^M $:
\begin{equation}
\begin{split}
\U_{N_k}^{\dia\dia} = \E\bigg[ & \frac 18 \int_{\T^3} (V_0 \ast (\pi_{N_k} \Ups^M)^2) 
(\pi_{N_k} \Ups^M )^2 dx \\
& + \frac 1{32} \bigg( \int_{\T^3} (\pi_{N_k} \Ups^M )^2 dx \bigg)^2 
+ \frac{1}{2} \int_0^1 \| \dt (\pi_{N_k} \Ups^M) (t) \|_{H^1_x}^2 dt\bigg].
\end{split}
\label{Ks64}
\end{equation}

\noi
Note that, in view of the smallness of $B_{N_k}$
in \eqref{Ks5a},  the second and third terms in \eqref{Ks64}
can be controlled by the positive terms $\U_M^{\dia\dia}$ coming from the last two terms 
in \eqref{Ks5a}.
On the other hand, the first term on the right-hand side of  \eqref{Ks64}
can not be controlled by the corresponding potential energy\footnote{Recall the notation
$\Ups_M = \pi_M \Ups^M$.}
$\frac 18 \int_{\T^3} (V_0 \ast \Ups_{M}^2) \Ups_{M}^2 dx $
in $\U_M^{\dia\dia}$.
Here, the second term on the right-hand side of \eqref{Ks5a} comes to the rescue.
From Sobolev's inequality, 
the interpolation~\eqref{interp}
(with $0 = \ta\cdot 1 + (1-\ta) (-\frac 12 - 2\eps)$ for differentiability), 
and 
Young's inequality, we have
\begin{align}
\begin{split}
 \int_{\T^3} (V_0 \ast (\pi_{N_k} \Ups^M )^2) (\pi_{N_k} \Ups^M )^2 dx 
& =  \| (\pi_{N_k} \Ups^M )^2\|_{\dot H^{-\frac \be 2}}^2
\les \| \pi_{N_k} \Ups^M \|_{L^\frac{12}{3+\be}}^4\\
& \les \| \pi_{N_k} \Ups^M \|_{H^1}^\frac{4+16\eps}{3+4\eps}
\| \pi_{N_k} \Ups^M \|_{\C^{-\frac 12 - \eps}}^\frac{8 }{3+4\eps}\\
& \les 1 + 
\| \Ups^{M}\|_{H^1}^2
+ \| \Ups^M\|_{\C^{-\frac 12 - \eps}}^{10}.
\end{split}
\label{Ks65}
\end{align}

\noi
Hence, from \eqref{Ks0}, \eqref{Ks5a}, \eqref{Ks62}, \eqref{Ks63}, 
and \eqref{Ks65} 
with the following bound:
\begin{align*}
\|Y+ \Ups^{M}- \ZZ_{M}\|_{\C^{-\frac 12 - \eps}}^{10}
\ges 
\| \Ups^{M}\|_{\C^{-\frac 12 - \eps}}^{10}
- \|Y \|_{\C^{-\frac 12 - \eps}}^{10}
- \| \ZZ_{M}\|_{\C^{-\frac 12 - \eps}}^{10}, 
\end{align*}

\noi
we obtain
\begin{align}
\ft \W_{M, k}^{\dia\dia}(\Ups^M)
\geq -    B_{N_k}\E \bigg[ 
 \int_{\T^3} [(V_0 \ast :\! Y_{N_k}^2 \!: ) Y_{N_k} ]^\dia \Dr_{N_k} dx \bigg]
- C_1 + \frac{1}{20}\U^{\dia\dia}_M
\label{Ks66}
\end{align}

\noi
for any $1 \ll N_k \leq M$.

It remains to estimate the contribution from the second term
on the right-hand side of~\eqref{Ks6}.
Let us first state a lemma whose proof is presented at the end of this subsection.

\begin{lemma}\label{LEM:Ks3}
Let  $0<\be<1$.
 Then, 
we have
\begin{align}
\bigg| \E \bigg[ \int_0^1\int_{\T^3} (1-\Dl) \dot \ZZ_N (t)\cdot (\dot \ZZ_N-\dot \ZZ_M)(t) dx dt \bigg] \bigg| \les 1
\label{Ks7}
\end{align}

\noi
for  $1\leq N \le M$, where
 $\dot \ZZ_N = \pi_N \dot \ZZ^N$.
\end{lemma}

Note that when $0 < \be \le \frac 12$
(namely, in the case where  we need to apply the change of variables \eqref{YZ13}), 
the pathwise regularity $\be + \frac 12 - \eps$ of 
$\dot \ZZ_N $ is not sufficient to prove Lemma \ref{LEM:Ks3}.
Instead,   we prove Lemma \ref{LEM:Ks3} 
by exploiting  orthogonality, coming from the frequency support consideration.

\medskip

By \eqref{YZ11}, \eqref{YZ12}, \eqref{Ks61}, Lemma \ref{LEM:Ks3}, Cauchy's inequality, 
\eqref{CZ2a}, and \eqref{Ks0a}, 
we have 
\begin{align}
\begin{aligned}
 \E & \bigg[\int_{\T^3} [(V_0 \, \ast \!:\!Y_{N_k}^2\!:) Y_{N_k}]^\dia \Dr_{N_k} dx \bigg] 
 = \E\bigg[ \int_0^1 \int_{\T^3}  
 [(V_0 \, \ast \! :\!Y_{N_k}(t)^2\!:) Y_{N_k}(t)]^\dia\dot \Dr_{N_k}(t) dt \bigg]\\
& 
 = 
   \E \bigg[ \int_0^1\int_{\T^3} (1-\Dl) \dot \ZZ_{N_k}(t) \cdot (\dot \ZZ_{N_k}-\dot \ZZ_{M}) (t)dx dt \bigg]\\
& \quad   - \E\bigg[ \int_0^1 \| \dot \ZZ_{N_k}(t)\|_{H^1_x}^2 dt   \bigg]
+ \E\bigg[   \int_0^1   \jb{\dot \ZZ_{N_k}(t), \dt \pi_{N_k} \Ups^M (t)}_{H^1_x} dt \bigg]\\
& 
 \leq C - \frac 12 \E\bigg[ \int_0^1 \| \dot \ZZ_{N_k}(t)\|_{H^1_x}^2 dt   \bigg]
 + \frac 12 \E\bigg[ \int_0^1 \| \dot \Ups^M(t)\|_{H^1_x}^2 dt   \bigg]\\
& \le C - c A_{N_k}
 + \frac 12 \E\bigg[ \int_0^1 \| \dot \Ups^M(t)\|_{H^1_x}^2 dt \bigg]
\end{aligned}
\label{Ks8}
\end{align}

\noi
for $1\leq N_k \le M$.
Thus, putting 
\eqref{Ks5a}, 
\eqref{Ks66}, 
and 
\eqref{Ks8} together, 
we have 
\begin{align}
\begin{aligned}
- \log C_{M, k} 
& 
\geq \inf_{\dot \Ups^{M}\in  \Ha^1}\Big\{ 
cB_{N_k} A_{N_k} - C_2 + \frac{1}{40}\U^{\dia\dia}_M\Big\}
\geq 
c B_{N_k} A_{N_k} - C_2.
\end{aligned}
\label{Ks9}
\end{align}

\noi
Hence, from \eqref{Ks9} with \eqref{Ks0a} and \eqref{Ks0}, 
we obtain 
\begin{align}
C_{M, k}
\les
\begin{cases}
\exp \Big( -c (\log N_k)^{\frac 14} \Big), & \text{if } \be=\frac 12, \\[5pt]
\exp \Big( -c (\log N_k)^{-\frac 14} N_k^{-\be+\frac 12} \Big), & \text{if } 0<\be<\frac 12
\end{cases}
\label{Ks10}
\end{align}
for $1\ll N_k \le M$, uniformly in $M \in \N$.
Therefore, 
by taking limits in $M \to \infty$ and then $k \to \infty$, 
we conclude from \eqref{Ks4} and \eqref{Ks10} that
\[ 
\lim_{k \to \infty} \int  e^{G_k(u)}d\rho(u) = 0\]

\noi
as desired.
This completes the proof of Proposition \ref{PROP:sing}.
\end{proof}

We conclude this subsection by presenting the proof of Lemma \ref{LEM:Ks3}.

\begin{proof}[Proof of Lemma \ref{LEM:Ks3}]
In the following, we drop the time variable. Let
\begin{align}
:\!\ft Y_N(n_1) \ft Y_N(n_2)\!:
\ = 
\ft Y_N(n_1) \ft Y_N(n_2) - \ind_{n_1+n_2=0} \cdot \jb{n_1}^{-2}.
\label{Ks11}
\end{align}

\noi
Then, proceeding as in \eqref{YS1b} and \eqref{Z13}
with 
  \eqref{YZ12}, \eqref{YZ6a}, and \eqref{kappa2},
we have
\begin{align}
\ft {\dot \ZZ}_N (n)
&= \jb{n}^{-2} \bigg(
\sum_{\substack{n_1, n_2, n_3 \in \Z^3 \\ n=n_1+n_2+n_3}} \ft V_0(n_1+n_2) 
\big( :\!\ft Y_N(n_1) \ft Y_N(n_2)\!:\big) \ft Y_N(n_3)
- 2 \kk_N(n) \ft Y_N(n)
\bigg) \notag\\
&= \jb{n}^{-2} 
\sum_{\substack{n_1, n_2, n_3 \in \Z^3 \\ n=n_1+n_2+n_3 \\ 
|n_2+n_3| |n_3+n_1| \ne 0}}
 \ft V_0(n_1+n_2) \big( :\!\ft Y_N(n_1) \ft Y_N(n_2)\!:\big) \ft Y_N(n_3) \notag \\
&\quad
+ 2 \jb{n}^{-2} \ft Y_N(n) \sum_{\substack{n_1 \in \Z^3 \\ |n_1| \le N}} \ft V_0 (n+n_1) \Big( |\ft Y_N(n_1)|^2 - \jb{n_1}^{-2} \Big) \notag\\
&\quad
- \jb{n}^{-2} \ft V_0 (2n) |\ft Y_N (n)|^2 \ft Y_N(n) \notag\\
&=:
\ft {\dot \ZZ}_{N,1} (n)
+ \ft {\dot \ZZ}_{N,2} (n)
+ \ft {\dot \ZZ}_{N,3} (n)
\label{Kt2}
\end{align}

\noi
for $|n| \le N$.
By repeating the proof of Lemma \ref{LEM:IV},
we have
\begin{align}
\E \Big[ |\ft {\dot \ZZ}_{N,1} (n)|^2 \Big]
\sim \jb{n}^{-2\be-4}.
\label{Kt3}
\end{align}

\noi
Also, by a computation analogous to \eqref{YS5} and \eqref{YS3}, 
we have
\begin{align}
\begin{aligned}
\E \Big[ |\ft {\dot \ZZ}_{N,2} (n)|^2 \Big]
+ \E \Big[ |\ft {\dot \ZZ}_{N,3} (n)|^2 \Big]
&\les \jb{n}^{-2\be-6}.
\end{aligned}
\label{Kt4}
\end{align}

\noi
Hence, from \eqref{Kt2},  \eqref{Kt3}, and \eqref{Kt4}, 
we have
\begin{align}
&\E \bigg[  \int_{\T^3} (1-\Dl) \dot \ZZ_N \cdot (\dot\ZZ_N-\dot\ZZ_M) dx  \bigg] \notag\\
&= \E \Bigg[ \sum_{n \in \Z^3} \jb{n}^2 \ft {\dot \ZZ}_N (n) \Big( \cj{\ft {\dot\ZZ}_N (n) - \ft {\dot\ZZ}_M(n)} \Big) \bigg] \notag\\
&=
\sum_{n \in \Z^3} \jb{n}^2 \E \Big[ \ft {\dot\ZZ}_{N,1} (n) \Big( \cj{\ft {\dot\ZZ}_{N,1} (n) 
- \ft {\dot\ZZ}_{M,1}(n)} \Big) \Big]
+ O \bigg( \sum_{\substack{n \in \Z^3 \\ |n| \le N}} \jb{n}^2 \jb{n}^{-\be-2} \jb{n}^{-\be-3} \bigg) \notag\\
&=
\sum_{n \in \Z^3} \jb{n}^2 \E \Big[ \ft {\dot\ZZ}_{N,1} (n) \Big( \cj{\ft{\dot\ZZ}_{N,1} (n) - \ft {\dot\ZZ}_{M,1}(n)} \Big) \Big]
+ O (1)
\label{Kt5}
\end{align}

\noi
for $\be>0$.

We now write 
 $\ft {\dot \ZZ}_{M,1} (n) - \ft {\dot \ZZ}_{N,1}(n)$ as follows:
\begin{align*}
&\ft {\dot \ZZ}_{M,1} (n) - \ft {\dot \ZZ}_{N,1}(n) \\
&= \sum_{\substack{j,k,\l \in \{1,2,3\} \\ \{ j,k, \l \} = \{ 1,2,3 \}}}
\sum_{\substack{n_1, n_2, n_3 \in \Z^3 \\ n=n_1+n_2+n_3 \\ |n_j|>N, \, |n_k| \le N, \, |n_\l| \le N\\ 
|n_2+n_3| |n_3+n_1| \ne 0}}
 \ft V_0(n_1+n_2) \big(:\!\ft Y_M(n_1) \ft Y_M(n_2) \!:\big) \ft Y_M(n_3) \\
&\quad 
+ \sum_{\substack{j,k,\l \in \{1,2,3\} \\ \{ j,k, \l \} = \{ 1,2,3 \}}}\sum_{\substack{n_1, n_2, n_3 \in \Z^3 \\ n=n_1+n_2+n_3 \\ |n_j|>N, \, |n_k|>N, \, |n_\l| \le N \\ 
|n_2+n_3| |n_3+n_1| \ne 0}}\ft V_0(n_1+n_2) 
\big( :\!\ft Y_M(n_1) \ft Y_M(n_2) \!:\big) \ft Y_M(n_3) \\
&\quad
+ \sum_{\substack{n_1, n_2, n_3 \in \Z^3 \\ n=n_1+n_2+n_3 \\ |n_1|, |n_2|, |n_3|>N \\ 
|n_2+n_3| |n_3+n_1| \ne 0}}
 \ft V_0(n_1+n_2) 
\big( :\!\ft Y_M(n_1) \ft Y_M(n_2) \!:\big) \ft Y_M(n_3)\\
&=: \1(n) + \II(n) + \III(n).
\end{align*}

\noi
By 
 the independence of $\{ \ft Y(n) \}_{n \in \Ld_0}$
where the index set $\Ld_0$ is as in \eqref{index}, 
we have 
\begin{align}
\E \Big[ \ft {\dot \ZZ}_{N,1} (n) \cj{\III(n)}\Big]
= \E \Big[ \ft {\dot \ZZ}_{N,1} (n)\Big] \cdot \E\Big[ \cj{\III(n)}\Big]= 0
\label{Kt6}
\end{align}

\noi
for any $n \in \Z^3$.
We also have 
\begin{align}
\E \Big[ \ft {\dot \ZZ}_{N,1} (n) \cj{\1(n)}\Big]
= 0
\label{Kt7}
\end{align}

\noi
for any $n \in \Z^3$
since only one of the frequencies is larger than $N$ in size.
Noting that there are exactly two frequencies
larger than $N$, 
 we have
\begin{align}
\E \Big[ \ft {\dot \ZZ}_{N,1} (n) \cj{\II(n)}\Big]
= 0
\label{Kt8}
\end{align}

\noi
for any $n \in \Z^3$
since, under the condition $|n_2+n_3| |n_3+n_1| \ne 0$ in $\II$, 
the only possible non-zero contribution $\II$  comes from 
$|n_1|, |n_2| > N$
with $n_1 + n_2 = 0$ in $\II$
but $\ft V_0(n_1 + n_2)= \ft V_0(0) = 0$ in this case.

The desired bound \eqref{Ks7} then follows from 
\eqref{Kt5}, \eqref{Kt6}, \eqref{Kt7}, \eqref{Kt8},
and integrating in time.
\end{proof}

\section{Paracontrolled operators}
\label{SEC:po}

In this section, we study the mapping properties
of 
the paracontrolled operators
$ \If_{\pl}^{(1)}$
and $\If_{\pl, \pe}$
defined in \eqref{X3} and \eqref{X6},
respectively.
Then, we briefly discuss the regularity property of
the stochastic term $\Ab$ defined in \eqref{sto1} at the end of this section.

We first consider the regularity property of the paracontrolled operator
$\If_{\pl}^{(1)}$ defined in~\eqref{X3}.
By writing out the frequency relation  $|n_2|^\theta \les |n_1| \ll |n_2|$
in a more precise manner, we have
\begin{align}
\begin{split}
 \If_{\pl}^{(1)} (w) (t)
 &   =  \sum_{n \in \Z^3}
e_n  \sum_{n =  n_1 +  n_2}
\sum_{ \ta k + c_0 \leq j < k-2}
\varphi_j(n_1) \varphi_k(n_2)  \\
& \hphantom{XXXXX}
\times 
\int_0^t e^{-\frac{t-t'}2} \frac{\sin ((t - t') \jbb{n})}{\jbb{n}} 
\ft w(n_1, t')\,  \ft{\Psi}(n_2, t') dt', 
\end{split}
\label{XX2a}
\end{align}

\noi
where $c_0 \in \R$ is some fixed constant.
In the following, we establish
the mapping property of 
$\If_{\pl}^{(1)}$ in a deterministic manner
by using a pathwise regularity of $\Psi$.

\begin{lemma}\label{LEM:sto3}

Let  $s>0$ and $T > 0$.
Then, given small $\theta > 0$, 
there exists small $\eps= \eps(s, \ta)  > 0$
such that 
the following deterministic estimate holds
for the paracontrolled operator $ \If_{\pl}^{(1)}$ defined in~\eqref{X3}\textup{:}
\begin{align}
\| \If_{\pl}^{(1)}(w) \|_{L^\infty_T H^{\frac12+3\eps}_x}
\les \|w\|_{L^2_T H_x^{s}}
\|\Psi\|_{L^2_T W_x^{-\frac 12 - \eps, \infty}}.
\label{A00}
\end{align}

\noi
In particular, 
 $ \If_{\pl}^{(1)}$ belongs almost surely
to the class
\begin{align}
 \L_3(T) = \L ( L^2([0, T]; H^{s}(\T^3) )
 \, ; \, 
C([0, T]; H^{\frac 12 + 3\eps}(\T^3))).
\notag
\end{align}

\noi
Moreover, by letting
$ \If_{\pl}^{(1), N}$, $N \in \N$, denote the paracontrolled operator
in \eqref{X3} with $\Psi$ replaced by the truncated stochastic convolution $\Psi_N$ in \eqref{so4a}, 
the truncated paracontrolled operator $ \If_{\pl}^{(1), N}$ converges almost surely to $ \If_{\pl}^{(1)}$ 
in $\L_3 (T)$.

\end{lemma}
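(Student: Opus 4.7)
\medskip

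\noindent
\textbf{Proof plan for Lemma \ref{LEM:sto3}.}
The plan is to establish the deterministic bound \eqref{A00} by performing a Littlewood-Paley decomposition adapted to the paraproduct structure encoded in \eqref{XX2a}, and then to deduce the convergence $\If_{\pl}^{(1),N}\to \If_{\pl}^{(1)}$ by linearity together with Lemma \ref{LEM:stoconv}. Since the estimate is purely deterministic (treating $\Psi$ as a given distribution with the prescribed spatial regularity), all stochastic content is isolated in the input norm $\|\Psi\|_{L^2_T W^{-1/2-\eps,\infty}_x}$.

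First, I would decompose
\begin{align*}
\If_{\pl}^{(1)}(w)(t) \;=\; \sum_{k\ge 0}\sum_{\theta k + c_0 \le j < k-2} F_{j,k}(t),
\end{align*}
where $F_{j,k}(t)$ is the single-scale piece built from $\P_j w$ and $\P_k \Psi$ through the damped wave Duhamel kernel. The key structural observation is that in the paraproduct regime $|n_1|\sim 2^j \ll |n_2|\sim 2^k$, the output $F_{j,k}(t)$ is frequency-localized in the annulus $|n|\sim 2^k$, so the Littlewood-Paley characterization reduces matters to estimating $\|F_{j,k}(t)\|_{L^2_x}$ and summing. For each such piece, I would bound the damped wave multiplier $\frac{\sin((t-t')\jbb{n})}{\jbb{n}} e^{-(t-t')/2}$ pointwise by $\jbb{n}^{-1}\sim 2^{-k}$ to obtain one derivative of smoothing, then apply Cauchy--Schwarz in $t'\in [0,t]$ together with H\"older in $x$ (with $\P_j w \in L^2_x$ and $\P_k \Psi\in L^\infty_x$) to get
\begin{align*}
\|F_{j,k}(t)\|_{L^2_x} \;\lesssim\; T^{1/2}\, 2^{-k}\, \|\P_j w\|_{L^2_{T}L^2_x}\,\|\P_k \Psi\|_{L^2_T L^\infty_x}.
\end{align*}
Invoking the Littlewood-Paley characterization of $H^s$ and $W^{-1/2-\eps,\infty}$ yields $\|\P_j w\|_{L^2_T L^2_x}\lesssim 2^{-sj}\|w\|_{L^2_T H^s_x}$ and $\|\P_k \Psi\|_{L^2_T L^\infty_x}\lesssim 2^{(1/2+\eps)k}\|\Psi\|_{L^2_T W^{-1/2-\eps,\infty}_x}$, so that $\|F_{j,k}(t)\|_{L^2_x}\lesssim 2^{-(1/2-\eps)k -sj}\|w\|_{L^2_T H^s_x}\|\Psi\|_{L^2_T W^{-1/2-\eps,\infty}_x}$.

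With the single-block estimate in hand, I would use the frequency localization of $F_{j,k}$ at $|n|\sim 2^k$ to sum the inner parameter $j$ over $\theta k + c_0 \le j < k$ as a geometric series dominated by $2^{-s\theta k}$, giving
\begin{align*}
\|\P_k \If_{\pl}^{(1)}(w)(t)\|_{L^2_x} \;\lesssim\; 2^{-(1/2 - \eps + s\theta)k}\,\|w\|_{L^2_T H^s_x}\|\Psi\|_{L^2_T W^{-1/2-\eps,\infty}_x}.
\end{align*}
Squaring, multiplying by $2^{(1+6\eps)k}$, and summing in $k$ then yields the $H^{1/2+3\eps}_x$ bound provided $2(1/2-\eps+s\theta)>1+6\eps$, i.e.\ $\eps < s\theta/4$; this fixes the range of $\eps = \eps(s,\theta)>0$ in the statement. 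Taking the supremum in $t\in[0,T]$ produces \eqref{A00}.

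For the convergence claim, the operator $w\mapsto \If_{\pl}^{(1)}(w)$ is linear in $\Psi$, so the deterministic bound \eqref{A00} applied to $\If_{\pl}^{(1)} - \If_{\pl}^{(1),N}$ (with $\Psi$ replaced by $\Psi - \Psi_N$) combined with the almost sure convergence $\Psi_N \to \Psi$ in $C([0,T];W^{-1/2-\eps,\infty}_x)\subset L^2([0,T];W^{-1/2-\eps,\infty}_x)$ from Lemma \ref{LEM:stoconv} yields $\If_{\pl}^{(1),N}\to \If_{\pl}^{(1)}$ in $\L_3(T)$ almost surely. The main subtlety, and the only delicate point in the plan, is the balance of small parameters: one must choose $\eps$ strictly smaller than $s\theta/4$ so that the low-frequency $2^{-s\theta k}$ gain, coming purely from the restriction $|n_1|\gtrsim |n_2|^\theta$ in the definition of $\If_{\pl}^{(1)}$, dominates both the $\eps$-loss inherent to $\Psi\in W^{-1/2-\eps,\infty}_x$ and the additional $3\eps$ of regularity being extracted. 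No stochastic input beyond Lemma \ref{LEM:stoconv} is needed.
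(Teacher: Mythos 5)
Your proof is correct and follows essentially the same route as the paper's: Littlewood--Paley decomposition, exploiting the lower bound $|n_1|\gtrsim |n_2|^\theta$ to trade a power of $\jb{n_2}$ for $\jb{n_1}^{1/\theta}$ absorbed by the $H^s$-regularity of $w$, the $\jbb{n}^{-1}$ smoothing from the damped wave kernel, H\"older in $x$ and Cauchy--Schwarz in $t'$, and then summation of the dyadic pieces. The paper packages the derivative-splitting as a single symbol bound on $\jb{n}^{1/2+3\eps}\jb{n}^{-1}$ before applying H\"older and the embedding $W^{-1/2-\eps,\infty}\hookrightarrow B^{-1/2-2\eps}_{\infty,1}$, whereas you first bound each block $F_{j,k}$ and then use the output frequency localization at $|n|\sim 2^k$ to sum; these are the same estimate in slightly different bookkeeping, and the resulting smallness conditions on $\eps$ (yours $\eps<s\theta/4$, the paper's $\eps\le s\theta/(5+\theta)$) are both valid. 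The stray factor $T^{1/2}$ in your single-block bound is not produced by the Cauchy--Schwarz step you describe, but it is harmless and the final estimate is as claimed.
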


Lemma \ref{LEM:sto3} follows from  a slight modification of 
the proof of  Lemma 5.1 in \cite{GKO2}.
We present the argument for readers' convenience.

\begin{proof}
Under $|n_2|^\theta \les |n_1| \ll |n_2|$ with $n = n_1 + n_2$,
we have $\jb{n} \sim \jb{n_2}$.
Thus, 
we have
\begin{align}
\jb{n}^{\frac 12+3\eps} \jb{n}^{-1}
\les \jb{n_1}^{\frac{5\eps}{\theta}}\jb{n_2}^{-\frac{1}{2}-2\eps}
\les \jb{n_1}^{s-\eps}\jb{n_2}^{-\frac{1}{2}-2\eps}
\label{A0}
\end{align}

\noi
by choosing $\eps = \eps (s, \theta) > 0$ sufficiently small.

Letting $\ft w_{j}(n_1, t')
= \varphi_j(n_1)\ft w(n_1, t')$
and 
$ \ft{\Psi}_{k}(n_2, t')
=  \varphi_k(n_2)  \ft{\Psi}(n_2, t')$, 
it follows 
from \eqref{XX2a}
and~\eqref{A0} 
with the trivial embedding \eqref{embed} that 
\begin{align*}
\| \If_{\pl}^{(1)}(w)(t) \|_{H^{\frac12+3\eps}}
& \les \int_0^t 
\sum_{j, k =0}^\infty
2^{(s-\eps)j} 2^{(-\frac{1}{2}-2\eps)k}
\bigg\|  \sum_{n =  n_1 +  n_2}
\ft w_{j}(n_1, t')
\ft{ \Psi}_{k}(n_2, t')\bigg\|_{\l^2_n} dt'\\
& \les \int_0^t 
\sum_{j, k =0}^\infty
2^{(s-\eps)j} 2^{(-\frac{1}{2}-2\eps)k}
\| w_{j}(t')\|_{L^{2}_x}
\| \Psi_{k}( t')\|_{L^\infty_x} dt'\\
& \les \|w\|_{L^2_T H_x^{s}}
\|\Psi\|_{L^2_T (B_{\infty, 1}^{-\frac 12 -2 \eps})_x}\\
& \les \|w\|_{L^2_T H_x^{s}}
\|\Psi\|_{L^2_T W_x^{-\frac 12 - \eps, \infty}}
\end{align*}

\noi
for any $t \in [0, T]$, which shows \eqref{A00}.
The continuity in time of $ \If_{\pl}^{(1)}(w)$ and the convergence of $\If_{\pl}^{(1), N}$
follow
from modifying the computation above.
We omit the details.
\end{proof}

Next, we present the proof of Proposition  \ref{PROP:sto4J}
on the  paracontrolled operator
 $\If_{\pl, \pe}$ in \eqref{X6}.
By writing out the frequency relations
more carefully
as in \eqref{XX2a},
we have
\begin{align}
\begin{split}
\If_{\pl, \pe}(w) (t)
&  = \sum_{n \in \Z^3}e_n 
    \int_0^{t} 
\sum_{j = 0}^\infty
\sum_{n_1 \in \Z^3}
\varphi_j(n_1)
\ft w(n_1, t') \A_{n, n_1} (t, t') dt', 
\end{split}
\label{A0a}
\end{align}

\noi
where $\A_{n, n_1} (t, t')$ is given by 
\begin{align}
\begin{split}
\A_{n, n_1} (t, t')
& = \ind_{[0 , t]}(t')
\sum_{\substack{k = 0\\ j \leq \ta k + c_0}}^\infty
\sum_{\substack{\l, m = 0\\|\l-m|\leq 2}}^\infty
\sum_{n - n_1 =  n_2 + n_3} 
 \varphi_k(n_2)  
 \varphi_\l(n_1 + n_2) 
 \varphi_m(n_3) \\
& \hphantom{XXXX}
\times
e^{-\frac{t-t'}2}
\frac{\sin ((t - t') \jbb{n_1+n_2})}{\jbb{n_1+n_2}} 
   \ft{\Psi}(n_2, t')\,   \ft{\Psi}(n_3, t)  .
\end{split}
\label{A0b}
\end{align}

\noi
For simplicity of notations,  however, 
we  use \eqref{X6} and \eqref{X7}
in the following, 
with the understanding that 
the frequency relations
$|n_1| \ll |n_2|^\theta$ and $ |n_1 + n_2|\sim |n_3|$
are indeed characterized by the use of smooth frequency cutoff functions
as in \eqref{A0a} and \eqref{A0b}.

Recall from the definition \eqref{W2}
that $\ft{\Psi}(n_2, t')$ and $\ft{\Psi}(n_3, t)$ in \eqref{X7}
are uncorrelated unless $n_2+ n_3 = 0$,
i.e.~$n = n_1$.
This leads to the following decomposition of $\A_{n, n_1}$:
\begin{align}
\A_{n, n_1} (t, t')
& = \ind_{[0 , t]}(t') \sum_{\substack{n - n_1 =  n_2 + n_3\\ |n_1| \ll |n_2|^\theta \\ |n_1 + n_2|\sim |n_3|}}
e^{-\frac{t-t'}{2}}
\frac{\sin ((t - t') \jbb{n_1+n_2})}{\jbb{n_1+n_2}} \notag\\
& \hphantom{XXXXXXXX}
\times \Big(   \ft{\Psi}(n_2, t') \, \ft{\Psi}(n_3, t) - \ind_{n_2 + n_3 = 0} \cdot \s_{n_2}(t, t') \Big) \notag\\
& \hphantom{X}
+ \ind_{[0 , t]}(t') \cdot \ind_{n=n_1}\cdot \sum_{\substack{n_2 \in \Z^3 \\ |n| \ll |n_2|^\theta}}
e^{-\frac{t-t'}{2}}
\frac{\sin ((t - t')  \jbb{n+n_2})}{\jbb{n+n_2}} 
 \s_{n_2}(t, t') \notag\\
& =: \, 
\A^{(1)}_{n, n_1} (t, t')+    \A^{(2)}_{n, n_1} (t, t').
\label{X9}
\end{align}

\noi
The second term $\A^{(2)}_{n, n_1}$ is a (deterministic) ``counter term''
for  the case $n_2 + n_3 = 0$.
For this second term,  the condition $|n_1 + n_2|\sim |n_3|$
reduces to $|n + n_2|\sim |n_2|$ which is automatically satisfied
under $|n| \ll |n_2|^\theta$ for small $\theta > 0$.

In view of \eqref{sigma2}, 
 the sum in $n_2$ for the second term $\A^{(2)}_{n, n_1}$ is not absolutely convergent.
Hence, we need to exploit the dispersive nature of the problem.
Proceeding as in \cite{GKO2}
with~\eqref{sigma2} and~\eqref{X9}, 
we decompose $ \A^{(2)}_{n, n} (t, t')$ as
\begin{align}
 \A^{(2)}_{n, n} (t, t')
&  =  \ind_{[0 , t]}(t')\cdot e^{-(t-t')} \sum_{\substack{n_2 \in \Z^3 \\ |n| \ll |n_2|^\theta}}
 \frac{\sin ((t - t') \jbb{n+n_2})}{\jbb{n+n_2}} 
   \frac{\cos((t - t') \jbb{n_2}) }{\jb{n_2}^2}  \notag\\
& \hphantom{X}
+ \ind_{[0 , t]}(t')\cdot e^{-(t-t')}
 \sum_{\substack{n_2 \in \Z^3 \\ |n| \ll |n_2|^\theta}}
 \frac{\sin ((t - t') \jbb{n+n_2})}{\jbb{n+n_2} } 
 \frac{\sin ((t-t') \jbb{n_2})}{2 \jb{n_2}^2 \jbb{n_2}}
\notag   \\
& = \ind_{[0 , t]}(t')\cdot  e^{-(t-t')}
\sum_{\substack{n_2 \in \Z^3 \\ |n| \ll |n_2|^\theta}}
\frac{\sin ((t - t')(\jbb{n+n_2} + \jbb{n_2}))}{2 \jbb{n+n_2} \jb{n_2}^2}  \notag\\
& \hphantom{X}
+ \ind_{[0 , t]}(t')\cdot e^{-(t-t')} \sum_{\substack{n_2 \in \Z^3 \\ |n| \ll |n_2|^\theta}}
 \frac{\sin ((t - t')( \jbb{n+n_2} - \jbb{n_2}))}{2 \jbb{n+n_2} \jb{n_2}^2}  \notag\\
& \hphantom{X}
+ \ind_{[0 , t]}(t')\cdot e^{-(t-t')} \sum_{\substack{n_2 \in \Z^3 \\ |n| \ll |n_2|^\theta}}
 \frac{\sin ((t - t') \jbb{n + n_2 })}{\jbb{ n + n_2} } 
 \frac{\sin ((t-t') \jbb{n_2})}{2 \jb{n_2}^2 \jbb{n_2}} \notag\\
& =:
 \A^{(3)}_{n} (t, t')
 +  \A^{(4)}_{n} (t, t')
 +  \A^{(5)}_{n} (t, t').
 \label{A4a}
\end{align}
We denote the contribution to $\If_{\pl, \pe}(w)$
from $\ind_{n = n_1} \cdot \A_{n}^{(j)}$ by $\If_{\pl, \pe}^{(j)}(w)$ for $j=3,4,5$:
\begin{align}
\If_{\pl, \pe}^{(j)} (w) (t)
:= \sum_{n \in \Z^3} e_n 
\int_0^{t}
\ft w(n, t') \A_{n}^{(j)} (t, t') dt'.
\label{X62}
\end{align}

\noi
The analysis for $j = 4, 5$ is analogous to that in~\cite{GKO2}.
As for the $j = 3$ case, 
while the  argument in~\cite{GKO2}
relied  on the time differentiability of the input function $w$, 
we 
present an argument
without using 
 the time differentiability of  $w$.

We now present 
the proof of Proposition \ref{PROP:sto4J}.
Part of the argument follows 
 closely 
 the proof of 
Proposition 1.11 in \cite{GKO2}.

\begin{proof}[Proof of Proposition \ref{PROP:sto4J}]

In the following, we only consider the case $0 < T \leq 1$.
The required modification 
for handling the case  $T > 1$ is straightforward.
As for the random operator $\If_{\pl, \pe}^{(1)}$, 
we carry out the stochastic analysis presented below
on each subinterval $[k, \max(k+1, T)] \subset [0, T]$, 
which gives the extra factor of $T$ in \eqref{pote2}.
As for the deterministic operators $\If_{\pl, \pe}^{(j)}$, $j = 3, 4, 5$,  
the analysis remains essentially the same
even when $T > 1$.

Fix finite $q > 1$ and let $q'$ be its H\"older conjugate.
First, 
we consider the contribution to $\If_{\pl, \pe}$
from $\A^{(1)}_{n, n_1}$ in \eqref{X9} and denote it 
by $\If_{\pl, \pe}^{(1)}$.
Then, 
from \eqref{X6}  and \eqref{X9}, we have
\begin{align}
\begin{split}
\| \If_{\pl, \pe}^{(1)}(w)(t) \|_{ H^{s_3}_x}
& \le \bigg\|\int_0^t 
\jb{n}^{s_3}\sum_{n_1 \in \Z^3}
\ft w(n_1, t') \A_{n, n_1}^{(1)}(t, t') dt'
\bigg\|_{ \l^2_n}\\
& \les  \|w\|_{L^q_T L^2_x}
\|\jb{n}^{s_3}
\A_{n, n_1}^{(1)} (t, t')
\|_{ L^{q'}_{t'}([0, T]; \l^2_{n, n_1})}.
\end{split}
\label{Aop0}
\end{align}

Note that the conditions $|n_1| \ll |n_2|^\theta$ for some small $\theta > 0$
and $|n_1 + n_2|\sim |n_3|$ imply $|n_2| \sim |n_3| \gg |n_1|$.
Moreover, with the condition $n - n_1 =  n_2 + n_3$, 
we have $|n_2| \sim |n_3| \ges |n|$.
Then, from \eqref{X9}, \eqref{Wickz}, and \eqref{sigma2}, 
we have 
\begin{align}
\begin{split}
 \E & \Big[  \|\jb{n}^{s_3} \A_{n, n_1}^{(1)}  (t, t')\|_{\l^2_{n, n_1}}^2\Big]\\
& \le 
\sum_{n, n_1} \jb{n}^{2s_3}
\E
\Bigg[\sum_{\substack{n - n_1 =  n_2 + n_3\\ |n_1| \ll |n_2|^\theta \\ |n_1 + n_2|\sim |n_3|}}
\sum_{\substack{n - n_1 =  n_2' + n_3'\\ |n_1| \ll |n_2'|^\theta \\ |n_1 + n_2'|\sim |n_3'|}}
\frac{\sin((t - t')\jbb{n_1 + n_2})}{\jbb{n_1+n_2}} 
\frac{\sin((t - t')\jbb{n_1 + n'_2})}{\jbb{n_1+n'_2}}  \\
& \quad  \times
\Big(   \ft{\Psi}(n_2, t') \, \ft{\Psi}(n_3, t) - \ind_{n_2+n_3=0} \cdot \s_{n_2}(t,t') \Big)\\
& \quad  \times
\cj{\Big(   \ft{\Psi}(n_2', t') \, \ft{\Psi}(n_3', t) - \ind_{n_2'+n_3'=0} \cdot \s_{n_2'}(t,t') \Big)}
 \Bigg] \\
& \les
\sum_{n, n_1} \jb{n}^{2s_3}
\sum_{\substack{N_2 \geq 1\\ \text{dyadic}} }
\sum_{\substack{n - n_1 =  n_2 + n_3\\ |n_1| \ll |n_2|^\theta \\ |n_1 + n_2|\sim |n_3|\\
|n_2|\sim N_2}}
\frac{1}{\jb{n_2}^4 \jb{n_3}^{2}}
\\
& \les
\sum_{\substack{N_2 \geq 1\\ \text{dyadic}} }
N_2^{-3} 
\sum_{n, n_1} \jb{n}^{2s_3}
\ind_{|n_1|\ll N_2^\theta} \ind_{ |n| \les  N_2}\\
& \les
\sum_{\substack{N_2 \geq 1\\ \text{dyadic}} }
N_2^{2s_3 + 3\ta} 
\les 1, 
\end{split}
\label{Aop3}
\end{align}

\noi
uniformly in $ 0 \leq t' \leq t \leq T$,
provided that
$2s_3 + 3\ta < 0$, 
where, at the second inequality, 
we used the fact that 
non-zero contribution appears only when 
$n_2 = n_2'$ or $n_2 = n_3'$.
Hence, from Minkowski's integral inequality, Lemma \ref{LEM:hyp}, 
and \eqref{Aop3}, we conclude that 
\begin{align}
\Big\|\|\jb{n}^{s_3}
\A_{n, n_1}^{(1)} (t, t')
\|_{  L^{q'}_{t'}([0, T]; \l^2_{n, n_1})}\Big\|_{L^p(\O)}
\les T^{\frac 1{q'}} p
\label{Aop4}
\end{align}

\noi
for any finite $p \geq 2$ and $t \in [0, T]$.
A similar argument yields the following difference estimate;
there exists small $\s_0 > 0$ such that 
\begin{align}
\begin{split}
\Big\|
& \|\jb{n}^{s_3}
\A_{n, n_1}^{(1)} (t_1, t')
\|_{  L^{q'}_{t'}([0, T]; \l^2_{n, n_1})}
- 
\|\jb{n}^{s_3}
\A_{n, n_1}^{(1)} (t_2, t')
\|_{  L^{q'}_{t'}([0, T]; \l^2_{n, n_1})}
\Big\|_{L^p(\O)}\\
& \les T^{\frac 1{q'}} p
|t_1 - t_2|^{\s_0 }
\end{split}
\label{Aop4a}
\end{align}

\noi
for any finite $p \geq 2$ and  $t_1, t_2 \in [0, T]$.
See, for example, the proof of Proposition 1.11 in \cite{GKO2}.
By Kolmogorov's continuity criterion, we conclude that 
\begin{align}
 \|\jb{n}^{s_3}
\A_{n, n_1}^{(1)} (\cdot , t')
\|_{  L^{q'}_{t'}([0, T]; \l^2_{n, n_1})}\in L^\infty([0, T]).
\label{Aop4b}
\end{align}

\noi
The desired mapping property then follows from \eqref{Aop0} and \eqref{Aop4b}.
The tail estimate  \eqref{pote2} for $\If_{\pl, \pe}^{(1)}$
 follows from 
\eqref{Aop4}, 
\eqref{Aop4a}, and 
the Garsia-Rodemich-Rumsey inequality (Lemma \ref{LEM:GRR})
as in the proof of
Lemma \ref{LEM:stoconv}.

Next, we consider 
$\If_{\pl, \pe}^{(3)}$ 
defined in \eqref{X62}.
This is a deterministic operator
with the kernel given by 
 $\A^{(3)}_{n}(t,t')$   in \eqref{A4a}.
 Hence, once we show its boundedness, 
 the tail estimate \eqref{pote2} is automatically satisfied.
The same comment applies to 
$\If_{\pl, \pe}^{(4)}$ 
and $\If_{\pl, \pe}^{(5)}$
 studied below.
In this case, we show
\begin{align}
\If_{\pl, \pe}^{(3)}\in 
 \L(L^q([0, T]; L^2(\T^3) )
\, ;\, 
 L^\infty([0, T]; L^2(\T^3)))
\label{A2a1}
\end{align}

\noi
for any $q > 1$.  In the following, we only consider $1 < q\leq 2$.

Define $\K_n$ by 
\begin{align}
\K_n(t) & = \ind_{[0, 1]}(t)\cdot e^{-t}
\sum_{\substack{n_2 \in \Z^3 \\ |n| \ll |n_2|^\theta}}
\frac{\sin (t(\jbb{n+n_2} + \jbb{n_2}))}{2 \jbb{n+n_2} \jb{n_2}^2}.
\label{A2a}
\end{align}

\noi
Then, 
from \eqref{A4a},
we have
\begin{align}
\A_{n}^{(3)} (t, t') = \ind_{[0, t]}(t') \cdot \K_n(t - t')
\label{A22}
\end{align}

\noi
for $0 \leq t \le 1$.
Thus, we have 
\begin{align}
\begin{split}
\If_{\pl, \pe}^{(3)} (w) (t)
& = \ind_{[0, T]}(t)\cdot \sum_{n \in \Z^3} e_n 
\int_0^{t}
\big(\ind_{[0, T]}(t')\cdot \ft w(n, t')\big)
 \K_n(t - t') dt'\\
& = \ind_{[0, T]}(t)
\cdot \sum_{n \in \Z^3} e_n 
\big(\ind_{[0, T]}\cdot \ft w(n, \cdot)\big)
*_t \K_n
\end{split}
\label{A2aa}
\end{align}

\noi
for $0 \leq t \le T \le 1$.

From \eqref{A2a}, we have 
\begin{align*}
\ft \K_n(\tau) 
& = \frac{1}{\sqrt{2\pi}}\int_0^1 \K_n(t) e^{- i t\tau} dt \\
&= \frac{1}{4i \sqrt{2\pi}}\sum_{\s \in \{1, -1\}}\sum_{\substack{n_2 \in \Z^3 \\ |n| \ll |n_2|^\ta}} 
\frac{1}{ \jbb{n+n_2} \jb{n_2}^2} \\
& \hphantom{XXX}
\times \frac{\exp \big(i (\s(\jbb{n+n_2}+ \jbb{n_2}) -  \tau) - 1\big) - 1}{
i(\s(\jbb{n+n_2}+ \jbb{n_2}) -  \tau) - 1}.
\end{align*}

\noi
In the following, we only bound the contribution from $\s = 1$.
The contribution from $\s = -1$ can be estimated in an analogous manner.
Let 
\[\phi_{n, \tau} (n_2) := \big|\jbb{n+n_2}+ \jbb{n_2} -  \tau\big|.\] 

\noi
Then, 
for $M \ge 4$ dyadic and $\tau\geq 1$, we have 

\smallskip

\begin{itemize}
\item If $M \ll \tau$, then $\#\{n_2 \in \Z^3: \phi_{n, \tau}(n_2) \sim M\} \les M\tau^2$.
In this case, we have  $|n_2| \sim \tau$.

\smallskip
\item If $M \sim \tau$, then $\#\{n_2 \in \Z^3: \phi_{n, \tau}(n_2) \sim M\} \les \tau^3$.
In this case, we have  $|n_2| \les \tau$.

\smallskip

\item If $M \gg \tau$, then $\#\{n_2 \in \Z^3: \phi_{n, \tau}(n_2) \sim M\} \les M^3$.
In this case, we have $\phi_{n, \tau}(n_2) \sim |n_2| \sim M \gg \tau$.
\end{itemize}

\smallskip
\noi
Hence, we have 
\begin{align}
\begin{split}
|\ft \K_n(\tau) | 
&\les \sum_{\substack{n_2\in \Z^3\\\phi_{n, \tau}(n_2) \le 4}} \frac 1 {\jb{n_2}^3} 
+ \sum_{\substack{M \ll \tau\\\text{dyadic}} }
\sum_{\substack{n_2\in \Z^3\\\phi_{n, \tau}(n_2) \sim M}} \frac 1 {\jb{\tau}^3}  \frac 1 M \\
& \quad + 
\sum_{\substack{M \sim \tau\\\text{dyadic}}} \sum_{\substack{n_2\in \Z^3\\\phi_{n, \tau}(n_2) \sim M} }
\frac 1 {\jb{n_2}^3} \frac 1 {\jb{\tau} }
+ \sum_{\substack{M \gg \tau\\\text{dyadic}}} \sum_{\substack{n_2\in \Z^3\\\phi_{n, \tau}(n_2) \sim M} }\frac 1 {M^4}\\
&\les \frac {1}{\jb{\tau}}  + \frac {\log \jb{\tau}}{\jb{\tau}} + \frac {\log \jb{\tau}}{\jb{\tau}} + \frac 1 {\jb{\tau}} \\
&\les \frac{\log \jb{\tau}}{\jb{\tau}}, 
\end{split}
\label{A2c}
\end{align}

\noi
uniformly in $n$ with $|n|\ll |n_2|^\ta$, 
when $\tau \ge 1$.
When $\tau \le 1$, 
we have 
$\phi_{n, \tau} (n_2) \ges \jb{n_2}\gg 1$
and thus 
\begin{align}
|\ft \K_n(\tau) | 
\les \sum_{n_2\in \Z^3}
\frac 1 {\jb{n_2}^3\max(\jb{n_2}, \jb{\tau})} \les \frac {\log \jb{\tau}}{\jb{\tau}}, 
\label{A2d}
\end{align}

\noi
uniformly in $n \in \Z^3 $ with $|n|\ll |n_2|^\ta$.
From \eqref{A2c} and \eqref{A2d}, we conclude that 
$\ft \K_n \in L^q(\R)$ for any $q>1$.
Then, by  Hausdorff-Young's inequality, we obtain
\begin{align}
\big\|\big(\ind_{[0, T]}\cdot \ft w(n, \cdot)\big)
*_t \K_n\big\|_{L^\infty_T}
\leq  \| \ft W_n \ft \K_n\|_{L^1_\tau}
\leq  \| \ft W_n\|_{L^{q'}_\tau} \|\ft \K_n\|_{L^q_\tau}
\les \|  W_n\|_{L^{q}_T} 
\label{A2e}
\end{align}

\noi
for any $1 < q \le 2$, 
uniformly in $n \in \Z^3$, 
where 
$W_n = \ind_{[0, T]}\cdot \ft w(n)$.
Therefore, from \eqref{A2aa}, \eqref{A2e}, and Minkowski's integral inequality, we
conclude that 
\begin{align*}
\|\If_{\pl, \pe}^{(3)} (w) \|_{L^\infty_T L^2_{x}}
\leq 
\Big\|\big\|\big(\ind_{[0, T]}\cdot \ft w(n, \cdot)\big)
*_t \K_n\|_{L^\infty_T} \Big\|_{\l^2_n}
\les \| \ft w(n, \cdot) \|_{\l^2_n L^q_T }
\leq \| w\|_{L^q_T L^2_x}
\end{align*}

\noi
for any $1 < q \le 2$.
This proves \eqref{A2a1}.

Lastly,  we consider
$\If_{\pl, \pe}^{(4)}$ 
and $\If_{\pl, \pe}^{(5)}$
defined in \eqref{X62}.
These are deterministic operators
with the kernels given by 
 $\A^{(4)}_{n}(t,t')$ and $\A^{(5)}_{n}(t,t')$  in \eqref{A4a}.
 Hence, once we show their boundedness, 
 the tail estimate \eqref{pote2} is automatically satisfied.
For now, we assume  that
\begin{align}
\| \jb{n}^{-2+ \frac 1 \theta} \A^{(j)}_{n} (t, t') \|_{\l^\infty_n} \les 1
\label{A3}
\end{align}

\noi
for any $0 \leq t' \leq t \leq T\le  1$, $j = 4, 5$, 
and show 
\begin{align*}
\If_{\pl, \pe}^{(j)}\in 
\L( L^1([0, T]; L^2(\T^3) )
\, ;\, L^\infty([0, T]; H^{s_3}(\T^3))), \qquad j = 4, 5.
\end{align*}

\noi
From  \eqref{X62} and \eqref{A3}, we have
\begin{align*}
\|\If_{\pl, \pe}^{(j)}(w) \|_{L^\infty_T H^{s_3}_x}
& = 
\bigg\|\int_0^t 
\jb{n}^{s_3}
\ft w(n, t') \A_{n}^{(j)}(t, t') dt'
\bigg\|_{L^\infty_T \l^2_n}\\
& \les    \|w\|_{L^1_T L^2_x} 
\sup_{n\in \Z^3}\jb{n}^{s_3 +2-\frac 1 \theta} \\
&\les   \|w\|_{L^1_T L^2_x},
\end{align*}

\noi
provided that $s_3 \le -2+\frac 1\theta$.
By noting that $\jbb{n+n_2} \sim \jb{n_2}\gg \jb{n}$
under $ |n| \ll |n_2|^\theta$, 
we see that~\eqref{A3} is easily verified for $j = 5$.

 The sum for $\A^{(4)}_{n} (t, t')$ in \eqref{A4a}
is not absolutely convergent.
As in \cite{GKO2}, 
we exploit the symmetry $n_2 \leftrightarrow -n_2$
and the oscillatory nature of the sine kernel.
Set
\begin{align}
\Theta^\pm(n, n_2)
:=
\jbb{n \pm n_2} - \jbb{n_2}
 \mp \frac{\jb{n, n_2}}{\jbb{n_2}} .
\label{Y14}
\end{align}

\noi
Then, 
noting  that $\jbb{n\pm n_2} \sim \jbb{n_2}\gg \jb{n}$
under $ |n| \ll |n_2|^\theta$, 
it follows from 
\eqref{jbbn} and the mean value theorem that 
\begin{align}
\begin{split}
\Theta^\pm(n, n_2)
&= \frac{|n|^2}{\jbb{n\pm n_2} + \jbb{n_2}}  \pm \jb{n, n_2} \frac{\jbb{n_2} - \jbb{n \pm n_2}}{(\jbb{n\pm n_2} + \jbb{n_2}) \jbb{n_2}}  \\
&= O\bigg(\frac{\jb{n}^2}{\jb{n_2}}\bigg).
\end{split}
\label{Y14a}
\end{align}

\noi
Write 
\[ \sum_{n_2 \in \Z^3 \setminus\{0\}} F(n_2) 
= \sum_{n_2 \in  \Ld} \big( F(n_2) + F(-n_2) \big), \]

\noi
where  the index $\Ld$ is as in \eqref{index}.
Then, 
from 
\eqref{A4a},  \eqref{Y14}, the mean value theorem, 
and \eqref{Y14a}, we have
\begin{align}
\begin{split}
& \A^{(4)}_{n} (t, t') \\
& = e^{-(t-t')} \sum_{\substack{n_2 \in \Ld \\ |n| \ll |n_2|^\theta}}
 \frac{\sin ((t - t')( \jbb{n + n_2} - \jbb{n_2}))
 + \sin ((t - t')( \jbb{n - n_2} - \jbb{n_2}) )}{2 \jbb{n + n_2} \jb{n_2}^2}  \\
& \hphantom{X}
- e^{-(t-t')} \sum_{\substack{n_2 \in \Ld \\ |n| \ll |n_2|^\theta }}
 \frac{  \sin ((t - t')( \jbb{n - n_2} - \jbb{n_2}) )}{2 \jb{n_2}^2}
 \bigg(\frac{1}{\jbb{n + n_2}} - \frac{1}{\jbb{n - n_2}}\bigg)
   \\
& = e^{-(t-t')} \sum_{\substack{n_2 \in \Ld \\ |n| \ll |n_2|^\theta}}
 \frac{1}{2 \jbb{n + n_2} \jb{n_2}^2} 
 \bigg\{
\sin \bigg((t - t')\Big(  \frac{\jb{n, n_2}}{\jbb{n_2}} + \Theta^+(n, n_2)\Big)\bigg) \\
& \hphantom{XXXXX}
- \sin \bigg((t - t')\Big(  \frac{\jb{n, n_2}}{\jbb{n_2}} - \Theta^-(n, n_2)\Big)\bigg)
 \bigg\} 
 \\
& \hphantom{X}
 +   O\bigg(  \sum_{\substack{n_2 \in \Ld \\ |n| \ll |n_2|^\theta}}
 \frac{ \jb{n}}{\jb{n_2}^4}\bigg) \\
& \les
 \sum_{\substack{n_2 \in \Ld \\ |n| \ll |n_2|^\theta}}
\frac{1}{\jbb{ n + n_2} \jb{n_2}^2} 
\frac{\jb{n}^{2}}{\jb{n_2}}
+ \sum_{\substack{n_2 \in \Ld \\ |n| \ll |n_2|^\theta}}
 \frac{ \jb{n}}{\jb{n_2}^4} \\
& \les
\jb{n}^{2 - \frac 1 \theta}
\end{split}
\label{A5a}
\end{align}

\noi
for any $0 \leq t' \le t \le 1$ and $0<\theta \le1$.
This proves \eqref{A3} for $j = 4$.

This completes the proof of Proposition \ref{PROP:sto4J}.
\end{proof}

We conclude this section by briefly discussing the regularity property 
of the stochastic term $\Ab$ defined in \eqref{sto1}.
For this purpose, we first define its truncated version:
\begin{align}
\Ab_N(x, t, t') = \sum_{n \in \Z^3} e_n(x) \sum_{\substack{n = n_1 + n_2\\|n_1|\sim| n_2|}}
 e^{-\frac{t-t'}{2}} \frac{\sin ( (t - t') \jbb{n_1})}{\jbb{n_1}}
 \ft \Psi_N(n_1, t') \ft \Psi_N (n_2, t).
\label{sto3}
\end{align}

\begin{lemma}\label{LEM:sto1}

Let $\Ab_N(t, t')$ be as in \eqref{sto3}.
Fix finite $q \geq 2$.
Then, 
given any  $T,\eps>0$ and finite $p \geq 1$, 
 $\{ \Ab_N \}_{N\in \N}$ is a Cauchy sequence in $L^p(\O;L^\infty_{t'}L^q_t(\Dl_2(T); H^{-\eps}(\T^3)))$, 
 converging to some limit $\Ab$ \textup{(}formally defined by \eqref{sto1}\textup{)}
 in $L^p(\O;L^\infty_{t'}L^q_t(\Dl_2(T);H^{-\eps}(\T^3)))$, 
 where $\Dl_2(T)$ is as in \eqref{sto2}.
Moreover,  $\Ab_N$  converges almost surely to the same  limit in $L^\infty_{t'}L^q_t(\Dl_2(T); H^{ -\eps}(\T^3))$.
Furthermore, 
 we   have 
the following uniform tail estimate\textup{:}
\begin{align}
\PP\Big( \| \Ab_N\|_{L^\infty_{t'}L^q_t(\Dl_2(T);  H^{-\eps}_x)} > \ld\Big) 
\leq 
\begin{cases}
C\exp\big(-c \frac{\ld}{T^{ \frac{1}{q}}}\big),  & \text{when }0<T\le1, \\
C T \exp ( -  \ld),  & \text{when }  T > 1
\end{cases}
\label{sto4}
\end{align}

\noi
for any  $\ld \gg 1$, and $N \in \N \cup \{\infty\}$, 
where $\Ab_\infty = \Ab$. 

\end{lemma}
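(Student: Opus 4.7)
The proof parallels those of Lemmas \ref{LEM:stoconv} and \ref{LEM:IV}: since $\Ab_N$ is a product of two copies of $\ft \Psi_N$, it belongs to the second Wiener chaos $\H_{\leq 2}$, so the Wiener chaos estimate (Lemma \ref{LEM:hyp}) reduces all higher-moment computations to $L^2(\Omega)$-bounds. Lemma \ref{LEM:reg} then delivers spatial regularity together with Cauchy-in-$N$ convergence, while the Garsia--Rodemich--Rumsey inequality (Lemma \ref{LEM:GRR}) is used to promote pointwise exponential moments to the $L^\infty$-in-$t'$ supremum. Chebyshev's inequality finally provides the tail \eqref{sto4}.

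The core computation is the bound
\begin{equation*}
\E\big[|\ft \Ab_N(n, t, t')|^2\big] \lesssim \jb{n}^{-3},
\end{equation*}
uniform in $N \in \N$ and $(t, t') \in \Dl_2(T)$. Wick's theorem (Lemma \ref{LEM:Wick}) together with the covariance formulas \eqref{Wickz} and \eqref{sigma2} shows that only the two non-trivial Gaussian pairings contribute, each producing the factor $\jb{n_1}^{-2} \jb{n_2}^{-2}$. Combining with the $\jbb{n_1}^{-2}$ from the kernel in \eqref{sto3} and invoking the resonant sum Lemma \ref{LEM:SUM}(ii) with $\alpha = 4$, $\beta = 2$, $d = 3$ yields
\begin{equation*}
\E\big[|\ft\Ab_N(n, t, t')|^2\big] \lesssim \sum_{\substack{n = n_1 + n_2 \\ |n_1| \sim |n_2|}} \frac{1}{\jb{n_1}^{4} \jb{n_2}^{2}} \lesssim \jb{n}^{-3}.
\end{equation*}
Summing against $\jb{n}^{-2\eps}$ and using the spatial homogeneity of $\Ab_N$ (as in \eqref{R2}) gives $\E\big[\|\Ab_N(t, t')\|_{H^{-\eps}_x}^2\big] \lesssim 1$ uniformly in $N$ and $(t, t')$, and the Wiener chaos estimate promotes this to $\big\|\|\Ab_N(t, t')\|_{H^{-\eps}_x}\big\|_{L^p(\Omega)} \lesssim p$ for any finite $p\geq 1$. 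Hölder-in-time difference estimates come from the mean value theorem applied to $\sin$, to the factor $e^{-(t-t')/2}$, and to the covariance \eqref{sigma2}: one obtains, for some $\s \in (0, 1)$ and $i = 1, 2$,
\begin{equation*}
\E\big[|\dl_h^{(i)} \ft \Ab_N(n, t, t')|^2\big] \lesssim |h|^\s \jb{n}^{-3 + \s},
\end{equation*}
where $\dl_h^{(1)}$ (resp.~$\dl_h^{(2)}$) denotes the increment in the $t$-variable (resp.~$t'$-variable), as well as analogous estimates for $\Ab_N - \Ab_M$ carrying an extra decay factor $\min(N, M)^{-\g}$ for some $\g > 0$. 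Combined with the Wiener chaos estimate, these imply that $\{\Ab_N\}_{N\in \N}$ is Cauchy in $L^p(\Omega; C(\Dl_2(T); H^{-\eps}_x))$, with limit $\Ab$.

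The remaining and main technical obstacle is the $L^\infty_{t'}$-bound with the sharp exponential tail, since this involves a supremum over a continuous parameter. By Minkowski's integral inequality together with the pointwise moment bound, $\big\|\|\Ab_N(\cdot, t')\|_{L^q_t H^{-\eps}_x}\big\|_{L^{2p}(\Omega)} \lesssim p\, T^{1/q}$ uniformly in $t'$, and the $t'$-difference estimate promotes to
\begin{equation*}
\big\|\|\Ab_N(\cdot, t_1') - \Ab_N(\cdot, t_2')\|_{L^q_t H^{-\eps}_x}\big\|_{L^{2p}(\Omega)} \lesssim p\, T^{1/q}\, |t_1' - t_2'|^{\s/2}.
\end{equation*}
The standard transfer from polynomial $L^{2p}$-bounds to an exponential moment (exploiting the linear-in-$p$ growth typical of the second chaos) then supplies the hypothesis of Lemma \ref{LEM:GRR} with $E = \R$, $\alpha = 2$, and $\theta \in (0, \s/2)$, applied pathwise to $t' \mapsto \|\Ab_N(\cdot, t')\|_{L^q_t H^{-\eps}_x}$, and one concludes
\begin{equation*}
\E \bigg[\exp\bigg(c\, T^{-1/q}\, \|\Ab_N\|_{L^\infty_{t'} L^q_t H^{-\eps}_x}\bigg)\bigg] \leq C,
\end{equation*}
uniformly in $N \in \N$ and $T \in (0, 1]$. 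Chebyshev's inequality then gives \eqref{sto4} for $T \leq 1$ with constants independent of $N$. The case $T > 1$ is handled by splitting $[0, T]$ into unit subintervals in $t$ and using a union bound in $t'$, producing the linear-in-$T$ prefactor. Finally, the almost sure convergence $\Ab_N \to \Ab$ in $L^\infty_{t'} L^q_t H^{-\eps}_x$ follows from Borel--Cantelli applied to the summable $L^p(\Omega)$-moments of $\Ab_N - \Ab$.
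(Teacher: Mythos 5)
Your proposal captures the variance computation for the \emph{connected} pairings correctly, and the GRR/Chebyshev machinery for promoting moment bounds to the exponential tail is the right framework. However, there is a genuine gap in the ``core computation''. You assert that only the two non-trivial Gaussian pairings contribute to $\E\big[|\ft\Ab_N(n,t,t')|^2\big]$, but this is false: unlike $:\!\Psi_N^2\!:$, the kernel $\Ab_N$ is \emph{not} Wick-ordered, so the ``disconnected'' pairing $n_1+n_2=0=m_1+m_2$ does contribute whenever $n=0$, producing $|\E[\ft\Ab_N(0,t,t')]|^2$. That mean is
\begin{align*}
\E\big[\ft\Ab_N(0,t,t')\big]
= \sum_{0<|n_1|\le N} e^{-\frac{t-t'}{2}}\,\frac{\sin\big((t-t')\jbb{n_1}\big)}{\jbb{n_1}}\, \s_{n_1}(t,t'),
\end{align*}
whose summand decays only like $\jb{n_1}^{-3}$ in $\Z^3$, hence is not absolutely summable. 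Your Wick/resonant-sum argument therefore does not establish a uniform-in-$N$ bound on $\E\big[|\ft\Ab_N(0,t,t')|^2\big]$; the sum is only conditionally convergent, and no counting lemma will save it.

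The paper's proof is organized precisely around this obstruction. It observes $\ft\Ab(n,t,t') = \A_{n,0}(t,t')$ and reuses the decomposition $\A_{n,n_1} = \A^{(1)}_{n,n_1} + \A^{(2)}_{n,n_1}$ from \eqref{X9}: the random, \emph{centered} part $\Ab^{(1)}$ is exactly where your chaos-estimate argument (Lemma \ref{LEM:reg}, \eqref{Aop3}) applies and yields $\jb{n}^{-3}$, while the deterministic counter-term at $n=0$ is further split into $\A^{(3)},\A^{(4)},\A^{(5)}$ as in \eqref{A4a}. The conditionally convergent sum is then controlled by purely deterministic dispersive estimates: a Hausdorff--Young argument on $\K_0$ from \eqref{A2c}--\eqref{A2d} for $\A^{(3)}$, the $n_2\leftrightarrow -n_2$ symmetrization and mean-value cancellation as in \eqref{Y14}--\eqref{A5a} for $\A^{(4)}$, and a trivial decay for $\A^{(5)}$. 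Without this separation and the accompanying oscillatory analysis, the crucial uniform-in-$N$ bound at the zeroth spatial frequency is not justified, and the $L^\infty_{t'} L^q_t$ bound (and hence the tail estimate) does not follow.
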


\begin{proof}
As in the proof of Proposition \ref{PROP:sto4J}, 
 we only consider the case $0 < T \leq 1$.
In the following, we simply study the regularity of $\Ab$, i.e.~when $N = \infty$.
The claimed convergence and the tail estimate \eqref{sto4}
follow from a standard argument and the fact that $\Ab_N \in \H_{\le 2}$, $N \in \N \cup\{\infty\}$.
By comparing \eqref{sto1} with \eqref{X7}, we have
\begin{align*}
\ft \Ab(n, t, t') = \A_{n, 0}(t, t')
\end{align*}

\noi
for $(t, t') \in \Dl_2(T)$.
Thus, from 
\eqref{X9}
and 
\eqref{A4a}, we can write
\begin{align*}
\ft \Ab(n, t, t') = \ft \Ab^{(1)}(n, t, t')+
\ind_{n = 0}\cdot \Big(
\ft \Ab^{(3)}(0, t, t')+\ft \Ab^{(4)}(0, t, t')+\ft \Ab^{(5)}(0, t, t')\Big),
\end{align*}

\noi
where 
$\ft \Ab^{(1)}(n, t, t') = \A_{n, 0}^{(1)}(t, t')$
and
$\ft \Ab^{(j)}(n, t, t') = \A_{n}^{(j)}(t, t')$, $j = 3, 4, 5$.

From \eqref{Aop3}, 
we have 
\begin{align*}
\E\Big[ \| \ft \Ab^{(1)}(n, t, t') \|_{L^q_t([0, T])}^2 \Big] \les \jb{n}^{-3}T^{\frac 2q}
\end{align*}

\noi
for $n \in \Z^3$ and $0 \le t' \le t \le T$.
Note that, in \eqref{sto1}, $t'$ appears
in $\sin ( (t - t') \jbb{n_1})$
and $\Psi(n_1, t')$.
Then, proceeding as in the proof of Lemma 3.1 in \cite{GKO2}, we obtain 
\begin{align*}
\E\Big[ \| \ft \Ab^{(1)}(n, t, t'_1) - \ft \Ab^{(1)}(n, t, t'_2) \|_{L^q_t([0, T])}^2 \Big] \les 
|t_1' - t_2'|^{\s_0}\jb{n}^{-3+\s_0}T^{\frac 2q }
\end{align*}

\noi
for some small $\s_0 > 0$.
Then, by
 (a variant of) Lemma \ref{LEM:reg}, we 
conclude that $\Ab^{(1)} \in 
L^\infty_{t'}L^q_t(\Dl_2(T);H^{-\eps}(\T^3)))$
almost surely.
The exponential tail estimate \eqref{sto4} for 
$\Ab^{(1)} $
follows from adapting the proof of Lemma \ref{LEM:stoconv}, using Lemma \ref{LEM:GRR}.

It remains to estimate the deterministic terms
$\Ab^{(j)}$, $j = 3, 4, 5$, which appear only at the zeroth frequency.
Let $\phi$ be a smooth bump function in Section \ref{SEC:2}
and set $\phi^T(t) = \phi(T^{-1} t)$. 
Then, from \eqref{A22}, \eqref{A2c},  \eqref{A2d}, 
Hausdorff-Young's inequality, 
and Young's inequality,  we have
\begin{align*}
 \|  \ft \Ab^{(3)}(0, t, t') \|_{L^\infty_{t'}L^q_t(\Dl_2(T))}
& \le
\| \K_0\|_{L^q_T} 
\le 
\|\ft {\phi^T  \K_0} \|_{L^{q'}_\tau}
\le  \| \ft {\phi^T}\|_{L^\frac{q'}{1+q'\eps}_\tau} \|\ft \K_0  \|_{L^\frac{1}{1-\eps}_\tau}
 \les T^{\frac 1q-\eps}
\end{align*}

\noi
for small $\eps > 0$.
From \eqref{A5a}, we have
\begin{align}
 \|  \ft \Ab^{(4)}(0, t, t') \|_{L^\infty_{t'}L^q_t(\Dl_2(T))}
\les T^\frac{1}{q}.
\label{sto5}
\end{align}

\noi
In view of 
a faster decay in $n_2$ for $j = 5$ in \eqref{A4a}, 
the estimate \eqref{sto5} trivially holds
for $\ft \Ab^{(5)}$.
\end{proof}

\section{Local well-posedness of Hartree SdNLW}
\label{SEC:LWP}

In this section,
we present the proofs of 
Theorems \ref{THM:1} and \ref{THM:2}
on  local well-posedness
of the renormalized Hartree SdNLW
systems \eqref{SNLW5} and \eqref{SNLW6}
in the defocusing case and the focusing case, respectively.

\medskip

\noi
$\bullet$ {\bf Defocusing case for $\be > 1$.}
We first treat the defocusing case \eqref{SNLW5}.
By writing \eqref{SNLW5} in the Duhamel formulation (for $X$ and $Y$), we have
\begin{align}
\begin{split}
X &= \Phi_1(X, Y,  \Res)\\
:\! & = 
S(t)(X_0, X_1) - \I \Big( \big(V \ast 
( \Qxy + 2\Res \, + \! :\! \Psi^2 \!:) \big) \pl \Psi\Big),\\
Y &= \Phi_2(X, Y,  \Res)\\
:\! & = 
S(t)(Y_0, Y_1) 
- \I \Big( \big(V \ast 
( \Qxy + 2\Res \, + \! :\! \Psi^2 \!:) \big)  (X+Y)\Big)\\
& \hphantom{X}
- \I \Big( \big(V \ast 
( \Qxy + 2\Res \, + \! :\! \Psi^2 \!:) \big) \pge \Psi\Big),\\
\Res
&= \Phi_3(X, Y,  \Res)\\ 
:\! &= - \If_{\pl}^{(1)}
 \big(V \ast (\Qxy  + 2\Res + :\! \Psi^2 \!:\,)\big)\pe \Psi\\
& \hphantom{X}
 - \If_{\pl, \pe}
 \big(V \ast (\Qxy  + 2\Res + :\! \Psi^2 \!:\,)\big), 
\end{split}
\label{SNLW7}
\end{align}

\noi
where 
\begin{align}
S(t) (f, g) = \dt\D(t)f +  \D(t) (f + g)
\label{lin1}
\end{align}

\noi
and $\D(t)$ is as in \eqref{D2a}.
In the following, we assume that $-\frac 12 < s_3 < 0 < s_1 < \frac 12 < s_2 < 1$.
Given $0 < T\le 1$, 
let $Z^{s_1, s_2, s_3}(T)$ be as in \eqref{Z1}.
Given an enhanced data set $\Xi$ as in~\eqref{data1}, 
we set
\begin{align*}
\Xi(\Psi)  = \big( \Psi, \,  :\! \Psi^2 \!:,  \,  (V \ast :\! \Psi^2 \!:) \pe \Psi,
\,  \If_{\pl, \pe}\big)
\end{align*}

\noi
and 
\begin{align}
\begin{split}
\| \Xi (\Psi)  \|_{\mathcal{X}^{ \eps}_T}
&: =
\| \Psi \|_{C_T  W^{-\frac 12-\eps,\infty}_x\cap \, C^1_T  W^{-\frac 32-\eps,\infty}_x}
+ \| :\! \Psi^2 \!: \|_{C_T  W^{-1-\eps,\infty}_x} \\
&\hphantom{X}
+ \| ( V \ast :\! \Psi^2 \!:) \pe \Psi \|_{C_T W^{ \beta - \frac 32 -\eps, \infty}_x} 
+  \| \If_{\pl, \pe} \|_{ \L_2(\frac 32, T)} 
\label{data3}
\end{split}
\end{align}

\noi
for some small $\eps = \eps( \be, s_1, s_2, s_3)> 0$.
We assume
\begin{align}
\| \Xi (\Psi)  \|_{\mathcal{X}^{ \eps}_1}
\leq K
\label{data4}
\end{align}

\noi
for some $K \geq 1$.

\begin{remark}\label{REM:conv0}\rm
As for proving  the local well-posedness result stated in Theorem \ref{THM:1}, 
we do not need to use the $C^1_T  W^{-\frac 32-\eps,\infty}_x$-norm
of the stochastic convolution $\Psi$.
However, this norm is needed for constructing global-in-time dynamics
and thus we have included it in the definition  of 
the $\mathcal{X}^{ \eps}_T$-norm in \eqref{data3}.
The same comment applies to 
the first component of the ${\mathcal{Y}^{ \eps}_T}$-norm defined in~\eqref{data3a}.
\end{remark}

We first establish  preliminary estimates.
By Sobolev's inequality, we have
\begin{align}
\| f^2 \|_{H^{-a}}
\les \| f^2 \|_{L^{\frac 6{3+2a}}}
= \| f \|_{L^{\frac{12}{3+2a}}}^2
\les \| f \|_{H^{\frac{3-2a}4}}^2
\label{M1-1}
\end{align}
for any $0\leq a< \frac  32$.
By  \eqref{Bessel1}, \eqref{Pxy}, \eqref{M1-1}, Lemma \ref{LEM:para}, 
Lemma \ref{LEM:gko}, and H\"older's inequality
with~\eqref{data4}, 
we have
\begin{align}
\begin{split}
\| V\ast \Qxy \|_{L^\infty_{T} H^{-s_1+s_2+\frac 12}_x}
& \les \| (X+Y)^2 \|_{L^\infty_{T} H^{-\be-s_1+s_2+\frac 12}_x}
+ \| X \pl \Psi \|_{L^\infty_{T} H^{-\be-s_1+s_2+\frac 12}_x} \\
&\quad 
  + \| X \pg \Psi \|_{L^\infty_{T} H^{-\be-s_1+s_2+\frac 12}_x}
+ \| Y \Psi \|_{L^\infty_{T} H^{-\be-s_1+s_2+\frac 12}_x} \\
%
&\les   \Big(
\| X \|_{L^\infty_{T} H^{\max (1 - \frac{s_1-s_2+\be}2, \eps)}_x}^2
+ \| Y \|_{L^\infty_{T} H^{\max (1 - \frac{s_1-s_2+\be}2, \eps)}_x}^2 \Big) \\
&\quad +
 \Big( \| X \|_{L^\infty_T L^2_x}
+ \| Y \|_{L^\infty_{T} H^{\frac 12+  \eps}_x} \Big)
\| \Psi \|_{L^\infty_{T} W^{-\frac 12-\eps,\infty}_x}
\\
&\les
 \| X \|_{X^{s_1}(T)}^2 + \| Y \|_{X^{s_2}(T)}^2 + K^2 , 
\end{split}
\label{M1-3}
\end{align}

\noi
provided that $\b\ge  \max( -s_1+s_2+1 + \eps, 
-3s_1 + s_2 + 2)$,  $s_1\ge \eps $, and $s_2\ge \frac 12 +2\eps$.
When $\be > 1$, these conditions are satisfied 
for  $0<s_1<\frac 12<s_2$ such that $s_2 - s_1 >0$ is sufficiently close to $0$.
We also record the following estimate, 
which follows from 
 Sobolev's and H\"older's inequality:
\begin{align}
\| fg \|_{H^{s_2-1}}
\les \| fg \|_{L^{\frac 6{5-2s_2}}}
\les \| f \|_{L^{\frac 6{3-2s_1}}} \| g \|_{L^{\frac 3{1+s_1-s_2}}}
\les \| f \|_{H^{s_1}} \| g \|_{H^{-s_1+s_2+\frac 12}}
\label{M1-2}
\end{align}

\noi
for any $0 \le s_1<s_2\le 1 $.
Lastly, we recall the energy estimate:
\begin{align}
\bigg\|\int_0^t \D(t-t') F (t') dt'\bigg\|_{X^s(T)}
\les \| F\|_{L^1_T  H^{s-1}_x}.
\label{EE1}
\end{align}

We now estimate $\Phi_1 (X, Y, \Res)$  in \eqref{SNLW7}.
By the energy estimate \eqref{EE1}, Lemma \ref{LEM:para},  and \eqref{M1-3}
with~\eqref{data4},  we have
\begin{align}
\begin{split}
\| \Phi_1 &  (X, Y, \Res) \|_{X^{s_1}(T)} \\
&\les \| (X_0, X_1)\|_{\H^{s_1}}
+ 
\big\|  \big(V \ast (\Qxy + 2\Res + :\! \Psi^2 \!:\,)\big) \pl \Psi \big\|_{L^1_{T} H^{s_1-1}_x}
\\
&\les
\| (X_0, X_1)\|_{\H^{s_1}}
+ T^{\frac 23} 
 \| V\ast (\Qxy + 2\Res + :\! \Psi^2 \!:\,) \|_{L^3_{T}L^2_x}
\| \Psi \|_{L^\infty_{T} W_x^{-\frac 12 -\eps, \infty}}
\\
&\les
\| (X_0, X_1)\|_{\H^{s_1}}
+ 
T^{\frac 23}K  \Big( \|(X, Y, \Res)\|_{Z^{s_1, s_2, s_3}(T)}^2 +K^2 \Big), 
\end{split}
\label{M1a}
\end{align}

\noi
provided that $\beta \ge  \max (-s_3, 1+\eps)$
and $s_1 < \frac 12-\eps$.

Next,  we estimate  $\Phi_2 (X, Y, \Res)$  in \eqref{SNLW7}.
By  the energy estimate \eqref{EE1}, \eqref{M1-2}, 
Lemma \ref{LEM:para}, and \eqref{M1-3}
with \eqref{data4},   we have
\begin{align}
\begin{split}
\| \Phi_2 & (X, Y, \Res) \|_{X^{s_2}(T)}  \\
&\les
\| (Y_0, Y_1)\|_{\H^{s_2}}
+ 
\big\| \big(V \ast (\Qxy + 2\Res + :\! \Psi^2 \!:\,)\big) (X+Y) \big\|_{L^1_T H^{s_2-1}_x}\\
&\quad
+ \big\| \big(V \ast (\Qxy + 2\Res + :\! \Psi^2 \!:\,)\big) \pge \Psi \big\|_{L^1_T H^{s_2-1}_x}
\\
&\les
\| (Y_0, Y_1)\|_{\H^{s_2}}\\
& \quad
+ 
T^{\frac 23}  \| V \ast (\Qxy +2 \Res) \|_{L^3_T H^{-s_1+s_2+\frac 12}_x}
 \Big( \| X \|_{L^\infty_T H^{s_1}_x} + \| Y \|_{L^\infty_T H^{s_2}_x} \Big) \\
&\quad
+  T \| V \ast :\! \Psi^2 \!: \|_{L^\infty_TL^\infty_x}
\Big( \| X \|_{L^\infty_T L^2_x} + \| Y \|_{L^\infty_T L^2_x} \Big) \\
&\quad
+ T^{\frac 23}  \| V \ast (\Qxy + 2\Res) \|_{L^3_T H^{\frac 12 + 2\eps}_x}
\| \Psi \|_{L^\infty_T W^{-\frac 12-\eps, \infty}_x}
\\
&\quad
+ T \| V \ast :\! \Psi^2 \!: \|_{L^\infty_T W_x^{\be - 1 - \eps,\infty}}
 \| \Psi \|_{L^\infty_T W^{-\frac 12-\eps, \infty}_x}
+ T  \| (V \ast :\! \Psi^2 \!:) \pe \Psi \|_{L^2_{T} H^{s_2-1}_x} \\
&\les
\| (Y_0, Y_1)\|_{\H^{s_2}}
+ 
T^{\frac 23}  \Big(  \|(X, Y, \Res)\|_{Z^{s_1, s_2, s_3}(T)}^3 +K^3 \Big), 
\end{split}
\label{M4}
\end{align}

\noi
provided that $\beta\ge \max\big( 1 + \eps, 
 s_2 + \frac 12 + 4\eps, - s_1 + s_2 - s_3 + \frac 12 \big)$
and $ s_1 + 2\eps \le s_2 \le 1$.

Finally, we estimate $\Phi_3(X, Y,  \Res)$ in \eqref{SNLW7}.
By 
Lemma \ref{LEM:para}, 
Lemma \ref{LEM:sto3} (in particular \eqref{A00}), 
and
\eqref{M1-3}
with \eqref{data4}, 
we have
\begin{align}
\begin{split}
\|\Phi_3(X, Y,  \Res)\|_{L^3_T H^{s_3}_x}
 &\le \big\| \If_{\pl}^{(1)}
 \big(V \ast (\Qxy  + 2\Res + :\! \Psi^2 \!:\,)\big)\pe \Psi
\big\|_{L^3_T H^{\eps}_x}
 \\
& \hphantom{X}
+\big\| \If_{\pl, \pe}
 \big(V \ast (\Qxy  + 2\Res + :\! \Psi^2 \!:\,)\big)\big\|_{L^3_T H^{s_3}_x}\\
&\les 
T^\frac{1}{3}
\| \If_{\pl}^{(1)} \big(V \ast (\Qxy  + 2\Res + :\! \Psi^2 \!:\,)\big)\big\|_{L^\infty_TH^{\frac 12 + 3\eps}_x}
\| \Psi\|_{L^\infty_T W^{-\frac 12 - \eps, \infty}_x} 
\\
& \hphantom{X}
+
T^{\frac 13}
K \|V*(\Qxy  + 2\Res + :\! \Psi^2 \!:\,)\|_{  L^\frac{3}{2}_T  L^2_x}\\
&\les 
T^{\frac 23}K^2
\| V \ast (\Qxy  + 2\Res + :\! \Psi^2 \!:\,)\|_{L^3_TH^{s_0 }_x}
\\
& \hphantom{X}
+T^\frac{2}{3} K \|V*(\Qxy  + 2\Res + :\! \Psi^2 \!:\,)\|_{L^3_T L^2_x}\\
& \les T^\frac{2}{3}
K^2  \Big( \|(X, Y, \Res)\|_{Z^{s_1, s_2, s_3}(T)}^2 +K^2 \Big)
\end{split}
\label{M5}
\end{align}

\noi
for some small positive $s_0 = s_0(\eps)  \sim  \eps$, 
provided that $\be \geq \max( -s_3 + s_0, 1 + s_0)$.

By repeating a similar computation, we also obtain the following difference estimate:
\begin{align}
\begin{split}
\| \vec \Phi &  (X, Y, \Res)
- \vec \Phi   (\wt X, \wt Y, \wt \Res)
 \|_{Z^{s_1, s_2, s_3}(T)} \\
&\les
T^{\frac 23} K^2 \Big(\|(X, Y, \Res)\|_{Z^{s_1, s_2, s_3}(T)}
+ \|(\wt X, \wt Y, \wt \Res)\|_{Z^{s_1, s_2, s_3}(T)}
 + K \Big)^2\\
& \quad \times 
\|(X , Y, \Res)  - (\wt X , \wt Y, \wt \Res)\|_{Z^{s_1, s_2, s_3}(T)}, 
\end{split}
\label{M6}
\end{align}

\noi
where $\vec \Phi := (\Phi_1, \Phi_2, \Phi_3)$.
Let $B_R \subset Z^{s_1, s_2, s_3}(T)$ be 
the closed ball 
of radius $R\sim 
\| (X_0, X_1) \|_{\H^{s_1}}
+ \| (Y_0, Y_1) \|_{\H^{s_2}} + 1$, centered at the origin.
Then, 
 by choosing $T = T(K, R) >0$ sufficiently small, 
we conclude from \eqref{M1a}, \eqref{M4}, \eqref{M5}, and \eqref{M6} that 
$\vec \Phi = (\Phi_1, \Phi_2, \Phi_3)$ 
is a contraction on the closed ball $B_R$.
A similar computation yields 
continuous dependence of the solution $(X, Y, \Res)$
on the enhanced data set $\Xi$
measured in the $\mathcal{X}^{s_1, s_2,\eps}_1$-norm.
This concludes the proof of 
Theorem~\ref{THM:1}.

\medskip

\noi
$\bullet$ {\bf Focusing case for  $\be \ge 2$.}
We conclude this section by briefly going over 
the required modifications in the focusing case.
In view of the Gibbs measure construction (Theorem \ref{THM:Gibbs2}), we take $2 <  \g \le 3$
sufficiently close to $3$ (and $\g = 3$ when $\be = 2$).
As mentioned in Section \ref{SEC:1}, 
 a precise value of $\s > 0$ does not play any role
 in the local well-posedness argument, 
so we simply set $\s = 1$
and 
 consider the system~\eqref{SNLW6}.
By writing  \eqref{SNLW6} in the Duhamel formulation,
 we have
\begin{align*}
X &=  \Psi_1(X, Y,  \Res)\\
:\! & = 
S(t)(X_0, X_1) + \I \Big( \big(V \ast 
( \Qxy + 2\Res \, + \! :\! \Psi^2 \!:) \big) \pl \Psi\Big)\\
& \hphantom{X}
- \I \Big(M_\g(\Qxy + 2\Res \, + \! :\! \Psi^2 \!:) \Psi\Big),\\
Y &= \Psi_2(X, Y,  \Res)\\
:\! & = 
S(t)(Y_0, Y_1) 
+ \I \Big( \big(V \ast 
( \Qxy + 2\Res \, + \! :\! \Psi^2 \!:) \big)  (X+Y)\Big)\\
& \hphantom{X}
+ \I \Big( \big(V \ast 
( \Qxy + 2\Res \, + \! :\! \Psi^2 \!:) \big) \pge \Psi\Big)\\
& \hphantom{X}
- \I \Big(M_\g(\Qxy + 2\Res \, + \! :\! \Psi^2 \!:) (X+Y)\Big),\\
\Res
&= \Psi_3(X, Y,  \Res)\\ 
:\! &=  \If_{\pl}^{(1)}
 \big(V \ast (\Qxy  + 2\Res + :\! \Psi^2 \!:\,)\big)\pe \Psi\\
& \hphantom{X}
 + \If_{\pl, \pe}
 \big(V \ast (\Qxy  + 2\Res + :\! \Psi^2 \!:\,)\big)\\
 & \hphantom{X}
- \I\big( M_\g(\Qxy + 2\Res \, + \! :\! \Psi^2 \!:) \Psi\big) \pe \Psi,
\end{align*}

\noi
where the last term in the $\Res$-equation is interpreted as \eqref{sto1a}.

Comparing with \eqref{SNLW7} from the defocusing case, 
it suffices to estimate the last terms in each equation.
Given an enhanced data set $\Xi$ as in~\eqref{data2}, 
we set
\begin{align}
\Xi(\Psi)  = \big( \Psi, \,  :\! \Psi^2 \!:,  \, \Ab, \, \If_{\pl, \pe}\big)
\label{data33}
\end{align}

\noi
and 
\begin{align}
\begin{split}
\| \Xi (\Psi)  \|_{\mathcal{Y}^{ \eps}_T}
&: =
\| \Psi \|_{C_T  W^{-\frac 12-\eps,\infty}_x\cap \, C_T^1  W^{-\frac 32-\eps,\infty}_x}
+ \| :\! \Psi^2 \!: \|_{C_T  W^{-1-\eps,\infty}_x} \\
&\hphantom{X}
+ \| \Ab  \|_{ L^\infty_{t'}L^3_t(\Dl_2(T); H^{ - \eps}_x)}
+  \| \If_{\pl, \pe} \|_{ \L_2(\frac 32, T)} 
\label{data3a}
\end{split}
\end{align}

\noi
for some small $\eps = \eps( \be, s_1, s_2, s_3)> 0$.
We assume
\begin{align}
\| \Xi (\Psi)  \|_{\mathcal{Y}^{ \eps}_1}
\leq K
\label{data5}
\end{align}

\noi
for some $K \geq 1$.

By the energy estimate \eqref{EE1}, 
\eqref{focusnon}, 
and 
\eqref{data5}, we have
\begin{align}
\begin{split}
\Big\|\I \Big(M_\g(\Qxy + 2\Res  & \, + \! :\! \Psi^2 \!:\, ) \Psi\Big)\Big\|_{X^{s_1}(T)}
 \les
\|M_\g(\Qxy + 2\Res \, + \! :\! \Psi^2 \!:) \Psi\|_{L^1_TH^{s_1-1}_x}\\
& \les
\|\Qxy + 2\Res \, + \! :\! \Psi^2 \!: \|_{L^{\g-1}_TH^{-100}_x}^{\g-1} 
\|\Psi\|_{L^\infty_TH^{s_1-1}_x}\\
& \les
T^{\frac{4 -\g}{3}} K \|\Qxy + 2\Res \, + \! :\! \Psi^2 \!: \|_{L^{3}_TH^{-100}_x}^{\g-1} \\
& \les
T^{\frac{4 -\g}{3}}
K \Big( \|(X, Y, \Res)\|_{Z^{s_1, s_2, s_3}(T)}^2 +K^2 \Big)^{\g-1}.
\end{split}
\label{M7}
\end{align}

\noi
Similarly, we have 
\begin{align}
\begin{split}
\Big\|\I \Big(M_\g & (\Qxy + 2\Res   \, + \! :\! \Psi^2 \!:\, ) (X+Y) \Big)\Big\|_{X^{s_2}(T)}\\
& \les
T^{\frac{4 -\g}{3}}
 \Big( \|(X, Y, \Res)\|_{Z^{s_1, s_2, s_3}(T)}^2 +K^2 \Big)^{\g-1}\|(X, Y, \Res)\|_{Z^{s_1, s_2, s_3}(T)}.
\end{split}
\label{M8}
\end{align}

\noi
By Minkowski's integral inequality
and the proceeding as in \eqref{M7}, we have 
\begin{align}
\begin{split}
\Big\|\I\big( M_\g(\Qxy & + 2\Res \, +  \! :\! \Psi^2 \!:\, ) \Psi\big) \pe \Psi\Big\|_{L^3_TH^{s_3}_x}\\
& \leq  \int_0^T 
| M_\g(\Qxy + 2\Res \, + \! :\! \Psi^2 \!:) (t')| \cdot \| \Ab(t, t')\|_{L^3_t([t', T]; H^{s_3}_x)} dt'\\
& \leq K 
\|\Qxy + 2\Res \, + \! :\! \Psi^2 \!: \|_{L^{\g-1}_TH^{-100}_x}^{\g-1} \\
& \les
T^{\frac{4 -\g}{3}}
K \Big( \|(X, Y, \Res)\|_{Z^{s_1, s_2, s_3}(T)}^2 +K^2 \Big)^{\g-1}.
\end{split}
\label{M9}
\end{align}

Since $\g \le 3$, 
we have a
small power of $T$ in  \eqref{M7}, \eqref{M8},  and \eqref{M9}.
Furthermore, since $\g \ge 2$, 
$|x|^{\g-2}x$ is differentiable
with a locally bounded derivative
and thus difference estimates also hold for these extra terms.
Therefore, 
proceeding as in the defocusing case, 
we can show
that 
 $\vec \Psi := (\Psi_1, \Psi_2, \Psi_3)$
is a contraction on the ball $B_R \subset Z^{s_1, s_2, s_3}(T)$
of radius $R\sim 
\| (X_0, X_1) \|_{\H^{s_1}}
+ \| (Y_0, Y_1) \|_{\H^{s_2}} + 1$.
This proves 
Theorem \ref{THM:2}
in the focusing case.

\section{Invariant Gibbs dynamics}
\label{SEC:GWP}

In this section, we present the proof of Theorem \ref{THM:GWP}
by applying Bourgain's invariant measure argument \cite{BO94, BO96}.
In Subsection \ref{SUBSEC:GWP1}, 
we first study the truncated dynamics 
and establish a long time  a priori bound on the solutions (Proposition \ref{PROP:GWPNB}).
In Subsection \ref{SUBSEC:GWP2}, 
we then prove almost sure global well-posedness of the Hartree SdNLW
and invariance of the Hartree Gibbs measure.
Our presentation closely follows those
in \cite{Tz08, BT, ORTz}, 
in particular~\cite{ORTz}, where a renormalization was required on the nonlinearity.
We, however,  point out that 
a certain part of the argument from~\cite{ORTz} in the two-dimensional setting
can not be applied to  the current  three-dimensional setting, 
where we imposed the paracontrolled structure
for constructing local-in-time solutions.
More precisely, in estimating the difference of two solutions, 
the authors in \cite{ORTz} applied
the product estimates (such as Lemma \ref{LEM:gko})
to bound the difference of the enhanced data sets
with two different initial data
(and iterated the local-in-time argument).
Such an estimate, however, fails in the three-dimensional setting
due to the lower regularity of the noise.
See Remark \ref{REM:fail} below
for a further discussion.
We instead establish a stability result on 
a large time interval $[0, T]$ in a direct manner, 
incorporating the paracontrolled structure.\footnote{In a preprint \cite{Bring2}, 
Bringmann overcame a similar issue via a different approach, 
by establishing a certain stability result of a paracontrolled structure.}
See
Proposition \ref{PROP:GWPNC}.

In the following, we only consider the focusing case ($\s> 0$).
A straightforward  modification yields the corresponding result for
the defocusing case.
Furthermore, we restrict our attention
to the non-endpoint case $\be > 2$ and assume $\s = 1$
for simplicity.
The same argument applies to the critical case $\be = 2$
with $0 < \s \ll 1$.

In the remaining part of this section, 
we fix some notations.
Let $V$ be the Bessel potential of order $\be>2$.
We also fix 
$A > 0$  sufficiently large
and $\g > 0$, satisfying 
 $\max \big(\frac{\be+1}{\be-1},2\big) \le \g <3$
 with $\g > 2$ when $\be = 3$, 
such that the focusing Hartree measure 
$\rho$ in \eqref{Gibbs9} is constructed as the limit of
the truncated Gibbs measures $\rho_N$
in \eqref{GibbsN1}
as in  Theorem \ref{THM:Gibbs2}.
With these parameters
and $\mu_0$  as in  \eqref{gauss0}, 
the Gibbs measure $\rhoo = \rho\otimes \mu_0$
for the focusing Hartree SdNLW~\eqref{SNLWA2}
is  constructed 
as the limit of 
the truncated Gibbs measures:
\begin{align}
\rhoo_N = \rho_N \otimes \mu_0
\label{rhooN}
\end{align}

\noi
for  the truncated focusing Hartree SdNLW \eqref{SNLWA2r};
see  Remark \ref{REM:Gibbs1}.

By assumption,  the Gaussian field $\muu = \mu_1 \otimes \mu_0$ in \eqref{gauss1}
and hence the (truncated) Gibbs measures
are independent of (the distribution of) the space-time white noise $\xi$ in \eqref{SNLWA2}
and \eqref{SNLWA2r}.
Hence, we can write the probability space $\Omega$
as 
\begin{align}
\O = \O_1 \times \O_2
\label{O1}
\end{align}
such that the random Fourier series in \eqref{IV2}  depend only on $\o_1 \in \O_1$, 
while the cylindrical Wiener process $W$ in \eqref{W1}
depends only on $\o_2 \in \O_2$.
In view of \eqref{O1}, 
we also write the underlying probability measure $\PP$ on $\O$
as $\PP = \PP_1 \otimes \PP_2$, 
where $\PP_j$ is the marginal probability measure on $\O_j$, $j = 1, 2$.

With the decomposition \eqref{O1} in mind, 
we set
\begin{align}
\begin{split}
\Psi (t;\vec u_0, \o_2)
&= S(t) \vec u_0 
+ \sqrt 2 \int_0^t \D(t-t') dW(t', \o_2),
\\
\vec \Psi (t;\vec u_0, \o_2)
&= \big( \Psi (t; \vec u_0, \o_2), \dt \Psi (t; \vec u_0, \o_2) \big),
\\
\vphantom{\int}
\Psi_N (t;\vec u_0, \o_2)
&= \pi_N \Psi (t; \vec u_0, \o_2)
\end{split}
\label{Psix}
\end{align}

\noi
for $\vec u_0 =(u_0,u_1) \in \H^{-\frac 12-\eps}(\T^3)$ and $\o_2 \in \O_2$, 
where $S(t)$ is as in \eqref{lin1}.
We may suppress the dependence on $t$ and $\o_2$
and write $\Psi (\vec u_0)$, etc.

In the remaining part of this section, 
we fix 
 $\frac 14<s_1 < \frac 12 < s_2<1$ and $-\frac 12 <s_3<0$, 
 satisfying \eqref{Z0}, 
 as in (the proof of) Theorem~\ref{THM:2}
 on the local well-posedness
 of the focusing Hartree SdNLW system~\eqref{SNLW6}.

\subsection{On the truncated dynamics}
\label{SUBSEC:GWP1}

In this subsection, we 
study the truncated  focusing Hartree SdNLW~\eqref{SNLWA2r}:
\begin{align}
\begin{split}
\dt^2 & u_N + \dt u_N  + (1 -  \Dl)  u_N  \\
& 
-\s  \pi_N \big( (V \ast  :\! (\pi_N u_N)^2 \!:\, ) \pi_N u_N \big)
+  M_\g (\,:\! (\pi_N u_N)^2 \!:\,) \pi_N u_N 
= \sqrt{2} \xi, 
\end{split}
\label{SNLWx}
\end{align}

\noi
where 
$:\! (\pi_N u_N)^2 \!:  \, = 
(\pi_N u_N)^2 -\s_N$
and $M_\g$ is  as in  \eqref{focusnon}.
While local well-posedness of the truncated equation~\eqref{SNLWx}
follows from a small modification of the proof of Theorem \ref{THM:2}, 
we  present a simple argument
 to prove local well-posedness of~\eqref{SNLWx}
 (Lemma \ref{LEM:LWPode}).
Then, we prove almost sure global well-posedness
of the truncated equation \eqref{SNLWx} and 
 invariance of the truncated Gibbs  measure $\rhoo_N$
 in \eqref{rhooN} (Lemma \ref{LEM:GWP4}).

Given  $N \in \N$, 
let 
 $\vec u_0 = (u_0, u_1)$ 
be a pair of random distributions such that 
$\Law( (u_0, u_1)) = \rhoo_N = \rho_N \otimes \mu_0$
defined in \eqref{rhooN}.
Let $u_N$ be a solution 
to the truncated equation \eqref{SNLWx} with $(u_N, \dt  u_N) |_{t = 0} 
= \vec u_0$.
With 
$:\! (\pi_N u_N)^2 \!:  \, = 
(\pi_N u_N)^2 -\s_N$, 
we write \eqref{SNLWx}
as 
\begin{align}
\begin{cases}
\dt^2  u_N + \dt u_N  + (1 -  \Dl)  u_N  
-\s  \pi_N \big( (V \ast   ((\pi_N u_N)^2  - \s_N) ) \pi_N u_N \big)\\
\quad 
+  M_\g ((\pi_N u_N)^2 -\s_N) \pi_N u_N 
= \sqrt{2} \xi, 
\\
(u_N,\dt u_N)|_{ t= 0}  =\vec u_0.
\end{cases}
\label{SNLWode2}
\end{align}

\noi
The
dynamics of \eqref{SNLWode2} 
is decoupled into the 
 high frequency part  $\{|n|>  N\}$
and the low frequency part  $\{|n|\le N\}$.
The high frequency part of the dynamics \eqref{SNLWode2} is given by 
\begin{align}
\begin{cases}
\begin{aligned}
&\dt^2 \pi_N^\perp u_N  + \dt \pi_N^\perp u_N + (1 - \Dl)\pi_N^\perp  u_N =\sqrt{2} \pi_N^\perp \xi
\end{aligned}
\\
(\pi_N^\perp u_N,\dt \pi_N^\perp u_N)|_{ t= 0}  =\pi_N^\perp  \vec u_0 , 
\end{cases}
\label{high1}
\end{align}

\noi where 
$\pi_N^\perp = \text{Id} - \pi_N$, 
and thus
the solution $\pi_N^\perp u_N$ to \eqref{high1} is given by
\begin{align}
\pi_N^\perp u_N 
= \pi_N^\perp  \Psi ( \vec u_0).
\label{high2}
\end{align}

\noi
With $v_N = \pi_N u_N$, 
the low frequency part of the dynamics \eqref{SNLWode2} is given by 
\begin{align}
\begin{cases}
\dt^2  v_N + \dt v_N  + (1 -  \Dl)  v_N  -\s  \pi_N \big( (V \ast   ((\pi_N v_N)^2  - \s_N) ) \pi_N v_N \big)\\
\quad 
+  M_\g ((\pi_N v_N)^2 -\s_N) \pi_N v_N 
= \sqrt{2}\pi_N \xi, 
\\
(v_N,\dt v_N)|_{ t= 0}  = \pi_N \vec u_0.
\end{cases}
\label{SNLWode2p}
\end{align}

\noi
Note that we kept $\pi_N$ in several places to emphasize that 
\eqref{SNLWode2p} depends only on finite many frequencies $\{|n|\le N\}$.
By writing~\eqref{SNLWode2p} 
in the Duhamel formulation, 
we have
\begin{align}
v_N (t) & = \pi_N S(t) \vec u_0 + \int_0^t \D(t - t') \NN_N(v_N)(t') dt'
 + \Psi_N(t; 0), 
\label{high3}
\end{align}

\noi
where the truncated nonlinearity $\NN_N(v_N)$ is given by 
\begin{align*}
\NN_N(v_N) =
\s  \pi_N \big( (V \ast   ((\pi_N v_N)^2  - \s_N) ) \pi_N v_N \big)
-   M_\g ((\pi_N v_N)^2 -\s_N) \pi_N v_N 
\end{align*}

\noi
and  $\Psi_N(t; 0) = \pi_N \Psi_N(t; 0)$ is as in \eqref{Psix} with $\vec u_0 = 0$.
Recall from Lemma \ref{LEM:stoconv}
that $\Psi_N(t; 0) \in C^1(\R_+; C^\infty(\T^3))$.
By viewing   $\Psi_N (t; 0)$  in \eqref{high3} as a perturbation, 
it suffices to study 
the following damped NLW with a deterministic perturbation:
\begin{align}
v_N (t) & = \pi_N S(t) (v_0, v_1) + \int_0^t \D(t - t') \NN_N(v_N)(t') dt'
 +  F, 
\label{high6}
\end{align}

\noi
where $(v_0,v_1) \in \H^1(\T^3)$, $\s_N$ is as in \eqref{sigma1}, and 
$F \in C^1(\R_+; C^\infty(\T^3))$ is a   given deterministic function.

\begin{lemma} \label{LEM:LWPode}
Let  $N \in \N$.
Given any 
$(v_0,v_1) \in \H^1(\T^3)$ and $F \in C([0,1]; H^1 (\T^3))$ with
\[
\| (v_0,v_1) \|_{\H^1} \le R
\qquad \text{and}\qquad 
\| F \|_{C^1([0, 1]; H^1)} \le K
\]
for some $R,K\ge 1$, 
 there exist $\tau= \tau (R, K,  N)>0$ and a unique solution $v_N$ to \eqref{high6} on $[0,\tau]$,
 satisfying the bound:
 \[ \| v_N\|_{X^1(\tau)} \les R + K,  \]

\noi
where  $X^1(\tau)$ is as in \eqref{M0}.
Moreover, the solution $v_N$ is unique 
in $X^1(\tau)$.

\end{lemma}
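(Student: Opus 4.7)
The plan is to apply a standard contraction-mapping argument to the integral equation \eqref{high6} on a closed ball in $X^1(\tau)$. The key observation is that $\pi_N$ appears everywhere in the nonlinearity $\NN_N$ and in the linear part $\pi_N S(t)(v_0,v_1)$, so that applying $\pi_N^\perp$ to \eqref{high6} gives simply $\pi_N^\perp v_N = \pi_N^\perp F$, while $w := \pi_N v_N$ satisfies a closed integral equation whose unknowns are the Fourier coefficients $\{\ft w(n,t)\}_{|n|\le N}$. The genuine dynamics is therefore a finite-dimensional second-order ODE, on which all Sobolev norms are equivalent (with constants depending on $N$ via Bernstein's inequality), so existence and uniqueness follow in principle from Picard--Lindel\"of; the task is to formulate this quantitatively in the $X^1(\tau)$-norm.

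First I would estimate each building block in $X^1(\tau)$. The damped wave propagator $S(t)$ is bounded on $\H^1$, hence $\|\pi_N S(t)(v_0,v_1)\|_{X^1(\tau)} \les \|(v_0,v_1)\|_{\H^1} \le R$, and by hypothesis $\|F\|_{X^1(\tau)} \les \|F\|_{C^1([0,1];H^1)} \le K$. The energy estimate \eqref{EE1} yields
\[
\Big\|\int_0^t \D(t-t')\NN_N(v_N)(t')\,dt'\Big\|_{X^1(\tau)} \les \|\NN_N(v_N)\|_{L^1_\tau L^2_x} \le \tau \|\NN_N(v_N)\|_{L^\infty_\tau L^2_x}.
\]
Hence the right-hand side of \eqref{high6} maps the ball of radius $R_0 := C_0(R+K)$ in $X^1(\tau)$ into itself as soon as $\tau \cdot \sup_{\|v_N\|_{X^1(\tau)}\le R_0}\|\NN_N(v_N)\|_{L^\infty_\tau L^2_x} \lesssim R+K$.

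Next I would bound $\NN_N(v_N)$ on such balls. Writing out the three terms in $\NN_N$, namely the Hartree cubic $\s\pi_N((V\ast(\pi_N v_N)^2)\pi_N v_N)$, the Wick counter-term $-\s\s_N \pi_N(V\ast \pi_N v_N)$, and the taming contribution $-M_\g((\pi_N v_N)^2 - \s_N)\pi_N v_N$, each is a polynomial in $\pi_N v_N$ of degree at most $2\g - 1 \le 5$. Since $\pi_N v_N$ is spectrally localized in $\{|n|\le N\}$, Bernstein's inequality gives uniform bounds $\|\pi_N v_N\|_{L^\infty}\les N^{3/2}\|v_N\|_{L^2}$, etc., so on the ball of radius $R_0$ one has $\|\NN_N(v_N)\|_{L^\infty_\tau L^2_x} \le C(N,R_0)$. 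The same argument, together with the fact that a polynomial map on a finite-dimensional space is Lipschitz on bounded sets, gives the difference estimate
\[
\|\NN_N(v_N) - \NN_N(\wt v_N)\|_{L^\infty_\tau L^2_x} \le L(N,R_0)\,\|v_N - \wt v_N\|_{X^1(\tau)}.
\]

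Choosing $\tau = \tau(R,K,N) > 0$ so that $C_1 \tau L(N,R_0) < \tfrac{1}{2}$ and the Duhamel contribution is at most $\tfrac{1}{2}R_0$, the Banach fixed-point theorem produces a unique fixed point $v_N \in X^1(\tau)$ with $\|v_N\|_{X^1(\tau)} \les R+K$. Uniqueness in all of $X^1(\tau)$ (without the a priori ball) follows by applying the same Lipschitz estimate to any two hypothetical solutions and running a Gronwall argument on $t \mapsto \|v_N(t) - \wt v_N(t)\|_{X^1(t)}$ on a short interval where both lie in a common ball. There is no genuine analytic obstacle here: because the effective dynamics is finite-dimensional, the only real work is tracking the dependence of the various constants on $N$, $R$, and $K$.
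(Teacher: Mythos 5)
Your proposal is correct and proves the same statement by a contraction argument on a ball in $X^1(\tau)$, exactly as the paper does, but you make the nonlinearity estimate differently. The paper keeps $v_N$ and $\NN_N(v_N)$ intact and bounds the cubic part via Young's convolution inequality for the Bessel potential $V$ together with the Sobolev embedding $H^1(\T^3)\hookrightarrow L^6(\T^3)$ (and bounds $M_\g$ by $|\int(\pi_N v_N)^2 - \s_N|^{\g-1}$), so that the only source of $N$-dependence is the Wick constant $\s_N \sim N$. You instead observe the clean splitting $\pi_N^\perp v_N = \pi_N^\perp F$ (a correct and useful remark the paper does not make explicit, since both $\pi_N \NN_N$ and the scalar $M_\g$-term are already valued in $\pi_N L^2$), reduce the genuine dynamics to a finite-dimensional ODE, and invoke Bernstein to control $L^\infty$-type norms. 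This is a more ``brute-force'' route and gives a worse power of $N$ in the constants; the paper's Sobolev route isolates the $\s_N$ term as the sole $N$-dependent factor. Since the lemma only claims $\tau = \tau(R,K,N)$, both yield the conclusion with an implicit constant independent of $N$ in $\|v_N\|_{X^1(\tau)} \lesssim R+K$. Your uniqueness argument (Lipschitz estimate plus a continuity/Gronwall step to upgrade ball-uniqueness to unconditional uniqueness in $X^1(\tau)$) matches what the paper sketches.
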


While the local existence time depends on $N \in \N$, 
Lemma \ref{LEM:LWPode} suffices for our purpose.

\begin{proof}
Let $\Phi_N(v_N)$ denote the right-hand side of \eqref{high6}.
Let $0 < \tau \leq 1$.
Then, from \eqref{W3}, \eqref{EE1}, and Sobolev's inequality 
with~\eqref{focusnon}, we have 
\begin{align*}
\| & \Phi_N  (v_N) \|_{X^1(\tau)} \\
&\les \| (v_0, v_1) \|_{\H^1}
+ \tau \big\| V \ast \big((\pi_N v_N)^2 -\s_N \big) \big\|_{L^\infty_\tau L^3_x}
 \| v_N \|_{L^\infty_\tau L^6_x}\\
&\quad
+ \tau \big\| M_\g \big( (\pi_N v_N)^2 -\s_N \big) \big\|_{L^\infty_\tau} 
\| v_N  \|_{L^\infty_\tau L^2_x} + \| F\|_{L^\infty_\tau H^1_x}\\
&\les R
+ \tau \Big( \| v_N \|_{L^\infty_\tau H^1_x}^{2(\g-1)} 
 +  \s_N^{\g-1}\Big) 
 \| v_N \|_{L^\infty_\tau H^1_x}
+ K\\
& \les  R + K
\end{align*}

\noi
for any $v_N  \in X^1(\tau)$ with  $\|v_N \|_{X^1(\tau)} \le C_0 (R+K)$, 
where the last step holds by choosing 
 $\tau= \tau (R,K, N)> 0$ 
 sufficiently small.
A difference estimate also follows in a similar manner since $\g \ge 2$.
Hence, we conclude that 
 $\Phi_N$ is a contraction on the ball $B_{C_0(R+K)}
\subset X^1(\tau)$ for some $C_0 > 0$.
At this point, the uniqueness holds only in the ball 
$B_{C_0(R+K)}$ but by a standard continuity argument, we can extend the uniqueness to hold in the entire $ X^1(\tau)$. We omit details.
\end{proof}

\begin{remark}\label{REM:tau}\rm
(i) From the proof, we see that 
 $\tau= \tau (R, K, N) 
 \sim( R  + K + N)^{-\ta}$ 
 for some $\ta > 0$.
In particular, the local existence time $\tau$ depends on $N \in \N$.

\smallskip

\noi
(ii) 
Note that the uniqueness statement
for $v_N$  in Lemma \ref{LEM:LWPode} is unconditional, 
namely, the uniqueness of the solution $v_N$ holds in the entire class
$X^1(\tau)$.
Then, from~\eqref{high2} and 
the unconditional uniqueness of the solution $v_N = v_N(\pi_N \vec u_0)$ to \eqref{SNLWode2p}, 
we obtain the {\it unique} representation of $u_N$:
\begin{align}
 u_N =  \pi_N^\perp  \<1>( \vec u_0) + \pi_N v_N(\pi_N \vec u_0).
 \label{O1a}
\end{align}

\noi
This   uniqueness statement
for $u_N$ plays an important role in Proposition \ref{PROP:GWPNB}
and Lemma \ref{LEM:Leo1}.
See Remarks \ref{REM:unique} and \ref{REM:Leo} below.

\end{remark}

Before proceeding further, let us introduce some notations.
Given the cylindrical Wiener process  $W$ in~\eqref{W1}, 
by possibly enlarging the probability space $\Omega_2$, 
there exists a family of translations $\tau_{t_0} : \O_2 \to \O_2$ such that
\begin{align}
W(t, \tau_{t_0}(\o_2))
=  W(t+t_0, \o_2) - W(t_0,\o_2)
\label{fail0}
\end{align}

\noi
for $t, t_0 \ge 0$ and $\o_2 \in \O_2$.
Denote by $\Phi^N(t)$ the stochastic flow map 
to the truncated equation~\eqref{SNLWx}
given in Lemma \ref{LEM:LWPode}
(which is not necessarily global at this point).
Namely, 
\[\vec u_N(t) 
= \Phi^N(t) (\vec u_0,\o_2)\]

\noi
is the solution 
 to \eqref{SNLWx} with 
 $\vec u_N |_{t=0}= \vec u_0$, satisfying  $\Law (\vec u_0) = \rhoo_N$, 
 and the noise $\xi (\o_2)$.
We now 
extend $\Phi^N(t)$ as
\begin{align}
\ft \Phi^N(t) (\vec u_0, \o_2)
= \big( \Phi^N(t)(\vec u_0, \o_2), \tau_t (\o_2) \big).
\label{Phi9}
\end{align}

\noi
Note that 
by the uniqueness of the solution to \eqref{SNLWx},
we have
\begin{align}
\Phi^N (t_1+t_2) (\vec u_0, \o_2)
= \Phi^N (t_2) \big( \Phi^N (t_1) (\vec u_0, \o_2), \tau_{t_1}(\o_2) \big)
= \Phi^N (t_2) \big(\ft \Phi^N (t_1) (\vec u_0, \o_2)\big)
\label{Ba0}
\end{align}

\noi
for $t_1, t_2 \ge 0$
as long as the flow is well defined.

Next, by exploiting invariance of the truncated Gibbs measure $\rhoo_N$, 
we construct global-in-time solutions 
to \eqref{SNLWx} almost surely with respect to the truncated Gibbs measure $\rhoo_N$ in~\eqref{rhooN}.

\begin{lemma} \label{LEM:GWP4}
Let $N \in \N$.
Then, the truncated focusing Hartree SdNLW \eqref{SNLWx} is almost surely globally well-posed
with respect to the random initial data distributed
by the truncated Gibbs measure $\rhoo_N$ in~\eqref{rhooN}.
Furthermore, $\rhoo_N$ is invariant under the resulting dynamics.
\end{lemma}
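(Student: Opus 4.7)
The plan is to apply Bourgain's invariant measure argument \cite{BO94, BO96} to the finite-dimensional low-frequency reduction of \eqref{SNLWx}. As observed in \eqref{high1}--\eqref{O1a}, the high-frequency component $\pi_N^\perp u_N = \pi_N^\perp \Psi(\vec u_0)$ is globally well-defined for almost every $\o_2$ by Lemma \ref{LEM:stoconv}, and its distribution at any time $t \ge 0$ is the restriction of $\muu$ to frequencies $\{|n| > N\}$, hence preserved by the underlying linear stochastic damped wave dynamics. It therefore suffices to globalize the finite-dimensional stochastic system \eqref{SNLWode2p} for $v_N = \pi_N u_N$ on $E_N := \pi_N L^2(\T^3) \times \pi_N L^2(\T^3)$, viewed as a perturbation of a smooth ODE by the forcing $\Psi_N(\cdot ;0) \in C^1_{\text{loc}}(\R_+; C^\infty(\T^3))$. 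Lemma \ref{LEM:LWPode} (see also Remark \ref{REM:tau}) yields a local flow $\Phi^N(t)$ with existence time $\tau = \tau(R,K,N) \sim (R+K+N)^{-\ta}$, where $R$ controls $\|\pi_N \vec u_0\|_{\H^1}$ and $K$ controls $\|\Psi_N(\cdot; 0)\|_{C^1_1 H^1_x}$.

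The next step is to verify formal invariance of $\rhoo_N$ under $\Phi^N(t)$. On $E_N$, the SDE \eqref{SNLWode2p} splits, in analogy with \eqref{SNLW00}, into a Hamiltonian vector field for the truncated energy $\mathcal{E}^\sharp_N(\vec v_N) = \frac{1}{2}\|v_N\|_{H^1}^2 + \frac{1}{2}\|\dt v_N\|_{L^2}^2 - \RR_N(v_N)$ and an Ornstein--Uhlenbeck drift-plus-noise $(0, -\dt v_N + \sqrt 2\, \pi_N\xi)$ acting only on the momentum. The Hamiltonian flow is divergence-free on $E_N$ (Liouville) and conserves $\mathcal{E}^\sharp_N$, hence it preserves the absolutely continuous measure $Z_N^{-1} e^{-\mathcal{E}^\sharp_N} d\vec v_N$, which coincides with the low-frequency marginal of $\rhoo_N$; the OU part leaves the Gaussian momentum factor $(\pi_N)_\# \mu_0$ invariant. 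A short generator computation, justified by the smoothness and polynomial growth of the coefficients and the integrability of $e^{-\mathcal{E}^\sharp_N}$ guaranteed by Theorem \ref{THM:Gibbs2} (which uses $\be \ge 2$, together with $0 < \s \ll 1$ when $\be = 2$, and $A, \g$ as specified), shows that $\rhoo_N$ is annihilated by the adjoint of the full generator, yielding $(\Phi^N(t))_\# \rhoo_N = \rhoo_N$ whenever $\Phi^N(t)$ is defined.

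Finally, the globalization is carried out as in \cite[\S 4]{BT} and \cite{ORTz}. Fix $T, R, K \gg 1$, and let $\tau = \tau(R,K,N)$. Define
\[
\Sigma_{N,T}(R,K) := \Big\{ (\vec u_0, \o_2) : \|\pi_N \Phi^N(j\tau)(\vec u_0,\o_2)\|_{\H^1} \le R,\ \|\Psi_N(\cdot;0)\|_{C^1([j\tau, (j+1)\tau]; H^1_x)} \le K,\ 0 \le j \le [T/\tau] \Big\}.
\]
On $\Sigma_{N,T}(R,K)$ one iterates Lemma \ref{LEM:LWPode} to extend the solution over $[0, T]$. Using the invariance $(\Phi^N(j\tau))_\# \rhoo_N = \rhoo_N$ established above together with the tail bounds $\rhoo_N(\|\pi_N \vec u_0\|_{\H^1} > R) \lesssim R^{-2}$ (by Chebyshev and the uniform moment bounds on $\rhoo_N$ from Theorem \ref{THM:Gibbs2}) and $\PP_2(\|\Psi_N(\cdot; 0)\|_{C^1([j\tau,(j+1)\tau]; H^1)} > K) \lesssim e^{-c_N K^2}$ (Lemma \ref{LEM:stoconv}, noting the $N$-dependent smoothing makes all Sobolev norms equivalent), a union bound gives
\[
(\rhoo_N \otimes \PP_2)\big(\Sigma_{N,T}(R,K)^c\big) \lesssim \frac{T}{\tau}\Big( R^{-2} + e^{-c_N K^2}\Big) \too 0
\]
as $R, K \to \infty$. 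Letting $T \to \infty$ along integers and taking a countable intersection yields an almost sure set of initial data on which the flow is globally defined, and invariance of $\rhoo_N$ on every finite time interval follows from the per-step invariance. The main obstacle is the rigorous justification of the invariance step itself, since $\mathcal{E}^\sharp_N$ is \emph{focusing} and the non-polynomial term $M_\g(:\!v_N^2\!:) v_N$ from the taming enters the generator; however, the coercivity argument underlying Theorem \ref{THM:Gibbs2}\,(i) (Subsection \ref{SUBSEC:foc1}) provides a Lyapunov function controlling $\mathcal{E}^\sharp_N$, which allows the standard Fokker--Planck verification for smooth finite-dimensional SDEs to go through without difficulty.
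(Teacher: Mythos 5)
Your proposal follows the same overall strategy as the paper: split the dynamics into the linear high--frequency flow (which preserves $\muu_N^\perp$) and the finite--dimensional low--frequency flow on $\pi_N\H^{-\frac12-\eps}(\T^3)$, verify formal invariance of $\rhoo_N$ via the Hamiltonian--plus--Ornstein--Uhlenbeck decomposition of the generator as in \eqref{O7}--\eqref{O8}, and then globalize by Bourgain's argument. The decomposition, the invariance verification, and the iteration scheme are all correct and essentially as in the paper's proof.

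There is, however, a genuine quantitative gap in the union bound. You invoke the tail estimate $\rhoo_N(\|\pi_N \vec u_0\|_{\H^1} > R) \lesssim R^{-2}$ by Chebyshev. But the local existence time from Lemma~\ref{LEM:LWPode} is $\tau = \tau(R,K,N) \sim (R+K+N)^{-\ta}$, and inspecting that proof (specifically the step requiring $\tau(R+K)^{2\g-1} \lesssim 1$) gives $\ta \ge 2(\g-1) \ge 2$ since $\g \ge 2$. Hence the number of iterations is $T/\tau \sim T(R+K+N)^\ta$ with $\ta \ge 2$, and the union bound contribution $\tfrac{T}{\tau}\, R^{-2} \gtrsim T R^{\ta-2}$ does \emph{not} tend to zero as $R \to \infty$. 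So the argument as written does not close. The fix is to use the Gaussian tail instead: by Cauchy--Schwarz with the uniform bound $\|e^{\RR_N}\|_{L^2(\mu)} < \infty$ from Theorem~\ref{THM:Gibbs2} together with the Gaussian tail of $\mu$, one obtains $\rhoo_N(\|\pi_N\vec u_0\|_{\H^1} > R) \le C_N e^{-c_N R^2}$, which dominates any fixed polynomial in $R$ coming from $T/\tau$. This is exactly the estimate the paper uses (with both $R$ and $K$ replaced by a single parameter of size $\sim (\log(T/\kappa))^{1/2}$). Relatedly, your concluding remark about the focusing nature of $\mathcal{E}^\sharp_N$ being "the main obstacle" overstates the difficulty: what is actually needed for the generator computation is only that $\vec\nu_N$ is a genuine probability measure on the finite-dimensional phase space, which is precisely the content of Theorem~\ref{THM:Gibbs2}; the invariance assertion is only \emph{formal} (valid while the flow is defined), and global existence is recovered \emph{a posteriori} by the Bourgain iteration, not from coercivity of the Hamiltonian.
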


\begin{proof}

We first discuss the (formal) invariance of the truncated Gibbs measure $\rhoo_N$
under the truncated dynamics \eqref{SNLWx}.
Given $N \in \N$, let $\pi_N^\perp = \text{Id}- \pi_N$.
We define
the marginal probability measures
$\muu_{N}$ and $\muu_{N}^\perp$
on $\pi_N \H^{-\frac 12 -\eps}(\T^3)$
and $\pi_N^\perp \H^{-\frac 12 -\eps}(\T^3)$, respectively, 
as 
   the induced probability measures
under the following maps:
\begin{equation*}
\o_1 \in \O_1 \longmapsto (\pi_N u^{\o_1}, \pi_N v^{\o_1})
 \end{equation*}

\noi
for 
$\muu_{N}$
and
\begin{equation*}
\o_1 \in \O_1 \longmapsto (\pi_N^\perp u^{\o_1}, \pi_N^\perp v^{\o_1})
 \end{equation*}

\noi
for $\muu_{N}^\perp$, 
 where 
$u^{\o_1}$ and $v^{\o_1}$ are as in \eqref{IV2}
with $\o$ replaced by $\o_1$ in view of the decomposition~\eqref{O1}.
Then, we have
\begin{align}
\muu = 
\muu_{ N} \otimes \muu_{N}^\perp.
\label{O2}
\end{align}

\noi
From 
\eqref{rhooN} with 
\eqref{gauss1}, 
\eqref{GibbsN1},  and \eqref{O2}, 
we then  have
\begin{align}
\rhoo_N = \vec \nu_{N} \otimes \muu_{N}^\perp, 
\label{O3}
\end{align}

\noi
where $\vec \nu_N$ is given by
\begin{align}
d \vec \nu_N= \ft Z_N^{-1} e^{\RR_N(u)}d\muu_{N}
\label{nuu}
\end{align}

\noi
with the density $\RR_N$ as in \eqref{K2}.

By writing $u_N$ as $u_N = \pi_N^\perp u_N + \pi_N u_N$, 
we see that
 the high frequency part
$\pi_N^\perp u_N = \pi_N^\perp\Psi(\vec u_0)$
satisfies the linear dynamics \eqref{high1}.
It is easy to check that  the Gaussian measure $\muu_{N}^\perp$
is invariant under the dynamics of \eqref{high1}, 
say,  by studying~\eqref{high1} for each frequency $|n|> N$
on the Fourier side.
The low frequency part $\pi_N u_N$ satisfies
\eqref{SNLWode2p}.
By writing \eqref{SNLWode2p} in the Ito formulation
with $(u_N^1, u_N^2) = (\pi_N u_N , \dt \pi_N u_N )$, 
it is easy to see that 
 the generator $\L^N$ for \eqref{SNLWode2p}
 can be written as $\L^N = \L^N_1 + \L^N_2$, 
 where $\L^N_1$ denotes the generator
 for the undamped  NLW with the truncated nonlinearity:
\begin{align}
\begin{split}
d  \begin{pmatrix}
u_N^1 \\ u_N^2
\end{pmatrix}
 + \Bigg\{
\begin{pmatrix}
0  & -1\\
1-\Dl &  0
\end{pmatrix}
 \begin{pmatrix}
u_N^1 \\ u_N^2
\end{pmatrix}
 +  
\begin{pmatrix}
0 \\ -  \NN_N(\pi_N u_N^1) 
\end{pmatrix}
\Bigg\} dt 
   = 0 
\end{split}
\label{O7}
\end{align}

\noi
and $\L^N_2$ denotes the generator
for the Ornstein-Uhlenbeck process 
(for the second component $u_N^2$):
\begin{align}
\begin{split}
d  \begin{pmatrix}
u_N^1 \\ u_N^2
\end{pmatrix}
   = 
  \begin{pmatrix}
0  \\ - u_N^2 dt + \sqrt 2\pi_N dW
\end{pmatrix} .
\end{split}
\label{O8}
\end{align}

By recalling that the Ornstein-Uhlenbeck process
preserves the standard Gaussian measure, 
we conclude that $\vec \nu_N$ is invariant under the linear dynamics  \eqref{O8}
since
 the measure $\vec \nu_N$ is nothing but the  white noise measure 
(projected onto the low frequencies $\{|n|\leq N\}$)
on the second component $u_N^2$.
As for  \eqref{O7}, we note that it  is a Hamiltonian equation with the Hamiltonian:
\begin{align*}
\mathcal{E}^\sharp_N(u_N^1, u_N^2)
= \frac 12 \int_{\T^3} |\jb{\nb} u_N^1|^2 dx 
+ \frac 12 \int_{\T^3} (u_N^2)^2 dx - \RR_N(u^1_N),
\end{align*}

\noi
where $\RR_N$ is as in \eqref{K2} (with $\s = 1$).
Then, from  the conservation of the Hamiltonian 
$\mathcal{E}^\sharp_N(u_N^1, u_N^2)$ and Liouville's theorem
(on the finite-dimensional phase space $\pi_N \H^{-\frac 12 -\eps}(\T^3)$), 
we conclude that $\vec \nu_N$ in \eqref{nuu} is invariant under the dynamics of \eqref{O7}.
Therefore, we conclude that 
\[(\L^N)^*\vec \nu_N = 
(\L^N_1)^*\vec \nu_N +  (\L^N_2)^*\vec  \nu_N = 0,\]
where
$(\L^N)^\ast$ denotes the 
 the adjoint of 
the infinitesimal 
generator $\L^N = \L^N_1 + \L^N_2$
for \eqref{SNLWode2p}.
This shows invariance of $\vec \nu_N$ under \eqref{SNLWode2p}.

Therefore, from \eqref{O3}
and the invariance of  $\muu^\perp_{N}$  and $\vec \nu_N$ 
under \eqref{high1} and \eqref{SNLWode2p}, respectively,  we conclude that  the truncated Gibbs measure 
$\rhoo_N$ in~\eqref{rhooN}
is {\it formally} invariant 
under the dynamics of 
the   truncated focusing Hartree SdNLW
\eqref{SNLWx}.
Here, by the formal invariance, 
we mean that the $\rhoo_N$-measure of a measurable set 
is preserved under the truncated dynamics~\eqref{SNLWx} as long as the flow is well defined.
In view of the translation invariance of the law
of the Brownian motions $\{B_n\}_{n \in \Z^3}$
in \eqref{W1}, 
we also conclude formal invariance of 
$\rhoo_N\otimes \PP_2$
under the extended stochastic flow map $\ft \Phi^N(t)$ defined in \eqref{Phi9}.

Next, by exploiting this formal invariance of $\rhoo_N\otimes \PP_2$, 
we establish almost sure global well-posedness of 
\eqref{SNLWx}.
By arguing as in \cite{BO94, CO, BOP2}, 
it suffices to show  ``almost'' almost sure global existence.
Namely, we prove that, 
given any $T\geq 1$ and $\kk >0$,
there exists $\Si_{T,\kk} \subset \H^{-\frac 12 - \eps}(\T^3)  \times \O_2$ such that
$\rhoo_N \otimes \PP_2( \Si_{T,\kk}^c) < \kk$ and for any $(\vec u_0, \o_2) \in \Si_{T,\kk}$,
there exists a solution $u_N$ to \eqref{SNLWx}
on the time interval $[0, T]$.

We follow the ideas from \cite{BO94, ORTz}.
Given $T \geq 1$ and  $\kk > 0$, 
let  
\begin{align}
K \sim c_N  \bigg( \log \frac T   \kk + \log C_N \bigg)^{\frac 12}
\label{O9}
\end{align}

\noi
for some suitable $c_N, C_N > 0$.
Then, 
with $\tau = \tau (K,  K, N)>0$ as in Lemma \ref{LEM:LWPode}
(see also Remark~\ref{REM:tau}), 
we set 
\begin{align*}
\Si_{T,\kk}
= \bigcap_{j = 0 }^{ [T/ \tau]}
\Big\{&  (\vec u_0, \o_2) \in \H^{-\frac 12 - \eps}(\T^3)  \times \O_2: 
\|  \Phi^N(j \tau) (\vec u_0, \o_2) \|_{\H^1 } \le  K, \\
& \| \Psi_N (\ft \Phi^N(j \tau) (\vec u_0, \o_2)) \|_{L^\infty_{\tau, x} } \le K \Big\}.
\end{align*}

\noi
By the definition of $\Si_{T,\kk}$
and the local well-posedness argument (Lemma \ref{LEM:LWPode}), 
we see that, given any 
$(\vec u_0, \o_2) \in \Si_{T,\kk}$, 
the corresponding solution $(u_N, \dt u_N) $ to \eqref{SNLWx} exists on $[0, T]$.

By Bernstein's inequality, we have
\begin{align*}
\| \pi_N \vec u_0\|_{\H^1} & \les N^{\frac{3}{2} + \eps}  \| \pi_N \vec u_0\|_{\H^{-\frac{1}{2}-\eps}}, \\
 \| \Psi_N (\vec u_0, \o_2) \|_{L^\infty_{\tau, x} } 
& \les 
N^{\frac{1}{2} + \eps} \| \Psi_N (\vec u_0, \o_2) \|_{L^\infty_{\tau} W^{-\frac 12 - \eps, \infty}_x}. 
\end{align*}

\noi
Then, from the  (formal) invariance of $\rhoo_N \otimes \PP_2$
under the extended stochastic flow map $\ft \Phi^N(t)$ in~\eqref{Phi9}, 
Remark~\ref{REM:tau}, Cauchy-Schwarz inequality
with Theorem \ref{THM:Gibbs2}
(in particular, the bound~\eqref{exp3}), 
 Lemma \ref{LEM:stoconv}, 
and \eqref{O9}, we have
\begin{align*}
\rho_N \otimes \PP_2( \Si_{T,\kk}^c)
& \les  \frac{T}{\tau}  
\bigg\{
\rho_N \otimes \PP_2 \big( (\vec u_0, \o_2): 
\|   \pi_N \vec u_0 \|_{\H^1 } >  K\big) \\
& \hphantom{XXXX}
+\rho_N \otimes \PP_2 \big( (\vec u_0, \o_2): 
\| \Psi_N ( \vec u_0, \o_2) \|_{L^\infty_{\tau, x} } >  K\big)\bigg\}
\\
& \le C_N  TK^\ta
\bigg\{
\mu \otimes \PP_2 \big( (\vec u_0, \o_2): 
\|   \pi_N \vec u_0 \|_{\H^1 } >  K\big) \\
& \hphantom{XXXXXX}
+\mu \otimes \PP_2 \big( (\vec u_0, \o_2): 
\| \Psi_N ( \vec u_0, \o_2) \|_{L^\infty_{\tau, x} } >  K\big)\bigg\}^\frac{1}{2}\\
%
& \le C_N T \cdot C e^{-c'_N K^2}
\ll  \kk.
\end{align*}

\noi
This proves the desired 
 ``almost'' almost sure global existence, and thus almost sure global well-posedness
 of the truncated focusing Hartree SdNLW \eqref{SNLWx}.
Since the dynamics is now globally well defined
almost surely with respect to $\rhoo_N$, 
we conclude invariance of the truncated Gibbs measure $\rhoo_N$
from the formal invariance of $\rhoo_N$ discussed above.
\end{proof}

We now establish a long time a priori bound on 
the solutions to the truncated equation~\eqref{SNLWx}.
We emphasize that the following growth bound \eqref{Ba1ab} with \eqref{Ba1aa} is independent of $N$,
which is contrast to all earlier results of this section.

\begin{proposition}\label{PROP:GWPNB}
Let 
 $i \in \N$ and $N \in \N$.
 Then, 
there exists a $\rhoo_N \otimes \PP_2$-measurable set 
$\Si^i_N \subset \H^{-\frac 12-\eps}(\T^3) \times \O_2$ such that
\begin{align}
\rhoo_N \otimes \PP_2 \big( (\H^{-\frac 12-\eps}(\T^3) \times \O_2) \setminus \Si^i_N \big)
\le 2^{-i}.
\label{Ba1aa}
\end{align}
Moreover,
there exists  $C>0$ such that
for any $(\vec u_0, \o_2) \in \Si^i_N$ and $t \ge 0$,
we have
\begin{align}
\big\| \Phi^N (t) ( \vec u_0, \o_2) \big\|_{\H^{-\frac 12-\eps}}
\le C ( i+ \log (1+t)).
\label{Ba1ab}
\end{align}
\end{proposition}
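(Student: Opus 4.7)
The plan is to adapt Bourgain's invariant measure argument~\cite{BO94, BO96, Tz08, ORTz} by exploiting the invariance of the truncated Gibbs measure $\rhoo_N$ under the extended stochastic flow $\ft\Phi^N(t)$ established in Lemma~\ref{LEM:GWP4}. For each $n\in \Z_{\ge 0}$, I would define the event
\begin{align*}
G_n^{(N)} := \Big\{(\vec u_0, \o_2) : \sup_{s\in [0,1]} \|\Phi^N(n+s)(\vec u_0, \o_2)\|_{\H^{-\frac 12 -\eps}} \le C_0\big(i + \log(1+n)\big)\Big\},
\end{align*}
with $C_0>0$ to be chosen sufficiently large, independently of $N$ and $i$. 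The cocycle identity \eqref{Ba0} together with the invariance of $\rhoo_N \otimes \PP_2$ under $\ft\Phi^N(n)$ yields
\begin{align*}
\rhoo_N \otimes \PP_2\big((G_n^{(N)})^c\big) = \rhoo_N \otimes \PP_2\Big(\sup_{s\in [0,1]} \|\Phi^N(s)(\vec u_0, \o_2)\|_{\H^{-\frac 12 -\eps}} > C_0\big(i + \log(1+n)\big)\Big),
\end{align*}
so the proposition reduces to a single one-step bound over the unit time interval that is uniform in $N$.

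Concretely, it suffices to establish the one-step Gaussian tail estimate
\begin{align}
\rhoo_N \otimes \PP_2\Big(\sup_{s\in [0,1]} \|\Phi^N(s)(\vec u_0, \o_2)\|_{\H^{-\frac 12 -\eps}} > K\Big) \le C e^{-c K^2}, \qquad K \gg 1,
\label{OSE}
\end{align}
with $C, c>0$ independent of $N$. Granted~\eqref{OSE}, I would set $\Sigma_N^i := \bigcap_{n\ge 0} G_n^{(N)}$. A union bound then gives
\begin{align*}
\rhoo_N \otimes \PP_2\big((\Sigma_N^i)^c\big) \le \sum_{n\ge 0} C \exp\big(-cC_0^2(i + \log(1+n))^2\big) \le 2^{-i}
\end{align*}
provided $C_0$ is chosen sufficiently large, independently of $N$ and $i$; this yields \eqref{Ba1aa}. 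For $(\vec u_0, \o_2)\in \Sigma_N^i$ and $t = n + s$ with $n\in \Z_{\ge 0}$ and $s\in [0,1)$, membership in $G_n^{(N)}$ directly gives \eqref{Ba1ab}.

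The hard part is therefore the one-step estimate~\eqref{OSE}, which I would approach via the decomposition~\eqref{O1a}: $\Phi^N(s) = \pi_N^\perp \Psi(s;\vec u_0,\o_2) + \pi_N v_N(s;\pi_N \vec u_0,\o_2)$. Writing $\pi_N^\perp \Psi(s;\vec u_0,\o_2) = \pi_N^\perp\bigl[S(s)\vec u_0 + \sqrt 2\int_0^s \D(s-s')\,dW(s',\o_2)\bigr]$, the uniform boundedness of $S(s)$ on $\H^{-\frac 12 -\eps}$ for $s\in [0,1]$ reduces the high-frequency part to Gaussian tails for $\|\vec u_0\|_{\H^{-\frac 12 -\eps}}$ and for the stochastic convolution; both hold uniformly in $N$, the former via the uniform $L^p(\muu)$-boundedness of the density $d\rhoo_N/d\muu$ from Theorem~\ref{THM:Gibbs2} combined with Fernique-type tails for $\muu$, and the latter via the tail estimate~\eqref{P0z} of Lemma~\ref{LEM:stoconv}. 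The real obstacle is bounding $\sup_{s\in [0,1]} \|\pi_N v_N(s)\|_{\H^{-\frac 12 -\eps}}$ with constants independent of $N$: while invariance of $\rhoo_N$ at each \emph{fixed} $s$ provides a Gaussian-tailed low-frequency marginal, extracting the supremum requires a time-continuity estimate, which I would obtain by discretizing $[0,1]$ into $O(K^\kappa)$ subintervals, applying the pointwise tail at each discrete time via a union bound, and controlling the oscillation on each subinterval through the Duhamel formulation~\eqref{high6} using the one-derivative smoothing of the propagator $\D$ together with moment bounds on $\NN_N(v_N)$ inherited from the invariance of $\vec\nu_N$ in~\eqref{nuu}.
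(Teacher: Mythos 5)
Your high-level skeleton (reduce to a one-step bound on a unit time interval via invariance and the cocycle property, then take a union bound over integer times) is sound and broadly consistent with the spirit of Bourgain's argument. The gap lies in the claimed one-step estimate~\eqref{OSE} and, more fundamentally, in \emph{which object you propose to control}.

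You claim a Gaussian tail $C e^{-cK^2}$ in~\eqref{OSE}, and your mechanism for the low-frequency oscillation is ``moment bounds on $\NN_N(v_N)$ inherited from the invariance of $\vec\nu_N$.'' But $\NN_N(v_N)$ is a cubic element of Wiener chaos in the defocusing case, and in the focusing case the extra term $M_\g(:\!(\pi_N u_N)^2\!:)\pi_N u_N$ is effectively degree $2(\g-1)+1 = 5$ for $\g = 3$. By the Wiener chaos estimate (Lemma~\ref{LEM:hyp}), an object in $\H_{\le k}$ has tails only of order $\exp(-cK^{2/k})$, so the oscillation term in your argument has stretched-exponential tails $\exp(-cK^{2/3})$ (defocusing) or $\exp(-cK^{2/5})$ (focusing), not Gaussian tails. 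With the threshold $K = C_0(i + \log(1+n))$ in $G_n^{(N)}$, your union bound becomes $\sum_n \exp\big(-c\,C_0^{2/k}(i + \log(1+n))^{2/k}\big)$, which \emph{diverges} whenever $2/k < 1$, i.e.\ already for $k=3$: the substitution $n = e^m$ turns the series into $\int e^{m - c m^{2/k}}\,dm = \infty$. So~\eqref{Ba1aa} cannot be extracted this way. Even if you replaced the threshold by $C_0(i+\log(1+n))^{k/2}$ to force convergence, you would obtain \eqref{Ba1ab} with the wrong power $(i + \log(1+t))^{k/2}$, which does not prove the stated proposition.

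The paper sidesteps this by never putting a tail estimate on the nonlinearity itself. Instead it controls the \emph{enhanced data set} $\Xi(\Psi_N(\vec u_0, \o_2))$ built from the stochastic convolution with the random initial data folded in --- see \eqref{data3x} and the definition of $B^{i,j}_N(D)$ in \eqref{Ba2}. In the focusing case every component of that enhanced data set sits in $\H_{\le 2}$ (see Remark~\ref{REM:unique}\,(ii)), so its tails are $e^{-cK}$ by Lemma~\ref{LEM:hyp}; this is exactly the decay needed for the union bound \eqref{Ba6} to close with the linear-in-$(i+j)$ threshold. Once the enhanced data set is under control on $B^{i,j}_N(D)$, the flow is propagated \emph{deterministically} over a time step $\tau \sim (D(i+j))^{-\theta}$ (independent of $N$) by the paracontrolled local theory of Theorem~\ref{THM:2} applied to the system \eqref{Ba4bb}; the resulting bound \eqref{Ba4c1} on $(X_N, Y_N, \Res_N)$ is pathwise, not a moment bound. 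This also resolves the $N$-dependence issue: the $N$-independent time step comes from the paracontrolled deterministic estimates, not from Lemma~\ref{LEM:LWPode} (whose $\tau$ does depend on $N$, as noted in Remark~\ref{REM:tau}). Your proposal is missing precisely this change of viewpoint --- from moment bounds on the cubic/quintic nonlinearity to tail bounds on the degree-two enhanced data set followed by a deterministic fixed-point argument --- and without it the union bound does not close.
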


\begin{proof}
We follow the argument in \cite{ORTz}.
Given $(\vec u_0, \o_2) \in \H^{-\frac 12-\eps}(\T^3) \times \O_2$, 
we set
\begin{align}
\Psi_N =  \Psi_N (\vec u_0, \o_2)
\label{Ba1x}
\end{align}
and 
define
the enhanced data set $ \Xi (\Psi_N (\vec u_0, \o_2))$  by 
\begin{align}
 \Xi (\Psi_N (\vec u_0, \o_2))
  = \big(   \Psi_N , \,  :\! \Psi_N^2  \!:,  \, \Ab_N, \, \wt \If_{\pl, \pe}^N\big), 
\label{data3x}
\end{align}

\noi
where 
$ :\! \Psi_N^2  \!:$ and  $ \Ab_N$
are defined in \eqref{so4b} and
\eqref{sto3}, respectively, 
with the substitution~\eqref{Ba1x}.
The paracontrolled operator
$ \wt \If_{\pl, \pe}^N$
is defined in a manner
analogous to $ \If_{\pl, \pe}^N$
in 
Proposition \ref{PROP:sto4J}, 
but with an extra frequency cutoff $\pi_N$.
Namely, instead of \eqref{X2}, 
we first define
$\wt \If^N_{\pl}$ by 
\begin{align*}
\wt \If^N_{\pl}(w) (t)
    =  \I (\pi_N(w\pl \Psi_N))(t) , 
\end{align*}

\noi
where $\Psi_N$ is as in \eqref{Ba1x}.
We then define
$\wt \If^{(1), N}_{\pl}$ and $\wt \If^{(2), N}_{\pl}$
as in \eqref{X3} with an extra frequency cutoff  $|n| \le N$, 
depending on $|n_1|\ges |n_2|^\ta$
or $|n_1|\ll |n_2|^\ta$.
Note that the conclusion of Lemma~\ref{LEM:sto3}
(in particular the estimate \eqref{A00})
holds for $\wt \If^{(1), N}_{\pl}$.
Finally, we  define 
$\wt \If^N_{\pl, \pe}$ by 
\begin{align*}
\wt \If^N_{\pl, \pe}(w) (t)
   =
\wt  \If_{\pl}^{(2), N}(w)\pe \Psi_N(t) , 
\end{align*}

\noi
namely, by inserting 
a  frequency cutoff  $|n_1+n_2| \le N$ 
and replacing $\Psi$ by $\Psi_N$ in \eqref{X7}.

Fix small $\dl > 0$. Given $i, j, N \in \N$ and $D \gg1 $, 
define a set $B^{i,j}_N (D)$ by\footnote{The third condition in \eqref{Ba2} is used in the proof of Proposition \ref{PROP:GWPNC} below.} 
\begin{align}
\begin{split}
B^{i,j}_N (D) = & \Big\{  (\vec u_0, \o_2) \in \H^{-\frac 12-\eps}(\T^3) \times \O_2 :\\
& \hphantom{X}
\| \vec \Psi (\vec u_0, \o_2) \|_{C([0, 2^j]; \H^{-\frac 12-\eps})
\cap\, C^1([0, 2^j]; \H^{-\frac 32-\eps})} \le D(i+j), \\
& \hphantom{X}
\sup_{k = 0, 1, \dots, j}\| \Xi (\Psi_N (\vec u_0, \o_2))  \|_{\mathcal{Y}^{ \eps}_{2^k}}
\le D(i+j),\\
& \hphantom{X}
\sup_{k = 0, 1, \dots, j}
\| \Xi (\Psi_M (\vec u_0, \o_2)) - \Xi (\Psi_N (\vec u_0, \o_2))  \|_{\mathcal{Y}^{ \eps}_{2^k}}\\
& \hphantom{XXXXXXXXX}
\le M^{-\dl} D(i+j)
\text{ for any } M \le N
\Big\}
\end{split}
\label{Ba2}
\end{align}

\noi
where $\| \Xi (\Psi)  \|_{\mathcal{Y}^{ \eps}_T}$ is as in \eqref{data3a}.
Then, 
by Theorem \ref{THM:Gibbs2}, Cauchy-Schwarz inequality, 
Lemma~\ref{LEM:stoconv}, 
Lemma~\ref{LEM:sto1},
Proposition~\ref{PROP:sto4J},
 and \eqref{Ba2},
we have
\begin{align}
\begin{split}
\rhoo_N \otimes  \PP_2 &  \big( (\H^{-\frac 12-\eps} (\T^3) \times \O_2 )\setminus B^{i,j}_N (D)  \big) \\
&\le C \big\| e^{\RR_N (u)} \big\|_{L^2(\mu)} \Big( \muu \otimes \PP_2 
\big( (\H^{-\frac 12-\eps} (\T^3) \times \O_2 )\setminus B^{i,j}_N (D) \big) \Big)^{\frac 12} \\
&
\le C 2^j \exp \big( -c D (i+j) \big)\\
& 
\le C  \exp \big( -c' D (i+j) \big), 
\end{split}
\label{Ba3}
\end{align}

\noi
uniformly in $i, j, N \in \N$, 
provided that $D \gg 1$.

It follows from a slight modification of (the proof of) Theorem \ref{THM:2}
that 
\begin{align}
\Phi^N (t) \big( B^{i,j}_N (D) \big)
\subset
\Big\{ \vec u \in \H^{-\frac 12-\eps}(\T^3):
\| \vec u \|_{\H^{-\frac 12-\eps}} \le D (i+j+1) \Big\}
\label{Ba4a}
\end{align}

\noi for any $0 \le t \le \tau$,
where $\tau $ is given by 
\begin{align}
\tau = \big( D (i+j) \big)^{-\ta}
\label{Ba4}
\end{align}

\noi
for some $\ta > 0$.
Indeed,
by decomposing the first component
 $\Phi^N_1(t)(\vec u_0, \o_2)$ of  $\Phi^N(t)(\vec u_0, \o_2)$ as in \eqref{decomp3} and \eqref{decomp3a}:
\begin{align}
\Phi_1^N (t) (\vec u_0, \o_2) = \Psi (t; \vec u_0, \o_2) + X_N(t) + Y_N(t), 
\label{Ba4b}
\end{align}

\noi
we see that 
 $X_N$, $Y_N$, and $\Res_N := X_N \pe \Psi_N (\vec u_0, \o_2)$ 
 satisfy  the following system:
\begin{align}
\begin{split}
 (\dt^2 + \dt  +1 - \Dl) X_{N} & =
\pi_{N} \Big(
\big( V \ast 
( \QxyN + 2\Res_{N} \, + \! :\! \Psi_{N}^2 \!: ) \big) \pl \Psi_{N} \Big) \\
&\phantom{X}
-M_\g(\QxyN + 2\Res_N \, + \! :\! \Psi_N^2 \!:) \Psi_N ,\\
 (\dt^2 + \dt +1  - \Dl) Y_{N}
&  = \pi_{N} \Big( \big( V \ast ( \QxyN + 2\Res_{N} \, + \! :\! \Psi_{N}^2 \!:) \big) (X_{N}+Y_{N}) \Big) \\
& \hphantom{X}
+ \pi_{N} \Big( \big( V \ast ( \QxyN + 2\Res_{N} \, + \! :\! \Psi_{N}^2 \!: ) \big) \pge \Psi_{N} \Big) \\
&\phantom{X}
-M_\g(\QxyN + 2\Res_N \, + \! :\! \Psi_N^2 \!:) (X_N+Y_N) ,\\
\Res_{N}
&=  \wt \If_{\pl}^{(1), N}
 \big(V \ast (\QxyN + 2\Res_{N} + :\! \Psi_{N}^2 \!:\,)\big)\pe \Psi_{N}\\
& \hphantom{X}
 + \wt \If_{\pl, \pe}^{N}
 \big(V \ast (\QxyN + 2\Res_{N} + :\! \Psi_{N}^2 \!:\,)\big) \\
& \hphantom{X}
- \I\big( M_\g(\QxyN + 2\Res_N \, + \! :\! \Psi_N^2 \!:) \Psi_N \big) \pe \Psi_N, \\
(X_{N}, \dt X_{N}, Y_{N} ,   \dt Y_{N} , &  \Res_{N})|_{t = 0}  = (0, 0, 0, 0, 0), 
\end{split}
\label{Ba4bb}
\end{align}

\noi
where $M_\g$ is as in \eqref{focusnon}, 
$\QxyN $ is as in 
\eqref{Pxy} with $\Psi$ replaced by 
$\Psi_N = \Psi_N (\vec u_0, \o_2)$
(in particular $\QxyN $ satisfies the bound \eqref{M1-3} 
with $X$, $Y$, and $\Psi$
replaced by  $X_N$, $Y_N$, and $\Psi_N$, 
uniformly in $N \in \N$), 
and 
$\wt \If_{\pl}^{(1), N}$ and $\wt \If_{\pl, \pe}^{N}$
are defined as above.
Then, by repeating 
 the proof of Theorem \ref{THM:2}
(see \eqref{M1a} - \eqref{M5} and \eqref{M7} - \eqref{M9})
with  the uniform boundedness of $\pi_N$, 
$(\vec u_0, \o_2) \in B^{i,j}_N (D)$ (see \eqref{Ba2}) and~\eqref{Ba4}, 
we have
\begin{align}
\begin{split}
\|(X_N, &  Y_N,  \Res_N)\|_{Z^{s_1, s_2, s_3}(\tau)} \\
&\les
\tau^{\frac 23} K
\Big( \|(X_N, Y_N, \Res_N)\|_{Z^{s_1, s_2, s_3}(\tau)}^3 + K^3\Big) \\
&\quad
+\tau^{\frac {4-\g}3}
\Big( \|(X_N, Y_N, \Res_N)\|_{Z^{s_1, s_2, s_3}(\tau)} +K \Big)^{2\g-1} \\
&\les \big( D (i+j) \big)^{1-\frac 23 \ta }
\Big( \|(X_N, Y_N, \Res_N)\|_{Z^{s_1, s_2, s_3}(\tau)}^3 
+ \big( D (i+j) \big)^3 \Big) \\
&\quad
+\big( D (i+j) \big)^{-\frac{4-\g}3 \theta}
\Big( \|(X_N, Y_N, \Res_N)\|_{Z^{s_1, s_2, s_3}(\tau)}
+ \big( D (i+j)\big) \Big)^{2\g-1},
\end{split}
\label{Ba4c}
\end{align}

\noi
where
$K = \| \Xi (\Psi_N (\vec u_0, \o_2))  \|_{\mathcal{Y}^{ \eps}_1} + 1$.
Then, by taking  sufficiently large $\ta \gg 1$
and  $D \gg 1$ (independent of $i, j, N \in \N$), 
a standard 
 continuity argument with \eqref{Ba4c} yields
\begin{align}
\begin{split}
\|(X_N, Y_N, \Res_N)\|_{Z^{s_1, s_2, s_3}(\tau)}
\le 1.
\end{split}
\label{Ba4c1}
\end{align}

\noi
Then, \eqref{Ba4a} follows
from  the decomposition \eqref{Ba4b} with the bounds 
\eqref{Ba2}
and 
\eqref{Ba4c1}.

Next, we set
\begin{align}
\Si^{i,j}_N = \bigcap_{\l=0}^{[2^j/\tau]} 
\big(\ft \Phi^N (\l \tau)\big)^{-1} \big( B^{i,j}_N(D) \big),
\label{Ba5}
\end{align}

\noi
where $\ft \Phi^N(t)$ is the extended stochastic flow map
in \eqref{Phi9}.
Then, from the invariance of $\rhoo_N \otimes\PP_2$ 
under  $\ft \Phi^N(t)$ (from the proof of Lemma  \ref{LEM:GWP4}), \eqref{Ba3}, and \eqref{Ba4},
we have
\begin{align}
\begin{split}
\rhoo_N \otimes & \, \PP_2 \big( (\H^{-\frac 12-\eps} (\T^3) \times \O) \setminus \Si^{i,j}_N \big) \\
&\le C  \frac{2^j}{\tau}\cdot 
\rhoo_N \otimes \PP_2 \big( (\H^{-\frac 12-\eps} (\T^3) \times \O) \setminus B^{i,j}_N(D) \big) \\
&\le
C 2^j D^\ta (i+j)^\ta  \exp \big( -c D (i+j) \big) \\
&\le
2^{-(i+j)},
\end{split}
\label{Ba6}
\end{align}
uniformly in $i, j, N \in \N$, 
provided that $D \gg 1$.
Moreover, from \eqref{Ba5} and \eqref{Ba4a}
with the flow property \eqref{Ba0}, we have 
\begin{align}
\| \Phi^N(t) (\vec u_0, \o_2) \| _{\H^{-\frac 12-\eps}}
\le D (i+j+1)
\label{Ba6a}
\end{align}

\noi
for $(\vec u_0, \o_2) \in \Si^{i,j}_N$ and $0 \le t \le 2^j$.

Finally, we set
\begin{align}
\Si^i_N = \bigcap_{j=1}^\infty \Si^{i,j}_N.
\label{Ba10}
\end{align}

\noi
Then, \eqref{Ba1aa}
follows from \eqref{Ba6}.
The growth bound
\eqref{Ba1ab} follows from 
\eqref{Ba6a}.
\end{proof}

\begin{remark}\label{REM:unique}\rm
(i) In the proof of Proposition \ref{PROP:GWPNB}, 
we used two different decompositions
\eqref{O1a}
 and~\eqref{Ba4b}
for the solution $u_N$ to the truncated equation \eqref{SNLWx}.
The former was used to obtain~\eqref{Ba6}, 
while the latter was used to obtain \eqref{Ba6a}.
The unconditional uniqueness statement 
for $u_N$ in Remark \ref{REM:tau}
was essential to conclude that 
these solutions given by the two different decompositions coincide.

\smallskip
\noi
(ii)
Note that 
the power in the growth bound \eqref{Ba1ab}
comes from the fact that 
the enhanced data set 
$\Xi (\Psi_N (\vec u_0, \o_2))$ in \eqref{data3x} 
belongs to $\H_{\le 2}$
in  the focusing case.
In the defocusing case, 
the associated enhanced data set 
belongs to $\H_{\le 3}$
and thus we need to replace the right-hand side of~\eqref{Ba1ab}
by 
$ C ( i+ \log (1+t))^{\frac 32}$.
\end{remark}

We conclude this subsection by stating a corollary to Proposition \ref{PROP:GWPNB}.

\begin{corollary}\label{COR:Leo2}
Given $i \in \N$ and $N \in \N$, 
let $\Si_N^i$ be as in Proposition \ref{PROP:GWPNB}.
Fix $T \gg 1$
and let $j$ be the smallest integer such  that $2^j \geq T$
and  $\tau > 0$ be as in \eqref{Ba4}.
Then, there exists $C(i, T) > 0$ such that 
\begin{align}
& \sup_{(\vec u_0, \o_2) \in \Si_N^i}
\| V*:\! (\pi_N u_N)^2\!:\|_{\l^\infty_kL^3([k\tau, (k+1)\tau];  
H^{-s_1 + s_2 +\frac{1}{2}} + W^{\be - 1-\eps, \infty})}
\le C(i, T), 
\label{XL1}\\
& \sup_{(\vec u_0, \o_2) \in \Si_N^i}
\|  :\! (\pi_N u_N)^2\!: \|_{\l^\infty_k 
L^3([k\tau, (k+1)\tau]; H^{-100})}
\le C(i, T), 
\label{XL2}
\end{align}

\noi
uniformly in $N \in \N$, 
where 
 $V$ is the Bessel potential of order $\b\geq 2$
 and $u_N
 = \Phi_1^N (t) (\vec u_0, \o_2)$  is as in \eqref{Ba4b}, 
denoting the global-in-time solution to  the truncated Hartree SdNLW  \eqref{SNLWx}.

\end{corollary}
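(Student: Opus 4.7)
The plan is to use the paracontrolled decomposition \eqref{Pxy2} to reduce the estimates \eqref{XL1}--\eqref{XL2} to bounds on the ingredients $\Psi_N$, $:\!\Psi_N^2\!:$, $\QxyN$, and $\Res_N$ that are uniform in $N$ on each time window $[k\tau, (k+1)\tau]$. From the decomposition \eqref{Ba4b} and the structure of the system \eqref{Ba4bb} (in which both $X_N$ and $Y_N$ are $\pi_N$-localized), we have $\pi_N u_N = \Psi_N + X_N + Y_N$, and hence
\[
:(\pi_N u_N)^2: \; = \QxyN + 2\Res_N + :\!\Psi_N^2\!:\, ,
\]
with $\QxyN$ defined as in \eqref{Pxy} (with $\Psi$ replaced by $\Psi_N$).

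Fix $(\vec u_0, \o_2) \in \Sigma_N^i$. By \eqref{Ba5} and \eqref{Ba10}, $\ft \Phi^N(k\tau)(\vec u_0, \o_2) \in B_N^{i,j}(D)$ for every $k = 0, 1, \dots, [2^j/\tau]$. Applying the flow property \eqref{Ba0} and the local-in-time bound \eqref{Ba4c1} with initial data $\Phi^N(k\tau)(\vec u_0, \o_2)$ and shifted noise $\tau_{k\tau}(\o_2)$, we obtain on each subinterval $[k\tau, (k+1)\tau]$ a restarted paracontrolled decomposition of $u_N$ (with zero initial data for the $X_N$- and $Y_N$-components at time $k\tau$), which satisfies
\[
\|(X_N, Y_N, \Res_N)\|_{Z^{s_1, s_2, s_3}([k\tau, (k+1)\tau])} \le 1,
\]
while the definition \eqref{Ba2} of $B_N^{i,j}(D)$ forces
\[
\|\Psi_N\|_{C_t W^{-\frac12-\eps,\infty}_x} + \|:\!\Psi_N^2\!:\|_{C_t W^{-1-\eps,\infty}_x} \le D(i+j) \le C(i, T)
\]
on the same subinterval. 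The unconditional uniqueness of $u_N$ (Remark \ref{REM:unique}) ensures that the restarted decomposition reassembles to the same $u_N$, which is all that \eqref{XL1}--\eqref{XL2} concern.

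With these uniform controls at hand, \eqref{XL1} follows by estimating each summand of $V * :\!(\pi_N u_N)^2\!:$ separately: the piece $V * \QxyN$ is controlled in $L^\infty_t H^{-s_1+s_2+\frac12}_x$ via the $\QxyN$-analogue of \eqref{M1-3}; the piece $V * \Res_N$ lies in $L^3_t H^{s_3+\beta}_x$ by Bessel smoothing of order $\beta$, and the condition \eqref{Z0} together with $s_1 < \frac12$ and $s_3 > -\frac12$ forces $s_3 + \beta > -s_1 + s_2 + \frac12$; finally, $V * :\!\Psi_N^2\!:$ sits in $C_t W^{\beta - 1 - \eps, \infty}_x$ uniformly. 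Taking $\ell^\infty$ in $k$ over the finitely many (namely $O(T (D(i+j))^\theta)$) subintervals preserves the bound $C(i, T)$. The estimate \eqref{XL2} is similar but much cruder: each of the three summands of $:(\pi_N u_N)^2:$ is placed in $L^3_t H^{-100}_x$ using the same uniform bounds together with Sobolev embedding from the stronger norms already controlled.

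The main subtle point is recognizing that the decomposition on each subinterval $[k\tau, (k+1)\tau]$ is not the one inherited from time $0$, but a restarted one associated to the initial data $\Phi^N(k\tau)(\vec u_0, \o_2)$ at time $k\tau$; the invariance-based construction of $\Sigma_N^i$ (through the backward images under $\ft\Phi^N(k\tau)$ in \eqref{Ba5}) guarantees that these restarted enhanced data sets still obey the bounds of $B_N^{i,j}(D)$, and this is precisely what allows the local bound \eqref{Ba4c1} to be chained together uniformly in $N$ and in $k$ to yield the claimed $N$-independent constant $C(i, T)$.
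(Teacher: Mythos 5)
Your proposal is correct and follows essentially the same route as the paper: restart the paracontrolled decomposition \eqref{Ba4bb} on each subinterval $[k\tau,(k+1)\tau]$ using the backward-image structure of $\Si_N^i$ from \eqref{Ba5}--\eqref{Ba10}, invoke the local bound \eqref{Ba4c1} with the shifted data and noise, reconcile via the unconditional uniqueness of $u_N$, and then estimate each piece of $V*(\QxyN + 2\Res_N + :\!\Psi_N^2\!:)$ using \eqref{M1-3} and the uniform enhanced-data bounds from \eqref{Ba2}, with \eqref{XL2} following by Sobolev embedding. Your explicit verification that $s_3 + \beta > -s_1 + s_2 + \tfrac12$ for the $V*\Res_N$ piece is a helpful elaboration of what the paper simply cites as ``the regularity ranges from Theorem~\ref{THM:2}.''
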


\begin{proof}
We only prove the first bound \eqref{XL1}, since the second bound
\eqref{XL2} follows from \eqref{XL1} and Sobolev's inequality.
Given 
$(\vec u_0, \o_2) \in \Si_N^i$, 
it follows from 
\eqref{Ba10} and \eqref{Ba5} that 
\begin{align*}
\ft \Phi^N (k \tau)(\vec u_0, \o_2) 
= \big( \Phi^N(k \tau )(\vec u_0, \o_2), \tau_{k\tau } (\o_2) \big)
\in  B^{i,j}_N(D)
\end{align*}

\noi
for any $k = 0, 1, \dots, \big[\frac{2^j}\tau\big]$, 
where $\ft \Phi^N $ is as in  \eqref{Phi9}.

Now, consider the 
truncated dynamics
\eqref{SNLWx} on 
the time interval $[k\tau, (k+1) \tau]$
with $(u_N, \dt u_N)|_{t = k\tau} = 
\Phi^N(k \tau )(\vec u_0, \o_2)$
and the noise parameter $\tau_{k\tau } (\o_2)$.
Let $\tf = t - k \tau$ denote the shifted time.
Then, it suffices to study the system \eqref{Ba4bb} 
for $0 \le \tf \le \tau$, 
where the enhanced data set 
$ \Xi (\Psi_N (\vec u_0, \o_2))$ in \eqref{data3x}
is based on 
$\Psi_N$  given by 
\begin{align}
\begin{split}
\Psi_N  (\tf )& = \Psi_N\big(\tf; \ft \Phi^N (k \tau)(\vec u_0, \o_2)\big)\\
&= \pi_N S(\tf) \big(\Phi^N(k \tau )(\vec u_0, \o_2)\big)
+ \sqrt 2 \pi_N  \int_0^\tf \D(\tf-t') dW(t', \tau_{k \tau } (\o_2))
\end{split}
\label{XL3}
\end{align}

\noi
and
$\QxyN $ is as in 
\eqref{Pxy} with $\Psi$ replaced by 
$\Psi_N$ in \eqref{XL3}.
For clarity, let us denote the solution to \eqref{Ba4bb} 
on $[k\tau, (k+1)\tau]$
by $(X_N^{(k)}, Y_N^{(k)}, \Res_N^{(k)})$.
Arguing
as in the proof of  Proposition~\ref{PROP:GWPNB}, 
we obtain (by expressing in terms of the original time $t = \tf + k\tau$)
\begin{align}
\begin{split}
\|(X_N^{(k)}, Y_N^{(k)},  \Res_N^{(k)})\|_{Z^{s_1, s_2, s_3}([k \tau, (k+1)\tau])}
\le 1, 
\end{split}
\label{XL4}
\end{align}

\noi
where, with a slight abuse of notation, 
we used $Z^{s_1, s_2, s_3}(I)$
to denote 
the $Z^{s_1, s_2, s_3}$-norm restricted to a given time interval $I$.
Then, 
by writing 
\begin{align*}
V*:\! (\pi_N u_N)^2\!:
\, 
= V*( Q_{X_N^{(k)}, Y_N^{(k)}} + 2 \Res_N^{(k)} +  :\!\Psi_N^2\!: ), 
\end{align*}

\noi
we obtain from \eqref{XL4} and \eqref{M1-3} with \eqref{Ba2} 
and the regularity ranges $\frac 14<s_1 < \frac 12 < s_2<1$ and $-\frac 12 <s_3<0$
from Theorem \ref{THM:2}
that 
\begin{align*}
\| V*:\!(\pi_N u_N)^2\!:\|_{L^3([k \tau, (k+1)\tau]; 
H^{-s_1 + s_2 +\frac{1}{2}} + W^{\be - 1-\eps, \infty})}
\le C(i, j), 
\end{align*}

\noi
uniformly in $N\in \N$, 
$(\vec u_0, \o_2) \in \Si_N^i$, and
$k = 0, 1, \dots, \big[\frac{2^j}\tau\big]$.
This proves \eqref{XL1}.
\end{proof}

\subsection{Proof of Theorem \ref{THM:GWP}}
\label{SUBSEC:GWP2}

In this subsection, by an approximation argument, 
we  first prove almost sure global well-posedness
of the focusing Hartree SdNLW \eqref{SNLWA2}.
Given $i \in \N$, define
a set $\Si^i$ by 
\begin{align}
\Si^i = \limsup_{N \to \infty} \Si^i_N
= \bigcap_{N = 1}^\infty \bigcup_{M = N}^\infty \Si^i_M. 
\label{Bc1}
\end{align}

\noi
Then, from 
\eqref{Bc1},   Theorem \ref{THM:Gibbs2}, 
and \eqref{Ba1aa}, we have 
\begin{align}
\begin{split}
\rhoo \otimes \PP_2 (\Si^i)
& = \lim_{N\to \infty}  \rhoo \otimes \PP_2 \bigg(\bigcup_{M = N}^\infty \Si^i_M\bigg)\\
& \ge \limsup_{N \to \infty} \, \rhoo \otimes \PP_2 (\Si^i_N )
= \limsup_{N \to \infty} \, \rhoo_N \otimes \PP_2 (\Si^i_N ) \\
&\ge 1- 2^{-i}.
\end{split}
\notag
\end{align}

\noi
Hence, by setting
\begin{align}
\Si = \bigcup_{i=1}^\infty \Si^i,
\label{Bc3}
\end{align}
we obtain
\[
\rhoo \otimes \PP_2 (\Si)=1.
\]

\noi
In view of 
Lemma \ref{LEM:GWP4}, 
without loss of generality, 
we assume that given any $(\vec u_0, \o_2) \in \Si$, 
there exists the global-in-time solution $(u_N, \dt u_N)$ to \eqref{SNLWx}
with $(u_N, \dt u_N)|_{t = 0} = \vec u_0$
and the noise parameter $\o_2$
(i.e.~with the external forcing $\xi(\o_2)$).

Fix $(\vec u_0, \o_2) \in \Si$.
Then, it follows from 
\eqref{Bc3}, \eqref{Bc1}, \eqref{Ba10}, and \eqref{Ba5}
that 
there exist $i \in \N$ and an increasing sequence $\{ N_k \}_{k \in \N}$ such that
\begin{align}
(\vec u_0, \o_2) \in \Si^i_{N_k}
= \bigcap_{j=1}^\infty \bigcap_{\l=0}^{[2^j/\tau]}
\big(\ft  \Phi^{N_k}(\l \tau)\big)^{-1} (B^{i,j}_{N_k} (D))
\label{Ba4c0}
\end{align}

\noi
for any $k \in \N$,
where $\tau = \tau(i, j, D)>0$ is as in \eqref{Ba4}.

In the next proposition, 
we prove convergence of 
the solutions $\{ \Phi^{N_k}(\vec u_0, \o_2) \}_{k \in \N}$
along this particular subsequence $N_k = N_k(\vec u_0, \o_2)$.
In Corollary \ref{COR:XX}, 
we establish convergence of the entire sequence
$\{ \Phi^{N}(\vec u_0, \o_2) \}_{N \in \N}$.
See also Remark \ref{REM:conv}.

\begin{proposition}\label{PROP:GWPNC}
Let $(\vec u_0, \o_2) \in \Si$, $i \in \N$, and $\{ N_k \}_{k \in \N}$
be as above.
Then, 
$\{ \Phi^{N_k}(\vec u_0, \o_2) \}_{k \in \N}$ is a Cauchy sequence 
in $C(\R_+; \H^{-\frac 12-\eps}(\T^3))$
endowed with the
compact-open topology \textup{(}in time\textup{)}.
\end{proposition}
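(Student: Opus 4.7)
The plan is to establish the Cauchy property on any fixed interval $[0, T]$ by an iterated local stability estimate on the subintervals used to build $\Si^i_N$. Fix $T \ge 1$, pick $j \in \N$ with $2^j \ge T$, and let $\tau = \tau(i, j, D)$ be as in \eqref{Ba4}. For $k_1 < k_2$, set $M := N_{k_1}$, $N := N_{k_2}$. By the hypothesis $(\vec u_0, \o_2) \in \Si^i_{N_k}$ for all $k$, together with \eqref{Ba4c0}, the evolved data $\ft\Phi^\alpha(\l\tau)(\vec u_0, \o_2)$ belongs to $B^{i,j}_\alpha(D)$ for each $\alpha \in \{M, N\}$ and every $\l = 0, 1, \dots, L := [2^j/\tau]$. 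Restarting the paracontrolled decomposition on each subinterval $I_\l := [\l\tau, (\l+1)\tau]$ as in Corollary~\ref{COR:Leo2} produces, with shifted time $\tf = t - \l\tau$,
\[
u^\alpha(\l\tau + \tf) = \Psi^{\alpha, \l}(\tf) + X^{\alpha, \l}(\tf) + Y^{\alpha, \l}(\tf), \qquad \alpha \in \{M, N\},
\]
with the uniform bounds $\|\Xi(\Psi^{\alpha, \l})\|_{\mathcal{Y}^{\eps}_\tau} \le D(i+j)$ and $\|(X^{\alpha,\l}, Y^{\alpha,\l}, \Res^{\alpha,\l})\|_{Z^{s_1, s_2, s_3}(I_\l)} \le 1$.

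The heart of the proof will be a local Lipschitz-type estimate of the form
\[
\Delta^{\l+1} \le C_1 \Delta^\l + C_2 M^{-\dl}, \qquad \Delta^\l := \|u^M(\l\tau) - u^N(\l\tau)\|_{\H^{-\frac 12 - \eps}},
\]
with constants $C_1, C_2$ depending only on $i$ and $T$ (via $D$ and $j$). The $M^{-\dl}$ gain will come from the third bound in the definition \eqref{Ba2} of $B^{i, j}_N(D)$, which with $M \le N$ controls the enhanced-data discrepancy produced purely by the two different frequency cutoffs. Since $u^M$ and $u^N$ share the initial datum $\vec u_0$, we have $\Delta^0 = 0$, and a discrete Gr\"onwall iteration yields
\[
\sup_{t \in [0, T]} \|u^M(t) - u^N(t)\|_{\H^{-\frac 12 - \eps}} \le C_2 M^{-\dl} \sum_{\l=0}^{L} C_1^\l \le C(i, T) M^{-\dl} \longrightarrow 0
\]
as $k_1 \to \infty$, uniformly in $k_2 > k_1$. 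Since $T$ was arbitrary, this delivers the Cauchy property in the compact-open topology.

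The main obstacle lies in the local stability estimate itself. As flagged in the paragraph preceding the proposition, a naive subtraction of the paracontrolled systems for $u^M$ and $u^N$ is obstructed by cross-terms such as $\Psi^{M,\l} \cdot \Psi^{N, \l}$, whose two factors have regularity $-\frac 12 - \eps$ each and thus fail to form a well-defined deterministic product. To bypass this, I will analyse the difference of the two restarted decompositions by splitting the enhanced-data discrepancy $\Xi(\Psi^{M, \l}) - \Xi(\Psi^{N, \l})$ into two pieces: (i) one coming from the distinct evolved initial data $\Phi^M(\l\tau)(\vec u_0, \o_2)$ and $\Phi^N(\l\tau)(\vec u_0, \o_2)$, which enters only through the free-evolution component of $\Psi^{M, \l} - \Psi^{N, \l}$ and contributes linearly in $\Delta^\l$; and (ii) one coming purely from the cutoffs $\pi_M$ versus $\pi_N$, which is controlled by $M^{-\dl} D(i+j)$ via \eqref{Ba2} and its consequences for the derived stochastic objects $:\!\Psi^2\!:$, $\Ab$, and $\If_{\pl, \pe}$. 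Both pieces are then inserted into the multilinear estimates \eqref{M1a}--\eqref{M9} underlying the proof of Theorem~\ref{THM:2}, applied to the differences $(X^{M,\l} - X^{N,\l}, Y^{M,\l} - Y^{N,\l}, \Res^{M,\l} - \Res^{N,\l})$. The point of this dichotomy is that at no stage does one have to estimate a genuinely ill-defined product: each quantity being bounded is either a deterministic combination weighted by $\Delta^\l$ or a bona fide stochastic object whose $M$-vs-$N$ difference is furnished by the $B^{i,j}_N(D)$-bound. This embodies the paracontrolled stability on the whole interval $[0, T]$ alluded to in the discussion preceding the statement.
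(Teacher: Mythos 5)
Your plan is the iterated local stability strategy of \cite{ORTz}, which the paper explicitly rules out in Remark~\ref{REM:fail}, and the proposed dichotomy does not actually escape the obstruction there. The issue is in step (i) of your splitting: you declare that the piece of $\Xi(\Psi^{M,\l}) - \Xi(\Psi^{N,\l})$ coming from the distinct evolved data $\Phi^M(\l\tau)(\vec u_0,\o_2)$ and $\Phi^N(\l\tau)(\vec u_0,\o_2)$ ``enters only through the free-evolution component of $\Psi^{M,\l} - \Psi^{N,\l}$ and contributes linearly in $\Delta^\l$.'' But to extract that linear contribution you must insert $S(\tf)\big(\Phi^M(\l\tau)(\vec u_0,\o_2)-\Phi^N(\l\tau)(\vec u_0,\o_2)\big)$ into the multilinear estimates of Theorem~\ref{THM:2}, and among these there is the resonant product of this free evolution against $X^{N,\l}$ (and against the other paracontrolled components). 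The free evolution of a general difference of evolved data is a $\mathcal{C}^{-\frac12-\eps}$ object with no attached paracontrolled structure, while $X^{N,\l}$ has limiting regularity $\frac12-$; the sum of regularities is negative, so this resonant product is not defined by any deterministic estimate. This is precisely the display~\eqref{fail} in Remark~\ref{REM:fail}. In two dimensions (the setting of \cite{ORTz}) those two regularities are $0-$ and $1-$ and the product is fine, which is why the iterated scheme works there; in three dimensions it fails, and that failure is the whole reason the paper takes a different route.

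What the paper actually does is never restart the paracontrolled decomposition for the purpose of the difference estimate. Instead it sets up the decomposition $u_N = \Psi_N(\vec u_0,\o_2) + X_N + Y_N$ \emph{once, anchored at $t = 0$}, and works directly on the full interval $[0,T]$. Lemma~\ref{LEM:Leo1} supplies a quantitative growth bound $\|(X_N,Y_N,\Res_N)\|_{Z^{s_1,s_2,s_3}(T)}\le C(i,T,K)$ for this global decomposition, obtained by combining the restarted local control of Corollary~\ref{COR:Leo2} with a discrete Gr\"onwall argument (the restarting is used there, but only for a single solution, where the unconditional uniqueness of Remark~\ref{REM:tau} guarantees the two representations agree). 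The difference $Z_{N_{k_1}} - Z_{N_{k_2}}$ is then estimated in the \emph{exponentially weighted} norm $\Zc^{s_1,s_2,s_3}_\ld(T)$ from \eqref{ZL1}: the terms that are linear in the difference of solutions are absorbed by choosing $\ld = \ld(i,T)$ large via the bound $e^{-\ld t}\|e^{\ld t'}\|_{L^q_{t'}([0,t])}\lesssim \ld^{-1/q}$, and the remaining terms involve only differences of enhanced data and the frequency tails $\pi_{N_{k_2}}-\pi_{N_{k_1}}$ applied to the \emph{same} underlying $\Psi(\vec u_0,\o_2)$, which are controlled by the third condition of \eqref{Ba2} and yield the $N_{k_1}^{-\dl_1}$ decay. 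Because both $Z_{N_{k_1}}$ and $Z_{N_{k_2}}$ are built from the same fixed noise path and fixed initial data, and only the cutoff index differs, the problematic resonant product of ``rough free evolution of a data discrepancy times a paracontrolled piece'' simply never appears. Your proof plan needs to be replaced by this global-in-time weighted-norm argument; as written it would stall exactly at the ill-defined product flagged in Remark~\ref{REM:fail}.
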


Before proceeding to the proof of 
Proposition~\ref{PROP:GWPNC}, 
we first establish a
growth bound
for the solution
 $(X_N, Y_N, \Res_N)$ to the truncated system~\eqref{Ba4bb} 
with  $\Psi_N = \Psi_N (\vec u_0, \o_2)$
on a given (large) time interval $[0, T]$.

\begin{lemma}\label{LEM:Leo1}
Let $(\vec u_0, \o_2) \in \Si_N^i$
for some  $i \in \N$ and $N \in \N$.
Fix  $T \gg 1$.
Suppose that we have 
\begin{align}
\sup_{k = 0, 1, \dots, j}\| \Xi (\Psi_N (\vec u_0, \o_2))  \|_{\mathcal{Y}^{ \eps}_{2^j}} 
\leq K
\label{YL1}
\end{align}

\noi
for some \textup{(}large\textup{)} $K \geq 1$, 
where 
 $j$ is the smallest integer such  that $2^j \geq T$
 and 
the enhanced data set 
$\Xi (\Psi_N (\vec u_0, \o_2))$ is as in 
\eqref{data3x}
and the $\mathcal{Y}^{ \eps}_t$-norm 
 is as in \eqref{data3a}.
Then, the global-in-time solution $(X_N, Y_N, \Res_N)$ to the truncated system~\eqref{Ba4bb} 
with $\Psi_N = \Psi_N (\vec u_0, \o_2)$ satisfies 
\begin{align}
\|(X_N, Y_N, \Res_N)\|_{Z^{s_1, s_2, s_3}(T)}
\le C(i, T, K), 
\label{YL2}
\end{align}

\noi
where the constant $ C(i, T, K)$ is  independent of 
 $(\vec u_0, \o_2) \in \Si_N^i$ and $N \in \N$.

\end{lemma}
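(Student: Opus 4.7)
The plan is to iterate the local well-posedness theory on sub-intervals of length $\tau$ as in~\eqref{Ba4}, using Corollary~\ref{COR:Leo2} to obtain $N$-uniform control on the nonlinear quantity $V\ast :\!(\pi_N u_N)^2\!:$ and then feeding this control back into the system~\eqref{Ba4bb}. First, I would partition $[0,T]$ into $J:=\lceil T/\tau\rceil$ sub-intervals $I_k:=[k\tau,(k+1)\tau]$ with $\tau=\tau(i,T,D)$ chosen so that Theorem~\ref{THM:2} applies locally. Since the algebraic identity
\[
V\ast(\QxyN+2\Res_N+\,:\!\Psi_N^2\!:)\ =\ V\ast:\!(\pi_N u_N)^2\!:
\]
holds irrespective of which decomposition of $u_N$ one uses, Corollary~\ref{COR:Leo2} yields the sub-interval bounds
\[
\|V\ast:\!(\pi_N u_N)^2\!:\|_{\l^\infty_k L^3(I_k;\, H^{-s_1+s_2+\frac12}+W^{\beta-1-\eps,\infty})}+\|:\!(\pi_N u_N)^2\!:\|_{\l^\infty_k L^3(I_k;\,H^{-100})}\le C(i,T)
\]
uniformly in $N\in\N$ and in $(\vec u_0,\o_2)\in\Si_N^i$.

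Next, I would run the energy estimate~\eqref{EE1} for $X_N$ and $Y_N$ on each sub-interval $I_k$, starting from the (nonzero) data $(X_N,Y_N,\dt X_N,\dt Y_N)(k\tau)$ inherited from the previous step. Plugging the Step-1 bounds into the paraproduct and product estimates that underlie~\eqref{M1a}--\eqref{M4} and~\eqref{M7}--\eqref{M8} (so that $V\ast:\!(\pi_N u_N)^2\!:$ and $M_\g(:\!(\pi_N u_N)^2\!:)$ act as given forcing rather than as functions of $X_N,Y_N,\Res_N$ to be re-estimated in terms of themselves), together with the enhanced-data bound~\eqref{YL1}, gives
\[
\|(X_N,Y_N)\|_{X^{s_1}(I_k)\times X^{s_2}(I_k)}\le C\|(X_N,Y_N,\dt X_N,\dt Y_N)(k\tau)\|_{\H^{s_1}\times\H^{s_2}}+C_1(i,T,K),
\]
where the coefficient $C$ on the initial data is the operator norm of the damped propagator $S(t)$ on $\H^{s_j}$ (in fact bounded in $t$ via the $e^{-t/2}$ decay in~\eqref{D2a}). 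Iterating from the zero initial data at $t=0$ across all $J\le C(i,T)$ sub-intervals then produces $\|(X_N,Y_N)\|_{X^{s_1}([0,T])\times X^{s_2}([0,T])}\le C(i,T,K)$ on the full interval.

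Finally, for $\Res_N$ I would apply the mapping properties of the paracontrolled operators directly on $[0,T]$---Lemma~\ref{LEM:sto3} for $\wt\If^{(1),N}_{\pl}$, Proposition~\ref{PROP:sto4J} for $\wt\If^{N}_{\pl,\pe}$, and Lemma~\ref{LEM:sto1} for the kernel $\Ab_N$ entering the focusing correction---each with operator norm controlled by the enhanced-data bound~\eqref{YL1}. Their inputs, namely $V\ast(\QxyN+2\Res_N+\,:\!\Psi_N^2\!:)$ and $M_\g(:\!(\pi_N u_N)^2\!:)$, are bounded in $L^{3/2}_TL^2$ and $L^{\g-1}_T$, respectively, by summing the Step-1 sub-interval bounds, whence $\|\Res_N\|_{L^3_TH^{s_3}}\le C(i,T,K)$. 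The main obstacle is that $\Res_N$ enters its own defining equation through the input $V\ast(\QxyN+2\Res_N+\,:\!\Psi_N^2\!:)$, making the scheme mildly implicit; however, as observed in~\eqref{M5} this self-dependence carries a short-time factor $\tau^\s$ for some $\s>0$ at each sub-interval stage, so I would incorporate $\Res_N$ into the iteration on $I_k$ alongside $X_N,Y_N$ and close the fixed-point argument on each $I_k$ by a standard continuity argument, the accumulated constants over $J=O(C(i,T))$ steps remaining bounded by $C(i,T,K)$.
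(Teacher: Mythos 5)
Your proposal is correct and follows essentially the same route as the paper's proof. Both hinge on the observation that $V\ast(\QxyN+2\Res_N+\,:\!\Psi_N^2\!:)=V\ast:\!(\pi_N u_N)^2\!:$ depends only on $u_N$ and not on the chosen decomposition, so Corollary~\ref{COR:Leo2} supplies an $N$-uniform bound that renders the $Y_N$- and $\Res_N$-equations effectively linear in the remaining unknowns (the paper highlights exactly this point in Remark~\ref{REM:Leo}); the only cosmetic difference is that the paper keeps the Duhamel integral over the full interval $[0,T]$ and closes the $Y_N$-estimate via the discrete Gronwall inequality, whereas you restart the propagator on each sub-interval and iterate.
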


\begin{proof}
With 
$:\! (\pi_N u_N)^2\!: \, = \QxyN + 2\Res_N \, + \! :\! \Psi_N^2 \!:$
and \eqref{sto1a} (for $\Ab_N$), 
we can write \eqref{Ba4bb} as
\begin{align}
\begin{split}
 (\dt^2 + \dt  +1 - \Dl) X_{N} & =
\pi_{N} \Big(
\big( V \ast 
:\! (\pi_N u_N)^2\!: \big) \pl \Psi_{N} \Big) 
-M_\g(:\! (\pi_N u_N)^2\!:) \Psi_N ,\\
 (\dt^2 + \dt +1  - \Dl) Y_{N}
&  = \pi_{N} \Big( \big( V \ast 
:\! (\pi_N u_N)^2\!:\big) (X_{N}+Y_{N}) \Big) \\
& \hphantom{X}
+ \pi_{N} \Big( \big( V \ast :\! (\pi_N u_N)^2\!: \big) \pge \Psi_{N} \Big) \\
& \hphantom{X}
-M_\g(:\! (\pi_N u_N)^2\!:) (X_N+Y_N) ,\\
\Res_{N}
&=  \wt \If_{\pl}^{(1), N}
 \big(V \ast :\! (\pi_N u_N)^2\!:\big)\pe \Psi_{N}
 + \wt \If_{\pl, \pe}^{N}
 \big(V \ast:\! (\pi_N u_N)^2\!:\big) \\
& \hphantom{X}
- \int_0^t  M_\g(:\! (\pi_N u_N)^2\!:)(t') \Ab_N(t, t') dt', \\
(X_{N}, \dt X_{N}, Y_{N} ,   \dt Y_{N} , &  \Res_{N})|_{t = 0}  = (0, 0, 0, 0, 0).
\end{split}
\label{YL3}
\end{align}

\noi
Let  $\tau > 0$ be as in \eqref{Ba4}.
Then, we set 
\[ L^q_{I_k} = L^q(I_k), \quad \text{where}\quad 
I_k = [k\tau, (k+1) \tau].\]

\noi
By writing \eqref{YL3}
in the Duhamel formulation, 
it follows from 
\eqref{EE1}, Lemma~\ref{LEM:para}, 
\eqref{focusnon}, and Corollary \ref{COR:Leo2} with \eqref{Ba4} and \eqref{YL1}
that 
\begin{align}
\begin{split}
\| X_N \|_{X^{s_1}(T)} 
& \les \frac{T}{\tau}
\Big(\| V \ast :\! (\pi_N u_N)^2\!: \|_{\l^\infty_k L^3_{I_k} H^{\frac{1}{2}+2\eps}_x}
+ \|  :\! (\pi_N u_N)^2\!: \|_{\l^\infty_k L^3_{I_k} H^{-100}_x}^{\g-1}\Big)\\
& \quad 
\times
\| \Psi_N \|_{L^\infty_T W^{-\frac{1}{2}-\eps, \infty}_x}\\
& \leq \frac{1}{\tau} C(i, T, K).
\end{split}
\label{YL4}
\end{align}

\noi
Proceeding as in \eqref{M5} and \eqref{M9}
with 
Lemma \ref{LEM:para}, 
Lemma \ref{LEM:sto3} (for $\wt \If_{\pl}^{(1), N}$), 
 \eqref{YL1}, and \eqref{Ba4}, we have
\begin{align}
\begin{split}
\| \Res_N \|_{L^3_{T} H^{s_3}_x} 
&\le
C(i, T)
K^2 
\Big( \| V \ast :\! (\pi_N u_N)^2\!: \|_{\l^\infty_kL^3_{I_k} H^{\frac{1}{2}+2\eps}_x}\\
& \hphantom{XXXXXXX}
+ \|  :\! (\pi_N u_N)^2\!: \|_{\l^\infty_kL^3_{I_k} H^{-100}_x}^{\g-1}\Big)\\
& \le C(i, T, K).
\end{split}
\label{YL5}
\end{align}

\noi
Given $0 < t \le T$, let $k_*(t)$ be the largest integer such that $k_*(t)\tau \leq t$.
Proceeding as in \eqref{M4} and \eqref{M8}
with 
Corollary \ref{COR:Leo2} and
\eqref{YL4},  we have 
\begin{align*}
\|  Y_N \|_{ X^{s_2}(t)} 
& \leq  \|  Y_N \|_{ X^{s_2}((k_*(t) + 1)\tau)}\\
& \le C \sum_{k = 0}^{k_*(t)}
\bigg\{
\tau^\frac{2}{3}\| V \ast :\! (\pi_N u_N)^2\!: \|_{\l^\infty_k L^3_{I_k}
 (H^{-s_1 + s_2 +\frac{1}{2}}_x + W^{\be - 1-\eps, \infty}_x)}\\
& \hphantom{XXXX}\times\Big(\|  X_N \|_{ X^{s_1}(T)} 
+ \|  Y_N \|_{ X^{s_2}((k+1)\tau)}
+ \| \Psi_N \|_{L^\infty_TW^{-\frac{1}{2}-\eps, \infty}_x}
 \Big)
\\
& \hphantom{X}
+ \tau^\frac{4-\g}{3} \|  :\! (\pi_N u_N)^2\!: \|_{\l^\infty_k L^3_{I_k} H^{-100}_x}^{\g-1}
\Big(\|  X_N \|_{ X^{s_1}(T)} 
+ \|  Y_N \|_{ X^{s_2}((k+1)\tau)}\Big)\bigg\}\\
& \le C_1(i,T, K) 
\sum_{k = 0}^{k_*(t) }
\tau^\frac{4-\g}{3} \frac 1 \tau
+  C_2(i,T) 
\sum_{k = 0}^{k_*(t) }
\tau^\frac{4-\g}{3}  \|  Y_N \|_{ X^{s_2}((k+1)\tau)}\\
& \le C_1'(i,T, K) 
\tau^\frac{4-\g}{3} \frac 1 {\tau^2}
+  C_2(i,T) 
\sum_{k = 0}^{k_*(t) }
\tau^\frac{4-\g}{3}  \|  Y_N \|_{ X^{s_2}((k+1)\tau)},
\end{align*}

\noi
where $C_2(i,T) $ does not explicitly depend on $\tau$.
Then, by choosing smaller\footnote{From the proof of Corollary \ref{COR:Leo2}, 
we see that 
 the constant $C_2(i,T)$, bounding 
\[\| V \ast :\! (\pi_N u_N)^2\!: \|_{\l^\infty_k L^3_{I_k}
 (H^{-s_1 + s_2 +\frac{1}{2}}_x + W^{\be - 1-\eps, \infty}_x)}\qquad 
 \text{and}\qquad
\|  :\! (\pi_N u_N)^2\!: \|_{\l^\infty_k L^3_{I_k} H^{-100}_x}^{\g - 1}\]
does not grow even if we choose smaller $\tau > 0$.

} $\tau = \tau(i, T)> 0$ (say, by taking 
sufficiently large $\ta \gg 1$ in \eqref{Ba4}), we obtain
\begin{align*}
   \|  Y_N \|_{ X^{s_2}((k_*(t) + 1)\tau)}
 \le C_3(i,T, K) 
+  C_2(i,T) 
\sum_{k = 0}^{k_*(t)-1 }
\tau^\frac{4-\g}{3}  \|  Y_N \|_{ X^{s_2}((k+1)\tau)}.
\end{align*}

\noi
By applying the discrete Gronwall inequality
with \eqref{Ba4}, we
then obtain
\begin{align}
\begin{split}
\|  Y_N \|_{ X^{s_2}(t)} 
& \leq \|  Y_N \|_{ X^{s_2}((k_*(t) + 1)\tau)}
\le C(i,T, K) 
\exp \bigg(\sum_{k = 0}^{k_*(t) - 1}
\tau^\frac{4-\g}{3}\bigg)\\
& \le C(i, T, K).
\end{split}
\label{YL6}
\end{align}

\noi
Putting \eqref{YL4}, \eqref{YL5}, and \eqref{YL6} together, 
we obtain 
 \eqref{YL2}.
\end{proof}

\begin{remark}\label{REM:Leo}\rm
In the proof of Lemma \ref{LEM:Leo1}, 
we crucially used the 
unconditional uniqueness of 
the solution $u_N$ to the truncated equation \eqref{SNLWx}.
More precisely,  
in  the proof of Lemma \ref{LEM:Leo1}, 
we studied the equation \eqref{YL3}
on a large time interval $[0, T]$, 
where we used the representation
\begin{align}
u_N (t) 
= \Psi_N (t;  \vec u_0, \o_2)
 + X_N(t) +  Y_N(t).
\label{YL7}
\end{align}
 
 \noi
On the other hand, we used Corollary \ref{COR:Leo2} 
for the bound on $V*\!:\! (\pi_N u_N)^2\!:$
whose proof is based on 
studying 
the system \eqref{Ba4bb}
on the subinterval $[k \tau, (k+1) \tau]$
with \eqref{XL3}.
In this case,  the enhanced data set
on each subinterval $[k \tau, (k+1) \tau]$
was constructed from 
$\Psi_N  (\tf ) = \Psi_N\big(\tf; \ft \Phi^N (k \tau)(\vec u_0, \o_2)\big)$, 
where  $\tf = t - k \tau$.
Namely,  we used the representation 
\begin{align}
u_N (t) 
= \Psi_N 
\big(t-k \tau ; \ft \Phi^N (k \tau)(\vec u_0, \o_2)\big) 
 + X^{(k)}_N(t-k \tau) +  Y^{(k)}_N(t-k \tau)
 \label{YL8}
\end{align}

\noi
for $t \in  [k \tau, (k+1) \tau]$.
The unconditional uniqueness of $u_N$
guarantees  that the two representations
\eqref{YL7} and \eqref{YL8} 
agree, thus allowing  us to use
 Corollary \ref{COR:Leo2}
 in the proof of Lemma~\ref{LEM:Leo1}.
 This in turn allowed us to 
 express the $Y_N$-equation in \eqref{YL3}
 linearly in $Y_N$,
 which was crucial in applying
 the discrete Gronwall inequality.
\end{remark}

\begin{remark}\label{REM:fail}\rm
As mentioned at the beginning of this section, 
the authors in \cite{ORTz} presented the details
of the globalization argument in the stochastic PDE setting.
However, the problem considered in \cite{ORTz}
is two-dimensional and thus is not applicable
to our three-dimensional problem.
More precisely, 
in the last part of Subsection 5.2 in \cite{ORTz}, 
the authors estimated the difference
$\Phi^N (t) \ft \Phi^M (\dl) (\vec u_0, \o_2) -
\Phi^N (t) \ft \Phi^N (\dl) (\vec u_0, \o_2)$
(written with the notation of the current paper).
In our problem, 
this leads to estimating a term of the form
\begin{align}
\begin{split}
&  \big(\Psi_N  (\vec v_0, \tau_\dl(\o_2) )
-  \Psi_N(\vec w_0, \tau_\dl(\o_2)) \big)
\pe  X_N(\vec v_0, \tau_\dl(\o_2))\\
&  = 
S(t) (\vec v_0 - \vec w_0)\pe X_N(\vec v_0, \tau_\dl(\o_2)),
\end{split}
\label{fail}
\end{align}

\noi
where 
$\vec v_0 =  \Phi^M (\dl) (\vec u_0, \o_2)$,  $\vec w_0 =  \Phi^N (\dl) (\vec u_0, \o_2)$, 
and  $X_N(\vec v_0, \tau_\dl(\o_2))$
denotes the first component of the solution to the truncated
system \eqref{Ba4bb} with the data $(\vec v_0, \tau_\dl (\o_2))= 
\ft \Phi^M (\dl) (\vec u_0, \o_2)$.
Here, $\tau_\dl$ denotes the translation operator defined in \eqref{fail0}.
Since the first factor
$S(t) (\vec v_0 - \vec w_0)$ has regularity $-\frac 12-$
and the second factor $X_N(\vec v_0, \tau_\dl(\o_2))$ has regularity
$\frac 12-$ (in the limiting sense), 
the resonant product in \eqref{fail} is not well defined
in the limit.
Note that, in the two-dimensional case studied in \cite{ORTz}, 
the first and second factors have regularities $0-$ and $1-$, respectively,
and thus there is no issue in making sense of the resonant product in \eqref{fail}.

\end{remark}

We are  now ready to prove
Proposition~\ref{PROP:GWPNC}.

\begin{proof}[Proof of 
Proposition~\ref{PROP:GWPNC}]

Fix $T\gg 1$.
We prove 
that $\{ \Phi^{N_k}(\vec u_0, \o_2) \}_{k \in \N}$ is a Cauchy sequence 
in $C([0, T]; \H^{-\frac 12-\eps}(\T^3))$.

Given   $\ld = \ld(i, T) \gg 1$ (to be determined later), 
we define 
$ \Zc^{s_1, s_2, s_3}_\ld(T)$ by 
\begin{align}
\|(X, Y, \Res)\|_{\Zc^{s_1, s_2, s_3}_\ld(T)}
= \|(e^{-\ld t}X, e^{-\ld t}Y, e^{-\ld t}\Res)\|_{Z^{s_1, s_2, s_3}(T)}.
\label{ZL1}
\end{align}

\noi
For notational simplicity,
we also set $Z_N = (X_N, Y_N, \Res_N)$
and  $\Xi_N = \Xi (\Psi_N (\vec u_0, \o_2))$, 
where $\Xi (\Psi_N (\vec u_0, \o_2))$ denotes
the enhanced data set defined in \eqref{data3x}.
We consider the truncated system~\eqref{Ba4bb}
on $[0, T]$ with $\Psi_N = \Psi_N(\vec u_0, \o_2)$
to study the difference $e^{-\ld t}(Z_{N_{k_1}} - Z_{N_{k_2}})(t)$.

Given $T \gg1$, 
let  $j = j(T)$ be the smallest integer such  that $2^j \geq T$.
 Recalling that 
$(\vec u_0, \o_2) \in \Si^i_{N_k}$,  $k \in \N$, 
it follows from \eqref{Ba2} that 
\begin{align}
\begin{split}
 \sup_{\l = 0, 1, \dots, j}\| \Xi_{N_k}\|_{\mathcal{Y}^\eps_{2^\l}}
&  \le K  =  D(i+j), \\
\sup_{\l = 0, 1, \dots, j} \| \Xi_{N_{k_1}} - \Xi_{N_{k_2}}\|_{\mathcal{Y}^\eps_{2^\l}}
& \le  N_{k_1}^{-\dl} D(i+j)
\end{split}
\label{ZL2}
\end{align}

\noi
for any $k_2 \ge k_1 \ge 1$.
 We first estimate 
$e^{-\ld t} (X_{N_{k_1}} - X_{N_{k_2}})(t)$.
Using the Duhamel formulation of \eqref{Ba4bb}, 
we have
\begin{align}
e^{-\ld t} X_{N_{k_1}}(t) - e^{-\ld t}X_{N_{k_2}}(t)
= e^{-\ld t}\1_1 (t)+ e^{-\ld t}\1_2(t) + e^{-\ld t}\1_3(t), 
\label{ZL3}
\end{align}

\noi
where (i) $\1_1$ contains 
one of the differences 
$X_{N_{k_1}} - X_{N_{k_2}}$, 
$Y_{N_{k_1}} - Y_{N_{k_2}}$, 
or $\Res_{N_{k_1}} - \Res_{N_{k_2}}$, 
(ii)~$\1_2$ contains 
the difference $\Psi_{N_{k_1}} - \Psi_{N_{k_2}}$
or 
$:\! \Psi_{N_{k_1}}^2 \!: - :\! \Psi_{N_{k_2}}^2 \!:$\,, 
and (iii) 
$\1_3$ contains the terms
with the high frequency projection $\pi_{N_{k_2}} - \pi_{N_{k_1}}$
onto the frequencies $\{N_{k_1} <  |n|\le N_{k_2}\}$, 
which  allows us to gain 
 a small negative power of $N_{k_1}$ 
 by losing a small amount of regularity.

Proceeding as in \eqref{M1a}
and \eqref{M7}
and then applying Lemma \ref{LEM:Leo1}
and \eqref{ZL2}, 
we can estimate the last two terms
on the right-hand side of~\eqref{ZL3}
as
\begin{align}
\begin{split}
e^{-\ld t}  \| \1_2(t) + \1_3(t) \|_{H^{s_1}}
&  \le C(T)
\Big( \sum_{j = 1}^2 \|Z_{N_{k_j}}\|_{Z^{s_1, s_2, s_3}(T)}^2
+ K^2\Big)^{2(\g-1)}\\
& \quad \times 
\Big( \| \Xi_{N_{k_1}} - \Xi_{N_{k_2}}\|_{\mathcal{Y}^\eps_T}
+N_{k_1}^{-\dl_0} K \Big) \\
& \le C(i, T) N_{k_1}^{-\dl_1}
\end{split}
\label{ZL4}
\end{align}

\noi
for any $0 \le t \le T$ and some small $\dl_0, \dl_1 > 0$.
In order to estimate the first term on the right-hand side of \eqref{ZL3}, 
we use the following bound:
\begin{align}
e^{-\ld t }\| e^{\ld t'} \|_{L^q_{t'}([0, t])} \les  \ld^{-\frac 1q}.
\label{ZL5}
\end{align}

\noi
Proceeding as above with \eqref{ZL1}, \eqref{ZL2}, and \eqref{ZL5}
and noting that $K = K(i, j) = K(i, T)$,  we have, 
for some finite $q \geq 1$, 
\begin{align}
\begin{split}
e^{-\ld t} \| \1_1(t) \|_{H^{s_1}}
&  \le C(T) e^{-\ld t }\| e^{\ld t'} \|_{L^q_{t'}([0, t])}  K^2 
\Big( \sum_{j = 1}^2 \|Z_{N_{k_j}}\|_{Z^{s_1, s_2, s_3}(T)}
+ K\Big)^{2\g-3}\\
& \quad \times
\|Z_{N_{k_1}} - Z_{N_{k_2}}\|_{\Zc^{s_1, s_2, s_3}_\ld(T)} \\
& \le C(i, T) \ld^{-\frac 1q}  \|Z_{N_{k_1}} - Z_{N_{k_2}}\|_{\Zc^{s_1, s_2, s_3}_\ld(T)}\\
& \le \frac{1}{10} \|Z_{N_{k_1}} - Z_{N_{k_2}}\|_{\Zc^{s_1, s_2, s_3}_\ld(T)}
\end{split}
\label{ZL6}
\end{align}

\noi
for any $0 \le t \le T$, 
where the last inequality follows from choosing
$\ld = \ld(i, T)$ sufficiently large.
Hence, from \eqref{ZL3}, \eqref{ZL4}, and \eqref{ZL6}, 
we obtain
\begin{align}
\| e^{-\ld t} (X_{N_{k_1}} - X_{N_{k_2}})\|_{X^{s_1}(T)}
\le C(i, T) N_{k_1}^{-\dl_1} + 
\frac{1}{10} \|Z_{N_{k_1}} - Z_{N_{k_2}}\|_{\Zc^{s_1, s_2, s_3}_\ld(T)}.
\label{ZL7}
\end{align}

\noi
A similar computation yields
\begin{align}
\| e^{-\ld t} (Y_{N_{k_1}} - Y_{N_{k_2}})\|_{X^{s_2}(T)}
\le C(i, T) N_{k_1}^{-\dl_1} + 
\frac{1}{10} \|Z_{N_{k_1}} - Z_{N_{k_2}}\|_{\Zc^{s_1, s_2, s_3}_\ld(T)}.
\label{ZL8}
\end{align}

It remains to estimate  $e^{-\ld t}(\Res_{N_{k_1}} - \Res_{N_{k_2}})(t)$.
Once again, using \eqref{Ba4bb}, 
we have
\begin{align}
e^{-\ld t} \Res_{N_{k_1}}(t) - e^{-\ld t}\Res_{N_{k_2}}(t)
= e^{-\ld t}\II_1 (t)+ e^{-\ld t}\II_2 (t)+ e^{-\ld t}\II_3 (t), 
\label{ZL9}
\end{align}

\noi
where (i) $\II_1$ contains 
one of the differences 
$X_{N_{k_1}} - X_{N_{k_2}}$, 
$Y_{N_{k_1}} - Y_{N_{k_2}}$, 
or $\Res_{N_{k_1}} - \Res_{N_{k_2}}$, 
(ii)~$\II_2$ contains 
the difference of one of the terms in 
the enhanced data set $\Xi_{N_{k_j}}$, $j = 1, 2$, 
and (iii)~$\II_3$ contains the terms
with the high frequency projection $\pi_{N_{k_2}} - \pi_{N_{k_1}}$
onto the frequencies $\{N_{k_1} <  |n|\le N_{k_2}\}$, 
allowing us to gain 
 a small negative power of $N_{k_1}$
 by losing a small amount of regularity.

Proceeding as in \eqref{M5}
and \eqref{M9}
and then applying Lemma \ref{LEM:Leo1}, 
we can estimate the last two  term 
on the right-hand side of~\eqref{ZL9}
as
\begin{align}
\begin{split}
  \| e^{-\ld t} \II_2 + e^{-\ld t} \II_3 \|_{L^3_TH^{s_3}_x}
& \le C(i, T) N_{k_1}^{-\dl_1}
\end{split}
\label{ZL10}
\end{align}

\noi
for any $0 \le t \le T$ and some small $\dl_1 > 0$.
As for the first term on the right-hand side of~\eqref{ZL9}, 
let us first estimate the contribution from 
the terms involving
$\wt \If_{\pl, \pe}^{N_{k_1}}$ as an example.
For $j = 1, 2$, 
let 
$Q_{X_{N_{k_j}},Y_{N_{k_j}}}$
be as in~\eqref{Pxy}
 with $X_{N_{k_j}}$ and $Y_{N_{k_j}}$ but with $\Psi_{N_{k_1}}$.
Then, a slight modification of \eqref{M5} with~\eqref{ZL2} and \eqref{ZL5} yields
\begin{align*}
& \Big\|     e^{-\ld t}\, \wt \If_{\pl, \pe}^{N_{k_1}}
 \big(V \ast (Q_{X_{N_{k_1}},Y_{N_{k_1}}}  + 2\Res_{N_{k_1}} + :\! \Psi_{N_{k_1}}^2 \!:\,)\big) \\
& \qquad -    e^{-\ld t}\, \wt \If_{\pl, \pe}^{N_{k_1}}
 \big(V \ast (Q_{X_{N_{k_2}},Y_{N_{k_2}}}  + 2\Res_{N_{k_2}} + :\! \Psi_{N_{k_1}}^2 \!:\,)\big) 
  \Big\|_{L^3_TH^{s_3}_x}\\
& \leq  K \bigg\|e^{- \ld t} \Big\|
e^{\ld t'}\Big( e^{-\ld t'}\big( Q_{X_{N_{k_1}},Y_{N_{k_1}}}(t')  + 2\Res_{N_{k_1}}(t')\big)\\
& \qquad - e^{-\ld t'} \big(Q_{X_{N_{k_2}},Y_{N_{k_2}}} (t') + 2\Res_{N_{k_2}}(t')\big)\Big)
\Big\|_{L^\frac{3}{2}_{t'}([0, t]; H^{-\be }_x)}\bigg\|_{L^3_T}\\
& \le C(i, T) \ld^{-\frac 1q}  \|Z_{N_{k_1}} - Z_{N_{k_2}}\|_{\Zc^{s_1, s_2, s_3}_\ld(T)}\\
& \le \frac{1}{10} \|Z_{N_{k_1}} - Z_{N_{k_2}}\|_{\Zc^{s_1, s_2, s_3}_\ld(T)}.
\end{align*}

\noi
The other terms can be handled in a similar
manner and thus we obtain
\begin{align}
  \| e^{-\ld t} \II_1 \|_{L^3_TH^{s_3}_x}
\le \frac{1}{10} \|Z_{N_{k_1}} - Z_{N_{k_2}}\|_{\Zc^{s_1, s_2, s_3}_\ld(T)}.
\label{ZL11}
\end{align}

Hence, putting 
\eqref{ZL1}, \eqref{ZL7}, \eqref{ZL8}, \eqref{ZL9}, \eqref{ZL10}, and \eqref{ZL11}
together, we obtain
\begin{align}
\begin{split}
\|Z_{N_{k_1}} - Z_{N_{k_2}}\|_{Z^{s_1, s_2, s_3}(T)}
& \le e^{\ld T} \|Z_{N_{k_1}} - Z_{N_{k_2}}\|_{\Zc^{s_1, s_2, s_3}_\ld(T)}\\
& \le C(i, T) e^{\ld T}  N_{k_1}^{-\dl_1} \too 0, 
\end{split}
\label{ZL12}
\end{align}

\noi
as $k_2 \ge k_1 \to \infty$.
Therefore, we conclude from 
\eqref{ZL2} and \eqref{ZL12}
that 
$u_{N_k} = \Psi_{N_k} + X_{N_k} + Y_{N_k}$, $k \in \N$, 
is a Cauchy sequence
in $C([0, T]; \H^{-\frac 12-\eps}(\T^3))$.
This completes
the proof of 
Proposition~\ref{PROP:GWPNC}.
\end{proof}

\begin{remark}\label{REM:conv} \rm 

In Proposition \ref{PROP:GWPNC}, 
we proved that, given any $(\vec u_0, \o_2) \in \Si$,
a subsequence 
 $\{ \Phi^{N_k}(\vec u_0, \o_2) \}_{k \in \N}$
converges 
to some limit 
in $C(\R_+; \H^{-\frac 12-\eps}(\T^3))$.
In fact, 
a slight modification of  the proof of 
Proposition~\ref{PROP:GWPNC}
shows that 
 the solution 
$ (X_{N_k}, Y_{N_k}, \Res_{N_k} )$ to~\eqref{Ba4bb} 
on $[0, T]$ with $N = N_k$, emanating from $(\vec u_0, \o_2)$, 
converges, 
in 
$Z^{s_1, s_2, s_3}(T)$, 
to a limit $(X, Y, \Res)$, satisfying
the focusing Hartree SdNLW system~\eqref{SNLW6}
on $[0, T]$ 
with the zero initial data
and 
 the enhanced data set $\Xi(\Psi)$ in \eqref{data33}
 given as the limit of the enhanced data set\footnote{This is 
 nothing but  the enhanced data set constructed from 
 the limiting stochastic convolution $\Psi (\vec u_0, \o_2)$.}
$\Xi(\Psi_{N_k}(\vec u_0, \o_2))$ in \eqref{data3x},  which is guaranteed to exist
thanks to 
\eqref{Ba4c0} and 
the difference estimate  assumption
in~\eqref{Ba2}.

\end{remark}

Let $\Si$ be the set of full 
$\rhoo \otimes \PP_2$-probability 
defined in \eqref{Bc3}.
Proposition \ref{PROP:GWPNC} shows that
 given any $(\vec u_0, \o_2) \in \Si$,
there exists a subsequence $N_k = N_k(\vec u_0, \o_2) \in \N$
such that 
 $\{ \Phi^{N_k}(\vec u_0, \o_2) \}_{k \in \N}$
converges 
to some limit.
We now show that the entire sequence 
 $\{ \Phi^{N}(\vec u_0, \o_2) \}_{N \in \N}$
 converges (to a unique limit, 
 which we can denote by $(u, \dt u)$ without ambiguity).

\begin{corollary}\label{COR:XX}
Let $(\vec u_0, \o_2) \in \Si$.
Then, the entire sequence 
$\{ \Phi^{N}(\vec u_0, \o_2) \}_{N \in \N}$ 
converges to some limit $(u, \dt u)$
in $C(\R_+; \H^{-\frac 12-\eps}(\T^3))$
endowed with the
compact-open topology \textup{(}in time\textup{)}.

\end{corollary}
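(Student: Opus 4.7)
The plan is to upgrade the subsequential convergence of Proposition \ref{PROP:GWPNC} to convergence of the full sequence via a quantitative stability argument for the limiting SPDE system.

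By Proposition \ref{PROP:GWPNC} and Remark \ref{REM:conv}, there exist $i \in \N$ and a subsequence $\{N_k\}$ (depending on $(\vec u_0, \o_2)$) along which $\Phi^{N_k}(\vec u_0, \o_2) \to (u, \dt u)$ in $C(\R_+; \H^{-\frac 12 -\eps})$, where $u = \Psi + X + Y$ with $\Psi = \Psi(\vec u_0, \o_2)$ and $(X, Y, \Res)$ is the unique solution to~\eqref{SNLW6} on every $[0, T]$ with zero initial data and enhanced data $\Xi := \Xi(\Psi(\vec u_0, \o_2))$. Passing to the limit in Lemma~\ref{LEM:Leo1} along this subsequence, the limit inherits the a priori bound $\|(X, Y, \Res)\|_{Z^{s_1, s_2, s_3}(T)} \le C(i, T, K)$, where $K = \|\Xi\|_{\mathcal{Y}^\eps_{2^j}} + 1$ and $2^j$ is the smallest dyadic integer with $2^j \ge T$. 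By restricting $\Si$ to a smaller $\rhoo \otimes \PP_2$-full measure subset if necessary, we may further assume that the truncated enhanced data $\Xi_N := \Xi(\Psi_N(\vec u_0, \o_2))$ converges to $\Xi$ in $\mathcal{Y}^\eps_T$ along the \emph{full} sequence $N \to \infty$, thanks to the almost sure convergence statements in Lemmas~\ref{LEM:stoconv}, \ref{LEM:sto1} and Proposition~\ref{PROP:sto4J}.

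Fix $T \gg 1$. For each $N$, Lemma~\ref{LEM:GWP4} provides a global solution $(X_N, Y_N, \Res_N)$ to the truncated system~\eqref{Ba4bb}. Setting $(\wt X_N, \wt Y_N, \wt \Res_N) := (X_N - X, Y_N - Y, \Res_N - \Res)$, the difference satisfies a Duhamel system whose right-hand side splits into three classes of terms: (i)~data-difference terms driven by $\Xi_N - \Xi$, (ii)~projection-tail terms of the form $\pi_N^\perp(\cdots)$ applied to objects built from the limit data, which are $O(N^{-\dl_1})$ at the cost of a little regularity, and (iii)~terms linear in $(\wt X_N, \wt Y_N, \wt \Res_N)$ with coefficients built from $(X, Y, \Res)$ and $(X_N, Y_N, \Res_N)$. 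Repeating the weighted-norm estimates in the proof of Proposition~\ref{PROP:GWPNC} in the space $\Zc^{s_1, s_2, s_3}_\ld(t)$ of~\eqref{ZL1}, with $\ld = \ld(i, T, K, R) \gg 1$ chosen large enough that the linear piece is absorbed via~\eqref{ZL5}, yields
\begin{equation*}
\|(\wt X_N, \wt Y_N, \wt \Res_N)\|_{\Zc^{s_1, s_2, s_3}_\ld(t)} \le C(i, T, K, R)\big(\|\Xi_N - \Xi\|_{\mathcal{Y}^\eps_T} + N^{-\dl_1}\big)
\end{equation*}
whenever the bootstrap bound $\|(\wt X_N, \wt Y_N, \wt \Res_N)\|_{\Zc^{s_1, s_2, s_3}_\ld(t)} \le R$ is in force on $[0, t]$. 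Starting from $R = 1$ and the vanishing of $(\wt X_N, \wt Y_N, \wt \Res_N)$ at $t = 0$, a continuity argument propagates this constraint up to $t = T$ for all $N$ sufficiently large that the right-hand side above is $< \tfrac 12$; undoing the exponential weight gives $\|(X_N, Y_N, \Res_N) - (X, Y, \Res)\|_{Z^{s_1, s_2, s_3}(T)} \to 0$ as $N \to \infty$, which combined with $\Psi_N \to \Psi$ from Lemma~\ref{LEM:stoconv} yields $\Phi^N(\vec u_0, \o_2) \to (u, \dt u)$ in $C([0, T]; \H^{-\frac 12 -\eps})$. Since $T$ was arbitrary, convergence holds in the compact-open topology on $\R_+$.

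The main obstacle is that, for \emph{arbitrary} $N \in \N$ not belonging to the good subsequence $\{N_k\}$, we do not have an a priori $Z^{s_1, s_2, s_3}(T)$-bound on the truncated solution $(X_N, Y_N, \Res_N)$: Lemma~\ref{LEM:Leo1} requires $(\vec u_0, \o_2) \in \Si_N^i$, which need not hold for the index $N$ under consideration. The continuity-bootstrap circumvents this by decomposing $(X_N, Y_N, \Res_N) = (X, Y, \Res) + (\wt X_N, \wt Y_N, \wt \Res_N)$, so that only the \emph{deviation} from the already-bounded limit solution must be controlled; the convergence $\Xi_N \to \Xi$ then drives this deviation to zero.
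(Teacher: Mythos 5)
Your overall strategy --- pass to a deviation $(\wt X_N, \wt Y_N, \wt\Res_N)$ from the already-bounded limit solution and run a bootstrap --- is a genuinely different route from the paper's. The paper instead splits $\Phi^N - \Phi = (\Phi^N - \Phi^{N_k}) + (\Phi^{N_k} - \Phi)$ with $N \le N_k$ and observes that the definition of $B^{i,j}_{N_k}(D)$ in \eqref{Ba2} \emph{already} contains the enhanced-data difference bound for every $M \le N_k$, so that $(\vec u_0, \o_2) \in \Si^i_{N_k}$ yields the bounds \eqref{Bd4} for all $N \le N_k$; this allows it to re-derive the a priori bound of Lemma~\ref{LEM:Leo1} on $\|Z_N\|_{Z^{s_1,s_2,s_3}(T)}$ for these $N$ and then run the Proposition~\ref{PROP:GWPNC} difference estimate with the pair $(N, N_k)$. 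Your argument dispenses with that a priori bound and tries to generate it on the fly through the bootstrap, which is exactly where the gap lies.

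As written, the bootstrap does not close. Your bootstrap hypothesis is $\|(\wt X_N, \wt Y_N, \wt\Res_N)\|_{\Zc^{s_1,s_2,s_3}_\ld(t)} \le R$ with $R = 1$ fixed. But the coefficient of the linear piece in the difference Duhamel iteration (the analogue of \eqref{ZL6}) depends on the \emph{unweighted} norm $\|Z_N\|_{Z^{s_1,s_2,s_3}(t)}$, and the $\Zc_\ld \to Z$ conversion costs a factor $e^{\ld T}$; under the bootstrap one only gets $\|Z_N\|_{Z(t)} \lesssim C(i,T,K) + e^{\ld T} R$. The absorption requires $\ld^{-1/q}\,(C + e^{\ld T}R + K)^{2\g-3} < \tfrac12$ with $2\g - 3 > 0$, and since $e^{(2\g-3)\ld T}$ dominates $\ld^{1/q}$, no $\ld = \ld(i,T,K,R)$ with $R = 1$ works. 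The argument can be repaired either by running the bootstrap hypothesis in the \emph{unweighted} norm, $\|\wt Z_N\|_{Z(t)} \le 1$ (which gives a $\ld$-independent bound $\|Z_N\|_{Z(t)} \le C(i,T,K) + 1$, permitting the choice of $\ld$, and the output $\|\wt Z_N\|_{Z(T)} \le e^{\ld T}C(\|\Xi_N - \Xi\|_{\mathcal{Y}^\eps_T} + N^{-\dl_1}) < 1$ for $N$ large), or by choosing $\ld$ first and then taking $R \sim e^{-\ld T}$ --- but these adjustments are not what you wrote. Until one of them is made, the step ``$\ld$ chosen large enough that the linear piece is absorbed'' is not justified.
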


\begin{proof}
We use the same notations as in the proof of Proposition \ref{PROP:GWPNC}.
As discussed before, 
given $(\vec u_0, \o_2) \in \Si$, 
there exist $i \in \N$ and an increasing sequence $\{ N_k \}_{k \in \N}$ such that
$(\vec u_0, \o_2) \in \Si^i_{N_k}$ defined in 
\eqref{Ba4c0}.
Denote by 
$\Phi (\vec u_0, \o_2)$  the limit of  $\Phi^{N_k} (\vec u_0,\o_2)$ as $k \to \infty$, 
constructed in Proposition \ref{PROP:GWPNC}.
Fix $T>0$.
By writing
\begin{align}
\begin{split}
\Phi^N (t) (\vec u_0,\o_2) -\Phi (t) (\vec u_0,\o_2) 
&= \big( \Phi^N(t) (\vec u_0,\o_2) - \Phi^{N_k} (t) (\vec u_0,\o_2) \big) \\
&\quad+ \big( \Phi^{N_k}(t) (\vec u_0,\o_2) - \Phi (t) (\vec u_0,\o_2) \big), 
\end{split}
\label{Bd3}
\end{align}
we see that the second term tends to $0$
 in $C([0,T];\H^{-\frac 12-\eps}(\T^3))$, 
 as $k \to \infty$.

From
\eqref{Ba4c0} and \eqref{Ba2},
we have
\begin{align}
\begin{split}
&
\sup_{m = 0, 1, \dots, j} \| \Xi (\Psi_N (\ft \Phi^{N_k} (\l \tau) (\vec u_0, \o_2))) \\
& \hphantom{XXXX}
- \Xi (\Psi_{N_k} ( \ft \Phi^{N_k} (\l \tau) (\vec u_0, \o_2)))  \|_{\mathcal{Y}^{ \eps}_{2^m}}
\le N^{-\dl} D(i+j), \\
&
\sup_{m = 0, 1, \dots, j} \| \Xi (\Psi_N ( \ft \Phi^{N_k} (\l \tau) (\vec u_0, \o_2)))  \|_{\mathcal{Y}^{ \eps}_{2^m}}
\le 2 D(i+j)
\end{split}
\label{Bd4}
\end{align}
for any 
$1\le N \le N_k$, $j \in \N$,  and 
$\l =0, \dots, [\frac{2^j}\tau]$, where $\tau $ is as in \eqref{Ba4}.
The, given any $T \gg 1$, using the second bound in \eqref{Bd4} with $\l = 0$, 
we can repeat the proofs of  
Corollary~\ref{COR:Leo2}
and 
Lemma \ref{LEM:Leo1}
so that Lemma \ref{LEM:Leo1} holds for 
the global-in-time solution $(X_N, Y_N, \Res_N)$ to the truncated system~\eqref{Ba4bb} 
with $\Psi_N = \Psi_N (\vec u_0, \o_2)$.
Then, we can estimate the first term on the right-hand side of \eqref{Bd3}
by repeating the computation in  the proof of 
and Proposition \ref{PROP:GWPNC} with $(N_{k_1}, N_{k_2})$ replaced by $(N, N_k)$.
\end{proof}

Finally, we show invariance of the focusing Hartree Gibbs measure $\rhoo$ in \eqref{Gibbs9}
for the limiting process $\vec u = (u, \dt u)$.
Fix  $F \in C_b (\H^{-\frac 12-\eps} (\T^3); \R)$ and $t > 0$.
It follows from \eqref{O1}, 
the bounded convergence theorem with Corollary \ref{COR:XX}, 
the strong convergence of $\rhoo_N$ to $\rhoo$ (Theorem \ref{THM:Gibbs2}), 
and invariance of $\rhoo_N$ (Proposition \ref{LEM:GWP4})
that
\begin{align*}
\int \E_{\o_2} \big[ F(\Phi (t) (\vec u_0^{\o_1},  \o_2)) \big] d \rhoo (\vec u_0^{\o_1})
&= \lim_{N \to \infty} \int \E_{\o_2}
 \big[ F(\Phi^{N} (t) (\vec u_0^{\o_1}, \o_2)) \big] d \rhoo (\vec u_0^{\o_1}) \\
&= \lim_{N \to \infty} \int \E_{\o_2}
 \big[ F(\Phi^{N} (t) (\vec u_0^{\o_1}, \o_2)) \big] d \rhoo_{N} (\vec u_0^{\o_1}) \\
&= \lim_{N \to \infty} \int F(\vec u_0^{\o_1}) d \rhoo_{N} (\vec u_0^{\o_1}) \\
&= \int F(\vec u_0^{\o_1}) d \rhoo (\vec u_0^{\o_1}).
\end{align*}

\noi
This shows  invariance of $\rhoo$.
This concludes the proof of Theorem \ref{THM:GWP}.

\appendix

\section{On the parabolic stochastic quantization
of the focusing Hartree Gibbs measure}
\label{SEC:B}

In this section,
we consider the parabolic stochastic quantization of the focusing Hartree Gibbs measure
$\rho$ constructed in Theorem \ref{THM:Gibbs2}, 
associated with the energy functional $E^\sharp (u)$ in~\eqref{Energy0a}.
More precisely, we study 
 the following  focusing Hartree stochastic nonlinear heat equation (SNLH) on $\T^3$:
\begin{align}
\dt u + (1 -  \Dl)  u -\s  (V \ast :\! u^2 \!:)u
+ M_\g (:\! u^2 \!:) u = \sqrt{2} \xi,
\label{SNLH}
\end{align}

\noi
where $\s > 0$ and $M_\g$ is as in \eqref{focusnon}.

\begin{theorem}\label{THM:GWPH}
Let $\s > 0$.
Let $V$ be the Bessel potential of order $\be \geq 2$, 
where 
we also assume that  $\s > 0$ is sufficiently small
when $\be = 2$.
Then, the focusing   Hartree SNLH~\eqref{SNLH} 
on the three-dimensional torus $\T^3$ 
 is almost surely globally well-posed
with respect to the random initial data distributed
by the focusing Hartree Gibbs measure $\rho$ in \eqref{Gibbs9}.
Furthermore, the Gibbs measure $\rho$ is invariant under the resulting dynamics.

\end{theorem}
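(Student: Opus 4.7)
The plan is to follow the same scheme as in the proof of Theorem~\ref{THM:GWP} for the wave case, namely: (i) construct local-in-time dynamics for the (renormalized) focusing Hartree SNLH~\eqref{SNLH} via a paracontrolled-type decomposition, (ii) establish global well-posedness and invariance of~$\rho$ via Bourgain's invariant measure argument applied to the truncated dynamics. Throughout, the parabolic setting is genuinely easier than the wave setting because the heat semigroup $e^{-t(1-\Delta)}$ gains two spatial derivatives per unit of time (as opposed to the one derivative gained by $\D(t)$), so we expect to be able to work with a simpler system than~\eqref{SNLW6} and to avoid introducing paracontrolled operators of the type $\If_{\pl,\pe}$.

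First, I would set $u = \<1> + v$, where $\<1>$ denotes the parabolic stochastic convolution solving $\dt \<1> + (1-\Delta)\<1> = \sqrt{2}\xi$ with random initial data distributed according to $\mu$. By standard Gaussian computations analogous to Lemma~\ref{LEM:stoconv}, one has $\<1>\in C(\R_+; \mathcal{C}^{-\frac12-\eps}(\T^3))$ and $:\!\<1>^2\!:\, \in C(\R_+; \mathcal{C}^{-1-\eps}(\T^3))$ almost surely; for $\be>\frac32$ the resonant product $(V*:\!\<1>^2\!:)\pe\<1>$ is well-defined by deterministic paraproduct estimates, and for $\be = 2$ (or more generally $\be > 1$) it still belongs to $C(\R_+; \mathcal{C}^{\be-\frac32-\eps}(\T^3))$ by a direct analogue of Lemma~\ref{LEM:IV}. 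Plugging $u = \<1>+v$ into~\eqref{SNLH} yields an equation for $v$ of the form
\begin{align*}
(\dt + 1-\Delta) v &= \s\bigl(V\ast(v^2 + 2v\<1>\, + :\!\<1>^2\!:)\bigr)(v+\<1>) \\
&\quad - M_\g(v^2 + 2v\<1>\,+:\!\<1>^2\!:)(v+\<1>).
\end{align*}
As in the wave case, I would postulate a paracontrolled ansatz $v = X+Y$ splitting off the worst term $(V\ast(\,\cdot\,))\pl\<1>$ into the $X$-equation, and introduce the resonant product $\Res = X\pe \<1>$ as a new unknown to obtain a closed system in $(X,Y,\Res)$. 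For $\be\geq 2$, the parabolic smoothing of one extra derivative relative to the wave case allows $X$ to gain enough regularity that~$\Res$ can be controlled by a combination of a (deterministic) high-low operator $\If_{\pl}^{(1)}$ analogous to Lemma~\ref{LEM:sto3} and a low-high paracontrolled operator whose stochastic analysis follows the lines of Proposition~\ref{PROP:sto4J} (but with the parabolic kernel $e^{-(t-t')\jb{n}^2}$ in place of the damped sine kernel, which makes the relevant time integrals absolutely convergent and spares us the delicate oscillatory analysis behind~\eqref{A5a}). This produces local well-posedness in $H^{s_1}\times H^{s_2}\times\{0\}$ for suitable $\frac14<s_1<\frac12<s_2<1$ and a continuous dependence on an enhanced data set, exactly as in Theorem~\ref{THM:2}. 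The extra term $M_\g(\!:\!u^2\!:\!)u$ is handled just as in Section~\ref{SEC:LWP}: under an expectation with respect to the Gibbs measure, the Wick power $:\!u^2\!:\,$ is a well-defined distribution, and the constraint $2<\g\le3$ gives a small positive power of $T$ via H\"older in time, permitting a contraction.

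Next, for the truncated equation
\[
\dt u_N + (1-\Delta)u_N - \s\pi_N\bigl((V\ast:\!(\pi_N u_N)^2\!:)\pi_N u_N\bigr) + M_\g(:\!(\pi_N u_N)^2\!:)\pi_N u_N = \sqrt{2}\,\xi,
\]
the high/low frequency decoupling is identical to~\eqref{high1}--\eqref{SNLWode2p}, with the low-frequency component being a finite-dimensional SDE whose generator $\L^N$ decomposes as the sum of the generator of a Hamiltonian-like flow conserving $E^\sharp_N$ and an Ornstein--Uhlenbeck generator preserving the Gaussian marginal; a standard application of Liouville's theorem and invariance of the Ornstein--Uhlenbeck semigroup then yields invariance of the truncated Gibbs measure $\rho_N$ in~\eqref{GibbsN1}, in direct analogy with Lemma~\ref{LEM:GWP4}. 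Combined with the uniform exponential integrability bound~\eqref{exp3}, this invariance yields a long-time a priori estimate on the truncated paracontrolled solutions analogous to Proposition~\ref{PROP:GWPNB}.

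Finally, I would apply a Bourgain-type argument as in Subsection~\ref{SUBSEC:GWP2} to prove convergence of the truncated solutions $\{\Phi^N(t)(\vec u_0)\}_{N\in\N}$ almost surely with respect to $\rho$, which combined with the strong convergence $\rho_N \to \rho$ (Theorem~\ref{THM:Gibbs2}) and the bounded convergence theorem gives invariance of $\rho$ under the limiting dynamics. I expect the main obstacle to lie in the stability estimate of the paracontrolled structure on a large time interval~$[0,T]$ for the $X$-equation (i.e.\ the parabolic analogue of Lemma~\ref{LEM:Leo1} and Proposition~\ref{PROP:GWPNC}), where one needs an a priori bound on $V*\!:\!(\pi_N u_N)^2\!:\,$ independent of $N$ in an $L^q_t$ space --- as in Remark~\ref{REM:fail}, naive difference estimates on the enhanced data set fail due to the low regularity of~$\<1>$, and one must instead run the difference argument with the exponential weight $e^{-\ld t}$ while carefully exploiting the unconditional uniqueness of the truncated solution to reconcile the two natural representations of~$u_N$ (the ``global paracontrolled'' one on $[0,T]$ and the ``locally shifted'' one on each subinterval $[k\tau,(k+1)\tau]$), exactly as highlighted in Remark~\ref{REM:Leo}. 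Once this stability bound is in place, the remainder of the argument is a routine transcription of Subsection~\ref{SUBSEC:GWP2} to the parabolic setting.
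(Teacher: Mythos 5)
Your proposal is sound in outline but substantially over-engineered relative to what the parabolic setting actually requires, and the paper's proof takes a much shorter route. The key observation you miss is that the heat propagator $P(t)=e^{-t(1-\Delta)}$ gains \emph{two} spatial derivatives (via the Schauder estimate in Lemma~\ref{LEM:Schauder}), not one. After the single Da Prato--Debussche expansion $u = \Psi + v$, the residual $v$ already sits at regularity $\frac32 -$. Since $(\frac32-)+(-\frac12-)>0$, the product $v\Psi$ --- and, once $(V\ast:\!\Psi^2\!:)\pe\Psi$ is supplied as part of the enhanced data set for $1 < \be \le \frac32$ --- every product appearing in the $v$-equation~\eqref{SNLH7} is well-defined by deterministic paraproduct estimates alone. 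There is therefore no need whatsoever for the paracontrolled ansatz $v = X+Y$, no second unknown $\Res = X\pe\Psi$, and no paracontrolled operators $\If_{\pl}^{(1)}$ or $\If_{\pl,\pe}$. The paper's Proposition~\ref{PROP:LWPv2} closes a contraction in a single space $C([0,T];\C^s(\T^3))$ with $\frac12+2\eps\le s <\frac32-\eps$, using only the Schauder estimate, the algebra and paraproduct bounds of Section~\ref{SEC:2}, and the enhanced data set $(v_0, \Psi, :\!\Psi^2\!:, (V\ast:\!\Psi^2\!:)\pe\Psi)$. The taming term $M_\g(\cdot)$ is handled by the same H\"older-in-time bookkeeping as in~\eqref{M7}, using $2<\g\le3$.

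This simplification also removes the hardest part of your proposed globalization. The delicate stability machinery of Section~\ref{SEC:GWP} --- the exponential weight $e^{-\ld t}$, the reconciliation of the ``global paracontrolled'' and ``locally shifted'' representations of $u_N$, Lemma~\ref{LEM:Leo1}, Remark~\ref{REM:Leo}, Remark~\ref{REM:fail}, and Proposition~\ref{PROP:GWPNC} --- exists precisely because the wave paracontrolled ansatz makes the solution map non-locally defined as a function of the enhanced data. In the parabolic case, with the simple enhanced data set and continuous dependence in $\C^s\times\mathcal{X}^{\eps}_T$ from Proposition~\ref{PROP:LWPv2}, the Bourgain globalization argument goes through in essentially the textbook form (cf.\ \cite{BO94,Tz08,BT}): invariance of the truncated Gibbs measure $\rho_N$, a uniform-in-$N$ growth bound on $\|v_N\|_{\C^s}$ via the invariance, and passage to the limit using strong convergence $\rho_N\to\rho$ together with almost sure convergence of the truncated enhanced data. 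Your route would probably work, but it imports a great deal of wave-specific structure that serves no purpose here and would obscure what is otherwise a routine argument.
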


Here, we made a somewhat informal statement in the spirit of Theorem \ref{THM:GWP0}.
A rigorous statement needs to be given in terms 
of a limiting procedure as in 
Theorem \ref{THM:GWP}, which we omit.

As in the wave case, 
the main task is to prove local well-posedness of \eqref{SNLH}.
Once this is achieved, then the rest follows from 
 Bourgain's invariant measure argument
 whose detail we omit.
Thus, we only prove local well-posedness of \eqref{SNLH}
in the following.

\begin{remark}\rm
The defocusing/focusing nature of the problem does not play an important role
in the local well-posedness argument.
By simply setting $\s < 0$ in \eqref{SNLH} and $A = 0$ in \eqref{focusnon}, 
our argument  below proves an analogue of Theorem \ref{THM:GWPH}
in the defocusing case
for $\be > 1$.
See Proposition \ref{PROP:LWPv2} below.

In the defocusing case,  by adapting 
the well-posedness argument 
 \cite{CC, GIP, Kupi, MW1}
 for the parabolic $\Phi^4_3$-model \eqref{heat0},  
 we expect that 
 an analogue of Theorem \ref{THM:GWPH}
 can be extended to $\be > 0$.

\end{remark}

Let  $\Psi$ be
 the stochastic convolution, satisfying
\begin{align*}
\begin{cases}
\dt \Psi + (1 -  \Dl) \Psi  = \sqrt{2} \xi
\\
\Psi |_{t = 0} = \phi_0 \qquad \text{with }\Law (\phi_0) = \mu. 
\end{cases}
\end{align*}

\noi
Then, by repeating the arguments, 
 Lemmas \ref{LEM:stoconv} and \ref{LEM:IV}
for $\Psi$, 
$:\! \Psi^2 \!:$, and $(V \ast :\! \Psi^2 \!:) \pe \Psi$
extend to the parabolic setting when $\be > 1$.

We proceed with the following first order expansion:
\begin{align}
u = \Psi+ v.
\label{decomp3H}
\end{align}

\noi
Then, it follows from \eqref{SNLH} and \eqref{decomp3H} that 
the residual term $v$ 
 satisfies
\begin{align}
(\dt +1 -  \Dl)  v 
&  = \NN_1(v) + \NN_2(v), 
\label{SNLH7}
\end{align}

\noi
where 
$\NN_1(v)$ and $\NN_2(v)$ are given by 
\begin{align}
\begin{split}
\NN_1(v) &:= \s \big( V \ast (v^2 + 2 v \Psi + :\! \Psi^2 \!:) \big) (v+\Psi), 
\\
\NN_2(v) &:= -M _\g (v^2 + 2 v \Psi + :\! \Psi^2 \!:) (v+\Psi). 
\end{split}
\label{NNH1}
\end{align}

\noi
Here,  
$( V \ast  :\! \Psi^2 \!:)\Psi$ in $\NN_1(v)$
is interpreted
as 
\[ ( V \ast  :\! \Psi^2 \!:)\Psi
= 
(V \ast :\! \Psi^2 \!:) \pl \Psi 
+ (V \ast :\! \Psi^2 \!:) \pe \Psi
+ (V \ast :\! \Psi^2 \!:) \pg \Psi,   
\]

\noi
where the second term on the right-hand side is given a meaning via
stochastic analysis for $1 < \be \le \frac 32$.

Since $\Psi \sim -\frac 12-$,
we expect that $v$ has regularity $\frac 32-$.
Hence, $v \Psi$ is well defined
and thus a straightforward computation yields the following local well-posedness
of \eqref{SNLH7}.

\begin{proposition}\label{PROP:LWPv2}
Let $\b>1$,  $\s \in \R\setminus \{0\}$, $2<\g\leq 3$, and $A\in \R$.
Given $s < \frac{3}{2}$ sufficiently close to $\frac 32$, 
%
%
%
there exists  $\eps = \eps(s) > 0$
such that 
if 
\begin{itemize}
\item   $\Psi $ is a distribution-valued function belonging to $C([0, T]; \C^{-\frac 12 - \eps, \infty}(\T^3))$, 

\smallskip
\item
$:\! \Psi^2 \!:
$ is a distribution-valued function belonging to $ C([0, T]; \C^{-1 - \eps, \infty}(\T^3))$,

\smallskip
\item
$(V \ast :\! \Psi^2 \!:) \pe \Psi
$ is a distribution-valued function belonging to $ C([0, T]; \C^{\be -\frac 32 - \eps, \infty}(\T^3))$,

\smallskip

\end{itemize}

\noi
then the Hartree SNLH \eqref{SNLH7} 
  is locally well-posed in $\C^s(\T^3)$.
More precisely, 
given any $v_0 \in \C^s(\T^3)$, 
there exists $T > 0$ 
such that   a unique solution $v$
to \eqref{SNLH7}
exists
 on the time interval $[0, T]$
in the class 
$ C([0, T]; \C^s(\T^3))$.
Furthermore, the solution $v$
depends  continuously 
on the enhanced data set:
\begin{align}
\Xi = 
(v_0, \Xi(\Psi)): = \big(
v_0, \Psi,  :\! \Psi^2 \!:, (V \ast :\! \Psi^2 \!:) \pe \Psi \big)
\label{data2H}
\end{align}

\noi
in the class $\C^s(\T^3) \times \mathcal{X}^{\eps}_T$, 
where 
\begin{align}
\begin{split}
\mathcal{X}^{\eps}_T
 :=
C([0,T]; \C^{-\frac 12-\eps}(\T^3))
\times
C([0,T]; \C^{-1- \eps}(\T^3))
\times
C([0, T]; \C^{\b-\frac 32- \eps}(\T^3)).
\end{split}
\label{data3H}
\end{align}
\end{proposition}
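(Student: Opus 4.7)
The plan is to solve \eqref{SNLH7} via a standard contraction-mapping argument applied to the Duhamel formulation
\[
v(t) = e^{t(\Delta - 1)} v_0 + \int_0^t e^{(t-t')(\Delta - 1)}\bigl[\NN_1(v) + \NN_2(v)\bigr](t')\,dt',
\]
with the solution map defined on a ball in the Banach space $X_T := C([0,T];\C^s(\T^3))$ for some $s$ slightly below $\frac{3}{2}$. The key analytic tool is the Schauder-type smoothing estimate for the (massive) heat semigroup:
\[
\Bigl\|\int_0^t e^{(t-t')(\Delta-1)} F(t')\,dt'\Bigr\|_{C_T \C^{s}}
\;\les\; T^{\eta}\,\|F\|_{C_T \C^{s-2+2\eta}}
\]
for small $\eta > 0$, which gives a gain of (nearly) two spatial derivatives together with a small positive power of $T$ to close the contraction.

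In order to apply this estimate, I would first record the regularity budget of each summand of the nonlinearities $\NN_1, \NN_2$ in \eqref{NNH1}. Under the working assumption $v\in X_T$ with $s$ close to $\tfrac32$, the algebra property \eqref{alge} and the resonant product estimate \eqref{para3} (valid since $s+(-\tfrac12-\eps)>0$) give $v^2\in C_T\C^s$ and $v\Psi\in C_T\C^{-1/2-\eps}$. Convolving with the Bessel potential of order $\beta$ improves regularity by $\beta$, so $V*v^2\in C_T\C^{s+\beta}$, $V*(v\Psi)\in C_T\C^{\beta-1/2-\eps}$, and $V*{:}\Psi^2{:}\in C_T\C^{\beta-1-\eps}$. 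Multiplying these by $(v+\Psi)$, the worst contribution is the genuinely stochastic piece $(V*{:}\Psi^2{:})\,\Psi$; every other product is handled pathwise by Lemma \ref{LEM:para}, where $\beta>1$ ensures positivity of the relevant sum of regularities (e.g.\ $(V*(v\Psi))\,\Psi$ requires $\beta-1-2\eps>0$). The nonlocal factor $\NN_2$ is essentially a scalar multiplier of $(v+\Psi)$: since $|{\int :\Psi^2: dx}|$ is finite by Lemma \ref{LEM:stoconv}, $\int v\Psi\,dx$ is well defined by \eqref{dual}, and $\int v^2\,dx=\|v\|_{L^2}^2$ is trivial, $M_\gamma$ in \eqref{focusnon} is a bounded continuous function of $t$ with Lipschitz control in $v$ (using $\gamma>2$ so that $|x|^{\gamma-2}x$ is $C^1$), and the product with $(v+\Psi)\in C_T\C^{-1/2-\eps}$ lies in $C_T\C^{-1/2-\eps}$.

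The main obstacle is the resonant product $(V*{:}\Psi^2{:})\pe\Psi$, which cannot be defined pathwise for $\beta\le\tfrac{3}{2}$ because the sum of regularities $(\beta-1-\eps)+(-\tfrac12-\eps)=\beta-\tfrac{3}{2}-2\eps$ is not positive there. To deal with it, I would decompose
\[
(V*{:}\Psi^2{:})\,\Psi \;=\; (V*{:}\Psi^2{:})\pl\Psi \;+\; (V*{:}\Psi^2{:})\pg\Psi \;+\; (V*{:}\Psi^2{:})\pe\Psi,
\]
estimate the two paraproducts by \eqref{para2}--\eqref{para2a}, which are always defined and give regularity $-\tfrac12-\eps$, and accept the resonant piece as part of the enhanced data set \eqref{data2H}, which by hypothesis lies in $C_T\C^{\beta-3/2-\eps}$ (the threshold $\beta>1$ of Lemma \ref{LEM:IV} is precisely what makes this object finite). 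With all pieces of $\NN_1+\NN_2$ thus placed in $C_T\C^{-1/2-\eps}$, the Schauder estimate above yields a map $\Phi:X_T\to X_T$ with $\|\Phi(v)\|_{X_T}\le \|v_0\|_{\C^s} + C\,T^\eta\,P(\|v\|_{X_T},\|\Xi(\Psi)\|_{\mathcal X^\eps_T})$ for a polynomial $P$, provided $s<\tfrac{3}{2}-3\eps$.

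Finally, a parallel estimate for $\Phi(v_1)-\Phi(v_2)$, exploiting the Lipschitz continuity of $v\mapsto\NN_j(v)$ on bounded subsets of $X_T$ (for $\NN_2$ this uses the differentiability of $|x|^{\gamma-2}x$, which holds since $\gamma>2$), shows that $\Phi$ is a contraction on the ball of radius $\sim\|v_0\|_{\C^s}+\|\Xi(\Psi)\|_{\mathcal X^\eps_T}+1$ once $T=T(\|v_0\|_{\C^s},\|\Xi(\Psi)\|_{\mathcal X^\eps_T})>0$ is chosen sufficiently small. Banach's fixed point theorem then produces the unique solution $v\in X_T$, and the same difference estimates yield continuous dependence of $v$ on the enhanced data set \eqref{data2H} in the topology of $\C^s(\T^3)\times\mathcal X^\eps_T$.
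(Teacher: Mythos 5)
Your proof is correct and follows essentially the same route as the paper's: a first-order (Duhamel) expansion with a Schauder estimate for the massive heat semigroup, the algebra and paraproduct estimates to budget the regularity of each summand of $\NN_1,\NN_2$, the enhanced data set to absorb the single ill-defined resonant product $(V*{:}\Psi^2{:})\pe\Psi$, and a contraction argument closed by the small power of $T$ gained from the Schauder smoothing. The only cosmetic difference is that the paper records concrete powers ($T^\theta(\|v\|^3 + K^3)$ for $\NN_1$, $T^\theta(\|v\|^5 + K^5)$ for $\NN_2$) in \eqref{He1}–\eqref{He2}, while you leave this as an unspecified polynomial $P$; your constraints on $s$ and $\eps$, and your use of $\gamma>2$ for the Lipschitz bound and $\gamma\le 3$ for the $T$-power, match the paper's.
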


When $\be > \frac 32$, 
the resonant product 
$(V \ast :\! \Psi^2 \!:) \pe \Psi $ makes sense in the deterministic manner
and thus we do not include this term in the enhanced data set.

Before proceeding to the proof of Proposition \ref{PROP:LWPv2},
 we first recall the Schauder estimate for the heat equation.
Let $P (t)  = e^{-t (1-\Dl)}$ denote the linear heat propagator defined 
as a Fourier multiplier operator:
\begin{align}
P(t) f = \sum_{n \in \Z^3} e^{-t \jb{n}^2} \ft f(n) e_n
\notag
\end{align}

\noi
for $t \geq 0$.
Then, we have the following Schauder estimate
on $\T^d$.

\begin{lemma} \label{LEM:Schauder}
Let $ -\infty< s_1 \leq  s_2 < \infty$.
Then, we have 
\begin{align}
\| P(t) f \|_{\C^{s_2}}
\les t^{\frac{s_1 -s_2}{2}} \| f \|_{\C^{s_1}}
\label{regheat}
\end{align}

\noi
for any $t > 0$. 
\end{lemma}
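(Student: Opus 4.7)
The plan is to exploit the parabolic smoothing of the semigroup $P(t) = e^{-t(1-\Delta)}$ frequency band by frequency band via the Littlewood-Paley decomposition introduced in Subsection~\ref{SUBSEC:21}. Since $\C^s(\T^3) = B^s_{\infty, \infty}(\T^3)$ and $P(t)$ commutes with the Littlewood-Paley projectors $\P_j$, the definition of the Besov norm gives
\begin{align*}
\| P(t) f \|_{\C^{s_2}} = \sup_{j \geq 0} 2^{s_2 j} \| \P_j P(t) f \|_{L^\infty},
\end{align*}
so the task reduces to showing
\begin{align*}
\sup_{j \ge 0} 2^{s_2 j} \| \P_j P(t) f \|_{L^\infty} \lesssim t^{\frac{s_1 - s_2}{2}} \|f\|_{\C^{s_1}}.
\end{align*}

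The first step is to establish the single-frequency bound
\begin{align*}
\| \P_j P(t) g \|_{L^\infty} \lesssim e^{-ct(1 + 2^{2j})} \| \P_j g \|_{L^\infty}, \qquad j \ge 0,
\end{align*}
uniformly in $t > 0$, for some $c > 0$. As $\P_j P(t)$ is convolution with the kernel $K_{j,t}(x) = \sum_{n \in \Z^3} \varphi_j(n)\, e^{-t \jb{n}^2}\, e_n(x)$, by Young's inequality it suffices to show $\|K_{j,t}\|_{L^1(\T^3)} \lesssim e^{-ct(1+2^{2j})}$. For $j = 0$ this is trivial from the compact support of $\varphi_0$; for $j \ge 1$, one factors the symbol as $e^{-\frac{t}{2}\jb{n}^2} \cdot \varphi_j(n)\, e^{-\frac{t}{2}\jb{n}^2}$, the first factor producing the desired exponential decay on the annulus $|n| \sim 2^j$ while the second factor is a Mihlin-type symbol whose inverse Fourier transform has $L^1$-norm uniformly bounded in $t$ and $j$ (either by Poisson summation from the Gaussian kernel on $\R^3$ with parabolic rescaling, or by the standard Littlewood-Paley kernel estimates).

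Combining this decay with the trivial identity $2^{s_2 j} = 2^{(s_2 - s_1) j} \cdot 2^{s_1 j}$, for every $j \ge 0$ we obtain
\begin{align*}
2^{s_2 j} \| \P_j P(t) f \|_{L^\infty}
\lesssim 2^{(s_2 - s_1) j} e^{-ct(1+2^{2j})} \cdot 2^{s_1 j} \| \P_j f \|_{L^\infty}
\lesssim 2^{(s_2 - s_1) j} e^{-ct 2^{2j}} \| f \|_{\C^{s_1}}.
\end{align*}
Taking the supremum in $j$, the lemma reduces to the elementary calculus bound
\begin{align*}
\sup_{j \ge 0} 2^{(s_2-s_1)j}\, e^{-ct\, 2^{2j}} \lesssim t^{\frac{s_1 - s_2}{2}},
\end{align*}
which is immediate (with $s_1 = s_2$ trivial), since for $s_1 < s_2$ the function $x \mapsto x^{(s_2-s_1)/2} e^{-ctx}$ on $[0,\infty)$ is maximized at $x = (s_2-s_1)/(2ct)$, yielding a constant multiple of $t^{(s_1-s_2)/2}$.

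The only mildly technical point is the $L^1$-kernel bound in the first step; once that multiplier estimate is granted, the remainder is a purely symbolic computation. This kernel bound is standard and the passage from $\R^3$ to $\T^3$ introduces no essential difficulty (see, e.g., the Littlewood-Paley calculus in~\cite{BCD}), so I do not anticipate any real obstruction.
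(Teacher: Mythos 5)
Your proof is correct and follows essentially the same route as the paper: the single-frequency estimate $\| \P_j P(t) g\|_{L^\infty} \lesssim e^{-ct(1+2^{2j})}\|\P_j g\|_{L^\infty}$ is precisely the content of \cite[Lemma 2.4]{BCD}, the paper's cited reference, transferred to $\T^3$ by Poisson summation as you indicate, and the remaining Besov-norm bookkeeping with the elementary supremum $\sup_j 2^{(s_2-s_1)j}e^{-ct2^{2j}} \lesssim t^{(s_1-s_2)/2}$ is exactly the standard completion of the Schauder estimate that the paper leaves implicit. You have simply spelled out the details that the paper's one-line proof defers to the reference.
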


The bound \eqref{regheat} on $\T^d$ 
follows from the decay estimate for the heat kernel on $\R^d$ 
(see Lemma~2.4 in \cite{BCD}) and the Poisson summation formula
to pass such a decay estimate to $\T^d$.

\begin{proof}[Proof of Proposition \ref{PROP:LWPv2}]

Define a map $\Phi$ by 
\begin{align}
\Phi (v) (t) &= P(t) v_0
+ \int_0^t P(t - t') \big(\NN_1 (v)+  \NN_2 (v)\big) (t') dt'.
\label{PhiH}
\end{align}

\noi
Let $0 <  T \le 1$.
We assume
\begin{align}
\| \Xi (\Psi)  \|_{\mathcal{X}^{\eps}_1}
\leq K
\label{data9}
\end{align}

\noi
for some $K \geq 1$, 
where $\Xi(\Psi)$
and $\mathcal{X}^{\eps}_1$ are as in \eqref{data2H} and \eqref{data3H}.

From 
Lemma \ref{LEM:Schauder}, 
\eqref{NNH1}, 
\eqref{alge}, 
and Lemma \ref{LEM:para} with \eqref{data9}, we have
\begin{align}
\begin{split}
\bigg\|\int_0^tP(t - t') \NN_1(v)(t') dt' \bigg\|_{L^\infty_T \C^s_x}
& \les T^\ta
\bigg(  \| v^2 +2 v\Psi\|_{L^\infty_T \C^{\frac 12 - \be +2 \eps}}
\|v + \Psi\|_{L^\infty_T \C^{-\frac 12 - \eps}}\\
& \hphantom{XXXX}
+ \|V \ast  :\! \Psi^2 \!:\|_{L^\infty_T \C^{ \eps}}
\| \Psi\|_{L^\infty_T \C^{-\frac 12 - \eps}}\\
& \hphantom{XXXX}
+ \|(V \ast :\! \Psi^2 \!:) \pe \Psi\|_{L^\infty_T \C^{\be -\frac 32 - \eps}}\\
& \hphantom{XXXX}
+ \|(V \ast :\! \Psi^2 \!:) v\|_{L^\infty_T \C^{\eps}}\bigg) \\
& \les T^\ta \Big(\| v\|_{L^\infty_T \C^s_x}^3 + K^3\Big)
\end{split}
\label{He1}
\end{align}
	
\noi	
for $\be > 1$
and 
$\frac 12 + 2\eps \le s < \frac 3 2 - \eps$.
Similarly, 
we have 
\begin{align}
\begin{split}
\bigg\|\int_0^t P(t - t')\NN_2(v)(t') dt' \bigg\|_{L^\infty_T \C^s_x}
& \les T^\ta
\|M_\g( v^2 + 2v\Psi +   :\! \Psi^2 \!:)\|_{L^\infty_T}
\|v + \Psi\|_{L^\infty_T \C^{-\frac 12 - \eps}}\\
& \les T^\ta
\| v^2 + 2v\Psi  +  :\! \Psi^2 \!:\|_{L^\infty_T\C_x^{-100}}^{\g-1}
\|v + \Psi\|_{L^\infty_T \C^{-\frac 12 - \eps}}\\
& \les T^\ta \Big(\| v\|_{L^\infty_T \C^s_x}^5 + K^5\Big)
\end{split}
\label{He2}
\end{align}
	
\noi
since $\g \leq 3$.
Hence, 
from  \eqref{PhiH}, \eqref{He1}, and \eqref{He2}, 
we have
\begin{align}
\begin{split}
\| \Phi (v) \|_{L^\infty_T \C^s_x}
&\les \| v_0\|_{\C^{s}} +T^\theta \Big( \| v \|_{L^\infty_T \C^s_x}^5+K^5\Big).
\end{split}
\label{LWP1H}
\end{align}

\noi
Moreover, 
since $\g > 2$, 
$\NN_2(v)$ in \eqref{NNH1} is Lipschitz continuous with respect to $v$
and thus a similar computation also yields a  difference estimate:
\begin{align}
\| \Phi (v_1) - \Phi (v_2) \|_{L^\infty_T \C^s_x}
&\les T^\ta \Big( \| v_1 \|_{L^\infty_T \C^s_x} + \| v_2 \|_{L^\infty_T \C^s_x} 
+ K \Big)^4 \| v_1-v_2 \|_{L^\infty_T \C^s_x}.
\label{LWP2H}
\end{align}

\noi
Therefore, 
local well-posedness of \eqref{SNLH7}
follows from a contraction argument with 
 \eqref{LWP1H} and~\eqref{LWP2H}.
An analogous  computation 
shows that  the solution $v$ depends continuously on 
the enhanced data set $\Xi$ in \eqref{data2H}.
\end{proof}

\section{On the regularities of the stochastic terms}
\label{SEC:C}

In the following, we study the regularities of the stochastic terms, 
appearing in Subsection~\ref{SUBSEC:def3}.
From \eqref{YZ12} and \eqref{YZ6a}, 
we have 
\begin{align*}
\dot \ZZ_N  
& = (1-\Delta)^{-1}[(V_0 \, \ast \!:\!Y_N^2\!:\,) Y_N]^\dia\\
& = (1-\Delta)^{-1}\Big((V_0 \, \ast\! :\! Y_N^2 \!:\,)  Y_N  -2  K_N* Y_N\Big).
\end{align*}

\noi
In view of 
\eqref{kappa1} and \eqref{kappa2}, 
we see that the subtraction of 
\[ 2K_N* Y_N = 
2 \ft Y_N (n,t) \sum_{\substack{n_1 \in \Z^3 \\ n_1 \ne -n\\ |n_1|\leq N}}
\ft V(n+n_1)  \jb{n_1}^{-2}, 
\]

\noi
removes  the divergent term 
in $(V_0 \, \ast \! :\! Y_N^2 \!:)\pe  Y_N $
(which corresponds to 
$Z_{13}$
defined in \eqref{Z13}).
See Remark  \ref{REM:Z}.
Then, by repeating the proof of Lemma \ref{LEM:IV}
and taking into account the smoothing by $(1-\Dl)^{-1}$, 
we have 
\begin{align}
\E \big [ |\ft {\dot \ZZ}_N(n, t)|^2\big] \sim  \jb{n}^{-2\be- 4}
\label{CZ2a}
\end{align}

\noi
for $0 < \be \le 1$ and $0 \leq t \leq 1$;
see the proof of  Lemma \ref{LEM:Ks3}.
Thus, by Minkowski's integral inequality, we have 
\begin{align}
\E \big [ |\ft {\ZZ}_N(n)|^2\big] \sim  \jb{n}^{-2\be- 4}
\label{CZ2}
\end{align}

\noi
where $\ZZ_N = \ZZ_N(1)$ is as in \eqref{K9a}.

\begin{lemma}\label{LEM:CZ}
Let $V_0$, $Y_N$, and $\ZZ_N$ be as in Section \ref{SEC:def}
and let $0 < \be \le \frac 12$.
Then, given any $\eps > 0$ and finite $p \geq 1$, we have
\begin{align}
\E\Big[ \|(V_0*:\!Y_N^2\!:) \ZZ_N^2\|_{\C^{\be - 1- \eps}}^p\Big]
\leq C_{p, \eps} < \infty, 
\label{CZ3}\\
\E \Big[\|(V_0*:\!Y_N^2\!:) \ZZ_N\|_{\C^{\be - 1- \eps}}^p\Big]
\leq C_{p, \eps} < \infty, 
\label{CZ4}\\
\E\Big[ \| [(V_0 \ast (Y_N \ZZ_N) ) Y_N \ZZ_N]^\dia
\|_{\C^{\be - 1-\eps}}^p\Big]
\leq C_{p, \eps} < \infty, 
\label{CZ5}\\
\E\Big[ \| [(V_0 \ast (Y_N \ZZ_N) ) Y_N ]^\dia
\|_{\C^{\be - 1-\eps}}^p\Big]
\leq C_{p, \eps} < \infty, 
\label{CZ6}
\end{align}

\noi
uniformly in $N \in \N$.	
Here, the third term is defined as in \eqref{YZ8a}
\textup{(}with $\Dr_N$ replaced by $\ZZ_N$\textup{)}, 
while the fourth term is defined in \eqref{YY3yy}.

\end{lemma}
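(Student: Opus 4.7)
The plan is to prove all four bounds via the standard two-step machinery used in Lemmas \ref{LEM:stoconv} and \ref{LEM:IV}: since each term lies in a finite Wiener chaos (at worst $\mathcal{H}_{\le 8}$ for $(V_0*:\!Y_N^2\!:)\ZZ_N^2$, using that $Y_N\in\mathcal{H}_1$ and $\ZZ_N\in\mathcal{H}_{\le 3}$), the Wiener chaos estimate (Lemma \ref{LEM:hyp}) reduces the $L^p(\Omega)$ bounds to $L^2(\Omega)$ bounds, and Lemma \ref{LEM:reg} reduces the spatial regularity claim $\mathcal{C}^{\beta-1-\varepsilon}$ to the Fourier-coefficient bound
\begin{align*}
\E\bigl[\,|\widehat{F_N}(n)|^2\,\bigr] \lesssim \jb{n}^{-1-2\beta+2\varepsilon},
\end{align*}
uniformly in $N\in\N$, where $F_N$ denotes any of the four terms. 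The time-difference estimate needed in Lemma \ref{LEM:reg} will follow by the same argument with the mean value theorem applied to the relevant time factors in $Y_N(t)$, exactly as in the proofs of Lemmas \ref{LEM:stoconv} and \ref{LEM:IV}.

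For the first two terms \eqref{CZ3}--\eqref{CZ4}, I would expand $\widehat{(V_0*:\!Y_N^2\!:)\ZZ_N^k}(n)$ as an iterated convolution in the frequency variable by inserting the formula \eqref{Kt2} for $\widehat{\dot\ZZ}_N$ (integrated in time to give $\widehat{\ZZ_N}$) together with \eqref{Wick1}--\eqref{Wick2}. Applying Wick's theorem (Lemma \ref{LEM:Wick}) produces a finite sum of discrete convolutions weighted by $\hat V(\,\cdot\,)\sim\jb{\cdot}^{-\beta}$ and $\jb{\cdot}^{-2}$ factors, and each of these can be summed by repeated use of Lemma \ref{LEM:SUM}. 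The key input is \eqref{CZ2}, which encodes the two full powers of smoothing supplied by the Duhamel-integrated operator $(1-\Delta)^{-1}$ defining $\ZZ_N$; this smoothing is what makes the sum \eqref{CZ3} converge even though the worst factor $V_0*:\!Y_N^2\!:$ has negative regularity $\beta-1-\varepsilon$.

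For the third and fourth terms \eqref{CZ5}--\eqref{CZ6}, the essential issue is that the naive resonant product $(V_0*(Y_N\ZZ_N))Y_N\pe\ZZ_N$ (respectively $(V_0*(Y_N\ZZ_N))\pe Y_N$) contains a divergent sub-contribution from the Wick pairing $Y_N\leftrightarrow Y_N$, entirely analogous to the term $Z_{13}$ in \eqref{Z13} which was identified in Remark \ref{REM:Z} as the source of the divergence at $\beta=1$. The renormalized quantity in \eqref{YY3yy} is built precisely to subtract this pairing at the Fourier level:
\begin{align*}
[(V_0\ast(Y_N\ZZ_N))Y_N]^\diamond = (V_0\ast(Y_N\ZZ_N))Y_N - K_N*\ZZ_N,
\end{align*}
where $K_N$ is the multiplier from \eqref{kappa1}--\eqref{kappa2}. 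After performing the expansion in Fourier and grouping terms by the Wick contraction pattern, the subtraction cancels exactly the pairing for which the discrete convolution coming from Lemma \ref{LEM:SUM} would fail to be summable; the remaining contractions involve at least one ``crossing'' pair, which gains an extra summable factor and allows application of Lemma \ref{LEM:SUM} for all $\beta>0$. Then taking one additional pairing with $\ZZ_N$ (for \eqref{CZ5}) uses \eqref{CZ2} once more.

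The main obstacle will be the bookkeeping in step three: the Wick expansion of $[(V_0\ast(Y_N\ZZ_N))Y_N\ZZ_N]^\diamond$ produces on the order of a dozen distinct contraction patterns once one further expands $\ZZ_N$ via \eqref{Kt2} into its three Wiener-chaos components $\dot\ZZ_{N,1}, \dot\ZZ_{N,2}, \dot\ZZ_{N,3}$, and verifying that the $K_N*\ZZ_N$ counter term cancels precisely the one divergent pattern (and only that one) requires the same resonance-identification calculation as in \eqref{Z13}, carried out in a nested setting. The remaining patterns must each be summed using Lemma \ref{LEM:SUM}, where the condition $\beta>0$ enters only at the very last summation; below $\beta=0$ one would be forced to introduce the further change of variables of Subsection \ref{SUBSEC:def3}, but for the range $0<\beta\le\tfrac12$ treated here no additional renormalization beyond \eqref{YY3yy} is needed.
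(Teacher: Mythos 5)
Your proposal follows essentially the same strategy as the paper's proof: reduce to second-moment Fourier bounds via the Wiener chaos estimate (the paper cites Proposition 3.6 of \cite{MWX}, which plays the role of your Lemma~\ref{LEM:reg}), expand in frequency, apply Wick's theorem, and sum with Lemma~\ref{LEM:SUM}. Your conceptual identification of the $K_N*\ZZ_N$ subtraction as the counterterm killing the $Z_{13}$-type Wick self-pairing of the two $Y_N$ factors is correct and is exactly what makes the $^\dia$ renormalization the right one.

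The place where the paper is meaningfully lighter than your plan is the treatment of the Wick contraction combinatorics, which you flag as ``the main obstacle.'' The paper avoids this entirely by two devices you do not invoke. First, it treats $\ZZ_N$ as a black box: it never re-expands $\ZZ_N$ into its three chaos components via \eqref{Kt2}, but uses only the second-moment estimate \eqref{CZ2}, $\E[|\ft\ZZ_N(n)|^2]\lesssim\jb{n}^{-2\be-4}$. Second, and more importantly, it applies Jensen's inequality (as explained for the tree $\<31p>$ in Section~4 of \cite{MWX} and Section~10 of \cite{Hairer}) to reduce the sum over all Wick pairings between the two copies of $\hat Q_j(n)$ to the single diagonal pairing $n_k=m_k$, which yields directly
\[
\E\big[|\ft Q_1(n)|^2\big]\lesssim \sum_{n=n_1+n_2+n_3+n_4}\frac{1}{\jb{n_1+n_2}^{2\be}\jb{n_1}^2\jb{n_2}^2\jb{n_3}^{2\be+4}\jb{n_4}^{2\be+4}}
\]
and analogously for $Q_3,Q_4$, so the only work left is the $\l^1$-convolution estimate from Lemma~\ref{LEM:SUM}. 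The dozen-odd contraction patterns you anticipate would also close, but their case-by-case summation is precisely what Jensen's inequality circumvents. Two further minor points: since $Y_N$ and $\ZZ_N$ here denote the fixed-time objects $Y_N(1)$ and $\ZZ_N(1)$, no time-difference estimate is needed (Lemma~\ref{LEM:reg}(i) suffices), so that step of your plan is unnecessary; and the constraints on $\be$ in the summations come out to roughly $-\tfrac12<\be<1$ rather than entering only ``at the very last summation,'' though this does not affect the range $0<\be\le\tfrac12$.
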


\begin{proof}
By Proposition 3.6 in \cite{MWX}, 
we only compute the second moment of the Fourier coefficient of each stochastic term.
With $Q_1 = (V_0*:\!Y_N^2\!:) \ZZ_N^2$, 
we have
\begin{align*}
\E\big[ |\ft Q_1(n)|^2\big]
& = \E \bigg[ \sum_{n = n_1 + n_2 + n_3 + n_4}
\jb{n_1 + n_2}^{-\be} 
:\!\ft Y_N(n_1)\ft Y_N(n_2)\!: \ft \ZZ_N(n_3) \ft \ZZ_N(n_4)\\ 
& \quad \times
\sum_{n = m_1 + m_2 + m_3 + m_4}
\jb{m_1 + m_2}^{-\be} \cj{:\!\ft Y_N(m_1)
\ft Y_N(m_2)\!: \ft \ZZ_N(m_3) \ft \ZZ_N(m_4)}\bigg], 
\end{align*}

\noi
where we used the notation introduced in \eqref{Ks11}.
In order to compute the expectation above, 
we need to take all possible pairings between $(n_1, n_2, n_3, n_4)$
and $(m_1, m_2, m_3, m_4)$.
By Jensen's inequality, however, 
we see that it suffices to consider the case
$n_j = m_j$, $j = 1, \dots, 4$.
See the discussion on $\<31p>$ in Section 4 of \cite{MWX}.
See also Section~10 in~\cite{Hairer}. 
Hence, from \eqref{CZ2}, 
we have
\begin{align*}
\E\big[ |\ft Q_1(n)|^2\big]
& \les   \sum_{n = n_1 + n_2 + n_3 + n_4}
\frac{1}{\jb{n_1 + n_2}^{2\be} 
\jb{n_1}^2\jb{n_2}^2 \jb{n_3}^{2\be + 4}\jb{n_4}^{2\be + 4}}.
\end{align*}

\noi
By applying Lemma \ref{LEM:SUM} iteratively, we have
\begin{align*}
\E\big[ |\ft Q_1(n)|^2\big]
& \les   \sum_{n_3, n_4\in \Z}
\frac{1}{\jb{n - n_3 - n_4 }^{1+ 2\be} 
\jb{n_3}^{2\be + 4}\jb{n_4}^{2\be + 4}}
\les \jb{n}^{-3 - 2(\be - 1)}.
\end{align*}

\noi
By applying Proposition 3.6 in \cite{MWX},
we obtain \eqref{CZ3}.
The second estimate \eqref{CZ4} follows in a similar manner.

Let $Q_3 =  [(V_0 \ast (Y_N \ZZ_N) ) Y_N \ZZ_N]^\dia$.
Then, proceeding as above with Jensen's inequality and Lemma \ref{LEM:SUM}, 
we have 
\begin{align*}
\E\big[ |\ft Q_3(n)|^2\big]
& \les   \sum_{n = n_1 + n_2 + n_3 + n_4}
\frac{1}{\jb{n_1 + n_2}^{2\be} 
\jb{n_1}^2\jb{n_2}^{2\be + 4} \jb{n_3}^{2}\jb{n_4}^{2\be + 4}}\\
& \les   \sum_{n_3, n_4\in \Z}
\frac{1}{\jb{n - n_3 - n_4 }^{2+ 2\be} 
\jb{n_3}^{2}\jb{n_4}^{2\be + 4}}\\
& \les \jb{n}^{-3 - 2(\be - 1)}.
\end{align*}

\noi
Similarly, with 
 $Q_4 =  [(V_0 \ast (Y_N \ZZ_N) ) Y_N]^\dia$, 
 we have
\begin{align*}
\E\big[ |\ft Q_4(n)|^2\big]
& \les   \sum_{n = n_1 + n_2 + n_3}
\frac{1}{\jb{n_1 + n_2}^{2\be} 
\jb{n_1}^2\jb{n_2}^{2\be + 4} \jb{n_3}^{2}}\\
& \les   \sum_{n_3\in \Z}
\frac{1}{\jb{n - n_3 }^{2+ 2\be} 
\jb{n_3}^{2}}
 \les \jb{n}^{-3 - 2(\be - 1)}.
\end{align*}

\noi
Therefore, 
these estimates with Proposition 3.6 in \cite{MWX} 
yield  \eqref{CZ5} and \eqref{CZ6}.
\end{proof}

\section{Absolute continuity with respect to the shifted measure}
\label{SEC:D}

In this section,
we prove that the defocusing Hartree Gibbs measure $\rho$  for $0<\be \le \frac 12$
is absolutely continuous
with respect to the shifted measure $\Law (Y(1) - \ZZ(1) + \W(1))$,
where $Y$ is as in \eqref{P2}, $\ZZ$ is defined as the limit of 
the antiderivative of 
$\dot \ZZ^N$ in \eqref{YZ12}, and the auxiliary process $\W$ is defined by 
\begin{align}
\W (t) = (1-\Delta)^{-1} \int_0^t \jb{\nabla}^{-\frac 12 - \eps} 
\big(\jb{\nabla}^{-\frac 12 - \eps} Y(t')\big)^{19} dt'
\label{AC0}
\end{align}

\noi
for some small $\eps > 0$.
For the proof,
we construct a drift as in the discussion in Section 3 of~\cite{BG2}.
Note that the coercive term $\W$ is introduced  to 
guarantee  global existence of a drift on the time interval $[0, 1]$.
See Lemma \ref{LEM:globald} below.

First, we present the following general lemma, 
giving a criterion for absolute continuity.

\begin{lemma} \label{lemma: AC1}
Let $\mu_n$ and $\rho_n$ be probability measures on a Polish space $X$.
Suppose that $\mu_n$ and $\rho_n$  converge weakly to $\mu$ and $\rho$, respectively.
Furthermore, suppose  that for every $\eps > 0$,
there exist $\delta(\eps) >0 $ and $\eta(\eps)>0 $ 
with $\delta(\eps)$, $\eta(\eps) \to 0$ as $\eps \to 0$ such that for every continuous function $F: X \to \R$ with $0 < \inf F \le F \le 1$ satisfying  
\[\mu_n(\{F \le \eps\}) \ge 1- \delta(\eps)\] 

\noi
for any $n \geq n_0 (F)$, 
we have
\begin{align}
\limsup_{n \to \infty} \int F(u) d \rho_n(u) \le \eta(\eps).
\label{J0a}
\end{align}
Then, $\rho$ is absolutely continuous with respect to $\mu$.
\end{lemma}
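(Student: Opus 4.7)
My plan is to prove absolute continuity by standard approximation: show that $\rho(\Gamma) = 0$ for every closed set $\Gamma$ with $\mu(\Gamma) = 0$, and then upgrade to Borel sets via inner (Radon) regularity of $\rho$ on the Polish space $X$, since any Borel $A$ with $\mu(A) = 0$ satisfies $\rho(A) = \sup\{\rho(K) : K \subset A \text{ compact}\}$, and each such compact $K$ is closed with $\mu(K) = 0$.

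So fix $\Gamma \subset X$ closed with $\mu(\Gamma) = 0$ and fix a complete metric $d$ on $X$. For each $r > 0$, set $U_r = \{x \in X : d(x,\Gamma) < r\}$ and define the continuous function
\[
F_r(x) = \eps + (1-\eps)\max\bigl(0,\,1 - d(x,\Gamma)/r\bigr).
\]
Then $0 < \eps \le F_r \le 1$, $F_r \equiv 1$ on $\Gamma$, and $\{F_r \le \eps\} = X \setminus U_r$ is closed. Since $\Gamma = \bigcap_{r>0}\{d(\cdot,\Gamma) \le r\}$ and the sets $\{d(\cdot,\Gamma) \le r\}$ decrease to $\Gamma$ as $r \downarrow 0$, continuity of $\mu$ gives $\mu(\overline{U_r}) \to \mu(\Gamma) = 0$. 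Hence, given $\eps > 0$, I can choose $r = r(\eps) > 0$ so small that $\mu(\overline{U_r}) < \delta(\eps)/2$.

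The key transfer to $\mu_n$ goes through the \emph{closed} set $\overline{U_r}$: by the portmanteau theorem applied to the weak convergence $\mu_n \to \mu$,
\[
\limsup_{n\to\infty} \mu_n(U_r) \le \limsup_{n \to \infty}\mu_n(\overline{U_r}) \le \mu(\overline{U_r}) < \delta(\eps)/2,
\]
so $\mu_n(\{F_r \le \eps\}) = 1 - \mu_n(U_r) \ge 1 - \delta(\eps)$ for all $n \ge n_0(F_r)$. The hypothesis of the lemma therefore applies to $F_r$, yielding $\limsup_{n\to\infty}\int F_r\,d\rho_n \le \eta(\eps)$. Because $F_r$ is bounded and continuous and $\rho_n \to \rho$ weakly, $\int F_r\,d\rho_n \to \int F_r\,d\rho$, so $\int F_r\,d\rho \le \eta(\eps)$. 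On the other hand, $F_r \ge \eps$ everywhere and $F_r = 1$ on $\Gamma$ gives
\[
\int F_r\,d\rho \ge \rho(\Gamma) + \eps\bigl(1 - \rho(\Gamma)\bigr) = \eps + (1-\eps)\rho(\Gamma).
\]
Combining, $(1-\eps)\rho(\Gamma) \le \eta(\eps) - \eps$, and letting $\eps \to 0$, using $\eta(\eps) \to 0$, forces $\rho(\Gamma) \le 0$, hence $\rho(\Gamma) = 0$.

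The only delicate point, and the main obstacle, is to arrange a test function whose sub-level set $\{F \le \eps\}$ has \emph{uniformly large} $\mu_n$-measure: weak convergence only provides $\liminf \mu_n(U) \ge \mu(U)$ on open sets and $\limsup \mu_n(C) \le \mu(C)$ on closed sets, which goes the wrong way if one works directly with the open complement $U_r$ of $\{F_r \le \eps\}$. The trick above circumvents this by sandwiching $U_r$ inside its closure $\overline{U_r}$ and exploiting the upper-semicontinuity estimate on the closed set $\overline{U_r}$, together with the fact that for closed $\Gamma$, the $r$-thickenings $\overline{U_r}$ still shrink to $\Gamma$ under $\mu$. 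Once this is in place, the remainder of the argument is a routine lower bound of $\int F_r\,d\rho$ in terms of $\rho(\Gamma)$, combined with weak convergence of $\rho_n \to \rho$.
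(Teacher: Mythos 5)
Your proof is correct and follows essentially the same strategy as the paper: reduce to compact/closed null sets by inner regularity, build a continuous test function that equals $1$ on $\Gamma$, is bounded below by a small positive floor, and decays to that floor off a thin neighbourhood, then transfer smallness of $\mu(\{F>\eps\})$ to $\mu_n$, apply the hypothesis, and let $\rho_n\to\rho$ finish. The only substantive difference is a local one: where you invoke the portmanteau theorem on the closed set $\overline{U_r}$ to bound $\limsup_n\mu_n(U_r)$, the paper instead keeps the floor $\eps_*$ of the test function as a free parameter, bounds $\int\chi\,d\mu$ by $\eps_*+\lambda_m$, and then uses Markov's inequality together with convergence of $\int\chi\,d\mu_n\to\int\chi\,d\mu$ to control $\mu_n(\{\chi>\eps\})$. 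These are two equivalent ways of exploiting weak convergence; your route tying the floor directly to $\eps$ and using the closed-set form of portmanteau is, if anything, slightly tighter and dispenses with the extra parameter $\eps_*$.
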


\begin{proof}
By the inner regularity, it suffices to show that 
for every compact set $K\subset X$ with $\mu(K) = 0$, 
we have  $\rho(K) = 0$. 
Consider the family of Lipschitz functions:
\begin{align}
\chi_m^{K,\eps_*}(u) := \max\big(\eps_*, 1-md(u,K)\big)
\label{J0}
\end{align}

\noi
for $m \in \N$ and small $\eps_*>0$,
where $d(u,K)$ denotes the distance between $u$ and $K$.
Then, we have
\begin{align}
0< \eps_* = \inf \chi_m^{K,\eps_*} \le \chi_m^{K,\eps_*} \le 1.
\label{J1}
\end{align}

It follows from \eqref{J0} that
\begin{align}
\int \chi_m^{K,\eps_*}(u) d \mu(u) \le \eps_* + \int \ind_{\{d(\cdot, K) < m^{-1}\}}(u) d \mu(u) =: \eps_* + \l_m
\label{J2}
\end{align}

\noi
and that $\l_m\to 0$ as $m \to \infty$.
Given  $\eps > 0$,  let $m = m(\eps) \in \N$ and $\eps_*= \eps_*(\eps)>0$ be such that 
$\frac{2(\eps_* + \l_{m})}{\eps} < \delta(\eps)$. 
Let $S^{K,\eps} := \{ \chi_{m}^{K,\eps_*} > \eps\}$. 
By Markov's inequality, the weak convergence of $\mu_n$ to $\mu$,  and \eqref{J2},
we have 
\begin{align}
\mu_n(S^{K,\eps}) \le\frac 1 \eps \int \chi_{m}^{K,\eps_*}(u) d \mu_n(u) \le \frac{2(\eps_* + \l_{m})}{\eps} < \delta(\eps)
\label{J3}
\end{align}

\noi
for any $\eps>0$ and sufficiently large $n \gg 1$.
Therefore, by our hypothesis \eqref{J0a} with \eqref{J1} and \eqref{J3},
we obtain
\begin{align}
\limsup_{n \to \infty}  \int \chi_{m}^{K,\eps_*}(u) d \rho_n(u) \le \eta(\eps)
\label{J4}
\end{align}
for $\eps>0$.
Hence,
it follows from \eqref{J0}, the weak convergence of $\rho_n$ to $\rho$, and \eqref{J4} that
\begin{align*}
\rho(K) &\le \int \chi_{m}^{K,\eps_*}(u) d \rho(u)
= \lim_{n \to \infty}  \int \chi_{m}^{K,\eps_*}(u) d \rho_n(u)
\le \eta(\eps).
\end{align*}

\noi
By taking $\eps \to 0$, 
 we conclude that $\rho(K) = 0$.
\end{proof}

By regarding
$\dot \ZZ^N$ in \eqref{YZ12} and $\W$ in \eqref{AC0}
as functions of $Y$,
we write them as 
\begin{align}
\dot \ZZ^N (Y) (t) &:= (1-\Delta)^{-1} [(V_0 \, \ast \!:\!Y_N^2(t)\!:) Y_N(t)]^\dia, \label{AC01} \\
\W (Y) (t) &:= (1-\Delta)^{-1} \int_0^t \jb{\nabla}^{-\frac 12 - \eps} 
\big(\jb{\nabla}^{-\frac 12 - \eps} Y(t')\big)^{19} dt', \notag 
\end{align}

\noi
and we set $\dot \ZZ_N (Y)  = \pi_N \dot \ZZ^N (Y)$.
Then, from 
 \eqref{AC01} and \eqref{YZ6a}, we have 
\begin{align}
\dot \ZZ_N (Y + \Theta) - \dot \ZZ_N (Y) = (1-\Delta)^{-1} \pi_N P_N(Y,\Theta),
\label{Jb0}
\end{align}

\noi
where $P_N(Y,\Theta)$ is given by
\begin{align}
\begin{split}
P_N(Y,\Theta) &:=
(V_0 \ast :\!Y_N^2\!:) \Theta_N + 2 \big( (V_0 \ast (Y_N \Theta_N))Y_N - K_N \ast \Theta_N \big)\\
&\qquad
+ (V_0 \ast \Theta_N^2)Y_N + 2 (V_0 \ast (\Theta_N Y_N)) \Theta_N + (V_0 \ast \Theta_N^2)\Theta_N.
\end{split}
\label{Jb1}
\end{align}

\noi
Here,  $K_N$ is as in \eqref{kappa2}
and $\Dr_N = \pi_N \Dr$.
We also define $\W_N(Y)(t)$ by 
\begin{align}
\W_N (Y)(t) = (1-\Delta)^{-1} \pi_N \int_0^t \jb{\nabla}^{-\frac 12 - \eps} 
\big(\jb{\nabla}^{-\frac 12 - \eps} Y_N(t')\big)^{19} dt'.
%
\label{AC04}
\end{align}

Next, we state a lemma on the construction of  a drift $\Dr$.

\begin{lemma} \label{LEM:globald}
Let $V$ be the Bessel potential of order $\be>0$.
Let $\dot \Ups \in L^2 ([0,1]; H^1(\T^3))$.
Then, given any $N \in \N$, 
the Cauchy problem for $\Dr$\textup{:}
\begin{equation} \label{ACde}
\begin{cases}
\dot \Theta - (1-\Delta)^{-1} \pi_N P_N(Y,\Theta) + \dot \W_N(Y+\Theta) - \dot \Ups = 0 \\
\Theta(0) = 0
\end{cases}
\end{equation}

\noi
is almost surely globally well-posed in $H^1(\T^3)$
on the time interval $[0, 1]$.
Moreover, if  $\|\dot \Ups\|_{L^2([0,\tau]; H^1_x)}^2 \le M$ for some $M>0$
and for some stopping time $\tau \in [0, 1]$,
then, for any $ 1 \le p <\infty$, there exists $C= C(M,p)>0$ such that 
\begin{equation}\label{AC2}
\E \Big[ \|\dot \Theta\|_{L^2([0,\tau]; H^1_x)}^p \Big]
 \le C(M,p),
\end{equation}
where $C(M,p)$ is independent of $N \in \N$.
\end{lemma}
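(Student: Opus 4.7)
Since $\pi_N$ projects onto finitely many Fourier modes, \eqref{ACde} is an ordinary differential equation on the finite-dimensional space $\pi_N H^1(\T^3)$, on which all Sobolev norms are equivalent. The vector field is polynomial in $\Theta$ with coefficients built from $Y_N$, $:\!Y_N^2\!:$, and the renormalization kernel $K_N$, all pathwise continuous in time and finite almost surely. Hence the right-hand side of \eqref{ACde} is locally Lipschitz in $\Theta$, and Cauchy--Lipschitz yields almost sure local existence and uniqueness of $\Theta \in C([0,T_*);\pi_N H^1)$, with the solution extending as long as $\|\Theta(t)\|_{H^1}$ stays bounded.

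Global existence and the a priori bound \eqref{AC2} both rest on a single coercive energy estimate. Pairing \eqref{ACde} with $\Theta$ in $H^1$ and using $\Theta = \pi_N\Theta$, we get
\begin{align*}
\tfrac12\tfrac{d}{dt}\|\Theta\|_{H^1}^2 + \langle \dot\W_N(Y+\Theta),\Theta\rangle_{H^1} = \langle P_N(Y,\Theta),\Theta\rangle_{L^2} + \langle \dot\Ups,\Theta\rangle_{H^1}.
\end{align*}
Setting $\phi := \jb{\nb}^{-\frac12-\eps}(Y_N+\Theta_N)$ and integrating by parts,
\begin{align*}
\langle \dot\W_N(Y+\Theta),\Theta\rangle_{H^1} = \|\phi\|_{L^{20}}^{20} - \langle \phi^{19}, \jb{\nb}^{-\frac12-\eps}Y_N\rangle_{L^2} \geq \tfrac12\|\phi\|_{L^{20}}^{20} - C\|\jb{\nb}^{-\frac12-\eps}Y_N\|_{L^{20}}^{20}
\end{align*}
by Young's inequality. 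The bad contributions from $\langle P_N(Y,\Theta),\Theta\rangle_{L^2}$ are polynomial in $\Theta$ of degree at most four, with coefficients involving $Y_N$, $:\!Y_N^2\!:$, and the renormalized object $(V_0*(Y_N\Theta_N))Y_N - K_N*\Theta_N$ from \eqref{YZ6a}; by Lemmas \ref{LEM:Dr}, \ref{LEM:IV}, and \ref{LEM:CZ}, all of these have finite moments uniformly in $N$. Using paraproduct estimates (Lemma \ref{LEM:para}), Sobolev embedding, interpolation between $H^1$ and the negative-regularity norm controlled by $\|\phi\|_{L^{20}}$, and the fact that the coercive power $20$ strictly dominates the polynomial degree of growth, Young's inequality absorbs each bad term into $\tfrac14\|\phi\|_{L^{20}}^{20}$, a linear contribution in $\|\Theta\|_{H^1}^2$ (to be handled by Gronwall), a term $\tfrac12\|\dot\Ups\|_{H^1}^2$, and a random constant of finite moments uniformly in $N$.

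Integration in time and Gronwall's inequality then give a pathwise estimate
\begin{align*}
\|\Theta\|_{L^\infty([0,\tau];H^1)}^2 + \int_0^\tau \|\phi(t)\|_{L^{20}}^{20}\,dt \leq C(\o)\big(1 + \|\dot\Ups\|_{L^2([0,\tau];H^1)}^2\big),
\end{align*}
with $C(\o)$ independent of $N$ and with finite moments of every order. This precludes finite-time blow-up and hence gives global existence on $[0,1]$. The bound \eqref{AC2} follows by rearranging \eqref{ACde} as
\begin{align*}
\|\dot\Theta\|_{H^1} \leq \|(1-\Delta)^{-1}\pi_N P_N(Y,\Theta)\|_{H^1} + \|\dot\W_N(Y+\Theta)\|_{H^1} + \|\dot\Ups\|_{H^1},
\end{align*}
controlling the first two summands in $L^2_t$ by means of the pathwise energy bound together with the estimates of Section \ref{SEC:def}, and finally taking $L^p_\o$-norms using the Wiener chaos estimate (Lemma \ref{LEM:hyp}) and the $N$-uniform moment bounds of Lemmas \ref{LEM:stoconv}, \ref{LEM:IV}, and \ref{LEM:CZ}.

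The main obstacle is closing the coercive estimate uniformly in $N$. The auxiliary process $\W$ in \eqref{AC0} is designed precisely so that its time-derivative, paired with $\Theta$ in $H^1$, generates the $20$-th order quantity $\|\phi\|_{L^{20}}^{20}$, which must be strong enough to absorb the quartic self-interaction $\int(V_0*\Theta_N^2)\Theta_N^2\,dx$ (which is nonnegative and therefore contributes with the wrong sign to the energy identity); the choice of the exponent $19$ in \eqref{AC0} is dictated by this requirement. A secondary and more delicate point is that the linear-in-$\Theta$ part of $P_N(Y,\Theta)$ is $N$-divergent unless treated as the renormalized combination $(V_0*(Y_N\Theta_N))Y_N - K_N*\Theta_N$ of \eqref{YZ6a}, whose $N$-uniform moments come from Lemma \ref{LEM:CZ} (cf.\ Remark \ref{REM:Z}).
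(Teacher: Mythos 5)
Your argument follows the paper's proof: local existence from the pointwise $H^1$-estimates on $(1-\Delta)^{-1}P_N$ and $\dot\W_N$, then the $H^1$ energy identity in which the $\dot\W_N$ pairing produces the coercive $\|\jb{\nb}^{-\frac12-\eps}(Y_N+\Theta_N)\|_{L^{20}}^{20}$, interpolation plus Young's inequality to absorb the (wrongly signed, nonnegative) quartic $\int(V*\Theta_N^2)\Theta_N^2\,dx$ and $\|\Theta_N\|_{L^2}^4$ into that coercive term, and Gronwall to close the pathwise bound and the $L^p(\O)$-moments. One small slip: the $N$-uniform moment bound for the renormalized linear-in-$\Theta$ piece $(V_0*(Y_N\Theta_N))Y_N - K_N*\Theta_N$ is obtained via the Hilbert--Schmidt-type operator $T_N^t$ from \eqref{KZ4}--\eqref{KZ5} and (a time-integrated variant of) Lemma \ref{LEM:Dr9}, as in \eqref{Jb4} and \eqref{AC3a0}, not via Lemma \ref{LEM:CZ}, which concerns the $\ZZ_N$-objects that do not appear in this lemma.
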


We first
 prove the absolute continuity of 
the defocusing Hartree Gibbs measure $\rho$ with respect to $\Law (Y(1) - \ZZ(1) + \W(1))$
by assuming Lemma~\ref{LEM:globald}.
 We present the proof of 
Lemma~\ref{LEM:globald} at the end of this section.
Let $\dl(L)$ and $R(L)$ satisfy $\dl (L) \to 0$ and $R(L) \to \infty$ as $L \to \infty$, which will be specified later.
In view of Lemma \ref{lemma: AC1}, it suffices to show that 
if $F: \C^{-100}(\T^3) \to \R$ is a bounded continuous function
with $F \ge 0$ and
\begin{align}
\PP \big(\{F(Y(1) - \ZZ_N(1)+ \W_N(1)) \ge L \}\big) \ge 1 - \delta(L),
\label{AC4b}
\end{align}

\noi
then
we have
\begin{align}
\limsup_{N \to \infty} \int \exp(- F(u)) d \rho_N (u) \le \exp(-R (L)).
\label{AC00}
\end{align}

For simplicity, we use the same short-hand notations as in Subsection \ref{SUBSEC:def3};
for instance,
$Y=Y(1)$, $\ZZ = \ZZ(1)$, and $\W = \W (1)$.
By the Bou\'e-Dupuis formula (Lemma \ref{LEM:var3}) and \eqref{YZ13},
we have
\begin{align*}
- \log &  \bigg( \int \exp(- F(u) - R^{\dia\dia}_N(u)) d \mu (u) \bigg) \\
&= \inf_{\dot \Ups^N \in  \mathbb H_a^1}\E \bigg[ 
F(Y+ \Ups^N- \ZZ_N)
+ \wt R^{\dia\dia}_N (Y+\Ups^N-\ZZ_N) + \frac12 \int_0^1 \| \dot \Ups^N(t) \|_{H^1_x}^2dt \bigg],
\end{align*}
where $\wt R^{\dia\dia}_N$ is as in \eqref{KZ16}.
It follows from Lemmas \ref{LEM:Dr7}, \ref{LEM:Dr8}, and \ref{LEM:Dr9} 
with Lemmas \ref{LEM:Dr} and  \ref{LEM:CZ}
(see \eqref{KZ14a})
that
\begin{align}
\begin{split}
- \log &  \bigg( \int \exp(- F(u) - R^{\dia\dia}_N(u)) d \mu (u)\bigg) \\
&\quad \ge \inf_{\dot \Ups^N \in  \mathbb H_a^1}\E \bigg[ 
F(Y+ \Ups^N- \ZZ_N)
+ \frac1{20} \int_0^1 \| \dot \Ups^N(t) \|_{H^1_x}^2dt \bigg]
-C_1
\end{split}
\label{AC4a}
\end{align}

\noi
for some constant $C_1 > 0$.
For $\dot \Ups^N \in \Ha^1$,
let $\Theta^N$ be the solution to \eqref{ACde} with $\Ups$ replaced by $\Ups^N$.
For any $M>0$, define the stopping time $\tau_M$ as 
\begin{align}
\begin{split}
\tau_M
&=\min\bigg(1,  \, \min \bigg\{ \tau : \int_0^\tau \|\dot\Ups^N (s) \|_{H^1_x}^2 ds = M \bigg\}, \\
&\hphantom{XXXXX}
\min \bigg\{ \tau : \int_0^\tau \|\dot\Theta^N (s) \|_{H^1_x}^2 ds = 2C(M,2)\bigg\}\bigg),
\end{split}
\label{AC4d}
\end{align}
where $C(M,2)$ is the constant appearing in \eqref{AC2} with $p=2$.
Define Let $\Theta^N_M$ by 
\begin{align}
\Theta^N_M(t) := \Theta^N (\min(t,\tau_M)).
\label{AC4e}
\end{align}

\noi
It follows from \eqref{Jb0} and \eqref{ACde} 
with $\Ups^N(0) = \Dr_M^N (0) = \W_N(0) = 0$ that
\begin{align}
Y + \Ups^N - \ZZ_N
= Y +  \Theta^N_M - \ZZ_N(Y + \Theta^N_M) + \W_N(Y + \Theta^N_M)
\label{Ja9}
\end{align}
on the set $\{\tau_M = 1\}$.

Since $\| \dot \Theta_M^N \|_{L^2_t([0,1];H^1_x)}^2$ is bounded by $2C(M,2)$,
Girsanov's theorem yields that $\Law(Y + \Theta_M^N)$ is absolutely continuous with respect to $\Law (Y)$.
Moreover, by Cauchy-Schwarz inequality, we have 
\begin{align}
\PP\big(\{Y + \Theta_M^N \in E\}\big) \le C_M \Big(\PP\big(\{Y \in E\}\big)\Big)^\frac12
\label{AC4c}
\end{align}
for any measurable set $E$.

From \eqref{AC4a}, \eqref{Ja9}, and  the non-negativity of $F$, 
we have
\begin{align}
 - \log & \bigg( \int \exp(- F(u)- R^{\dia\dia}_N(u)) d \mu (u)\bigg) \notag \\
&\ge \inf_{\dot \Ups^N \in  \mathbb H_a^1}\E \bigg[ 
\Big( F \big( Y+  \Theta^N_M - \ZZ_N(Y + \Theta^N_M) + \W_N(Y + \Theta^N_M) \big)\notag \\
&\phantom{= \inf_{\Ups\in  \mathbb H_a}\E \bigg[ }
+ \frac1{20} \int_0^1 \| \dot \Ups^N (t) \|_{H^1_x}^2dt\Big)\ind_{\{ \tau_M = 1 \}}\phantom{]} \notag \\
&\phantom{= \inf_{\Ups\in  \mathbb H_a^1}\E \bigg[ }
+\Big( F(Y+ \Ups^N- \ZZ_N)
+ \frac1{20} \int_0^1 \| \dot \Ups^N (t) \|_{H^1_x}^2dt\Big)\ind_{\{ \tau_M < 1 \}} \bigg]
-C_1 \notag \\
&\ge \inf_{\dot \Ups^N \in  \mathbb H_a^1}\E \bigg[ 
F \big( Y+  \Theta^N_M - \ZZ_N(Y + \Theta^N_M) + \W_N(Y + \Theta^N_M) \big)
\cdot \ind_{\{ \tau_M = 1 \}}\phantom{]} \notag \\
&\phantom{= \inf_{\Ups\in  \mathbb H_a}\E \bigg[ }
+ \frac1{20} \int_0^1 \| \dot \Ups^N (t) \|_{H^1_x}^2dt \cdot \ind_{\{ \tau_M < 1 \}} \bigg] - C_1 \notag \\
\intertext{From \eqref{AC4d} followed by 
\eqref{AC4c} and 
\eqref{AC4b}, 
}
&\ge \inf_{\dot \Ups^N \in  \mathbb H_a^1}
\E \bigg[ L \cdot \ind_{\{\tau_M = 1\} 
\cap \{ F(Y+  \Theta^N_M - \ZZ_N(Y + \Theta^N_M) + \W_N(Y + \Theta^N_M)) \ge L \}} \notag \\
&\phantom{= \inf_{\Ups\in  \mathbb H_a}\E \bigg[ }
+ \frac M {20} \ind_{\{\tau_M < 1\} \cap \{\int_0^1 \| \dot \Theta^N_M (t) \|_{H^1_x}^2 dt < 2C(M,2)\}} \bigg]  - C_1 \notag \\
&\ge \inf_{\dot \Ups^N \in  \mathbb H_a^1}
\Bigg\{ L \Big( \PP(\{\tau_M = 1\}) - C_M \delta (L)^ \frac 12 \Big) \notag \\
&\hphantom{XXXXX}
+ \frac M {20} \PP\bigg(\{\tau_M < 1\} \cap \bigg\{\int_0^1 \| \dot \Theta^N_M (t) \|_{H^1_x}^2 dt < 2C(M,2)\bigg\}\bigg) \Bigg\} - C_1.
\label{ACX1}
\end{align}

In view of  \eqref{AC2} with \eqref{AC4d} and \eqref{AC4e}, 
Markov's inequality gives 
\[
\PP \bigg( \int_0^1 \| \dot \Theta^N_M (t) \|_{H^1_x}^2 dt
=  \int_0^{\tau_M} \| \dot \Theta^N_M (t) \|_{H^1_x}^2 dt
 \ge 2C(M,2) \bigg) \le  \frac 12,
\]

\noi
and thus we have
\begin{align}
\PP \bigg( \{\tau_M < 1\} \cap \bigg\{\int_0^1 \| \dot \Theta^N_M (t) \|_{H^1_x}^2dt < 2C(M,2) \bigg\} \bigg) \ge \PP(\{\tau_M < 1\})- \frac 12. 
\label{ACX2}
\end{align}

\noi
Hence, by choosing   $M = 20L$,
it follows from \eqref{ACX1} and \eqref{ACX2} that 
\begin{align*}
- \log & \bigg( \int \exp(- F(u)- R^{\dia\dia}_N(u)) d \mu (u) \bigg) \\
&\ge  \inf_{\dot \Ups^N \in  \mathbb H_a^1} \bigg\{
L \Big( \PP(\{\tau_M = 1\}) - C'_{L} \delta(L)^ \frac 12 \Big)
+ L \Big( \PP(\{\tau_M < 1\})- \frac 12 \Big) \bigg\} - C_1 \\
&= L \Big( \frac 12 - C_{L}' \delta(L)^\frac 12 \Big) - C_1.
\end{align*}

\noi
Therefore, 
by choosing $\delta(L)>0$ such that $C'_{L} \delta(L)^\frac12 \to 0$ as $L \to \infty$,
this shows \eqref{AC00} with 
\[R(L) = L \Big( \frac 12 - C'_{L} \delta(L)^\frac 12 \Big) - C_1 + \log Z, \]

\noi
where $Z = \lim_{N \to \infty} Z_N$ denotes the normalization
constant for the defocusing Hartree Gibbs measure $\rho$.

\smallskip

We conclude this section by presenting the proof of Lemma \ref{LEM:globald}.

\begin{proof}[Proof of Lemma \ref{LEM:globald}]
For simplicity, we only consider $0< \be \le \frac 12$, which is the relevant case in this section.
First, we estimate each term on the right-hand side of \eqref{Jb1}.
From Lemma~\ref{LEM:gko}, we have
\begin{align}
\begin{split}
\| (V_0 \ast :\!Y_N^2(t)\!:) \Theta_N (t)\|_{H^{-1}_x}
&\les \| V_0 \ast :\!Y_N^2(t)\!:  \|_{W^{-1,\infty}_x} \| \Theta_N (t)\|_{H^1_x} \\
&\les \| :\!Y_N^2(t)\!: \|_{W^{-1-\eps,\infty}_x} \| \Theta (t)\|_{H^1_x},
\end{split}
\label{Jb3}
\end{align}

\noi
provided that $\be\ge\eps>0$.
For the second term on the right-hand side of \eqref{Jb1},
we define $ \Y_N^t $ by replacing $Y_N =Y_N(1)$ in \eqref{YZ9} with $Y_N(t)$.
We also define $T_N^t$ by \eqref{KZ4} and \eqref{KZ5}
where we replaced  $\Y_N$ in 
 \eqref{KZ5}
 with $ \Y_N^t$.
Then, by duality we have 
\begin{align}
\begin{split}
\| (V_0 \ast (Y_N (t) & \Theta_N(t))) Y_N (t) - K_N \ast \Theta_N (t) \|_{H^{-1}_x}\\
& =\sup_{\|h \|_{H^1_x} = 1}
\bigg|\int_{\T^3\times \T^3} 
\Y_N^t (x, y) \wt \Dr_N(y, t)  h(x) dy dx
 \bigg| \\
& =\sup_{\|h \|_{ H^1_x} = 1}
\bigg|\int_{\T^3} T_N^t\big(\jb{\nb}^{1-\eps} \wt \Dr_N (t)\big)(x) 
\cdot \jb{\nb}^{1-\eps}h (x) dx \bigg|\\
& \leq \|T_N^t\|_{\L(L^2; L^2)} 
\| \Dr_N(t)\|_{H^{1-\eps}_x}
\end{split}
\label{Jb4}
\end{align}

\noi
for $\eps>0$, where $\wt \Dr_N(x, t) = \Dr_N(-x, t)$.
By Lemma \ref{LEM:gko} (i) and (ii) and Sobolev's inequality, 
we have
\begin{align}
\begin{split}
\| (V_0 \ast \Theta_N^2(t))Y_N (t) \|_{H^{-1}_x}
&\les \| V_0 \ast \Theta_N^2 (t) \|_{W^{\frac 12+\eps, \frac 3{2}}_x} 
\| Y_N(t)  \|_{W^{-\frac 12-\eps, \infty}_x} \\
&\les \| \Dr_N(t) \|_{H_x^{-\be+\frac 12 + \eps}}
\| \Dr_N (t)\|_{L^6_x} \| Y_N(t) \|_{W^{-\frac 12-\eps,\infty}_x} \\
&\les \| Y_N (t)\|_{W^{-\frac 12-\eps,\infty}_x}\| \Dr(t) \|_{H^1_x}^2 
\end{split}
\label{Jb5}
\end{align}

\noi
for $0 \le \be \le \frac 12$ and $0<\eps \ll 1$.
By Sobolev's inequality and  Lemma \ref{LEM:gko}, we have
\begin{align}
\begin{split}
\| (V_0 \ast (\Theta_N (t)Y_N(t))) \Theta_N(t) \|_{H^{-1}_x}
&\les \| (V_0 \ast (\Theta_N (t)Y_N(t))) \Theta_N (t)\|_{W^{-\frac 12, \frac{3}{2}}_x} \\
&\les \| V_0 \ast (\Theta_N (t)Y_N (t))\|_{W^{-\frac 12, \frac{12}{5}}_x} 
\| \Theta_N(t) \|_{W^{\frac 12, \frac{12}{5}}_x} \\
&\les \| \Theta_N (t) \|_{W^{\frac 12, \frac{12}{5}}_x} 
 \| Y_N (t) \|_{W^{-\frac 12-\eps, \infty}_x} 
\| \Theta_N (t) \|_{H^1_x} \\
&\les \| Y_N(t) \|_{W^{-\frac 12-\eps, \infty}_x}\| \Theta(t)  \|_{H^1_x}^2 
\end{split}
\label{Jb6}
\end{align}

\noi
for $\be \ge \eps > 0$.
Lastly, we have
\begin{align}
\begin{split}
\| (V_0 \ast \Theta_N^2(t))\Theta_N (t)\|_{H^{-1}_x}
\les \| \Theta_N (t)\|_{L^{\frac {18}5}_x}^3
\les \| \Dr(t) \|_{H^1_x}^3
\end{split}
\label{Jb7}
\end{align}
for $\be \ge 0$.

Putting
\eqref{Jb1} and \eqref{Jb3} - \eqref{Jb7} together, 
\begin{align}
\begin{split}
\| (1-\Delta)^{-1}P_N(Y(t),\Theta(t)) \|_{H^{1}_x}
&\les 
\| P_N(Y(t),\Theta(t)) \|_{H^{-1}_x} \\
&\les
\Big( \| :\!Y_N^2(t)\!: \|_{W^{-1-\eps,\infty}_x}
+ \|  T_N^t \|_{\L (L^2; L^2)}  \Big) \| \Theta (t)\|_{H^1_x} \\
& \quad+ \| Y_N(t) \|_{W^{-\frac 12-\eps, \infty}_x} \| \Dr(t) \|_{H^1_x}^2
+ \| \Dr(t) \|_{H^1_x}^3.
\end{split}
\label{Jb8}
\end{align}

\noi
Moreover,
from \eqref{AC0}, we have
\begin{align}
\begin{split}
\| \dot \W_N(Y(t)+\Theta(t)) \|_{H^1_x}
&\les
\| \jb{\nb}^{-\frac 12-\eps}Y(t) \|_{L^\infty_x}^{19}
+ \| \jb{\nb}^{-\frac 12-\eps} \Dr (t)\|_{L^\infty_x}^{19} \\
&\les 
\| Y(t) \|_{W^{-\frac 12-\eps, \infty}_x}^{19}
+ \| \Dr (t)\|_{H^1_x}^{19}
\end{split}
\label{Jb9}
\end{align}

\noi
for $\eps>0$.
Therefore,
by studying the integral formulation of 
\eqref{ACde}, 
a contraction argument 
in $L^\infty([0,T]; H^1(\T^3))$ for some $T>0$
with  \eqref{Jb8} and \eqref{Jb9}
yields local well-posedness.
Here, the local existence time $T$ depends on $\| \Theta (0) \|_{H^1_x}$, 
$\|\dot \Ups\|_{L^2_T H^1_x}$,  and 
the following terms:
\[
\| Y_N \|_{L^\infty_TW^{-\frac 12-\eps, \infty}_x}, \quad
\| :\!Y_N^2\!: \|_{L^\infty_T W^{-1-\eps,\infty}_x}, \quad
\text{and}\quad \| T_N^t \|_{L^2_T\L(L^2; L^2)}
\]

\noi
whose 
almost sure boundedness follows from a small modification 
of the proofs of 
Lemmas \ref{LEM:Dr} and~\ref{LEM:Dr9}.

Next, we prove global existence on $[0, 1]$.
It follows from \eqref{ACde} with \eqref{AC04} that 
\begin{align}
\begin{split}
\frac 12 \frac{d}{dt} \| \Theta(t) \|_{H^1}^2
&= \int_{\T^3} P_N(Y(t),\Theta(t)) \Theta_N(t) dx \\
&\quad
- \int_{\T^3} \big(\jb{\nabla}^{-\frac 12 - \eps} 
(Y_N(t)+ \Theta_N(t))\big)^{19}\jb{\nabla}^{-\frac 12 - \eps} \Theta_N(t) dx \\
&\quad
+ \int_{\T^3} \jb{\nabla} \Theta(t) \jb{\nabla} \dot \Ups(t) dx. \label{AC3}
\end{split}
\end{align}

\noi
From   \eqref{Jb1}, Lemma \ref{LEM:Dr6}, and \eqref{Jb4}, we have
\begin{align}
\begin{split}
\int_{\T^3} P_N(Y(t),\Theta(t)) \Theta_N(t) dx
& \les \| \Theta_N (t) \|_{H^1}^2 + \|\Dr_N(t)\|_{L^2}^4 \\
& \quad + \int_{\T^3} (V \ast \Theta_N^2(t)) \Theta_N^2(t) dx + C_0(Y_N(t))
\end{split}
\label{AC3a}
\end{align}

\noi
for $0<\be \le \frac 12$ and $0<\eps \ll 1$,
where
\begin{align}
C_0(Y_N(t))
: = 1+ \| Y_N (t) \|_{\C^{-\frac 12-\eps}}^c
+ \| :\!Y_N^2 (t) \!: \|_{\C^{-1-\eps}}^c
+ \| T_N^t  \|_{\L (L^2; L^2)}^c
\label{AC3a0}
\end{align}

\noi
for some $c>0$.
We now estimate the last two terms on the right-hand side of~\eqref{AC3a}.
By~\eqref{interp}, we have
\begin{align}
\begin{split}
\|\Dr_N(t)\|_{L^2}^4 &  + \int_{\T^3} (V \ast \Theta_N^2(t)) \Theta_N^2(t) dx\\
&\les \| \Theta_N(t) \|_{L^4}^4
 \les \| \Theta_N(t) \|_{H^1}^{\frac {5} {3}} \|  \Theta_N(t) \|_{W^{-\frac 12 - \eps, 20}}^{\frac {7}{3}} 
\\
&\le \| \Theta_N(t) \|_{H^1}^2 + \eps_0 \| \Theta_N(t) \|_{W^{-\frac 12 - \eps, 20}}^{20} 
+ C_{\eps_0}
\end{split}
\label{AC3b}
\end{align}
for $\be \ge 0$ and small $\eps_0 > 0$.
Moreover, it follows from \eqref{YY9} and Young's inequality that
\begin{align}
\begin{split}
\int_{\T^3}&  \big(\jb{\nabla}^{-\frac 12 - \eps} (Y_N(t)+ \Theta_N(t))\big)^{19}\jb{\nabla}^{-\frac 12 - \eps} \Theta_N(t) dx \\
&\ge
\frac 12 \int_{\T^3} (\jb{\nabla}^{-\frac 12 - \eps} \Theta_N(t))^{20} dx
- c \int_{\T^3} \big|(\jb{\nabla}^{-\frac 12 - \eps} Y_N(t))^{19}
\jb{\nabla}^{-\frac 12 - \eps} \Theta_N(t)\big| dx \\
& \ge 
\frac 12 \| \Theta_N(t) \|_{W^{-\frac 12 - \eps, 20}}^{20}
- c \| Y_N(t) \|_{W^{-\frac12-\eps, 20}}^{19} \| \Theta_N(t) \|_{W^{-\frac12-\eps, 20}}\\
& \ge
\frac 14  \| \Theta_N(t) \|_{W^{-\frac 12 - \eps, 20}}^{20}
- c \| Y_N(t) \|_{W^{-\frac12-\eps, 20}}^{20}.
\end{split}
\label{AC3c}
\end{align}

\noi
Therefore, from \eqref{AC3} - \eqref{AC3c},
we obtain
\[
 \frac{d}{dt} \| \Theta(t) \|_{H^1}^2
\les
\| \Theta(t) \|_{H^1}^2
+ \|\dot \Ups(t)\|_{H^1}^2
+ C_0 (Y_N(t)) + \| Y(t) \|_{W^{-\frac12-\eps, 20}}^{20}.
\]

\noi
By Gronwall's inequality, this implies
\begin{align}
\| \Theta(t) \|_{H^1}^2
\les
\|\dot \Ups\|_{L^2([0,t];H^1_x)}^2 
+ \| C_0 (Y_N(t)) \|_{L_t^1([0,1])}
+ \| Y \|_{L_t^{20}([0,1];W^{-\frac12-\eps, 20}_x)}^{20}, 
\label{AC3ca}
\end{align}

\noi
uniformly in $ 0 \le t \le 1$.
The a priori bound \eqref{AC3ca} allows us to iterate
the local well-posedness argument, 
guaranteeing  existence of the solution $\Dr$ on $[0, 1]$.

It follows from \eqref{AC3a0}  and  a small modification 
of the proofs of 
Lemmas \ref{LEM:Dr} and~\ref{LEM:Dr9} that 
\begin{align}
\E \Big[ \| C_0 (Y_N(t)) \|_{L_t^1([0,1])}^p \Big]
+ \E \Big[ \| Y \|_{L_t^{20}([0,1];W^{-\frac12-\eps, 20}_x)}^p \Big]
< \infty
\label{AC3X}
\end{align}
for any finite $p \ge 1$, uniformly in $N \in \N$.
Then,  from \eqref{Jb8}, \eqref{Jb9}, 
\eqref{AC3ca}, and \eqref{AC3X}, we have
\begin{align}
\|(1-\Delta)^{-1} P_N(Y,\Theta) + \dot \W_N(Y+\Theta) \|_{L^2([0,\tau];H^1_x)}
\les \| \dot \Ups \|_{L^2 ([0,\tau]; H^1_x)}^{19} + \wt C_N,
\label{AC3d}
\end{align}
with $\E \big[ |\wt C_N|^p \big] \le C_p < \infty$ for any finite $p \ge 1$,  uniformly in $N \in \N$.
Therefore, from  \eqref{ACde} and \eqref{AC3d},
we obtain the bound \eqref{AC2}.
\end{proof}

\begin{ackno}\rm
T.O.~was supported by the European Research Council (grant no.~637995 ``ProbDynDispEq''
and grant no.~864138 ``SingStochDispDyn"). 
M.O.~was supported by JSPS KAKENHI  Grant numbers JP16K17624 and JP20K14342.
L.T.~was funded by the Deutsche Forschungsgemeinschaft (DFG, German Research Foundation) under Germany's Excellence Strategy-EXC-2047/1-390685813, through the Collaborative Research Centre (CRC) 1060.
M.O. would like to thank the School of Mathematics at the University
of Edinburgh for its hospitality, where part of this manuscript was prepared. 
T.O.~would like to express gratitude to  
the  Centre de recherches math\'ematiques, Canada, 
for its hospitality, 
where the revision of this manuscript was prepared.
The authors would like to thank Bjoern Bringmann
for pointing out an error in Section \ref{SEC:GWP}
in the previous version.
The authors also would like to thank
the anonymous referees for the helpful comments.

\end{ackno}

\end{document}